\begin{document}
\title{Equivariant operads, symmetric sequences, and Boardman-Vogt tensor products}
\author{Natalie Stewart}
\date{\today}
 
\begin{abstract}
  We advance the foundational study of be Nardin-Shah's $\infty$-category of $G$-operads and their associated $\infty$-categories of algebras.
  In particular, we construct the \emph{underlying $G$-symmetric sequence} of a (one color) $G$-operad, yielding a monadic functor; 
  we use this to lift Bonventre's genuine operadic nerve to a conservative functor of $\infty$-categories, restricting to an equivalence between categories of discrete $G$-operads.
  Using this, we extend Blumberg-Hill's program concerning \emph{$\cN_\infty$-operads} to arbitrary sub-operads of the terminal $G$-operad, which we show are equivalent to weak indexing systems.

  We then go on to define and characterize a homotopy-commutative and closed \emph{Boardman-Vogt tensor product} on $\Op_G$;
  in particular, this specializes to a $G$-symmetric monoidal $\infty$-category of $\cO$-algebras in a $G$-symmetric monoidal $\infty$-category whose $\cP$-algebras are objects with interchanging $\cO$-algebra and $\cP$-algebra structures.
\end{abstract}

\maketitle
 
\toc

\section*{Introduction}  
Within the burgeoning study of algebraic structures in $G$-equivariant homotopy theory, tensor products are generalized to \emph{indexed tensor products}, leading to the notion of \emph{$G$-symmetric monoidal $\infty$-categories} \cite{Hill_SMC,Bachmann}.
Naturally, $G$-equivariant algebraic theories are represented by \emph{$G$-operads}, including the equivariant little cubes/Seiner operads of \cite{Guillou-May} and $\cN_\infty$-operads of \cite{Blumberg-op}.
In this paper, we use $\infty$-categorical foundations to advance the homotopy theory of $G$-operads, both structurally on Nardin-Shah's $\infty$-category of $G$-$\infty$-operads $\Op_G$ (henceforth just $G$-operads) and individually on the $\infty$-categories of algebras $\Alg_{\cO}(\cC)$ of $\cO$-algebras for various examples of interest.\footnote{
  In this paper we will call $\infty$-categories \emph{$\infty$-categories} and $\infty$-categories with discrete mapping spaces \emph{1-categories}, as their theory is equivalent to the traditional theory of categories.
More generally, we will call $\infty$-categories whose mapping spaces are $(d-1)$-truncated \emph{$d$-categories}.}

Our first contribution generalizes the rudimentary theory of $G$-symmetric monoidal $\infty$-categories to \emph{$I$-symmetric monoidal $\infty$-categories}, for $I$ a \emph{weak} indexing category in the sense of \cite{Windex};
these posses indexed tensor products over a collection of arities only under the assumptions that they can be restricted and composed.

We go on to generalize $G$-operads to \emph{$I$-operads}, which occur as a full subcategory $\Op_I \subset \Op_G$ with a terminal object $\cN_{I \infty}^{\otimes}$, which we refer to as a \emph{weak $\cN_\infty$-operad};
in particular, an $I$-symmetric monoidal $\infty$-category $\cC^{\otimes}$ has an underlying (colored) $I$-operad of the same name, and $\cO$-algebras in $\cC^{\otimes}$ correspond with maps of $G$-operads $\cO^{\otimes} \rightarrow \cC^{\otimes}$.  
We combinatorially classify the weak $\cN_\infty$-operads as \emph{weak indexing systems}, generalizing \cite{Rubin,Gutierrez,Bonventre,Nardin}.

One of our central constructions is a monadic \emph{underlying $G$-symmetric sequence} functor 
\[
  \sseq\cln \Op_G^{\oc} \rightarrow \Fun(\tot \uSigma_G, \cS),
\]
the former being the \emph{one-colored} $G$-operads.
The objects of $\tot \uSigma_G$ are identified with pairs $(H,S)$ where $H \subset G$ is a subgroup and $S \in \FF_H$ is a finite $H$-set;
given this data, we write $\cO(S) \deq \sseq \cO^{\otimes}(S)$, which we call the \emph{$S$-ary structure space of $\cO^{\otimes}$}.
This intertwines with Bonventre's genuine operadic nerve, so the nerve lifts to a conservative functor of $\infty$-categories.

We use this data to characterize the compatible $(d+1)$-categories of \emph{$G$-symmetric monoidal $d$-categories} and \emph{$G$-$d$-operads}:
a $G$-operad $\cO^{\otimes}$ is a \emph{$G$-$d$-operad} if the $S$-ary structure space $\cO(S)$ is $(d-1)$-truncated for all subgroups $H \subset G$ and finite $H$-sets $S \in \FF_H$.
These are a localizing subcategory, and the corresponding \emph{homotopy $G$-$d$-operad} functor $h_d\colon \Op_G \rightarrow \Op_{G,d}$ acts on structure spaces as $(d-1)$-truncation.
We characterize the free $\cO$-algebra monad, showing that the functor $\Alg_{(-)}(\ucS_{G, \leq (d-1)})$ of algebras in $(d-1)$-truncated $G$-spaces detects $h_d$-equivalences between one color $G$-operads;
in particular, taking algebras in $G$-spaces is conservative.

When $d \leq 1$, we show that the restriction of Bonventre's nerve to genuine $G$-operads with $(d-1)$-truncated structure spaces maps equivalently onto $G$-$d$-operads, and we classify the $G$-0-operads as the weak $\cN_\infty$-operads. 
Using this, we classify the $d$-connected $I$-operads as those whose algebras in $d$-truncated $G$-spaces lift canonically to weak $\cN_\infty$-spaces.

Having done this, we define a homotopy-commutative tensor product on $\Op_G$ called the \emph{Boardman-Vogt tensor product} .
We show that this tensor product is \emph{closed}, i.e. it has an associated \emph{(colored) $G$-operad of algebras} $\uAlg_{\cO}^{\otimes}(\cC)$. 
When $\cC^{\otimes}$ is an $I$-symmetric monoidal $\infty$-category, we show that $\uAlg_{\cO}^{\otimes}(\cC)$ underlies an $I$-symmetric monoidal $\infty$-category, which we give the same name;
in particular, $\uAlg_{\cO}^{\otimes}(\cC)$ is an $I$-symmetric monoidal $\infty$-category whose $\cP$-algebras are characterized by the formula
\[
  \Alg_{\cP} \uAlg_{\cO}^{\otimes}(\cC) \simeq \Alg_{\cP \otimes \cO}(\cC).
\]
We thus interpret $\cP \otimes \cO$-algebras as \emph{homotopy coherently interchanging pairs of $\cP$-algebras and $\cO$-algebras};
indeed we give a ``bifunctor'' presentation generalizing \cite[\S~2.2.5.3]{HA}.

We end by developing an ``inflation and fixed points'' adjunction $\Infl_e^G\colon  \Op \rightleftarrows \Op_G\cln \Gamma^{G}$ and showing that it is compatible with Boardman-Vogt tensor products. 
We now move on to a more careful accounting of the background and main results of this paper.

\stoptocwriting 

\subsection*{Background and motivation}
Let $\cC$ be a semiadditive 1-category, i.e. a pointed 1-category whose \emph{norm} map $X \sqcup Y \rightarrow X \times Y$ is an isomorphism for all $X,Y \in \cC$.
Let $G$ be a finite group and let $\cO_G$ be the orbit category of $G$.\footnote{The \emph{orbit category} is the full subcategory of $G$-sets $\cO_G \subset \Set_G$ spanned by the homogeneous $G$-sets $[G/H]$ for $H \subset G$ a subgroup.}
Recall that a \emph{semi-Mackey functor} valued in $\cC$ is the data of:
\begin{itemize}
  \item a contravariant functor $R\cln \cO^{\op}_G \rightarrow \cC$, and
  \item a covariant functor $N\cln \cO_G \rightarrow \cC$
\end{itemize}
subject to the conditions that
\begin{enumerate}[label={(\alph*)}]
  \item for all $H \subset G$, the values $R([G/H])$ and $N([G/H])$ are isomorphic, and
  \item writing $R_K^H\cln R([G/H]) \rightarrow R([G/K])$ for the contravariant functoriality and $N_K^H \cln N([G/K]) \rightarrow N([G/H])$ for the covariant functoriality, $R$ and $N$ satisfy the \emph{double coset formula}
    \[
      R_J^H N_K^H(-) \simeq \sum_{g \in [J\backslash H / K]} N_{H \cap gKg^{-1}}^H \Res_K^H\prn{-}_g
    \]
    where $(-)_g$ denotes the covariant conjugation action and $[J \backslash G / K]$ is the set of \emph{double cosets}.
\end{enumerate}
Let $\Span(\FF_G)$ be the effective Burnside 1-category, whose objects are finite $G$-sets, whose morphisms $R_{XY}\cln X \rightarrow Y$ are given by isomorphism classes of spans $X \leftarrow R_{XY} \rightarrow Y$, and whose composition is given by pullback of spans
    \[\begin{tikzcd}[row sep=tiny]
	&& {R_{XZ}} \\
	& {R_{XY}} && {R_{YZ}} \\
  X && Y && Z.
	\arrow[from=1-3, to=2-2]
	\arrow[from=1-3, to=2-4]
	\arrow["\lrcorner"{anchor=center, pos=0.125, rotate=-45}, draw=none, from=1-3, to=3-3]
	\arrow[from=2-2, to=3-1]
	\arrow[from=2-2, to=3-3]
	\arrow[from=2-4, to=3-3]
	\arrow[from=2-4, to=3-5]
\end{tikzcd}\]
It is an observation due to Lindner \cite{Lindner} that (semi)-Mackey functors valued in $\cC$ are equivalently given by product preserving functors 
\[
  \Span(\FF_G) \rightarrow \cC.
\]
This appears as a straightforward generalization of the Lawvere theory $\Span(\FF)$ for commutative monoids, so we will refer to semi-Mackey functors as \emph{$G$-commutative monoids}.

Moreover, \emph{any} $\cC$ admits a universal map from a semiadditive category, given by the forgetful functor $U\cln \CMon(\cC) \rightarrow \cC$;
since $\Span(\FF_G)$ possesses an identity-on-objects anti-involution, it is semiadditive, and so $U$ induces an equivalence
\[
  \Fun^{\times}(\Span(\FF_G), \CMon(\cC)) \xrightarrow{\;\;\sim\;\;} \Fun^{\oplus}(\Span(\FF_G),\cC);
\]
in fact, replacing $\Span(\FF_G)$ with the effective Burnside \emph{2-}category of \cite{Barwick1} (whose 2-cells are isomorphisms of spans), $\cC$ with an $\infty$-category, and interpreting $\CMon(\cC)$ as $\EE_\infty$-monoids in $\cC$, the semiadditivization result for $\CMon(\cC)$ still holds \cite{Gepner}, and $\Span(\FF_G)$ is still semiadditive.
Thus we are justified in making the following definition.
\begin{definition*}
  The \emph{$\infty$-category of $G$-commutative monoids in $\cC$} is the product-preserving functor $\infty$-category
  \[
    \CMon_G(\cC) \deq \Fun^\times(\Span(\FF_G),\cC);
  \]
  the \emph{$\infty$-category of small $G$-symmetric monoidal $\infty$-categories} is
  \[
    \Cat_G^{\otimes} \deq \CMon_G(\Cat).\qedhere
  \]  
\end{definition*}
This recovers the notion of \cite{Nardin}, which generalizes the notion of \cite{Hill_SMC}.
Recall that we define $G$-$\infty$-categories to be categorical coefficient systems
\[
  \Cat_G \deq \Fun\prn{\cO_G^{\op},\cC};
\]
the $[G/H]$-value of a $G$-$\infty$-category $\cC$ will be written $\cC_H$, and the contravariant functoriality along $[G/K] \rightarrow [G/H]$ will be written $\Res_K^H \cln \cC_H \rightarrow \cC_K$.
$G$-symmetric monoidal $\infty$-categories $\cC^{\otimes}$ have underlying $G$-$\infty$-categories $\cC$ defined by the precomposition
\[
  \cC\cln \cO_G^{\op} \rightarrow \Span(\FF_G) \xrightarrow{\cC^{\otimes}} \Cat.
\]
Given a subgroup $H \subset G$ and a finite $H$-set $S$, we will write the value of $\cC^{\otimes}$ on $\Ind_H^G S$ as $\cC_S$, noting that there is a canonical equivalence $\cC_S \simeq \prod_{[H/K] \in \Orb(S)} \cC_K$.

We may induce the unique map of $H$-sets $S \rightarrow *_H$ to $G$ to construct a structure map $\Ind_H^G S \rightarrow [G/H]$,\footnote{See \cite{Dieck} for a discussion of induced $G$-sets.}
and covariant functoriality yields a natural \emph{$S$-indexed tensor product} operation
\[
  \bigotimes^S\cln \cC_S \rightarrow \cC_H.
\]
We may induce the \emph{orbit set} factorization $S \rightarrow \coprod_{[H/K] \in \Orb(S)} *_H \rightarrow *_H$ to yield a natural equivalence
\[
  \bigotimes^S_K X_K \simeq \bigotimes_{[H/K] \in \Orb(S)} N_K^H X_K.
\]
Similarly, contravariant functoriality yields an \emph{$S$-indexed diagonal} $\Delta^S\cln \cC_H \rightarrow \cC_S$ satisfying
\[
  \Delta^S X \simeq \prn{\Res_K^H X}_{[H/K] \in \Orb(S)}.
\]
This allows us to define \emph{$S$-indexed tensor power} of an object $X_H \in \cC_H$ by
\[
  X_H^{\otimes S} \deq \bigotimes^S \Delta^S X_H \simeq \bigotimes^S_K \Res_K^H X_H \simeq \bigotimes_{[H/K] \in \Orb(S)} N_K^H \Res_K^H X_H.
\]
Akin to the discrete case, these satisfy a double coset formula by functoriality under the composite span
\[
  \begin{tikzcd}[row sep = tiny]
    && {\coprod\limits_{g \in [J \backslash H / K]} G / (K \cap gJg^{-1})} \\
    & {G/J} && {G/K} \\
    {G/J} && {G/H} && {G/K}
    \arrow[from=1-3, to=2-2]
    \arrow[from=1-3, to=2-4]
    \arrow["\lrcorner"{anchor=center, pos=0.125, rotate=-45}, draw=none, from=1-3, to=3-3]
    \arrow[from=2-2, to=3-3]
    \arrow[from=2-4, to=3-3]
    \arrow[Rightarrow, no head, from=2-4, to=3-5]
    \arrow[Rightarrow, no head, from=3-1, to=2-2]
  \end{tikzcd}
\]
\begin{example*}
  Write $\ucS_G$ for the $G$-$\infty$-category with $H$-value $\prn{\ucS_G}_H \deq \cS_H \simeq \Fun(\cO_H^{\op},\cS)$ the $\infty$-category of genuine $H$-equivariant spaces.
  This possesses a $G$-symmetric monoidal structure $\ucS_G^{G-\times}$ whose $S$-ary tensor product is the \emph{$S$-indexed product} \cite{Nardin};
  in particular, $\cS_H$ is a cartesian symmetric monoidal $\infty$-category and $N_K^H \simeq \CoInd_K^H\colon \cS_K \rightarrow \cS_H$ is right adjoint to restriction.
\end{example*}
\begin{example*}
  There is a $G$-symmetric monoidal $\infty$-category $\uSp_G^{\otimes}$ whose $H$-value $\prn{\uSp_G}_H \simeq \Sp_H$ is the $\infty$-category of genuine $H$-spectra with norms given by the Hill-Hopkins-Ravanel norm \cite{Nardin,Bachmann}.
\end{example*}

We are concerned with algebraic structures \emph{inside} $G$-symmetric monoidal $\infty$-categories, which we will control with a version of Nardin-Shah's $\infty$-category $\Op_G$ of $G$-$\infty$-operads, which we simply call \emph{$G$-operads}.
Work of Barkan, Haugseng, and Steinebrunner \cite{Barkan} identifies these with functors of $\infty$-categories $\pi_{\cO}\cln \cO^{\otimes} \rightarrow \Span(\FF_G)$ possessing cocartesian lifts over backwards maps and satisfying a pair Segal conditions, which we may summarize in two cases of interest:
\begin{enumerate}
  \item in the case that the fibers $\pi_{\cO}^{-1}(S)$ are contractible for all $S \in \FF_G$ (i.e. $\cO^{\otimes}$ \emph{has one color}), cocartesian lifts over the backwards maps $(S \leftarrow [G/H] = [G/H])_{[G/H] \in \Orb(S)}$ furnish an equivalence
    \[
      \Map^{T \rightarrow S}_{\pi_{\cO}} \prn{iT,iS} \simeq \prod_{[G/H] \in \Orb(S)} \Map^{T_H \rightarrow [G/H]}_{\pi_{\cO}}(iT_H,i[G/H]),
    \]
    where we set $T_H \deq T \times_S [G/H]$ and we write $iS$ for the unique object of $\pi_{\cO}^{-1}(S)$;\footnote{Given a functor $F\cln \cC \rightarrow \cD$, and $\psi\cln FX \rightarrow FY$ a map in $\cD$, we write $\Map^{\psi}_{F}(X,Y) \subset \Map_{\cC}(X,Y)$ for the disjoint union of the connected components consisting of maps $\varphi\cln X \rightarrow Y$ such that $F \varphi$ is homotopic to $\psi$.}
  \item in the case that $\pi_{\cO}$ is a cocartesian fibration, $\cO^{\otimes}$ is a $G$-operad if and only if it is the unstraightening of a $G$-symmetric monoidal $\infty$-category.
\end{enumerate}
These span a localizing subcategory \cite[Cor~4.2.3]{Barkan}
\begin{equation}\label{LOpG equation}
  \begin{tikzcd}
    {\Op_G} & {\Cat_{/\Span(\FF_G)}^{\Int-\cocart},}
    \arrow[""{name=0, anchor=center, inner sep=0}, curve={height=18pt}, hook, from=1-1, to=1-2]
    \arrow[""{name=1, anchor=center, inner sep=0}, "{L_{\Op_G}}"', curve={height=18pt}, from=1-2, to=1-1]
    \arrow["\dashv"{anchor=center, rotate=-90}, draw=none, from=1, to=0]
  \end{tikzcd}
\end{equation}
the latter denoting the non-full subcategory $\Cat_{/\Span(\FF_G)}^{\Int-\cocart} \subset \Cat_{/\Span(\FF_G)}$ whose objects possess cocartesian lifts over backwards maps and whose morphisms preserve these cocartesian lifts.

Given $\cO^{\otimes}$ a one-color $G$-operad, $H \subset G$ a subgroup, and $S \in \FF_H$ a finite $H$-set, we write 
\[
  \cO(S) \deq \Map^{\Ind_H^G S \rightarrow [G/H]}_{\pi_{\cO}}(i\Ind_H^G S,i[G/H])
\]
for the \emph{$S$-ary structure space of $\cO^{\otimes}$}.
\begin{example*}
  Let $I \subset \FF_{G}$ be a pullback-stable and core-full subcategory.
  In \cref{I operads subsection} we show that the subcategory $\Span_I(\FF_{G}) \subset \Span(\FF_G)$ presents a $G$-operad if and only if $I$ is a weak indexing category in the sense of \cite{Windex}, in which case we refer to the resulting $G$-operad as $\cN_{I \infty}^{\otimes}$.
  We refer to these together as the class of \emph{weak $\cN_\infty$-operads}.
  These are identified by their structure spaces
  \[
    \cO(S) \simeq \begin{cases}
      * & \Ind_H^G S \rightarrow [G/H] \in I; \\ 
      \emptyset & \mathrm{otherwise}. 
    \end{cases}
  \]
\end{example*} 

An $\cO$-algebra in $\cC^{\otimes}$ is defined to be a map of $G$-operads $\cO^{\otimes} \rightarrow \cC^{\otimes}$;
these posses an underlying $G$-object $X_\bullet$ (extending canonically to a cocartesian section of $\cC \rightarrow \cO_G^{\op}$, with canonical equivalences $X_K \simeq \Res^H_K X_H$) together with action maps 
\begin{equation}\label{action map}
  \cO(S) \rightarrow \Map_{\cC_H}\prn{X^{\otimes S}_H, X_H}
\end{equation}
for each subgroup $H \subset G$ and finite $H$-set $S \in \FF_H$, suitably functorial and compatible with cocartesian lifts of backwards maps.
In fact, as in \cite{Nardin}, we may lift these to a \emph{$G$-$\infty$-category} $\uAlg_{\cO}(\cC)$ whose $H$-value consists of algebras over the restricted \emph{$H$-operad}:
\[
  \uAlg_{\cO}(\cC)_H \simeq \Alg_{\Res_H^G \cO}(\Res_H^G \cC).
\]
\begin{example*}
  Let $\cC^{\otimes} \deq \ucS_G^{G-\times}$.
  Note that there is a natural equivalence
  \[
    \prn{\prod^S_K X_K}^H \simeq \prod_{[H/K] \in \Orb(S)} \prn{ \CoInd_K^HX_K}^H \simeq \prod_{[H/K] \in \Orb(S)} X^K_K,
  \]
  for each $S$-equivariant tuple $(X_K) \in \cS_S$, where $X^H = \Map^H(*,X)$ is the $H$-equivariant genuine fixed points functor.
  Thus we may compose \cref{action map} with genuine fixed points to acquire an action map
  \[
    \cO(S) \rightarrow \Map\prn{\prod_{[H/K] \in \Orb(S)} X^K, X^H};
  \]
  in particular, we may view $\cO([H/K])$ as the \emph{space of transfers $X^K \rightarrow X^H$} prescribed to an $\cO$-algebra.
  
  In particular, $\cN_{I \infty}^{\otimes}$ prescribes a contractible space of maps $\prod_{[H/K] \in \Orb(S)} X^K \rightarrow X^H$ for all $S \in \FF_H$ whose structure map $\Ind_H^G S \rightarrow [G/H]$ lies in $I$;
  indeed we will verify in forthcoming work \cite{Tensor} that $\cN_{I \infty}$-algebras in $\ucS_G^{G-\times}$ are (homotopy-coherent) incomplete $G$-commutative monoids.
\end{example*}

\subsection*{Summary of main results}
Write $\uSigma_G$ for the $G$-space core of the $G$-$\infty$-category of finite $G$-sets $\uFF_G$;
write $\tot\cln \Cat_G \rightarrow \Cat$ for the functor taking a $G$-$\infty$-category to the total $\infty$-category of its corresponding cocartesian fibration.
We identify objects with $\tot \uSigma_G$ with pairs $(H,S)$ where  $(H) \subset G$ is a conjugacy class and $S \in \FF_H$ is a finite $H$-set.
\begin{cooltheorem}\label{Sseq cool theorem}
  There exists a monadic functor
  \[
    \sseq\cln \Op_G^{\oc} \rightarrow \Fun(\tot \uSigma_G, \cS)
  \]
  whose composite functor $\Op_G \xrightarrow{\sseq} \Fun(\tot \uSigma_G, \cS) \xrightarrow{\ev_{(H,S)}} \cS$ recovers $\cO(S)$.
\end{cooltheorem}

In parallel, Bonventre-Pereira developed a model category $s\Op_G^{\oc}$ of \emph{genuine $G$-operads} which is right-transferred along a monadic \emph{underlying $G$-symmetric sequence} functor $U\cln s\Op_{G,*_G} \xrightarrow \Fun(\tot \uSigma_G, \sSet_{\mathrm{Quillen}})$ \cite[Thm~II]{Bonventre}.\footnote{When we say a model category $\cC$ is \emph{right-transferred along $F\colon \cC \rightarrow \cD$}, we mean that $F$ preserves and reflects weak equivalences and fibrations.}
We refer to the associated $\infty$-category as $g\Op_{G}^{\oc} \deq s\Op^{\oc}_{G}[\mathrm{weq}^{-1}]$.

Unwinding definitions, we will see that $\sseq$ is total right derived from a functor of 1-categories out of Nardin-Shah's model structure \cite{Nardin} which preserves and reflects weak equivalences between fibrant objects, and Bonventre's \emph{genuine operadic nerve} $N^{\otimes}$ satisfies $\cP(S) \simeq \prn{N^{\otimes} \cO}(S)$.
We conclude by two-out-of-three that $N^{\otimes}$ preserves and reflects weak equivalences between fibrant objects, yielding the following.
\begin{coolcorollary}\label{Nerve cool theorem}
  Bonventre's genuine operadic nerve possesses a conservative total right derived functor of $\infty$-categories.
\end{coolcorollary}
Moreover, in \cref{I operads subsection}, given a $G$-operad $\cO^{\otimes}$ we construct 
\emph{operadic composition maps}
\begin{equation}\label{Operadic composition equation}
  \gamma\cln \cO(S) \otimes \hspace{-5pt} \bigotimes_{[H/K_i] \in \Orb(S)} \hspace{-5pt} \cO(T_i) \rightarrow \cO\prn{\coprod_{[H/K_i] \in \Orb(S)} \Ind_{K_i}^H T_i},
\end{equation}
\emph{operadic restriction maps}
\begin{equation}\label{Operadic restriction equation}
  \Res\cln \cO(S) \rightarrow \cO\prn{\Res_K^H S},
\end{equation}
and \emph{equivariant symmetric group action}
\begin{equation}\label{Operadic symmetric action equation}
    \rho\cln \Aut_H(S) \times \cO(S) \rightarrow \cO(S)
\end{equation}
It is difficult to describe the coherences for these structures directly; nevertheless, in \cref{Discrete genuine nerve subsection}, we will use this structure to show that $N^{\otimes}$ restricts to an equivalence between the full subcategories of $G$-operads with discrete structure spaces.

Moving on, given $\cO^{\otimes}$ a $G$-operad, we define the \emph{arity support} subcategory\footnote{
  Throughout this paper, we say \emph{subobject} to mean monomorphism in the sense of \cite[\S~5.5.6]{HTT} and we write $\Sub_{\cC}(X)$ for the poset of subobjects of $X$ in $\cC$;
    in the case the ambient $\infty$-category is a 1-category, this agrees with the traditional notion.

    In the case our objects are in the $\infty$-category $\Cat$ of small $\infty$-categories, we call this a \emph{subcategory};
    in the case that the containing $\infty$-category is a 1-category, this is canonically expressed as a \emph{core-preserving wide subcategory of a full subcategory}, i.e. it is a \emph{replete subcategory}.
    Hence it is uniquely determined by its morphisms, so we will implicitly identify subcategories of $\cC$ a 1-category with their corresponding subsets of $\Mor(\cC)$.}
$A\cO \subset \FF_G$  by its maps
\[
  A\cO := \cbr{T \rightarrow S \; \middle| \;  \prod_{[H/K] \in \Orb(S)} \cO(T_K) \neq \emptyset} \subset \FF_G.
\]
where we once again use the shorthand $T_K \deq T \times_S [H/K]$.
In essence, $A \cO$ consists of the \emph{equivariant (multi-)arities} over which $\cO^{\otimes}$ prescribes structure on its algebras.

The fact that $\emptyset$ accepts no maps from nonempty spaces obstructs construction of maps matching \cref{Operadic composition equation,Operadic restriction equation}, so $A\cO$ can't be an arbitrary subcategory.
We use this to show the following.
\begin{cooltheorem}\label{Windex main theorem}
    The following posets are each equivalent:
    \begin{enumerate}[label={(\arabic*)}]
      \item \label[poset]{Subcommutative item}
            The poset $\Sub_{\Op_{G}}(\Comm_{G}) \subset \Op_G$ of sub-commutative $G$-operads. 
          \item \label[poset]{Truncated item}
            The poset $\Op_{G, 0} \subset \Op_G$ of $G$-$0$-operads.
          \item \label[poset]{Weak N ininifty item}
          The poset $\Op_G^{weak-\cN_\infty} \subset \Op_G$ of weak $\cN_\infty$ $G$-operads.
        \item \label[poset]{Image item}
            The essential image $A(\Op_{G}) \subset \Sub_{\Cat}(\FF_{G})$
          \item \label[poset]{Windex item}
            The embedded sub-poset $\wIndCat_{G} \subset \Sub_{\Cat}(\FF_{G})$ spanned by subcategories $I \subset \FF_{G}$ which are closed under base change and automorphisms and satisfy the Segal condition that
            \[
              T \rightarrow S \in I \hspace{30pt} \iff \hspace{30pt} \forall [G/H] \in \Orb(S), \;\;\; T \times_S [G/H] \rightarrow [G/H] \in I
            \]
          \item \label[poset]{Self-indexed coproducts item}
          The embedded sub-poset $\wIndSys_G \subset \mathrm{FullSub}_G(\uFF_{G})$ spanned by full $G$-subcategories $\cC \subset \uFF_{G}$ which are closed under self-indexed coproducts and have $*_H \in \cC_H$ whenever $\cC_H \neq \emptyset$.
    \end{enumerate}
    Furthermore, there is an equalities of sub-posets
    \[
      \IndCat_G = A\Op_{G, \geq \EE_\infty},
    \]
    where $\IndCat_G \simeq \IndSys_G$ denotes the \emph{indexing categories} of \cite{Blumberg-op,Bonventre,Rubin,Gutierrez}.
\end{cooltheorem}
\begin{proof}[References]
  In \cref{0-operads subterminal,Ninfty T-0-operad corollary} we show that \cref{Weak N ininifty item,Subcommutative item,Truncated item} are equal full subcategories of $\Op_G$.
    In \cref{Windex image proposition} we characterize the image of $A$, constructing equivalences between \cref{Image item,Windex item}.
  \cref{Weak N ininifty item,Image item} are shown to be equivalent in \cref{Windex image corollary} by realizing $\Op_G^{\mathrm{weak}-\cN_\infty}$ as the essential image of a fully faithful right adjoint $\cN_{(-)\infty}^{\otimes}$ to the essential surjection underlying $A$: 
    \begin{equation}\label{First ninfty adjunction equation}
         \begin{tikzcd}
        {\Op_G} & {\wIndCat_G}
        \arrow[""{name=0, anchor=center, inner sep=0}, "A", curve={height=-15pt}, from=1-1, to=1-2]
        \arrow[""{name=1, anchor=center, inner sep=0}, "{\cN_{I\infty}^{\otimes}}", curve={height=-15pt}, hook', from=1-2, to=1-1]
        \arrow["\dashv"{anchor=center, rotate=-90}, draw=none, from=0, to=1]
        \end{tikzcd}
    \end{equation}

    The equivalence between \cref{Windex item,Self-indexed coproducts item} is handled in \cite[Thm~A]{Windex};
    nevertheless, the composite map from \cref{Subcommutative item} to \cref{Self-indexed coproducts item} is shown to be furnished by the \emph{self-indexed symmetric monoidal envelope} in \cref{Windex windcat example}.
    Finally, the remaining identity follows by \cref{Property P observation}.
\end{proof}

Having done this, we move on to develop a notion of \emph{equivariant homotopy-coherent interchange} via the \emph{Boardman-Vogt tensor product}
\[
  \cO^{\otimes} \obv \cP^{\otimes} \deq L_{\Op_G}\prn{\cO^{\otimes} \times \cP^{\otimes} \rightarrow \Span(\FF_G) \times \Span(\FF_G) \xrightarrow{\;\;\; \wedge \;\;\;} \Span(\FF_G)}.
\]
where $L_{\Op_G}$ is as in \cref{LOpG equation}.
We verify many basic properties of this.
\begin{cooltheorem}\label{BV cool theorem}
  The bifunctor $\obv \cln \Op_G \times \Op_G \rightarrow \Op_G$ enjoys the following properties.
    \begin{enumerate}[ref={(\arabic*)}]
      \item In the case $G = e$ is the trivial group, $\obv$ is naturally equivalent to the Boardman-Vogt tensor product of \cite{HA,Hinich}.\label[statement]{Ope item}
        \item The functor $- \obv \cO\cln \Op_G \rightarrow \Op_G$ possesses a right adjoint $\uAlg^{\otimes}_{\cO}(-)$, whose underlying $G$-$\infty$-category is the $G$-$\infty$-category of algebras $\uAlg_{\cO}(-)$;
          the associated $\infty$-category is the $\infty$-category of algebras $\Alg_{\cO}(-)$.\label[statement]{Alg is adjoint item}
        \item The $\obv$-unit of $\Op_G$ is the $G$-operad $\triv_{G}^{\otimes}$ of \cite{Nardin};
          hence $\uAlg_{\triv_{G}}^{\otimes}(\cO) \simeq \cO^{\otimes}$.\label[statement]{Triv item}
        \item When $\cC^{\otimes}$ is a $G$-symmetric monoidal $\infty$-category, $\uAlg_{\cO}^{\otimes}(\cC)$ is a $G$-symmteric monoidal $\infty$-category; 
          furthermore, when $\cO^{\otimes} \rightarrow \cP^{\otimes}$ is a map of $G$-operads, the pullback lax $G$-symmetric monoidal functor
        \[
          \uAlg_{\cP}^{\otimes}(\cC) \rightarrow \uAlg_{\cO}^{\otimes} \prn{\cC}
        \]
        is $G$-symmetric monoidal;
        in particular, if $\cO^{\otimes}$ has one object, then pullback along the unique map $\triv^{\otimes}_G \rightarrow \cP^{\otimes}$ presents the unique natural transformation of operads
        \[
          \uAlg_{\cP}^{\otimes}(\cC) \rightarrow \cC^{\otimes},
        \]
        and this is $G$-symmetric monoidal when $\cC$ is $G$-symmetric monoidal. 
        \label[statement]{Symmetric monoidal pullback item}
        \item When $\cC^{\otimes} \rightarrow \cD^{\otimes}$ is a $G$-symmetric monoidal functor, the induced lax $G$-symmetric monoidal functor
        \[
            \uAlg_{\cO}^{\otimes}(\cC) \rightarrow \uAlg_{\cO}^{\otimes}(\cD)
        \]
        is $G$-symmetric monoidal.\label[statement]{Symmetric monoidal pushforward item}
      \item \label[statement]{Infl item}
        The adjunction $\Infl_e^G\cln \Op \rightleftarrows \Op_G\cln \Gamma^G$ enjoys the following (natural) equivalences:
        \begin{align*}
          \Infl_e^G \triv^{\otimes} &\simeq \triv_G^{\otimes};\\
          \Gamma^G \uAlg_{\Infl_e^{\cT} \cO}^{\otimes}(\cC) &\simeq \Alg^{\otimes}_{\cO}\prn{\Gamma^{G} \cC};\\
          \Infl_e^G(\cO) \obv \Infl_e^G(\cP) &\simeq \Infl_e^G(\cO \otimes \cP).
        \end{align*}
        Hence, writing $\EE_n$ for the little $n_G$-disks $G$-operad,\footnote{Here, $n_G$ is the $n$-dimensional trivial orthogonal $G$-representation.} the maps $\EE_{n},\EE_{m} \rightarrow \EE_{n+m}$ induce an equivalence
        \[
          \EE_{n}^{\otimes} \obv \EE_{m}^{\otimes} \xrightarrow{\;\;\; \sim \;\;\;} \EE_{n+m}
        \]
      \item The $G$-symmetric monoidal envelope of \cite{Nardin,Barkan} intertwines Day convolution with Boardman-Vogt tensor products, i.e. the following diagram commutes
    \[
      \begin{tikzcd}[column sep = large]
            \Op_G^2 \arrow[rrr,"\obv"] \arrow[d,"\Env^2"]
            &&& \Op_G \arrow[d,"\Env"]\\
            \prn{\Cat_G^{\otimes}}^2  \arrow[r,"\simeq" marking, phantom]
            & \Fun^{\times}(\Span(\FF_G),\Cat)^2 \arrow[r,"\circledast"]
            & \Fun^{\times}(\Span(\FF_G),\Cat)  \arrow[r,"\simeq" marking, phantom]
            & \Cat_G^{\otimes}
        \end{tikzcd}
      \]\label[statement]{Env item}
    \end{enumerate}
\end{cooltheorem}
\begin{proof}[References]
  \cref{Ope item} is \cref{Underlying tensor product}.
  \cref{Alg is adjoint item} is \cref{Alg values,Alg is adjoint prop,Alg underlying corollary}.
  \cref{Triv item} is \cref{Triv prop}.
  \cref{Symmetric monoidal pullback item,Symmetric monoidal pushforward item} are \cref{Symmetric monoidal push-pull corollary}.
  \cref{Infl item} is \cref{Infl BV,Gamma alg,Infl triv,En corollary}.
  \cref{Env item} is \cref{BV Env corollary}. 
\end{proof}

\subsection*{Notation and conventions}
We assume that the reader is familiar with the technology of higher category theory and higher algebra as developed in \cite{HTT} and \cite[\S~2-3]{HA}, though we encourage the reader to engage with such technologies via a ``big picture'' perspective akin to that of \cite[\S~1-2]{Gepner_HA} and \cite[\S~1-3]{Haugseng}. 
We will generally use the term \emph{replete subcategory inclusion} to refer to functors $F\colon \cC \rightarrow \cD$ whose core $F^{\simeq}\colon  \cC^{\simeq} \rightarrow \cD^{\simeq}$ is a summand inclusion and whose effect on mapping spaces $F\colon \Map(X,Y) \rightarrow \Map(FX,FY)$ is a summand inclusion for each $X,Y \in \cC$. 

\subsection*{Acknowledgements}
I would like to thank Jeremy Hahn for suggesting the problem of \emph{constructing equivariant multiplications on $\BPR$}, whose (ongoing) work necessitated many of the results on equivariant Boardman-Vogt tensor products developed in this paper;
Additionally, I would like to thank Clark Barwick, Dhilan Lahoti, Piotr Pstr\k{a}gowski, Maxime Ramzi, and Andy Senger, with whom I had many helpful conversations about equivariant homotopy theory and algebra.
Of course, none of this work would be possible without the help of my advisor, Mike Hopkins, who I'd like to thank for many helpful conversations.

While developing this material, the author was supported by the NSF Grant No. DGE 2140743.

 \resumetocwriting 
\section{Equivariant symmetric monoidal categories}
In this section, we review and advance the equivariant $\infty$-category theory of \emph{homotopical incomplete (semi)-Mackey functors} for a weak indexing system $I$, which we call \emph{$I$-commutative monoids}.
To that end, we begin in \cref{T-categories subsection} by reviewing our equivariant higher categorical setup.
We go on to cite and prove some basic facts about $I$-commutative monoids in \cref{I-commutative monoids subsection}.
In \cref{Canonical SMC section} we then endow the $\cT$-$\infty$-category of $I$-commutative monoids with its \emph{mode} symmetric monoidal structure, and prove that this is uniquely determined as a presentable symmetric monoidal structure by the free functor from coefficient systems;
we use this to identify the resulting symmetric monoidal structure with the \emph{localized Day convolution structure}. 
Following this, in \cref{Truncations of SMCs subsubsection} we quickly develop a framework for $\cT$-symmetric monoidal $d$-categories. 

\subsection{Recollections on \texorpdfstring{$\cT$-$\infty$}{T-infinity}-categories}\label{T-categories subsection} 
We center on the following definition.
\begin{definition}
  An $\infty$-category $\cT$ is
  \begin{enumerate}
      \item \emph{orbital} if the finite coproduct completion $\FF_{\cT} := \cT^{\coprod}$ has all pullbacks, and
      \item \emph{atomic orbital} if it is orbital and every map in $\cT$ possessing a section is an equivalence.\qedhere
  \end{enumerate}
\end{definition}
We view the setting of atomic orbital $\infty$-categories as a natural axiomatic home for higher algebra centered around the Burnside category (see \cite[\S~4]{Nardin-Stable}), generalizing the orbit categories of a finite group.
The reader who is exclusively interested in equivariant homotopy theory is encouraged to assume every atomic orbital $\infty$-category is the orbit category of a family of subgroups of a finite group.
\begin{definition}
    Let $\cT$ be an $\infty$-category.
    Then, a full subcategory $\cF \subset \cT$ is a \emph{$\cT$-family} if whenever $V \in \cF$ and $W \rightarrow V$ is a map, we have $W \in \cF$.\footnote{These are named \emph{families} after subconjugacy closed families of subgroups, which frequently occur in equivariant homotopy; these are referred to as \emph{sieves} in \cite{Blumberg-op,Nardin} and \emph{upwards-closed subcategories} in \cite{Glasman}.}
    The poset of $\cT$-families under inclusion is denoted $\Fam_{\cT}$.

    Similarly, a full subcategory $\cF \subset \cT$ is a \emph{$\cT$-cofamily} if its opposite $\cF^{\op} \subset \cT^{\op}$ is a $\cT^{\op}$-family.
\end{definition}
Temporarily fix $G$ be a topological group, let $\cS_G$ be the $\infty$-category of $G$-spaces, and let $\cO_G \subset \cS_G$ be the full subcategory spanned by homogeneous $G$-spaces $[G/H]$, where $H \subset G$ is a closed subgroup.
\begin{example}
  The full subcategory $BG \subset \cO_G$ is a family, and the contractible full subcategory $\cbr{[G/G]} \hookrightarrow \cO_G$ is a cofamily.
  More generally, if $\cT$ is an $\infty$-category and $V \in \cT$ an object, then the full subcategory $\cT_{\geq V} \subset \cT$ consisting of objects admitting a map to $V$ is a family and the full subcategory $\cT_{\leq V} \subset \cT$ of objects admitting a map from $V$ is a cofamily.
\end{example}

\begin{example}
  The following are all atomic orbital $\infty$-categories (see \cite{Windex}).
  \begin{enumerate}
    \item The full subcategory $\cO_G^{fin} \subset \cO_G$ spanned by $[G/H]$ for $H$ finite.
    \item The wide subcategory $\cO_G^{f.i.} \subset \cO_G$ whose morphisms are projections $[G/K] \rightarrow [G/H]$ for $K \subset H$ finite index inclusion of closed subgroups.
    \item $X$ a space, considered as an $\infty$-category.
    \item $P$ a meet semilattice.
    \item If $\cT$ is an atomic orbital $\infty$-category, $\ho(T)$.
    \item If $\cT$ is an atomic orbital $\infty$-category, $\cF \subset \cT$ a full subcategory satisfying the following conditions:
      \begin{enumerate}[label={(\alph*)}]
        \item For all $U,W \in \cF$ and paths $U \rightarrow V \rightarrow W$ in $\cT$, $V \in \cF$.
        \item For all $U,W \in \cF$ and cospans $U \rightarrow V \leftarrow W$ in $\cT$, there is a span $U \leftarrow V' \rightarrow V$ in $\cF$.
      \end{enumerate}
      For instance, $\cF$ may be the intersection of a family and a cofamily whose connected components have weakly initial objects, such as $\cT_{\leq V}$ or $\cT_{\geq V}$.
    \item If $\cT$ is an atomic orbital $\infty$-category and $V \in \cT$, the $\infty$-category $\cT_{/V}$.\qedhere
  \end{enumerate}
\end{example}

In this section, we briefly summarize some relevant elements of parameterized and equivariant higher category theory in the setting of atomic orbital $\infty$-categories.
Of course, this theory has advanced far past that which is summarized here; for instance, further details can be found in the work of Barwick-Dotto-Glasman-Nardin-Shah \cite{Barwick_Intro, Barwick-Parameterized, Shah, Shah2, Nardin-Stable}, Cnossen-Lenz-Linskens \cite{Cnossen_stable, Cnossen_partial, Cnossen_semiadditive,Linskens,Linskens_stable}, Hilman \cite{Hilman}, and Martini-Wolf \cite{Martini_yoneda, Martini_cocompletions, Martini_straightening, Martini_presentable, Martini_topos}.

\subsubsection{The $\cT$-$\infty$-category of small $\cT$-$\infty$-categories}
We are motivated by the following.
\begin{example}
  Let $G$ be a finite group, $\cF \subset \cO_G$ a family, and $\cS_{\cF}$ be the $\infty$-category of $\cF$-spaces, constructed e.g. by inverting $\cF$-weak equivalences between topological $G$-spaces.
  Then, a version of Elmendorf's theorem \cite{Elmendorf} for families \cite[Thm~3.1]{Dwyer_Kan} states that the \emph{total $\cF$-fixed points} functor yields an equivalence
  \[
    \cS_{\cF} \simeq \Fun(\cF^{\op}, \cS).\qedhere
  \]
\end{example}
We extend this via the following definition.
\begin{definition}
  The \emph{$\infty$-category of small $\cT$-$\infty$-categories} is
  \[
    \Cat_{\cT} \deq \Fun(\cT^{\op},\Cat),
  \] 
  where $\Cat$ is the $\infty$-category of small $\infty$-categories.
  If $\widehat \Cat$ is the (very large) $\infty$-category of \emph{arbitrary} $\infty$-categories, then the \emph{very large $\infty$-category of $\cT$-$\infty$-categories} is
  \[
    \widehat \Cat_{\cT} \deq \Fun\prn{\cT^{\op}, \widehat \Cat}.\qedhere 
  \]
\end{definition}

\begin{notation}
    Fix $\cC \in \Cat_{\cT}$.
    We refer to the value of $\cC$ at $V \in \cT^{\op}$ as the \emph{$V$-value category of $\cC$}, written as $\cC_V$;
    given $f\colon V \rightarrow W$, we refer to the associated functor as \emph{restriction}
    \[
        \Res_V^W\cln \cC_W \rightarrow \cC_V.\qedhere
    \]
\end{notation}

\begin{remark}
  We show in \cref{Product pattern example} that $\Cat_{\cT}$ is equivalently presented as \emph{complete Segal objects} in the $\infty$-topos 
  \begin{equation}\label{T-spaces equation}
    \cS_{\cT} \deq \Fun(\cT^{\op},\cS).\qedhere
  \end{equation}
\end{remark}

\begin{remark}
    The Grothendieck construction, imported to $\infty$-category theory as the straightening unstraightening equivalence in \cite[Thm~3.2.0.1]{HTT}, produces an equivalence
    \[
        \Cat_{\cT} \simeq \Cat^{\cocart}_{/\cT^{\op}},
    \]
    the latter denoting the (non-full) subcategory of $\Cat_{/\cT^{\op}}$ whose objects are cocartesian fibrations and whose morphisms are functors over $\cT^{\op}$ which preserve cocartesian arrows.
    Under this identification, the fiber of $\Tot \cC \rightarrow \cT^{\op}$ over $V$ is identified with the $V$-value $\cC_V$ and the restriction functors are identified with cocartesian transport, where $\Tot$ denotes the total $\infty$-category of the unstraightening.
\end{remark}

Given $\cC,\cD$ a pair of $\cT$-$\infty$-categories, we may define the \emph{$\cT$-functor category} to be the full subcategory
\[
    \Fun_{\cT}(\cC,\cD) \deq \Fun^{\cocart}_{/\cT^{\op}}\prn{\cC,\cD} \subset  \Fun_{/\cT^{\op}}(\cC,\cD)
\]
consisting of functors over $\cT^{\op}$ which preserve cocartesian lifts of the structure maps.
\begin{example}
  For any object $V \in \cT$, the forgetful functor $\prn{\cT_{/V}}^{\op} \rightarrow \cT^{\op}$ is a cocartesian fibration classified by the representable presheaf $\Map_{\cT}(-,V)$.
    We refer to the associated $\cT$-$\infty$-category as $\uV$.
    This is covariantly functorial in $V$, since postcomposition yields functors $f_!\cln \cT_{/V} \rightarrow \cT_{/W}$ for all maps $f\cln V \rightarrow W$.
\end{example}
The representable $\cT$-categories are particularly nice in the atomic orbital setting.
\begin{proposition}[{\cite[Prop~2.5.1]{Nardin}}]\label{1-category}
    If an atomic orbital $\infty$-category $\cT$ has a terminal object, then it is a 1-category;
    in particular, $\cT_{/V}$ is a 1-category.\footnote{To see this, note that this is equivalent to the condition that the (split) diagonal map $U \rightarrow U \times U$ is an equivalence, which follows from the atomic assumption.}
\end{proposition}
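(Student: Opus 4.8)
The plan is to show directly that every mapping space $\Map_{\cT}(X,Y)$ with $X, Y \in \cT$ is discrete, which by our conventions is precisely the assertion that $\cT$ is a $1$-category. First I would set up the combinatorics: since $\cT$ has a terminal object $\ast$ and maps out of a coproduct in $\FF_{\cT} := \cT^{\coprod}$ are determined summandwise, $\ast$ remains terminal in $\FF_{\cT}$; together with orbitality (which provides pullbacks in $\FF_{\cT}$) this shows that $\FF_{\cT}$ has all finite limits, in particular binary products. Now I would reduce via the Yoneda lemma: for fixed $Y \in \cT$, the diagonal $\Delta_Y \colon Y \to Y \times Y$ taken in $\FF_{\cT}$ induces, for each $X$, the diagonal $\Map_{\cT}(X,Y) \to \Map_{\cT}(X,Y)^{\times 2}$ under the identification $\Map_{\FF_{\cT}}(X, Y\times Y) \simeq \Map_{\cT}(X,Y)^{\times 2}$; hence $\Map_{\cT}(X,Y)$ is $0$-truncated for all $X$ if and only if $\Delta_Y$ is a monomorphism in $\FF_{\cT}$, and it suffices to establish the latter for every $Y \in \cT$.

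Next I would run the single key step, which is where atomicity enters. Write $Y \times Y \simeq \coprod_{j \in J} V_j$ as a finite coproduct of objects $V_j \in \cT$ --- every object of $\cT^{\coprod}$ is of this form. Because $Y$ is connected in $\cT^{\coprod}$, i.e. $\Map_{\FF_{\cT}}(Y, \coprod_j V_j) \simeq \coprod_j \Map_{\cT}(Y, V_j)$, the diagonal factors through a single summand: $\Delta_Y \simeq \iota_{j_0} \circ \delta$ for some morphism $\delta \colon Y \to V_{j_0}$ of $\cT$. Post-composing with the first projection gives $(\mathrm{pr}_1 \circ \iota_{j_0}) \circ \delta \simeq \mathrm{id}_Y$, so the morphism $\mathrm{pr}_1 \circ \iota_{j_0} \colon V_{j_0} \to Y$ of $\cT$ admits a section and is therefore an equivalence by the atomic hypothesis; hence so is $\delta$, and $\Delta_Y$ is the composite of an equivalence with a summand inclusion, in particular a monomorphism. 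Since this holds for every $Y \in \cT$, we conclude that $\cT$ is a $1$-category. For the ``in particular'' clause, $\cT_{/V}$ is again atomic orbital --- a morphism in $\cT_{/V}$ admitting a section has underlying morphism in $\cT$ admitting a section, hence is an equivalence, and $\FF_{\cT_{/V}} \simeq (\FF_{\cT})_{/V}$ inherits pullbacks --- and it has the terminal object $\mathrm{id}_V$, so the result just proved applies to it.

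I do not expect a genuine obstacle here; the only point requiring care is the formal manipulation of the coproduct completion --- that $\ast$ stays terminal in $\FF_{\cT}$, that binary products of objects of $\cT$ decompose into finite coproducts of objects of $\cT$, and that objects of $\cT$ are connected in $\cT^{\coprod}$ so that the diagonal localizes onto a single summand --- all of which are immediate from the universal property of $\cT^{\coprod}$. Given these, the proof is a single invocation of atomicity.
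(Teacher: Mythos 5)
Your proof is correct and is the argument the paper's footnote compresses to one line: the diagonal $\Delta_Y$ in $\FF_{\cT}$ factors through a single orbit since $Y$ is connected, atomicity makes the factored map $\delta$ (having the projection as a retraction) an equivalence, and hence $\Delta_Y$ is a monomorphism. If anything your write-up is more precise than the footnote, which loosely calls the (split) diagonal itself ``an equivalence''; as you correctly identify, it is $\delta$ that is the equivalence, and the condition on $\Delta_Y$ that characterizes $\cT$ being a $1$-category is that it be a monomorphism.
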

\begin{remark}
  \cref{1-category} provides an easy verification that $\cO_G$ is not atomic orbital when $\dim G > 0$;
  $\cO_G$ has a terminal object $[G/G]$, but it is not a 1-category, as $\End([G/e]) \simeq G$ is not discrete.
\end{remark}
These play an important role in equivariant higher category theory.
\begin{notation}
  Given $\cC$ a $\cT$-$\infty$-category, we define the \emph{restricted} $\cT_{/V}$-category by 
  \[
    \Res_V^{\cT} \deq \cC_{\uV} \deq \cC \times_{\cT^{\op}} \prn{\cT_{/V}}^{\op}.\qedhere 
  \]
\end{notation}
\begin{proposition}[{\cite[Thm~9.7]{Barwick-Parameterized}}]
  $\Cat_{\cT}$ has exponential objects $\uFun_{\cT}(\cC,\cD)$ classified by the functor
  \[
    V \mapsto \Fun_{\cT_{/V}}\prn{\cC_{\uV}, \cD_{\uV}}.
  \]
\end{proposition}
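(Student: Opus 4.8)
The plan is to get cartesian closedness for free and then compute the internal hom explicitly. Since $\Cat_{\cT} = \Fun(\cT^{\op},\Cat)$ is a functor $\infty$-category into the presentable cartesian closed $\infty$-category $\Cat$, it is presentable and its products are computed objectwise; hence for each $\cC \in \Cat_{\cT}$ the functor $-\times\cC$ preserves all colimits (colimits and products being both objectwise, and $-\times\cD$ being a left adjoint in $\Cat$ for every $\cD$), so it admits a right adjoint by the adjoint functor theorem. I would take this right adjoint as the definition of $\uFun_{\cT}(\cC,-)$; this already shows $\Cat_{\cT}$ is cartesian closed, and all that remains is to identify the value category $\uFun_{\cT}(\cC,\cD)_V$ for $V \in \cT$.

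Next I would compute that value through the $\cT_{/V}$-functor $\infty$-category of the excerpt. Evaluation $\ev_V \colon \Cat_{\cT}\to\Cat$ is restriction along $\cbr{V}\hookrightarrow\cT^{\op}$, so its left adjoint is the left Kan extension, which one checks sends $X$ to $\mathrm{const}_X\times\uV$. Feeding this through the defining adjunction of $\uFun_{\cT}(\cC,-)$ together with the canonical $\Cat$-tensoring on $\Fun(\cT^{\op},\Cat)$ — under which $\Fun_{\cT}(\cA,\cB)$ is the cotensor, i.e. $\Map_{\Cat}(X,\Fun_{\cT}(\cA,\cB)) \simeq \Map_{\Cat_{\cT}}(\mathrm{const}_X\times\cA,\cB)$, a formal consequence of cartesian closedness of $\Cat$ — produces a chain of natural equivalences exhibiting $\Map_{\Cat}(X,\uFun_{\cT}(\cC,\cD)_V) \simeq \Map_{\Cat}(X,\Fun_{\cT}(\uV\times\cC,\cD))$ for all $X \in \Cat$. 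The Yoneda lemma for $\Cat$ then gives $\uFun_{\cT}(\cC,\cD)_V \simeq \Fun_{\cT}(\uV\times\cC,\cD)$, naturally in $V$.

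It then remains to identify $\Fun_{\cT}(\uV\times\cC,\cD)\simeq\Fun_{\cT_{/V}}(\cC_{\uV},\cD_{\uV})$. The total $\infty$-category of $\uV$ is the left fibration $\prn{\cT_{/V}}^{\op}\to\cT^{\op}$, and since products of $\cT$-$\infty$-categories unstraighten to fiber products over $\cT^{\op}$, one has $\Tot(\uV\times\cC)\simeq\prn{\cT_{/V}}^{\op}\times_{\cT^{\op}}\Tot\cC = \Tot_{\cT_{/V}}(\cC_{\uV})$; because $\prn{\cT_{/V}}^{\op}\to\cT^{\op}$ is a left fibration, an edge of this total category is cocartesian over $\cT^{\op}$ exactly when it is cocartesian over $\prn{\cT_{/V}}^{\op}$. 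I would then observe that a functor $\Tot_{\cT_{/V}}(\cC_{\uV})\to\Tot\cD$ over $\cT^{\op}$ factors uniquely through the pullback $\Tot_{\cT_{/V}}(\cD_{\uV}) = \prn{\cT_{/V}}^{\op}\times_{\cT^{\op}}\Tot\cD$ and matches cocartesian edges on the nose, which yields the desired equivalence and hence the stated classification.

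The step I expect to be the main obstacle is coherence rather than any single computation: each identification above (the $\ev_V$-adjunction, the tensor–cotensor adjunction, and the pullback manipulation of total categories) is individually natural in $V$, but assembling $V\mapsto\Fun_{\cT_{/V}}(\cC_{\uV},\cD_{\uV})$ into a genuine object of $\Cat_{\cT}$ compatibly with all of these requires carrying the naturality data simultaneously. The cleanest route is to work throughout with the unstraightened models, exploiting the functoriality of $V\mapsto\prn{\cT_{/V}}^{\op}$ and $V\mapsto\uV$ recalled in the text; alternatively one packages it via the equivalence $\prn{\Cat_{\cT}}_{/\uV}\simeq\Cat_{\cT_{/V}}$. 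This is of course \cite[Thm~9.7]{Barwick-Parameterized}, to which we defer for the full bookkeeping.
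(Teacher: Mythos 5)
The paper does not give a proof of this proposition; it is cited directly from \cite[Thm~9.7]{Barwick-Parameterized}, so there is no internal argument to compare yours against. Evaluated on its own terms, your proposal is correct and is a sensible high-level reconstruction of the result.

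Your decomposition is sound: presentability of $\Cat_{\cT} = \Fun(\cT^{\op},\Cat)$ plus the fact that products are computed objectwise gives a right adjoint $\uFun_{\cT}(\cC,-)$ to $-\times\cC$ by the adjoint functor theorem; evaluating it at $V$ via the left Kan extension adjunction $\Infl_e^{\cT}(-)\times\uV \dashv \ev_V$ together with the cotensor identity yields $\uFun_{\cT}(\cC,\cD)_V \simeq \Fun_{\cT}(\uV\times\cC,\cD)$; and passing to total categories over the left fibration $(\cT_{/V})^{\op}\to\cT^{\op}$ converts this to $\Fun_{\cT_{/V}}(\cC_{\uV},\cD_{\uV})$, which is exactly the slice identification $\Cat_{\cT,/\uV}\simeq\Cat_{\cT_{/V}}$ transported through the internal hom. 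You are right to flag the coherent assembly of $V\mapsto\Fun_{\cT_{/V}}(\cC_{\uV},\cD_{\uV})$ into a genuine object of $\Cat_{\cT}$ as the nontrivial remaining bookkeeping; that is indeed the substance of what the cited theorem packages, and working with unstraightened total categories and the functoriality of $V\mapsto(\cT_{/V})^{\op}$ is the standard way through.

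One small point is worth making explicit rather than absorbing into "a formal consequence of cartesian closedness of $\Cat$": the cotensor identity $\Map_{\Cat}\prn{X,\Fun_{\cT}(\cA,\cB)} \simeq \Map_{\Cat_{\cT}}\prn{\Infl_e^{\cT}X\times\cA,\cB}$ uses that $\Fun_{\cT}(\cA,\cB) = \Fun^{\cocart}_{/\cT^{\op}}(\Tot\cA,\Tot\cB)$ actually computes the $\Cat$-enriched hom in $\Fun(\cT^{\op},\Cat)$, i.e.\ that straightening $\Cat_{\cT}\simeq\Cat^{\cocart}_{/\cT^{\op}}$ is compatible with the $\Cat$-enrichment and that this enriched hom is the expected end $\int_W \Fun(\cC_W,\cD_W)$. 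That compatibility is standard but is itself a nontrivial input, not pure formality.
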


We refer to monomorphisms\footnote{Following \cite[\S~5.5.6]{HTT}, we refer to a morphism $X \rightarrow Y$ in $\cC$ as a \emph{monomorphism} if the canonical map $X \rightarrow X \times_Y X$ is an equivalence, or equivalently, if the pullback functor $f^*\colon \cC_{/Y} \rightarrow \cC_{/Y}$ is fully faithful.} in $\Cat_{\cT}$ as \emph{$\cT$-subcategories}, and $\cT$-functors which are fiberwise-fully faithful as \emph{full $\cT$-subcategories}, or \emph{$\cT$-fully faithful functors}.
\begin{observation}\label{Subcategories are fiberwise observation}
  By the fiberwise expression for limits in functor categories (c.f. \cite[Cor~5.1.2.3]{HTT}), a $\cT$-functor $F\colon \cC \rightarrow \cD$ is a $\cT$-subcategory inclusion if and only if $F_V\colon \cC_V \rightarrow \cD_V$ is a subcategory inclusion for all $V \in \cT$.
\end{observation}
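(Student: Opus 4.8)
The plan is to unwind the definition of a $\cT$-subcategory — a monomorphism in $\Cat_{\cT}$ — and to exploit that $\Cat_{\cT} = \Fun(\cT^{\op},\Cat)$ is a functor $\infty$-category, in which both limits and equivalences are detected objectwise.

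First I would recall that, by the footnote's definition of monomorphism, $F\colon \cC \rightarrow \cD$ is a $\cT$-subcategory inclusion if and only if the canonical map $\cC \rightarrow \cC \times_{\cD} \cC$ is an equivalence in $\Cat_{\cT}$. Since $\Cat_{\cT}$ is an $\infty$-category of $\Cat$-valued presheaves on $\cT^{\op}$, the pullback $\cC \times_{\cD} \cC$ is computed objectwise by \cite[Cor~5.1.2.3]{HTT}: for each $V \in \cT$ there is a natural equivalence $(\cC \times_{\cD} \cC)_V \simeq \cC_V \times_{\cD_V} \cC_V$ under which the canonical map is carried to the analogous comparison map $\cC_V \rightarrow \cC_V \times_{\cD_V} \cC_V$ attached to $F_V$. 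Next I would recall that a natural transformation in a functor $\infty$-category is an equivalence if and only if it is an equivalence on each object — again a formal consequence of the objectwise computation of mapping spaces, hence of cores, in $\Fun(\cT^{\op},\Cat)$ (equivalently, \cite[Cor~5.1.2.3]{HTT} applied to the arrow category).

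Combining these, the map $\cC \rightarrow \cC \times_{\cD} \cC$ is an equivalence if and only if each $\cC_V \rightarrow \cC_V \times_{\cD_V} \cC_V$ is an equivalence, i.e. if and only if each $F_V$ is a monomorphism in $\Cat$ — which, by the conventions of the excerpt, is exactly the assertion that each $F_V$ is a subcategory inclusion. There is essentially no obstacle: the content is entirely the objectwise behavior of limits and equivalences in presheaf $\infty$-categories. The only point meriting a line of care is checking that the objectwise identification of the pullback is compatible with the diagonal comparison maps, which is immediate from the naturality of the equivalence supplied by the cited corollary.
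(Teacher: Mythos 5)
Your argument is correct and matches the paper's intended reasoning: the paper's Observation is itself proved by citing exactly the fiberwise computation of limits in functor $\infty$-categories from \cite[Cor~5.1.2.3]{HTT}, together with the (implicit) fact that equivalences in $\Fun(\cT^{\op},\Cat)$ are detected objectwise. You have merely spelled out the one-line justification — recalling that monomorphism means $\cC \rightarrow \cC \times_{\cD} \cC$ is an equivalence, computing the pullback objectwise, and checking equivalences objectwise — so the two arguments coincide.
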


\begin{example}
  The terminal $\cT$-$\infty$-category $\uAst_{\cT}$ is classified by the constant functor $V \mapsto *$.
  The poset of \emph{sub-terminal objects in $\Cat_{\cT}$} (i.e. monomorphisms with codomain $\uAst_{\cT}$) is isomorphic to $\Fam_{\cT}$; the $\cT$-$\infty$-category $\uAst_{\cF}$ associated with $\cF$ is determined by the values
    \[
        \uAst_{\cF, V} \simeq \begin{cases}
            * & V \in \cF;\\
            \emptyset & \mathrm{otherwise}.
        \end{cases}
        \qedhere
    \]
    In fact, the ``$\infty$-groupoid'' inclusion $\cS \hookrightarrow \Cat$ induces an inclusion $\cS_{\cT} \hookrightarrow \Cat_{\cT}$ sending the universal space $E\cF$ to $\uAst_{\cF}$.
\end{example}

The $\infty$-category $\Cat_{\cT}$ participates in an adjunction
\[
  \Tot\cln \Cat_{\cT} \longrightleftarrows \Cat\cln \uCoFr^{\cT}
\]
whose left adjoint $\Tot$ is the total category of cocartesian fibrations, and whose right adjoint has $V$-value
\[
  (\uCoFr^{\cT} \cC)_V \simeq \Fun\prn{\prn{\cT_{/V}}^{\op},\cC}
\]
where the functoriality on $f$ is given by $\prn{f_!}^*$ \cite[Thm~7.8]{Barwick-Parameterized}.
We refer to $\uCoFr^{\cT}$ as the \emph{$\cT$-$\infty$-category of coefficient systems in $\cC$}.\footnote{These are referred to as the \emph{cofree} parameterization $\underline{\mathrm{CoFree}}(\cC)$ in \cite{Hilman} and as the \emph{$\cT$-$\infty$-category of $\cT$-objects} $\underline{\cC}_{\cT}$ in \cite{Nardin_thesis}.
  We avoid the former for clarity (as we do not view $\Tot$ as a forgetful functor), and we avoid the latter as it conflicts with the $\cT$-$\infty$-category of $\cT$-spectra $\uSp_{\cT}$;
  instead, our name is chosen to evoke the \emph{coefficient systems} used in equivariant cohomology.}
\begin{example}
  There is an equivalence $\uAst_{\cT} = \uCoFr^{\cT} * \in \Cat_{\cT}$ since right adjoints preserve terminal objects.
\end{example}
We may additionally construct the \emph{associated $\infty$-category}
\[
    \Gamma^{\cT} \cC \deq \Fun_{\cT}(\uAst,\cC),
\]
whose objects consist of cocartesian sections of the structure functor $\cC \rightarrow \cT^{\op}$.
We refer to this as the \emph{$\infty$-category of $\cT$-objects in $\cC$}.
For instance, if $\cT$ has a terminal object $V$, \cite[Lemma~2.12]{Barwick-Parameterized} shows that we have an equivalence
\[
  \Gamma^{\cT} \cC \simeq \cC_{V};
\]
more generally, this implies that $\Gamma^{\cT} \cC \simeq \lim_{V \in \cT^{\op}} \cC_V$, i.e. it is the \emph{$\cT$-fixed points} (or the limit of $\cC$ viewed as a $\cT^{\op}$ functor). 
Defining the \emph{$\cT$-inflation} to have $V$-values
\[
    \prn{\Infl_e^{\cT} \cD}_V \deq \cD
\]
for any $\cD \in \Cat$ and $V \in \cT$, the adjunction between limits and diagonals immediately yields the following.
\begin{proposition}\label{Gamma adjunction proposition}
  The functor $\Infl_e^{\cT}\cln \Cat \rightarrow \Cat_{\cT}$ is left adjoint to $\Gamma^{\cT}\cln \Cat_{\cT} \rightarrow \Cat$.
\end{proposition}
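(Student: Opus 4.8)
The plan is to reduce \cref{Gamma adjunction proposition} to the standard adjunction $\operatorname{colim} \dashv q^* \dashv \lim$ attached to the projection $q\cln \cT^{\op} \to \Delta^0$, after matching $\Infl_e^{\cT}$ with the constant-diagram functor $q^*$ and $\Gamma^{\cT}$ with the limit functor $\lim_{\cT^{\op}}\cln \Fun(\cT^{\op},\Cat) \to \Cat$.

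First I would unwind the definition of $\Infl_e^{\cT}$: since $\prn{\Infl_e^{\cT}\cD}_V = \cD$ with every restriction functor the identity, $\Infl_e^{\cT}\cD$ is the constant presheaf $q^*\cD$, naturally in $\cD \in \Cat$, so $\Infl_e^{\cT} \simeq q^*$. Next I would identify $\Gamma^{\cT}$ with $\lim_{\cT^{\op}}$: by definition $\Gamma^{\cT}\cC = \Fun_{\cT}(\uAst_{\cT},\cC)$ is the $\infty$-category of cocartesian sections of the cocartesian fibration $\Tot\cC \to \cT^{\op}$ classified by $\cC$, and under straightening--unstraightening the cocartesian sections of a cocartesian fibration over $\cT^{\op}$ are identified with $\lim_{V \in \cT^{\op}}\cC_V$. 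This identification is moreover natural in $\cC$: a morphism in $\Cat_{\cT}$ is precisely a natural transformation of the straightened diagrams, and post-composition of cocartesian sections corresponds to the induced map of limits. (For fixed $\cC$ this was recorded in the discussion preceding the proposition, following \cite[Lemma~2.12]{Barwick-Parameterized}; the naturality statement is part of the standard straightening comparison, cf.\ \cite{HTT}.) Hence $\Gamma^{\cT} \simeq \lim_{\cT^{\op}}$ as functors $\Cat_{\cT} \to \Cat$.

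Finally, since $\Cat$ admits all small limits, right Kan extension along $q$ exists and computes $\cT^{\op}$-indexed limits, so $q_* = \lim_{\cT^{\op}}$ is right adjoint to $q^*$ (see \cite{HTT}); transporting this adjunction along the equivalences $\Infl_e^{\cT} \simeq q^*$ and $\Gamma^{\cT} \simeq q_*$ yields $\Infl_e^{\cT} \dashv \Gamma^{\cT}$.

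The one place with actual content is the naturality of the identification $\Gamma^{\cT}\cC \simeq \lim_{V \in \cT^{\op}}\cC_V$ in $\cC$; everything else is formal. If one prefers to avoid citing naturality of the straightening comparison, one can instead exhibit the unit $\cD \to \Gamma^{\cT}\Infl_e^{\cT}\cD$ as the canonical comparison map $\cD \to \lim_{\cT^{\op}}\mathrm{const}_{\cD}$ and check the triangle identities directly --- but this comes down to the same computation.
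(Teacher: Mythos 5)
Your proof is correct and is essentially the same as the paper's: the paper asserts in one line that the result follows from "the adjunction between limits and diagonals," which is exactly the adjunction $q^* \dashv q_*$ you unpack. You simply make explicit the two identifications $\Infl_e^{\cT} \simeq q^*$ and $\Gamma^{\cT} \simeq \lim_{\cT^{\op}}$ (the latter already recorded in the paper via \cite[Lemma~2.12]{Barwick-Parameterized}) and note the naturality point that the paper leaves implicit.
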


Using this adjunction, given $\cC \in \Cat$, we define the $\infty$-category
\[
  \CoFr^{\cT} \cC \deq \Gamma^{\cT} \uCoFr^{\cT} \cC \simeq \Fun(\cT^{\op},\cC);
\]
then, we have $\Cat_{\cT} = \CoFr^{\cT} \Cat$, and Elmendorf's theorem states that $\cS_G \simeq \CoFr^{\cO_G} \cS$, motivating the following.

\begin{definition}
    The \emph{$\cT$-$\infty$-category of small $\cT$-$\infty$-categories} is $\uCat_{\cT} \deq \uCoFr^{\cT}(\Cat)$;
    the $\cT$-$\infty$-category of $\cT$-spaces is $\ucS_{\cT} \deq \uCoFr^{\cT}(\cT)$, and the $\infty$-category of $\cT$-spaces is $\cS_{\cT} \deq \CoFr^{\cT}(\cS) \simeq \Gamma^{\cT} \ucS_{\cT}$.
\end{definition}

\begin{observation}
    The \emph{$V$-value of $\uCat_{\cT}$} is $\prn{\uCat_{\cT}}_V = \Cat_{\cT_{/V}}$;
    we henceforth refer to this as $\Cat_V$.
    The restriction functor $\Res_V^W\colon \Cat_W \rightarrow \Cat_V$ is presented from the perspective of cocartesian fibrations by the pullback
    \[
        \begin{tikzcd}
            \Res^V_W \cC \arrow[r] \arrow[d] \arrow[dr,"\lrcorner" very near start, phantom]
            & \cC \arrow[d]\\
            \prn{\cT_{/V}}^{\op} \arrow[r] 
            & \prn{\cT_{/W}}^{\op}
        \end{tikzcd}
    \]
    In particular, given a map $U \rightarrow V \rightarrow W$, abusively referring to $(U \rightarrow V) \in \cT_{/V}$ as $U$, this is characterized by the formula
    \[
      \prn{\Res_W^V \cC}_U \simeq \cC_U.\qedhere
    \]
\end{observation}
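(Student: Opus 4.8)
The plan is to unwind the definition $\uCat_{\cT} \deq \uCoFr^{\cT}(\Cat)$ and then transport the resulting formulas across the straightening--unstraightening equivalence. First I would invoke the cited description of $\uCoFr^{\cT}$: its $V$-value is $\prn{\uCoFr^{\cT}\cC}_V \simeq \Fun\prn{\prn{\cT_{/V}}^{\op},\cC}$, with restriction along a map $f\cln V \to W$ given by $\prn{f_!}^*$, where $f_!\cln \cT_{/V} \to \cT_{/W}$ is postcomposition with $f$. Specializing to $\cC = \Cat$ and recalling that $\cT_{/V}$ is again atomic orbital (one of the examples listed above, so that the notation $\Cat_{\cT_{/V}}$ is defined), the right-hand side is by definition $\Fun\prn{\prn{\cT_{/V}}^{\op},\Cat} = \Cat_{\cT_{/V}}$, which gives the first claim, and exhibits $\Res_V^W$ as the precomposition functor $\prn{f_!}^*$.

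Next I would pass to cocartesian fibrations. Under the equivalence $\Cat_{\cS} \simeq \Cat^{\cocart}_{/\cS^{\op}}$ (valid for any small $\infty$-category $\cS$, in particular for $\cT_{/V}$ and $\cT_{/W}$), the object $\cC \in \Cat_{\cT_{/W}}$ corresponds to a cocartesian fibration $\cC \to \prn{\cT_{/W}}^{\op}$, and precomposition of its classifying functor with $\prn{f_!}^{\op}$ corresponds---since cocartesian fibrations are stable under base change---to the pullback $\cC \times_{\prn{\cT_{/W}}^{\op}} \prn{\cT_{/V}}^{\op}$ along $\prn{f_!}^{\op}\cln \prn{\cT_{/V}}^{\op} \to \prn{\cT_{/W}}^{\op}$. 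The bottom arrow in the square displayed in the statement is precisely this $\prn{f_!}^{\op}$ (it sends $(U \to V)$ to the composite $(U \to V \to W)$), so this identifies $\Res_V^W$ with the displayed pullback, proving the second claim.

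Finally, the $U$-value formula is then immediate: for $U = (U \to V) \in \cT_{/V}$, evaluation of a presheaf at an object commutes with precomposition, so $\prn{\prn{f_!}^*\cC}_U \simeq \cC_{f_!(U)}$, and $f_!(U)$ is the composite $(U \to V \to W)$, which in the abusive notation of the statement is written $\cC_U$; equivalently, the fibre of the displayed pullback over $U$ is the fibre of $\cC$ over $f_!(U)$. The whole argument is formal bookkeeping and there is no real obstacle; the one ingredient I would cite rather than reprove is the compatibility of unstraightening with base change (as in \cite{HTT}), which is what licenses the passage between ``precompose the classifying functor'' and ``pull back the cocartesian fibration''.
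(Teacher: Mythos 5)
Your argument is correct and is precisely the unwinding one would do: the paper states this as an \emph{observation} with no proof, and the two cited facts you use—the fiberwise description of $\uCoFr^{\cT}$ with restriction $(f_!)^*$ from \cite[Thm~7.8]{Barwick-Parameterized}, and compatibility of unstraightening with base change—are exactly what the paper expects the reader to supply. No gap; the proposal matches the paper's intent.
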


\subsubsection{Join, slice, and (co)limits} 
We now summarize some elements of \cite{Shah,Shah2}.
\begin{definition}[{\cite[Def~4.1]{Shah2}}]
  Let $\iota\cln \cT^{\op} \times \partial \Delta^1 \hookrightarrow \cT^{\op} \times \Delta^1$ be the evident inclusion.
  Then, the \emph{$\cT$-join} is the top horizontal functor
  \[
    \begin{tikzcd}
      \Cat_{\cT}^2 \arrow[rr,"- \star_{\cT} -"] \arrow[d,hook]
      && \Cat_{\cT} \arrow[d,hook]\\
      \Cat_{/\cT^{\op} \times \partial \Delta^1} \arrow[r,"\iota^*"]
      & \Cat_{/\cT^{\op} \times I} \arrow[r,"\pi_!"]
      & \Cat_{/\cT^{\op}}
    \end{tikzcd}
  \]
  which exists by \cite[Prop~4.3]{Shah}.
  We write 
  \begin{align*}
    K^{\utright} &\deq K \star_{\cT} \uAst_{\cT};\\
    K^{\utleft} &\deq \uAst_{\cT} \star_{\cT} K.\qedhere
  \end{align*}
\end{definition}
\begin{definition}
  If $\cC,\cD \in \Cat_{\cT, \cE/}$ are $\cT$-$\infty$-categories under $\cE$, the \emph{$\cT$-$\infty$-category of $\cT$-functors under $\cE$} is defined by the pullback of $\cT$-$\infty$-categories
\[\begin{tikzcd}
	{\uFun_{\cT, \cE/}(\cC,\cD)} & {\uFun_{\cT}(\cC,\cD)} \\
  \uAst_{\cT} & {\uFun_{\cT}(\cE,\cD)}
	\arrow[from=1-1, to=1-2]
	\arrow[from=1-1, to=2-1]
	\arrow["\lrcorner"{anchor=center, pos=0.125}, draw=none, from=1-1, to=2-2]
	\arrow["{\prn{\pi_{\cC}}^*}", from=1-2, to=2-2]
  \arrow["{\cbr{\pi_{\cD}}}"', from=2-1, to=2-2]
\end{tikzcd}\]
  If $p\colon K \rightarrow \cC$ is a $\cT$-functor, then the \emph{$\cT$-undercategory and $\cT$-overcategory} are the functor $\cT$-$\infty$-categories
  \begin{align*}
    \cC^{(p,\cT)/} &\deq \uFun_{\cT, K/}\prn{K^{\utright},\cC};\\
    \cC^{/(p,\cT)} &\deq \uFun_{\cT, K/}\prn{K^{\utleft},\cC} \qedhere
  \end{align*}
\end{definition}
In the case $p\colon \uAst_{\cT} \rightarrow \cC$ corresponds with the $\cT$-object $X \in \Gamma^{\cT} \cC$, we simply write $\cC^{X/} \deq \cC^{(p,\cT)/}$ and similar for overcategories.
In general, the categories $\cC^{(p,\cT)/}$ take part in a functor out of $\Cat_{\cT,K/}$.
Of fundamental importance is the adjoint relationship between these functors:
\begin{theorem}[{\cite[Cor~4.27]{Shah2}}]
  The $\cT$-join forms the left adjoint in a pair of adjunctions
  \begin{align*}
    K \star_\cT -\cln \Cat_{\cT} &\longrightleftarrows \Cat_{\cT, K/} \cln(-)^{(-,\cT)/},\\
    - \star_\cT K\cln \Cat_{\cT} &\longrightleftarrows \Cat_{\cT, K/} \cln(-)^{/(-,\cT)}.
  \end{align*}
\end{theorem}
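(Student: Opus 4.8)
The plan is to deduce both adjunctions from the adjoint functor theorem together with the non-parameterized join--slice adjunction of \cite[\S~4.2.1]{HTT}, relativized over $\cT^{\op}$. I will spell out the first adjunction, the second being formally dual --- obtained by placing the fixed factor $K$ on the other side of the join throughout, so that $K^{\utright} = K \star_\cT \uAst_\cT$ is replaced by $K^{\utleft} = \uAst_\cT \star_\cT K$ and undercategories by overcategories.

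First I would establish that $K \star_\cT - \colon \Cat_\cT \to \Cat_{\cT, K/}$ admits a right adjoint. Since $\Cat_\cT = \Fun(\cT^{\op}, \Cat)$ and the coslice $\Cat_{\cT, K/}$ are presentable, it is enough to check that $K \star_\cT -$ preserves colimits. By the defining formula (\cite[Def~4.1]{Shah2}, \cite[Prop~4.3]{Shah}) the $\cT$-join is assembled from base change and pushforward along the projection $\pi\colon \cT^{\op}\times\Delta^1\to\cT^{\op}$ and the inclusion $\iota\colon \cT^{\op}\times\partial\Delta^1\hookrightarrow\cT^{\op}\times\Delta^1$; passing to the objectwise description of colimits in $\Cat_\cT$ and $\Cat_{\cT,K/}$, one reduces to the statement that the ordinary join $K\star-\colon \Cat\to\Cat_{K/}$ preserves colimits --- the $\infty$-categorical form of \cite[\S~4.2.1]{HTT} --- where it is essential that the target be the coslice (for instance the initial object $\emptyset_\cT\in\Cat_\cT$ must be carried to $K$, which is the initial object of $\Cat_{\cT,K/}$ via the identity). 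This produces a right adjoint $R$.

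Next I would identify $R$ with $(-)^{(p,\cT)/}$. The key point is that both $K\star_\cT -$ and $\cD\mapsto\cD^{(p,\cT)/}=\uFun_{\cT,K/}(K^{\utright},\cD)$ commute with restriction: for each $V\in\cT$, base change along $(\cT_{/V})^{\op}\to\cT^{\op}$ commutes with the base-change and Kan-extension functors over $\cT^{\op}$ that define the $\cT$-join, and (by pullback-stability of parameterized functor categories, \cite[Thm~9.7]{Barwick-Parameterized}) with the $\cT$-functor-category construction, so that $\Res_V^\cT(K\star_\cT\cC)\simeq(\Res_V^\cT K)\star_{\cT_{/V}}(\Res_V^\cT\cC)$ and similarly for undercategories; hence $R$ is compatible with restriction as well. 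Evaluating at objects and using that $\cT_{/V}$ has a terminal object --- so that by \cite[Lemma~2.12]{Barwick-Parameterized} the $V$-values are computed by $\Gamma^{\cT_{/V}}$ --- the desired comparison reduces to the non-parameterized identification of Lurie's slice $\infty$-category as the right adjoint of the join (\cite[\S~4.2.1]{HTT}). Since the functors $\{\Res_V^\cT\}_{V\in\cT}$ are jointly conservative, this objectwise equivalence upgrades to a natural equivalence $R\simeq(-)^{(p,\cT)/}$ of $\cT$-$\infty$-categories; the unit and counit are then assembled from the fiberwise ones and the triangle identities are checked after each $\Res_V^\cT$.

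The hard part will be the colimit-preservation step. Unlike $\cS_\cT$, the $\infty$-category $\Cat_\cT$ does not have universal colimits, so one cannot extract preservation of colimits by $K\star_\cT -$ from abstract base-change formalism; instead one must reduce it, objectwise in $\cT^{\op}$, to the combinatorial input that the nerve-level join commutes with colimits (equivalently, to Lurie's construction of the slice and its universal property), and one must be careful that this reduction is carried out in the coslice $\Cat_{K/}$ rather than in $\Cat$. Once this input is granted, the remaining bookkeeping --- tracking the under-$K$ structure and the compatibility with restriction --- is routine.
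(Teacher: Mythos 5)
The paper does not prove this statement: it is quoted verbatim from \cite[Cor~4.27]{Shah2}, so there is no in-paper argument to compare your proposal against. Your sketch is nevertheless a genuinely different route from Shah's, who works explicitly with marked simplicial set models over $(\cT^{\op})^{\sharp}$ and verifies the adjunction combinatorially in the style of \cite[\S~4.2.1]{HTT}; you instead extract an abstract right adjoint from presentability and then identify it fiberwise. This can work, but note two things. First, the colimit-preservation step presupposes the objectwise identification $(K \star_\cT \cC)_V \simeq K_V \star \cC_V$, which is precisely the base-change bookkeeping for the defining $\iota^*$ and $\pi_!$ that Shah's direct proof already carries out --- you have relocated, not eliminated, the combinatorial input. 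Second, and more importantly, joint conservativity of $\{\Res_V^\cT\}_{V\in\cT}$ only lets you check that an already-constructed transformation $R \Rightarrow (-)^{(-,\cT)/}$ is an equivalence; it does not produce the transformation, and ``assembling the unit and counit from the fiberwise ones'' is exactly where the coherence difficulty lives. The cleanest way to close this hole, once the objectwise join formula is established, is to observe that both $K\star_\cT -$ and the parameterized undercategory functor are induced by a $\cT^{\op}$-indexed family of join--slice adjunctions on the fibers, so the right adjoint assembles automatically from the standard fact that a natural transformation between $\cT^{\op}$-shaped diagrams whose components are all left adjoints induces a left adjoint of functor categories with objectwise-computed right adjoint. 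With that adjustment your approach is correct; its advantage over Shah's is a cleaner formal skeleton, at the cost of making the required combinatorics implicit rather than front-and-center.
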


We say a $\cT$-functor $\underline{p}\colon K^{\utl} \rightarrow \cC$ extends $p\colon K \rightarrow \cC$ if the composite $K \rightarrow K^{\utl} \rightarrow \cC$ is homotopic to $p$.
\begin{definition}
  Let $\cC$ be a $\cT$-$\infty$-category.
  A $\cT$-object $X \in \Gamma^{\cT} \cC$ is \emph{final} if for all $V \in \cT$, the object $X_V \in \cC_{V}$ is final;
  a $\cT$-functor $\underline{p}\colon K^{\utl} \rightarrow \cC$ extending $p\colon K \rightarrow \cC$ is a \emph{limit diagram for $p$} if the corresponding cocartesian section $\sigma_{\underline{p}}\colon *_{\cT} \rightarrow \cC^{/(p,\cT)}$ is a final $\cT$-object.
\end{definition}

The \emph{fiberwise opposite} (or vertical opposite) functor $\op\cln \Cat_{\cT} \rightarrow \Cat_{\cT}$ is the $\cT$ functor induced under $\CoFr^{\cT}$ by the \emph{opposite category} functor $\op\cln \Cat \rightarrow \Cat$;
the notions of initial $\cT$-objects and $\cT$-colimits are defined dually as final $\cT$-objects and $\cT$-limits in the fiberwise opposite.

In many cases, these are familiar;
for instance, \emph{trivially indexed} (co)limits are non-equivariant in nature.
\begin{proposition}[{\cite[Thm~8.6]{Shah}}]\label{Trivially indexed limits}
  A diagram $\underline{p}\colon \prn{\Infl_e^{\cT} K}^{\utl} \rightarrow \cC$ is a limit diagram for $p\colon \Infl_e^{\cT} K \rightarrow \cC$ if and only if for all $V$, the associated diagram $\underline{p}_V\colon K^{\triangleleft} \rightarrow \cC_V$ is a limit diagram for $p_V$.
\end{proposition}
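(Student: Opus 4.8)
The plan is to unwind the definition of a $\cT$-limit diagram until the statement becomes fiberwise, and hence non-equivariant. By definition, $\underline p\colon \prn{\Infl_e^{\cT}K}^{\utleft} \to \cC$ is a limit diagram for $p$ exactly when the corresponding cocartesian section $\sigma_{\underline p}\colon \uAst_{\cT} \to \cC^{/(p,\cT)}$ is a final $\cT$-object, where $\cC^{/(p,\cT)} = \uFun_{\cT, \Infl_e^{\cT}K/}\bigl(\prn{\Infl_e^{\cT}K}^{\utleft}, \cC\bigr)$; and by definition of final $\cT$-objects this holds if and only if, for every $V \in \cT$, the value $\prn{\sigma_{\underline p}}_V$ is a final object of the ordinary $\infty$-category $\prn{\cC^{/(p,\cT)}}_V$. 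It therefore suffices to fix $V$ and prove that $\prn{\sigma_{\underline p}}_V$ is final in $\prn{\cC^{/(p,\cT)}}_V$ precisely when $\underline p_V\colon K^{\triangleleft} \to \cC_V$ is a limit diagram for $p_V$.

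The content is then the natural identification
\[
  \prn{\cC^{/(p,\cT)}}_V \;\simeq\; \prn{\cC_V}^{/p_V}
\]
carrying the distinguished object $\prn{\sigma_{\underline p}}_V$ to $\underline p_V$; granting this, the desired equivalence is immediate, since a cone is a limit cone exactly when it is a final object of the corresponding overcategory.

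To prove the identification I would argue as follows. First, replace $(\cT, V)$ by $(\cT_{/V}, \mathrm{id}_V)$: taking $V$-values is $\Gamma^{\cT_{/V}}$ applied to $\Res_V^{\cT}$, and $\Res_V^{\cT}$ commutes with $\uFun_{\cT}$, with pullbacks, with the $\cT$-join, with $\uAst_{\cT}$, and with $\Infl_e^{\cT}$, so that $\Res_V^{\cT}\Infl_e^{\cT}K \simeq \Infl_e^{\cT_{/V}}K$; since $\cT_{/V}$ has a terminal object, its value at $\mathrm{id}_V$ coincides with $\Gamma^{\cT_{/V}}$ (recalled in \cref{T-categories subsection}), so we reduce to the case that $\cT$ has a terminal object $*$ and must show $\Gamma^{\cT}\cC^{/(p,\cT)} \simeq \prn{\cC_*}^{/p_*}$. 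Second, note that $\Infl_e^{\cT}$ sends ordinary joins to $\cT$-joins — both are computed fiberwise and $\Infl_e^{\cT}$ is constant — whence $\uAst_{\cT} \simeq \Infl_e^{\cT}(*)$ and $\prn{\Infl_e^{\cT}K}^{\utleft} \simeq \Infl_e^{\cT}\prn{K^{\triangleleft}}$. Third, invoke an enriched form of \cref{Gamma adjunction proposition}, namely an equivalence $\Fun_{\cT}\bigl(\Infl_e^{\cT}(-), \cC\bigr) \simeq \Fun\bigl(-, \Gamma^{\cT}\cC\bigr)$ compatible with the ``under''-structure maps, to obtain
\[
  \Gamma^{\cT}\cC^{/(p,\cT)} = \Fun_{\cT, \Infl_e^{\cT}K/}\bigl(\Infl_e^{\cT}\prn{K^{\triangleleft}}, \cC\bigr) \simeq \Fun_{K/}\bigl(K^{\triangleleft}, \Gamma^{\cT}\cC\bigr) = \prn{\cC_*}^{/p_*}.
\]

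The step I expect to demand the most care is keeping track of the distinguished object $\sigma_{\underline p}$ through this chain, which reduces to proving the enriched inflation/fixed-points adjunction together with its compatibility with comma-category structures; the remaining inputs are formal consequences of the base-change behaviour of the $\cT$-join and of the recollections in \cref{T-categories subsection}. The dual statement for $\cT$-colimits then follows by passing to fiberwise opposites, using that $\op$ commutes with $\Infl_e^{\cT}$ and sends left cones to right cones.
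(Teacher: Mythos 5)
The paper does not actually prove this proposition: it is stated as a citation to \cite[Thm~8.6]{Shah}, so there is no internal proof to compare against. Your argument is a reasonable reconstruction from scratch, and the outline is correct.

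Two of your three inputs are clean. Reducing to $\cT$ with a terminal object via $\Res_V^{\cT}$ is legitimate because restriction along $\cT_{/V} \to \cT$ is implemented by pullback of (co)cartesian fibrations, and the defining limits (for $\uFun_{\cT}$, for the pullback defining $\uFun_{\cT,\cE/}$, and for the $\cT$-join construction) are all fiberwise, hence commute with that pullback; and the fact that $\Gamma^{\cT_{/V}}$ is evaluation at $\mathrm{id}_V$ is the observation already recalled in \cref{T-categories subsection}. The fiberwise description of the $\cT$-join, $(\cC \star_{\cT} \cD)_V \simeq \cC_V \star \cD_V$, which you use to get $\Infl_e^{\cT}(K^{\triangleleft}) \simeq \uAst_{\cT} \star_{\cT} \Infl_e^{\cT} K$, is also correct and is established in \cite{Shah,Shah2}.

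The genuine load-bearing step, which you correctly flag, is the ``enriched'' inflation/fixed-points adjunction $\Fun_{\cT}(\Infl_e^{\cT}(-),\cC) \simeq \Fun(-,\Gamma^{\cT}\cC)$, natural enough to be compatible with the pullback defining the under-category. This does hold: unstraightening identifies $\Infl_e^{\cT}\cD$ with the projection $\cD \times \cT^{\op} \to \cT^{\op}$, whose cocartesian edges are exactly $(\mathrm{id}_d,f)$, so a $\cT$-functor $\cD \times \cT^{\op} \to \cC$ is the same as a functor $\cD \to \Fun_{/\cT^{\op}}(\cT^{\op},\cC)$ landing in cocartesian sections, i.e.\ a functor $\cD \to \Gamma^{\cT}\cC$, and this identification is natural in $\cD$. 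Feeding this naturality into the pullback square defining $\uFun_{\cT,\Infl_e^{\cT}K/}$ and taking $\Gamma^{\cT}$ (which preserves pullbacks) yields
\[
  \Gamma^{\cT}\cC^{/(p,\cT)} \simeq * \times_{\Fun(K,\Gamma^{\cT}\cC)} \Fun(K^{\triangleleft},\Gamma^{\cT}\cC) \simeq (\Gamma^{\cT}\cC)_{/p_*},
\]
and the same naturality carries $\sigma_{\underline p}$ to $\underline p_*$. So your argument goes through. Since you are effectively re-deriving Shah's theorem, the honest cost is exactly the bookkeeping you identify; nothing is missing, but one should be explicit that ``$\Res_V^{\cT}$ commutes with the $\cT$-join'' and ``$\Infl_e^{\cT}$ sends joins to $\cT$-joins'' are consequences of the fiberwise join formula rather than formal nonsense.
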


Similarly, indexed (co)limits in coefficient systems may be converted into non-equivariant colimits.
\begin{proposition}[{\cite[Prop~5.6-7]{Shah2}}]\label{Fixed points of colimit}
  Let $\cT$ be an atomic orbital $\infty$-category and $F\colon \cC \rightarrow \uCoFr^{\cT}(\cD)$ a $\cT$-functor.
  Then, the indexed limit and colimit of $F$ have values computed by ordinary limits and colimits:
  \begin{align*}
    \prn{\ucolim F}^V &\simeq \colim\prn{\cC_V \rightarrow \Tot^V \cC \rightarrow \CoFr^V(\cD) \xrightarrow{(-)^V} \cD};\\
    \prn{\ulim F}^V &\simeq \lim\prn{\Tot^V \cC \rightarrow \CoFr^V(\cD) \xrightarrow{(-)^V} \cD}.
  \end{align*}
\end{proposition}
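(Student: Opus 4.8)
The plan is to transport the computation across the adjunction $\Tot \dashv \uCoFr^{\cT}$, which turns a parametrized $\cT$-(co)limit in $\uCoFr^{\cT}(\cD)$ into an ordinary pointwise Kan extension along the structure cocartesian fibration $\pi \colon \Tot\cC \to \Tot\uAst_{\cT} = \cT^{\op}$, and then to read off its $V$-values. To begin, recall that $\ucolim F$ (resp.\ $\ulim F$) is the left (resp.\ right) $\cT$-Kan extension of $F$ along the terminal $\cT$-functor $p \colon \cC \to \uAst_{\cT}$; via the join/slice adjunctions cited above it is the cocartesian section of the $\cT$-undercategory $\uCoFr^{\cT}(\cD)^{(p,\cT)/}$ (resp.\ the $\cT$-overcategory) singled out by the initial (resp.\ final) $\cT$-object. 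Since $\Gamma^{\cT}\uCoFr^{\cT}(\cD) \simeq \Fun(\cT^{\op}, \cD)$, both $\ucolim F$ and $\ulim F$ are honest objects of $\Fun(\cT^{\op},\cD)$, so asking for their $V$-values is meaningful.

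The technical heart is to show that $\Tot \dashv \uCoFr^{\cT}$ intertwines $\cT$-Kan extension along $p$ with ordinary pointwise Kan extension along $\pi = \Tot p$. This adjunction identifies the $\cT$-functor $F$ with the ordinary functor $\widetilde F = \varepsilon_{\cD} \circ \Tot F \colon \Tot\cC \to \cD$, where $\varepsilon$ is the counit, and more generally identifies $\cT$-functors out of the $\cT$-cones $\cC^{\utleft}$ and $\cC^{\utright}$ with ordinary functors out of $\Tot(\cC^{\utleft})$ and $\Tot(\cC^{\utright})$, which one computes from the construction of the $\cT$-join (via $\iota^{*}$ and $\pi_{!}$, using that $\Tot$ is a left adjoint) to be the relative cones on $\Tot\cC$ over $\cT^{\op}$. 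Under these identifications the finality condition on the canonical cocartesian section of the $\cT$-(over/under)category that defines a $\cT$-(co)limit diagram becomes exactly the finality condition defining a pointwise Kan extension relative to $\cT^{\op}$, so $\ucolim F \simeq \mathrm{Lan}_{\pi}\widetilde F$ and $\ulim F \simeq \mathrm{Ran}_{\pi}\widetilde F$ inside $\Fun(\cT^{\op},\cD)$. (Alternatively, one may first reduce to the case that $\cT$ has a terminal object by restricting all of the data along $\cT_{/V} \to \cT$; this alters neither side of the claimed formula, since $\cT$-colimits commute with restriction and the $V$-value of a $\cT$-object is the value of its $\cT_{/V}$-restriction at $\mathrm{id}_{V}$, and one then combines the terminal-object description of $\Gamma^{\cT}$ with \cref{Trivially indexed limits}.)

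It remains to evaluate the pointwise Kan extensions at $V \in \cT$, using that $\pi \colon \Tot\cC \to \cT^{\op}$ is a cocartesian fibration. For the left Kan extension, $\mathrm{Lan}_{\pi}\widetilde F(V) = \colim_{\Tot\cC \times_{\cT^{\op}} (\cT^{\op})_{/V}} \widetilde F$, and the fibre inclusion $\cC_{V} = \pi^{-1}(V) \hookrightarrow \Tot\cC \times_{\cT^{\op}} (\cT^{\op})_{/V}$ is cofinal, because every object receives an initial map into the fibre, namely a cocartesian lift of its structure map down to $V$; this collapses the colimit to one indexed by $\cC_{V}$. For the right Kan extension, $\mathrm{Ran}_{\pi}\widetilde F(V) = \lim_{\Tot\cC \times_{\cT^{\op}} (\cT^{\op})_{V/}} \widetilde F$, and the comma $\infty$-category $\Tot\cC \times_{\cT^{\op}} (\cT^{\op})_{V/}$ is precisely $\Tot(\Res^{\cT}_{V}\cC) = \Tot^{V}\cC$; since $\pi$ is cocartesian rather than cartesian there is no analogous collapse to the fibre, so the limit genuinely ranges over $\Tot^{V}\cC$. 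Finally, restricting $\widetilde F = \varepsilon_{\cD} \circ \Tot F$ to the fibre $\cC_{V}$ (resp.\ to $\Tot^{V}\cC$) and using that $\varepsilon_{\cD}$ over the point $V$ is evaluation at the terminal object $\mathrm{id}_{V} \in (\cT_{/V})^{\op}$ --- i.e.\ the functor $(-)^{V} \colon \CoFr^{V}(\cD) \to \cD$ --- one identifies $\widetilde F|_{\cC_{V}}$ with the composite $\cC_{V} \to \Tot^{V}\cC \to \CoFr^{V}(\cD) \xrightarrow{(-)^{V}} \cD$ and $\widetilde F|_{\Tot^{V}\cC}$ with $\Tot^{V}\cC \to \CoFr^{V}(\cD) \xrightarrow{(-)^{V}} \cD$, giving the two displayed formulas.

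The main obstacle is the middle step: making rigorous that $\Tot$ transports the parametrized join/slice machinery governing $\cT$-Kan extensions to the ordinary relative join/slice machinery governing pointwise Kan extensions over $\cT^{\op}$ --- concretely, matching $\Tot$ of a $\cT$-cone with a relative cone, and matching finality of a canonical cocartesian section of a $\cT$-(over/under)category with finality of an object of the relevant comma $\infty$-category. The atomic orbital hypothesis enters to keep the slices $\cT_{/V}$ $1$-categorical (\cref{1-category}) and, along the reduction-to-terminal-object route, to license the appeal to \cref{Trivially indexed limits}.
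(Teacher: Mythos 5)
The paper does not prove this proposition; it simply cites \cite[Prop~5.6--7]{Shah2}. So there is no in-text proof to compare against, and I will instead assess your argument on its merits. Your strategy is sound: transport $\cT$-(co)limits across the $\Tot \dashv \uCoFr^{\cT}$ adjunction to ordinary pointwise Kan extensions along $\pi = \Tot p \colon \Tot\cC \to \cT^{\op}$, then use cofinality of the fibre inclusion $\cC_V \hookrightarrow \Tot\cC \times_{\cT^{\op}} (\cT^{\op})_{/V}$ (a right adjoint via cocartesian pushforward, hence cofinal) for the colimit formula, and the identification $(\cT^{\op})_{V/} \simeq (\cT_{/V})^{\op}$, whence $\Tot\cC \times_{\cT^{\op}} (\cT^{\op})_{V/} \simeq \Tot^V \cC$, for the limit formula. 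All of this is correct and is essentially the argument in \cite{Shah2}.

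Two remarks on the execution. First, your ``technical heart'' can be streamlined: rather than matching relative cones and finality across $\Tot$, observe that $\Gamma^{\cT}(\ucolim)$ and $\mathrm{Lan}_{\pi}$ are each partial left adjoints to the same functor, namely $\pi^{*}\colon \Fun(\cT^{\op},\cD) \to \Fun(\Tot\cC,\cD)$, since $\Gamma^{\cT}$ of the diagonal $\cT$-functor $\uCoFr^{\cT}\cD \to \uFun_{\cT}(\cC,\uCoFr^{\cT}\cD)$ is identified with $\pi^{*}$ under the exponential adjunction $\Fun_{\cT}(\cC, \uCoFr^{\cT}\cD) \simeq \Fun(\Tot\cC,\cD)$ and $\Gamma^{\cT}\uCoFr^{\cT}\cD \simeq \Fun(\cT^{\op},\cD)$; one then invokes \cref{Gamma adjunction prop} to pass from $\cT$-adjunctions to ordinary ones. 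This avoids explicitly matching $\Tot$ of the $\cT$-join with the relative join over $\cT^{\op}$ and $\cT$-initial sections with initial objects, though that route also works if carried out. Second, two small slips: your appeal to \cref{Trivially indexed limits} in the alternative terminal-object reduction is misplaced --- that proposition concerns colimits \emph{inflated} from a constant shape, which is not what that reduction uses; the input you actually want there is $\Gamma^{\cT_{/V}}\cC \simeq \cC_{\id_V}$ from \cite[Lem~2.12]{Barwick-Parameterized}. And contrary to your closing remark, the atomic orbital hypothesis does no work in this argument --- neither the adjunction transport nor the cofinality argument uses it --- so it is carried along only because it is the paper's standing assumption on $\cT$.
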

\begin{definition}
    Let $\cC$ be a $\cT$-$\infty$-category and let $\ucK_{\cT} = (\cK_V)_{V \in \cT} \subset \uCat_{\cT}$ be a restriction-stable collection of $V$-categories.
    We say that $\cC$ \emph{strongly admits $\cK$-shaped limits} if for each $V \in \cT$, each $V$-category $K \in \cK_V$ and each $V$-functor $p\colon K \rightarrow \cC_{\uV}$, there exists a limit diagram for $p$.
    We say $\cC$ is \emph{$\cT$-complete} if it strongly admits $\uCat_{\cT}$-shaped limits.
  
    If $\cC$ and $\cD$ are $\cT$-$\infty$-categories which strongly admit all $\cK$-shaped limits and $F\colon \cC \rightarrow \cD$ is a $\cT$, functor, we say $F$ \emph{strongly preserves $\cK$-shaped limits} if for all $V \in \cT$ and all $K \in \cK_V$, postcomposition with the $V$-functor $F_{\uV}\colon \cC_{\uV} \rightarrow \cD_{\uV}$ sends $K$-shaped limits diagrams to limits diagrams.

    If $\cC \subset \cD$ is a full $\cT$-subcategory whose inclusion strongly preserves $\cK$-shaped limits, we say that $\cC$ is \emph{strongly closed under $\cK$-shaped limits}.
\end{definition}

An important class of examples is \emph{indexed (co)products}.
\begin{definition}\label{Indexed coproducts notation}
  Consider $S \in \FF_V$, considered as a $V$-category under the inclusion $\Set_V \hookrightarrow \Cat_V$ extending the \emph{representable $V$-category} functor $\cT_{/V} \rightarrow \Cat_V$ via coproducts.
  Then, we refer to $S$-shaped $V$-limits as \emph{$S$-indexed products} and $S$-shaped $V$-colimits as \emph{$S$-indexed coproducts}.

  If $\cC \subset \uFF_{\cT}$ is a full $\cT$-subcategory, we refer to $\cT$-colimits of the corresponding class as \emph{$\cC$-indexed coproducts};
  similarly, following \cite{Windex}, if $I \subset \Set_{\cT}$ is a pullback-stable and core-full subcategory, we define the full $\cT$-subcategory $\uSet_I \subset \uSet_{\cT}$ of \emph{$I$-admissible $\cT$-sets} by 
  \[
    \prn{\uSet_{I}}_V \deq \Set_{I,V} \deq \cbr{S \; \middle| \; \Ind_V^{\cT} S \rightarrow V \in I} \subset \Set_{V}.
  \]
  We refer to the class of $\uSet_{I}$-indexed coproducts as \emph{$I$-indexed coproducts}, and use the dual language for $I$-indexed products.
  If $\cD$ strongly admits $\uSet_I$-shaped limits, we simply say $\cD$ \emph{admits $I$-indexed coproducts};
  we use the following language.
  \begin{itemize}
    \item $\Set_{\cT}$-indexed coproducts are \emph{small indexed coproducts};
    \item $\FF_{\cT}$-indexed coproducts are \emph{finite indexed coproducts};
    \item $\cbr{\nabla\colon n \cdot S \rightarrow S}$-indexed coproducts are \emph{trivially indexed coproducts} (or \emph{ordinary coproducts}).\qedhere
  \end{itemize}
\end{definition}
\begin{notation}
  Given $\cC$ a $\cT$-category and $S \in \Set_{\cT}$, we write 
  \begin{align*}
    \cC_S &\deq \prod_{U \in \Orb(S)} \cC_U\\
        &\simeq \Fun_{\cT}(S,\cC);
  \end{align*}
  more generally, given $S \in \Set_V$, we write $\cC_S$ for $\cC_{\Ind_V^{\cT} S}$.
  where $\Orb(S)$ is the set of \emph{orbits} expressing $S$ as a disjoint union of elements of $\cT$.
  Given $S \in \Set_{I,V}$, and $(X_U) \in \cC_S$, we write the $S$-indexed products and coproducts as
  \[
    \begin{tikzcd}[ampersand replacement=\&, row sep=tiny, column sep=large]
      {\cC_S} \& {\cC_V} \&\& {\cC_S} \& {\cC_V} \\
      {(X_U)_{U \in \Orb(S)}} \& {\prod\limits_U^S X_U} \&\& {(X_U)_{U \in \Orb(S)}} \& {\coprod\limits_U^S X_U}
      \arrow["{\prod^S}", from=1-1, to=1-2]
      \arrow["{\coprod^S}", from=1-4, to=1-5]
      \arrow["\in"{marking, allow upside down}, draw=none, from=2-1, to=1-1]
      \arrow[maps to, from=2-1, to=2-2]
      \arrow["\in"{marking, allow upside down}, draw=none, from=2-2, to=1-2]
      \arrow["\in"{marking, allow upside down}, draw=none, from=2-4, to=1-4]
      \arrow[maps to, from=2-4, to=2-5]
      \arrow["\in"{marking, allow upside down}, draw=none, from=2-5, to=1-5]
    \end{tikzcd}
  \]
  In particular, in the case that $S$ has one orbit $U$, we write $\Ind_U^V(-)$ and $\CoInd_U^V(-)$ for $S$-indexed coproducts and products, respectively. 
\end{notation}

Given $\cK \subset \uCat_{\cT}$ a restriction-stable collection of $V$-categories and $W \in \cT$, we let $\cK_{\uW} \subset \uCat_W$ be the corresponding restriction-stable collection $V$-categories, where $V$ ranges over $\cT_{/W}$.
We will use the following notation for strongly (co)limit-presereving functors.
\begin{notation}
    Let $I \subset \FF_{\cT}$ be a pullback-stable subcategory.
    Following and slightly extending \cite[Notn~1.15]{Shah}, we use the following notation for the described distinguished full $\cT$-subcategories of $\uFun_{\cT}(\cC,\cD)$:
    \begin{enumerate}
        \item $\uFun_{\cT}^{\cK-L}(\cC,\cD)$: the $V$-functors which strongly preserve $\cK_{\uV}$-indexed colimits;
        \item $\uFun_{\cT}^{\cK-R}(\cC,\cD)$: the $V$-functors which strongly preserve $\ucK_{\uV}$-indexed limits;
         \item $\uFun_{\cT}^L(\cC,\cD)$: the $V$-functors which strongly preserve small $V$-colimits;
        \item $\uFun_{\cT}^{R}(\cC,\cD)$: the $V$-functors which strongly preserve small $V$-limits;
        \item $\uFun_{\cT}^{I-\sqcup}(\cC,\cD)$: the $V$-functors which (strongly) preserve $I$-indexed coproducts;
        \item $\uFun_{\cT}^{I-\times}(\cC,\cD)$: the $V$-functors which (strongly) preserve $I$-indexed products.
        \item $\uFun_{\cT}^{\sqcup}(\cC,\cD)$: the $V$-functors which (strongly) preserve finite ordinary coproducts;
        \item $\uFun_{\cT}^{\times}(\cC,\cD)$: the $V$-functors which (strongly) preserve finite ordinary products.\qedhere
    \end{enumerate}
\end{notation}

\subsubsection{Parameterized Kan extensions}
Fix a (not necessarily commuting) triangle of $\cT$-functors
\[
  \begin{tikzcd}[ampersand replacement=\&]
	\cC \& \cE \\
	\cD
	\arrow["F", from=1-1, to=1-2]
	\arrow["\varphi"', from=1-1, to=2-1]
	\arrow["G"', dashed, from=2-1, to=1-2]
\end{tikzcd}
\]
and $x \in \cD_V$ a $V$-object.
Assume $\cD$ has a final $V$-object.
We define the composite $V$-functor
\[
  G^x\colon \prn{\cC_{\uV}^{/x}}^{\utr} \xrightarrow{\;\;\; \varphi \;\;\;} \prn{\cD_{\uV}^{/x}}^{\utr} \xrightarrow{\prn{H',\pi}} \cD_{\uV}^{/x} \times \Delta^1 \xrightarrow{\;\;\; H \;\;\;} \cD_{\uV} \xrightarrow{\;\;\; G \;\;\;}  \cE_{\uV}
\]
where $H'$ takes the cone point to a $\uV$-final object and $H$ is adjunct to the evident map $\cD_{\uV}^{/x} \rightarrow \mathrm{Ar}(\cD_{\uV})$.
\begin{theoremdefinition}[{\cite[Thm~2.13]{Shah}}]
  The pullback $\cT$-functor
  \[
    \varphi^*\colon \uFun_{\cT}(\cD,\cE) \rightarrow \uFun_{\cT}(\cC,\cE)
  \]
  has a partially defined left adjoint $\varphi_!$ whose values are uniquely characterized by the property that $\prn{\varphi_! F}^x$ is a $V$-colimit diagram for all $V \in \cT$ and $x \in \cD_V$. We call this the \emph{$\cT$-left Kan extension of $F$ along $\varphi$.}
\end{theoremdefinition}
For instance, $\cT$-left Kan extensions along $\cC \rightarrow *_{\cT}$ are precisely $\cT$-colimits.
More generally, we will view the above colimit formula via the shorthand
\[
  \phi_! F(y) = \ucolim_{\varphi(x) \rightarrow y} F(x).
\]
Of course, $\cT$-right Kan extensions are defined dually, and denoted $\varphi_*$.

\subsubsection{Parameterized adjunctions}%
Related to indexed colimits, there is a theory of \emph{parameterized adjunctions} 

\begin{definition}
    A $\cT$-functor $L\cln \cC \rightarrow \cD$ is \emph{left adjoint} to $R\cln \cD \rightarrow \cC$ if the associated functors $L_V\cln \cC_V \rightarrow \cD_V$ are left adjoint to $R_V\cln \cD_V \rightarrow \cC_V$ for all $V \in \cT$.
\end{definition}
These are the same as \emph{relative adjunctions} over $\cT^{\op}$ by \cite[Prop~7.3.2.1]{HA};
$\cT$-left adjoints strongly preserve small $\cT$-colimits and $\cT$-right adjoints strongly preserve small $\cT$-limits \cite[Thm~3.1.10]{Hilman}, and they satisfy a parameterized version of the adjoint functor theorem \cite[Thm~6.2.1]{Hilman}. 
\begin{remark}
  By \cite[Rmk~5.4]{Shah}, $\cT$-limits form a (partially defined) right $\cT$-adjoint $\ulim\colon \uFun_{\cT}(K,\cC) \rightarrow \cC$ to the ``diagonal'' $\cT$-functor $\Delta^K\colon \cC \rightarrow \uFun_{\cT}(K,\cC)$, which itself may be computed as precomposition along the canonical $\cT$-functor $K \rightarrow *_{\cT}$.
\end{remark}
As observed in \cite{Windex}, diagonals are functorial, so composing right adjoints to the diagonal of the ``orbit set'' factorization $\Ind_V^{\cT} S \rightarrow \coprod_{U \in \Orb(S)} V \rightarrow V$ 
thus yields natural equivalences
\begin{equation}\label{Coproducts and ind observation}
  \coprod\limits_U^S X_U \simeq \coprod\limits_{U \in \Orb(S)} \Ind_U^V X_U;
  \hspace{50pt}
  \prod\limits_U^S X_U \simeq \prod\limits_{U \in \Orb(S)} \CoInd_U^V X_U.
\end{equation}

We may construct many more $\cT$-adjunctions using $\uCoFr^{\cT}$:
\begin{lemma}\label{Cofree adjunction}
    Suppose $L\cln \cC \rightleftarrows \cD\cln R$ is an adjunction of $\infty$-categories.
    Then,
    \[
        \uCoFr^{\cT}L\cln \uCoFr^{\cT} \cC \rightleftarrows \uCoFr^{\cT} \cD\cln \uCoFr^{\cT} R
    \]
    is an adjunction of $\cT$-$\infty$-categories.
\end{lemma}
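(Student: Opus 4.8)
The plan is to unwind the definitions until the statement reduces to the fact that postcomposition with an adjunction is again an adjunction, applied fiberwise.

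First I would recall the explicit description of $\uCoFr^{\cT}$: its $V$-value is $\Fun\bigl((\cT_{/V})^{\op},\cC\bigr)$, the restriction along $f\colon V \to W$ is precomposition $(f_!)^{*}$ with $f_!^{\op}\colon(\cT_{/V})^{\op}\to(\cT_{/W})^{\op}$, and the $V$-component of $\uCoFr^{\cT}L$ is postcomposition $L\circ(-)\colon \Fun((\cT_{/V})^{\op},\cC)\to\Fun((\cT_{/V})^{\op},\cD)$ (and likewise $(\uCoFr^{\cT}R)_V = R\circ(-)$). Now for any $\infty$-category $K$, postcomposition carries the adjunction $L\dashv R$ to an adjunction $L\circ(-)\colon\Fun(K,\cC)\rightleftarrows\Fun(K,\cD)\colon R\circ(-)$, as its unit $\mathrm{id}_{\cC}\Rightarrow RL$ and counit $LR\Rightarrow\mathrm{id}_{\cD}$ whisker with $K$ to the required natural transformations (see e.g. \cite[\S~5.2]{HTT}). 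Taking $K = (\cT_{/V})^{\op}$ for each $V\in\cT$ shows $(\uCoFr^{\cT}L)_V$ is left adjoint to $(\uCoFr^{\cT}R)_V$ for all $V$; since $\uCoFr^{\cT}L$ and $\uCoFr^{\cT}R$ are manifestly $\cT$-functors (being $\uCoFr^{\cT}$ applied to the morphisms $L$ and $R$), this is exactly the fiberwise criterion defining $\uCoFr^{\cT}L\dashv\uCoFr^{\cT}R$ as $\cT$-functors.

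For applications one also wants the corresponding adjunction relative to $\cT^{\op}$ in the sense of \cite[Prop~7.3.2.1]{HA}, and this follows from the same observations. Because restriction is precomposition with $f_!^{\op}$ while the components of $\uCoFr^{\cT}L$ and $\uCoFr^{\cT}R$ are postcomposition, the square
\[
\begin{tikzcd}
\Fun((\cT_{/W})^{\op},\cC) \arrow[r] \arrow[d] & \Fun((\cT_{/W})^{\op},\cD) \arrow[d] \\
\Fun((\cT_{/V})^{\op},\cC) \arrow[r] & \Fun((\cT_{/V})^{\op},\cD)
\end{tikzcd}
\]
— with horizontal maps the components of $\uCoFr^{\cT}L$, and separately with horizontal maps those of $\uCoFr^{\cT}R$ — commutes strictly in both cases; hence the Beck--Chevalley comparison of the fiberwise adjoints with the restriction functors is an equivalence, and the $(\uCoFr^{\cT}L)_V$ assemble into a left adjoint of $\uCoFr^{\cT}R$ relative to $\cT^{\op}$, necessarily coinciding fiberwise — hence coinciding — with $\uCoFr^{\cT}L$.

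The only genuine bookkeeping, and thus the closest thing to a ``hard part,'' is this last compatibility; it is routine precisely because $\uCoFr^{\cT}$ is assembled from precomposition in the $\cT$-variable and postcomposition in the $\cC$-variable, and these commute on the nose. Alternatively one can argue conceptually from the identification $\uCoFr^{\cT}(-)\simeq\uFun_{\cT}\bigl(\uAst_{\cT},\Infl_e^{\cT}(-)\bigr)$ together with the facts that $\Infl_e^{\cT}$ preserves fiberwise adjunctions and $\uFun_{\cT}(K,-)$ preserves $\cT$-adjunctions, though this unpacks to the same whiskering argument one level up.
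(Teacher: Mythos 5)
Your proof is correct and takes essentially the same route as the paper's: the paper's entire argument is that the $V$-value of $\uCoFr^{\cT}F$ is postcomposition $F_*$ on $\Fun\prn{(\cT_{/V})^{\op},-}$, and postcomposition carries an adjunction $L\dashv R$ to an adjunction $L_*\dashv R_*$. Since the paper defines a $\cT$-adjunction purely fiberwise (a $\cT$-functor $L$ is left adjoint to a $\cT$-functor $R$ precisely when $L_V\dashv R_V$ for every $V$), this one-line observation already closes the proof; your second paragraph verifying that the fiberwise units and counits are compatible with restriction via a strictly commuting Beck--Chevalley square is a reasonable sanity check on why the fiberwise notion matches the relative adjunction over $\cT^{\op}$, but it is not needed to meet the definition being used.
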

\begin{proof}
  This follows from the fiberwise description of $\uCoFr^{\cT} (-)$;
    indeed, the $V$-values
    \[
      L_*:\Fun\prn{(\cT_{/V})^{\op},\cC} \rightleftarrows \Fun\prn{(\cT_{/V})^{\op},\cD}:R_*
    \]
    are adjoint.
\end{proof}
\begin{example}
    We may use \cref{Cofree adjunction} to realize the full $\cT$-subcategory of $\cT$-spaces whose fixed points are $d$-connected or $d$-truncated as (co)localizing $\cT$-subcategories
    \[
      \begin{tikzcd}[ampersand replacement=\&]
        {\ucS_{\cT,\geq d}} \& {\ucS_{\cT}} \& {\ucS_{\cT, \leq d}}
        \arrow[""{name=0, anchor=center, inner sep=0}, curve={height=-16pt}, hook', from=1-1, to=1-2]
        \arrow[""{name=1, anchor=center, inner sep=0}, curve={height=-16pt}, from=1-2, to=1-1]
        \arrow[""{name=2, anchor=center, inner sep=0}, curve={height=-16pt}, from=1-2, to=1-3]
        \arrow[""{name=3, anchor=center, inner sep=0}, curve={height=-16pt}, hook', from=1-3, to=1-2]
        \arrow["\dashv"{anchor=center, rotate=-90}, draw=none, from=0, to=1]
        \arrow["\dashv"{anchor=center, rotate=-90}, draw=none, from=2, to=3]
      \end{tikzcd}
    \]
    We will use this line of thought to understand \emph{truncatedness and connectedness of $\cT$-operads and $\cT$-symmetric monoidal categories}.
\end{example}

\begin{example}\label{Core example}
    By \cref{Cofree adjunction}, the \emph{classifying space and core} double adjunction $(-)_{\simeq} \dashv \iota \dashv (-)^{\simeq}$ yields a double $\cT$-adjunction 
    \[
      \begin{tikzcd}[ampersand replacement=\&]
        {\uCat_{\cT}} \&\& {\ucS_{\cT}} \& {}
        \arrow[""{name=0, anchor=center, inner sep=0}, "{{(-)_{\simeq}}}", curve={height=-18pt}, from=1-1, to=1-3]
        \arrow[""{name=1, anchor=center, inner sep=0}, "{{(-)^{\simeq}}}"{description}, curve={height=18pt}, from=1-1, to=1-3]
        \arrow[""{name=2, anchor=center, inner sep=0}, hook', from=1-3, to=1-1]
        \arrow["\dashv"{anchor=center, rotate=-90}, draw=none, from=0, to=2]
        \arrow["\dashv"{anchor=center, rotate=-90}, draw=none, from=2, to=1]
      \end{tikzcd}
    \]
\end{example}

Additionally, we can make genuine adjunction \emph{non-genuine} using \cite[Prop~7.3.2.1]{HA}.
\begin{proposition}\label{Gamma adjunction prop}
    If $L\cln \cC \rightleftarrows \cD\cln R$ are adjoint $\cT$-functors, then $\tot L\cln \tot \cC \rightleftarrows \tot \cD\cln \tot R$ and $\Gamma L\cln \Gamma \cC \rightleftarrows \Gamma \cD\cln \Gamma R$ are adjoint pairs.
\end{proposition}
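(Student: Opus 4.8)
The plan is to reduce both claims to the identification of $\cT$-adjunctions with relative adjunctions over $\cT^{\op}$ (recalled just above from \cite[Prop~7.3.2.1]{HA}), after which each statement is formal $2$-categorical bookkeeping.

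For the $\tot$ statement: a relative adjunction over $B = \cT^{\op}$ comes equipped with a unit transformation $\mathrm{id}_{\tot\cC} \rightarrow (\tot R) \circ (\tot L)$ lying over $\mathrm{id}_B$ and satisfying the triangle identities; forgetting the structure over $B$, this is exactly the datum exhibiting $\tot L \dashv \tot R$ as an adjunction of $\infty$-categories. Concretely, $\tot L$ and $\tot R$ carry cocartesian edges to cocartesian edges and restrict on each fiber to the given adjunction $L_V \dashv R_V$, and the identification recalled above says precisely that such compatible fiberwise data assembles to the absolute adjunction $\tot L \dashv \tot R$.

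For the $\Gamma$ statement: I would use the descriptions $\Gamma^{\cT}\cC = \Fun_{\cT}(\uAst_{\cT}, \cC)$ of cocartesian sections, together with $\Gamma^{\cT}\cC \simeq \lim_{V \in \cT^{\op}} \cC_V$. Post-composition with the relative adjunction $\tot L \dashv \tot R$ over $\cT^{\op}$ yields an adjunction $(\tot L)_* \dashv (\tot R)_*$ between the functor categories $\Fun_{/\cT^{\op}}(\uAst_{\cT}, \cC)$ and $\Fun_{/\cT^{\op}}(\uAst_{\cT}, \cD)$ — here one uses that the $2$-functor $\Fun_{/\cT^{\op}}(\uAst_{\cT}, -)$ preserves adjunctions — and since $\tot L$ and $\tot R$ preserve cocartesian edges, the functors $(\tot L)_*$ and $(\tot R)_*$ restrict to the full subcategories $\Gamma^{\cT}(-) = \Fun_{\cT}(\uAst_{\cT}, -)$ of cocartesian sections, where the unit and counit also restrict; this gives $\Gamma L \dashv \Gamma R$. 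Alternatively, one may observe that $(L_V \dashv R_V)_{V \in \cT}$ forms a $\cT^{\op}$-indexed diagram of adjunctions in $\Cat$ and invoke $2$-functoriality of $\lim_{\cT^{\op}} \colon \Fun(\cT^{\op}, \Cat) \rightarrow \Cat$.

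I do not expect a genuine obstacle: all the real content sits in the cited identification of $\cT$-adjunctions with $\cT^{\op}$-relative adjunctions, and what remains is the standard fact that post-composition by (equivalently, whiskering with) an adjunction is an adjunction, together with the harmless observation that the relevant full subcategories of cocartesian-edge-preserving functors are stable under both functors. If a fully uniform treatment were desired, one could instead note that $\tot$ and $\Gamma^{\cT}$ are each induced on morphisms by functors compatible with composition and with the formation of natural transformations, hence are $(\infty,2)$-functorial, and $(\infty,2)$-functors preserve adjunctions.
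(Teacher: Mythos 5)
Your proposal is correct and follows essentially the same route as the paper: cite \cite[Prop~7.3.2.1]{HA} for $\tot$, post-compose to get an adjunction on $\Fun_{/\cT^{\op}}(\uAst_{\cT},-)$, and restrict to cocartesian sections for $\Gamma^{\cT}$. Your extra remarks (that both $\tot L$ and $\tot R$ preserve cocartesian edges because $\cT$-functors are by definition morphisms of cocartesian fibrations, and that the unit/counit restrict along with the functors) simply make explicit what the paper leaves implicit when it says the adjunction ``restricts.''
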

\begin{proof}
  The adjunction on $\tot$ is \cite[Prop~7.3.2.1]{HA}, and it induces an adjunction
  \[
    \tot L_*\cln \Fun_{/\cT}(\cT,\tot \cC) \longrightleftarrows \Fun_{/\cT}(\cT,\tot \cD)\cln \tot R_*,
  \]
  which restricts to the full subcategories of cocartesian sections, and hence yields an adjunction
  \[
    \Gamma^{\cT} L\cln \Gamma^{\cT} \cC \longrightleftarrows \Gamma^{\cT} \cD\cln \Gamma^{\cT}.\qedhere.
  \]
\end{proof}

We will need the following lemma and proposition later.
\begin{lemma}\label{Gamma of conservative}
  Suppose a $\cT$-functor $F\cln \cC \rightarrow \cD$ has $F_V:\cC_V \rightarrow \cD_V$ conservative for all $V \in \cT$;
  then, $\Gamma^{\cT} F$ is conservative.
\end{lemma}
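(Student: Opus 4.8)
The plan is to reduce the statement to the fact that a functor into a limit of $\infty$-categories is conservative once its composite with each projection detects equivalences, combined with the levelwise description of $\Gamma^{\cT}$. Recall from the discussion preceding the lemma that $\Gamma^{\cT}\cC = \Fun_{\cT}(\uAst,\cC)$ is the full subcategory of $\Fun_{/\cT^{\op}}(\cT^{\op},\tot\cC)$ spanned by the cocartesian sections, and that this is identified with the limit $\lim_{V \in \cT^{\op}}\cC_V$ of the straightening. Under this identification the projection $p_V\colon \Gamma^{\cT}\cC \to \cC_V$ is evaluation at $V$, and the $\cT$-functor $F$ induces on associated $\infty$-categories the map $\Gamma^{\cT}F$ which is levelwise $F_V$; in particular there are natural equivalences $p_V\circ\Gamma^{\cT}F \simeq F_V\circ p_V$ for each $V \in \cT$.

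First I would record that equivalences in $\Gamma^{\cT}\cC$ are detected on the value categories $\cC_V$. Indeed, a morphism $\alpha$ of cocartesian sections is in particular a natural transformation in $\Fun(\cT^{\op},\tot\cC)$, hence is an equivalence if and only if each component $\alpha_V$ is an equivalence in $\tot\cC$; since $\alpha_V$ lies in the fiber $\cC_V$ and $\tot\cC \to \cT^{\op}$ is a cocartesian fibration, $\alpha_V$ is an equivalence in $\tot\cC$ if and only if it is an equivalence in $\cC_V$ (any inverse in $\tot\cC$ must cover $\mathrm{id}_V$, hence lies in $\cC_V$). Equivalently, this is the statement that the core functor $(-)^{\simeq}\colon\Cat\to\cS$, being a right adjoint, preserves the limit $\lim_{V\in\cT^{\op}}\cC_V$.

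With this in hand the conclusion is immediate: given a morphism $\alpha$ in $\Gamma^{\cT}\cC$ with $\Gamma^{\cT}F(\alpha)$ an equivalence, for each $V \in \cT$ the morphism $F_V(p_V\alpha)\simeq p_V\bigl(\Gamma^{\cT}F(\alpha)\bigr)$ is an equivalence in $\cD_V$, so conservativity of $F_V$ forces $p_V\alpha$ to be an equivalence in $\cC_V$; as this holds for every $V$, the previous paragraph shows $\alpha$ is an equivalence in $\Gamma^{\cT}\cC$, as desired. There is no substantial obstacle here — the only point requiring any care is the levelwise detection of equivalences in $\Gamma^{\cT}\cC$, which is routine once one unwinds the definition of a cocartesian section; everything else is formal.
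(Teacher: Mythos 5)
Your argument is correct and is essentially the same as the paper's: identify $\Gamma^{\cT}\cC$ with cocartesian sections of $\tot\cC \to \cT^{\op}$, note that a map of such sections is an equivalence if and only if it is so at each $V$, and then apply conservativity of each $F_V$. You spend a bit more ink justifying the levelwise detection of equivalences (via the core functor preserving limits), which the paper takes as read, but the substance is identical.
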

\begin{proof}
  Suppose $f_\bullet:X_\bullet \rightarrow Y_\bullet$ is a map of $\cT$-objects in $\cC$, i.e. a natural transformation of cocartesian sections of $\tot \cC \rightarrow \cT^{\op}$.
  Then, $f_\bullet$ is an equivalence if and only if $f_V$ is an equivalence for each $V$;
  by conservativity of $F_V$, this is true if and only if $F_v f_v$ is an equivalence for each $V$, i.e. if and only if $F f_\bullet$ is an equivalence, so $\Gamma^{\cT} F$ is conservative.
\end{proof}

\begin{proposition}\label{Gamma of monadic}
  Suppose $L\cln \cC \rightleftarrows \cD\cln R$ is a $\cT$-adjunction such that $R_V$ is monadic for all $V \in \cT$;
  Then, $\Gamma^{\cT} R\cln \Gamma^{\cT} \cD \rightarrow \Gamma^{\cT} \cC$ is monadic.
\end{proposition}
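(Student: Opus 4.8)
The plan is to verify the hypotheses of the Barr--Beck--Lurie monadicity theorem \cite[Thm~4.7.3.5]{HA} for $\Gamma^{\cT} R$. First, by \cref{Gamma adjunction prop} the adjunction $L \dashv R$ induces an adjunction $\Gamma^{\cT} L \dashv \Gamma^{\cT} R$, so $\Gamma^{\cT} R$ admits a left adjoint. Second, since each $R_V$ is monadic it is in particular conservative, so \cref{Gamma of conservative} shows $\Gamma^{\cT} R$ is conservative. It remains to show that every $\Gamma^{\cT} R$-split simplicial object $X_\bullet$ in $\Gamma^{\cT} \cD$ admits a colimit in $\Gamma^{\cT} \cD$ which is preserved by $\Gamma^{\cT} R$.

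I would use the identification $\Gamma^{\cT} \cC \simeq \lim_{V \in \cT^{\op}} \cC_V$ (and likewise for $\cD$), under which $\Gamma^{\cT} R$ is the functor induced on limits by the $R_V$, and the projection to the $V$-value is evaluation $\ev_V$. Applying $\ev_V$ to the splitting of $\Gamma^{\cT} R \circ X_\bullet$ shows that $(X_\bullet)_V$ is an $R_V$-split simplicial object in $\cD_V$; since $R_V$ is monadic, $\cD_V$ admits the colimit $c_V$ of $(X_\bullet)_V$ (note $R_V(X_\bullet)_V$ is split, hence has an absolute colimit in $\cC_V$, so $R_V$ creates it) and $R_V$ preserves it.

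The crux is to check that each restriction functor $\Res^W_V \colon \cD_W \to \cD_V$ carries $c_W$ to the colimit of $\Res^W_V(X_\bullet)_W = (X_\bullet)_V$; granting this, the family $(c_V)_V$ assembles to an object of $\lim_V \cD_V = \Gamma^{\cT} \cD$ which is a colimit of $X_\bullet$ computed fiberwise, by the standard computation of colimits in a limit of $\infty$-categories along transition functors that preserve them, and then $\Gamma^{\cT} R$ sends it to $(R_V c_V)_V$, which is the fiberwise colimit of $\Gamma^{\cT} R \circ X_\bullet$, so $\Gamma^{\cT} R$ preserves it. To verify the claim: because $L \dashv R$ is a $\cT$-adjunction, $R$ commutes with restriction, giving $R_V \Res^W_V (X_\bullet)_W \simeq \Res^W_V R_W (X_\bullet)_W$; the simplicial object $R_W(X_\bullet)_W$ is split, hence an absolute colimit diagram (see \cite[\S~4.7.2]{HA}), so $\Res^W_V$ carries its colimit to $\colim \Res^W_V R_W(X_\bullet)_W$. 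Chasing the canonical comparison map $\colim \Res^W_V (X_\bullet)_W \to \Res^W_V c_W$ through the conservative functor $R_V$ (which by the previous paragraph sends both sides to $\colim R_V\Res^W_V(X_\bullet)_W$) shows the comparison map is an equivalence. Then Barr--Beck--Lurie applies and $\Gamma^{\cT} R$ is monadic.

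The main obstacle is precisely this middle step: promoting fiberwise monadicity of the $R_V$ to a statement controlling how the restriction functors interact with the relevant fiberwise colimits, so that these colimits can be assembled in the limit $\infty$-category $\Gamma^{\cT}\cD$. This genuinely uses the $\cT$-adjunction structure (so that restriction commutes with both adjoints), the absoluteness of split simplicial colimits, and conservativity of the $R_V$; fiberwise monadicity on its own would not suffice, since a general $R_V$-split colimit need not be preserved by an arbitrary restriction functor.
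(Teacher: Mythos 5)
Your proof is correct and follows the same Barr--Beck--Lurie approach as the paper, citing the same lemmas (\cref{Gamma adjunction prop,Gamma of conservative}) for adjointness and conservativity. Where you go further is in actually justifying that the fiberwise colimits assemble: the paper asserts that ``the resulting cocartesian section creates a colimit for $Z_\bullet$'' without argument, whereas your verification that each $\Res^W_V$ carries $c_W$ to $c_V$ — via absoluteness of the split simplicial colimit of $R_W(X_\bullet)_W$ in $\cC_W$, commutation of $R$ with restriction, and conservativity of $R_V$ — is precisely the missing justification for that sentence, so your write-up is the more complete version of the same argument.
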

\begin{proof}
  We verify that $\Gamma^{\cT} R$ satisfies the conditions of the $\infty$-categorical Barr-Beck theorem \cite[Thm~4.7.3.5(c)]{HA}.
  First, by \cref{Gamma adjunction prop,Gamma of conservative}, $\Gamma^{\cT} R$ is a conservative right adjoint.
  Second, note that a simplicial object $Z_{\bullet}(-)$ in $\Gamma^{\cT} \cD$ corresponds to a family of simplicial objects $Z_V(-)$ in $\cD_V$, and a $\Gamma^{\cT} R$-splitting of $Z_{\bullet}(-)$ corresponds with a restriction-stable family of $R_V$-splittings of $Z_V(-)$.
  Thus $R_V$ creates a colimit of $Z_V$ for all $V$, and the resulting cocartesian section creates a colimit for $Z_\bullet$, i.e. $\Gamma^{\cT} R$ createss $\Gamma^{\cT}R$-split simplicial colimits, so $\Gamma^{\cT}R$ is monadic by \cite[Thm~4.7.3.5(c)]{HA}.
\end{proof}

\subsubsection{Language in the case $\cT = \cO_G$}
When $G$ is a finite group, the category $\cO_G$ has objects the homogeneous $G$-sets $[G/H]$ and morphisms the $G$-equivariant maps $[G/K] \rightarrow [G/H$];
tracking the image of the identity, the hom set from $[G/K]$ to $[G/H]$ may alternatively be presented as
\[
  \Hom([G/K],[G/H]) \simeq \frac{\cbr{a \in G \mid aKa^{-1} \subset H}}{a \sim b \;\; \text{ when } \;\; ab^{-1} \in K}
\]
(see e.g. \cite[Prop~1.3.1]{Dieck} for details).
In particular, the endomorphism monoid of $[G/K]$ is the Weyl group $W_G H = N_G(H)/H$.
Using this, one may see that when $G$ is a finite group, the map $\Ind_H^G\cln \cO_H \rightarrow \cO_{G,/(G/H)}$ is an equivalence of categories.
Thus we may set the following notation without creating clashes.
\begin{notation}    
  In the setting that $\cT = \cO_G$, we use the following notation:
  \begin{enumerate}
    \item we refer to $\underline{\brk{G/H}}$ as $\underline{H}$;
    \item we refer to $\cO_G$-$\infty$-categories as $G$-$\infty$-categories and $\uCat_{\cO_G}$ as $\uCat_G$;
    we refer to $\cO_G$-spaces as $G$-spaces and $\ucS_{\cO_G}$ as $\ucS_G$;
    \item we refer to $\cC_{[G/H]}$ as $\cC_H$ and $\Res_{[G/K]}^{[G/H]}$ as $\Res_K^H$;
      the superscripts and subscripts of $\Ind$, $\CoInd$, $\Gamma$, $\uCoFr$, $\star$, $(-)^{(-,\cT)/}$, and $*$ are determined similarly.
    \item we refer to $\coprod_{[H/K]}^S X_K$ as $\coprod_K^S X_K$, and similar for $\prod^S$.\qedhere
  \end{enumerate}
\end{notation}

\subsection{\tI-commutative monoids}\label{I-commutative monoids subsection}
 Following \cite{Barwick1}, we say that an \emph{adequite triple} is the data of two core-preserving wide subcategories $\cX_b \subset \cX \supset \cX_f$ of an $\infty$-category such that cospans $X \xrightarrow{\varphi_f} Y \xleftarrow{\varphi_b}Z$  satisfying $\varphi_f \in \cX_f$ and $\varphi_b \in \cX_b$ lift to pullback diagrams
    \[
      \begin{tikzcd}[sep=small, row sep = tiny]
            & X \times_Y Z \arrow[dl,"\psi_b" above] \arrow[dr,"\psi_f"] \arrow[dd,phantom,"\lrcorner" twlabl]\\
            X \arrow[dr,"\varphi_f" below] 
            && Z \arrow[dl,"\varphi_b"]\\
            & Y
        \end{tikzcd}
    \]
    satisfying $\psi_b \in \cX_b$ and $\psi_f \in \cX_f$.
    Given an adequate triple $\cX_b \subset \cX \supset \cX_f$, we define the \emph{span category} to be
    \[
        \Span_{b,f}(\cX) \deq A^{eff}(\cX,\cX_b,\cX_f),
    \]
    the latter denoting the effective Burnside category of \cite{Barwick1}.
    In particular, the objects of $\Span_{b,f}(\cX)$ are precisely those of $\cX$, and the morphisms from $X$ to $Z$ are the spans $X \xleftarrow{\varphi_b} Y \xrightarrow{\varphi_f} Z$ with $\varphi_b \in \cX_b$ and $\varphi_f \in \cX_f$, with composition defined by taking pullbacks.
    \footnote{Those readers more familiar with \cite{Elmanto} may note that this specializes to the notion of a \emph{span pair}, when backwards maps are $\cX_b = \cX$, in which case $\Span_f(\cX)$ recovers that of \cite{Elmanto}, and hence lifts to an $(\infty,2)$-category with a universal property that we will not use.}
\begin{example}
  For $\cT$ an orbital $\infty$-category and $I \subset \FF_{\cT}$ a pullback-stable wide subcategory with $I^{\simeq} \simeq \FF_{\cT}^{\simeq}$, $\FF_{\cT} = \FF_{\cT} \hookleftarrow I$ is an adequate triple;
write
\[
    \Span_I(\FF_{\cT}) := \Span_{all,I}(\FF_{\cT}).
\]
More generally, if there exists some full subcategory $\cC \subset \FF_{\cT}$ such that $I \subset \cC$ is a pullback-stable wide subcategory with $I^{\simeq} \subset \cC^{\simeq}$, we write
\[
    \Span_I(\FF_{\cT}) := \Span_{all,I}(\cC).\qedhere
\]
\end{example}
\begin{warning}
  Even when $\FF_{\cT}$ is a 1-category (i.e. $\cT$ is a 1-category), $\Span_I(\FF_{\cT})$ will seldom be a 1-category;
  indeed, in this case, $\Span_I(\FF_{\cT})$ is a $2$-category whose 2-cells are the isomorphisms of spans
  {\[\noafterskip
    \begin{tikzcd}[sep=small, row sep=tiny]
            & Y' \arrow[dl] \arrow[dr] \arrow[dd,"\sim"description]\\
            X \arrow[dr] 
            && Z \arrow[dl]\\
            & Y
    \end{tikzcd}
  \]}
\end{warning}
In this subsection, we review the cartesian algebraic theory $\Span_I(\FF_{\cT})$ corepresents, called \emph{$I$-commutative monoids}. 
We will find that, in the same way that $\CMon$ is easily characterized via \emph{semiadditivity} (c.f. \cite{Gepner}), $\CMon_I$ is easily characterized via \emph{$I$-semiadditivity}.
Little of this subsection is original; instead, the results concerning $I$-commutative monoids form a slight generalization of \cite{Nardin-Stable} and a massive specialization of \cite{Cnossen_semiadditive}, and the results concerning weak indexing systems are largely review of \cite{Windex}.

\subsubsection{Weak indexing systems}
We begin by briefly reviewing the setting of \emph{weak indexing systems} introduced in \cite{Windex}, which we view as the combinatorial context for the intersection of category theoretic and algebraic notions of $I$-commutative monoids.

\begin{definition}\label{Windex definition}
  A \emph{$\cT$-weak indexing category} is a subcategory $I \subset \FF_{\cT}$ satisfying the following conditions:
  \begin{enumerate}[label={(IC-\alph*)}]
    \item \label[condition]{Restriction stable condition} (restrictions) $I$ is stable under arbitrary pullbacks in $\uFF_{\cT}$;
    \item \label[condition]{Windex segal condition} (segal condition) $T \rightarrow S$ and $T' \rightarrow S$ are both in $I$ if and only if $T \sqcup T' \rightarrow S \sqcup S'$ is in $I$; and
    \item \label[condition]{Automorphism condition} ($\Sigma_{\cT}$-action) if $S \in I$, then all automorphisms of $S$ are in $I$.
  \end{enumerate}
  A \emph{$\cT$-weak indexing system} is a full $\cT$-subcategory $\uFF_{I} \subset \uFF_{\cT}$ satisfying the following conditions:
  \begin{enumerate}[label={(IS-\alph*)}]
    \item whenever the $V$-value $\FF_{I,V} \deq \prn{\uFF_I}_V$ is nonempty, we have $*_V \in \uFF_{I,V}$; and
    \item $\uFF_{I} \subset \uFF_{\cT}$ is closed under $\uFF_I$-indexed coproducts.\qedhere
  \end{enumerate}
\end{definition}

\begin{observation}\label{Reduction to maps to orbits observation}
  By a basic inductive argument, condition (IC-b) is equivalent to the condition that $S \rightarrow T$ is in $I$ if and only if $T_U = T \times_S U \rightarrow U$ is in $I$ for all $U \in \Orb(S)$;
  in particular, $I$ is determined by its slice categories over \emph{orbits}. 
\end{observation}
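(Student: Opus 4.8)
The plan is to prove this by induction on the number of orbits $\abs{\Orb(S)}$, using the Segal condition \cref{Windex segal condition} as essentially the only input beyond the extensivity of $\FF_{\cT} = \cT^{\coprod}$.

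First I would record the two structural facts about $\FF_{\cT}$ that make the statement type-check. Every $S \in \FF_{\cT}$ splits canonically as $S \simeq \coprod_{U \in \Orb(S)} U$, and $\FF_{\cT}$ is extensive, so for any $f\cln T \rightarrow S$ this decomposition pulls back to a decomposition $T \simeq \coprod_{U \in \Orb(S)} T_U$ with $T_U \deq T \times_S U$ (the relevant pullbacks exist since $\cT$ is orbital). Under this identification $f$ is literally the coproduct of the maps $f_U\cln T_U \rightarrow U$, so it suffices to show that $f \in I$ if and only if $f_U \in I$ for every $U \in \Orb(S)$.

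The induction then runs as follows. When $\abs{\Orb(S)} \leq 1$ the claim is a tautology, except for the degenerate case $S = \emptyset$, where $T = \emptyset$ and the asserted equivalence reads ``$\mathrm{id}_\emptyset \in I \iff$ (empty conjunction)''; here one only needs $\mathrm{id}_\emptyset \in I$, which holds because $I$ is a nonempty subcategory, hence contains some identity $\mathrm{id}_{S_0}$, and by \cref{Restriction stable condition} the pullback of $\mathrm{id}_{S_0}$ along $\emptyset \rightarrow S_0$ is $\mathrm{id}_\emptyset$. For the inductive step, fix an orbit $U_0 \in \Orb(S)$ and write $S \simeq U_0 \sqcup S'$, so that $f \simeq f_{U_0} \sqcup f'$ for some $f'\cln T' \rightarrow S'$ with $\Orb(S') = \Orb(S) \setminus \{U_0\}$; by \cref{Windex segal condition}, $f \in I$ if and only if both $f_{U_0} \in I$ and $f' \in I$, and by the inductive hypothesis $f' \in I$ if and only if $f_U \in I$ for all $U \in \Orb(S')$. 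Combining the two equivalences yields the claim. The ``in particular'' clause is then immediate: membership of an arbitrary map in $I$ is detected by its pullbacks to the slices over orbits, so $I$ is recovered from the subcategories $I \cap \FF_{\cT,/U}$ as $U$ ranges over $\cT$ (equivalently, from the full $\cT$-subcategory of $\uFF_{\cT}$ spanned by $\uFF_I$).

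There is no substantive obstacle here — this is the promised ``basic inductive argument.'' The only points requiring a moment's care are the extensivity of $\FF_{\cT}$, so that $T \times_S U$ is genuinely the summand $f^{-1}(U)$ of $T$ rather than an abstract pullback, and the vacuous base case $S = \emptyset$, which is the reason the hypothesis ``$I$ nonempty'' (implicit in $I$ being a subcategory) is used.
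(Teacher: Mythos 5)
Your inductive argument correctly establishes the direction \cref{Windex segal condition}~$\Rightarrow$~(orbit-wise condition): the setup via extensivity of $\FF_{\cT}$ is exactly right, the induction on $\abs{\Orb(S)}$ works, and the inductive step is a clean application of the Segal condition to the decomposition $f \simeq f_{U_0} \sqcup f'$. This is the ``basic inductive argument'' the paper gestures at.

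However, the observation asserts an \emph{equivalence} of conditions, and you only prove one implication. You reduce the statement to ``$f \in I$ iff $f_U \in I$ for all $U \in \Orb(S)$'' and derive that from \cref{Windex segal condition}; that is the implication \cref{Windex segal condition}~$\Rightarrow$~(orbit-wise). The converse --- that the orbit-wise condition implies \cref{Windex segal condition} --- is not addressed. It is immediate and requires no induction: given $g\cln T \rightarrow S$ and $h\cln T' \rightarrow S'$, one has $\Orb(S \sqcup S') \simeq \Orb(S) \sqcup \Orb(S')$, and for $U \in \Orb(S)$ the pullback $(g \sqcup h)_U$ coincides with $g_U$ (and similarly with $h_U$ for $U \in \Orb(S')$), since $T'$ maps into the disjoint summand $S'$. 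Applying the orbit-wise condition to $g \sqcup h$, to $g$, and to $h$ then yields $g \sqcup h \in I \iff (g \in I \text{ and } h \in I)$. You should include this to actually prove the equivalence.

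A smaller point: in the degenerate base case $S = \emptyset$ you invoke \cref{Restriction stable condition} to conclude $\id_\emptyset \in I$. This is available if one reads the observation as taking place inside the definition of a weak indexing category, where (IC-a) is also in force, but it is not strictly an input to the claimed equivalence. In fact \cref{Windex segal condition} alone (together with nonemptiness of $I$) gives $\id_\emptyset \in I$: take $T = S = \emptyset$ and any $T' \rightarrow S' \in I$, so that $T \sqcup T' \rightarrow S \sqcup S'$ is just $T' \rightarrow S' \in I$, and the ``only if'' direction of \cref{Windex segal condition} forces $\id_\emptyset \in I$. Using this keeps the argument self-contained to the two conditions being compared.
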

We denote the \emph{$I$-admissible sets} by $\uFF_I \deq \uSet_I \subset \FF_{\cT}$ as in \cref{Indexed coproducts notation}.
This is a full $\cT$-subcategory.
\begin{remark}\label{Pullback along orbits remark}\label{Profinite remark}
   By \cref{Reduction to maps to orbits observation}, in the presence of \cref{Windex segal condition}, \cref{Restriction stable condition} is equivalent to the condition that for all Cartesian diagrams in $\FF_{\cT}$
    \begin{equation}\label{Restriction along transfer}
        \begin{tikzcd}
            T \times_V U \arrow[r] \arrow[d,"\alpha'"] \arrow[rd,phantom,"\lrcorner" very near start]
            & T \arrow[d,"\alpha"]\\
            U \arrow[r]
            & V
        \end{tikzcd}
    \end{equation}
    with $U,V \in \cT$ and $\alpha \in I$, we have $\alpha' \in I$.
\end{remark}

Inspired by \cref{Reduction to maps to orbits observation,Profinite remark}, in \cite[Thm~A]{Windex} we prove the following.
\begin{proposition}
  The assignment $I \mapsto \uFF_I$ implements an equivalence between the posets of $\cT$-weak indexing categories and $\cT$-weak indexing systems.
\end{proposition}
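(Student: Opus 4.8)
The plan is to produce an explicit order-preserving inverse to the (evidently order-preserving) assignment $\Phi\colon I \mapsto \uFF_I$, so that the two assemble into an isomorphism of posets. Given a $\cT$-weak indexing system $\uFF_J$, I would set $\Psi(\uFF_J) = I_J$ to be the subcategory of $\FF_\cT$ whose morphisms are exactly those $\varphi\colon T \to S$ with $T \times_S U$ an object of $\FF_{J,U}$ for every $U \in \Orb(S)$. The formal point underlying the whole argument is that, by \cref{Windex segal condition} and \cref{Reduction to maps to orbits observation}, a subcategory $I \subset \FF_\cT$ satisfying the Segal condition is recovered from its slice categories over orbits, i.e.\ from the record $\uFF_I$; symmetrically, $I_J$ is by construction the unique Segal-closed subcategory whose maps with orbit target are the ones recorded by $\uFF_J$. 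Granting that $\Phi$ and $\Psi$ are well defined, the round-trip identities $\Psi\Phi = \mathrm{id}$ and $\Phi\Psi = \mathrm{id}$ are then immediate from this dictionary, so the substance of the proof lies in the two well-definedness claims, which amount to matching the axioms of \cref{Windex definition} on the two sides.

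For well-definedness of $\Phi$ I would translate the three weak-indexing-category axioms. Fullness of $\uFF_I \subset \uFF_\cT$ — equivalently, stability of the values $\FF_{I,V}$ under the restrictions $\Res_U^V$ — is exactly \cref{Pullback along orbits remark}, which identifies \cref{Restriction stable condition} (in the presence of the Segal condition) with stability under pullback along maps of orbits; since full $\cT$-subcategories are automatically closed under automorphisms of their objects, \cref{Automorphism condition} plays no role here. Condition (IS-a) holds because any $S \in \FF_{I,V}$ exhibits $V$ as the target of a morphism of $I$, forcing $\mathrm{id}_V \in I$ and hence $*_V \in \FF_{I,V}$. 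Condition (IS-b) is the translation of closure of $I$ under composition: for $S \in \FF_{I,V}$ with orbits $(U_i)$ and $T_i \in \FF_{I,U_i}$, \cref{Coproducts and ind observation} identifies the $\uFF_I$-indexed coproduct $\coprod^S_{U_i} T_i$ with $\coprod_i \Ind_{U_i}^V T_i$, whose induction to $V$ factors as $\coprod_i \Ind_{U_i}^\cT T_i \to \coprod_i U_i = \Ind_V^\cT S \to V$; the first map lies in $I$ by \cref{Windex segal condition} (each $\Ind_{U_i}^\cT T_i \to U_i$ being in $I$) and the second because $S \in \FF_{I,V}$, so composability of $I$ places $\coprod^S_{U_i} T_i$ in $\FF_{I,V}$.

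For well-definedness of $\Psi$ I would run the same translation backwards. The Segal condition \cref{Windex segal condition} and automorphism-closure \cref{Automorphism condition} for $I_J$ are immediate from the definition, since membership of $\varphi\colon T \to S$ depends only on the isomorphism types of the fibers $T\times_S U$; restriction-stability \cref{Restriction stable condition} is again \cref{Pullback along orbits remark} applied to fullness of $\uFF_J$; and the structure of objects and identities of $I_J$ is governed by (IS-a) as above. The one genuinely computational step — which I expect to be the crux — is that $I_J$ is closed under composition: for $\varphi\colon T \to S$ and $\psi\colon S \to R$ in $I_J$ and $W \in \Orb(R)$, one rewrites $T\times_R W \simeq T\times_S (S\times_R W)$ with $S\times_R W \in \FF_{J,W}$ since $\psi \in I_J$, decomposes $S\times_R W$ into orbits $(U_j)$ — each mapping to a unique orbit $U'_j$ of $S$, so that $T\times_S U_j$ is a restriction of $T\times_S U'_j \in \FF_{J,U'_j}$ and hence lies in $\FF_{J,U_j}$ by restriction-stability of $\uFF_J$ — and recognizes $T\times_R W \simeq \coprod_j (T\times_S U_j)$ as a $\uFF_J$-indexed coproduct over $S\times_R W$, which lies in $\FF_{J,W}$ by (IS-b). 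Since $\Phi$ and $\Psi$ are manifestly monotone, assembling these verifications yields the asserted poset equivalence.
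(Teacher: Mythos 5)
Your proof is correct and complete. Note that the paper itself does not supply a proof here — it defers to \cite[Thm~A]{Windex} — so there is no in-text argument to compare against. Your construction of the explicit inverse $\Psi$, recording precisely which maps $T \to S$ have all fibers $T\times_S U$ in $\FF_{J,U}$, together with \cref{Reduction to maps to orbits observation} (which says a Segal-closed subcategory is recovered from its maps to orbits) for the round-trip identities, is the natural direct argument and presumably what appears in the cited reference. The well-definedness checks are all sound: you correctly identify that (IC-a) $\Leftrightarrow$ restriction-stability of $\uFF_I$ via \cref{Profinite remark}, that (IS-b) for $\Phi(I)$ is composition plus (IC-b) for $I$ via the factorization $\coprod_i \Ind_{U_i}^\cT T_i \to \Ind_V^\cT S \to V$, and — the one genuinely non-formal step — that composability of $I_J$ follows from rewriting $T\times_R W$ as the $\uFF_J$-indexed coproduct $\coprod_j^{S\times_R W}\Res_{U_j}^{U'_j}(T\times_S U'_j)$ and applying restriction-stability and (IS-b) of $\uFF_J$.
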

We additionally recall the following conditions, which may equivalently be restated for weak indexing categories by \cite[Thm~A]{Windex}.
In view of \cite[\S~2.4]{Windex}, we encourage the reader to think primarily of \emph{unitality}.
\begin{definition}
  We say that $\uFF_I$
  \begin{enumerate}[label={(\roman*)}]
    \item has one color if for all $V \in \cT$, we have $\FF_{I,V} \neq \emptyset$,
    \item is almost essentially unital if for all non-contractible $V$-sets $S \sqcup S' \in \FF_{I,V}$, we have $S,S' \in \FF_{I,V}$,
    \item is unital if it has one color and for all $V$-sets $S \sqcup S' \in \FF_{I,V}$, we have $S,S' \in \FF_{I,V}$, and
    \item is an \emph{indexing system} if the subcategory $\uFF_{I,V} \subset \FF_V$ is closed under finite coproducts for all $V \in \cT$.
  \end{enumerate}
  These lie in a diagram of embedded sub-posets
  \[
    \IndSys_{\cT} \subset \wIndSys_{\cT}^{\uni} \subset \wIndSys_{\cT}^{aE\uni} \subset \wIndSys_{\cT}.\qedhere
  \]
\end{definition}

If a weak indexing category $I$ corresponds with a weak indexing system satisfying property $P$, we say that $I$ satisfies property $P$;
if $\uFF_I$ is an indexing system, we say $I$ is an indexing category.
When $\cT = \cO_G$, $\cT$-indexing systems and indexing categories recover the notions given the same name in \cite{Blumberg-op,Blumberg_incomplete} (see \cite{Windex}).
Some useful invariants of these include
\begin{equation}\label{The families equation}
  \begin{split}
    c(I) &\deq \cbr{V \in \cT \mid *_V \in \FF_{I,V}};\\ 
    \upsilon(I) &\deq \cbr{V \in \cT \mid \emptyset_V \in \FF_{I,V}};\\ 
    \nabla(I) &\deq \cbr{V \in \cT \mid 2 \cdot *_V \in \FF_{I,V}}.
  \end{split}
\end{equation}
These are each families \cite[\S~1.2]{Windex}, which we call the families of \emph{colors}, \emph{units}, and \emph{fold maps} in $I$.

We will import these into the setting of $\cT$-operads in \cref{Definition of NIinfty proposition} through \emph{weak $\cN_\infty$-operads}, which play an important structural role in $\Op_{\cT}$.
Narrowly, this role comes down to the fact that indexed coproducts appear as the arities of \emph{compositions} of indexed operations, so weak indexing systems occur as the possible ``arity supports'' that $\cT$-equivariant algebraic theories can have, so long as they possess identity operations and they allows for the formation of composite operations.

Moreover, in the setting of indexing systems, it is typical to make frequent reductions of $S$-ary operations to $[H/K]$-ary operations and binary operations.
In the setting of weak indexing systems, we say that a\emph{sparse $V$-set} is a $V$-set of the form 
\[
  \epsilon \cdot *_V \sqcup W_1 \sqcup \cdots \sqcup W_n
\]
where $\epsilon \in \cbr{0,1}$ and there exist no maps $W_i \rightarrow W_j$ over $V$ for $i \neq j$.
The relevant generation statement is the following.
\begin{proposition}[{\cite[\S~3.1]{Windex}}]\label{Sparse generation prop}
   Suppose $\uFF_I,\uFF_J$ are weak indexing systems.
   \begin{enumerate}
     \item If $\uFF_I$ is almost essentially unital then $\uFF_I \subset \uFF_J$ if and only if $\uFF_J$ contains all sparse $I$-admissible $V$-sets.
     \item If $\uFF_I$ is an indexing system then $\uFF_I \subset \uFF_J$ if and only if for all $V \in \cT$, $\FF_{J,V}$ contains $\emptyset_V, 2 \cdot *_V$, and all transitive $I$-admissible $V$-sets.
   \end{enumerate}
\end{proposition}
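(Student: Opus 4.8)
The plan is to treat the two forward implications as formalities and reduce the content of each equivalence to a single \emph{generation} statement. In both parts, if $\uFF_I \subseteq \uFF_J$ then every $I$-admissible $V$-set lies in $\FF_{J,V}$, so in particular so do the sparse ones; and when $\uFF_I$ is an indexing system its fibres $\FF_{I,V}$ are closed under finite coproducts for every $V$ (hence nonempty, and containing $\emptyset_V$ and $2\cdot *_V$), so these too lie in $\FF_{J,V}$. Thus the substance is the reverse implication, i.e.\ the claim that $\uFF_I$ is the \emph{smallest} weak indexing system whose admissible sets include the indicated collection $\mathcal G$ of $I$-admissible sets; granting this, any weak indexing system $\uFF_J$ with $\mathcal G \subseteq \uFF_J$ satisfies $\uFF_I \subseteq \uFF_J$. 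I would prove this by induction on the cardinality $|S|$ of an $I$-admissible $V$-set $S$ --- the number of orbits of $S$ counted with multiplicity, with $V$ ranging over $\cT$ --- showing $S \in \FF_{J,V}$ by decomposing $S$ into strictly smaller $I$-admissible sets, applying the inductive hypothesis to the pieces, and reassembling $S$ as a $\uFF_J$-indexed coproduct, which is admissible as soon as its indexing object and all its fibres are, $\uFF_J$ being a weak indexing system.

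For the indexing-system statement the induction runs smoothly. If $S$ is empty or transitive then $S \in \mathcal G$, which handles the base cases; otherwise $|S| \geq 2$, so I pick an orbit $U$ of $S$ and write $S \simeq U \sqcup S'$ with $S'$ nonempty. Since every indexing system is unital (as recorded in the diagram $\IndSys_{\cT} \subset \wIndSys_{\cT}^{\uni} \subset \cdots$ above), both $U$ and $S'$ lie in $\FF_{I,V}$; the inductive hypothesis gives $S' \in \FF_{J,V}$, while $U \in \mathcal G \subseteq \FF_{J,V}$ as a transitive $I$-admissible $V$-set. Now $2\cdot *_V \in \FF_{J,V}$ by hypothesis, so identifying each of its two orbits with $V$ and forming the $\uFF_J$-indexed coproduct over it with fibres $U$ and $S'$ recovers $U \sqcup S' \simeq S$; hence $S \in \FF_{J,V}$. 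Here $\emptyset_V$ serves only for the empty base case, and the admissibility of the pieces is exactly unitality.

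The almost-essentially-unital statement follows the same scheme --- reduce $|S|$ by splitting off a summand (legitimate by almost essential unitality, since $|S| \geq 2$ makes $S$ non-contractible), take sparse $S$ as the base cases, and reassemble along a transitive or two-orbit indexing object --- but the reassembly is where the real work lies, and is the step I expect to be the main obstacle. An arbitrary almost essentially unital weak indexing system need not contain $2\cdot *_V$ (that would be the condition $V \in \nabla(I)$ of \cref{The families equation}), so the object used to glue $S$ back together must itself be a \emph{sparse} $I$-admissible set in order to lie in $\mathcal G$. When $S$ is non-sparse I would locate the witnessing configuration --- a repeated non-terminal orbit, a pair of comparable non-terminal orbits, or three-or-more copies of $*_V$ --- and use it to realize $S$ as a $\uFF_J$-indexed coproduct of strictly smaller $I$-admissible sets along a suitable sparse indexing object (e.g.\ $2\cdot *_V$ to peel off a copy of $*_V$ once $S$ contains two of them, or the transitive orbit $U$ with single fibre $2\cdot *_W$ over its underlying object $W$ when $S$ contains a repeated copy of $U$), reducing first to the case that $S$ has at most one copy of $*_V$. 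The delicate point is verifying that the needed indexing objects and fibres are themselves $I$-admissible: here I would lean on the orbit-local characterization of membership in a weak indexing category (\cref{Reduction to maps to orbits observation}), restriction-stability (\cref{Restriction stable condition}), the Segal condition (\cref{Windex segal condition}), and the precise content of almost essential unitality, pushing and pulling the structure map of the already-admissible $S$ to produce the required sparse admissible sets. Granting these verifications, the inductive step closes exactly as in the indexing-system case.
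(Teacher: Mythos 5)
The paper itself does not prove this proposition --- it is cited from the companion work [\S~3.1]{Windex} --- so there is no in-text argument to compare against. That said, your reduction of both equivalences to a generation statement (that $\uFF_I$ is the smallest weak indexing system containing the indicated collection) is the natural strategy, and your induction on total orbit count is complete and correct for part~(2): peeling off a single orbit $U$ from $S$ is legitimate by unitality, and reassembling over $2\cdot *_V$ with fibres $U$ and $S\setminus U$ works because the indexing object, the transitive $I$-admissible fibre, and (by induction) the other fibre all lie in $\uFF_J$ by hypothesis.

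For part~(1), though, you have correctly located the crux --- that the reassembly must happen over a \emph{sparse} $I$-admissible indexing object, and that the fibres must themselves be sparse $I$-admissible sets --- and then explicitly conceded it, writing ``granting these verifications.'' That concession is precisely where the content lives, and the argument does not close without it. To finish: after peeling copies of $*_V$ down to at most one (using the sparse set $2\cdot *_V$), non-sparsity of $S$ is witnessed by a map $W_i \to W_j$ between distinct non-terminal orbit summands. Writing $W$ for the object of $\cT$ underlying $W_j$, one must produce a sparse $I$-admissible $W$-set to serve as the fibre over $W_j$ in the reassembly --- namely $2\cdot *_W$ when $W_i \simeq W_j$, or $*_W \sqcup W_i'$ when they differ (where $W_i'$ is $W_i$ regarded as a $W$-set via the given map). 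The verification is a restriction argument: $\Res_W^V S \in \FF_{I,W}$ by \cref{Restriction stable condition}, it contains the desired $W$-set as a summand because atomicity makes the sections $W_i \to W_i \times_V W$ and $W \to W\times_V W$ into summand inclusions, $\Res_W^V S$ is non-contractible, and almost essential unitality then splits off the required sparse $I$-admissible fibre. None of this is in your proposal, so as written the inductive step for part~(1) is an outline with its load-bearing lemma unproved. (A minor slip: the witnessing configurations for non-sparsity should include ``two or more copies of $*_V$ unless $S = 2\cdot *_V$'' rather than ``three or more''; your peeling step at two copies shows you intend this.)
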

\subsubsection{Indexed semiadditivity}
One central source of weak indexing categories is \emph{indexed semiadditivity}, which only makes sense to evaluate in the pointed setting.
\begin{definition}
  Given $\cF \subset \cT$ a $\cT$-family, we say that $\cD$ is \emph{$\cF$-pointed} if $\cD_V$ is pointed for all $V \in \cF$.
\end{definition}

\def\Nm{\mathrm{Nm}}

Given $S \in \FF_V$ a finite $V$-set with a distinguished orbit $W \subset S$, $\cD$ a $\cT_{\leq V}$-pointed $\cT$-$\infty$-category admitting $S$-indexed products and coproducts, and $(X_U) \in \cD_S$ a $S$-tuple in $\cD$, \cite[Cons~5.2]{Nardin-Stable} constructs a map
\[
  \chi_W\cln \Res_W^V \coprod_U^S X_U \rightarrow X_W
\]
by distinguishing a ``diagonal'' $X_W$-summand on the left hand side and dictating the map to be the indentity on this summand and 0 elsewhere;
then, the \emph{norm map} 
\[
  \mathrm{Nm}_S\cln \coprod_U^S X_U \rightarrow \prod_U^S X_W
\]
has projected map $\coprod_U^S X_U \rightarrow \CoInd_W^V X_W$ adjunct to $\chi_W$.

\begin{definition}
  Given $\cD$ a $\cT$-$\infty$-category and $S \in \FF_V$ a finite $V$-set, we say that $S$ is \emph{$\cD$-ambidextrous} if $\cD$ admits $S$-indexed products and coproducts, is $\cT_{\leq V}$-pointed, and for all $(X_U) \in \cD_S$, the norm map is an equivalence
  \[
    \coprod_U^S X_U \xrightarrow\sim \prod_U^S X_U.
  \]
  Given $I$ a $\cT$-weak indexing category, we say that $\cD$ is \emph{$I$-semiadditive} if $S$ is $\cD$-ambidextrous for all $S \in \uFF_I$.
\end{definition}

\begin{remark}\label{Ambidextrous composition remark}
  We've given an elementary presentation of this notion;
  this has been generalized to encapsulate Hopkins-Lurie's \emph{higher semiadditivity} in \cite{Cnossen_semiadditive} (see Example 3.37 there).
  In particular, we find that $T \rightarrow S$ is $\cD$-ambidextrous in the sense of \cite{Cnossen_semiadditive} if and only if the $U$-set $T \times_S U$ is $\cD$-ambidextrous for all orbits $U \subset S$, so we adpot their language for \emph{ambidextrous maps}.
  In particular, by \cite[Prop~3.13,Prop~3.16]{Cnossen_twisted}, ambidextrous maps are closed under composition and base change.  
\end{remark}

Given $\cD$ a $\cT$-$\infty$-category, we define the \emph{semiadditive locus} 
\[
  s(\cD) = \cbr{f\cln T \rightarrow S \, \mid \, f \text{ is } \cD\text{-ambidextrous}} \subset \FF_{\cT}.
\]
This is closed under composition by \cref{Ambidextrous composition remark};
furthermore, it's clear that an equivalence $T \simeq S$ is $\cD$-ambidextrous if and only if $\cD$ is $\cT_{\leq V}$-pointed, so $s(\cD) \subset \FF_{\cT}$ is a subcategory satisfying \cref{Automorphism condition}. 
In fact, we may say more.
\begin{proposition}
  $s(\cD)$ is a weak indexing category and $\cD$ is $I$-semiadditive if and only if $I \leq s(\cD)$.
\end{proposition}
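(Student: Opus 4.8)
The plan is to establish the two weak indexing category axioms for $s(\cD)$ that are not yet in hand — pullback-stability \cref{Restriction stable condition} and the Segal condition \cref{Windex segal condition} — and then to compare $I$-semiadditivity with the inclusion $I \leq s(\cD)$ by unwinding, via \cref{Ambidextrous composition remark}, what it means for a map of finite $\cT$-sets to be $\cD$-ambidextrous.

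First I would note that, by the preceding discussion, $s(\cD) \subset \FF_{\cT}$ is already a subcategory closed under composition and satisfying \cref{Automorphism condition}, so only \cref{Restriction stable condition,Windex segal condition} remain. Both are immediate from the fact (\cref{Ambidextrous composition remark}) that a map $T \to S$ lies in $s(\cD)$ exactly when $T \times_S U$ is a $\cD$-ambidextrous $U$-set for every orbit $U \subset S$. For \cref{Windex segal condition}: $\Orb(S \sqcup S') = \Orb(S) \sqcup \Orb(S')$ and $(T \sqcup T') \times_{S \sqcup S'} U \simeq T \times_S U$ for $U \in \Orb(S)$ (symmetrically for $U \in \Orb(S')$), so the orbit-wise fibers of $T \sqcup T' \to S \sqcup S'$ are precisely those of $T \to S$ together with those of $T' \to S'$; hence $T \sqcup T' \to S \sqcup S' \in s(\cD)$ if and only if $T \to S$ and $T' \to S'$ both lie in $s(\cD)$. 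For \cref{Restriction stable condition}: given $\alpha\colon T \to S$ in $s(\cD)$, any $g\colon S' \to S$ in $\FF_{\cT}$, and $U' \in \Orb(S')$, the fiber $(T \times_S S') \times_{S'} U' \simeq T \times_S U'$ is the base change of $\alpha$ along $U' \to S$, which is $\cD$-ambidextrous since ambidextrous maps are closed under base change (\cref{Ambidextrous composition remark}); thus $T \times_S S' \to S' \in s(\cD)$. (Alternatively, this is immediate from \cref{Profinite remark} once \cref{Windex segal condition} is known.) Hence $s(\cD)$ is a weak indexing category.

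For the equivalence, recall that $\cD$ is $I$-semiadditive if and only if every $S \in \uFF_I$ is $\cD$-ambidextrous, i.e.\ for every $V \in \cT$ and every $V$-set $S$ with $\Ind_V^{\cT} S \to V \in I$ the $V$-set $S$ is $\cD$-ambidextrous; and since $V$ is an orbit, $\Ind_V^{\cT} S \to V$ lies in $s(\cD)$ exactly when the $V$-set $S$ is $\cD$-ambidextrous. If $I \leq s(\cD)$, then for any $S \in \FF_{I,V}$ the map $\Ind_V^{\cT} S \to V$ lies in $I \subseteq s(\cD)$, so $S$ is $\cD$-ambidextrous; hence $\cD$ is $I$-semiadditive. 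Conversely, assume $\cD$ is $I$-semiadditive and let $\alpha\colon T \to S$ be a morphism of $I$. For each orbit $U \subset S$, pullback-stability \cref{Restriction stable condition} of $I$ gives $T \times_S U \to U \in I$, hence $T \times_S U \in \FF_{I,U}$, hence $T \times_S U$ is a $\cD$-ambidextrous $U$-set by $I$-semiadditivity; as $U$ is an orbit this says $T \times_S U \to U \in s(\cD)$. Since this holds for all orbits $U \subset S$, the orbit-wise criterion \cref{Ambidextrous composition remark} gives $\alpha \in s(\cD)$, so $I \subseteq s(\cD)$.

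The only genuinely delicate point — and the step I would be most careful about — is the translation between the elementary notion of a $\cD$-ambidextrous $V$-set and the orbit-wise notion of a $\cD$-ambidextrous map of finite $\cT$-sets supplied by \cref{Ambidextrous composition remark}: one must consistently use that a map whose target is an orbit is $\cD$-ambidextrous precisely when its source is a $\cD$-ambidextrous $V$-set, and that $S \in \FF_{I,V}$ is by definition the assertion $\Ind_V^{\cT} S \to V \in I$. With these two dictionary entries fixed, both implications are formal.
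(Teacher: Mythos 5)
Your proof is correct and follows the same strategy as the paper: verify \cref{Windex segal condition,Restriction stable condition} via the orbit-wise ambidexterity criterion and base-change closure from \cref{Ambidextrous composition remark} (which is also what \cref{Reduction to maps to orbits observation} is for), then unwind the definition of $I$-semiadditivity against $\uFF_I$ to get the equivalence. Your write-up simply makes explicit the "follows immediately by unwinding definitions" step that the paper leaves terse.
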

\begin{proof}
  By \cref{Reduction to maps to orbits observation,Ambidextrous composition remark}, $s(\cD)$ satisfies \cref{Windex segal condition}.
  In fact, by \cref{Ambidextrous composition remark}, ambidextrous maps are closed under base change, i.e. $s(\cD)$ satisfies \cref{Restriction stable condition}.
  We're left with verifying that $\cD$ is $I$-semiadditive if and only if $I \leq s(\cD)$, but this follows immediately by unwinding definitions.
\end{proof}

By \cite{Windex}, the poset $\wIndCat_{\cT}$ has joins, which we write as $- \vee -$.
The following is immediate.
\begin{corollary}\label{Join of semiadditivity}
  $\cD$ is $I \vee J$-semiadditive if and only if it is $I$-semiadditive and $J$-semiadditive.
\end{corollary}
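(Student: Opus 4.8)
The plan is to reduce everything to the characterization of $I$-semiadditivity via the semiadditive locus $s(\cD)$ proved just above, together with the universal property of the join in $\wIndCat_{\cT}$. The key observation is that all of the comparisons in question take place inside the poset $\wIndCat_{\cT}$: by the preceding proposition, $s(\cD)$ is not merely a subcategory of $\FF_{\cT}$ but an honest $\cT$-weak indexing category, so it is a legitimate element of $\wIndCat_{\cT}$ against which we may test the defining property of $\vee$.

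Concretely, I would first recall that for \emph{any} $\cT$-weak indexing category $K$ the preceding proposition gives the equivalence ``$\cD$ is $K$-semiadditive $\iff K \leq s(\cD)$''. Applying this with $K = I \vee J$ reduces the assertion of the corollary to the purely poset-theoretic claim
\[
  I \vee J \leq s(\cD) \qquad \iff \qquad I \leq s(\cD) \text{ and } J \leq s(\cD).
\]
Next I would observe that this last equivalence is exactly the statement that $I \vee J$ is the least upper bound of $I$ and $J$ in $\wIndCat_{\cT}$, applied to the element $s(\cD)$ of that poset; this is precisely the content of the join cited from \cite{Windex}. Finally, re-applying the characterization ``$\cD$ is $K$-semiadditive $\iff K \leq s(\cD)$'' in the cases $K = I$ and $K = J$ converts the right-hand side into ``$\cD$ is $I$-semiadditive and $J$-semiadditive'', completing the argument.

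I do not expect a genuine obstacle here: once the two inputs (the semiadditive-locus characterization and the existence of joins in $\wIndCat_{\cT}$) are in hand, the corollary is a formal consequence. The only subtlety worth flagging in the write-up is the one already noted — that the inequality $I \vee J \leq s(\cD)$ is being interpreted inside $\wIndCat_{\cT}$, which is permissible exactly because $s(\cD)$ was shown to satisfy \cref{Restriction stable condition}, \cref{Windex segal condition}, and \cref{Automorphism condition}, and hence lies in $\wIndCat_{\cT}$ rather than in the larger poset of arbitrary subcategories of $\FF_{\cT}$ (where $\vee$ need not have the same universal property).
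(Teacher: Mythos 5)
Your proof is correct and is essentially the argument the paper has in mind — the paper states the corollary ``is immediate'' from the preceding proposition, and the immediacy is exactly the reduction you spell out: $I\vee J \leq s(\cD)$ iff $I \leq s(\cD)$ and $J \leq s(\cD)$, which is the universal property of the join in $\wIndCat_{\cT}$ applied to the element $s(\cD)$, whose membership in $\wIndCat_{\cT}$ the preceding proposition establishes. The care you take to flag that $s(\cD)$ must lie in $\wIndCat_{\cT}$ (rather than merely $\Sub_{\Cat}(\FF_{\cT})$) for the join's universal property to apply is exactly the right subtlety to note.
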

From \cref{Sparse generation prop}, we acquire a proof of a familiar corollary:
in the setting of indexing categories, $I$-semiadditivity is a combination of fiberwise semiadditivity and $I$-admissible Wirthm\"uller isomorphisms.
\begin{corollary}
  Let $I$ be an almost essentially unital weak indexing category and $\cD$ a $c(I)$-pointed $\cT$-$\infty$-category.
  Then,
  \begin{enumerate}
    \item $\cD$ is $I$-semiadditive if and only if all sparse $V$-sets are $\cD$-ambidextrous.
    \item If $I$ is an indexing category, then $\cD$ is $I$-semiadditive if and only if $\cD_V$ is semiadditive for all $V \in \cT$ and for all maps of orbits $U \rightarrow V$ in $I$ and objects $X \in \cD_U$, the norm map
      \[
        \Ind_U^V X \rightarrow \CoInd_U^V X
      \]
      is an equivalence.
  \end{enumerate}
\end{corollary}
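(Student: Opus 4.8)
The plan is to deduce both parts formally from two facts already in hand: the preceding proposition, which identifies $I$-semiadditivity of $\cD$ with the containment $I \le s(\cD)$ of weak indexing categories and hence, via the poset equivalence $I \mapsto \uFF_I$, with the containment $\uFF_I \subseteq \uFF_{s(\cD)}$ of weak indexing systems; and the sparse generation criterion \cref{Sparse generation prop}, which we apply with $J \deq s(\cD)$ --- this is legitimate precisely because $s(\cD)$ is a weak indexing category, again by the preceding proposition. Throughout we use that, by construction of the semiadditive locus, a $V$-set $S$ lies in $\uFF_{s(\cD)}$ if and only if $S$ is $\cD$-ambidextrous; so in both parts we are merely rephrasing ``$\uFF_I \subseteq \uFF_{s(\cD)}$'' via \cref{Sparse generation prop} and then unwinding what $\cD$-ambidexterity means for the small $V$-sets that appear.

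\emph{For part (1):} since $I$ is almost essentially unital, \cref{Sparse generation prop}(1) gives that $\uFF_I \subseteq \uFF_{s(\cD)}$ if and only if $\uFF_{s(\cD)}$ contains every sparse $I$-admissible $V$-set, i.e. if and only if every such set is $\cD$-ambidextrous. The one point that needs a word is that the $c(I)$-pointedness hypothesis supplies exactly the pointedness these ambidexterity statements require: if $S$ is an $I$-admissible $V$-set and $W \in \Orb(S)$, then pulling $S \to V$ back along $W \to V$ yields an $I$-admissible $W$-set $S \times_V W$ which is nonempty (it contains $W \times_V W$, which in turn contains the diagonal copy of $W$), whence axiom (IS-a) forces $*_W \in \FF_{I,W}$, i.e. $W \in c(I)$; so $\cD$ is pointed at every orbit of every $I$-admissible $V$-set, which is all the pointedness needed.

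\emph{For part (2):} an indexing category is in particular unital, so here $c(I) = \cT$ and the hypothesis just says $\cD_V$ is pointed for all $V$. Now \cref{Sparse generation prop}(2) gives that $\uFF_I \subseteq \uFF_{s(\cD)}$ if and only if, for every $V \in \cT$, the $V$-sets $\emptyset_V$, $2 \cdot *_V$, and every transitive $I$-admissible $V$-set are $\cD$-ambidextrous; it remains to translate these. The set $\emptyset_V$ is automatically $\cD$-ambidextrous, as its norm map is the canonical map from the initial to the terminal object of the pointed category $\cD_V$. The set $2 \cdot *_V$ has $\cD_{2 \cdot *_V} \simeq \cD_V \times \cD_V$ with norm map the map $X \sqcup Y \to X \times Y$ of $\cD_V$, so $2 \cdot *_V$ is $\cD$-ambidextrous for all $V$ precisely when every $\cD_V$ is semiadditive. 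Finally, a transitive $I$-admissible $V$-set is an orbit $U$ equipped with a structure map $(U \to V) \in I$; its associated indexed coproduct and product are $\Ind_U^V$ and $\CoInd_U^V$, so it is $\cD$-ambidextrous precisely when $\Ind_U^V X \to \CoInd_U^V X$ is an equivalence for every $X \in \cD_U$. Assembling the three translations --- over all $V$, and over all maps of orbits in $I$ --- yields the asserted characterization.

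The argument is entirely formal once \cref{Sparse generation prop} and the proposition on $s(\cD)$ are granted; the only mild subtlety, and hence the ``main obstacle'' such as it is, is the pointedness bookkeeping in part (1): verifying that $c(I)$-pointedness is exactly the amount of pointedness making all the relevant ambidexterity assertions well-posed, which reduces to the observation that every orbit of an $I$-admissible $V$-set is a color of $I$.
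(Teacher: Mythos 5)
Your proof is correct and takes exactly the approach the paper intends but does not spell out: the corollary is flagged in the text as a direct consequence of \cref{Sparse generation prop} together with the immediately preceding proposition (which records that $s(\cD)$ is a weak indexing category and that $\cD$ is $I$-semiadditive iff $I \leq s(\cD)$), and you execute precisely this by translating $\uFF_I \subseteq \uFF_{s(\cD)}$ through the two generation criteria. The one remark worth making is that your pointedness bookkeeping in part (1) can be streamlined: once $\FF_{I,V} \neq \emptyset$, axiom (IS-a) gives $V \in c(I)$ directly, and since $c(I)$ is a $\cT$-family (downward closed under maps $W \to V$) every orbit of an $I$-admissible $V$-set already lies in $c(I)$ — there is no need to pull $S$ back along $W \to V$ and inspect the diagonal.
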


\subsubsection{$I$-commutative monoids as the $I$-semiadditive completion} 
Let $\mathrm{Trip}^{\adeq} \subset \Fun(\bullet \rightarrow \bullet \leftarrow \bullet, \Cat)$ be the full subcategory spanned by adequate triples.
By definition \cite[Def~3.6]{Barwick1}, $\Span_{-,-}(-)$ forms a functor $\mathrm{Trip}^{\adeq} \rightarrow \Cat$.
Fix $I$ a one-color weak indexing category.
Write $\uFF_{V} \deq \uFF_{\cT, /V} \simeq \uFF_{\cT_{/V}}$ and let $\uFF_{\cT}^I \subset \uFF_{\cT}$ be the wide subcategory whose $V$-value is $\prn{\uFF_{\cT}^I}_V \deq I_V \subset \FF_V \simeq \FF_{\cT, /V}$ is the wide subcategory of maps whose underlying map in $\FF_{\cT}$ lies in $I$.

The wide $\cT$-subcategory inclusion $\uFF_{\cT}^I \subset \uFF_{\cT}$ is fiberwise given by a (one object) weak indexing category \cite[\S~2.1]{Windex}, so in particular, this yields a functor $\cT^{\op} \rightarrow \mathrm{Trip}^{\adeq}$ (c.f. \cite[\S~4.1]{Cnossen_semiadditive}).
We use this to define the composite $\cT$-functor
{\zerodisplayskips
  \[
  \uSpan_I(\uFF_{\cT})\colon \cT^{\op} \xrightarrow{(\uFF_{\cT},\uFF_{\cT}, \uFF_{\cT}^I)} \mathrm{Trip}^{\adeq} \xrightarrow{\Span} \Cat.
\]}
\begin{definition}
  If $\cC$ is a $\cT$-$\infty$-category admitting $I$-indexed products, then the \emph{$\cT$-$\infty$-category of $I$-commutative monoids in $\cC$} is
  \[
    \uCMon_I(\cC) \deq \uFun_{\cT}^{I-\times}\prn{\uSpan_I(\uFF_{\cT}), \cC}.
  \]
  The \emph{$\infty$-category of $I$-commutative monoids} is $\CMon_I(\cC) \deq \Gamma^{\cT} \uCMon(\cC) \simeq \Fun_{\cT}^{I-\times}\prn{\uSpan_I(\uFF_{\cT}), \cC}$.
\end{definition}

\begin{definition}
  We say that a $\cT$-functor $F\colon \cD \rightarrow \cC$ is the \emph{$I$-semiadditive completion of $\cC$} if $\cD$ is $I$-semiadditive and for all $I$-semiadditive $\cT$-categories $\cE$, postcomposition along $F$ yields an equivalence
  \[
    \uFun^{I-\times}(\cE,\cD) \xrightarrow\sim \uFun^{I-\times}(\cE,\cC).
  \]
\end{definition}
Write $\Cat^{I-\times}_{\cT} \subset \Cat_{\cT}$ for the non-full subcategory of $\cT$-$\infty$-categories with $I$-indexed products and $I$-product preserving functors;
write $\iota\colon \Cat^{I-\oplus}_{\cT} \subset \Cat^{I-\times}_{\cT}$ for the full subcategory spanned by $I$-semiadditive $\cT$-$\infty$-categories.
The \emph{$I$-semiadditive completion} is, if it exists, the unit of a partially defined adjunction whose right adjoint is $\iota$.
In fact, it does exist, by the following fundamental theorem.\footnote{To see that the $\cT$-$\infty$-category $\uCMon_I(\cC)$ of \cite{Cnossen_semiadditive} agrees with ours, apply \cite[Lem~4.7]{Cnossen_semiadditive}.}
\begin{theorem}[{\cite[Thm~B]{Cnossen_semiadditive}}]\label{Semiadditivization theorem}
  $U\cln \uCMon_I(\cC) \rightarrow \cC$ is the $I$-semiadditive completion.
\end{theorem}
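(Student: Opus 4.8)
The plan is to verify the two conditions in the definition of the $I$-semiadditive completion: that $\uCMon_I(\cC)$ is $I$-semiadditive, and that postcomposition with the forgetful functor $U\colon \uCMon_I(\cC) \to \cC$ --- restriction along the canonical $\cT$-functor $\uAst_{\cT} \to \uSpan_I(\uFF_{\cT})$ picking out the terminal object $*_V$ in each fiber $\Span_I(\FF_V)$ --- induces an equivalence $\uFun_{\cT}^{I-\times}(\cE, \uCMon_I(\cC)) \xrightarrow{\sim} \uFun_{\cT}^{I-\times}(\cE, \cC)$ for every $I$-semiadditive $\cE$. Ultimately this is \cite[Thm~B]{Cnossen_semiadditive}, after matching our model of $\uCMon_I$ with the one used there via \cite[Lem~4.7]{Cnossen_semiadditive}; I would organize the argument around two inputs that together imply both conditions.

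The first input is that $\uSpan_I(\uFF_{\cT})$ is \emph{self-dual} in the appropriate sense: each $\Span_I(\FF_V)$ carries an identity-on-objects anti-involution exchanging the two legs of a span, and for $S \in \FF_{I,V}$ the norm map $\coprod^S X_U \to \prod^S X_U$ is inverted by the span assembled from the diagonal and codiagonal of $S$, with the triangle identities verified by a diagram chase through the pullback-of-spans composition and the Segal and base-change axioms \cref{Windex segal condition,Restriction stable condition}. The second --- and this is the crux --- is the parametrized universality of the span construction: the internal hom $\uFun_{\cT}^{I-\times}(-,-)$ on $\Cat_{\cT}^{I-\times}$ is part of a closed symmetric monoidal structure $\otimes_I$, and $\uSpan_I(\uFF_{\cT})$ is an idempotent algebra for $\otimes_I$ whose module $\cT$-$\infty$-categories are precisely the $I$-semiadditive ones. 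This is the $I$-parametrized analogue of the Gepner--Groth--Nikolaus description of commutative monoids \cite{Gepner} and is the substance of \cite{Cnossen_semiadditive}.

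Granting these, I would conclude as follows. By self-duality $\uCMon_I(\cC) = \uFun_{\cT}^{I-\times}(\uSpan_I(\uFF_{\cT}), \cC) \simeq \uSpan_I(\uFF_{\cT}) \otimes_I \cC$ is a module over the idempotent algebra $\uSpan_I(\uFF_{\cT})$, hence $I$-semiadditive. For the universal property, an $I$-semiadditive $\cE$ is a $\uSpan_I(\uFF_{\cT})$-module and so is fixed by $(-)\otimes_I\uSpan_I(\uFF_{\cT})$; hence the tensor--hom adjunction gives $\uFun_{\cT}^{I-\times}(\cE, \uCMon_I(\cC)) = \uFun_{\cT}^{I-\times}(\cE, \uFun_{\cT}^{I-\times}(\uSpan_I(\uFF_{\cT}), \cC)) \simeq \uFun_{\cT}^{I-\times}(\cE \otimes_I \uSpan_I(\uFF_{\cT}), \cC) \simeq \uFun_{\cT}^{I-\times}(\cE, \cC)$, and unwinding the identifications shows the composite equivalence is postcomposition with $U$.

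The hard part will be the second input --- the universality of $\uSpan_I(\uFF_{\cT})$ --- over an \emph{arbitrary} weak indexing category $I$: here the class of $I$-indexed coproducts need not be closed under fold maps (equivalently, $\nabla(I)$ can be a proper family, in the notation of \cref{The families equation}), so the usual inductive, bar-complex proof of the universality of spans must be executed relative to the families of colors, units, and fold maps $c(I)$, $\upsilon(I)$, $\nabla(I)$. Having reduced to that, the remaining ingredients --- the comparison of the two models of $\uCMon_I$, the construction of $\otimes_I$ with its tensor--hom adjunction, and the self-duality diagram chase --- are routine, and I would treat them briefly.
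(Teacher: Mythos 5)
The paper does not prove this statement: it is cited directly from Cnossen--Lenz--Linskens, with the only original content being the footnote that the model of $\uCMon_I$ defined here agrees with theirs via \cite[Lem~4.7]{Cnossen_semiadditive}. You correctly identify that citation and the model-comparison issue at the top, so at the level of what the paper actually does, you and the paper agree.

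The issue is with the proof sketch you append. Your argument has two ``inputs'': self-duality of $\uSpan_I(\uFF_{\cT})$, and the existence of a closed symmetric monoidal structure $\otimes_I$ on $\Cat_{\cT}^{I-\times}$ for which $\uSpan_I(\uFF_{\cT})$ is an idempotent algebra whose modules are the $I$-semiadditive $\cT$-$\infty$-categories. But you say yourself that this second input ``is the substance of \cite{Cnossen_semiadditive}''---which makes the sketch circular: it deduces the cited theorem from a statement that is essentially equivalent to it, rather than from anything more primitive. (Once $\uSpan_I(\uFF_{\cT})$ is known to be idempotent with modules the $I$-semiadditive categories, the conclusion is indeed bookkeeping, as you show; but establishing that is the theorem.) There is also a technical gap you don't flag: the closed symmetric monoidal structure $\otimes_I$ on the $\cT$-$\infty$-category of \emph{small} $\cT$-$\infty$-categories with $I$-indexed products is not something this paper constructs, and its smashing-localization arguments (\cref{Mode SMC}, \cref{Mode is Day theorem}) live in $\Pr^L_{\cT}$ and themselves \emph{use} the Cnossen--Lenz--Linskens theorems as input. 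The actual CLL route avoids the tensor product by proving directly that, for $I$-semiadditive $\cD$, evaluation at the unit $\uFun_{\cT}^{I-\times}(\uSpan_I(\uFF_{\cT}), \cD) \to \cD$ is an equivalence, and then deducing the universal property; that is the lemma your sketch would need to supply in place of the idempotency claim.
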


\subsubsection{Commutative monoids in \tcT-objects}\label{Commutative monoids subsection}
Let $I^{\infty}_{\cT} \subset \FF_{\cT}$ be the minimal indexing category \cite{Windex}.
\begin{observation}\label{I infty obs}
  $I^{\infty}_{\cT}$-indexed products are precisely \emph{trivially} indexed products;
  by \cref{Trivially indexed limits} the $I_{\cT}^{\infty}$-indexed product preserving functors are precisely the fiberwise product-preserving $\cT$-functors.
  Furthermore, a $\cT$-category is $I_{\cT}^{\infty}$-semiadditive if and only if, for each $V \in \cT$, the $\infty$-category $\cC_V$ is semiadditive.
  Thus we have equivalences 
  \begin{align*}
    \Cat_{\cT}^{I^{\infty}_{\cT}-\times} &\simeq \CoFr^{\cT}(\Cat^{\times}),\\
    \Cat_{\cT}^{I^{\infty}_{\cT}-\oplus} &\simeq \CoFr^{\cT}(\Cat^{\oplus}),\\
  \end{align*}
  compatible with the inclusions.
\end{observation}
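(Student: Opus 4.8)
The plan is to unwind each clause of \cref{Windex definition} and the definition of $I$-semiadditivity in the minimal case $I = I^{\infty}_{\cT}$ and observe that every restriction, norm, and coinduction collapses to an identity, so that nothing equivariant remains. First I would record that, since $I^{\infty}_{\cT}$ is a one-color indexing category, $*_V$ and hence every coproduct $n \cdot *_V$ lies in $\Set_{I^{\infty}_{\cT},V}$, and by minimality \cite{Windex} these are \emph{all} of the $I^{\infty}_{\cT}$-admissible $V$-sets; equivalently $\uFF_{I^{\infty}_{\cT}} \subset \uFF_{\cT}$ is the full $\cT$-subcategory spanned by the $n \cdot *_V$. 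Viewing $n \cdot *_V = \coprod^n \uAst_{\cT_{/V}}$ as a $\cT_{/V}$-category via $\Set_V \hookrightarrow \Cat_V$ (\cref{Indexed coproducts notation}), it is the constant $\cT_{/V}$-diagram at the discrete $n$-element $\infty$-category, i.e. it is $\Infl_e^{\cT_{/V}}$ applied to the discrete $n$-element $\infty$-category. Hence $\uSet_{I^{\infty}_{\cT}}$ is precisely the full $\cT$-subcategory defining trivially indexed (co)limits, so that $I^{\infty}_{\cT}$-indexed (co)limits are the trivially indexed (co)limits of \cref{Indexed coproducts notation} and \cref{Trivially indexed limits} (together with its fiberwise-opposite analogue for colimits) applies: a $\cT_{/V}$-extension of such a diagram in $\cC_{\uV}$ is a (co)limit diagram if and only if it is one in every fiber $\cC_U$, $U \in \cT_{/V}$.

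From this I would deduce the first two clauses. A $V$-limit diagram over $n \cdot *_V$ restricts to an $n$-fold product in each $\cC_U$ by the previous paragraph, and, being a cocartesian section, its formation commutes with the restrictions $\Res^U_{U'}$; conversely, \cref{Trivially indexed limits} shows that when each $\cC_U$ has finite products and each restriction preserves them, these fiberwise products assemble into a $V$-limit diagram. Thus $\cC$ strongly admits $I^{\infty}_{\cT}$-indexed products if and only if the functor $\cT^{\op} \to \Cat$ underlying $\cC$ factors through $\Cat^{\times}$. Since these limit diagrams are detected fiberwise, the same reasoning shows that a $\cT$-functor $F$ between two such $\cT$-categories strongly preserves $I^{\infty}_{\cT}$-indexed products if and only if each $F_V$ preserves finite products, i.e. $F$ is a fiberwise product-preserving $\cT$-functor.

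For the third clause I would unwind $I^{\infty}_{\cT}$-semiadditivity: $\cC$ is $I^{\infty}_{\cT}$-semiadditive if and only if each $n \cdot *_V$ is $\cC$-ambidextrous, i.e. $\cC$ admits the trivially indexed products and coproducts over $n \cdot *_V$, is $\cT_{\leq V}$-pointed (which over all $V$ amounts to fiberwise pointedness), and the norm map is an equivalence. As every orbit of $n \cdot *_V$ equals $V$ itself, every $\Res$ and $\CoInd$ entering the construction of $\chi_W$ and $\Nm_S$ recalled above is an identity, so $\Nm_{n \cdot *_V}$ is exactly the classical comparison map $\coprod_{i=1}^n X_i \to \prod_{i=1}^n X_i$ in $\cC_V$ whose components are the identity on the diagonal and $0$ off it. Hence all $n \cdot *_V$ are $\cC$-ambidextrous if and only if $\cC_V$ is pointed, has finite (co)products, and all these matrix maps are equivalences --- that is, if and only if $\cC_V$ is semiadditive.

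Finally I would assemble the equivalences. The inclusions $\Cat^{\times} \hookrightarrow \Cat$ and $\Cat^{\oplus} \hookrightarrow \Cat$ are subcategory inclusions, so applying $\Fun(\cT^{\op},-)$ gives subcategory inclusions $\CoFr^{\cT}(\Cat^{\times}), \CoFr^{\cT}(\Cat^{\oplus}) \hookrightarrow \Cat_{\cT}$; by the previous three paragraphs their images are exactly $\Cat_{\cT}^{I^{\infty}_{\cT}-\times}$ and $\Cat_{\cT}^{I^{\infty}_{\cT}-\oplus}$ (for the latter also using that a product-preserving functor between semiadditive $\infty$-categories automatically preserves finite coproducts, so that $\Cat^{\oplus} \subset \Cat^{\times}$ is \emph{full}, matching that $\Cat_{\cT}^{I^{\infty}_{\cT}-\oplus}$ is full in $\Cat_{\cT}^{I^{\infty}_{\cT}-\times}$). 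Compatibility with the inclusions is then immediate from $\Cat^{\oplus} \subset \Cat^{\times} \subset \Cat$. I expect the fiddliest points to be confirming that the construction of \cite{Nardin-Stable} genuinely degenerates to the ordinary biproduct comparison map when no proper subobjects are present, and being careful in the second paragraph that ``strongly admits $I^{\infty}_{\cT}$-indexed products'' really does force restriction functors to preserve finite products, so that the right-hand side is honestly $\CoFr^{\cT}(\Cat^{\times})$ and not some variant with laxer restriction-compatibility.
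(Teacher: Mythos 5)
Your proof is correct and follows exactly the route the paper intends: the paper states this as an unproved \emph{observation}, relying on the reader to unwind the minimality of $I^{\infty}_{\cT}$ together with \cref{Trivially indexed limits}, and you have done that unwinding carefully and accurately (including the non-obvious point that strong admissibility of trivially-indexed products forces the restriction functors to preserve fiberwise products, via cocartesianness of the limit cone, so the right-hand side is honestly $\CoFr^{\cT}(\Cat^{\times})$ and not a lax variant).
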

\cref{Cofree adjunction,I infty obs} directly imply that the $I^\infty$-semiadditive closure satisfies
\[
  \uCMon_{I^{\infty}_{\cT}}(\cC) \simeq \prn{\cT^{\op} \xrightarrow{\cC} \Cat^{\times} \xrightarrow{\CMon} \Cat^{\oplus}};
\]
Cnossen-Lenz-Linsken's semiadditive closure theorem (i.e. \cref{Semiadditivization theorem}) then yields the following.
\begin{corollary}\label{Commutative monoids corollary}
  There is a canonical equivalence $\CMon_{I^\infty_{\cT}}(\cC) \simeq \CMon\prn{\Gamma^{\cT} \cC}$.
\end{corollary}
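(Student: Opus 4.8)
The plan is to reduce both sides to limits indexed by $\cT^{\op}$ and then to observe that $\CMon$ preserves such limits. First I would invoke the display immediately above the statement, namely that the $I^{\infty}_{\cT}$-semiadditive completion is computed levelwise, $\uCMon_{I^{\infty}_{\cT}}(\cC) \simeq \prn{\cT^{\op} \xrightarrow{\cC} \Cat^{\times} \xrightarrow{\CMon} \Cat^{\oplus}}$, together with the fact recalled above that $\Gamma^{\cT}$ is $\cT$-fixed points, i.e.\ $\Gamma^{\cT}\cD \simeq \lim_{V \in \cT^{\op}}\cD_V$ for any $\cT$-$\infty$-category $\cD$. Combining these with the definition $\CMon_{I^{\infty}_{\cT}}(\cC) \deq \Gamma^{\cT}\uCMon_{I^{\infty}_{\cT}}(\cC)$ yields
\[
  \CMon_{I^{\infty}_{\cT}}(\cC) \;\simeq\; \lim_{V \in \cT^{\op}} \CMon(\cC_V), \qquad\qquad \Gamma^{\cT}\cC \;\simeq\; \lim_{V \in \cT^{\op}} \cC_V,
\]
the latter limit also admitting finite products (the product of a compatible family being the family of products), so that $\CMon(\Gamma^{\cT}\cC)$ is defined.

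Next I would produce the canonical comparison functor: the limit projections $\pi_V \colon \Gamma^{\cT}\cC \rightarrow \cC_V$ preserve finite products and are natural in $V \in \cT^{\op}$, so the functors $\CMon(\pi_V)$ assemble into a canonical map $\CMon(\Gamma^{\cT}\cC) \rightarrow \lim_{V \in \cT^{\op}}\CMon(\cC_V) \simeq \CMon_{I^{\infty}_{\cT}}(\cC)$, and it suffices to show this is an equivalence. Equivalently, I must check that $\CMon$ carries $\lim_{V \in \cT^{\op}}\cC_V$ to the limit of the $\CMon(\cC_V)$; this follows from the general fact that $\CMon \colon \widehat\Cat \rightarrow \widehat\Cat$ preserves limits, which in turn holds because $\CMon(-) = \Fun^{\times}(\Span(\FF),-)$ sits inside $\Fun(\Span(\FF),-)$, the latter preserves limits (limits of functor $\infty$-categories are computed levelwise), and the property of sending finite coproducts in $\Span(\FF)$ to products is itself stable under levelwise limits of functors (finite products commute with limits). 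Assembling these equivalences delivers the asserted canonical equivalence $\CMon_{I^{\infty}_{\cT}}(\cC) \simeq \CMon(\Gamma^{\cT}\cC)$.

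I do not expect a genuine obstacle here: the whole content is the levelwise description of $\uCMon_{I^{\infty}_{\cT}}$ (already in hand) together with limit-preservation of $\CMon$, both of which are soft. The only point that wants a little care is that $\cC$ is valued in $\Cat$ rather than a small subcategory, so $\Gamma^{\cT}\cC$ and the various $\CMon$-categories may be large; this is harmless, since every step of the argument applies verbatim in $\widehat\Cat$.
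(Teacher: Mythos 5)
Your proof is correct and matches the intended argument: the paper packs it into the single sentence preceding the corollary (invoking \cref{Cofree adjunction,I infty obs} for the levelwise identification of $\uCMon_{I^\infty_\cT}$ and the semiadditivization theorem), and you have merely made explicit the remaining step that $\CMon$ commutes with the $\cT^{\op}$-indexed limit computing $\Gamma^{\cT}$. The one point worth stating with slightly more care is that $\CMon$ is a functor on $\Cat^{\times}$ rather than all of $\widehat\Cat$, and that the limit $\Gamma^{\cT}\cC$ is being taken there — but your levelwise argument (limits in the limit category are detected by the product-preserving projections) handles exactly this, so no gap results.
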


\subsubsection{$I$-commutative monoids in $\infty$-categories}
We recall a special case of Cnossen-Lenz-Linsken's Mackey functor theorem.
\begin{theorem}[{\cite[Thm~C]{Cnossen_semiadditive}}]\label{CMon in coeff}
  For every presentable $\infty$-category $\cC$, there are canonical equivalences
  \begin{align*}
    \CMon_I(\uCoFr^{\cT}(\cC)) 
    &\simeq \Fun^{\times}\prn{\Span_I(\FF_{\cT}), \cC};\\ 
    \uCMon_I(\uCoFr^{\cT}(\cC))_V 
    &\simeq \Fun^{\times}\prn{\Span_{I_V}(\FF_{V}), \cC}.
  \end{align*}
  Furthermore, given a map $f\colon V \rightarrow W$, the associated restriction functor 
  \[
    \Res_V^W\cln \Fun(\Span_{I_W}(\FF_{W}), \cC) \rightarrow \Fun(\Span_{I_V}(\FF_V), \cC)
  \]
  is given by precomposition along $\Span(\Ind_V^W(-))$.
\end{theorem}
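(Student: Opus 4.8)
The plan is to deduce both equivalences from the adjunction $\Tot\colon\Cat_{\cT}\rightleftarrows\Cat\colon\uCoFr^{\cT}$ of \cite[Thm~7.8]{Barwick-Parameterized}, together with the description of indexed limits in coefficient systems from \cref{Fixed points of colimit}. First I would reduce the parameterized statement to the unparameterized one: since $\uCMon_I(\cD)=\uFun^{I-\times}_{\cT}(\uSpan_I(\uFF_{\cT}),\cD)$ is a $\cT$-functor $\cT$-category, its $V$-value is $\Fun^{I_V-\times}_{\cT_{/V}}(\Res_V^{\cT}\uSpan_I(\uFF_{\cT}),\Res_V^{\cT}\cD)$, and because $\uSpan_I(\uFF_{\cT})$ is obtained by postcomposing $(\uFF_{\cT},\uFF_{\cT},\uFF_{\cT}^I)\colon\cT^{\op}\to\mathrm{Trip}^{\adeq}$ with $\Span$, restriction along $(\cT_{/V})^{\op}\hookrightarrow\cT^{\op}$ identifies $\Res_V^{\cT}\uSpan_I(\uFF_{\cT})\simeq\uSpan_{I_V}(\uFF_V)$, while $\Res_V^{\cT}\uCoFr^{\cT}(\cC)\simeq\uCoFr^{\cT_{/V}}(\cC)$. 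Hence $\uCMon_I(\uCoFr^{\cT}\cC)_V\simeq\CMon_{I_V}(\uCoFr^{\cT_{/V}}\cC)$, and --- as $\cT_{/V}$ is again atomic orbital and $I_V$ again a one-color weak indexing category --- the whole statement follows from the $\Gamma^{\cT}$-level equivalence, proved for all atomic orbital $\cT$, together with the identification of the restriction functors. For the latter I would read the restriction functors off by transporting the cocartesian functoriality of $\uSpan_I(\uFF_{\cT})$: cocartesian transport along $f\colon V\to W$ is induced by a map of adequate triples, and after applying $\Span$ and the $\Tot\dashv\uCoFr^{\cT}$ equivalence it becomes precomposition along $\Span(\Ind_V^W(-))$, as claimed.

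For the $\Gamma^{\cT}$-level statement, I would use that $\Tot\dashv\uCoFr^{\cT}$ upgrades to a natural equivalence of functor $\infty$-categories $\Fun_{\cT}(\cD,\uCoFr^{\cT}\cC)\simeq\Fun(\Tot\cD,\cC)$ (this is the characterizing property of $\uCoFr^{\cT}$ as the cocartesian-fibration-relative right adjoint to $\Tot$), and then check that it restricts to an equivalence between $\Fun^{I-\times}_{\cT}(\cD,\uCoFr^{\cT}\cC)$ and the full subcategory $\Fun^{W}(\Tot\cD,\cC)\subset\Fun(\Tot\cD,\cC)$ of functors sending a distinguished class $W$ of cones to limit cones --- namely the images under $\Tot$ of the $I$-indexed product cones of $\cD$. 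The point is that by \cref{Fixed points of colimit} an $I$-indexed product in $\uCoFr^{\cT}\cC$ is computed fibrewise by an ordinary limit, so a $\cT$-functor preserves $I$-indexed products precisely when its adjunct functor on total categories sends these cones to limit cones. Taking $\cD=\uSpan_I(\uFF_{\cT})$ and using $\Gamma^{\cT}\uFun_{\cT}=\Fun_{\cT}$, the statement then comes down to identifying $(\Tot\uSpan_I(\uFF_{\cT}),W)$ with $(\Span_I(\FF_{\cT}),W')$, where $W'$ is the class of cones exhibiting finite coproducts of $\cT$-sets as products in the span category, in the sense that $\Fun^{W}(\Tot\uSpan_I(\uFF_{\cT}),\cC)\simeq\Fun^{\times}(\Span_I(\FF_{\cT}),\cC)$ naturally in $\cC$. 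I would set this up with the evident comparison functor $\Tot\uSpan_I(\uFF_{\cT})\to\Span_I(\FF_{\cT})$ carrying a pair $(V,\,S\to V)$ to the underlying finite $\cT$-set $S$, observing that every finite $\cT$-set is an ordinary --- hence $I$-indexed --- coproduct of orbits, so that both functor $\infty$-categories are equivalent to their respective categories of orbit-indexed data.

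The main obstacle is exactly this last identification: controlling the total $\infty$-category $\Tot\uSpan_I(\uFF_{\cT})$ of the span-valued $\cT$-category and matching its ``$W$-completion'' with $\Span_I(\FF_{\cT})$ under $W'$. Concretely one must check that the orbit-decomposition data on the two sides --- the $I$-indexed coproduct decompositions, the restriction maps, the transfers, and the equivariant automorphism actions (cf. \cref{Operadic restriction equation,Operadic symmetric action equation}) --- correspond term by term; this is where atomic orbitality (so that each $\cT_{/V}$ is a $1$-category and orbit decompositions are strictly functorial) and the Segal condition \cref{Windex segal condition} on $I$ are used. Once the orbit-data descriptions are matched, naturality in $\cC$ is formal and the restriction-functor computation of the first step falls out; finally I would invoke \cite[Lem~4.7]{Cnossen_semiadditive}, as in the footnote to \cref{Semiadditivization theorem}, to reconcile our model of $\uCMon_I$ with that of \cite{Cnossen_semiadditive} and so conclude the cited \cite[Thm~C]{Cnossen_semiadditive}.
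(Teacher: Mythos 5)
The statement you are trying to prove is, in the paper, attributed to \cite[Thm~C]{Cnossen_semiadditive} and not re-proved; the paper's only added content is the footnote to \cref{Semiadditivization theorem} reconciling the two models of $\uCMon_I$ via \cite[Lem~4.7]{Cnossen_semiadditive}. So you are comparing against a citation, not against a proof, and a reviewer should treat your proposal as a sketch of the cited proof.

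Your reduction of the parameterized statement to the $\Gamma^{\cT}$-level statement is sound: $\Res_V^{\cT}\uSpan_I(\uFF_{\cT})\simeq\uSpan_{I_V}(\uFF_V)$, $\Res_V^{\cT}\uCoFr^{\cT}(\cC)\simeq\uCoFr^{\cT_{/V}}(\cC)$, and $\cT_{/V}$ is again atomic orbital, so the second displayed equivalence and the description of the restriction functors follow once the first is established for all atomic orbital $\cT$. The unparameterized identification $\Fun_{\cT}(\cD,\uCoFr^{\cT}\cC)\simeq\Fun(\Tot\cD,\cC)$ is also correct and is indeed the right starting point.

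The genuine gap is the one you flag yourself as the ``main obstacle,'' and it is larger than you make it sound. After transporting through $\Tot\dashv\uCoFr^{\cT}$ and applying \cref{Fixed points of colimit}, the condition ``preserves $I$-indexed products'' on a $\cT$-functor into $\uCoFr^{\cT}(\cC)$ does not translate into a finite-product-preservation condition on the adjunct functor out of $\Tot\uSpan_I(\uFF_{\cT})$. By \cref{Fixed points of colimit}, an $S$-indexed product has $V$-value a limit over the entire total category $\Tot^V S \simeq \coprod_{U\in\Orb(S)}(\cT_{/U})^{\op}$, which is not a finite diagram; in particular, $\CoInd_U^V$ is a right Kan extension along $\cT_{/U}\hookrightarrow\cT_{/V}$, not a binary product. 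So the class $W$ of cones you would single out in $\Tot\uSpan_I(\uFF_{\cT})$ consists of $(\cT_{/U})^{\op}$-shaped limit cones, whereas $W'$ in $\Span_I(\FF_{\cT})$ consists of finite product cones. Matching the two requires a cofinality/Kan-extension argument showing that the comparison functor $\Tot\uSpan_I(\uFF_{\cT})\to\Span_I(\FF_{\cT})$ induces an equivalence between the correspondingly restricted functor categories; nothing in the ``orbit-decomposition data match term by term'' paragraph actually supplies that argument. That step is precisely the technical content of \cite{Cnossen_semiadditive}, and without it the proposal is a plan, not a proof.

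Finally, the last sentence is internally inconsistent: invoking \cite[Thm~C]{Cnossen_semiadditive} at the end renders the preceding two paragraphs redundant. Either the proposal is a proof from first principles (in which case the gap above must be closed and the citation dropped), or it is a citation-plus-model-comparison (in which case the only content needed is the reconciliation via \cite[Lem~4.7]{Cnossen_semiadditive} already noted in the paper's footnote, and the rest can be deleted). As written the proposal does neither cleanly.
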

This motivates us to make the following definition.
\begin{definition}
  If $\cC$ is an $\infty$-category with finite products, then the \emph{$\cT$-$\infty$-category of $I$-commutative monoids in $\cC$} is
  \[
    \uCMon_I(\cC) \deq \uCMon_I(\uCoFr^{\cT}(\cC)).\qedhere
  \]
\end{definition}

Similar to the case of $\uCoFr^{\cT}$, this construction is compatible with adjunctions.
\begin{lemma}\label{CMon adjunction lemma}
    Let $I \subset \cT$ be a pullback-stable wide subcategory of an orbital $\infty$-category.
    \begin{enumerate}
        \item If $f:\cC \rightarrow \cD$ is a product-preserving functor, then postcomposition yields a $\cT$-functor
        \[
            f_*\cln \uCMon_I \cC \rightarrow \uCMon_I \cD.
        \]
        \item If $L:\cC \rightleftarrows:R$ is an adjunction whose right adjoint $R$ is product preserving, then
        \[
            L_*\cln \uCMon_I \cC \longrightleftarrows \uCMon_I \cD\colon R_*
        \]
        is a $\cT$-adjunction.
    \end{enumerate}
\end{lemma}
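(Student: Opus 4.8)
The plan is to deduce both statements from the structural description $\uCMon_I(\cC) = \uFun_{\cT}^{I-\times}(\uSpan_I(\uFF_{\cT}), \uCoFr^{\cT}\cC)$, together with \cref{Cofree adjunction} and the observation that $I$-indexed products in coefficient systems are computed value-wise by finite products in the base. First I would establish (1) in invariant form: the $\cT$-functor $\uCoFr^{\cT}(f)\cln \uCoFr^{\cT}\cC \to \uCoFr^{\cT}\cD$ strongly preserves $I$-indexed products. Indeed, by \cref{Fixed points of colimit} indexed limits in $\uCoFr^{\cT}(-)$ are computed value-wise by ordinary limits in the base, and atomicity of $\cT$ forces the relevant indexing diagrams (the orbits of the pullbacks $U \times_V W$ appearing in $\CoInd_U^V$) to be discrete and finite; hence each $I$-indexed product $\prod^S$ is computed value-wise by a finite product, which $f$ preserves by hypothesis. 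Since postcomposition with a $\cT$-functor induces a $\cT$-functor on $\cT$-functor categories, and $\uCoFr^{\cT}(f)$ preserves $I$-indexed products, this restricts to the full $\cT$-subcategories of $I$-product-preserving functors, yielding
\[
  f_* \deq \uCoFr^{\cT}(f) \circ (-)\cln \uCMon_I\cC \longrightarrow \uCMon_I\cD;
\]
unwinding against \cref{CMon in coeff} identifies its $V$-value with postcomposition $\Fun^{\times}(\Span_{I_V}(\FF_V),\cC) \to \Fun^{\times}(\Span_{I_V}(\FF_V),\cD)$, as claimed.

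For (2): by \cref{Cofree adjunction} the pair $\uCoFr^{\cT}L \dashv \uCoFr^{\cT}R$ is a $\cT$-adjunction, and cotensoring with the fixed $\cT$-$\infty$-category $\uSpan_I(\uFF_{\cT})$ — i.e. applying $\uFun_{\cT}(\uSpan_I(\uFF_{\cT}),-)$, which preserves $\cT$-adjunctions by postcomposing unit and counit — produces a $\cT$-adjunction between the full $\cT$-functor categories whose right adjoint is $\uCoFr^{\cT}R \circ (-)$. By part (1) with $f = R$ (using that $R$ preserves finite products), this right adjoint restricts to $R_*\cln \uCMon_I\cD \to \uCMon_I\cC$. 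Since $\uCoFr^{\cT}\cD$ admits $I$-indexed products, the full $\cT$-subcategory of $I$-product-preserving $\cT$-functors into it is localizing in $\uFun_{\cT}(\uSpan_I(\uFF_{\cT}),\uCoFr^{\cT}\cD)$; hence the restricted $R_*$ retains a left $\cT$-adjoint, namely the reflection of $\uCoFr^{\cT}L \circ (-)$ back onto $I$-product-preserving functors. Calling this $L_*$ gives the asserted $\cT$-adjunction $L_* \dashv R_*$.

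The main obstacle is precisely this last point: a left adjoint $L$ need not preserve finite products, so $L_*$ is genuinely not naive postcomposition, and the argument hinges on knowing that $I$-product-preserving $\cT$-functors into a $\cT$-$\infty$-category with $I$-indexed products form a localizing $\cT$-subcategory, so that the reflection exists. For presentable bases this is standard; in general it is the one step deserving care, and one may instead argue fiberwise, combining \cref{CMon in coeff} with the classical fact that the $\Fun^{\times}(-)$-construction out of a small $\infty$-category with finite products sends an adjunction with product-preserving right adjoint to an adjunction, then checking compatibility with restriction using that $\Span(\Ind_V^W(-))$ preserves the biproducts in each $\Span_{I_V}(\FF_V)$. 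The remaining verifications — naturality in $V$, the reduction of indexed products to finite products, and the bookkeeping of full $\cT$-subcategories — are routine.
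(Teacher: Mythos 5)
Your proof is correct and, if anything, more careful than the paper's own, which is a brief remark: by \cref{CMon in coeff} the $V$-value of $\uCMon_I(-)$ is $\Fun^\times(\Span_{I_V}(\FF_V),-)$, so postcomposition defines the $V$-values of $f_*$ and $R_*$, the adjunction $L_* \dashv R_*$ is asserted as a standard fact about $\Fun^\times$ out of a small category with finite products, and compatibility with restriction is because postcomposition commutes with the precomposition functors $\Span(\Ind_V^W)^*$ implementing restriction. Your main argument instead works invariantly---cotensoring the $\cT$-adjunction from \cref{Cofree adjunction} with $\uSpan_I(\uFF_{\cT})$, verifying $I$-product preservation via the value-wise formula of \cref{Fixed points of colimit}, and reflecting onto the full $\cT$-subcategory of $I$-product-preserving $\cT$-functors---with the fiberwise route noted as an alternative. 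Both routes land in the same place; the substantive contribution of your version is that you explicitly flag the one genuine subtlety the paper elides: since $L$ need not preserve finite products, $L_*$ is \emph{not} naive postcomposition but the reflection of $\uCoFr^{\cT}(L)\circ(-)$ onto $I$-product-preserving functors, and the existence of that reflection rests on a hypothesis (presentability of the coefficient category, or accessibility plus an adjoint functor theorem) that you correctly isolate as the one step requiring care. In the paper's intended applications ($\cC,\cD$ presentable, e.g. $\Cat$ and $\Cat_d$) this is automatic, so the lemma as used is safe; but the paper's proof, read literally, does not justify the existence of $L_*$.
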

\begin{proof}
    (1) follows by noting that $f_*$ exists since $f$ is product preserving, and it is compatible with restriction because postcomposition and precomposition commute.
    (2) follows by noting that the associated functors
    \[
    L_*\cln \prn{\CMon_I \cC}_V \simeq \Fun^\times\prn{\Span_{I_V}(\FF_V),\cC} \longrightleftarrows \Fun^\times\prn{\Span_{I_V}(\FF_V),\cD} = \prn{\CMon_I \cD)_V}:R_*
    \]
    are adjoint.
\end{proof}

We may unpack the structure of $I$-commutative monoids more using the following.
\begin{construction}\label{V-value construction}
    Let $\cC$ be an $\infty$-category, $X \in \CMon_{I} \cC$ be an $I$-commutative monoid, $V \in \cT$ be an orbit, and $\iota_V\colon \FF \rightarrow \FF_{\cT}$ the finite coproduct-preserving functor sending $* \mapsto V$.
    Then, the \emph{$V$-value} is the pullback
    \[
        \begin{tikzcd}
            \CMon_{I} \cC \arrow[r,"(-)_V"] \arrow[d,phantom,"\simeq"labl]
            & \CMon_{I \times_{\FF_{\cT}, \iota-V} \FF}(\cC) \arrow[d,phantom,"\simeq"labl]\\
            \Fun^{\times}(\Span_I(\FF_{\cT}),\cC) \arrow[r,"\iota_V^*"]
            & \Fun^{\times}(\Span_{I \times_{\FF_{\cT},\iota_V} \FF}(\FF),\cC)
        \end{tikzcd}
    \]
    In particular, $I$ is an indexing category and $X$ is an $I$-commutative monoid, $X_V$ is a commutative monoid in $\cC$.
\end{construction}
\begin{construction}
  Fix $X \in \CMon_{I} (\cC)$ and $f\colon V \rightarrow W$ a map in $I$.
    There exists a natural transformation $\alpha_f\colon \iota_V \rightarrow \iota_W$ whose value on $n$ is the copower map $n \cdot V \rightarrow n \cdot W$;
    this induces a natural transformation $N_V^W\colon (-)_V \implies (-)_W$, which we refer to as the \emph{norm map}.
\end{construction}

\subsubsection{$I$-symmetric monoidal $\infty$-categories}
We refer to 
\[
  \uCat_{I}^{\otimes} \deq \uCMon_I \Cat
\]
as the \emph{$\cT$-$\infty$-category of $I$-symmetric monoidal $\infty$-categories}, and write $\Cat_I^{\otimes} \deq \CMon_I \Cat$.
In the case $I = \FF_{\cT}$, we refer to these simply as \emph{$\cT$-symmetric monoidal $\infty$-categories} and write $\uCat_{\cT}^{\otimes} \deq \uCat_{\FF_{\cT}}^{\otimes}$ and $\Cat_{\cT}^{\otimes} \deq \Cat_{\FF_{\cT}}^{\otimes}$. 

\begin{notation}
  Suppose $S \in \FF_{I,V}$.
  Associated with the structure map $\Ind_V^{\cT} S \rightarrow V$ we have functors
  \[
    \bigotimes\limits_U^S:\cC_S \rightarrow \cC_V, \hspace{40pt} \Delta^S:\cC_V \rightarrow \cC_S
  \]
  called the \emph{$S$-indexed tensor product and $S$-indexed diagonal}.
  We refer to the composite $(-)^{\otimes S}:\cC_V \xrightarrow{\Delta^S} \cC_S \xrightarrow{\otimes_U^S} \cC_V$ as the \emph{$S$-indexed tensor power}.
  In the case $\Ind_V^{\cT} S = W$ is an orbit (i.e. $S$ is a \emph{transitive $V$-set}), we write
  \[
    N_W^V \deq \bigotimes\limits_U^W:\cC_W \rightarrow \cC_V.
  \]
  In general, we will use the inset notation $- \otimes -$ for $\otimes_U^{2 \cdot *_V}$, and when $\emptyset_V \in \uFF_I$, we will refer to the $\emptyset_V$-ary operation $* \rightarrow \cC_V$ as the \emph{$V$-unit} and denote its essential image as $1_V$.
\end{notation}

\begin{observation}
  Suppose $S$, $\abs{\Orb(S)} \cdot *_V$, and all orbits of $S$ are $I$-admissible $V$-sets.
  Then, the following path lies in $I$
  \[
    \Ind_V^{\cT} S \xrightarrow{\Ind_H^G\coprod\limits_{U \in \Orb(S)} \prn{U \xrightarrow{!} V}} \abs{\Orb(S)} \cdot V \xrightarrow{\hspace{30pt} \nabla \hspace{30pt}} V,
  \]
  In algebra, this yields the commutative diagram
  \[\begin{tikzcd}
	{\cC_S} && {\cC_V} \\
	& {\cC_{V}^{\times \Orb(S)}}
	\arrow["{\bigotimes\limits_U^S}"{description}, from=1-1, to=1-3]
	\arrow["{\prn{N_U^V-}}" left, from=1-1, to=2-2]
	\arrow["\otimes" below, from=2-2, to=1-3]
\end{tikzcd}\]
  i.e. $\bigotimes\limits_U^S X_U \simeq \bigotimes_{U \in \Orb(S)} N_U^V X_U$.
  Thus, when $I$ is an indexing category, the indexed tensor products in an $I$-symmetric monoidal $\infty$-category is are determined by their binary tensor products and norms.  
\end{observation}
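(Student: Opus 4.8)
The plan is to verify first that the displayed composite lies in $I$, then to read the commuting triangle off functoriality of the $I$-symmetric monoidal structure, and finally to observe that the hypotheses are automatic when $I$ is an indexing category. For the membership statement I would factor the path into its two legs and check each lies in $I$, invoking that $I \subset \FF_{\cT}$ is a subcategory and hence closed under composition. The first leg $\coprod_{U \in \Orb(S)}\prn{U \to V}\colon \Ind_V^{\cT}S \to \abs{\Orb(S)}\cdot V$ has a target whose orbits are indexed by $\Orb(S)$, and the base change of this leg over the $U$-th copy of $V$ is the orbit's structure map $U \to V$; so by the Segal condition \cref{Windex segal condition}, in the orbitwise form of \cref{Reduction to maps to orbits observation}, this leg lies in $I$ exactly because each orbit of $S$ was assumed to be an $I$-admissible $V$-set. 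The second leg is the fold map $\nabla\colon \abs{\Orb(S)}\cdot V \to V$, which lies in $I$ by the definition of $I$-admissibility since $\abs{\Orb(S)}\cdot *_V$ was assumed $I$-admissible. Composing, the path lies in $I$; as this composite is just the structure map $\Ind_V^{\cT}S \to V$, the hypothesis that $S$ itself is $I$-admissible is in fact a formal consequence of the other two.

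For the algebraic identity, recall (cf.\ \cref{CMon in coeff}) that an $I$-symmetric monoidal $\infty$-category is an $I$-commutative monoid in $\Cat$, hence presented by a finite-product-preserving functor $\cC^{\otimes}\colon \Span_I(\FF_{\cT}) \to \Cat$ with $\cC_S = \cC^{\otimes}(\Ind_V^{\cT}S)$, that $\bigotimes_U^S$ is by construction the covariant functoriality of $\cC^{\otimes}$ along the forward morphism $\Ind_V^{\cT}S \to V$, that $N_U^V$ is its covariant functoriality along the orbit map $U \to V$, and that the $\abs{\Orb(S)}$-ary tensor product $\otimes$ in the diagram is its covariant functoriality along $\nabla$. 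The first step exhibits the forward morphism $\Ind_V^{\cT}S \to V$ as the composite, in $\Span_I(\FF_{\cT})$, of the forward morphisms $\coprod_U(U \to V)$ and $\nabla$ (forward morphisms compose by composing their forward legs), so functoriality of $\cC^{\otimes}$ identifies $\bigotimes_U^S$ with $\otimes$ after the covariant functoriality along $\coprod_U(U \to V)$. Since a coproduct of forward morphisms is a finite product in $\Span_I(\FF_{\cT})$ and $\cC^{\otimes}$ preserves finite products, this last functor is identified, under the Segal equivalences $\cC_S \simeq \prod_{U\in\Orb(S)}\cC_U$ and $\cC_{\abs{\Orb(S)}\cdot V}\simeq \prod_{U\in\Orb(S)}\cC_V$, with $\prn{X_U} \mapsto \prn{N_U^V X_U}$; chaining these identifications yields the triangle and the formula $\bigotimes_U^S X_U \simeq \bigotimes_{U\in\Orb(S)} N_U^V X_U$.

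Finally, when $I$ is an indexing category its hypotheses hold for every $S \in \FF_{I,V}$: indexing systems are unital, hence one-colored, and closed under finite coproducts, so $\abs{\Orb(S)}\cdot *_V$ is admissible, and they are closed under subobjects (see \cite{Windex}), so every orbit of an admissible set is admissible; as the $\abs{\Orb(S)}$-ary tensor on $\cC_V$ is built from the binary one by associativity, the formula exhibits every indexed tensor product as determined by the binary tensor products and the norms $N_U^V$. I do not expect a serious obstacle here; the one point requiring care is the identification of covariant functoriality along a coproduct of forward morphisms with the product of the individual functorialities, which is precisely where finite-product-preservation of the $I$-commutative monoid $\cC^{\otimes}$ is used.
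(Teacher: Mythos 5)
Your argument is correct and fills in, in the natural way, the details the paper elides by labeling this an ``observation'': decompose the path, check each leg lies in $I$ via the orbitwise Segal condition (\cref{Reduction to maps to orbits observation}) and the fold-map admissibility, and then read the triangle off the fact that a product-preserving functor $\Span_I(\FF_{\cT}) \to \Cat$ sends a coproduct of forward maps to the product of the corresponding norms. Your side remark that the $I$-admissibility of $S$ itself is redundant (being the composite of the other two hypotheses) is a small but genuine observation beyond what the paper states.
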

In \cite[\S~1.2]{Windex}, we saw that $I$-symmetric monoidal $\infty$-categories satisfy a version of the \emph{double coset formula}
\[
  \Res_W^V N_U^V Z \simeq \bigotimes_X^{U \times_V W} \Res^U_X Z
\]
for all cospans $U \rightarrow V \leftarrow W$ in $\cT$ such that $U \rightarrow W$ is in $I$.
Moreover, $\Res_V^W$ and $N_V^W$ preserve applicable trivially indexed tensor products;
when $I$ is an indexing category, this and the double coset formula characterize \emph{all} interactions between restrictions and indexed tensor products.

\begin{construction}\label{Symmetric monoidal evaluation construction}
    Right Kan extensions preserve product preserving functors;
    applying this to the \emph{orbits} functor $F_{\cT}\colon \FF_\cT \rightarrow \FF$ yields a functor
    \[
        \Gamma \deq \Span(F_{\cT})_*\colon \Fun^\times(\Span(\FF_{\cT}),\cC) \rightarrow \Fun^\times(\Span(\FF),\cC).
    \]
    In particular, $\Gamma$ is right adjoint to $\Infl_e^{\cT} \deq \Span(F_{\cT})^*$.
    When $\cC = \Cat$, the counit of this adjunction is a natural $\cT$-symmetric monoidal functor. 
    \[
      \Infl_e^{\cT} \Gamma \cC^{\otimes} \rightarrow \cC^{\otimes}
    \]
    We refer to the (symmetric monoidal) $V$-value of this as the \emph{symmetric monoidal $V$-evaluation}
    \[
        \ev_V\colon \Gamma \cC^{\otimes} \rightarrow \cC_V^{\otimes}.\qedhere
    \]
\end{construction}

\subsubsection{Symmetric monoidal $\cT$-$\infty$-categories}
The $\infty$-category of \emph{symmetric monoidal $\cT$-$\infty$-categories} is
\[
  \Cat_{I^\infty,\cT}^{\otimes} \simeq \CoFr^{\cT} \Cat_\infty^{\otimes} \simeq \CMon \Cat_{\cT}.
\]

\begin{definition}
Suppose $L\cC \subset \cC$ is a localizing $\cT$-subcategory of a symmetric monoidal $\cT$-$\infty$-category.
We say that $L$ is \emph{compatible with the symmetric monoidal structure} if for each $V \in \cT$, the localization $L_V$ is compatible with the symmetric monoidal structure on $\cC_V$ in the sense of \cite[Def~2.2.1.6]{HA}.
\end{definition}
We will crucially use the following proposition in \cref{Canonical SMC section}.
\begin{proposition}\label{Compatible symmetric monoidal prop}
  If $L$ is compatible with the symmetric monoidal structure, there exists a commutative diagram of $\cT$-$\infty$-categories
  \[
    \begin{tikzcd}
      \cC^{\otimes} \arrow[rr,"{L^{\otimes}}"] \arrow[rd,"p"]
      && L\cC^{\otimes} \arrow[dl]\\
      & \prn{\FF_{*}}_{\triv}
    \end{tikzcd}
  \]
  satisfying the following conditions:
  \begin{enumerate}[label={(\alph*)}]
    \item $L\cC^{\otimes}$ is a symmetric monoidal $\cT$-$\infty$-category and $L^{\otimes}$ is a symmetric monoidal $\cT$-functor,
    \item the underlying $\cT$-functor of $L^{\otimes}$ is $L\cln \cC \rightarrow L\cC$, and
    \item $L^{\otimes}$ possesses a fully faithful and lax symmetric monoidal right $\cT$-adjoint extending the inclusion $L\cC \subset \cC$.
  \end{enumerate}
\end{proposition}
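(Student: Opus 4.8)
The plan is to reduce to the non-parameterized localization result \cite[Prop~2.2.1.9]{HA} applied fiberwise over $\cT^{\op}$, and then verify that the resulting data assembles coherently. Recall that $\Cat_{I^\infty,\cT}^{\otimes}\simeq\CoFr^{\cT}\Cat^{\otimes}$, so the symmetric monoidal $\cT$-$\infty$-category $\cC^{\otimes}$ is equivalently a functor $\cC_{\bullet}^{\otimes}\cln\cT^{\op}\rightarrow\Cat^{\otimes}$, $V\mapsto\cC_V^{\otimes}$, whose underlying $\cT$-$\infty$-category $\cC$ is $\uCoFr^{\cT}$ of the underlying-category functor $\Cat^{\otimes}\rightarrow\Cat$; equivalently, as in the displayed triangle, it is a cocartesian fibration $\cC^{\otimes}\rightarrow\prn{\FF_*}_{\triv}$ which is cocartesian and Segal over each fiber $\{V\}\times\FF_*$ and whose cocartesian lifts over $\prn{\FF_*}_{\triv}\rightarrow\cT^{\op}$ are the symmetric monoidal restriction functors $\Res_V^W$. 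By \cref{Subcategories are fiberwise observation}, the localizing $\cT$-subcategory $L\cC\subset\cC$ is a fiberwise localization $L_V\cln\cC_V\rightarrow\cC_V$ with essential image $(L\cC)_V$; since the inclusion $L\cC\subset\cC$ and its left $\cT$-adjoint $L$ are both $\cT$-functors, they commute with cocartesian transport, forcing $\Res_V^W\circ L_W\simeq L_V\circ\Res_V^W$ and hence that each $\Res_V^W$ carries $L_W$-equivalences to $L_V$-equivalences. By hypothesis each $L_V$ is compatible with the symmetric monoidal structure on $\cC_V$ in the sense of \cite[Def~2.2.1.6]{HA}.

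Now I would apply \cite[Prop~2.2.1.9]{HA} in each fiber: it produces a symmetric monoidal structure $(L\cC)_V^{\otimes}$ on $(L\cC)_V$, a symmetric monoidal functor $L_V^{\otimes}\cln\cC_V^{\otimes}\rightarrow(L\cC)_V^{\otimes}$ with underlying functor $L_V$, and a fully faithful lax symmetric monoidal right adjoint $\iota_V\cln(L\cC)_V^{\otimes}\rightarrow\cC_V^{\otimes}$ extending the inclusion $(L\cC)_V\subset\cC_V$. The crux --- and the step I expect to be the main obstacle --- is to glue these over $\cT^{\op}$. I would run the ``relative localization'' argument behind \cite[Prop~2.2.1.9]{HA} one categorical level up: the compatible localization data determines a fiberwise localization of the cocartesian fibration $\cC^{\otimes}\rightarrow\prn{\FF_*}_{\triv}$ which is compatible with all cocartesian transport --- in the $\FF_*$-direction by symmetric monoidal compatibility of the $L_V$, and in the $\cT^{\op}$-direction by the commutation $\Res_V^W\circ L_W\simeq L_V\circ\Res_V^W$ established above. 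The full $\cT$-subcategory spanned in each fiber by the local objects then admits a left adjoint relative to $\prn{\FF_*}_{\triv}$, defining $L\cC^{\otimes}\rightarrow\prn{\FF_*}_{\triv}$ together with a fiberwise-localizing functor $L^{\otimes}\cln\cC^{\otimes}\rightarrow L\cC^{\otimes}$ over $\prn{\FF_*}_{\triv}$; its cocartesian and Segal conditions over each $\{V\}\times\FF_*$ are inherited from those of $\cC^{\otimes}$, since the fiber over $(V,\langle n\rangle)$ is the essential image of $L_V^{\times n}$ on $\cC_V^{\times n}$. Thus $L\cC^{\otimes}$ is a symmetric monoidal $\cT$-$\infty$-category and $L^{\otimes}$ a symmetric monoidal $\cT$-functor fitting into the displayed triangle, giving (a). Alternatively, one may observe that the construction of \cite[Prop~2.2.1.9]{HA} is functorial on the $\infty$-category of symmetric monoidal $\infty$-categories equipped with a compatible localization, and precompose with the resulting functor from $\cT^{\op}$ into that $\infty$-category produced in the previous paragraph; ``compatibility'' being a fiberwise condition, this functor is well-defined.

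Finally, (b) is the computation of the fiber of $L^{\otimes}$ over $(V,\langle1\rangle)$, which is $L_V$ by construction. For (c), the symmetric monoidal $\cT$-functor $L^{\otimes}$ has a right adjoint in each fiber, namely $\iota_V$; since $\cT$-adjunctions are relative adjunctions over $\cT^{\op}$ by \cite[Prop~7.3.2.1]{HA}, these assemble into a right $\cT$-adjoint $\iota\cln L\cC^{\otimes}\rightarrow\cC^{\otimes}$. This $\iota$ is lax symmetric monoidal because a strong symmetric monoidal functor whose underlying functor admits a right adjoint has lax symmetric monoidal right adjoint (see \cite[\S~7.3.2]{HA}), applied fiberwise and reassembled via $\uCoFr^{\cT}$; full faithfulness of $\iota$ holds fiberwise by \cref{Subcategories are fiberwise observation}; and $\iota$ extends the inclusion $L\cC\subset\cC$ since its underlying $\cT$-functor is $\uCoFr^{\cT}$ of the fiberwise inclusions $(L\cC)_V\hookrightarrow\cC_V$.
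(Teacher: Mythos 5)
Your proof is correct but takes a genuinely different route from the paper's. The paper disposes of this in one line, specializing Nardin's parameterized localization theorem (their Theorem 2.9.2) to $\cO^{\otimes} = \EE_\infty^{\otimes}$. You instead reduce to the non-parameterized \cite[Prop~2.2.1.9]{HA}, using the identification $\Cat_{I^\infty,\cT}^{\otimes}\simeq\CoFr^{\cT}\Cat^{\otimes}$ to unwind a symmetric monoidal $\cT$-$\infty$-category as a functor $\cT^{\op}\rightarrow\Cat^{\otimes}$, and then glue the fiberwise localizations via functoriality of Lurie's construction.

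Both approaches are valid, and you correctly flag the gluing step as the crux. The observation that the compatible-localization hypothesis is \emph{defined} fiberwise is what makes your reduction sound, and the commutation $\Res_V^W L_W \simeq L_V \Res_V^W$ (which follows from $L$ being a $\cT$-functor and a $\cT$-left adjoint) is exactly what lets you regard the data as a functor from $\cT^{\op}$ into the $\infty$-category of symmetric-monoidal-$\infty$-categories-with-compatible-localization, then postcompose with the functorial construction. The one thing worth noting: this fiberwise reduction is specific to the $I^{\infty}$-case being proved here, where the $\cT$-symmetric monoidal structure is a \emph{coefficient system} of non-parameterized symmetric monoidal structures with no norms mixing orbits. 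Nardin's cited theorem handles arbitrary $\cO$-monoidal structures, where indexed tensor products genuinely mix fibers and a fiberwise reduction to \cite{HA} is unavailable, which is why the paper reaches for the stronger black box. For the statement at hand, though, your more elementary argument buys self-containment at the cost of not generalizing.
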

\begin{proof}
  This is the specialization of \cite[Thm~2.9.2]{Nardin} to $\cO^{\otimes} \deq \EE_\infty^{\otimes}$.
\end{proof}

\subsection{The canonical symmetric monoidal structure on \tI-commutative monoids}\label{Canonical SMC section}
We now explore the observation that the parameterized presentability results of \cite{Hilman} are sufficiently strong to power non-indexed lifts of \cite{Gepner} in the $I$-semiadditive setting.
\def\PrL{\mathrm{Pr}^L}
\def\uPrL{\underline{\mathrm{Pr}}^L}

\begin{definition}[{c.f. \cite[Thm~3.1.9(2),Thm~6.1.2]{Hilman}}]\label{T-presentable definition}
  A (large) $\cT$-$\infty$-category $\cC$ is \emph{$\cT$-presentable} if it admits finite $\cT$-coproducts and its straightening factors as
    \[
        \cC:\cT^{\op} \rightarrow \mathrm{Pr}^{L,\kappa} \rightarrow \widehat \Cat
    \]
    for some regular cardinal $\kappa$.
    The (nonfull) subcategory 
    \[
        \PrL_{\cT} \subset \widehat\Cat_{\cT}
    \]
    has objects given by $\cT$-presentable $\infty$-categories and morphisms given by $\cT$-left adjoints.
\end{definition}

\begin{observation}
    The conditions of factoring through $\Pr^{L,\kappa}$, of strongly admitting finite $\cT$-coproducts, and of being $\cT$-left adjoints are preserved by restriction;
    hence $\Pr_{\cT}^{L}$ canonically lifts to a (nonfull) $\cT$-subcategory
    \[
        \uPrL_{\cT} \subset \widehat \uCat_{\cT}\qedhere
    \]
\end{observation}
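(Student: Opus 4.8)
The plan is to reduce the claim to the fiberwise criterion for $\cT$-subcategories. Recall that $\widehat{\uCat}_{\cT} = \uCoFr^{\cT}(\widehat{\Cat})$, so that its $V$-value is $\widehat{\Cat}_{\cT_{/V}} \simeq \Fun((\cT_{/V})^{\op}, \widehat{\Cat})$ and, for a map $f\colon W \to V$ in $\cT$, the restriction functor $\Res_W^V$ is precomposition along $(f_!)^{\op}$, where $f_!\colon \cT_{/W} \to \cT_{/V}$ is postcomposition with $f$. Inside $\widehat{\Cat}_{\cT_{/V}}$ I single out the nonfull subcategory $\Pr^L_{\cT_{/V}}$ whose objects are the $\cT_{/V}$-presentable $\infty$-categories and whose morphisms are the $\cT_{/V}$-left adjoints; this is legitimate because $\cT_{/V}$ is again atomic orbital, and it is what should become the $V$-value of $\uPrL_{\cT}$. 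By \cref{Subcategories are fiberwise observation}, it then suffices to show that $\Res_W^V$ carries $\Pr^L_{\cT_{/V}}$ into $\Pr^L_{\cT_{/W}}$ for every $f\colon W \to V$; the inclusions $\Pr^L_{\cT_{/V}} \hookrightarrow \widehat{\Cat}_{\cT_{/V}}$ then assemble into a $\cT$-functor $\uPrL_{\cT} \to \widehat{\uCat}_{\cT}$ which is fiberwise a subcategory inclusion, hence a $\cT$-subcategory.

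The heart of the argument is to run through the three defining conditions of \cref{T-presentable definition} and observe that each is stable under $\Res_W^V = (-)\circ (f_!)^{\op}$. First, if the straightening $\cC\colon (\cT_{/V})^{\op} \to \widehat{\Cat}$ factors through $\Pr^{L,\kappa} \to \widehat{\Cat}$ for some regular cardinal $\kappa$, then $\cC \circ (f_!)^{\op}$ factors through the same $\Pr^{L,\kappa}$, since precomposition cannot destroy a factorization. Second, ``strongly admits finite $\cT$-coproducts'' is the special case of ``strongly admits $\cK$-shaped colimits'' for the restriction-stable collection $\cK = \uFF_{\cT}$, with $(\uFF_{\cT})_{\uW} \simeq \uFF_{\cT_{/W}}$; since strongly admitting a restriction-stable collection is by construction inherited by the restricted collection $\cK_{\uW}$, this condition passes from $\cC$ to $\Res_W^V \cC$. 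Third, a $\cT_{/V}$-functor $L$ is a $\cT_{/V}$-left adjoint exactly when every component $L_U$ (for $U \in \cT_{/V}$) is a left adjoint of $\infty$-categories; the components of $\Res_W^V L$ form the sub-family $(L_{f_! U'})_{U' \in \cT_{/W}}$ of those of $L$, so left adjointness survives restriction. Together these say precisely that $\Res_W^V$ preserves both the objects and the morphisms cutting out $\Pr^L$.

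I do not expect a genuine obstacle here: the statement is essentially bookkeeping, and in each of the three points the mechanism is the same --- the condition in question is ``evaluated over the slice $\cT_{/V}$'', so precomposing along $(f_!)^{\op}$ can only preserve it. The two spots deserving a sentence of care are the identification $(\uFF_{\cT})_{\uW} \simeq \uFF_{\cT_{/W}}$ (which certifies that ``finite $\cT$-coproducts'' restricts to the correct notion of ``finite $\cT_{/W}$-coproducts'', neither larger nor smaller), and the assembly of a restriction-compatible family of pointwise subcategory inclusions into an honest $\cT$-subcategory inclusion; but the latter is exactly the converse direction of \cref{Subcategories are fiberwise observation}, so it requires no new argument.
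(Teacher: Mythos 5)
Your proof is correct and matches the paper's own (implicit) approach: the paper states this as an \emph{observation} precisely because the only content is the bookkeeping you spell out, namely that each of the three defining clauses of \cref{T-presentable definition} is stable under precomposition along $(f_!)^{\op}$, after which the fiberwise criterion of \cref{Subcategories are fiberwise observation} assembles the data into a $\cT$-subcategory. Your identification of ``finite $\cT$-coproducts'' with the $\cK = \uFF_{\cT}$ case and your note on the identification $(\uFF_{\cT})_{\uW} \simeq \uFF_{\cT_{/W}}$ are exactly the points worth flagging; the only cosmetic caveat is that assembling the pointwise-compatible family of inclusions into a $\cT$-functor is an honest (if routine) construction step rather than literally the converse implication of \cref{Subcategories are fiberwise observation}, but this does not affect correctness.
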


These satisfy an adjoint functor theorem \cite[Thm~6.2.1]{Hilman} and have analogous characterizations to the non-equivariant case;
in particular, $\PrL_{\cT} \subset \widehat \Cat_{\cT}$ is closed under functor $\cT$-$\infty$-categories from small $\cT$-$\infty$-categories \cite[Lem~6.7.1]{Hilman} and by \cref{T-presentable definition}, $\PrL_{\cT}$ is closed under fiberwise $\kappa$-accessible $\cT$-localizations.
Hence $\uCMon_I(\cC)$ is $\cT$-presentable when $\cC$ is $\cT$-presentable.

Additionally, in \cite{Nardin_thesis}, a $\cT$-symmetric monoidal structure was constructed on $\uPrL_{\cT}$.
In order to characterize this structure, we use the following definition 
(c.f. \cite[\S~5.1]{Quigley}).
\begin{definition}[{\cite[Def~5.14]{Quigley}}]
  Fix $S$ a finite $V$-set, $(\cC_U)$ an $S$-$\infty$-category,  $\cD$ a $V$-$\infty$-category, and $F\cln \prod_U^S \cC_U \rightarrow \cD$ a $V$-functor.
  Denote by $(-)_*$ the indexed products in $\Cat_{\cT}$ and $(-)^*$ the restriction.
  We say that $F$ is \emph{$S$-distributive} if, for every pullback diagram
  \[
    \begin{tikzcd}
      T \times_V S \arrow[r,"f'"] \arrow[d,"g'"] \arrow[dr, "\lrcorner" very near start, phantom]
      & T \arrow[d, "g"]\\
      S \arrow[r, "f"]
      & V
    \end{tikzcd}
  \]
  and $S$-colimit diagram $\overline{p}\cln K^{\utr} \rightarrow g^{\prime *} \cC$ for $p\cln K \rightarrow g^{\prime *} \cC$, the composite $T$-functor
  \[
    \prn{f'_* K}^{\utr} \xrightarrow{\mathrm{can}} f'_* \prn{K^{\utr}} \xrightarrow{f'_* \overline{p}} f'_* g^{\prime *} \cC \simeq g^* f_* \cC \xrightarrow{g^* F} g^* \cD
  \]
  is a $T$-colimit diagram for the associated composite $f'_* K \rightarrow g^* \cD$.
  We denote by 
  \[
    \Fun_{\cT}^{\delta}\prn{f_* \cC, \cD} \subset \Fun_{\cT} \prn{f_* \cC, \cD}
  \]
  the full subcategory spanned by $S$-distributive functors.
\end{definition}

By the proof of \cite[Prop~3.25]{Nardin_thesis}, Nardin's $\cT$-symmetric monoidal structure on $\uPrL_{\cT}$ has $V$ unit $\ucS_V$ and indexed tensor products characterized by the universal property
{\[
  \Fun^L_{\cT}\prn{\bigotimes_U^S \cC, \cE} \simeq \Fun_{\cT}^{\delta}\prn{\prod_U^S  \cC, \cD}.
\]\noafterskip}
\begin{definition}
  The $\infty$-category of \emph{presentably $\cT$-symmetric monoidal $\infty$-categories} is the (non-full) subcategory $\CAlg_{\cT}\prn{\underline{\Pr}^{L,\otimes}_{\cT}} \subset \widehat \Cat_{\cT}^{\otimes}$;
  the $\infty$-category of \emph{presentably symmetric monoidal $\cT$-$\infty$-categories} is the (non-full) subcategory $\CAlg\prn{\Pr^L_{\cT}} \subset \CMon\prn{\widehat \Cat_{\cT}}$.
\end{definition}

\begin{observation}\label{Distributivity observation}
  By definition, a $\cT$-symmetric monoidal $\infty$-category whose underlying $\cT$-$\infty$-category is presentable factors through the inclusion $\uPr^L_{\cT} \subset \uCat_{\cT}$ if and only if its structure maps $\cC^{\times S}_V \rightarrow \cC_V$ are in $\Fun^{\delta}_V(\cC^{\times S}_V,\cC_V)$;
  in the language of \cite{Nardin}, a presentably $\cT$-symmetric monoidal $\infty$-category is precisely a \emph{distributive} $\cT$-symmetric monoidal $\infty$-category whose underlying $\cT$-$\infty$-category is presentable. 
\end{observation}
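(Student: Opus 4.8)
The statement is a direct unwinding of the definition of $\underline{\Pr}^{L,\otimes}_\cT$, so the plan is to carry out that unwinding and then match the result against the ``distributive'' terminology of \cite{Nardin}. First I would use the universal property recalled above, namely that the indexed tensor products of $\underline{\Pr}^{L,\otimes}_\cT$ are characterized by
\[
  \Fun^L_\cT\prn{\bigotimes_U^S \cC, \cE} \simeq \Fun_\cT^\delta\prn{\prod_U^S \cC, \cE},
\]
with $V$-unit $\ucS_V$. Reading off the data of a commutative algebra object: such an object consists of a $\cT$-presentable $\cC$, together with multiplication $\cT$-functors $\bigotimes_U^S \cC \to \cC$ of $\uPrL_\cT$ (one for each $V \in \cT$ and $S \in \FF_V$) and the usual higher coherences. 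Applying the universal property converts each multiplication map into an $S$-distributive $V$-functor $\prod_U^S \cC \to \cC$, whose $V$-value is a functor $\cC^{\times S}_V \simeq \cC_S \rightarrow \cC_V$; exactly as in the non-parameterized case \cite[\S~4.8]{HA}, the coherences of the algebra assemble these functors into a $\cT$-symmetric monoidal structure on $\cC$ --- they impose no further constraint, since every functor appearing among them lies between indexed tensor powers of $\cC$ and is therefore automatically a $\uPrL_\cT$-morphism once the generating multiplication maps are. This yields a functor $\CAlg_\cT\prn{\underline{\Pr}^{L,\otimes}_\cT} \to \widehat\Cat_\cT^\otimes$ identifying its source with $\cT$-symmetric monoidal $\infty$-categories $\cC^\otimes$ such that $\cC$ is $\cT$-presentable and each structure functor $\cC^{\times S}_V \to \cC_V$ is $S$-distributive, i.e.\ lies in $\Fun^\delta_V\prn{\cC^{\times S}_V, \cC_V}$ --- since that is precisely the condition under which the structure functor corresponds, via the displayed universal property, to a $\uPrL_\cT$-morphism out of the indexed tensor product. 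This gives the asserted ``if and only if''.

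For the translation to Nardin's vocabulary I would simply compare definitions. A \emph{distributive} $\cT$-symmetric monoidal $\infty$-category, in the sense of \cite{Nardin}, is one whose underlying $\cT$-$\infty$-category is $\cT$-cocomplete and all of whose indexed tensor product functors preserve indexed colimits after arbitrary base change --- which is verbatim the requirement that every structure functor be $S$-distributive in the sense of \cite[Def~5.14]{Quigley}. Since $\cT$-presentability subsumes $\cT$-cocompleteness, the subcategory of $\widehat\Cat_\cT^\otimes$ carved out by ``$\cT$-presentable underlying category $+$ $S$-distributive structure maps'' is exactly the subcategory of distributive $\cT$-symmetric monoidal $\infty$-categories with $\cT$-presentable underlying $\cT$-$\infty$-category, as claimed.

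I expect the real content here to be bookkeeping rather than mathematics. The two compatibilities that need care are: (i) that the passage from a commutative algebra in $\underline{\Pr}^{L,\otimes}_\cT$ to a $\cT$-symmetric monoidal $\infty$-category identifies the algebra multiplication maps with the indexed tensor product functors $\cC_S \to \cC_V$; and (ii) that the universal property of $\underline{\Pr}^{L,\otimes}_\cT$ is applied with base changes and variances matching those in the definition of $S$-distributivity, so that ``$\cT$-left adjoint out of the indexed tensor product'' becomes exactly ``$S$-distributive out of the indexed product'' rather than some a priori stronger colimit-preservation condition. Both are part of the construction of $\underline{\Pr}^{L,\otimes}_\cT$ in \cite{Nardin_thesis} (cf.\ \cite{Quigley}), so modulo citing them the argument is formal --- which is why it is recorded as an observation.
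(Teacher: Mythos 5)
Your proposal is correct and matches the paper's (implicit) approach: the paper tags this as an ``observation'' precisely because, after recalling the universal property $\Fun^L_{\cT}(\bigotimes_U^S \cC,\cE) \simeq \Fun^\delta_{\cT}(\prod_U^S \cC,\cE)$ from Nardin's thesis, the identification is a definitional unwinding, and that is exactly what you carry out. The two places you flag as needing care --- that the algebra multiplications are the indexed tensor functors, and that ``$\cT$-left adjoint out of $\bigotimes$'' translates to ``$S$-distributive out of $\prod$'' rather than something stronger --- are indeed the only nontrivial points, and both are discharged by the cited universal property together with the fact that $\uPr^{L,\otimes}_\cT \hookrightarrow \uCat_\cT^\otimes$ is monic (replete non-full), so higher coherence data imposes no additional lifting condition.
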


\begin{example}[{\cite[Ex~3.17]{Nardin_thesis}}]
  In the case $\cT = \cO_{C_2}$, given a $[C_2/e]$-distributive $C_2$-functor $F\colon \CoInd_e^{C_2} \cC \rightarrow \cD$, the $e$-value of $F$ is a functor $\cC \times \cC \rightarrow \cD_e$ and the $C_2$-value of $F$ turns coproducts into $\CoInd_e^{C_2} [2] \simeq [2] \sqcup [C_2/e]$-indexed coproducts:
  \[
    F_{C_2}(X \sqcup Y) \simeq F_{C_2}(X) \sqcup F_{C_2}(Y) \sqcup \Ind_e^{C_2} F_e(X,Y).
  \]
  In particular, the norms in a presentably $\cT$-symmetric monoidal $\infty$-category are often not compatible with coproducts.
\end{example}

\begin{example}
  By \cite[Prop~3.2.5]{Nardin}, if $\cC$ is a cocomplete $\infty$-category with finite products such that finite products preserve colimits separately in each variable, then the cartesian symmetric monoidal structures on $\CoFr^V \cC$ lift to a distributive $\cT$A-symmetric monoidal $\infty$-category $\uCoFr^{\cT} \cC^\times$, which we refer to as the \emph{Cartesian structure}.
  It follows from Hilman's characterization of parameterized presentability \cite[Thm~6.1.2]{Hilman} that $\uCoFr^{\cT} \cC$ is presentable, so \cref{Distributivity observation} implies that $\uCoFr^{\cT} \cC^\times$ is presentably symmetric monoidal.
\end{example}

Hilman used the universal property of $\otimes$ in \cite[Prop~6.7.5]{Hilman} to prove the formula 
\[
  \cC \otimes \cD \simeq \uFun_{\cT}^{R}\prn{\cC^{\op},\cD}.
\]
Using this, for any $\cT$-presentable $\cT$-$\infty$-category $\cC$, we have
\begin{align*}
    \uCMon_I(\cC)
    &\simeq \uFun^{I-\times}_{\cT}(\uSpan_I(\uFF_{\cT}),\cC)\\
    &\simeq \uFun^{I-\times}_{\cT}(\uSpan_I(\uFF_{\cT}),\uFun^{R}_{\cT}(\cC^{\op},\ucS_{\cT}))\\
    &\simeq \uFun^{R}_{\cT}(\cC^{\op},\uFun^{I-\times}_{\cT}(\uSpan_I(\uFF_{\cT}),\ucS_{\cT}))\\
    &\simeq \cC \otimes \uCMon_I(\ucS_{\cT}).
\end{align*}
i.e. the functor $\cC \mapsto \uCMon_I(\cC)$ is \emph{smashing}.
In fact, we can say more.

\begin{notation}
  We say that a presentable $\cT$-$\infty$category is \emph{$I$-semiadditive} if its underlying $\cT$-$\infty$-category is $I$-semiadditive, and we let $\PrLS_{\cT} \subset \Pr^{L}_{\cT}$ be the full subcategory spanned by $I$-semiadditive presentable $\cT$-categories.
\end{notation}
It follows from \cref{Semiadditivization theorem} that a $\cT$-presentable $\cT$-$\infty$-category is fixed by $\uCMon_I(-)$ if and only if it's $I$-semiadditive, i.e. the smashing localization corresponding with $\uCMon_I(-)$ is left adjoint to the inclusion $\Pr_{\cT}^{L} \subset \Pr_{\cT}^{L, I-\oplus}$.
By \cite[Lemma~3.6]{Gepner}, this implies that given $\cC \in \CAlg(\PrL_{\cT})$, there is a unique compatible commutative algebra structure on its localization $\uCMon_I(\cC)$.
In other words, we've shown the following.
\begin{theorem}\label{Mode SMC}
    The localizing subcategory
    \[
        \uCMon_I\cln \PrL_{\cT} \rightleftarrows \PrLS_{\cT}\cln \iota
    \]
    is smashing;
    in particular, if $\cD^{\otimes}$ is a presentably symmetric monoidal $\cT$-category, then there is an essentially unique presentably symmetric monoidal $\cT$-$\infty$-category $\uCMon_I^{\otimes-\Mode}(\cD)$ possessing a (necessarily unique) symmetric monoidal lift
    \[
        \Fr^{\otimes}\cln \cD^{\otimes} \rightarrow \uCMon_I^{\otimes-\Mode}(\cD)
    \]
    of $\Fr\cln \cD \rightarrow \uCMon_I(\cD)$.
\end{theorem}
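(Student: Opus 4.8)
The plan is to leverage the discussion preceding the statement, which already establishes the substance: $\uCMon_I(-)$ is a localization $\PrL_{\cT} \to \PrL_{\cT}$ with essential image $\PrLS_{\cT}$ (via \cref{Semiadditivization theorem}), and Hilman's formula for $\otimes$ identifies this endofunctor with $(-) \otimes \uCMon_I(\ucS_{\cT})$, so that it is smashing. It remains to extract the symmetric-monoidal consequences, and for that I would invoke the theory of idempotent objects in a presentably symmetric monoidal $\infty$-category \cite[\S~4.8.2]{HA}, or equivalently \cite[Lemma~3.6]{Gepner}.

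First I would record that $A \deq \uCMon_I(\ucS_{\cT})$ is the $\uCMon_I$-localization of the $\otimes$-unit of $\PrL_{\cT}$; since the localization is smashing, $A$ is an idempotent object of $(\PrL_{\cT},\otimes)$, hence carries an essentially unique commutative-algebra structure, the forgetful functor $\mathrm{Mod}_A(\PrL_{\cT}) \to \PrL_{\cT}$ is fully faithful with essential image the local objects $\PrLS_{\cT}$, and base change $(-) \otimes A \colon \PrL_{\cT} \to \mathrm{Mod}_A(\PrL_{\cT}) \simeq \PrLS_{\cT}$ is the reflective localization. Then, given $\cD^{\otimes} \in \CAlg(\PrL_{\cT})$, I would form the pushout $\cD^{\otimes} \otimes A \deq \cD^{\otimes} \amalg_{\ucS_{\cT}} A$ in $\CAlg(\PrL_{\cT})$; its underlying object is $\cD \otimes A \simeq \uCMon_I(\cD)$, it remains presentably symmetric monoidal (the underlying $\cT$-$\infty$-category is still $\cT$-presentable, being a fiberwise-accessible localization of $\cD$, and it is $I$-semiadditive by \cref{Semiadditivization theorem}), and its structure unit map has underlying $\cT$-functor $\Fr \colon \cD \to \uCMon_I(\cD)$. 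I would name this $\uCMon_I^{\otimes-\Mode}(\cD)$, with unit $\Fr^{\otimes}$.

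For uniqueness, I would use that $\cD^{\otimes} \mapsto \cD^{\otimes} \otimes A$ is left adjoint to the inclusion $\CAlg(\PrLS_{\cT}) \hookrightarrow \CAlg(\PrL_{\cT})$, again a formal consequence of $A$ being idempotent. If $\cE \in \CAlg(\PrL_{\cT})$ is another presentably symmetric monoidal structure on $\uCMon_I(\cD)$ admitting a symmetric monoidal lift $\cD^{\otimes} \to \cE$ of $\Fr$, then $\cE$ lies in $\CAlg(\PrLS_{\cT})$ and this lift factors uniquely through $\Fr^{\otimes}$; since localization functors are epimorphisms in $\Cat$ (precomposition along the Bousfield localization $\Fr$, which exhibits $\uCMon_I(\cD)$ as $\cD[W^{-1}]$, is fully faithful on functor categories, and $\Fr$ is essentially surjective), the resulting comparison $\cD^{\otimes} \otimes A \to \cE$ is the identity on underlying $\cT$-$\infty$-categories, hence an equivalence. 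This yields both the essential uniqueness of $\uCMon_I^{\otimes-\Mode}(\cD)$ and the uniqueness of $\Fr^{\otimes}$.

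The main obstacle is purely bookkeeping at the parameterized level: one must know that $\PrL_{\cT}$ — with the symmetric monoidal structure induced from Nardin's $\cT$-symmetric monoidal structure on $\uPrL_{\cT}$, e.g.\ via $\Gamma^{\cT}$ — is genuinely a presentably symmetric monoidal $\infty$-category to which \cite[\S~4.8.2]{HA} applies, and that the smashing localization $\uCMon_I(-)$ is compatible with $\otimes$ fiberwise in the sense of \cite[Def~2.2.1.6]{HA}. The former follows from the universal property of $\otimes$ recorded above together with lax symmetric monoidality of $\Gamma^{\cT}$; the latter is automatic for smashing localizations, or follows from \cref{Compatible symmetric monoidal prop} applied fiberwise. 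If one prefers to avoid setting up parameterized idempotent-algebra theory altogether, \cite[Lemma~3.6]{Gepner} applies verbatim to the smashing localization $\uCMon_I(-)$, which is the route the preceding discussion takes.
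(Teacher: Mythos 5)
Your proposal is correct and ends by endorsing exactly the paper's route: the preceding discussion establishes that $\uCMon_I(-)$ is smashing via Hilman's formula $\cC \otimes \cD \simeq \uFun_{\cT}^{R}(\cC^{\op},\cD)$ together with \cref{Semiadditivization theorem}, and the paper then simply invokes \cite[Lemma~3.6]{Gepner} to extract the symmetric monoidal consequences. Your longer elaboration via idempotent algebras in $\PrL_{\cT}$ (viewing $\uCMon_I(\ucS_{\cT})$ as an idempotent commutative algebra, identifying $\PrLS_{\cT}$ with its module category, and obtaining $\uCMon_I^{\otimes-\Mode}(\cD)$ as the base change $\cD^{\otimes} \otimes A$) is a more explicit unwinding of the content of \cite[Lemma~3.6]{Gepner} and \cite[\S~4.8.2]{HA} rather than a genuinely different argument; it is correct and would serve as a more self-contained exposition, at the cost of some bookkeeping you flag yourself (verifying that $\PrL_{\cT}$ is presentably symmetric monoidal and that the machinery applies at the parameterized level). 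The paper avoids this by citing \cite[Lemma~3.6]{Gepner} as a black box.
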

\begin{warning}
  \cref{Mode SMC} is not as \emph{genuinely equivariant} as the user may want, as it constructs \emph{symmetric monoidal structures}, but never norm maps.
  The author is content with this for the purposes of this paper, as the algebraic interpretation of indexed tensor products of $\cT$-operads is unclear.
  She hopes to address the indexed case in forthcoming work.
\end{warning}
\begin{remark}
    Under the equivalence of \cref{CMon in coeff}, writing $\cD = \uCoFr^{\cT}(\cC)$, \cref{Mode SMC} constructs an essentially unique presentably symmetric monoidal structure on $\uCMon_I(\cC)$ subject to the condition that the free functor $\uCoFr^{\cT} \cC \rightarrow \uCMon_I(\cC)$ bears a symmetric monoidal structure under the Cartesian structure.
\end{remark}
\begin{observation}\label{Unit observation}
  The $\cT$-$\infty$-category $\ucS_{\cT}$ is freely generated under $\cT$-colimits by one $\cT$-point, in the sense that evaluation at the $V$-units $(*_{V})$ yields an equivalence \cite[Thm~11.5]{Shah2}
    \[
        \Fun^L_{\cT}(\ucS_{\cT},\cC) \simeq \Gamma \cC.
    \]
    In particular, every symmetric monoidal $\cT$-$\infty$-category receives at most one symmetric monoidal $\cT$-left adjoint from $\ucS_{\cT}$;
    in the case $\cC = \ucS_{\cT}^{\times}$ the condition of \cref{Mode SMC} then may be read as saying that there is a unique presentably symmetric monoidal structure on $\uCMon_I(\ucS_{\cT})$ with $V$-unit $1^{\Mode}_V = \Fr(*_{V})$ for all $V \in \cT$.
    
    Furthermore, by Yoneda's lemma, these $V$-units are characterized by the property that
    \[
        \Map_V(1_V^{\Mode}, X_V) \simeq \Map(*_V,X_V(*_V)) \simeq X_V(*_V).\qedhere
    \]
\end{observation}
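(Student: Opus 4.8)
The plan is to peel the statement into three layers, taking the free-generation equivalence $\Fun^L_{\cT}(\ucS_{\cT},\cC)\simeq\Gamma\cC$ from \cite[Thm~11.5]{Shah2} as input. First I would observe that the $\cT$-object $(*_V)$ plays two roles at once: it is the $\cT$-point generating $\ucS_{\cT}$ under $\cT$-colimits (its $V$-value $*_V\in\cS_V=\Fun(\cT_{/V}^{\op},\cS)$ is the presheaf represented by the terminal object $\mathrm{id}_V\in\cT_{/V}$, i.e. the constant presheaf at a point, which is also the terminal object of $\cS_V$), and it is the monoidal unit of the Cartesian structure $\ucS_{\cT}^{\times}$, since the unit of any Cartesian structure is the terminal object and $(*_V)$ is the terminal $\cT$-object. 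Consequently any symmetric monoidal $\cT$-functor $F\colon\ucS_{\cT}^{\times}\to\cC^{\otimes}$ carries $(*_V)$ to the $\cT$-unit $1_{\cC}$, so if $F$ is moreover a $\cT$-left adjoint then, under $\Fun^L_{\cT}(\ucS_{\cT},\cC)\simeq\Gamma\cC$, its underlying $\cT$-functor is the unique $\cT$-colimit-preserving functor classified by $1_{\cC}\in\Gamma\cC$; thus any two symmetric monoidal $\cT$-left adjoints out of $\ucS_{\cT}^{\times}$ agree after forgetting the monoidal structure. To upgrade this to honest uniqueness I would use that $\ucS_{\cT}^{\times}$ is the $\otimes$-unit of $\PrL_{\cT}$ — which follows from Hilman's formula $\cC\otimes\cD\simeq\uFun^R_{\cT}(\cC^{\op},\cD)$ together with the previous point — so $\ucS_{\cT}^{\times}$ is initial in $\CAlg(\PrL_{\cT})$, the $\infty$-category of presentably symmetric monoidal $\cT$-$\infty$-categories; hence there is in fact exactly one symmetric monoidal $\cT$-left adjoint into each such $\cC^{\otimes}$, and in particular at most one into any symmetric monoidal $\cT$-$\infty$-category admitting the relevant $\cT$-colimits.

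Next I would specialize $\cD=\ucS_{\cT}^{\times}$ in \cref{Mode SMC}: its content there is that there is an essentially unique presentably symmetric monoidal structure on the $\cT$-$\infty$-category $\uCMon_I(\ucS_{\cT})$ for which the free functor $\Fr\colon\ucS_{\cT}\to\uCMon_I(\ucS_{\cT})$ — which, being the unit of the smashing localization, is a $\cT$-left adjoint — lifts to a symmetric monoidal $\cT$-functor $\Fr^{\otimes}$. I would then check that, for a presentably symmetric monoidal structure $\cD^{\otimes}$ on $\uCMon_I(\ucS_{\cT})$, admitting such a lift is equivalent to $1^{\Mode}_V=\Fr(*_V)$: the forward implication is unit-preservation for $\Fr^{\otimes}$ exactly as above, while conversely the unique symmetric monoidal $\cT$-left adjoint $\ucS_{\cT}^{\times}\to\cD^{\otimes}$ furnished by initiality has underlying $\cT$-functor classified by $1_{\cD}=\Fr(*_V)\in\Gamma\uCMon_I(\ucS_{\cT})$, which by the generation statement is precisely $\Fr$; so $\Fr$ lifts, and the uniqueness clause of \cref{Mode SMC} then pins down $\cD^{\otimes}\simeq\uCMon_I^{\otimes-\Mode}(\ucS_{\cT})$. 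This is the asserted ``unique presentably symmetric monoidal structure with $V$-unit $\Fr(*_V)$.''

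Finally, for the Yoneda characterization of the units I would compute directly via the free–forgetful adjunction $\Fr\dashv U$, with $U\colon\uCMon_I(\ucS_{\cT})\to\ucS_{\cT}$ the underlying-$V$-space functor — equivalently, by \cref{CMon in coeff}, restriction of an $I$-Mackey space $X_V\colon\Span_{I_V}(\FF_V)\to\cS$ along $(\cT_{/V})^{\op}\hookrightarrow\Span_{I_V}(\FF_V)$. Then $\Map_V(1^{\Mode}_V,X_V)=\Map_V(\Fr(*_V),X_V)\simeq\Map_{\cS_V}(*_V,UX_V)$, and since $*_V$ is the presheaf represented by the terminal object of $\cT_{/V}$, ordinary Yoneda in $\cS_V=\Fun(\cT_{/V}^{\op},\cS)$ identifies this with the value of $UX_V$ at that object, namely $X_V(*_V)$, as claimed. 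I expect the only real friction point — the ``hard part'' — to be reconciling the two a priori different maps named $\Fr$: the unit of the smashing localization of \cref{Mode SMC} versus the free $I$-commutative monoid functor left adjoint to $U$. One checks they coincide by observing that $\uCMon_I(\ucS_{\cT})$ is at once the free $I$-semiadditive presentable $\cT$-$\infty$-category on a single $\cT$-point and the free presentable $\cT$-$\infty$-category on $\uSpan_I(\uFF_{\cT})$ preserving $I$-indexed products; granting this identification, everything else is a formal unwinding of universal properties.
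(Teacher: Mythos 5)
Your proposal is correct and reconstructs the observation along the same lines the paper intends: identify the generating $\cT$-point $(*_V)$ with the Cartesian unit, combine the free-generation statement from \cite[Thm~11.5]{Shah2} with unit-preservation of symmetric monoidal functors and initiality of the $\otimes$-unit $\ucS_{\cT}^{\times}$ in $\CAlg(\Pr^L_{\cT})$ to pin down the monoidal structure on $\uCMon_I(\ucS_{\cT})$, and finish with the $\Fr\dashv U$ adjunction plus Yoneda (noting $*_V=y(\id_V)$) for the mapping-space formula. Two minor remarks: the closing clause of your first paragraph, asserting uniqueness ``into any symmetric monoidal $\cT$-$\infty$-category admitting the relevant $\cT$-colimits,'' does not follow from initiality in $\CAlg(\Pr^L_{\cT})$ alone (that only covers presentable targets, which is all the paper actually uses); and the ``friction point'' you flag about two competing meanings of $\Fr$ is a non-issue, since the paper's $\Fr$ is by construction the unit of the smashing localization, which \cref{Semiadditivization theorem} identifies with the left adjoint to $U$, so no reconciliation is needed.
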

We'd like to identify this symmetric monoidal structure via a familiar formula.
We have a candidate:
\begin{proposition}[{\cite[Prop~4.24]{Moshe}, via \cite[Prop~3.3.4]{Cnossen_bispans}}]\label{Localized Day convolution}
    If $\cC$ is a presentably symmetric monoidal $\infty$-category, then the Day convolution structure on $\Fun(\Span_I(\FF_{V}),\cC)$ with respect to the smash product on $\Span_I(\FF_{V})$ is compatible with the localization
    \[
        L_{\Seg}:\Fun(\Span_I(\FF_{V}), \cC) \rightarrow \uCMon_{I}(\cC)_V
    \]
\end{proposition}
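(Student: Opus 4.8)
The plan is to deduce the statement from the closed structure on Day convolution. Since $\cC$ is presentably symmetric monoidal and $\Span_I(\FF_V)$ is small, the Day convolution symmetric monoidal structure on $\Fun(\Span_I(\FF_V),\cC)$ is again presentably symmetric monoidal, hence closed; I would first record that its internal hom $[-,-]^{\mathrm{Day}}$ is computed by the end
\[
  [K,H]^{\mathrm{Day}}(S) \simeq \int_{T \in \Span_I(\FF_V)} \underline{\Hom}_{\cC}\prn{K(T),\, H(S \wedge T)},
\]
where $\underline{\Hom}_\cC$ is the internal hom of $\cC$. By \cref{CMon in coeff}, the target $\uCMon_I(\cC)_V$ is the reflective subcategory $\Fun^{\times}(\Span_I(\FF_V),\cC)$ of product-preserving functors, and $L_{\Seg}$ is the corresponding localization; compatibility in the sense of \cite[Def~2.2.1.6]{HA} asks precisely that the $L_{\Seg}$-equivalences be stable under $- \otimes_{\mathrm{Day}} Y$ for every $Y \in \Fun(\Span_I(\FF_V),\cC)$, so this is what I would verify.

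Since $- \otimes_{\mathrm{Day}} Y$ is left adjoint to $[Y,-]^{\mathrm{Day}}$, a map $f$ is carried to an $L_{\Seg}$-equivalence by $-\otimes_{\mathrm{Day}} Y$ as soon as $[Y,W]^{\mathrm{Day}}$ is $L_{\Seg}$-local for every $L_{\Seg}$-local $W$. Thus the crux is the claim that \emph{$[Y,W]^{\mathrm{Day}}$ is product-preserving whenever $W$ is}. This is where the concrete shape of the smash product enters: $\wedge$ on $\Span_I(\FF_V)$ is induced by the Cartesian product of finite $V$-sets, which distributes over disjoint unions, so $(S \sqcup S') \wedge T \simeq (S \wedge T) \sqcup (S' \wedge T)$ and $\emptyset_V \wedge T \simeq \emptyset_V$, while $\sqcup$ and $\emptyset_V$ present the finite products of $\Span_I(\FF_V)$. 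Substituting into the end formula and using that $W$ preserves products, that $\underline{\Hom}_\cC(Y(T),-)$ preserves products, and that ends commute with products, I would find that the canonical maps
\[
  [Y,W]^{\mathrm{Day}}(S \sqcup S') \longrightarrow [Y,W]^{\mathrm{Day}}(S) \times [Y,W]^{\mathrm{Day}}(S'), \qquad [Y,W]^{\mathrm{Day}}(\emptyset_V) \longrightarrow \ast
\]
are equivalences, establishing the claim.

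With this in hand the statement assembles quickly: for an $L_{\Seg}$-equivalence $f$, any $Y$, and any $L_{\Seg}$-local $W$, the map $\Map(f \otimes_{\mathrm{Day}} Y, W) \simeq \Map(f, [Y,W]^{\mathrm{Day}})$ is an equivalence because $[Y,W]^{\mathrm{Day}}$ is $L_{\Seg}$-local, so $f \otimes_{\mathrm{Day}} Y$ is an $L_{\Seg}$-equivalence. This is the argument underlying the cited \cite[Prop~4.24]{Moshe} and \cite[Prop~3.3.4]{Cnossen_bispans}. I expect the main obstacle to be bookkeeping rather than ideas: making the end formula for the Day internal hom precise in this generality (or pinning down a citable form), and confirming that the equivalences produced by the ``ends commute with products'' computation really are the structural comparison maps for product-preservation; the distributivity of $\wedge$ over $\sqcup$ that powers everything is elementary.
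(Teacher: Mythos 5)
Your argument is correct, and it is, in substance, the paper's proof with the citation unwound. The paper invokes the general criterion of \cite[Prop~3.3.4]{Cnossen_bispans}, which reduces compatibility to the single claim that $A_+ \wedge (-)\colon \Span_I(\FF_V) \rightarrow \Span_I(\FF_V)$ is product-preserving, and then disposes of that claim by noting that $A_+ \wedge (-)$ preserves coproducts and $\Span_I(\FF_V)$ is semiadditive. Your route through the Day-convolution internal hom is a direct re-derivation of that criterion specialized to this setting: the adjunction reduction to ``$[Y,W]^{\mathrm{Day}}$ is $L_{\Seg}$-local whenever $W$ is,'' together with the end formula, is exactly what the cited criterion packages, and the distributivity $(S \sqcup S') \wedge T \simeq (S \wedge T) \sqcup (S' \wedge T)$ (plus $\emptyset_V \wedge T \simeq \emptyset_V$, and semiadditivity turning $\sqcup$ into the product) is precisely the content of $A_+ \wedge (-)$ being product-preserving. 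The end-commutes-with-limits step and the product-preservation of $\underline{\Hom}_{\cC}(Y(T),-)$ are both standard and unproblematic, so your ``bookkeeping rather than ideas'' assessment is accurate. The tradeoff is that your version is self-contained and makes visible exactly which structural feature of the smash product is being used, while the paper's citation-based version is shorter and more uniform across the variants it needs.
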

\begin{proof}
  By the general criterion \cite[Prop~3.3.4]{Cnossen_bispans}, it suffices to verify that $A_+ \wedge -:\Span(\FF_{V}) \rightarrow \Span(\FF_{V})$ is product-preserving, which follows by the fact that it is colimit preserving and $\Span(\FF_{V})$ is semiadditive.
\end{proof}

By \cref{Compatible symmetric monoidal prop}, \cref{Localized Day convolution} constructs a symmetric monoidal structure $\uCMon_I(\cC)^{\circledast}$ on $\uCMon_I(\cC)$. 
We will show that this agrees with the mode symmetric monoidal structure.
\begin{theorem}\label{Mode is Day theorem}
  Let $\cC^{\otimes}$ be a presentably symmetric monoidal $\infty$-category.
  Then, there is a unique equivalence between the Day convolution and mode symmetric monoidal structures on $\uCMon_I(\cC)$ lifting the identity.
\end{theorem}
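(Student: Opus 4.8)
The plan is to recognize the Day convolution structure $\uCMon_I(\cC)^{\circledast}$ as the \emph{compatible algebra structure} for the smashing localization of \cref{Mode SMC} and conclude by uniqueness. Recall from \cref{Mode SMC} that $\uCMon_I\colon \PrL_{\cT} \to \PrLS_{\cT}$ is smashing; hence by \cite[Lem~3.6]{Gepner}, for $\cC$ presentably symmetric monoidal the $\cT$-$\infty$-category $\uCMon_I(\cC) = \uCMon_I(\uCoFr^{\cT}\cC)$ carries an essentially unique presentably symmetric monoidal structure for which the localization unit $\Fr\colon \uCoFr^{\cT}(\cC) \to \uCMon_I(\cC)$ lifts to a symmetric monoidal $\cT$-functor from the pointwise structure $\uCoFr^{\cT}(\cC^{\otimes})$ --- this is the mode structure $\uCMon_I^{\otimes-\Mode}(\uCoFr^{\cT}(\cC^{\otimes}))$ --- and the space of such lifts is contractible. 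So it will suffice to show that $\uCMon_I(\cC)^{\circledast}$ makes $\Fr$ symmetric monoidal; the existence and uniqueness of the equivalence ``lifting the identity'' are then formal.

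I would first reduce to $\cC = \cS$. Fiberwise one has $\uCoFr^{\cT}(\cC)_V = \Fun((\cT_{/V})^{\op},\cC) \simeq (\ucS_{\cT})_V \otimes \cC$ compatibly with restriction (which is precomposition), so $\uCoFr^{\cT}(\cC^{\otimes})$, the mode structure, and the Segal localization all arise from the corresponding data over $\cS$ by extension of scalars along $\cS \to \cC$: for the mode structure this is the smashing property, and for the Day structure it uses that Day convolution with presentable coefficients commutes with $-\otimes\cC^{\otimes}$ together with the identification $\Fun^{\times}(\Span_I(\FF_V),\cC) \simeq \Fun^{\times}(\Span_I(\FF_V),\cS)\otimes\cC$, which makes $L_{\Seg}$ compatible with the extension of scalars. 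Hence $\uCMon_I(\cC)^{\circledast} \simeq \uCMon_I(\cS)^{\circledast}\otimes\cC^{\otimes}$ and $\uCMon_I^{\otimes-\Mode}(\uCoFr^{\cT}(\cC^{\otimes})) \simeq \uCMon_I^{\otimes-\Mode}(\ucS_{\cT})\otimes\cC^{\otimes}$, both compatibly with $\Fr$, so it is enough to treat $\cC = \cS$.

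For $\cC = \cS$ the $\cT$-$\infty$-category $\ucS_{\cT}$ is freely generated under $\cT$-colimits by one $\cT$-point, so by \cref{Unit observation} the mode structure is the unique presentably symmetric monoidal structure on $\uCMon_I(\ucS_{\cT})$ whose $V$-unit corepresents evaluation at $*_V$, equivalently whose $V$-unit is $\Fr(*_V)$. It therefore remains only to compute the $V$-unit $1^{\circledast}_V$ of the Day structure. By \cref{Localized Day convolution} and \cref{Compatible symmetric monoidal prop} the localization $L_{\Seg}\colon \Fun(\Span_I(\FF_V),\cS)^{\mathrm{Day}} \to \bigl(\uCMon_I(\ucS_{\cT})^{\circledast}\bigr)_V$ is symmetric monoidal, hence carries the Day-convolution unit to $1^{\circledast}_V$; and the Day-convolution unit on $\Fun(\Span_I(\FF_V),\cS)$ is the corepresentable $\Map_{\Span_I(\FF_V)}(\mathbf{1},-)$ at the monoidal unit of $\Span_I(\FF_V)$, which is the one-point $V$-set $*_V$. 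Since $\uCMon_I(\ucS_{\cT})_V$ embeds fully faithfully into $\Fun(\Span_I(\FF_V),\cS)$ as the local objects, the Yoneda lemma gives for every such $X_V$
\[
  \Map_V\bigl(1^{\circledast}_V, X_V\bigr) \;\simeq\; \Map_{\Fun(\Span_I(\FF_V),\cS)}\bigl(\Map_{\Span_I(\FF_V)}(*_V,-),\,X_V\bigr) \;\simeq\; X_V(*_V),
\]
so $1^{\circledast}_V$ corepresents evaluation at $*_V$ and thus agrees with $\Fr(*_V)$. Hence $\uCMon_I(\ucS_{\cT})^{\circledast}$ is a presentably symmetric monoidal structure with the defining $V$-units of the mode structure, and the uniqueness in \cref{Unit observation} (equivalently \cite[Lem~3.6]{Gepner}) yields the desired equivalence, unique lifting the identity.

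I expect the main obstacle to be the $\cT$-naturality and base-change bookkeeping rather than any single conceptual step: one must check that \cref{Localized Day convolution} and \cref{Compatible symmetric monoidal prop} are applied in their $\cT$-functorial forms, so that the fiberwise equivalences assemble into an equivalence of symmetric monoidal $\cT$-$\infty$-categories, and that the Segal localization really is compatible with extension of scalars (using \cref{CMon in coeff} to track $V$-values and restrictions). It is also worth recording why the more naive strategy fails: writing $\Fr$ as a localized left $\cT$-Kan extension along the ``orbits and backward maps'' inclusion $(\cT_{/V})^{\op}\hookrightarrow \Span_I(\FF_V)$ and checking symmetric monoidality directly does not work, since left Kan extension from orbits to all finite $V$-sets carries a pointwise tensor product of presheaves to a ``diagonal'' tensor product rather than to the pointwise one; it is precisely the passage through the unit afforded by \cref{Unit observation} that circumvents this.
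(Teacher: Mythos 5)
Your proposal is correct and follows essentially the same route as the paper's own proof: reduce to the universal coefficients $\ucS_{\cT}$ by recognizing both structures as extension of scalars from there (the paper makes this precise via its Lemma on tensoring localizations and the identification of $\otimes$ with coproducts in $\CAlg(\Pr^L_{\cT})$; you gesture at the same content), then identify the Day unit with $\Fr(*_V)$ via Yoneda and conclude by the Unit observation. The only cosmetic difference is that the paper cites Nardin's result that the $\cT$-Yoneda embedding is $\cT$-symmetric monoidal for Day convolution to write $1^{\Day}_V \simeq y(*_V)$, whereas you argue directly that the Day unit is the corepresentable at the monoidal unit $*_V$ of $\Span_I(\FF_V)$ — the same observation.
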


The proof of \cite[Lemma~4.21]{Moshe} and \cite[Lemma~5.2.1]{Carmeli} apply identically to the following.
\begin{lemma}\label{Tensor compatible localization lemma}
  Fix $\cA_0,\cA_1,\cB \in \CAlg(\Pr^L_{\cT})$ and $L:\cA_0 \rightarrow \cA_1$ a $\cT$-localization functor which is compatible with the symmetric monoidal structure on $\cA_0$.
  Then, $L \otimes \id_{\cB}:\cA_0 \otimes \cB \rightarrow \cA_1 \otimes \cB$ is a $\cT$-localization functor which is compatible with the symmetric monoidal structure on $\cA_0 \otimes \cB$.
\end{lemma}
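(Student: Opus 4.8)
The plan is to reduce to non-parameterized input checked fiberwise and then run a universal-property argument at the $\cT$-level. First I would record that, by \cref{Compatible symmetric monoidal prop} applied in each fiber (i.e.\ \cite[\S~2.2.1]{HA} for $L_{V}$), compatibility of $L$ with the symmetric monoidal structure on $\cA_{0}$ means exactly that $\cA_{1}$ is the localization of $\cA_{0}$ at a strongly saturated class $W$ of morphisms which is an $\otimes$-ideal (closed under $\otimes$ with arbitrary objects), and that $L$ thereby refines to a symmetric monoidal $\cT$-left adjoint $L^{\otimes}\colon\cA_{0}^{\otimes}\rightarrow\cA_{1}^{\otimes}$ in $\CAlg(\Pr^{L}_{\cT})$. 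Applying the functor $-\otimes\cB$ on $\CAlg(\Pr^{L}_{\cT})$ to $L^{\otimes}$ produces a symmetric monoidal $\cT$-left adjoint $(L\otimes\id_{\cB})^{\otimes}$, so it suffices to prove that the underlying $\cT$-functor $L\otimes\id_{\cB}$ exhibits $\cA_{1}\otimes\cB$ as the localization of $\cA_{0}\otimes\cB$ at the strongly saturated class $W'$ generated by $\{\,w\otimes\id_{\cB}\mid w\in W\,\}$; here $W'$ is again a $\otimes$-ideal, because $W$ is. Granting this, $L\otimes\id_{\cB}$ is a $\cT$-localization, and being a localization at a $\otimes$-ideal it is compatible with the symmetric monoidal structure on $\cA_{0}\otimes\cB$ — once more a fiberwise assertion reducing to \cite[\S~2.2.1]{HA}.

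To prove the identification I would use that $\Pr^{L}_{\cT}$ is closed symmetric monoidal with internal hom $\uFun^{L}_{\cT}(-,-)$, as in \cite[Prop~6.7.5]{Hilman}. For every $\cE\in\Pr^{L}_{\cT}$ the tensor--hom equivalences
\[
  \uFun^{L}_{\cT}(\cA_{i}\otimes\cB,\cE)\;\simeq\;\uFun^{L}_{\cT}\bigl(\cA_{i},\uFun^{L}_{\cT}(\cB,\cE)\bigr)
\]
intertwine $(L\otimes\id_{\cB})^{*}$ with $L^{*}$, while a colimit-preserving $\cT$-functor $F\colon\cA_{0}\otimes\cB\rightarrow\cE$ inverts every $w\otimes\id_{\cB}$ (equivalently, since $F$ preserves colimits, every morphism of $W'$) if and only if its mate $\widetilde F\colon\cA_{0}\rightarrow\uFun^{L}_{\cT}(\cB,\cE)$ inverts $W$, because equivalences in $\uFun^{L}_{\cT}(\cB,\cE)$ are detected objectwise. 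Since $L$ is a $\cT$-localization, $L^{*}$ is $\cT$-fully faithful with image the $\cT$-functors inverting $W$ (the universal property of $\cT$-localizations, which I would verify fiberwise from \cite[\S~5.5.4]{HTT} via \cref{Subcategories are fiberwise observation}); combining the two previous sentences identifies $\Fun^{L}_{\cT}(\cA_{1}\otimes\cB,-)$ with $\Fun^{L}_{\cT}\bigl((\cA_{0}\otimes\cB)[W'^{-1}],-\bigr)$ compatibly with the maps from $\cA_{0}\otimes\cB$, which by the Yoneda lemma gives the claim. Alternatively, one can short-circuit part of this by observing that $-\otimes\cB$ is a $\cT$-left adjoint, hence preserves epimorphisms, while a $\cT$-localization is precisely an epimorphism in $\Pr^{L}_{\cT}$.

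I expect the genuine obstacle to lie in this identification: one must know enough naturality of the closed structure on $\Pr^{L}_{\cT}$ to intertwine $(L\otimes\id_{\cB})^{*}$ with $L^{*}$ under tensor--hom, and one must carry out the parameterized bookkeeping that reduces ``precomposition is $\cT$-fully faithful on all $\cT$-presentable targets $\Rightarrow$ the functor is a $\cT$-localization'' to the fiberwise statement, taking care that $(\cA_{0}\otimes\cB)_{V}$ is not simply $(\cA_{0})_{V}\otimes\cB_{V}$. Everything else is formal or directly cited, which is why the arguments of \cite[Lemma~4.21]{Moshe} and \cite[Lemma~5.2.1]{Carmeli} transport to the $\cT$-parameterized setting with no essential change.
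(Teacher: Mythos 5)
Your proof is correct and is, in substance, the paper's: the paper disposes of this lemma with the single sentence that the proofs of \cite[Lemma~4.21]{Moshe} and \cite[Lemma~5.2.1]{Carmeli} apply identically, and your proposal simply unpacks what that transport to the $\cT$-parameterized setting amounts to, using the closed symmetric monoidal structure on $\Pr^L_{\cT}$ from \cite[Prop~6.7.5]{Hilman} exactly as the non-parameterized arguments use the closed structure on $\Pr^L$. The only soft spot is the aside that $\cT$-localizations are precisely the epimorphisms in $\Pr^L_{\cT}$, which deserves a citation or a check if you actually wish to lean on it, but as you note that short-circuit is dispensable and the main tensor--hom argument stands on its own.
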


\begin{proof}[Proof of \cref{Mode is Day theorem}]
  Set the temporary notation $\uPCMon_I(-) \deq \uFun_{\cT}\prn{\uSpan_I(\uFF_{\cT}), -}$.
  Our argument follows along the lines of \cite[Thm~4.26]{Moshe}.
  Repeating the argument of \cref{Mode SMC}, for all presentably symmetric monoidal $\cT$-$\infty$-categories $\cD$, we acquire a diagram
  \[
    \begin{tikzcd}
      {\uPCMon_I(\cD)} & {\uPCMon_I(\ucS_{\cT}) \otimes \cD} \\
      {\uCMon_I(\cD)} & {\uCMon_I(\ucS_{\cT}) \otimes \cD}
    \arrow["\simeq"{description}, draw=none, from=1-1, to=1-2]
    \arrow[from=2-1, to=1-1, hook]
    \arrow[from=2-2, to=1-2, hook]
    \arrow["\simeq"{description}, draw=none, from=2-1, to=2-2]
  \end{tikzcd}
  \]
  Moreover, under the identification of tensor products and coproducts in $\CAlg(\Pr^L_{\cT})$, the top equivalence corresponds with the postcomposition symmetric monoidal $\cT$-functor $\uPCMon_I(\cS_{\cT}) \rightarrow \uPCMon_I(\cD)$ along the canonical symmetric monoidal left $\cT$-adjoint $\cS_{\cT} \rightarrow \cD$ and the symmetric monoidal free $\cT$-functor $\cD \rightarrow \uPCMon_I(\cD)$ pushforward functor (see \cite[Prop~3.3,3.6]{Moshe}).
  Thus the top arrow can be lifted to a \emph{symmetric monoidal} equivalence.
  We may take adjoint functors to find the diagram
  \[
  \begin{tikzcd}
    {\uPCMon_I(\cD)} & {\uPCMon_I(\ucS_{\cT}) \otimes \cD} \\
    {\uCMon_I(\cD)} & {\uCMon_I(\ucS_{\cT}) \otimes \cD}
	\arrow["\simeq"{description}, draw=none, from=1-1, to=1-2]
	\arrow["{{L_{\Seg}}}", from=1-1, to=2-1]
	\arrow["{{L_{\Seg}}}", from=1-2, to=2-2]
	\arrow["\simeq"{description}, draw=none, from=2-1, to=2-2]
\end{tikzcd}
  \]
  of \cite[Prop~3.3.4]{Cnossen_bispans}.
  The bottom functor is a symmetric monoidal localization of the top. 
  In particular, by \cref{Tensor compatible localization lemma}, it suffices to prove this in the case $\cD = \ucS_{\cT}$.

    The $\cT$-Yoneda embedding is $\cT$-symmetric monoidal for the $\cT$-Day convolution by \cite[Thm~6.0.12]{Nardin}, so  $1^{\Day}_V \simeq y(*_V)$.
    Hence Yoneda's lemma yields that 
    \[
      \Map_V(1_V^{\circledast},X_V) \simeq \Map(y(*_V),X_V) \simeq X_V(*_V),
    \]
    which implies that $1^{\circledast}_V \simeq 1^{\Mode}_V$, naturally in $V$. 
    The theorem then follows by \cref{Unit observation}.
\end{proof}

\begin{remark}
  It is not likely that it is necessary for $\cT$ to be atomic orbital in the above argument;
  indeed, for $\uCMon_I(\cC) \deq \uFun_{\cT}^{\times}(\Span_I(\uFF_{\cT}),\cC)$ to implement \emph{$I$-semiadditivization}, it suffices to assume that $I \subset \FF_{\cT}$ is a weakly extensive subcategory whose slice categories $I_{/V}$ are $n_V$-categories for some finite $n_V \in \NN$ in the sense of \cite{Cnossen_tambara}.
  
  For instance, if $\cP \subset \cT$ is an atomic orbital subcategory of an $\infty$-category, then weakly extensive subcategories $I \subset \FF_{\cT}^{\cP}$ are pre-inductible (and hence satisfy the semiadditive closure theorem)and represent a global version of one-color weak indexing categories.
  Unfortunately, the author is not aware of a symmetric monodial structure on partially presentable $\cT$-categories, and developing such a thing would lead us far afield from our current operadic goals, 
\end{remark}

\subsection{The homotopy \tI-symmetric monoidal \td-category}\label{Truncations of SMCs subsubsection}
Recall that a space is \emph{$(-2)$-truncated} if it is empty, \emph{$(-1)$-truncated} if it is empty or contractible, and for $d \geq 0$, a space $X$ is \emph{$d$-truncated} if it is a disjoint union of connected spaces $(X_\alpha)_{\alpha \in A}$ such that $\pi_m \prn{X_\alpha} = 0$ for all $m > d$ and $\alpha \in A$.

Recall that a \emph{$(d+1)$-category} is an $\infty$-category $\cC$ such that the space $\Map(X,Y)$ is $d$-truncated for all $X,Y \in \cC$.
We say that an $\infty$-category is a \emph{$(-1)$-category} if it is either $*$ or empty.
In general, we write $\Cat_{d} \subset \Cat$ for the full subcategory spanned by the $\infty$-categories with the property that they are $d$-categories.

\begin{definition}
    The $\cT$-$\infty$-category of small $\cT$-$d$-categories is
    \[
        \uCat_{\cT,d} := \uCoFr^{\cT} \Cat_d.
    \]
    A \emph{$\cT$-poset} is a $\cT$-0-category. 
    If $I \subset \FF_{\cT}$ is pullback-stable, the \emph{$\cT$-$\infty$-category of small $I$-symmetric monoidal $d$-categories} is
    \[
        \uCat_{I,d}^{\otimes} := \uCMon_{I} \Cat_d.
    \]
    We write $\Cat_{\cT, d} \deq \Gamma^{\cT} \uCat_{\cT,d}$ and $\Cat_{I,d}^{\otimes} \deq \Gamma^{\cT} \uCat_{I,d}^{\otimes}$.
\end{definition}
By the following lemma, $\uCat_{\cT,d}$ is a $\cT$-$(d+1)$-category and $\Cat_{\cT,d}$ is a $(d+1)$-category.
\begin{lemma}[{\cite[Cor~2.3.4.8,Prop~2.3.4.12,Cor~2.3.4.19]{HTT}}]\label{Catd lemma}
    $\Cat_d$ is a $(d+1)$-category and the inclusion
    \[
        \Cat_d \hookrightarrow \Cat
    \]
    has a right adjoint $h_d\cln \Cat \rightarrow \Cat_d$.
\end{lemma}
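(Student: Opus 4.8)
The plan is to transcribe the relevant material from \cite[\S~2.3.4]{HTT}, so the work is mostly coherence bookkeeping. For the first assertion, fix $\cC \in \Cat$ and $\cD \in \Cat_d$ and identify $\Map_{\Cat}(\cC,\cD)$ with the core $\Fun(\cC,\cD)^{\simeq}$. The mapping spaces of $\Fun(\cC,\cD)$ are ends, hence limits, of the mapping spaces of $\cD$, which are $(d-1)$-truncated; since $(d-1)$-truncated spaces are closed under limits in $\cS$ (the inclusion $\cS_{\leq d-1} \hookrightarrow \cS$ being a right adjoint), the category $\Fun(\cC,\cD)$ is again a $d$-category. I would then record the elementary fact that the core $\cE^{\simeq}$ of any $d$-category $\cE$ is a $d$-truncated space: for $n \geq 1$ one has $\pi_n(\cE^{\simeq},x) \cong \pi_{n-1}\Map_{\cE^{\simeq}}(x,x)$, and this vanishes for $n > d$ because $\Map_{\cE^{\simeq}}(x,x)$ is a union of connected components of the $(d-1)$-truncated space $\Map_\cE(x,x)$, hence itself $(d-1)$-truncated. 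Applying this to $\cE = \Fun(\cC,\cD)$ shows $\Map_{\Cat}(\cC,\cD)$ is $d$-truncated, i.e. $\Cat_d$ is a $(d+1)$-category; this is the content of \cite[Cor~2.3.4.8]{HTT}.

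For the adjunction, I would construct $h_d$ directly on mapping spaces: let $h_d\cC$ have the same objects as $\cC$ and mapping spaces $\Map_{h_d\cC}(X,Y) \deq \tau_{\leq d-1}\Map_{\cC}(X,Y)$. Because the truncation functor $\tau_{\leq d-1}\colon \cS \to \cS_{\leq d-1}$ preserves finite products, the composition law of $\cC$ descends along the truncation units to a coherently associative and unital composition on $h_d\cC$, so $h_d\cC$ is a well-defined $\infty$-category, visibly a $d$-category, equipped with a canonical functor $\eta_{\cC}\colon \cC \to h_d\cC$ given by the truncation units. It is cleanest to run this construction in the complete Segal space presentation of $\Cat$ (the same presentation invoked in \cref{Product pattern example}), where $h_d$ is levelwise $(d-1)$-truncation and the required coherences are inherited from $\tau_{\leq d-1}$ being a product-preserving accessible localization of $\cS$.

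Finally, I would verify the universal property that exhibits $h_d$ as adjoint to the inclusion, realizing $\Cat_d \subset \Cat$ as a localizing subcategory: for every $\cD \in \Cat_d$, precomposition with $\eta_{\cC}$ is an equivalence $\Fun(h_d\cC,\cD) \xrightarrow{\;\sim\;} \Fun(\cC,\cD)$, since any $F\colon \cC \to \cD$ acts on mapping spaces through $(d-1)$-truncated targets and therefore factors essentially uniquely through the truncation units, the factorization respecting composition by product-preservation of $\tau_{\leq d-1}$. Passing to cores gives $\Map_{\Cat}(h_d\cC,\cD) \simeq \Map_{\Cat}(\cC,\cD)$, naturally in both variables, which is the desired adjunction, and the construction makes manifest the property used downstream, that $h_d$ acts on mapping spaces as $(d-1)$-truncation. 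The only genuine content is the coherence bookkeeping in the last two steps, which is exactly \cite[Prop~2.3.4.12, Cor~2.3.4.19]{HTT}; in the write-up I would cite these rather than reprove them, and the main thing to be careful about is that the functor $h_d$ so produced agrees on underlying mapping spaces with iterated $(d-1)$-truncation, since that compatibility is what later propagates through $\uCoFr^{\cT}$ and $\uCMon_I(-)$ to the operadic and symmetric monoidal settings.
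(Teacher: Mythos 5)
Your proof is correct, and since the paper gives no argument for this lemma beyond the citation to HTT, what you have done is transcribe exactly the content of \cite[\S~2.3.4]{HTT} that the citation is pointing at: the end formula for mapping spaces in $\Fun(\cC,\cD)$ plus closure of $(d-1)$-truncated spaces under limits gives the first claim, and the levelwise truncation construction in the complete Segal space model gives $h_d$ together with its universal property. The core-of-a-$d$-category step is exactly right, including the care taken to pass through the loop-space identification $\pi_n(\cE^{\simeq},x)\cong\pi_{n-1}\Map_{\cE^{\simeq}}(x,x)$ and the observation that the automorphism space is a union of components of $\Map_\cE(x,x)$.

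One point worth flagging, in your favor: the lemma as printed says the inclusion $\Cat_d\hookrightarrow\Cat$ has a \emph{right} adjoint $h_d$, but what you construct --- and what is actually true --- is a \emph{left} adjoint, witnessed by the equivalence $\Fun(h_d\cC,\cD)\xrightarrow{\sim}\Fun(\cC,\cD)$ for $\cD$ a $d$-category. The paper's own subsequent usage bears you out: immediately after the lemma the inclusion is called a ``localizing $\cT$-subcategory'' and $h_d$ ``the associated $\cT$-left adjoint,'' and \cref{Operad truncation corollary} again treats $h_d$ as a left adjoint. (The inclusion $\Cat_d\hookrightarrow\Cat$ does not preserve pushouts and so cannot have a right adjoint.) So the word ``right'' in the lemma statement is a typo; your argument proves the statement that the paper actually needs and uses. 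The only other small remark is that the direct ``same objects, truncated mapping spaces'' description implicitly assumes $d\geq 0$ (so that $\tau_{\leq d-1}$ is a product-preserving localization of $\cS$); the $d=-1$ case requires the separate convention a $(-1)$-category is $*$ or $\emptyset$, but this edge case does not affect the uses in the paper.
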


\begin{construction}
  By \cref{Cofree adjunction,Catd lemma}, the functor $\uCat_{\cT,d} \hookrightarrow \uCat_{\cT}$ is an inclusion of a localizing $\cT$-subcategory;
let $h_{d}\cln \uCat_{\cT} \rightarrow \uCat_{\cT,d}$ be the associated $\cT$-left adjoint.

The mapping spaces in a product of categories are the product of the mapping spaces;
in particular, the inclusion $\Cat_d \hookrightarrow \Cat$ is product-preserving.
Hence \cref{CMon adjunction lemma,Catd lemma} construct a $\cT$-adjunction
\[
  \begin{tikzcd}[ampersand replacement=\&, column sep = large]
	{\uCat_{I}^{\otimes}} \& {\uCat_{I,d}^{\otimes}}
	\arrow[""{name=0, anchor=center, inner sep=0}, "{h_d}", curve={height=-18pt}, from=1-1, to=1-2]
	\arrow[""{name=1, anchor=center, inner sep=0}, "\iota", curve={height=-18pt}, hook', from=1-2, to=1-1]
	\arrow["\dashv"{anchor=center, rotate=-90}, draw=none, from=0, to=1]
  \end{tikzcd}
\]
whose right adjoint is fully faithful.
We refer to $h_{d}$ as the \emph{homotopy $I$-symmetric monoidal $d$-category}.
\end{construction}

The remainder of this subsection will be dedicated to recognition results for $\cT$-symmetric monoidal $d$-categories, which will be useful throughout the remainder of the paper.
We first reduce this consideration to that of plain $\cT$-$\infty$-categories;
the following proposition follows by unwinding definitions and noting that $\Cat_{d} \hookrightarrow \Cat$ is closed under products.
\begin{proposition}
  If $I \subset \FF_{\cT}$ is a one-object weak indexing system, then $\cC^{\otimes} \in \Cat_{I}^{\otimes}$ is a $I$-symmetric monoidal $d$-category if and only if its underlying $\cT$-$\infty$-category $\cC$ is a $\cT$-$d$-category.
\end{proposition}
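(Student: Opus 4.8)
The plan is to unwind the definitions; the only genuinely external ingredient is that $\Cat_d \hookrightarrow \Cat$ is a fully faithful, finite-product-preserving reflective subcategory (\cref{Catd lemma}). First I would observe that the inclusion $\uCat_{I,d}^{\otimes} = \uCMon_I \Cat_d \hookrightarrow \uCMon_I \Cat = \uCat_I^{\otimes}$ is $\cT$-fully faithful — this is the fully faithful right $\cT$-adjoint of the $h_d$-adjunction constructed just above — and hence so is $\Cat_{I,d}^{\otimes} \hookrightarrow \Cat_I^{\otimes}$; thus ``$\cC^{\otimes}$ is an $I$-symmetric monoidal $d$-category'' is the property of $\cC^{\otimes}$ lying in its essential image. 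Next I would invoke \cref{CMon in coeff} with $\cC = \Cat$ and with $\cC = \Cat_d$ (as large presentable $\infty$-categories), and naturality in $\cC$, to identify $\Cat_I^{\otimes} \simeq \Fun^{\times}(\Span_I(\FF_{\cT}), \Cat)$ and $\Cat_{I,d}^{\otimes} \simeq \Fun^{\times}(\Span_I(\FF_{\cT}), \Cat_d)$ compatibly with these inclusions, so that $\cC^{\otimes}$ is an $I$-symmetric monoidal $d$-category if and only if the product-preserving functor $\cC^{\otimes}\colon \Span_I(\FF_{\cT}) \to \Cat$ factors through $\Cat_d$, i.e. $\cC^{\otimes}(X) \in \Cat_d$ for every $X \in \FF_{\cT}$ (one color forces $*_V \in \FF_{I,V}$ for all $V$, hence every object of $\FF_{\cT}$ to be an object of $\Span_I(\FF_{\cT})$).

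I would then record that the underlying $\cT$-$\infty$-category $\cC$ is the restriction of $\cC^{\otimes}$ along $\cT^{\op} \hookrightarrow \Span_I(\FF_{\cT})$, sending a map of $\cT$ to the span with that backwards leg and identity forwards leg (defined on all of $\cT^{\op}$, again using one color); in particular $\cC_V \simeq \cC^{\otimes}(V)$, so $\cC$ is a $\cT$-$d$-category precisely when $\cC^{\otimes}(V) \in \Cat_d$ for every orbit $V$. The forward implication is then immediate. For the converse, the crux is that every $X = \coprod_{U \in \Orb(X)} U$ is the categorical \emph{product} $\prod_{U \in \Orb(X)} U$ in $\Span_I(\FF_{\cT})$: each projection span $X \leftarrow U = U$ has identity forwards leg, and decomposing an arbitrary span $Z \leftarrow R \to X$ along the coproduct decomposition of $X$ — using extensivity of $\FF_{\cT}$ together with the Segal condition \cref{Windex segal condition} to see that the resulting legs again lie in $I$ — furnishes the universal property. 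Since $\cC^{\otimes}$ is product-preserving, this yields a natural equivalence $\cC^{\otimes}(X) \simeq \prod_{U \in \Orb(X)} \cC^{\otimes}(U) \simeq \prod_{U \in \Orb(X)} \cC_U$.

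Finally, since $\Orb(X)$ is finite and $\Cat_d \subset \Cat$ is closed under finite products, if $\cC$ is a $\cT$-$d$-category then each $\cC_U \in \Cat_d$, whence $\cC^{\otimes}(X) \simeq \prod_{U \in \Orb(X)} \cC_U \in \Cat_d$ for every $X$, so $\cC^{\otimes}$ is an $I$-symmetric monoidal $d$-category; combined with the forward implication this gives the equivalence. The argument is entirely formal, so there is no real ``hard step''; the only point I would treat as needing care is the bookkeeping of \cref{CMon in coeff} — namely that the identification $\Cat_I^{\otimes} \simeq \Fun^{\times}(\Span_I(\FF_{\cT}), \Cat)$ is compatible both with passage to underlying $\cT$-$\infty$-categories and with the inclusion $\Cat_{I,d}^{\otimes} \subset \Cat_I^{\otimes}$.
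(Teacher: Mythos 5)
Your proof is correct and is exactly the argument the paper has in mind: the paper's ``proof'' is the one-line remark preceding the proposition (``follows by unwinding definitions and noting that $\Cat_d \hookrightarrow \Cat$ is closed under products''), and you have fleshed it out faithfully — identifying $\Cat_I^{\otimes}$ with $\Fun^\times(\Span_I(\FF_{\cT}),\Cat)$, reading off the underlying $\cT$-$\infty$-category via the backwards-map inclusion $\cT^{\op}\hookrightarrow\Span_I(\FF_{\cT})$, and using product-preservingness together with the closure of $\Cat_d$ under finite products to reduce $d$-truncatedness of every $\cC^{\otimes}(X)$ to $d$-truncatedness of the orbit values. Your flagging of where the one-color hypothesis enters (so that $\Span_I(\FF_{\cT})$ has all of $\FF_{\cT}$ as objects, hence $\cC$ is defined on all of $\cT^{\op}$) is the right bookkeeping and is exactly the point one needs to be careful about.
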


Often in equivariant higher algebra, we will find that our objects come with natural $\cT$-functors to $\cT$-$1$-categories, and we'd like to develop a recognition theorem in this case in terms of mapping spaces.
\def\uMor{\underline{\Mor}}
\begin{proposition}\label{Mor truncatedness criterion}
    A $\cT$-$\infty$-category $\cC$ is a $\cT$-$d$-category if and only if
    \[
        \Mor_V(\cC) \deq \Fun(\Delta^1,\cC_V)^{\simeq}
    \]  
    is $(d-2)$-truncated for all $V \in \cT$.
\end{proposition}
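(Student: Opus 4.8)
The plan is to strip away the parameterization first, then analyze the source–target fibration on the arrow $\infty$-category. Since $\uCat_{\cT,d} = \uCoFr^{\cT}\Cat_d$ and $\uCoFr^{\cT}$ is computed fiberwise, the inclusion $\uCat_{\cT,d}\hookrightarrow\uCat_{\cT}$ is $\uCoFr^{\cT}$ applied to the fully faithful inclusion $\Cat_d\hookrightarrow\Cat$; hence $\cC$ is a $\cT$-$d$-category exactly when $\cC_V\in\Cat_d$ for every $V\in\cT$, i.e.\ when $\Map_{\cC_V}(X,Y)$ is $(d-1)$-truncated for all $V$ and all $X,Y\in\cC_V$. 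So it suffices to prove, for a single $\cE\in\Cat$, that $\cE\in\Cat_d$ if and only if $\Mor(\cE)\deq\Fun(\Delta^1,\cE)^{\simeq}$ is $(d-2)$-truncated, and then to quantify over $V$.

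The key tool for the nonparameterized statement is the endpoint map. The functor $\Fun(\Delta^1,\cE)\to\Fun(\partial\Delta^1,\cE)=\cE\times\cE$ has fiber the mapping space $\Map_{\cE}(X,Y)$ over $(X,Y)$; since $(-)^{\simeq}$ is a right adjoint and hence preserves pullbacks, passing to cores produces a map
\[
  (\mathrm{ev}_0,\mathrm{ev}_1)\colon \Mor(\cE)\longrightarrow\cE^{\simeq}\times\cE^{\simeq}
\]
whose homotopy fibers are again the spaces $\Map_{\cE}(X,Y)$. Moreover $X\mapsto\mathrm{id}_X$ is a section of $\mathrm{ev}_0$, so $\cE^{\simeq}$, and therefore $\cE^{\simeq}\times\cE^{\simeq}$, is a retract of $\Mor(\cE)$ and is at least as truncated as $\Mor(\cE)$.

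For the backward implication, suppose $\Mor(\cE)$ is $(d-2)$-truncated; then the retract observation makes the base $\cE^{\simeq}\times\cE^{\simeq}$ $(d-2)$-truncated, and the long exact sequence of the endpoint fibration then forces every fiber $\Map_{\cE}(X,Y)$ to be $(d-2)$-truncated, a fortiori $(d-1)$-truncated, so $\cE\in\Cat_d$. For the forward implication, if $\cE\in\Cat_d$ then $\Fun(\Delta^1,\cE)\in\Cat_d$ as well, because its mapping spaces are iterated pullbacks of $(d-1)$-truncated spaces along maps of $(d-1)$-truncated spaces and $(d-1)$-truncated spaces are closed under limits; hence $\Mor(\cE)$ is truncated. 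The remaining work is to pin this bound down to $(d-2)$, which I would again approach through the endpoint fibration, playing the truncatedness of $\cE^{\simeq}\times\cE^{\simeq}$ off against that of the mapping-space fibers.

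I expect this last sharpening to be the main obstacle. The functor-category estimate alone only yields that $\Mor(\cE)$ is $d$-truncated for a generic $d$-category $\cE$ (indeed $\Fun(\Delta^1,\cE)^{\simeq}$ can be as large as $\cE^{\simeq}$ itself), so the claimed $(d-2)$-bound is not formal; in the applications one presumably leans on the additional structure flagged in the preceding discussion — an auxiliary $\cT$-functor from $\cC$ to a $\cT$-$1$-category — to bound $\cC_V^{\simeq}$ and thereby feed the endpoint fibration the input it needs. Reassembling the fiberwise conclusions over all $V\in\cT$ then yields \cref{Mor truncatedness criterion}.
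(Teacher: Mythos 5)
Your route through the endpoint fibration $\Mor(\cE)\to\cE^{\simeq}\times\cE^{\simeq}$ and the retract $\cE^{\simeq}\hookrightarrow\Mor(\cE)$ is a genuinely different decomposition from the paper's. The paper does not attempt to bound $\Mor(\cE)$ as a whole; instead it fixes $f,g\in\Mor_V(\cC)$ and computes the path space $\Map(f,g)$ inside $\Mor_V(\cC)$ directly. A point of this path space is a pair of equivalences $(a,b)$ (the verticals of a square from $f$ to $g$) together with a $2$-cell witnessing commutativity, and for a fixed $(a,b)$ the remaining $2$-cell data is a (possibly empty) path space inside a mapping space of $\cC_V$, hence $(d-2)$-truncated once $\cC_V$ is a $d$-category. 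That one-step delooping of the ambient mapping space is the move your proposal does not make, and it is why the forward direction eludes you: you are trying to bound $\Mor(\cE)$ itself rather than the loop spaces of its components.

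That said, your suspicion is not merely a failure of technique. The retract you exhibit is a real obstruction to the statement as written: for a $d$-category $\cE$, the core $\cE^{\simeq}$ is a $d$-type, so $\Mor(\cE)$ cannot in general be $(d-2)$-truncated. For instance $\Delta^1$ is a $1$-category, yet $\Mor(\Delta^1)=\Fun(\Delta^1,\Delta^1)^{\simeq}$ is a three-element set, hence $0$-truncated rather than $(-1)$-truncated; and for $G$ a nontrivial group the $2$-groupoid $\cE$ with one object and endomorphism groupoid $BG$ has $\Mor(\cE)\simeq\cE$, a $2$-type rather than a $0$-type. The paper's own computation, if read carefully, only bounds by $(d-2)$ the fiber over a fixed $(a,b)$; the part of $\Map(f,g)$ lying over the whole component of $(a,b)$ inside the space of equivalence pairs is a fibration over that component, which is itself $(d-1)$-truncated, so the total piece is $(d-1)$-truncated. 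What the argument genuinely establishes is that the path spaces of $\Mor_V(\cC)$ are $(d-1)$-truncated, i.e.\ that $\Mor_V(\cC)$ is $d$-truncated — which is exactly the estimate you gave. The sharper bound needed downstream in \cref{T-fiber truncatedness corollary} is recovered only because one additionally has the $\cT$-functor $F$ to a $\cT$-$1$-category and restricts attention to the mapping fibers $\Mor^{\psi}_{F}(\cC)$; your instinct that this extra structure must enter the estimate is, I believe, the right diagnosis, and the gap you could not close coincides with an indexing slip in the proposition itself.
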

\begin{proof}
    By definition, it suffices to prove this in the case $\cT = *$.
    Fix $f,g \in \Mor_V(\cC)$.
    Then, we may present $\Map(f,g)$ as a disjoint union over $a,b$ of homotopies
    \[
        \begin{tikzcd}
        	W & X \\
        	Y & Z
        	\arrow["f", from=1-1, to=1-2]
        	\arrow["a"', from=1-1, to=2-1]
        	\arrow["b", from=1-2, to=2-2]
        	\arrow[shorten <=8pt, shorten >=8pt, Rightarrow, from=2-1, to=1-2]
        	\arrow["g"', from=2-1, to=2-2]
        \end{tikzcd}
    \]
    For fixed $a,b$, this is either empty or equivalent to the component of the space $\Map(S^1,\Map(W,Z))$ whose underlying map is homotopic to $bf$.
    If $\cC$ is a $d$-category, then this is $(d-2)$-truncated;
    conversely, choosing $a,b = \mathrm{id}$ and $f = g$, if this is $(d-2)$-truncated for all $f$, then the mapping spaces of $\cC_V$ are $(d-1)$-truncated for all $V$, i.e. $\cC$ is a $\cT$-$d$-category.
\end{proof}
Given a $\cT$-functor $F\cln \cC \rightarrow \cD$ and a map $\psi\cln \Delta^1 \rightarrow \cC_V$, define the pullback space
\[
    \begin{tikzcd}
        \Mor_F^{\psi}(\cC) \arrow[r] \arrow[d] \arrow[rd,"\lrcorner" very near start, phantom]
        & \Mor_V(\cC) \arrow[d]\\
        B\Aut_\psi \arrow[r,hook]
        & \Mor_V(\cD)
    \end{tikzcd}
\]
so that $\Mor_F^{\psi}(\cC)$ is the disjoint union of the connected components of $\Mor_V(\cC)$ whose image in $\Mor_V(\cD)$ is equivalent to $\psi$.
We say that $F$ has \emph{$(d-1)$-truncated mapping fibers}  if $\Mor_F^{\psi}(\cC)$ is $(d-2)$-truncated for all $V \in \cT$ and $\psi \in \Mor_V(\cC)$.
\begin{corollary}\label{T-fiber truncatedness corollary}
    Suppose $F:\cC \rightarrow \cD$ is a $\cT$-functor and $\cD$ is a $\cT$-1-category.
    Then, the following are equivalent for $d \geq 1$:
    \begin{enumerate}
        \item $F$ has $(d-1)$-truncated mapping fibers.
        \item $\cC$ is a $\cT$-$d$-category.
    \end{enumerate}
    Additionally, the following are equivalent.
    \begin{enumerate}[label={(\arabic*')}]
      \item $F^{\simeq}\cln \cC^{\simeq} \rightarrow \cD^{\simeq}$ is fully faithful and $F$ has $(-1)$-truncated mapping fibers.
      \item $F$ includes $\cC$ as a (replete) $\cT$-subcategory of $\cD$.
    \end{enumerate}
\end{corollary}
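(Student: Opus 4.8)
First I would note that all four conditions are fiberwise over $\cT$: $\cC$ is a $\cT$-$d$-category iff each $\cC_V$ is a $d$-category, ``$F$ has $(d-1)$-truncated mapping fibers'' is by definition a condition imposed for each $V \in \cT$, and by \cref{Subcategories are fiberwise observation} $F$ is a replete $\cT$-subcategory inclusion iff each $F_V$ is a (replete) subcategory inclusion; likewise $F^{\simeq}$ is fully faithful iff each $F_V^{\simeq}$ is. So I would reduce to an ordinary functor $F\colon \cC \rightarrow \cD$ with $\cD$ a $1$-category. Then $\Fun(\Delta^1,\cD)$ is again a $1$-category, so $\Mor(\cD) = \Fun(\Delta^1,\cD)^{\simeq}$ is $1$-truncated, its connected components being of the form $B\Aut_\psi$; by construction $\Mor_F^{\psi}(\cC) = \Mor(\cC)\times_{\Mor(\cD)} B\Aut_\psi$ is the clopen subspace of $\Mor(\cC)$ lying over the $\psi$-component, so taking the coproduct over components gives
\[
  \Mor(\cC) \;\simeq\; \coprod_{\psi} \Mor_F^{\psi}(\cC).
\]

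For $(1) \Leftrightarrow (2)$: by \cref{Mor truncatedness criterion}, $(2)$ says $\Mor(\cC)$ is $(d-2)$-truncated, whereas $(1)$ says by definition that each $\Mor_F^{\psi}(\cC)$ is $(d-2)$-truncated. For $d \geq 2$ the level $d-2$ is $\geq 0$, so a space is $(d-2)$-truncated precisely when each of its connected components --- hence each of its clopen summands --- is; feeding the displayed decomposition into this yields the equivalence. For $d = 1$ the implication $(2) \Rightarrow (1)$ is immediate, since a clopen subspace of a $(-1)$-truncated space is $(-1)$-truncated; for the converse I would apply \cref{Mor truncatedness criterion} to $\cD$, which makes $\Mor(\cD)$ itself $(-1)$-truncated, so that there is a single index $\psi$ and $\Mor(\cC) = \Mor_F^{\psi}(\cC)$.

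For $(1') \Leftrightarrow (2')$: recall that a fully faithful functor of $\infty$-groupoids is precisely an equivalence onto a union of connected components, so ``$F^{\simeq}$ fully faithful'' is the same as ``$F^{\simeq}$ is a summand inclusion''. Assuming this, I would analyze $\Mor(F)\colon \Mor(\cC) \rightarrow \Mor(\cD)$ through the source--target fiber sequences $\Map_{\cC}(X,Y) \rightarrow \Mor(\cC) \rightarrow \cC^{\simeq}\times\cC^{\simeq}$ and the analogous one for $\cD$: since $\cC^{\simeq}\times\cC^{\simeq}$ is then a union of components of $\cD^{\simeq}\times\cD^{\simeq}$, the mapping fibers $\Mor_F^{\psi}(\cC)$ of $F$ are, for $\psi\colon X \rightarrow Y$ a morphism of $\cC$, governed by the map $F\colon \Map_{\cC}(X,Y) \rightarrow \Map_{\cD}(FX,FY)$, and ``$F$ has $(-1)$-truncated mapping fibers'' translates --- given $F^{\simeq}$ a summand inclusion --- into each such map of mapping spaces being a monomorphism of spaces, equivalently a summand inclusion. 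Hence $(1')$ says exactly that $F^{\simeq}$ and all mapping-space maps of $F$ are summand inclusions, which is the definition of $F$ being a (replete) subcategory inclusion, i.e. $(2')$.

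The step I expect to be the main obstacle is this last matching: carefully relating the truncatedness of the mapping fibers --- assembled from the arrow-spaces $\Fun(\Delta^1,-)^{\simeq}$ rather than the mapping spaces directly --- to the monomorphism condition on $\Map_{\cC}(X,Y)$, while keeping track of the passage to cores in the source--target fibrations and of the components of $\Mor(\cD)$ not in the image of $F$. The $d = 1$ corner of $(1)\Leftrightarrow(2)$, where disjoint unions need not preserve $(-1)$-truncatedness, is the other point needing a separate argument.
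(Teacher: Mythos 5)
Your proposal takes the same route as the paper: both reduce to the fiberwise case, invoke \cref{Mor truncatedness criterion} to translate (2) into a truncatedness condition on $\Mor_V(\cC)$, and decompose $\Mor_V(\cC)$ into the clopen pieces $\Mor_F^\psi(\cC)$. The paper's own proof of $(1)\Leftrightarrow(2)$ is telegraphic --- it simply declares that this part ``follows from \cref{Mor truncatedness criterion}'' --- whereas you spell out the needed observation that truncatedness at level $\geq 0$ is checked componentwise and, crucially, that the edge case $d=1$ is the only place the hypothesis that $\cD$ is a $\cT$-$1$-category is used (so that $\Mor_V(\cD)$, hence the index set of the decomposition, collapses). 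That is a genuine improvement in explicitness, and it matches the intent of the argument the paper has in mind.

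For $(1')\Leftrightarrow(2')$ the paper's proof is also one sentence: from $B\Aut_\psi$ being $(-1)$-truncated, it asserts that $(1')$ unwinds to ``$\cC$ is a $\cT$-$1$-category and $F_V$ is faithful and fully faithful on cores,'' which is by fiat the definition of a replete subcategory. Your sketch via the source--target fibration $\Map_\cC(X,Y)\to\Mor_V(\cC)\to\cC_V^\simeq\times\cC_V^\simeq$ is the right mechanism to make that unwinding precise, and you correctly flag the place where care is needed: passing between the truncation of $\Mor_F^\psi(\cC)$ (an assertion about the arrow space) and the summand-inclusion condition on each $\Map_\cC(X,Y)\to\Map_\cD(FX,FY)$. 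The paper does not actually carry this out either, so you are at the same level of rigor as the source; I would just caution that in writing it up you should track the orbit $\Aut(FX)\times\Aut(FY)\cdot F\psi \subset \Map_\cD(FX,FY)$ carefully rather than the single point $\{F\psi\}$, since $B\Aut_{F\psi}$ lies over the whole $(FX,FY)$-component of $\cD^\simeq\times\cD^\simeq$, and that it is worth recording explicitly (as the paper does) that $(1')$ forces $\cC$ to be a $\cT$-$1$-category before comparing mapping spaces.
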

\begin{proof}
    After \cref{Mor truncatedness criterion}, the only remaining part is the equivalence between (1') and (2').
    Note that $B\Aut_{\psi}$ is $(-1)$-truncated by \cref{Mor truncatedness criterion}, so (1') is equivalent to the conditions that $\cC$ is a $\cT$-$1$-category and $F_V\colon \cC_V \rightarrow \cD_V$ is a faithful functor which is fully faithful on cores, i.e. it is a (replete) subcategory inclusion.
  \end{proof}

\section{Equivariant operads and symmetric sequences}
In \cref{Algebraic patterns subsection}, we begin by recalling rudiments of the theory of \emph{algebraic patterns and Segal objects} of \cite{Chu} and the theory of \emph{fibrous patterns and the Segal envelope} of \cite{Barkan};
in the case of $\fO = \Span(\FF_{\cT})$, we show in \cref{Operads are fibrous subsection} that this recovers the theory of $\cT$-symmetric monoidal $\infty$-categories, $\cT$-$\infty$-operads (henceforth $\cT$-operads), and the $\cT$-symmetric monoidal envelope of \cite{Nardin}.
We go on in \cref{I operads subsection} to specialize several results of \cite{Chu,Barkan} to this setting and construct the family of \emph{weak $\cN_\infty$-operads}.

After this, we go on to study the \emph{underlying $\cT$-symmetric monoidal sequence} functor in \cref{Symmetric sequence subsection}, showing in \cref{Monadic sseq corollary} that it forms a fiberwise-monadic $\cT$-functor
\[
  \usseq_{\cT}:\uOp^{\oc}_{\cT} \rightarrow \uFun_{\cT}(\uSigma_{\cT},\ucS_{\cT});
\]
in particular, this implies that it is a conservative right $\cT$-adjoint and confirms an atomic orbital lift of \cref{Sseq cool theorem}.
In \cref{Discrete genuine nerve subsection}, we use this to confirm \cref{Nerve cool theorem}.

In \cref{Monad subsection} we go on to compute the monad $T_{\cO}$ for $\cO$-algebras in arbitrary $\cT$-symmetric monoidal $\infty$-categories;
in particular, when $\cC \simeq \ucS_{\cT}$ for a structure whose indexed tensor products are indexed products, we naturally split off a $\cO(S)$-summand from $T_{\cO}(S)$;
using our atomic orbital lift of \cref{Sseq cool theorem}, we conclude that $\Alg_{(-)}(\ucS_{\cT})\cln \Op^{\oc}_{\cT} \rightarrow \Cat$ is conservative.

Last, in preparation for forthcoming work, we initiate in \cref{d-operads subsection} the study of the localizing subcategory of $\cT$-operads whose underlying $\cT$-symmetric sequence is $(d-1)$-truncated, called \emph{$\cT$-$d$-operads};
we show in particular that $\Alg_{(-)}(\ucS_{\cT, \leq n+1})$ detects $n$-equivalences.
Moreover, in \cref{Arity support subsection}, we confirm that the full subcategory of $\cT$-0-operads agrees with the poset of subterminal objects, which themselves agree with the weak $\cN_\infty$-operads.

We finish in \cref{Discrete genuine nerve subsection} by verifying that Bonventre's nerve restricts to an equivalence between categories of $G$-$1$-operads and explicitly describing algebras over $\cT$-1-operads.
We assure the reader exclusively interested in \emph{using} $\cT$-operads that the relevant interpretations of the results of \cref{Algebraic patterns subsection} will be restated throughout the following subsections, so these sections may be black-boxed at the cost of completeness of proofs.

\subsection{Recollections on algebraic patterns} \label{Algebraic patterns subsection}
An algebraic pattern is a collection of data encoding \emph{Segal conditions} for the purpose of homotopy-coherent algebra.
Given an algebraic pattern $\fO$ and a complete $\infty$-category $\cC$, there is an $\infty$-category of \emph{Segal $\fO$-objects in $\cC$}, which we view as $\fO$-monoids in $\cC$;
these are presented as functors $\fO \rightarrow \cC$ satisfying a Segal condition.

We may view Segal $\fO$-objects in $\Cat$ (aka Segal $\fO$-$\infty$-categories) as $\fO$-monoidal $\infty$-categories;
these straighten to cocartesian fibrations over $\fO$ satisfying conditions.
As in \cite[\S~2]{HA}, the condition of \emph{being a cocartesian fibration} may be relaxed to construct a form of operads parameterized by $\fO$, called \emph{fibrous $\fO$-patterns.}

In contrast to the categorical patterns of \cite[\S~B]{HA}, these are manifestly $\infty$-categorical, and it is relatively easy to construct push-pull adjunctions between categories of fibrous patterns over different algebraic patterns;
we found our theory of $I$-operads in this syntax for this reason, as the Boardman-Vogt tensor product is most easily defined in terms of pushforward along maps of algebraic patterns.

The author would like to emphasize that the program surrounding algebraic patterns has achieved many results not mentioned here, as fibrous patterns only play a foundational role.
For a significantly more thorough and elegant treatment, we recommend \cite{Chu,Barkan,Chu_enriched}.

\subsubsection{Algebraic patterns and Segal objects}
\begin{definition}
  An \emph{algebraic pattern} is a triple $(\base,(\base^{\In},\base^{\act}), \base^{\el})$, where $(\base^{\In},\base^{\act})$ is a factorization system on $\base$ and $\base^{\el} \subset \base^{\In}$ is a full subcategory.\footnote{Throughout this paper, we adopt the definition of \emph{factorization system} used in \cite[Rmk~2.2]{Chu}, which does not assert any lifting properties;
  that is, a factorization system on $\cC$ is a pair of wide subcategories $\cC^L,\cC^R \subset \cC$ satisfying the condition that, for all maps $X \xrightarrow{f} X'$, the space of factorizations $X \xrightarrow{l} Y \xrightarrow{r} X'$ with $l \in \cC^L$ and $r \in \cC^R$ is contractible.}
  The $\infty$-category $\AlgPatt \subset \Fun(\mathbf{Q},\Cat)$ is the full subcategory spanned by algebraic patterns, where
  \begin{equation}\label{Q definition}
    \mathbf{Q} := \bullet \rightarrow \bullet \rightarrow \bullet \leftarrow \bullet.\qedhere
  \end{equation}
\end{definition}
We refer to the morphisms in $\base^{\In}$ as ``inert morphisms,'' morphisms in $\base^{\act}$ as ``active morphisms,'' and objects in $\base^{\el}$ as ``elementary objects.''
When it is clear from context, we will abusively refer to the quadruple $\prn{\base,(\base^{\In},\base^{\act}),\base^{\el}}$ simply as $\base$.
The following is our primary source of examples.
\begin{construction}[{\cite[Def~3.2.6]{Barkan}}]\label{Span construction} 
  An \emph{adequate quadruple} is the data of an adequate triple $\cX_b,\cX_f \subset \cX$ in the sense of \cref{I-commutative monoids subsection} together with a full subcategory $\cX_0 \subset \cX_b$;
    the \emph{$\infty$-category of adequate quadruples} is the full subcategory
    \[
        \Quad^{\adeq} \subset \Fun(\mathbf{Q},\Cat)
    \]
    spanned by adequate quadruples, where $\mathbf{Q}$ is defined by \cref{Q definition}.

    Given an adequate quadruple $\cX_0 \subset \cX_b \subset \cX \supset \cX_f$, let $\cX_b^{\op} \subset \Span_{b,f}(\cX)$ be the wide subcategory spanned by the spans $X \xleftarrow{\psi_b} R \xrightarrow{\psi_f} Y$ with $\psi_f$ an equivalence, and similarly $\cX_f \subset \Span_{b,f}(\cX)$ the wide subcategory of spans with $\psi_b$ an equivalence.
    This yields a factorization system \cite[Prop~4.9]{Haugseng_two} 
    \[
        \cX_b^{\op} \hookrightarrow \Span_{b,f}(\cX) \hookleftarrow \cX_f.
    \]
    We define the span pattern $\Span_{b,f}\prn{\cX;\cX^{\op}_0}$ via the data
    \begin{itemize}
      \item underlying $\infty$-category $\Span_{b,f}(\cX)$,
      \item inert morphisms $\cX_b^{\op} \subset \Span(\cX)$,
      \item active morphisms $\cX_f \subset \Span(\cX)$, and
      \item elementary objects $\cX_0^{\op} \subset \cX_b^{\op}$.
    \end{itemize}  
    Given a map of adequate quadruples $\prn{\cX,(\cX_b,\cX_f),\cX_0} \rightarrow \prn{\cY,(\cY_b,\cY_f),\cY_0}$ the associated functor $\Span_{b,f}(\cX) \rightarrow \Span_{b,f}(\cY)$
    preserves inert morphisms, active morphisms, and elementary objects by definition;
    hence the functor $\Span_{-,-}(-;-)\cln \Quad^{\adeq} \rightarrow \Fun(\mathbf{Q}, \Cat)$ descends to a functor
    \[
        \Span_{-,-}(-;-)\cln \Quad^{\adeq} \rightarrow \AlgPatt.\qedhere
    \]
\end{construction}

The central example for equivariant higher algebra is the following.
\begin{example} 
  When $\cT$ is an orbital $\infty$-category, $I \subset \FF_{\cT}$ a $\cT$-weak indexing system (e.g. $I = \FF_{\cT}$), and $c(I)$ its \emph{color family} in the sense of \cref{The families equation}, we define the \emph{effective $I$-Burnside pattern}
    \[
      \Span_I(\FF_{\cT}) := \Span_{\mathrm{all},I}\prn{\FF_{c(I)};c(I)}\qedhere
    \]
\end{example}

\begin{example}
    Given $\cT$ an orbital $\infty$-category, we may define the \emph{algebraic pattern of finite pointed $\cT$-sets} as
    \[
      \Tot \uFF_{\cT,*} \deq \Span_{\mathrm{si},\mathrm{tdeg}} \prn{ \Tot \uFF_{\cT}; \cT^{\op}},
    \]
    where morphisms are in $\Tot \uFF_{\cT}^{\mathrm{tdeg}}$ if their projection to $\cT^{\op}$ is homotopic to an identity, morphisms are in $\Tot \uFF_{\cT}^{\mathrm{si}}$ if they're a composition of cocartesian arrows and target-degenerate summand inclusions, and the inclusion $\cT^{\op} \rightarrow \Tot \uFF_{\cT,*}$ corresponds with the $\cT$-object $*_{\cT} \in \Gamma^{\cT} \uFF_{\cT}$.
    Note that the \emph{target functor} $\Tot \uFF_{\cT} \rightarrow \cT^{\op}$ determines a functor $\Tot \uFF_{\cT,*} \rightarrow \cT^{\op}$;
    this corresponds with the structure functor of the free pointed $\cT$-$\infty$-category $\uFF_{\cT,*}$ on $\uFF_{\cT}$ as defined in \cite{Nardin, Cnossen_semiadditive}.
    Moreover, there is a composite map of algebraic patterns
    \begin{equation}\label{Span comparison map}
      \varphi\colon \Tot \uFF_{\cT,*} \hookrightarrow \Span_{\mathrm{all}, \mathrm{tdeg}}(\Tot \uFF_{\cT};\cT^{\op}) \xrightarrow{U} \Span(\FF_{\cT}).\qedhere
    \end{equation}
\end{example}

Algebraic patterns provide a general framework for algebraic structures satisfying the associated \emph{Segal conditions}, which are encoded in the notion of \emph{Segal objects}.
\begin{definition}
  Let $\cC$ be a complete $\infty$-category and $\fO$ an algebraic pattern.
  Then, the $\infty$-category of \emph{Segal $\fO$-objects in $\cC$} is the full subcategory $\Seg_{\fO}(\cC) \subset \Fun(\fO,\cC)$ consisting of functors $F\colon \fO \rightarrow \cC$ such that, for every object $O \in \fO$, the natural map
  \[
    F(O) \rightarrow \lim_{E \in \fO^{\el}_{O/}} F(E)
  \]
  is an equivalence, where $\fO^{\el}_{O/} \deq \fO^{\el} \times_{\fO^{\In}, \ev_1} \fO^{\In}_{O/}$ is the $\infty$-category whose objects consist of inert morphisms from $O$ to an elementary object.
\end{definition}

\begin{remark}
    By \cite[Lem~2.9]{Chu}, a functor $F\colon \fO \rightarrow \cC$ is a Segal $\fO$-object if and only if the associated functor $F|_{\fO^{\Int}}$ is right Kan extended from $F|_{\fO^{\el}}$ along the inclusion $\fO^{\el} \rightarrow \fO^{\Int}$.
\end{remark}

\begin{example}\label{CMon pattern example}
    We show in \cref{Span segal condition lemma} that $\Span_I(\FF_{\cT})^{\el}_{S/} \simeq \prn{\FF_{\cT, /S}}^{\op}$ contains the set of orbits $\Orb(S)$ as an initial subcategory.
    Hence there is an equivalence of full subcategories 
    \[
        \Seg_{\Span_I(\uFF_{\cT})}(\cC) \simeq \CMon_I(\cC) \hspace{10pt} \subset  \hspace{10pt} \Fun(\Span_I(\FF_{\cT}), \cC).\qedhere
    \]
\end{example}

One benefit of the framework of Segal objects is the following monadicity result.
\begin{proposition}[{\cite[Cor~8.2]{Chu}}]\label{Segal objects are monadic}
    if $\fO$ is an algebraic pattern and $\cC$ a presentable $\infty$-category, then the forgetful functor
    \[
        U\colon \Seg_{\fO}(\cC) \rightarrow \Fun(\fO^{\el}, \cC)
    \]
    is monadic; 
    in particular, it is conservative.
\end{proposition}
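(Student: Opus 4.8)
The plan is to verify the hypotheses of the Barr--Beck--Lurie monadicity theorem \cite[Thm~4.7.3.5]{HA}: I must produce a left adjoint to $U$, show that $U$ is conservative, and show that $\Seg_{\fO}(\cC)$ admits colimits of $U$-split simplicial objects which $U$ preserves. For the left adjoint, I would first recall that $\Seg_{\fO}(\cC)\subset\Fun(\fO,\cC)$ is closed under limits — since limits commute with the defining limits $F(O)\xrightarrow{\;\sim\;}\lim_{E\in\fO^{\el}_{O/}}F(E)$ — and is in fact a reflective, indeed accessible, localization of the presentable $\infty$-category $\Fun(\fO,\cC)$ (see \cite{Chu}). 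Writing $L$ for the localization functor and using that restriction along $\fO^{\el}\hookrightarrow\fO$ has a left adjoint given by left Kan extension, the composite of these two left adjoints is left adjoint to $U$; in particular this computes the free Segal $\fO$-object on an object of $\Fun(\fO^{\el},\cC)$.

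Conservativity is then immediate from the Segal condition: if $f\colon F\to F'$ is a morphism of Segal $\fO$-objects with $Uf$ an equivalence — i.e.\ $f_E$ an equivalence for every elementary $E$ — then naturality of the Segal maps identifies $f_O$, for an arbitrary $O\in\fO$, with the map $\lim_{E\in\fO^{\el}_{O/}}f_E$ on limits, which is an equivalence; hence $f$ is an equivalence. The remaining point, which I expect to be the main obstacle, is the colimit condition. Let $F_\bullet$ be a simplicial object of $\Seg_{\fO}(\cC)$ with $UF_\bullet$ split. For each elementary $E$ the simplicial object $F_\bullet(E)=\ev_E UF_\bullet$ of $\cC$ is again split, hence admits an \emph{absolute} colimit. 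I would then form the pointwise colimit $F_{-\infty}\in\Fun(\fO,\cC)$ of $F_\bullet$ over $\Delta^{\op}$ and check that it is again Segal: for every $O\in\fO$,
\[
  F_{-\infty}(O)\simeq\colim_{[n]}F_n(O)\simeq\colim_{[n]}\lim_{E}F_n(E)\simeq\lim_{E}\colim_{[n]}F_n(E)\simeq\lim_{E}F_{-\infty}(E),
\]
where $\lim_E$ abbreviates $\lim_{E\in\fO^{\el}_{O/}}$; the second equivalence is the Segal condition for each $F_n$, and the crucial third one holds because each $\colim_{[n]}F_n(E)$ is an absolute colimit and therefore commutes with the limit $\lim_E$. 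Thus $F_{-\infty}$ is the colimit of $F_\bullet$ in $\Seg_{\fO}(\cC)$, it is computed pointwise, and $UF_{-\infty}$ is the pointwise — hence the genuine — colimit of $UF_\bullet$, so $U$ preserves it.

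With all three hypotheses in place, monadicity follows from \cite[Thm~4.7.3.5]{HA}, and conservativity (which we also checked by hand) is then a formal consequence. The only genuinely non-formal ingredient is the recognition that $U$-split simplicial colimits are absolute, together with the standard accessible-localization facts for $\Seg_{\fO}(\cC)$; once those are granted, the interchange of colimits with the Segal limits in the display above does the rest.
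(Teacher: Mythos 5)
The paper does not prove this statement; it cites it to Chu--Haugseng~\cite[Cor~8.2]{Chu}, so there is no internal proof to compare against. Your argument is nevertheless a correct, self-contained proof, and it follows the standard route one would take (and, to my recollection, essentially the route Chu--Haugseng take): verify the three Barr--Beck--Lurie hypotheses, obtaining the left adjoint as $L\circ\mathrm{LKE}$, conservativity from the Segal limit description, and preservation of $U$-split simplicial colimits by computing colimits pointwise.

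One step is stated a little loosely and deserves to be tightened: the interchange $\colim_{[n]}\lim_E F_n(E)\simeq\lim_E\colim_{[n]}F_n(E)$. Saying ``each $\colim_{[n]}F_n(E)$ is an absolute colimit'' is the right idea but not quite the right formulation (absoluteness is a property of the colimit diagram, not of its value at each $E$). What you actually want to say is: the splitting of $UF_\bullet$ in $\Fun(\fO^{\el},\cC)$ restricts along the forgetful functor $\fO^{\el}_{O/}\to\fO^{\el}$ to a splitting of the simplicial object $G_\bullet\in\Fun(\fO^{\el}_{O/},\cC)$ given by $G_n(E)=F_n(E)$; split simplicial colimits are preserved by \emph{every} functor, in particular by $\lim\colon\Fun(\fO^{\el}_{O/},\cC)\to\cC$ (which exists since $\cC$ is presentable and $\fO$ is small); and colimits in $\Fun(\fO^{\el}_{O/},\cC)$ are computed pointwise. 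Chaining these gives the display. With that repair the proof is complete, and the rest (closure of $\Seg_{\fO}(\cC)$ under limits and its accessible reflectivity in $\Fun(\fO,\cC)$, the conservativity via the Segal limit square) is correct as written.
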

\begin{corollary}\label{Underlying conservative proposition}
    A morphism of $I$-commutative monoids is an equivalence if and only if its underlying morphism of $c(I)$-objects is an equivalence;
    in particular, an $I$-symmetric monoidal functor $F\colon \cC^{\otimes} \rightarrow \cD^{\otimes}$ is an equivalence if and only if the underlying $c(I)$-functor is an equivalence.
\end{corollary}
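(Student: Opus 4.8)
The plan is to deduce this from the monadicity theorem \cref{Segal objects are monadic} via the identification $\CMon_I(\cC) \simeq \Seg_{\Span_I(\uFF_{\cT})}(\cC)$ of \cref{CMon pattern example}. The first, purely bookkeeping, step is to identify the elementary objects of the pattern $\Span_I(\FF_{\cT})$: by \cref{Span construction} applied with $\cX_0 = c(I)$, so that $\Span_I(\FF_{\cT}) = \Span_{\mathrm{all},I}(\FF_{c(I)};c(I))$, the elementary objects are exactly the orbits, forming the wide subcategory $c(I)^{\op} \subset \Span_I(\FF_{\cT})$ of inert maps between them. Consequently $\Fun(\fO^{\el},\cC) \simeq \Fun(c(I)^{\op},\cC)$ is the $\infty$-category of $c(I)$-objects in $\cC$, and the forgetful functor $U$ of \cref{Segal objects are monadic} is precisely the passage to the underlying $c(I)$-object.

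With this identification in hand the second step is immediate: \cref{Segal objects are monadic} says $U$ is monadic, hence conservative, so a morphism of $I$-commutative monoids is an equivalence if and only if its image under $U$ is an equivalence in $\Fun(c(I)^{\op},\cC)$; and equivalences in a functor $\infty$-category are detected objectwise, so this holds if and only if the underlying morphism of $c(I)$-objects is an equivalence. This gives the first claim. (If one prefers to avoid the presentability hypothesis of \cref{Segal objects are monadic}, conservativity of $U$ also follows directly from the Segal condition: since $\Span_I(\FF_{\cT})^{\el}_{S/}$ has $\Orb(S)$ as an initial subcategory, the Segal condition reads $F(S) \simeq \prod_{U \in \Orb(S)} F(U)$, and finite products preserve equivalences.)

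The ``in particular'' is the specialization to $\cC = \Cat$, using $\Cat_I^{\otimes} = \CMon_I(\Cat)$: an $I$-symmetric monoidal functor is a morphism of $\CMon_I(\Cat)$, its underlying $c(I)$-object is its underlying $c(I)$-$\infty$-category (the restriction of the underlying $\cT$-$\infty$-category to the color family), and a morphism in $\Fun(c(I)^{\op},\Cat)$ is an equivalence exactly when it is an equivalence of $\infty$-categories color-by-color. I do not anticipate a genuine obstacle; the only points that require any care are pinning down $\fO^{\el}$ and $U$ as the color family and the underlying-object functor, and --- if the fully general statement is wanted --- replacing the appeal to \cref{Segal objects are monadic} by the direct Segal-condition argument above.
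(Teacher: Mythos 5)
Your proof is correct and follows the same route the paper intends: identify $\CMon_I(\cC)$ with $\Seg_{\Span_I(\FF_{\cT})}(\cC)$, note that $\Span_I(\FF_{\cT})^{\el} \simeq c(I)^{\op}$, and invoke conservativity of the monadic forgetful functor from \cref{Segal objects are monadic}. The parenthetical remark — that conservativity also follows directly from the Segal condition $F(S) \simeq \prod_{U \in \Orb(S)} F(U)$, sidestepping the presentability hypothesis — is a worthwhile observation, since the corollary as stated does not restrict $\cC$.
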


Another benefit of Segal objects is a rich framework for functoriality. 
\begin{definition}
  Suppose $\fP,\fO$ are algebraic patterns.
  A functor $f\colon \fP \rightarrow \fO$ is \emph{compatible with Segal objects} if it preserves the inert-active factorization system and $f^*\colon \Fun(\fO,\cC) \rightarrow \Fun(\fP,\cC)$ preserves Segal objects in any complete $\infty$-category $\cC$.
  Moreover, a morphism of algebraic patterns $f\colon \fP \rightarrow \fO$ is a called a:
  \begin{itemize}
    \item \emph{Segal morphism} if it is compatible with Segal objects, and a
    \item \emph{strong Segal morphism} if the associated functor $f^{\el}_{X/}\colon \fP^{\el}_{X/} \rightarrow \fO^{\el}_{f(X)/}$ is initial for all $X \in \fP$.\qedhere
  \end{itemize}
\end{definition}

\begin{observation}\label{AlgPatt Seg observation}
  The conditions for Segal morphisms and strong Segal morphisms are each compatible with compositions and equivalences;
  that is, there are \emph{core-preserving} wide subcategories 
  \[
    \AlgPatt^{\Seg}, \AlgPatt^{\mathrm{Strong-}\Seg} \subset \AlgPatt
  \]
  whose morphisms are the Segal morphisms and strong Segal morphisms, respectively.
\end{observation}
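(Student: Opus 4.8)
The plan is to check the two properties that make a class of morphisms into a core-preserving wide subcategory of an $\infty$-category: that it contains every equivalence and is closed under composition. Since a subcategory is detected on the homotopy category (\cite[\S~1.2.11]{HTT}) and both the Segal and the strong Segal condition on a morphism $f$ visibly depend only on its equivalence class in $\AlgPatt$ (through $f^{*}$ in the first case, through the family of functors $f^{\el}_{X/}$ in the second), this will suffice. The Segal case is essentially formal: a composite $g\circ f$ of morphisms of algebraic patterns is again a morphism of algebraic patterns because $\AlgPatt\subset\Fun(\mathbf{Q},\Cat)$ is a full subcategory and hence closed under composition, and $(g\circ f)^{*}\simeq f^{*}\circ g^{*}$, so if $f^{*}$ and $g^{*}$ each preserve Segal objects in every complete $\infty$-category $\cC$ then so does their composite; moreover an equivalence $f$ of algebraic patterns induces an equivalence $f^{*}$ of functor $\infty$-categories, which restricts to an equivalence on the full subcategories of Segal objects, so every equivalence is a Segal morphism. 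This yields the core-preserving wide subcategory $\AlgPatt^{\Seg}$.

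For strong Segal morphisms the one substantive point is that the formation of elementary slices is compatible with composition. Given composable morphisms of algebraic patterns $f\colon\fP\to\fO$ and $g\colon\fO\to\mathfrak{R}$ and an object $X\in\fP$, the functor $f^{\el}_{X/}\colon\fP^{\el}_{X/}\to\fO^{\el}_{f(X)/}$ sends an inert arrow $X\to E$ with $E\in\fP^{\el}$ to $f(X)\to f(E)$ with $f(E)\in\fO^{\el}$ — precisely here one uses that morphisms of algebraic patterns preserve inert morphisms and elementary objects — and likewise for $g^{\el}_{f(X)/}$; comparing the two descriptions on objects and morphisms gives a canonical identification $(g\circ f)^{\el}_{X/}\simeq g^{\el}_{f(X)/}\circ f^{\el}_{X/}$. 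Initial functors are closed under composition, so if $f$ and $g$ are strong Segal then both factors on the right are initial for every $X$, whence so is $(g\circ f)^{\el}_{X/}$; thus $g\circ f$ is strong Segal. Finally, an equivalence $f$ restricts to equivalences $\fP^{\el}_{X/}\xrightarrow{\;\;\sim\;\;}\fO^{\el}_{f(X)/}$ (being an equivalence of underlying $\infty$-categories compatible with the inert wide subcategories and the elementary full subcategories), and equivalences are initial, so every equivalence is a strong Segal morphism. This yields $\AlgPatt^{\mathrm{Strong-}\Seg}$.

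The main obstacle, such as it is, lies entirely in the bookkeeping behind the identification $(g\circ f)^{\el}_{(-)/}\simeq g^{\el}_{f(-)/}\circ f^{\el}_{(-)/}$: unwinding it amounts to a short diagram chase with the defining pullback $\fO^{\el}_{O/}=\fO^{\el}\times_{\fO^{\In},\,\ev_{1}}\fO^{\In}_{O/}$ and the naturality of $f$, after which everything reduces to the facts that full subcategories, and initial functors, are closed under composition and contain all equivalences. One may also record the containment $\AlgPatt^{\mathrm{Strong-}\Seg}\subset\AlgPatt^{\Seg}$ — the standard implication that a strong Segal morphism is a Segal morphism, obtained by pulling back the Segal condition along the initial functors $f^{\el}_{X/}$ (\cite{Chu}) — although this is not needed for the present statement.
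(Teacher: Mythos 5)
Your proposal is correct, and since the paper states this as an \emph{observation} without supplying any argument, it accurately fills in the expected verification: closure under composition follows from $(g\circ f)^{*}\simeq f^{*}\circ g^{*}$ (Segal case) and the identification $(g\circ f)^{\el}_{X/}\simeq g^{\el}_{f(X)/}\circ f^{\el}_{X/}$ together with closure of initial functors under composition (strong Segal case), while equivalences are handled by noting that they induce equivalences of functor categories and of elementary slices. No gaps.
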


There is a universal example of coefficients, which we can use to verify that a functor is a Segal morphism.
\begin{remark}
  \cite[Lem~4.5]{Chu} concludes that $f$ is a Segal morphism if $f^*$ preserves Segal objects in \emph{spaces}.
\end{remark}

\begin{example}\label{Segal morphisms between span patterns example}
    We show in \cref{Segal morphisms between span patterns prop} that, given any functor $\cT \rightarrow \cT'$ of atomic orbital $\infty$-categories, the associated functor
    \[
        \Span(\FF_{\cT}) \rightarrow \Span(\FF_{\cT'})
    \]
    is a Segal morphism.
    Additionally, in \cref{Span fibrous corollary}, we show that the map $\varphi$ of \cref{Span comparison map} is a segal morphism, constructing a pullback map
    \[
        \CMon_{\cT}(\cC) \simeq \Seg_{\Span(\FF_{\cT})}(\cC) \rightarrow \Seg_{\tot \uFF_{\cT,*}}(\cC).
    \]
    In \cite[Cor~2.64]{Barkan-arity}, conditions for a strong Segal morphism were developed concerning when their pullback maps are equivalences, and these conditions were checked in \cite[Prop~5.2.14]{Barkan} in the case $\cT = \cO_G$;
    we review their argument and extend it to arbitrary atomic orbital $\infty$-categories in \cref{Operads are fibrous subsection}.
    The existence of such an equivalence (not necessarily induced by a map of patterns) is not new, and to the author's knowledge, first appeared as \cite[Thm~6.5]{Nardin-Stable}.
\end{example}

\emph{Limits of patterns} construct a large number of examples according to the following lemma.
\begin{lemma}[{\cite[Cor~5.5]{Chu}}]
    $\AlgPatt \subset \Fun(\mathbf{Q},\Cat)$ is a localizing subcategory;
    in particular, $\AlgPatt$ has small limits.
\end{lemma}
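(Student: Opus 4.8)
The plan is to realize $\AlgPatt$ as an \emph{accessible} reflective subcategory of the presentable $\infty$-category $\Fun(\mathbf{Q},\Cat)$, which is exactly the assertion that it is a localizing subcategory, and then read off closure under small limits as a corollary. Concretely, I would verify that $\AlgPatt\subset\Fun(\mathbf{Q},\Cat)$ is closed under small limits and under $\kappa$-filtered colimits for some regular cardinal $\kappa$; since $\Fun(\mathbf{Q},\Cat)$ is presentable (a functor $\infty$-category into a presentable $\infty$-category) and the defining conditions are accessible, a full subcategory closed under limits and filtered colimits is an accessible localization by the adjoint functor theorem (c.f.\ \cite[\S~5.5]{HTT}), and in particular inherits all small limits computed as in $\Fun(\mathbf{Q},\Cat)$.

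For closure under limits, recall that limits in $\Fun(\mathbf{Q},\Cat)$ are computed objectwise, so given a diagram $k\mapsto(\base_k,(\base_k^{\In},\base_k^{\act}),\base_k^{\el})$ of algebraic patterns it suffices to check that the objectwise limit $(\lim_k\base_k,(\lim_k\base_k^{\In},\lim_k\base_k^{\act}),\lim_k\base_k^{\el})$ again satisfies the three defining conditions. (i) The functor $\lim_k\base_k^{\el}\to\lim_k\base_k^{\In}$ is again a full subcategory inclusion: mapping spaces in a limit $\infty$-category are the limits of the mapping spaces, so fully faithfulness is preserved, and being a monomorphism is preserved since limits commute with the pullback $X\to X\times_Y X$ defining monomorphisms. (ii) Each of $\lim_k\base_k^{\In}\to\lim_k\base_k$ and $\lim_k\base_k^{\act}\to\lim_k\base_k$ is again a core-preserving wide subcategory inclusion: faithfulness is the condition that the maps of mapping spaces be $(-1)$-truncated, which is stable under limits, and being core-preserving is stable under limits because the core functor $(-)^{\simeq}\colon\Cat\to\cS$ is corepresented by the terminal $\infty$-category and hence preserves limits. (iii) For the factorization system, I would rephrase the axiom so that it visibly commutes with limits: $(\base^{\In},\base^{\act})$ is a factorization system on $\base$ exactly when the full subcategory $\Fun(\Delta^2,\base)^{\In,\act}\subset\Fun(\Delta^2,\base)$ of $2$-simplices whose $\{0,1\}$-edge lies in $\base^{\In}$ and whose $\{1,2\}$-edge lies in $\base^{\act}$ is carried equivalently onto $\Fun(\Delta^1,\base)$ by restriction along $\Delta^{\{0,2\}}\hookrightarrow\Delta^2$. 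Since $\Fun(\Delta^n,-)\colon\Cat\to\Cat$ is a right adjoint and the conditions ``$\{0,1\}$-edge in $\base^{\In}$'', ``$\{1,2\}$-edge in $\base^{\act}$'' are detected objectwise (by (ii)), both source and target of this comparison commute with the limit over $k$, and a limit of equivalences is an equivalence.

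For closure under $\kappa$-filtered colimits one runs through (i)--(iii) again, now using the standard facts that filtered colimits in $\Cat$ preserve fully faithful functors and monomorphisms, that $(-)^{\simeq}$ commutes with filtered colimits (the terminal $\infty$-category is compact), and that $\Fun(\Delta^n,-)$ and finite limits commute with filtered colimits in $\Cat$; hence the objectwise $\kappa$-filtered colimit of algebraic patterns is again an algebraic pattern. Combining the two closure properties with presentability of $\Fun(\mathbf{Q},\Cat)$ yields the left adjoint, so $\AlgPatt$ is localizing; being closed under limits in $\Fun(\mathbf{Q},\Cat)$, it then has all small limits. (One could instead build the reflection by hand --- replacing $\base^{\el}$ by the full subcategory on its essential image, forcing $\base^{\In},\base^{\act}$ to be wide subcategories, and imposing the factorization axiom by a small-object-type iteration --- but the adjoint functor theorem route is cleaner.) I expect the main obstacle to be condition (iii): the delicate point is to package the factorization axiom through the $\Fun(\Delta^2,-)$-formulation, since the naive ``for each $k$ the factorization of $f_k$ is contractible'' does not by itself assemble into a contractible space of factorizations of $f$ in the (co)limit.
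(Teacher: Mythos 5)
The paper does not actually prove this lemma --- it outsources it to \cite[Cor~5.5]{Chu} --- so there is no ``paper's proof'' to compare against; your job is to reconstruct the cited argument, and your overall strategy (show $\AlgPatt$ is closed under small limits and $\kappa$-filtered colimits in $\Fun(\mathbf{Q},\Cat)$, then invoke the reflection criterion for accessible full subcategories of a presentable $\infty$-category) is indeed the one Chu--Haugseng use. Items (i) and (ii) are fine as you state them: fully faithfulness, monicity, and core-surjectivity of the structure maps are all detected by (pullbacks of) mapping spaces, which are computed pointwise in limits of $\Cat$ and preserved by filtered colimits since $\Delta^0,\Delta^1$ are compact.

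The one place where you are glossing over a real step is the ``exactly when'' in (iii). The footnoted definition you are working with asserts only that, for every $f$, the fiber $\{f\}\times_{\Fun(\Delta^1,\base)}\Fun(\Delta^2,\base)^{\In,\act}$ is contractible. That is a condition on fibers; it does not formally imply that the restriction functor $\Fun(\Delta^2,\base)^{\In,\act}\to\Fun(\Delta^1,\base)$ is an equivalence of $\infty$-categories (a functor with contractible fibers need not be an equivalence unless it is, say, a left or right fibration, which here requires the orthogonality/lifting properties you were explicitly told not to assume). So your reformulation (B) is genuinely stronger-looking than the definition (A), and asserting their equivalence requires either a citation (it is true, and essentially amounts to the comparison of the ``weak'' and Lurie-style definitions of factorization system) or a short lifting argument. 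The cleaner fix, which avoids the issue entirely, is to not pass through (B) at all: argue directly that, in a limit diagram $\base=\lim_k\base_k$ of algebraic patterns, the factorization fiber of any $f=(f_k)$ identifies (using that $\Fun(\Delta^n,-)$ and fiber products commute with limits, and that the inert/active subcategory condition is detected pointwise) with $\lim_k\bigl(\{f_k\}\times_{\Fun(\Delta^1,\base_k)}\Fun(\Delta^2,\base_k)^{\In,\act}\bigr)$, a limit of contractible $\infty$-categories, hence contractible; and dually for filtered colimits using that $\Delta^n$ is compact and finite limits commute with filtered colimits in $\Cat$. With that patch the proof is correct.
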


\def\coeff{\op,\el}
\begin{example}\label{Product pattern example}
    In particular, $\AlgPatt$ has products.
    By \cite[Ex~5.7]{Chu}, there is an equivalence
    \[
        \Seg_{\base \times \base'}(\cC) \simeq \Seg_{\base} \Seg_{\base'}(\cC).
    \]
    In particular, this combined with \cref{CMon pattern example} gives a complete segal space model for $I$-symmetric monoidal categories;
    indeed, the pattern $\Delta^{\op,\natural}$ of \cite[Ex~4.9]{Chu} has Segal $\Delta^{\op,\natural}$-objects in $\cC$ given by \emph{complete Segal objects in $\cC$}, specializing to the fact that $\Seg_{\Delta^{\op,\natural}}(\cS) \simeq \Cat$, and hence
    \[
      \Seg_{\Delta^{\op,\natural}}(\cS_{\cT}) \simeq \Seg_{\cT^{\coeff} \times \Delta^{\op,\natural}}(\cS) \simeq \Seg_{\cT^{\coeff}}(\Cat) \simeq \Cat_{\cT},
    \]
    where $\cT^{\coeff}$ is the algberaic pattern with $\prn{\cT^{\coeff}}^{\el} = \prn{\cT^{\coeff}}^{\Int} = \cT^{\op} = \prn{\cT^{\coeff}}^{\act}$.
    Additionally,
    \[
      \Seg_{\Delta^{\op,\natural}}(\CMon_{\cT}(\cS)) \simeq \Seg_{\Delta^{\op,\natural} \times \Span(\FF_{\cT})}(\cS) \simeq \Seg_{\Span(\FF_{\cT})}(\Cat) \simeq \CMon_{\cT}(\Cat).\qedhere
    \]
\end{example}

Cartesian products of patterns play nicely with well-structured maps of patterns.
\begin{lemma}\label{Products of strong segal morphisms}
    Suppose $f\colon \fO \rightarrow \fP$ and $f'\colon \fO' \rightarrow \fP'$ are (resp. strong) Segal morphisms.
    Then,
    \[
        f \times f'\colon \fO \times \fO' \rightarrow \fP \times \fP'
    \]
    is a (strong) Segal morphism.
\end{lemma}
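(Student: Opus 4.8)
The plan is to reduce both assertions to the componentwise structure of the product pattern. By \cref{Product pattern example} (see also \cite[\S~5]{Chu}), the factorization system and the elementary objects of a product of patterns are formed componentwise, so
\[
  (\fO \times \fO')^{\In} = \fO^{\In} \times (\fO')^{\In}, \qquad (\fO \times \fO')^{\act} = \fO^{\act} \times (\fO')^{\act}, \qquad (\fO \times \fO')^{\el} = \fO^{\el} \times (\fO')^{\el},
\]
and likewise for $\fP \times \fP'$. Since each of $f$, $f'$ preserves inert morphisms, active morphisms, and elementary objects, so does $f \times f'$; hence $f \times f'$ is a morphism of algebraic patterns, and only the Segal, resp.\ strong Segal, condition remains.

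For the strong Segal case, I would fix $(X,X') \in \fO \times \fO'$ and note that, since a slice of a product is the product of the slices, the componentwise description above furnishes a natural equivalence $(\fO \times \fO')^{\el}_{(X,X')/} \simeq \fO^{\el}_{X/} \times (\fO')^{\el}_{X'/}$, compatibly with the analogous equivalence over $(f(X),f'(X'))$; under these the comparison functor $(f \times f')^{\el}_{(X,X')/}$ is identified with $f^{\el}_{X/} \times (f')^{\el}_{X'/}$. The point is then that a product of initial functors is initial: passing to opposite categories, it suffices to show that a product $u \times u'$ of cofinal functors is cofinal, for which the under-category appearing in Lurie's criterion \cite[Thm~4.1.3.1]{HTT} for $u \times u'$ at $(b,b')$ is canonically the product $(\cA \times_{\cB} \cB_{b/}) \times (\cA' \times_{\cB'} \cB'_{b'/})$, which is weakly contractible since each factor is and the classifying space functor $\Cat \to \cS$ preserves finite products. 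Thus $(f \times f')^{\el}_{(X,X')/}$ is initial for every object, so $f \times f'$ is a strong Segal morphism.

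For the (not necessarily strong) Segal case, I would fix an arbitrary complete $\infty$-category $\cC$. Since the Segal condition is a pointwise limit condition, $\Seg_{\fP'}(\cC) \subset \Fun(\fP',\cC)$ is closed under limits and hence complete, and likewise for $\Seg_{\fO'}(\cC)$. By \cref{Product pattern example} there are identifications $\Seg_{\fP \times \fP'}(\cC) \simeq \Seg_{\fP}\big(\Seg_{\fP'}(\cC)\big)$ and $\Seg_{\fO \times \fO'}(\cC) \simeq \Seg_{\fO}\big(\Seg_{\fO'}(\cC)\big)$, and under currying $\Fun(\fP \times \fP', \cC) \simeq \Fun\big(\fP, \Fun(\fP', \cC)\big)$ the functor $(f \times f')^*$ becomes $f^*$ in the $\fP$-variable followed by postcomposition with $(f')^*$. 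Now, because $f'$ is a Segal morphism, $(f')^*$ restricts to $\Seg_{\fP'}(\cC) \to \Seg_{\fO'}(\cC)$, and as precomposition preserves all limits while both Segal subcategories are limit-closed, this restriction preserves limits, so postcomposing with it carries Segal $\fO$-objects valued in $\Seg_{\fP'}(\cC)$ to Segal $\fO$-objects valued in $\Seg_{\fO'}(\cC)$; and because $f$ is a Segal morphism and $\Seg_{\fP'}(\cC)$ is complete, $f^*$ carries $\Seg_{\fP}\big(\Seg_{\fP'}(\cC)\big)$ into $\Seg_{\fO}\big(\Seg_{\fP'}(\cC)\big)$. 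Chaining these two shows $(f \times f')^*$ preserves Segal objects; as $\cC$ was an arbitrary complete $\infty$-category, $f \times f'$ is a Segal morphism.

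The only genuinely non-formal ingredients are the two categorical lemmas invoked above — that a product of initial (equivalently cofinal) functors is initial, and that restriction along $f'$ preserves the limits computing the Segal condition — so that is where I would concentrate care; both are standard, resting respectively on the classifying space functor preserving finite products and on limits in functor categories being computed pointwise together with closure of Segal objects under limits. Everything else is bookkeeping with \cref{Product pattern example} and the currying equivalence.
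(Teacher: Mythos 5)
Your proof is correct and follows essentially the same route as the paper: it identifies the componentwise structure of the product pattern, uses the $\Seg_{\fP\times\fP'}(\cC)\simeq\Seg_{\fP}(\Seg_{\fP'}(\cC))$ identification from \cref{Product pattern example} for the Segal case, and reduces the strong Segal case to the fact that a product of initial functors is initial. The paper states both steps more tersely (citing \cref{Product pattern example} and ``limits in product categories are computed pointwise''), while you fill in the currying bookkeeping and justify initiality of a product via Lurie's cofinality criterion plus preservation of finite products by the classifying space functor — these are the same underlying arguments, just spelled out.
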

\begin{proof}
    The case of Segal morphisms follows immediately from \cref{Product pattern example}, so we assume that $f,f'$ are strong Segal.
    Then, the induced map
    \[
        f^{\el}_{X/} \times f^{'\el}_{X'/} = \prn{f \times f'}^{\el}_{(X,X')/}:\prn{\fO \times \fO'}^{\el}_{(X,X')/} \rightarrow \prn{\fP \times \fP'}^{\el}_{(fx,fx')/}
    \]
    is a product of initial maps; it follows that it is initial, since limits in product categories are computed pointwise.
\end{proof}

\subsubsection{An interlude on soundness and extendability}%
We will move on to describe the theory of operads corresponding with an algebraic pattern, but to do so, we make some technical assumptions.
Let $\fO$ be an algebraic pattern and $\omega\colon X \rightarrow Y$ an active map.
Define the pullback square 
\[
  \begin{tikzcd}
    \fO^{\el}(\omega) \arrow[r] \arrow[d] \arrow[rd,"\lrcorner" very near start, phantom]
    & \Ar(\fO^{\Int}_{X/}) \arrow[d,"{(s,t)}"]\\
    \fO^{\el}_{Y/} \times \fO^{\el}_{X/} \arrow[r,"\prn{\omega_{(-)},\id}"]
    & \fO^{\Int}_{X/} \times \fO^{\Int}_{X/}
  \end{tikzcd}
\]
where $\omega_{(-)}\cln \fO^{\el}_{Y/} \rightarrow \fO^{\Int}_{X/}$ sends $\alpha:Y \rightarrow E$ to the inert map $\omega_a$ of the inert-active factorization of $X \xrightarrow \omega Y \xrightarrow{a} E$.

\begin{definition}
  $\fO$ is \emph{sound} if, for all $\omega:X \rightarrow Y$ active, the associated map $\fO^{\el}(\omega) \rightarrow \fO^{\el}_{X/}$ is initial.
  A sound pattern $\fO$ is \emph{soundly extendable} if $\sA_{\fO} \deq \Ar^{\act}(\fO) \xrightarrow{t} \fO$ is a Segal $\fO$-$\infty$-category, where $\Ar^{\act}(\fO) \subset \Ar(\fO) = \Fun(\Delta^1, \fO)$ is the full subcategory spanned by active arrows.
\end{definition}

Soundness as a condition allows one to simplify Segal conditions;
sound extendibility reduces many instances of \emph{relative Segal objects} in the sense \cite[Def~3.1.8]{Barkan} to a morphism with Segal domain by \cite[Obs~3.1.9]{Barkan}.
A condition of \emph{extendability} was originally introduced in \cite[Def~8.5]{Chu} for the sake of explicit formulas for the free Segal $\fO$-object monad, and is equivalent to sound extendability in the presence of soundness \cite[Rmk~3.3.17]{Barkan};
we will not consider the reasoning for this notion further, but instead remark that it is true of our main examples.

\begin{example}/
  We verify in \cref{Soundly extendable lemma} that $\Span(\FF_{\cT})$ is soundly extendable;
  moreover, we verify in \cref{Sound lemma} that $\Tot \uFF_{\cT,*}$ is sound, and one may verify that it is soundly extendable.
\end{example}

\subsubsection{Fibrous patterns}%
The unstraightening functor of \cite{HTT} realizes $\Seg_{\fO}(\Cat)$ as a non-full subcategory of $\Cat_{/\fO}$ consisting of cocartesian fibrations satisfying Segal conditions;
we relax this for the following definition, which is equivalent to the original definition stated in \cite[Def~4.1.2]{Barkan} by \cite[Prop~4.1.6]{Barkan}.

\begin{definition}\label{Fibrous patterns definition}
   Let $\base$ be a sound algebraic pattern.
  A \emph{fibrous $\base$-pattern} is a functor $\pi:\fO \rightarrow \base$ such that
  \begin{enumerate}
    \item (inert morphisms) $\fO$ has $\pi$-cocartesian lifts for inert morphisms of $\base$,
    \item (Segal condition for colors) For every active morphism $\omega\cln V_0 \rightarrow V_1$ in $\base$, the functor
      \[
        \fO_{V_0} \rightarrow \lim_{\alpha \in \base^{\el}_{V_1/}} \fO_{\omega_{\alpha,!} V_1}
      \]
      induced by cocartesian transport along $\omega_\alpha$ is an equivalence, where $\omega_{(-)}\cln \base^{\el}_{Y/} \rightarrow \base^{\Int}_{X/}$ is the inert morphism appearing in the inert-active factorization of $\alpha \circ \omega$, and
    \item (Segal condition for multimorphisms) for every pair of objects $V_1,V_2 \in \base$ and colors $X_i \in \fO_{V_i}$, the commutative square
      \[
        \begin{tikzcd}
          \Map_{\fO}(X_0,X_1) \arrow[r] \arrow[d]
          & \lim\limits_{\alpha\colon V_1 \rightarrow E \in \base^{\el}_{V_1/}} \Map_{\fO}(X_0,\alpha_! X_1) \arrow[d]\\
          \Map_{\base}(V_0,V_1) \arrow[r]
          & \lim\limits_{\alpha\colon V_1 \rightarrow E \in \base^{\el}_{V_1/}} \Map_{\base}(V_0, E)
        \end{tikzcd}
      \]
      is cartesian.
  \end{enumerate}
  We denote by $\Fbrs(\base) \subset \Cat^{\Int-\cocart}_{/\base}$ the full subcategory spanned by the fibrous $\base$-patterns, where the latter category has objects the functors to $\base$ possessing cocartesian lifts over inert morphisms and morphisms the functors preserving such cocartesian lifts.
\end{definition} 

\begin{remark}\label{Enough to assert essentially surjective remark}
  As noted in \cite[Rmk~4.1.8]{Barkan}, in the presence of condition (3) above, condition (2) may be weakened to assert that the functor $\fO_{V_0} \rightarrow \lim_{\alpha \in \base^{\el}_{V_1/}} \fO_{\omega_{\alpha,!} V_1}$ is a $\pi_0$-surjection without changing the resulting notion.
  To match \cite[Prop~4.1.6]{Barkan}, we may even take the intermediate assumption that this functor induces an equivalence on cores.
\end{remark}

\begin{example}
  Fibrous $\FF_*$-patterns are equivalent to $\infty$-operads (c.f. \cite{HA}), and in \cref{Operads are fibrous subsection} we will extend a proof due to \cite{Barkan} (in the case $\cT = \cO_G$) that fibrous $\Tot \uFF_{\cT,*}$-patterns are equivalent to the $\cT$-$\infty$-operads of \cite{Nardin}. 
\end{example}

The fully faithful functor $U\colon \Fbrs(\base) \rightarrow \Cat^{\Int-\cocart}_{/\base}$ is a reflective subcategory inclusion.
\begin{proposition}[{\cite[Cor~4.2.3]{Barkan}}]
  $U$ participates in an adjunction 
    \[
      \begin{tikzcd}
        {\Cat_{/\base}^{\Int-\cocart}}
        & {\Fbrs(\base)} 
	\arrow[""{name=0, anchor=center, inner sep=0}, "U", hook', curve={height=-15pt}, from=1-2, to=1-1]
	\arrow[""{name=1, anchor=center, inner sep=0}, "{L_{\Fbrs}}", curve={height=-15pt}, from=1-1, to=1-2]
	\arrow["\dashv"{anchor=center, rotate=-90}, draw=none, from=0, to=1]
\end{tikzcd}
    \]
\end{proposition}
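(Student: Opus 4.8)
The plan is to realize $\Fbrs(\base)$ as an accessible localization of the presentable $\infty$-category $\Cat_{/\base}^{\Int-\cocart}$; since $\Fbrs(\base)$ is by definition a full subcategory, $U$ is automatically fully faithful, so it remains to cut $\Fbrs(\base)$ out by locality with respect to a \emph{small} set of morphisms and then invoke the general theory of accessible localizations. First I would recall that $\Cat_{/\base}^{\Int-\cocart}$ is presentable; this is part of the standard setup for fibrous patterns (cf. \cite[\S~4]{Barkan}) and relies on $\base$ being essentially small.

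Next I would encode the defining conditions of \cref{Fibrous patterns definition} as $S$-locality for a small set $S$ of maps in $\Cat_{/\base}^{\Int-\cocart}$. Condition (1) is already built into $\Cat_{/\base}^{\Int-\cocart}$. For the colors Segal condition (2), one fixes an active map $\omega\cln V_0 \to V_1$ of $\base$ and builds, out of the representable inert-cocartesian fibrations at $V_0$ and at the objects $\omega_{\alpha,!}V_1$ for $\alpha \in \base^{\el}_{V_1/}$, a morphism $c_\omega$ whose source corepresents $\cE \mapsto \cE_{V_0}$ and whose target corepresents $\cE \mapsto \lim_{\alpha} \cE_{\omega_{\alpha,!}V_1}$; soundness of $\base$ guarantees this comparison has the expected form. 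For the multimorphism Segal condition (3), fixing $V_0,V_1,V_2 \in \base$ one builds ``universal'' objects of $\Cat_{/\base}^{\Int-\cocart}$ classifying a color over a given object and a multimorphism between colors, so that the cartesianness of the relevant square on $\Map_{\cE}$ becomes $S$-locality at a single map $a_{V_0,V_1,V_2}$; crucially these objects depend only on $\base$, not on $\cE$. Since $\base$ — hence each $\base^{\el}_{V/}$ and each finite tuple of objects — is small, $S = \{c_\omega\} \cup \{a_{V_0,V_1,V_2}\}$ is small, and using \cref{Enough to assert essentially surjective remark} one checks that the $S$-local objects are precisely the fibrous $\base$-patterns.

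Granting this, \cite[Prop~5.5.4.15]{HTT} shows that the full subcategory $S^{-1}\Cat_{/\base}^{\Int-\cocart} = \Fbrs(\base)$ of $S$-local objects is an accessible localization: it is presentable, and $U$ admits a left adjoint $L_{\Fbrs}$, as claimed. (One could instead identify $\Fbrs(\base)$ with a category of relative Segal objects via \cite[Prop~4.1.6]{Barkan} and reflect onto those; and of course the statement is exactly \cite[Cor~4.2.3]{Barkan}.)

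The main obstacle is the middle step: faithfully packaging the multimorphism condition (3), which constrains mapping spaces rather than fibers, as locality against a fixed small set forces one to construct the correct corepresenting objects in $\Cat_{/\base}^{\Int-\cocart}$ and check, at the level of straightening, that $S$-locality there reproduces the pullback square on $\Map_{\cE}(X_0,X_1)$; condition (2), while easier, still needs soundness to identify $\base^{\el}_{V_1/}$-indexed limits of fibers with fibers of the total space.
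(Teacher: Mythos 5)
The paper itself does not prove this proposition: it is imported verbatim from Barkan--Haugseng--Steinebrunner, which you acknowledge in your final sentence, so there is no in-paper argument to compare against. Your sketch of a direct Bousfield-localization proof is reasonable in outline: $\Cat_{/\base}^{\Int-\cocart}$ is presentable, and if one exhibits a small set $S$ of morphisms whose local objects are precisely the fibrous patterns, then \cite[Prop~5.5.4.15]{HTT} delivers the reflection. The genuine content, however, is the middle step that you flag yourself as ``the main obstacle'' — corepresenting conditions (2) and especially (3) of \cref{Fibrous patterns definition} by $\fO$-independent morphisms in $\Cat_{/\base}^{\Int-\cocart}$ — and the sketch asserts the existence of the corepresenting objects rather than constructing them. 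Condition (3) quantifies over colors $X_0,X_1$ that live in $\fO$ rather than $\base$, so turning it into $S$-locality requires producing ``free'' inert-cocartesian fibrations over $\base$ classifying a color over $V_0$, a color over $V_1$, and a multimorphism between them, and then verifying via inert-straightening that locality against the resulting comparison map recovers cartesianness of the square on $\Map_{\fO}(X_0,X_1)$. That is precisely the content of the cited result, not something you can wave at. Your closing parenthetical — rerouting through the relative Segal objects of \cite[Prop~4.1.6]{Barkan} and reflecting there — is the cleaner path and is closer in spirit to the machinery Barkan--Haugseng--Steinebrunner actually develop, since the locality maps become visible at the level of inert-straightened diagrams where they are genuine Segal conditions; if you need a self-contained argument rather than a citation, take that route instead.
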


In terms of functoriality, we prove the following in \cref{Soundly extendable pull-push prop}, extending \cite[Lem~4.1.19]{Barkan}.
\begin{proposition}\label{Fibrous pullback prop}
  Suppose $f\cln \fP \rightarrow \fO$ is a Segal morphism and either $\fO$ is soundly extendable or $f$ is strong Segal.
  Then, the pullback functor $f^*\cln \Cat_{/\fP} \rightarrow \Cat_{/\fO}$ preserves fibrous patterns;
  furthermore, the functor 
  \[
    f^*\cln \Fbrs(\fO) \rightarrow \Fbrs(\fP)
  \]
  has a left adjoint given by $L_{\Fbrs} f_!$.
\end{proposition}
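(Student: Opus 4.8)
The plan is to separate the statement into (i) pullback along $f$ preserves fibrous patterns and (ii) the resulting functor $f^{*}\cln \Fbrs(\fO) \rightarrow \Fbrs(\fP)$ has left adjoint $L_{\Fbrs} f_{!}$, with essentially all the content in (i). For (ii) I would argue formally: postcomposition with $f$ gives $f_{!}\cln \Cat_{/\fP} \rightarrow \Cat_{/\fO}$, left adjoint to $f^{*}$ on slice categories, and composing with the reflection $L_{\Fbrs}$ onto fibrous patterns of \cite[Cor~4.2.3]{Barkan} yields, for $X \in \Fbrs(\fP)$ and $\cE \in \Fbrs(\fO)$, a natural chain
\[
  \Map_{\Fbrs(\fO)}\prn{L_{\Fbrs} f_{!} X, \cE} \simeq \Map_{\Cat_{/\fO}}\prn{f_{!} X, \cE} \simeq \Map_{\Cat_{/\fP}}\prn{X, f^{*}\cE} \simeq \Map_{\Fbrs(\fP)}\prn{X, f^{*}\cE},
\]
the final equivalence using part (i) (so that $f^{*}\cE$ is again fibrous) together with \cite[Cor~4.2.3]{Barkan}, so that $L_{\Fbrs} f_{!} \dashv f^{*}$.

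For (i), fix a fibrous $\fO$-pattern $p\cln \cE \rightarrow \fO$, write $f^{*}\cE \deq \fP \times_{\fO} \cE$ with projections $q\cln f^{*}\cE \rightarrow \fP$ and $\tilde{f}\cln f^{*}\cE \rightarrow \cE$, and verify the three conditions of \cref{Fibrous patterns definition} for $q$. The inert condition should reduce to bookkeeping: $f$, being a Segal morphism, preserves the inert--active factorization systems, hence carries inert morphisms of $\fP$ to inert morphisms of $\fO$; a $p$-cocartesian lift of such an image then pairs with the inert morphism of $\fP$ to give a morphism of $f^{*}\cE$ which is $q$-cocartesian, since cocartesian edges of a pullback are detected after projecting to $\cE$. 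This also gives fibrewise identifications $(f^{*}\cE)_{V} \simeq \cE_{f(V)}$ under which cocartesian transport along an inert $e$ of $\fP$ becomes cocartesian transport along $f(e)$ in $\cE$. For the two Segal conditions, let $\omega\cln V_{0} \rightarrow V_{1}$ be active in $\fP$; then $f(\omega)$ is active in $\fO$, and $f$ carries the inert--active factorization of each $\alpha\circ\omega$ (for $\alpha \in \fP^{\el}_{V_{1}/}$) to that of $f(\alpha)\circ f(\omega)$, so $f(\omega_{\alpha}) \simeq f(\omega)_{f(\alpha)}$. Combining this with the fibrewise identifications, the diagram $\alpha \mapsto (f^{*}\cE)_{(\omega_{\alpha})_{!}V_{1}}$ on $\fP^{\el}_{V_{1}/}$ is the restriction along $f^{\el}_{V_{1}/}\cln \fP^{\el}_{V_{1}/} \rightarrow \fO^{\el}_{f(V_{1})/}$ of the corresponding $\fO$-diagram $D$, whose limit is $\cE_{f(V_{0})} \simeq (f^{*}\cE)_{V_{0}}$ because $p$ is fibrous; a parallel identification holds for the mapping-space squares of condition (3). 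So everything reduces to showing $\lim_{\fP^{\el}_{V_{1}/}}(f^{\el}_{V_{1}/})^{*}D \simeq \lim_{\fO^{\el}_{f(V_{1})/}} D$ compatibly with the relevant data, and similarly for the mapping-space diagrams.

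This last comparison is where the two hypotheses enter. If $f$ is strong Segal, then $f^{\el}_{X/}$ is initial by definition, so the comparison is immediate, and one recovers \cite[Lem~4.1.19]{Barkan}. If instead $\fO$ is soundly extendable, I would route through \cite[Obs~3.1.9]{Barkan}: over a soundly extendable pattern, fibrousness of a functor with inert cocartesian lifts is equivalent to a relative Segal condition in the sense of \cite[Def~3.1.8]{Barkan}, detected on $\sA_{\fO} = \Ar^{\act}(\fO)$; since $f$ preserves active morphisms it induces a map $\sA_{\fP} \rightarrow \sA_{\fO}$ over $f$, and since $f$ is a Segal morphism $f^{*}$ preserves Segal objects, whence (after unwinding the relative version) this condition is stable under pullback along $f$. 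The main obstacle I anticipate is precisely this soundly-extendable case: cleanly matching the conditions of \cref{Fibrous patterns definition} (equivalently \cite[Prop~4.1.6]{Barkan}) against the relative-Segal/$\sA_{\fO}$ formulation of \cite[\S~3.1]{Barkan}, and checking that Segal-object preservation of $f^{*}$ transports the relative statement, is the delicate bookkeeping; by contrast the strong Segal case is essentially a one-line initiality argument, and the inert condition together with part (ii) are formal.
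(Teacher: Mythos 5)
Your proposal is correct and takes essentially the same route as the paper: the strong Segal case follows \cite[Lem~4.1.19]{Barkan} via initiality of $f^{\el}_{X/}$, and the soundly extendable case is handled (as the paper does in \cref{Soundly extendable pull-push prop}) by reformulating fibrousness relative to $\sA_{\fO}$ and using that $f^*$ preserves Segal objects and relative Segal conditions are pullback-stable. You correctly flag the soundly-extendable case as the delicate part and leave it as a sketch, but you identify exactly the right lemmas and reduction that the paper's appendix argument carries out.
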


\cref{Segal morphisms between span patterns example,Fibrous pullback prop} together yield a functor
  \[
    \Fbrs(\Span(\FF_{\cT})) \rightarrow \Fbrs(\Tot \uFF_{\cT,*});
  \]
  we review a proof that this is an equivalence (originally due to \cite{Barkan} when $\cT = \cO_G$) in \cref{Span fibrous corollary}.

A fibrous pattern $\pi\colon \fO \rightarrow \base$ inherits a structure of an algebraic pattern whose inert morphisms consist of $\pi$-cocartesian lifts of inert morphisms in $\base$, whose active morphisms are arbitrary lifts of active morphisms in $\base$, and whose elementary objects are spanned by lifts of elementary objects.
This is canonical:
\begin{proposition}[{\cite[Cor~4.1.7]{Barkan}}]\label{Overcategory of fibrous patterns prop}
    Fibrous patterns are closed under composition for the above pattern structure, inducing an equivalence
    \[        
        \Fbrs(\fO) \simeq \Fbrs(\base)_{/\fO}.
    \]
\end{proposition}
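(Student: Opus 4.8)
The plan is to upgrade the canonical equivalence of iterated slices $\Cat_{/\fO} \simeq (\Cat_{/\base})_{/\fO}$ to an equivalence of fibrous patterns. The first step is to check that $\fO$, with the stated inert--active--elementary structure, is a \emph{sound} algebraic pattern (needed for $\Fbrs(\fO)$ to even be defined). The inert--active factorization on $\fO$ is lifted from $\base$: a morphism $\varphi$ of $\fO$ over $\pi\varphi = a \circ i$ factors through the $\pi$-cocartesian lift of $i$ with the same source, and the space of such factorizations is contractible by the universal property of cocartesian lifts together with the factorization system on $\base$ (cf. \cite[Rmk~2.1.2.2]{HA}); $\fO^{\el} \subset \fO^{\Int}$ is full by construction. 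Since $\pi$-cocartesian lifts of inert morphisms are essentially unique and lifts of elementary objects are elementary, cocartesian transport yields equivalences $\fO^{\Int}_{X/} \simeq \base^{\Int}_{\pi X/}$ and $\fO^{\el}_{X/} \simeq \base^{\el}_{\pi X/}$, natural in $X$; soundness of $\fO$ then follows from that of $\base$, as $\fO^{\el}(\omega) \to \fO^{\el}_{X/}$ is the base change along these equivalences of $\base^{\el}(\pi\omega) \to \base^{\el}_{\pi X/}$. In particular $\pi$ preserves the factorization system and induces equivalences on the $\fO^{\el}_{X/}$, hence is a strong Segal morphism (so \cref{Fibrous pullback prop} applies, though I will not need it).

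Next, I would observe that the iterated-slice equivalence restricts to $\Cat^{\Int-\cocart}_{/\fO} \simeq (\Cat^{\Int-\cocart}_{/\base})_{/\fO}$. Indeed, given $q\colon \cP \to \fO$, an inert morphism of $\fO$ is a $\pi$-cocartesian lift of an inert morphism of $\base$; by the cancellation property of cocartesian morphisms \cite[Prop~2.4.1.3]{HTT}, a $q$-cocartesian lift of such a morphism is precisely a $(\pi q)$-cocartesian lift of an inert morphism of $\base$ whose image under $q$ is $\pi$-cocartesian --- that is, an inert-cocartesian arrow for $\cP \to \base$ which $q$ carries to an inert-cocartesian arrow for $\fO \to \base$ --- and morphisms correspond in the same way. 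Under this identification it then suffices to match the three conditions of \cref{Fibrous patterns definition} for $\cP \to \fO$ with those for $\cP \to \base$: the condition on cocartesian lifts of inert morphisms is the previous sentence, while the two Segal conditions transfer because the active morphisms of $\fO$ are exactly the lifts of active morphisms of $\base$ (with compatible inert--active factorizations), the equivalences $\fO^{\el}_{X/} \simeq \base^{\el}_{\pi X/}$ and $\fO^{\Int}_{X/} \simeq \base^{\Int}_{\pi X/}$ of the first step allow the relevant limits and cartesian squares to be compared term by term, and the fibrousness of $\fO$ itself relates the fiber $\cP_V$ over an object $V$ of $\base$ to the fibers $\cP_X$ over the $\fO$-colors $X$ above $V$. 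This exhibits $\Fbrs(\fO)$ as the full subcategory of $(\Cat^{\Int-\cocart}_{/\base})_{/\fO}$ on those $\cP \to \fO$ whose composite to $\base$ is fibrous, i.e. as $\Fbrs(\base)_{/\fO}$, with mutually inverse equivalences ``compose with $\pi$'' and ``forget the $\fO$-structure.'' Restricting to the terminal object $\id_{\fO} \in \Fbrs(\base)_{/\fO}$ recovers that the composite of two fibrous patterns is fibrous.

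I expect the main obstacle to be the bookkeeping in the last step: verifying that the Segal conditions (2) and (3) of \cref{Fibrous patterns definition} really do transfer across $\pi$, which requires a precise description of how $\cP_V$ is assembled from the fibers $\cP_X$, $X \in \fO_V$ --- compatibly with cocartesian transport over inert morphisms --- and of how active morphisms of $\fO$ lying over a fixed active morphism of $\base$ see all of the relevant structure; this is the point at which one genuinely uses that $\fO \to \base$ is fibrous rather than merely a functor with inert-cocartesian lifts. The soundness verification in the first step, though routine, is also somewhat delicate, since it is presupposed by the statement itself.
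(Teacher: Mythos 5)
The paper does not supply its own argument for this proposition; it is cited from \cite[Cor~4.1.7]{Barkan} and used as a black box, so there is no in-paper proof against which to compare yours.

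That said, your outline is the right shape and the first two steps go through as you say. Essential uniqueness of $\pi$-cocartesian lifts of inert morphisms gives $\fO^{\Int}_{X/} \simeq \base^{\Int}_{\pi X/}$ and $\fO^{\el}_{X/} \simeq \base^{\el}_{\pi X/}$, and since $\pi$ preserves the inert--active factorization, these equivalences carry the construction $\fO^{\el}(\omega)$ to $\base^{\el}(\pi\omega)$ compatibly with the maps to $\fO^{\el}_{X/}$; soundness of $\fO$ therefore does reduce to soundness of $\base$. The identification $\Cat^{\Int-\cocart}_{/\fO} \simeq \prn{\Cat^{\Int-\cocart}_{/\base}}_{/\fO}$ is, as you note, an application of cancellation for cocartesian edges in both directions.

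The genuine gap is where you anticipate it, and it is worth being precise about its shape. Condition (3) of \cref{Fibrous patterns definition} in fact transfers by a clean formal argument you did not quite articulate: under $\fO^{\el}_{X_1/} \simeq \base^{\el}_{\pi X_1/}$, the multimorphism square for $\cP \to \fO$ and the one for $\cP \to \base$ share their top horizontal arrow, while the square connecting their bottom rows is the multimorphism square for $\fO \to \base$, which is cartesian because $\fO$ is fibrous; pasting of cartesian squares then shows the two conditions are equivalent. Condition (2) is the one that does not reduce to pasting. One must relate the fiber $\cP_{V_0}$ over a color $V_0 \in \base$ to the fibers $\cP_{X_0}$ over the various $X_0 \in \fO_{V_0}$: the comparison square has the Segal-for-colors equivalence for $\fO\to\base$ as its bottom row, and one wants the top row (the colors condition for $\cP \to \base$) to be an equivalence precisely when it is one on each fiber over $\fO_{V_0}$ (the colors condition for $\cP \to \fO$). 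That implication is not automatic from a commutative square with invertible bottom row; one needs control of the vertical maps or, as Barkan does, a reformulation of fibrousness (cf.\ \cite[Prop~4.1.6]{Barkan}) from which the overcategory equivalence falls out more cheaply. Since the proposition is imported wholesale, the economical move is to cite Barkan rather than reconstruct this.
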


We construct many Segal morphisms in \cref{Segal morphisms subsection}.
Many more are constructed in the following.
\begin{proposition}[{\cite[Obs~4.1.14]{Barkan}}]\label{Fibrous patterns are strong Segal morphisms}
    Fibrous patterns are strong Segal morphisms.
\end{proposition}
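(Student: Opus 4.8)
The plan is to show directly that, for a fibrous $\base$-pattern $\pi\colon \fO \to \base$ equipped with its inherited pattern structure (inert morphisms the $\pi$-cocartesian lifts of inert morphisms of $\base$, active morphisms the arbitrary lifts of active morphisms, and elementary objects the objects lying over $\base^{\el}$, as in \cref{Overcategory of fibrous patterns prop}), the functor $\pi$ is a morphism of algebraic patterns whose comparison functor
\[
  \pi^{\el}_{X/}\colon \fO^{\el}_{X/} \longrightarrow \base^{\el}_{\pi X/}
\]
is not merely initial but an \emph{equivalence} of $\infty$-categories for every $X \in \fO$. That $\pi$ preserves the inert–active factorization system and elementary objects is immediate from the definition of the inherited structure, so it is a morphism of algebraic patterns, and once the comparison functor is an equivalence the strong Segal condition follows at once.

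The substance is in producing the equivalence. I would first record the two structural facts that drive everything: (i) $Y \in \fO$ is elementary iff $\pi Y \in \base$ is elementary, and (ii) the inert morphisms of $\fO$ are exactly the $\pi$-cocartesian lifts of the inert morphisms of $\base$ --- both true by definition. From (ii), the stability of inert morphisms of $\base$ under composition, and the two-out-of-three cancellation property for $\pi$-cocartesian morphisms (\cite[2.4.1.7]{HTT}), I would deduce that $\pi$ restricts to a functor $\pi^{\In}\colon \fO^{\In} \to \base^{\In}$ which is a cocartesian fibration --- the required cocartesian lifts being precisely the cocartesian lifts of inert morphisms supplied by the fibrous-pattern axioms --- and, moreover, that \emph{every} morphism of $\fO^{\In}$ is $\pi^{\In}$-cocartesian. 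A cocartesian fibration all of whose morphisms are cocartesian has $\infty$-groupoid fibers (morphisms over identities are equivalences), hence is a left fibration by \cite[2.4.2.4]{HTT}.

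Next I would pass to slices. Since $\pi^{\In}$ is a left fibration, straightening identifies $\fO^{\In}_{X/} \to \base^{\In}$ with the left fibration classified by $W \mapsto \coprod_{\psi\colon \pi X \to W}\bigl((\pi^{\In})^{-1}(W)\bigr)_{\psi_!X/}$; each fiber $(\pi^{\In})^{-1}(W)$ is an $\infty$-groupoid, so its slice under any object is contractible, and this presheaf reduces to $W \mapsto \Map_{\base^{\In}}(\pi X, W)$. Thus $\fO^{\In}_{X/} \to \base^{\In}$ is equivalent over $\base^{\In}$ to $\base^{\In}_{\pi X/} \to \base^{\In}$, so $\fO^{\In}_{X/} \to \base^{\In}_{\pi X/}$ is an equivalence. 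Restricting to the full subcategories on elementary objects --- which correspond to one another under this equivalence by fact (i) --- shows $\pi^{\el}_{X/}\colon \fO^{\el}_{X/} \to \base^{\el}_{\pi X/}$ is an equivalence, hence initial; as $X$ was arbitrary, $\pi$ is a strong Segal morphism.

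The main obstacle I anticipate is bookkeeping in the second step: one must carefully check that the inert morphisms of $\fO$ are $\pi^{\In}$-cocartesian for the \emph{restricted} functor $\fO^{\In} \to \base^{\In}$, not merely $\pi$-cocartesian in $\fO$ over $\base$, which amounts to matching up connected components of mapping spaces via the cancellation property; and one must use crucially that a general lift of an active morphism need not be cocartesian, so that restricting to the inert subcategories is essential. Everything after that is formal. If one prefers to sidestep the straightening computation, an alternative is to verify the strong Segal condition via the $\infty$-categorical Theorem A directly: for each $(\psi\colon \pi X \to E) \in \base^{\el}_{\pi X/}$ the comma $\infty$-category $\fO^{\el}_{X/} \times_{\base^{\el}_{\pi X/}} (\base^{\el}_{\pi X/})_{/\psi}$ admits an initial object, namely the cocartesian lift $X \to \psi_!X$ together with its tautological structure map, and is therefore weakly contractible.
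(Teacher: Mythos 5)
Your argument is correct, and it is exactly the argument behind the cited \cite[Obs~4.1.14]{Barkan}; the paper does not reproduce a proof but simply cites that observation. The key chain --- that the fibrous-pattern axioms, together with the cancellation law for cocartesian morphisms, make $\pi^{\In}\colon \fO^{\In} \to \base^{\In}$ a left fibration, from which $\fO^{\In}_{X/} \to \base^{\In}_{\pi X/}$ is a trivial fibration and hence $\fO^{\el}_{X/} \to \base^{\el}_{\pi X/}$ is an equivalence --- is exactly right, and the bookkeeping you flag in Step~2 (using two-out-of-three for cocartesian morphisms to pass from $\pi$-cocartesianness to $\pi^{\In}$-cocartesianness) is indeed the point that needs care and does go through.

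One small slip in the alternative ``Theorem~A'' paragraph: the object $(X \to \psi_! X,\; \id_\psi)$ is a \emph{terminal}, not initial, object of the comma $\infty$-category $\fO^{\el}_{X/} \times_{\base^{\el}_{\pi X/}} (\base^{\el}_{\pi X/})_{/\psi}$. A morphism \emph{to} it from $(\beta\colon X \to Y,\; h\colon \pi Y \to E)$ is, by the universal property of the cocartesian morphism $\beta$, the unique $g\colon Y \to \psi_! X$ lying over $h$ with $g\beta \simeq (X \to \psi_! X)$, and cancellation shows $g$ is inert; a morphism in the opposite direction would require a section of $h$, which need not exist. The conclusion is unaffected --- a terminal object establishes weak contractibility just as well --- so the argument still closes.
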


\subsubsection{The Segal envelope}
In \cite[Lem~4.2.4]{Barkan} it was verified that a cocartesian fibration to $\fO$ is a fibrous $\fO$-pattern if and only if it's the straightening of a Segal $\fO$-category (assuming $\fO$ is sound);
this lifts the fact that an operad $\cC^{\otimes}$ is a symmetric monoidal $\infty$-category if and only if the corresponding functor $\cC^{\otimes} \rightarrow \FF_*$ is a cocartesian fibration.
We would like to describe adjunctions relating fibrous patterns to Segal objects, but to do so, we need a few constructions.
\begin{definition}
  Given $\fO \rightarrow \base$ a map of algebraic patterns, the \emph{Segal envelope of $\fO$ over $\base$} is the horizontal composite
    \[
        \begin{tikzcd}
        	{\Env_{\base} \fO} & {\Ar^{\act}(\base)} & \base \\
        	\fO & \base
        	\arrow[from=1-1, to=1-2]
        	\arrow[from=1-1, to=2-1]
        	\arrow["\lrcorner"{anchor=center, pos=0.125}, draw=none, from=1-1, to=2-2]
        	\arrow["t", from=1-2, to=1-3]
        	\arrow["s", from=1-2, to=2-2]
        	\arrow[from=2-1, to=2-2]
        \end{tikzcd}
    \]
    where $\Ar^{\act}(\base) \subset \Ar(\base) = \Fun(\Delta^1,\base)$ is the full subcategory spanned by active arrows and $s,t$ are the \emph{source and target} functors.
    We denote the envelope of the terminal $\base$-pattern as
    \[
        \sA_\base:= \Ar^{\act}(\base) \xrightarrow{t} \base.\qedhere
    \]
\end{definition}

Given $f\cln \fP \rightarrow \fO$ a Segal morphism between algebraic patterns, we then define the composite functor 
\[
  f^{\circledast}\cln\Seg_{\fO}^{/\sA_{\fO}} \xrightarrow{f^{*}} \Seg_{\fO}^{/f^* \sA_{\fO}} \xrightarrow{q^*} \Seg_{\fO}^{/\sA_{\fP}} 
\]
where $q$ is the map fitting into the following diagram:
\[\begin{tikzcd}[row sep=small]
	{\sA_{\fP}} \\
	& {f^* \sA_{\fO}} & {\sA_{\fO}} \\
	& \fP & \fO
  \arrow["\sA_{f}", from=1-1, to=2-3]
	\arrow["p"', from=1-1, to=3-2]
	\arrow["p", from=2-3, to=3-3]
	\arrow["f"', from=3-2, to=3-3]
  \arrow[from=2-2, to=2-3]
	\arrow[from=2-2, to=3-2]
	\arrow["q"{description}, from=1-1, to=2-2]
	\arrow["\lrcorner"{anchor=center, pos=0.125}, draw=none, from=2-2, to=3-3]
\end{tikzcd}\]
This participates in the following theorem, which was proved under a \emph{strong Segal} assumption which is rendered unnecessary by \cref{Fibrous pullback prop}.
\begin{theorem}[{\cite[Prop~4.2.1,Prop~4.2.5,Thm~4.2.6,Rem~4.2.8]{Barkan}}]\label{Envelope theorem}
  Let $\fO$ be a soundly extendable pattern.
  Then, $\Env_{\fO}$ participates in an adjunction
  \[
    \begin{tikzcd}
	{\Fbrs(\fO)} & {\Seg_{\fO}(\Cat).}
	\arrow[""{name=0, anchor=center, inner sep=0}, "{\Env_{\fO}}", curve={height=-12pt}, from=1-1, to=1-2]
	\arrow[""{name=1, anchor=center, inner sep=0}, "\Un", curve={height=-12pt}, from=1-2, to=1-1]
	\arrow["\dashv"{anchor=center, rotate=-90}, draw=none, from=0, to=1]
\end{tikzcd}
  \]
  By taking slice categories, this induces an adjunction
  \[
    \begin{tikzcd}
	{\Fbrs(\fO)} & {\Seg_{\fO}(\Cat)}
  \arrow[""{name=0, anchor=center, inner sep=0}, hook', "{\Env^{/\sA_{\fO}}_{\fO}}", curve={height=-12pt}, from=1-1, to=1-2]
	\arrow[""{name=1, anchor=center, inner sep=0}, curve={height=-12pt}, from=1-2, to=1-1]
	\arrow["\dashv"{anchor=center, rotate=-90}, draw=none, from=0, to=1]
\end{tikzcd}
  \]
  whose left adjoint is fully faithful.
  Furthermore, if $f\colon \fO \rightarrow \fP$ is a Segal morphism between soundly extendable patterns, the following diagram commutes:
  \[\begin{tikzcd}[row sep = large, column sep = huge]
	{\Seg_{\fO}(\Cat_\infty)} & {\Fbrs(\fO)} & {\Seg_{\fO}(\Cat_\infty)_{/\sA_{\fO}}} & {\Fbrs(\fO)} \\
	{\Seg_{\fP}(\Cat_\infty)} & {\Fbrs(\fP)} & {\Seg_{\fP}(\Cat_\infty)_{/\sA_{\fP}}} & {\Fbrs(\fP)}
	\arrow["{f^*}"', from=1-1, to=2-1]
	\arrow["{f^*}"', from=1-2, to=2-2]
	\arrow["\Un", from=1-1, to=1-2]
	\arrow["\Un"', from=2-1, to=2-2]
  \arrow[hook', "{\Env_{\fO}^{/\sA_{\fO}}}", from=1-2, to=1-3]
  \arrow[hook, "{\Env_{\fP}^{/\sA_{\fP}}}"', from=2-2, to=2-3]
	\arrow["{f^{\circledast}}"', from=1-3, to=2-3]
	\arrow["\Un", from=1-3, to=1-4]
	\arrow["\Un"', from=2-3, to=2-4]
	\arrow["{f^*}", from=1-4, to=2-4]
\end{tikzcd}\]
\end{theorem}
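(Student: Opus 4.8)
The strategy is to isolate the single place where the proof in \cite{Barkan} uses the strong Segal hypothesis and to replace it with \cref{Fibrous pullback prop}. The adjunction $\Env_{\fO}\dashv\Un$, the sliced adjunction it induces, and the full faithfulness of the sliced envelope all concern a single soundly extendable pattern $\fO$ and make no reference to morphisms of patterns, so for these I would simply cite \cite[Prop~4.2.1, Prop~4.2.5, Thm~4.2.6]{Barkan} verbatim; no strong Segal condition enters their proofs. The only part requiring attention is the naturality diagram for a Segal morphism $f$, which is \cite[Rem~4.2.8]{Barkan} under the additional assumption that $f$ is strong Segal.

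First I would check that the two outer squares of that diagram are formal, independently of any strength hypothesis on $f$: the square comparing $\Un$ with pullback along $f$ on Segal objects commutes because both composites forget a fibrous pattern to its underlying object of $\Cat^{\Int-\cocart}_{/(-)}$ and then pull back, a construction blind to the pattern structure; and the square comparing $\Un$ with $f^{\circledast}$ commutes by unwinding the definition of $f^{\circledast}$ against the base-change square that defines it, using that $\Un$ of a sliced Segal object is formed over the relevant $\sA_{(-)}$. This reduces the claim to the middle square, comparing the two sliced envelopes across $f$. In \cite{Barkan} this is deduced from two ingredients: (i) that pullback along $f$ carries fibrous patterns to fibrous patterns, hence restricts to a functor between the categories $\Fbrs(-)$; and (ii) a levelwise identification of the envelope of a pulled-back fibrous pattern with the pullback of the envelope, which rests only on $f$ preserving the inert--active factorization system. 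Ingredient (ii) is a pure statement about pullbacks of active-arrow categories along a Segal morphism and survives verbatim; ingredient (i) is the sole appeal to strong Segal.

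Finally I would supply ingredient (i) from \cref{Fibrous pullback prop} instead: since both $\fO$ and $\fP$ are soundly extendable, that proposition (applied to $f$, with its source and target assigned to match its hypotheses) furnishes the restricted pullback functor between categories of fibrous patterns for an \emph{arbitrary} Segal morphism, so the remainder of \cite{Barkan}'s argument for the middle square then applies unchanged. I expect the main obstacle to be precisely the audit this requires: one must trace \cite[Rem~4.2.8]{Barkan} together with the underlying \cite[Lem~4.1.19]{Barkan} and confirm that the strong Segal hypothesis enters \emph{only} through the existence and elementary behaviour of this pullback functor --- in particular not in the two formal squares above, and not in any implicit use of $f$ interacting with the limits that cut out the Segal conditions. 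Granting that audit, the theorem follows.
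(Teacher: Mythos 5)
Your proposal matches what the paper actually does: it has no written proof of this theorem beyond the sentence immediately preceding it, which states that the results of \cite[Prop~4.2.1, Prop~4.2.5, Thm~4.2.6, Rem~4.2.8]{Barkan} carry over because the strong Segal hypothesis in \cite[Rem~4.2.8]{Barkan} is rendered unnecessary by \cref{Fibrous pullback prop}. Your more detailed audit --- isolating the two formal squares, identifying the sole appeal to strong Segal in the existence of $f^*\colon\Fbrs(\fO)\to\Fbrs(\fP)$, and supplying it from \cref{Fibrous pullback prop} --- is exactly the reasoning the paper leaves implicit.
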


We will make frequent use of product patterns, so we observe their interaction with Segal envelopes.

\begin{observation}\label{Product of envelopes observation}
  If $\fO,\fP$ are fibrous $\base$-patterns, then their Segal envelopes satisfy
  \begin{align*}
    \Env_{\base \times \base}(\fO \times \fP) 
    &\simeq \prn{\fO \times \fP} \times_{\base \times \base} \Ar^{\act}(\base \times \base)\\
    &\simeq \prn{\fO \times_{\base} \Ar^{\act}(\base)} \times \prn{\fP \times_{\base} \Ar^{\act}(\base)}\\
    &\simeq \Env_{\base}(\fO) \times \Env_{\base}(\fP)\qedhere
  \end{align*}
\end{observation}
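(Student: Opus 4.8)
The plan is to unwind the definition of the Segal envelope and reduce the claim to two elementary facts: that products in $\AlgPatt$ are computed coordinatewise, and that finite limits in $\Cat$ commute with one another. There is no real content beyond bookkeeping.

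First I would record the structure of a product pattern. Since $\AlgPatt \subset \Fun(\mathbf{Q},\Cat)$ is a localizing subcategory, its inclusion is a right adjoint and hence preserves limits, so the product $\base \times \base'$ in $\AlgPatt$ is computed pointwise in $\Fun(\mathbf{Q},\Cat)$: its underlying $\infty$-category is $\base \times \base'$, its inert (resp. active) morphisms are exactly the pairs of inert (resp. active) morphisms, and its elementary objects are the pairs of elementary objects (this is the same convention already used in the proof of \cref{Products of strong segal morphisms}). In particular, an arrow of $\base \times \base'$ is active if and only if it is a pair of active arrows, so there is a canonical equivalence $\Ar^{\act}(\base \times \base') \simeq \Ar^{\act}(\base) \times \Ar^{\act}(\base')$ compatible with the source and target projections; taking $\base' = \base$ this identifies $\sA_{\base \times \base}$ with $\sA_{\base} \times \sA_{\base}$ over $\base \times \base$.

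Next I would compute directly. By definition $\Env_{\base \times \base}(\fO \times \fP) = (\fO \times \fP) \times_{\base \times \base} \Ar^{\act}(\base \times \base)$, and substituting the previous identification this is $(\fO \times \fP) \times_{\base \times \base} \prn{\Ar^{\act}(\base) \times \Ar^{\act}(\base)}$. Now for maps $A \to X$, $C \to X$, $B \to Y$, $D \to Y$ in any $\infty$-category with finite limits, the canonical map $(A \times B) \times_{X \times Y} (C \times D) \to (A \times_X C) \times (B \times_Y D)$ is an equivalence, since both sides compute the limit of the same diagram (equivalently, this is an instance of the pasting law for pullback squares). Applying this with $A = \fO$, $C = \Ar^{\act}(\base)$, $X = \base$ on the first factor and $B = \fP$, $D = \Ar^{\act}(\base)$, $Y = \base$ on the second, and then recognizing $\fO \times_{\base} \Ar^{\act}(\base) = \Env_{\base}(\fO)$ and $\fP \times_{\base} \Ar^{\act}(\base) = \Env_{\base}(\fP)$, yields the stated chain of equivalences. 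All maps in sight commute with the target functors to $\base \times \base$, so these are equivalences over $\base \times \base$; moreover each envelope appearing is a Segal object over $\base \times \base$ (resp.\ over $\base$) by \cref{Envelope theorem}, and an equivalence of the underlying objects over the pattern is automatically an equivalence of Segal objects.

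The only points requiring verification — that the inert--active factorization system of a product pattern is the pointwise product, and that the comparison map of iterated finite limits is the claimed one — are immediate from products in $\AlgPatt$ being pointwise and from the universal property of limits, respectively, so the argument is purely formal and I do not anticipate any obstacle.
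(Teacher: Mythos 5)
Your argument is correct and is exactly the paper's approach: the paper's ``proof'' is just the displayed chain of equivalences, and you are supplying the two elementary justifications — that products in $\AlgPatt$ are computed pointwise (so $\Ar^{\act}(\base \times \base) \simeq \Ar^{\act}(\base) \times \Ar^{\act}(\base)$) and that $(A \times B) \times_{X \times Y} (C \times D) \simeq (A \times_X C) \times (B \times_Y D)$ — that the paper leaves implicit. No gap.
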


We finish with a right handed construction which will be useful in \cref{Operads section}. 
\begin{observation}\label{Push-pull along projection corollary}
  Suppose $\base$, $\base'$ are soundly extendable algebraic patterns, modelled within quasicategories.
  $\base$ has an associated categorical pattern 
  \[
  \mathrm{CatPatt}(\base) \deq \prn{\base, \mathrm{inert}, \mathrm{all}, \cbr{\base^{\el}_{O/}}_{O \in \base}}
  \]
  Unwinding definitions, fibrous $\base$ patterns are presented by the model structure on $\Set^+_{\Delta, /\mathrm{CattPatt}(\base)}$ constructed in \cite[Thm~B.0.20]{HA}.
  In particular, we may apply \cite[Rmk~B.2.5]{HA} to conclude that cartesian products furnish a distributive bifunctor 
  \[
    \Fbrs(\base) \times \Fbrs(\base') \rightarrow \Fbrs(\base \times \base');
  \]
  the restriction $\Fbrs(\base) \simeq \Fbrs(\base) \times \cbr{\base'} \rightarrow \Fbrs(\base \times \base')$ is then seen to be both equivalent to pullback along the projection $p\colon \base \times \base' \rightarrow \base$ and a left adjoint.
  We will write $p_*$ for its right adjoint.
  The same result applies for Segal $\base$-$\infty$-categories using the pattern
  \[
    \mathrm{CatPatt}^{\otimes}(\base) \deq \prn{\base, \mathrm{all}, \mathrm{all}, \cbr{\base^{\el}_{O/}}_{O \in \base}}.\qedhere
  \]
\end{observation}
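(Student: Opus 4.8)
The plan is to pass to the model-categorical presentation of fibrous patterns by marked simplicial sets over a categorical pattern, and then to invoke the product compatibility of that presentation from \cite[\S~B.2]{HA}. First I would record that, for a soundly extendable pattern $\mathfrak{R}$ presented as a quasicategory, sound extendability ensures $\mathrm{CatPatt}(\mathfrak{R})$ is a categorical pattern in the sense of \cite[\S~B]{HA}, so that \cite[Thm~B.0.20]{HA} supplies a model structure on $\Set^+_{\Delta,/\mathrm{CatPatt}(\mathfrak{R})}$; unwinding \cref{Fibrous patterns definition} against the explicit description of its fibrant objects identifies $\Fbrs(\mathfrak{R})$ with the underlying $\infty$-category of this model structure, the fibrant objects being precisely the fibrous $\mathfrak{R}$-patterns marked by their inert-cocartesian edges. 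Since every object of $\Set^+_{\Delta}$ is cofibrant, a left Quillen functor out of such a model structure preserves all weak equivalences, hence descends to a functor of underlying $\infty$-categories, and a left Quillen bifunctor descends to a bifunctor preserving colimits separately in each variable, which is the sense in which the bifunctor is \emph{distributive}. I would also check, by a direct argument on active maps using that limits in products of $\infty$-categories are computed pointwise, that products of soundly extendable patterns are soundly extendable, so that $\base\times\base'$ and $\Fbrs(\base\times\base')$ are defined.

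The crux is the bookkeeping identification of the product categorical pattern: $\mathrm{CatPatt}(\base)\times\mathrm{CatPatt}(\base')$, in the sense of \cite[\S~B.2]{HA}, agrees with $\mathrm{CatPatt}(\base\times\base')$. The underlying $\infty$-category is $\base\times\base'$ in both cases; the marked edges of the product pattern are pairs of inert edges, which by \cref{Product pattern example} are exactly the inert morphisms of $\base\times\base'$; all $2$-simplices are distinguished on each side; and one matches the distinguished cone diagrams using $(\base\times\base')^{\el}=\base^{\el}\times\base'^{\el}$ together with $(\base\times\base')^{\In}_{(O,O')/}=\base^{\In}_{O/}\times\base'^{\In}_{O'/}$, whence $(\base\times\base')^{\el}_{(O,O')/}=\base^{\el}_{O/}\times\base'^{\el}_{O'/}$. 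Checking that \cite[\S~B.2]{HA}'s product construction records precisely these products of cone diagrams (and no additional cones) is the one step where I expect real friction; the remainder is formal.

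Granting this, \cite[Rmk~B.2.5]{HA} supplies a left Quillen bifunctor
\[
  \Set^+_{\Delta,/\mathrm{CatPatt}(\base)}\times\Set^+_{\Delta,/\mathrm{CatPatt}(\base')}\longrightarrow\Set^+_{\Delta,/\mathrm{CatPatt}(\base\times\base')}
\]
given by cartesian product of marked simplicial sets, and passing to underlying $\infty$-categories yields the distributive bifunctor $\Fbrs(\base)\times\Fbrs(\base')\to\Fbrs(\base\times\base')$. Fixing the second variable at the terminal object $\base'\in\Fbrs(\base')$, the resulting functor $\Fbrs(\base)\to\Fbrs(\base\times\base')$ carries $\fO$ to the fibrous replacement of the pullback $\fO\times_{\base}(\base\times\base')$ along the projection $p\colon\base\times\base'\to\base$, that is, to $L_{\Fbrs}p^{*}\fO$; here the localization $L_{\Fbrs}$ is genuinely needed, since $p$ is typically not a Segal morphism (unlike the situation of \cref{Fibrous pullback prop}). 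Being the $\infty$-categorical functor derived from the left Quillen functor $-\times(\base')^{\natural}$, this $L_{\Fbrs}p^{*}$ is a left adjoint, and I name its right adjoint $p_{*}$, the derived right Quillen adjoint of $-\times(\base')^{\natural}$. Finally, the Segal $\infty$-category version is the same argument run with $\mathrm{CatPatt}^{\otimes}(\mathfrak{R})\deq\prn{\mathfrak{R},\mathrm{all},\mathrm{all},\cbr{\mathfrak{R}^{\el}_{O/}}_{O\in\mathfrak{R}}}$ in place of $\mathrm{CatPatt}(\mathfrak{R})$: its fibrant objects are the Segal $\mathfrak{R}$-$\infty$-categories, the pattern again commutes with products (the edge-matching step being trivial now that every edge is distinguished), and the same left Quillen bifunctor produces the distributive bifunctor $\Seg_{\base}(\Cat)\times\Seg_{\base'}(\Cat)\to\Seg_{\base\times\base'}(\Cat)$, the restriction $L_{\Fbrs}p^{*}$, and its right adjoint $p_{*}$.
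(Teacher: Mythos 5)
Your proposal follows the paper's route — present $\Fbrs(\base)$ model-categorically over $\mathrm{CatPatt}(\base)$, invoke \cite[Rmk~B.2.5]{HA} for the left Quillen bifunctor, and pass to underlying $\infty$-categories — so the two proofs are essentially the same. The bookkeeping you flag resolves cleanly: the product categorical pattern of \cite[Rmk~B.2.4]{HA} need not literally equal $\mathrm{CatPatt}(\base\times\base')$ (its distinguished cones have the shape $\base^{\el}_{O/}\times\{O'\}$ and $\{O\}\times\base^{\prime\el}_{O'/}$ rather than the products $\base^{\el}_{O/}\times\base^{\prime\el}_{O'/}$), but the two cut out the same fibrant objects and hence the same model structure, essentially by the iterated-Segal comparison of \cref{Product pattern example}; and your reading of ``pullback along $p$'' as $L_{\Fbrs}p^{*}$ is the right one, since $p$ is not a Segal morphism and $p^{*}\fO$ generically fails the Segal condition for colors (the limit degenerates to a power of $\fO_{V}$ indexed by orbits in the $\base'$-coordinate), so the localization is genuinely needed exactly as you observe.
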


\subsection{\tcT-operads and \tI-operads}\label{I operads subsection} 
We're finally ready to specialize to equivariant operads.
Fix $\cT$ an atomic orbital $\infty$-category.
\begin{definition}\label{Opt definition}
    The $\infty$-category of $\cT$-operads is
    \[
        \Op_{\cT} := \Fbrs(\Span(\FF_{\cT})).
    \]
    More generally, when $I \subset \FF_{\cT}$ is a weak indexing category, the \emph{$\infty$-category of $I$-operads} is 
    \[
        \Op_{I}:= \Fbrs(\Span_I(\FF_{\cT})).
    \]
    The associated localization functors are $L_{\Op_{\cT}}\colon \Cat_{/\Span(\FF_{\cT})}^{\Int-\cocart} \rightarrow \Op_{\cT}$ and $L_{\Op_I}\colon \Cat_{/\Span_I(\FF_{\cT})}^{\Int-\cocart}  \rightarrow \Op_I$.
\end{definition}

By \cref{Overcategory of fibrous patterns prop}, if $\cO^{\otimes}$ is an $I$-operad, then it has a natural pattern structure such that $\cO^{\otimes} \rightarrow \Span_I(\FF_{\cT})$ is a morphism of patterns;
the inert morphisms are cocartesian lifts of backwards maps, and the active maps are \emph{arbitrary} lifts of forwards maps.

\begin{definition}
    If $\cO^{\otimes},\cP^{\otimes}$ are $I$-operads, then an \emph{$\cO$-algebra in $\cP$} is a map of $I$-operads $\cO^{\otimes} \rightarrow \cP^{\otimes}$;
    the $\infty$-category of $\cO$-algebras in $\cP$ is written
    \[
      \Alg_{\cO}(\cP) \deq \Fun^{\Int-\cocart}_{/\Span_I(\FF_{\cT})}(\cO^{\otimes},\cP^{\otimes}).\qedhere
    \]
\end{definition}
\begin{remark}
  It follows by unwinding definitions that $\Map_{\Op_I}(\cO^{\otimes},\cP^{\otimes}) \simeq \Alg_{\cO}(\cP)^{\simeq}$.
\end{remark}

The following proposition verifies that the pushforward functor $\Op_I \rightarrow \Op_{\cT}$ is simply given by postcomposition along the canonical functor $\iota_I^{\cT}\cln \Span_I(\FF_{\cT}) \rightarrow \Span(\FF_{\cT})$ (c.f. \cite[Ex~2.4.7]{Nardin}).
\begin{proposition}\label{Definition of NIinfty proposition}
  Let $I \subset \FF_{\cT}$ be a pullback-stable replete subcategory.
    Then, the functor
    \[
        \cN_{I \infty}^{\otimes} \deq \prn{\Span_I(\FF_{\cT}) \xrightarrow{\pi_I} \Span(\FF_{\cT})}
    \]
  presents a $\cT$-operad if and only if $I$ is a weak indexing category.
\end{proposition}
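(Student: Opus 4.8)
The plan is to verify the conditions of \cref{Fibrous patterns definition} for $\pi_I\colon \Span_I(\FF_{\cT}) \rightarrow \Span(\FF_{\cT})$, reading off exactly which conditions on $I$ are needed. First I would unwind the inert-active factorization system on $\Span_I(\FF_{\cT})$: inert morphisms are the backwards maps $(S \leftarrow T = T)$ (with $T \rightarrow S$ arbitrary in $\FF_{\cT}$, since all backwards maps are allowed), and active morphisms are the forwards maps coming from $I$. The colors (fibers over each $S \in \FF_{\cT}$) are contractible since $\pi_I$ is identity-on-objects, so condition (3) (the Segal condition for multimorphisms) reduces to the assertion that a square of mapping spaces is cartesian, which — because the fibers are points — becomes the statement that $\Map_{\Span_I(\FF_{\cT})}(S,T) \rightarrow \prod_{[G/H] \in \Orb(T)} \Map_{\Span_I(\FF_{\cT})}(S, [G/H])$ is an equivalence onto the appropriate subspace of $\Map_{\Span(\FF_{\cT})}(S,T) \rightarrow \prod \Map_{\Span(\FF_{\cT})}(S,[G/H])$.

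**Matching conditions to the axioms.** The key observation is that a span $S \leftarrow R \rightarrow T$ lies in $\Span_I(\FF_{\cT})$ iff $R \rightarrow T \in I$, and that $\Orb(T)$ is an initial subcategory of $\Span_I(\FF_{\cT})^{\el}_{T/}$ (by \cref{Span segal condition lemma}, cited in \cref{CMon pattern example}, which applies verbatim here since the elementary objects are the same). So condition (3) for $\pi_I$ becomes: the span $S \leftarrow R \rightarrow T$ decomposes, via pullback along $T \times_T [G/H] \to T$, into spans $S \leftarrow R_H \rightarrow [G/H]$, and $R \to T \in I$ iff each $R_H \to [G/H] \in I$. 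This is precisely the Segal condition (IC-b) / \cref{Windex segal condition}, combined with the observation \cref{Reduction to maps to orbits observation} that it suffices to test $I$-membership orbitwise. Condition (2) (the Segal condition for colors) is automatic since all fibers are contractible — the limit over $\base^{\el}_{V_1/}$ of contractible spaces is contractible. Condition (1) (existence of cocartesian lifts over inert morphisms) requires that for any $S \in \FF_{\cT}$ and any backwards map $S' \leftarrow S = S$ in $\Span(\FF_{\cT})$, there is a cocartesian lift in $\Span_I(\FF_{\cT})$; this lift is the span $S' \leftarrow S = S$ itself, and it being cocartesian over $\pi_I$ is a formal consequence of how the inert morphisms sit inside the span category — but for the lift to exist at all we need every backwards map to be available, which is true by construction ($\Span_I$ uses all backwards maps), and we need the restriction-stability \cref{Restriction stable condition} to guarantee that cocartesian transport of active morphisms stays in $I$ (needed for the pattern structure and for the compatibility of inert and active maps).

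**Pinning down the equivalence $I$ weak indexing $\iff$ $\cN_{I\infty}^\otimes$ fibrous.** For the forward direction, I would argue contrapositively: if $I$ fails \cref{Restriction stable condition}, pick a pullback square witnessing the failure and observe that cocartesian transport of the corresponding active morphism exits $\Span_I(\FF_{\cT})$, obstructing either condition (1) or the well-definedness of the pattern; if $I$ fails \cref{Windex segal condition}, the orbitwise decomposition in condition (3) fails; if $I$ fails \cref{Automorphism condition}, then there is some $S \in I$ with an automorphism $\sigma \notin I$, but $\sigma$ (viewed as a span $S \leftarrow S \rightarrow S$ with the backwards leg the identity) would need to be inert-composed-with-active in a way that forces $\sigma \in I$ — more precisely, condition (1) forces all such automorphism-spans into the image, since $\sigma \colon S \xleftarrow{=} S \xrightarrow{=} S$ read the other way is inert, so the composite must lie in $\Span_I$, contradiction. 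Conversely, if $I$ is a weak indexing category, all three conditions of \cref{Fibrous patterns definition} hold by the analysis above, so $\cN_{I\infty}^\otimes$ is a fibrous $\Span(\FF_{\cT})$-pattern, i.e.\ a $\cT$-operad. The main obstacle I anticipate is the bookkeeping around condition (1): carefully identifying the cocartesian inert lifts in a span category and checking that the failure of \cref{Automorphism condition} or \cref{Restriction stable condition} genuinely obstructs them rather than merely obstructing some auxiliary structure — this is where one must be most careful about the precise definitions of the factorization system on $\Span_{\mathrm{all},I}(\FF_{c(I)})$ inherited from \cref{Span construction}.
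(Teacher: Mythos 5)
Your proposal identifies the correct core of the argument — conditions (1) and (2) of \cref{Fibrous patterns definition} (equivalently (a) and (b) of \cref{GOperads conditions}) hold automatically for $\Span_I(\FF_{\cT})$, and the multimorphism Segal condition, after unwinding the mapping spaces (which are all $\emptyset$ or $*$) and invoking \cref{Span segal condition lemma} to replace the limit over $\Span_I(\FF_{\cT})^{\el}_{S/}$ by the product over $\Orb(S)$, reduces precisely to the biconditional of \cref{Reduction to maps to orbits observation}, i.e.\ to \cref{Windex segal condition}. This is exactly the paper's argument.

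Where you diverge — and waste effort — is in the ``pinning down the equivalence'' step, where you try to \emph{derive} \cref{Restriction stable condition} and \cref{Automorphism condition} from the fibrous-pattern axioms via contrapositive. These two conditions are not conclusions to be extracted; they are part of the \emph{hypothesis} (``pullback-stable replete subcategory''), and are forced by the mere requirement that $\Span_I(\FF_{\cT})$ be a well-defined algebraic pattern: pullback-stability of $I$ is needed for span composition, and $I$ must be core-full in its ambient full subcategory (which is what ``replete'' is packaging here) for the adequate triple of \cref{Span construction} to exist, since the active wide subcategory must contain all equivalences. The paper dispatches this in one sentence. Your contrapositive argument for \cref{Automorphism condition} is also not quite precise: you describe the span ``$\sigma\colon S \xleftarrow{=} S \xrightarrow{=} S$,'' which is the identity, not $\sigma$; the span encoding $\sigma$ as a morphism of $\Span(\FF_{\cT})$ has $\sigma$ as its forward leg, and whether it lies in $\Span_I$ depends directly on whether $\sigma \in I$ — but again, this is a question about whether $\Span_I$ is definable, not about the fibrous conditions. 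Once you recognize (IC-a) and (IC-c) as standing hypotheses, the entire content of the proposition is the equivalence of condition (c) with (IC-b), which you have correctly.
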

We will delay the proof of this until \cpageref{Proof of ninfty proposition}.
If $\cO^{\otimes} \simeq \cN_{I\infty}^{\otimes}$ arises from \cref{Definition of NIinfty proposition}, we say that $\cO^{\otimes}$ is a \emph{weak $\cN_\infty$ $\cT$-operad} (or simply a weak $\cN_\infty$-operad), and if $I$ is an indexing category, then we say that $\cN_{I \infty}^{\otimes}$ is an \emph{$\cN_\infty$-operad}; in either case, we write
\[
  \CAlg_I(\cC) \deq \Alg_{\cN_{I \infty}}(\cC)
\]
for the $\infty$-category of \emph{$I$-commutative algebras in $\cC$}.
This fits nicely into the theory of $\cT$-operads:
\begin{corollary}
  Pushforward along $\iota_I^{\cT}$ yields an equivalence of $\infty$-categories $\Op_I \simeq \Op_{\cT, /\cN_{I \infty}^{\otimes}}$.
\end{corollary}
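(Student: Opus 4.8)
The plan is to recognize $\cN_{I\infty}^{\otimes}$, which \cref{Definition of NIinfty proposition} exhibits as a fibrous $\Span(\FF_{\cT})$-pattern (hence an object of $\Op_{\cT}$), as simultaneously presenting an algebraic pattern whose fibrous patterns are precisely the $I$-operads, and then to invoke \cref{Overcategory of fibrous patterns prop}.

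Being a fibrous $\Span(\FF_{\cT})$-pattern, the total category $\Span_I(\FF_{\cT})$ of $\cN_{I\infty}^{\otimes}$ inherits, as in the discussion preceding \cref{Overcategory of fibrous patterns prop}, an algebraic pattern structure: its inert morphisms are the $\pi_I$-cocartesian lifts of inert morphisms of $\Span(\FF_{\cT})$, its active morphisms are the arbitrary $\pi_I$-lifts of active morphisms, and its elementary objects are the $\pi_I$-lifts of elementary objects. The main step is to verify that this coincides on the nose with the effective $I$-Burnside pattern structure $\Span_I(\FF_{\cT}) = \Span_{\mathrm{all},I}(\FF_{c(I)};c(I))$. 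On underlying categories there is nothing to check. The inert morphisms of $\Span(\FF_{\cT})$ are the backwards maps (spans with invertible forwards leg); since backwards legs in $\Span_I(\FF_{\cT})$ are unrestricted, the $\pi_I$-cocartesian lift of a backwards map based at $X \in \FF_{c(I)}$ is the evident backwards span in $\Span_I(\FF_{\cT})$ --- its source again lies in $\FF_{c(I)}$ because $c(I)$ is a $\cT$-family --- so the induced inert morphisms are exactly the backwards spans, as in the span pattern. A morphism of $\Span_I(\FF_{\cT})$ is a span whose forwards leg lies in $I$, and its image under $\pi_I$ is active precisely when its backwards leg is invertible; hence the induced active morphisms are exactly the forwards spans with forwards leg in $I$, as in the span pattern. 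Finally, the objects of $\Span_I(\FF_{\cT})$ are those of $\FF_{c(I)}$, and such an object maps to an elementary (orbital) object of $\Span(\FF_{\cT})$ precisely when it is an orbit in $c(I)$, so the induced elementary subcategory is $c(I)^{\op}$, again as in the span pattern.

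Granting this identification, \cref{Opt definition} gives $\Op_I = \Fbrs(\Span_I(\FF_{\cT})) = \Fbrs(\cN_{I\infty}^{\otimes})$, and \cref{Overcategory of fibrous patterns prop}, applied with $\base = \Span(\FF_{\cT})$ and $\fO = \cN_{I\infty}^{\otimes}$, yields an equivalence $\Fbrs(\cN_{I\infty}^{\otimes}) \simeq \Op_{\cT,/\cN_{I\infty}^{\otimes}}$. Unwinding, this carries a fibrous $\cN_{I\infty}^{\otimes}$-pattern $q\colon \cO^{\otimes} \to \cN_{I\infty}^{\otimes}$ to the composite $\cO^{\otimes} \xrightarrow{q} \cN_{I\infty}^{\otimes} \xrightarrow{\pi_I} \Span(\FF_{\cT})$ equipped with $q$ as its structure map to $\cN_{I\infty}^{\otimes}$; that is, to postcomposition with $\iota_I^{\cT} = \pi_I$, which by the discussion preceding \cref{Definition of NIinfty proposition} is the pushforward functor $\Op_I \to \Op_{\cT}$. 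This is the asserted equivalence. (Alternatively one could check that $\iota_I^{\cT}$ is a Segal morphism and apply \cref{Fibrous pullback prop}, but the route through \cref{Overcategory of fibrous patterns prop} delivers the identification with the slice category directly.)

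The step I expect to be the main obstacle is the on-the-nose comparison of the two algebraic pattern structures on $\Span_I(\FF_{\cT})$ in the second paragraph --- in particular keeping careful track of the color family $c(I)$, so that the underlying categories, the inert and active classes, and the elementary subcategories genuinely agree, and confirming that backwards spans really are $\pi_I$-cocartesian over the inert morphisms of $\Span(\FF_{\cT})$. The remaining steps are direct appeals to the cited structural facts about fibrous patterns.
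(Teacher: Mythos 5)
Your proof is correct and takes exactly the same approach as the paper, whose entire proof is the one-line remark that this is \cref{Overcategory of fibrous patterns prop} applied with $\fO = \cN_{I\infty}^{\otimes}$ and $\base = \Span(\FF_{\cT})$. The careful matching of the inherited fibrous-pattern structure on $\Span_I(\FF_{\cT})$ with the effective $I$-Burnside pattern structure $\Span_{\mathrm{all},I}(\FF_{c(I)};c(I))$ is precisely what the paper sweeps under ``unwinding definitions,'' and your account of it is accurate.
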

\begin{proof}
  Unwinding definitions, this is \cref{Overcategory of fibrous patterns prop} for $\fO \deq \cN^{\otimes}_{I \infty}$ and $\base \deq \Span(\FF_{\cT})$.
\end{proof}
In \cref{0-operads subterminal,Unslicing ff corollary} we will show that the morphism $\cN_{I \infty}^{\otimes} \rightarrow \Comm_{\cT}^{\otimes}$ is monic, so pushforward $\Op_{I} \rightarrow \Op_{\cT}$ is fully faithful.
Until then, we will largely consider $\Op_I$ and $\Op_{\cT}$ separately.

\begin{example}\label{Commf example}
    The terminal $\cT$-operad is presented by $\Comm^{\otimes}_{\cT} = \prn{\Span(\FF_{\cT}) \xrightarrow{\id} \Span(\FF_{\cT})}$, and hence it is an $\cN_\infty$-operad;
    we write $\CAlg_{\cT}(\cC) \deq \CAlg_{\FF_{\cT}}(\cC)$, and call these \emph{$\cT$-commutative algebras}.
    For any $\cT$-operad $\cO^{\otimes}$, pullback along the unique map $\cO^{\otimes} \rightarrow \Comm_{\cT}^{\otimes}$ determines a unique natural transformation
    \[
      \CAlg_{\cT}(\cC) \rightarrow \Alg_{\cO}(\cC),
    \]
    so we view $\cT$-commutative algebras as the universal $\cT$-equivariant algebraic structure.
\end{example}

\begin{definition}\label{CatI definition}
  If  $\cO^{\otimes}$ is an $I$-operad, then the \emph{$\infty$-category of small $\cO$-monoidal $\infty$-categories is}
  \[
    \Cat^{\otimes}_{\cO} \deq \Seg_{\cO^{\otimes}}(\Cat).
  \]
  If $\cC^{\otimes},\cD^{\otimes}$ are $\cO$-monoidal $\infty$-categories, then the \emph{$\infty$-category of $\cO$-monoidal functors from $\cC^{\otimes}$ to $\cD^{\otimes}$} is
  \[
    \Fun^{\otimes}_{\cO}(\cC,\cD) \deq \Fun^{\cocart}_{/\cO^{\otimes}}\prn{\cC^{\otimes}, \cD^{\otimes}}.
  \]
  A \emph{lax $\cO$-symmetric monoidal functor} is a functor of $I$-operads $\cC^{\otimes} \rightarrow \cD^{\otimes}$ over $\cO^{\otimes}$.
\end{definition}
In particular, we write $\Cat_I^{\otimes} \deq \Cat_{\cN_{I \infty}}^{\otimes}$ and $\Cat_{\cT}^{\otimes} \deq \Cat_{\FF_{\cT}}^{\otimes}$;
\cref{CMon I cat corollary} constructs an equivalence
\[
  \Cat_I^{\otimes} \simeq \CMon_I(\Cat).
\]
Note that a lax $I$-symmetric monoidal functor is an $I$-symmetric monoidal functor if and only if it is a morphism in $\Cat_I^{\otimes}$, i.e. if and only if it preserves cocartesian lifts for arbitrary maps in $\Span_I(\FF_{\cT})$.

\begin{remark}
  \cref{Opt definition,CatI definition} appear to depend on $I$, but we omit $I$ from our notation;
  we will show in \cref{0-operads subterminal} that $\cN_{I \infty}^{\otimes} \rightarrow \Comm_{\cT}^{\otimes}$ is monic, obviating this dependence.
\end{remark}

The rest of this subsection proceeds through a series of vignettes.
In \cref{Structure subsubsection}, we explicitly describe the structure of $I$-operads through the lens of their underlying $\cT$-categories and their \emph{structure spaces}.
Following this, in \cref{T-category subsubsection}, we summarize the comparison between $\cT$-operads and \cite{Nardin}'s $\cT$-operads, and we derive $\cT$-$\infty$-categorical lifts of $\Op_{\cT}$ and $\Alg_{\cO}(\cC)$.
Then, in \cref{Envelopes subsubsection}, we summarize the specialization of the Segal envelope to $I$-operads.
Finally, in \cref{Trivial subsubsection} we describe the family of \emph{trivial $\cT$-operads}, which form the left adjoint to the underlying $\cT$-$\infty$-category.

\subsubsection{The structure of $I$-operads}\label{Structure subsubsection}
The Segal conditions for fibrous $\Span(\FF_{\cT})$-patterns were characterized in \cite{Barkan} in the case $\cT = \cO_G$;
we generalize this to weak indexing systems over general atomic orbital $\infty$-categories in \cref{Span segal condition lemma}, and summarize the results here.
\begin{construction}\label{Underlying construction}
    Given $\pi_{\cO}:\cO^{\otimes} \rightarrow \Span_I(\FF_{\cT})$ an $I$-operad and $S \in \FF_{\cT}$ a finite $\cT$-set, we define
    \[
        \cO_S := \pi^{-1}_{\cO}(S).
    \]
    Inert arrows endow on $(\cO_V)_{V \in \cT}$ the structure of a $\cT$-$\infty$-category $U(\cO^{\otimes})$, formally given by the pullback
    \[
        \begin{tikzcd}
            \Tot U(\cO^{\otimes}) \arrow[r] \arrow[d] \arrow[rd,"\lrcorner" very near start, phantom]
            & \cO^{\otimes} \arrow[d]\\
            \cT^{\op} \arrow[r,hook]
            & \Span(\FF_{\cT})
        \end{tikzcd}
    \]
    We call this the \emph{underlying $\cT$-$\infty$-category of $\cO^{\otimes}$}, and refer to it as $\cO$ when this won't cause confusion.
\end{construction}

\begin{proposition}\label{GOperads conditions}  
    A functor $\pi:\cO^{\otimes} \rightarrow \Span_I(\FF_{\cT})$ is an $I$-operad if and only if the following are satisfied:
  \begin{enumerate}[label={(\alph*)}]
    \item \label[condition]{Inert cocartesian lifts} $\cO^{\otimes}$ has $\pi$-cocartesian lifts for backwards maps in $\Span_I(\FF_{\cT})$;
    \item \label[condition]{Color Segal condition} (Segal condition for colors) for every $S \in \FF_\cT$, cocartesian transport along the $\pi$-cocartesian lifts lying over the inclusions $\prn{S \leftarrow U = U \mid U \in \mathrm{Orb}(S)}$ together induce an equivalence
      \[
        \cO_S \simeq \prod_{U \in \mathrm{Orb}(S)} \cO_U; 
      \]
    \item \label[condition]{Multimorphism Segal condition} (Segal condition for multimorphisms) for every map of orbits $T \rightarrow S$ in $I$ and pair of objects $(\bC,\bD) \in \cO_T \times \cO_U$, postcomposition with the $\pi$-cocartesian lifts $\bD \rightarrow D_U$ lying over the inclusions 
    $\left(S \leftarrow U = U \mid U \in \mathrm{Orb}(S)\right)$ induces an equivalence
      \[
        \Map^{T \rightarrow S}_{\cO^{\otimes}}(\bC,\bD) \simeq \prod_{U \in \mathrm{Orb}(S)} \Map^{T \leftarrow T_U \rightarrow  U}_{\cO^{\otimes}}(\bC,D_U).
      \]
      where $T_U \deq T \times_S U$. 
  \end{enumerate}
  Furthermore, a cocartesian fibration $\pi:\cO^{\otimes} \rightarrow \Span_I(\FF_{\cT})$ is an $I$-operad if and only if its unstraightening $\Span_I(\FF_{\cT}) \rightarrow \Cat$ is an $I$-symmetric monoidal category.
\end{proposition}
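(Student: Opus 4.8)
The idea is to unwind \cref{Fibrous patterns definition} for the pattern $\base \deq \Span_I(\FF_{\cT})$, whose inert/active/elementary structure is the one produced by \cref{Span construction}: the inert morphisms are the backwards maps (spans whose forwards leg is an equivalence), the active morphisms are the forwards maps lying in $I$, and the elementary objects are the orbits in the color family $c(I)$. The one combinatorial input we need is that for each finite $\cT$-set $S$, the slice $\Span_I(\FF_{\cT})^{\el}_{S/}$ of inert arrows from $S$ into an orbit is, up to equivalence, the opposite of the category of orbits lying over $S$, and that the set $\Orb(S)$ of orbit \emph{summands} of $S$ sits inside it as an initial subcategory; this is \cref{Span segal condition lemma}, proved by inspecting the factorization systems of \cite[Prop~4.9]{Haugseng_two} and running a cofinality argument (every finite $\cT$-set maps to its source-orbit summands, so the relevant comma categories are contractible). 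We also record there that $\Span_I(\FF_{\cT})$ is sound and soundly extendable (\cref{Sound lemma}, \cref{Soundly extendable lemma}), which is what lets us work with the reformulated definition \cref{Fibrous patterns definition} in the first place.

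Granting this, the three conditions match up as follows. \cref{Inert cocartesian lifts} is condition (1) of \cref{Fibrous patterns definition} verbatim, once ``inert $=$ backwards map'' is known. Feeding the active morphism $\mathrm{id}_S$ into condition (2) of \cref{Fibrous patterns definition} and rewriting the limit over $\Span_I(\FF_{\cT})^{\el}_{S/}$ as a limit over the initial subcategory $\Orb(S)$ (using the cocartesian lifts over inert backwards maps supplied by (1) to identify the transported objects with the fibers $\cO_U$) yields exactly the Segal condition for colors \cref{Color Segal condition}; conversely, for a general active $\omega\cln V_0 \rightarrow V_1$ the targets of the inert legs appearing in condition (2) run over the summand decomposition $\prn{V_0 \times_{V_1} W}_{W \in \Orb(V_1)}$ of $V_0$, so condition (2) for $\omega$ is reassembled from instances of \cref{Color Segal condition} for $\cO^{\otimes}$. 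Finally, condition (3) of \cref{Fibrous patterns definition} unwinds to \cref{Multimorphism Segal condition}: decomposing the Burnside mapping space $\Map_{\Span_I(\FF_{\cT})}(V_0,S)$ over its spans, the cartesian square of \cref{Fibrous patterns definition}(3) breaks up over each span $V_0 \leftarrow R \rightarrow S$ into the claimed product decomposition over $\Orb(S)$, and a cocartesian lift over the inert leg $V_0 \leftarrow R$ identifies this with the active case appearing in \cref{Multimorphism Segal condition}; one also invokes \cref{Enough to assert essentially surjective remark} to see that the colors Segal condition need only be checked on $\pi_0$. This gives both implications of the first assertion.

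For the ``furthermore'' clause: a cocartesian fibration over a sound pattern $\base$ is a fibrous $\base$-pattern if and only if its straightening is a Segal $\base$-$\infty$-category \cite[Lem~4.2.4]{Barkan}. Specializing to $\base = \Span_I(\FF_{\cT})$ and using the identification $\Seg_{\Span_I(\FF_{\cT})}(\Cat) \simeq \CMon_I(\Cat) \simeq \Cat_I^{\otimes}$ from \cref{CMon pattern example} and \cref{CMon I cat corollary}, a cocartesian fibration $\pi\cln \cO^{\otimes} \rightarrow \Span_I(\FF_{\cT})$ is an $I$-operad precisely when its unstraightening is an $I$-symmetric monoidal $\infty$-category.

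The main obstacle is the bookkeeping in the middle paragraph: both sides of \cref{GOperads conditions} are presented as families of Segal-type equivalences, and the matching of condition (3) with \cref{Multimorphism Segal condition} requires unwinding the multimorphism Segal square fiberwise over the space of spans in $\Span_I(\FF_{\cT})$, reducing general spans to active ones via cocartesian transport over the inert (backwards) legs, and keeping the reductions on $\base$ and on $\cO^{\otimes}$ compatible. Establishing the structure of the elementary slices and soundness of $\Span_I(\FF_{\cT})$ (i.e.\ \cref{Span segal condition lemma}) is the other technical point, though it is an elementary manipulation of spans.
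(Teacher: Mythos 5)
Your proof is correct and takes essentially the same route as the paper's, which simply observes that the conditions match \cref{Fibrous patterns definition} once one knows (via \cref{Span segal condition lemma}) that $\Orb(S)$ is an initial subcategory of $\Span_I(\FF_{\cT})^{\el}_{S/}$. You spell out the reductions in more detail and, usefully, handle the ``furthermore'' clause explicitly via \cite[Lem~4.2.4]{Barkan} and \cref{CMon I cat corollary}, which the paper leaves implicit; note as a minor citation slip that soundness of $\Span_I(\FF_{\cT})$ is covered by \cref{Soundly extendable lemma} (\cref{Sound lemma} concerns $\Tot \uFF_{\cT,*}^{\cP}$), and that the paper's own proof appears to cite \cref{Segal condition lemma} where \cref{Span segal condition lemma} is intended.
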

\begin{proof}
  Each of our conditions nearly matches with that of \cref{Fibrous patterns definition}, with the exception being that we evaluate the limits on the sub-diagram $\Orb(S) \subset \Span_I(\FF_{\cT})^{\el}_{S/}$;
  we show in \cref{Segal condition lemma} that this is an initial subcategory, proving the proposition.
\end{proof}

\begin{remark}
  Cocartesian lifts over backwards maps furnish an equivalence 
  \[
    \Map^{T \leftarrow T_U \rightarrow U}_{\cO^{\otimes}}(\bC,D_U) \simeq \Map^{T_U \rightarrow U}_{\cO^{\otimes}}(\bC_{T_U},D_U),
  \]
  where $\bC_{T_U} \in \cO_{T_U}$ is the $T_U$-tuple of colors underlying $\bC$.
  Hence in the presence of \cref{Inert cocartesian lifts,Color Segal condition}, \cref{Multimorphism Segal condition} may equivalently stipulate that the map 
      \[
        \Map^{T \rightarrow S}_{\cO^{\otimes}}(\bC,\bD) \rightarrow \prod_{U \in \mathrm{Orb}(S)} \Map^{T_U \rightarrow  U}_{\cO^{\otimes}}(\bC_{T_U},D_U)
      \]
  is an equivalence.
  We will generally prefer this version, as the data of a $\cT$-operad is most naturally viewed as living over the \emph{active} (i.e. forward) maps.
\end{remark}

\begin{remark}
  Practitioners of \cite[Def~2.1.10]{HA} should note that, by \cref{Enough to assert essentially surjective remark}, we may weaken \cref{Color Segal condition} to assert only that cocartesian transport induces a $\pi_0$-surjection $\cO_S \rightarrow \prod\limits_{U \in \Orb(S)} \cO_U$.
\end{remark}

We're finally ready to prove \cref{Definition of NIinfty proposition}.
\begin{proof}[Proof of \cref{Definition of NIinfty proposition}]\label{Proof of ninfty proposition}
  Note that \cref{Restriction stable condition,Automorphism condition} of \cref{Windex definition} are true by assumption (they were forced on us in order to make $\Span_I(\FF_{\cT})$ definable).
  We verify the conditions of \cref{GOperads conditions} for $\cT$-operads.
  
  Note that $\Span_I(\FF_{\cT})$ has \emph{unique} lifts for backwards maps, so condition (a) follows always.
  Furthermore, $\Span_I(\FF_{\cT})$ always satisfies condition (b) by construction.
  Lastly, by unwinding definitions and noting that there exists a map of spaces $X \rightarrow Y \times \emptyset = \emptyset$ if and only if $X$ is empty, \cref{Reduction to maps to orbits observation} implies that (c) is equivalent to \cref{Windex segal condition}.
\end{proof}

Using \cref{GOperads conditions}, we gain access to the \emph{structure spaces} of $\cT$-operads.
\begin{notation}
  Following \cite{Bonventre-color} We will refer to tuples $(V \in \cT,S \in \FF_V,(\bC;D) \in \cO_S \times \cO_V)$ as \emph{$\cO$-profiles}, and we will often abusively refer to the profile $(V,S, (\bC;D))$ as $(\bC;D)$.
  Additionally, if $\psi\colon T \rightarrow S$ is a map of finite $V$-sets, we will write $T_U \deq T \times_U S$;
we refer to a \emph{composable datum over $\psi$} as the data of a $\cO$-profile $(V,S,(\bC;D))$ together with an $S$-tuple of profiles $\prn{(U,T_U,(\bB_U,C_U))}_{U \in \Orb(S)}$;
  in this case, $(\bB_U)_{U \in \Orb(S)}$ assemble into a $T$-tuple $\bB$, and we refer to $(V,T,(\bB;D))$ as the \emph{composite $\cO$-profile}.
\end{notation}

\begin{construction}
    Let $\cO^{\otimes}$ be a $\cT$-operad.
    Given an $\cO$-profile $(V, S, (\bC;D))$, we write
    \[
      \cO(\bC;D) \deq \Map_{\cO}^{\Ind_V^{\cT} S \rightarrow V}(\bC,D).
    \]
    Similarly, given $S \in \FF_V$, we write
    \[
      \cO(S) := \coprod_{(\bC,D) \in \cO_{S} \times \cO_V} \cO(\bC;D);
    \]
    we refer to this is the \emph{space of $S$-ary operations in $\cO$}.\qedhere
\end{construction}

This simplifies in a particular setting:
\begin{definition}\label{Many definitions definition}
    A $\cT$-operad $\cO^{\otimes}$ is:
    \begin{itemize}
        \item \emph{at most one-colored} if $\cO_V \in \cbr{\emptyset,*}$ for all $V \in \cT$, i.e. $\cO(*_V) \in \cbr{\emptyset,*}$ for all $V \in \cT$,
        \item \emph{at least one-colored} if $\cO_V \neq \emptyset$ for all $V \in \cT$, i.e. $\cO(*_V) \neq \emptyset$ for all $V \in \cT$, and
         \item \emph{one-colored} if $\cO^{\otimes}$ is at least one-colored and at-most one colored.
    \end{itemize}    
  We denote the associated full subcategories by $\Op_{\cT}^{\oc} \subset \Op_{\cT}^{\geq \oc}, \Op_{\cT}^{\leq \oc} \subset \Op_{\cT}$.
\end{definition}
We acquire a simpler description for one-color $\cT$-operads:
they are functors $\pi\colon \cO^{\otimes} \rightarrow \Span(\FF_{\cT})$ with:
\begin{enumerate}[label={(\alph*')}]
  \item cocartesian lifts of backwards maps,
  \item contractible fibers, and
  \item for whom cocartesian transport induces an equivalence
    \[
      \Map_{\cO}^{T \rightarrow S}(iT, iS) \simeq \prod_{U \in \Orb(S)}\cO(T_U), 
    \]
    where $\pi^{-1}(T) = \cbr{iT}$ and $T_U = T \times_S U \rightarrow U$, considered as a $U$-set.
\end{enumerate}
For most applications, algebras over one-color $\cT$-operads suffice.
For now, we describe the general setting.
\begin{construction}\label{T-operad structure maps}
  Given $\cO^{\otimes} \in \Op_{\cT}$ and $\prn{S,V,(\bC;D)}$ an $\cO$-profile,  for any $T \leftarrow \Ind_V^{\cT} S$, we have an equivalence
  \[
      \cO(\bC;D) \simeq \Map^{T \leftarrow \Ind_V^{\cT} S \rightarrow V}_{\pi_{\cO}}(\bC; D)
  \]
  due to the existence of cocartesian lifts for inert morphisms. 
  Given a map $U \rightarrow V$ in $\cT$ and a finite $V$-set $S \in \FF_V$, postcomposition with the cocartesian lift of the backwards map $V \leftarrow U = U$ yields a restriction map
  \begin{equation}\label{Restriction map}
      \begin{tikzcd}
          \cO(\bC;D) \arrow[d,"\simeq" labl, phantom] \arrow[r,"\Res_U^V"]
          & \cO\prn{\Res_U^V \bC; \; \Res_U^V D} \arrow[d,"\simeq" labl, phantom]\\
          \Map_{\pi_{\cO}}^{\Ind_V^{\cT} S \rightarrow V}(\bC,D) \arrow[r]
          &  \Map_{\pi_{\cO}}^{\Ind_V^{\cT} S \leftarrow \Ind^{\cT}_V S \times_V U \rightarrow U}\prn{\Res_U^V \bC, \; \Res_U^V D}
      \end{tikzcd}
  \end{equation}
  where the right hand side corresponds with the profile $\prn{U, \Res_U^V S, \prn{\Res_U^V \bC;\; \Res_U^V D}}$ induced by restriction.

  Moreover, given a composable datum $\prn{(\bC;D),(\bB_U;C_U)_{U \in \Orb(S)}}$ lying over a map of $V$-sets $\varphi_{TS}\colon T \rightarrow S$, writing $\varphi_{TV}$ for the structure map of $T$, composition in $\cO^{\otimes}$ restricts to a map 
  \begin{equation}\label{Composition map}
      \begin{tikzcd}
          \cO(\bC;D) \times \dprod_{U \in \Orb(S)} \cO(\bB_U;C_U) \arrow[r,"\gamma"] \arrow[d, "\simeq" labl, phantom]
          & \cO(\bB;D) \arrow[d,"\simeq" labl, phantom]\\
          \Map_{\cO^{\otimes}}^{\varphi_{SV}}(\bC, D) \times \Map_{\pi_{\cO}}^{\varphi_{TS}}(\bB,\bC)  \arrow[r, "\circ"]
          & \Map_{\cO^{\otimes}}^{\varphi_{TV}}(\bB, D)
      \end{tikzcd}
  \end{equation}
  Lastly, define the \emph{color preserving automorphism group} to be the subgroup $\Aut_{V}(\bC) \subset \Aut_V(S)$ consisting of automorphisms $\sigma$ such that $C_U \simeq C_{\sigma U}$ for all $U \in \Orb(S)$.
  Note that $\pi_{\cO}$-cocartesian lifts of $\Aut_V(\bC)$ preserve $\bC$;
  cocartesian transport then yields an action
  \begin{equation}\label{Sigma action map}
      \rho_S:\Aut_V(\bC) \times \cO(\bC;D) \longrightarrow \cO(\bC;D).
  \end{equation}
  We refer to $\Res_U^V$ as \emph{restriction}, $\gamma$ as the \emph{composition}, and $\rho_S$ as \emph{$\Sigma$-action}.\qedhere
\end{construction}

\begin{example}\label{Ninfty sseq example}
  Let $I$ be a weak indexing category.
  Recall the example $\cN_{I \infty}^{\otimes} = \prn{\Span_I(\FF_{\cT}) \rightarrow \Span(\FF_{\cT})}$ of \cref{Definition of NIinfty proposition}.
  Then, it follows by definition that $U \cN_{I \infty}^{\otimes} \simeq *_{c(I)}$;
  that is, $\cN_{I \infty}$ always has at most one color, and it has one color if and only if $I$ has one color in the sense of \cite{Windex}.

  Moreover, we have
  \[
    \cN_{I \infty}(S) \simeq \begin{cases}
      * & S \in \FF_{I,V};\\
      \emptyset & S \not \in \FF_{I,V}.
    \end{cases}
  \]
  Each of the maps $\Res_U^V, \gamma, \rho_S$ are uniquely determined by their domain and codomain.
\end{example}

For the following observation, we restrict to the one-color case simply for notational clarity;
the general case follows identically.
\begin{observation}\label{Discretization remark}
  The structures of \cref{Restriction map,Composition map,Sigma action map} are compatible in the following ways:
  \begin{enumerate}
    \item The restriction maps are Borel $\Aut_V(S)$-equivariant, i.e. the following commutes:
      \[\begin{tikzcd}[column sep={6.5em, between origins}]
          {\cbr{\text{cocart lifts of } \Aut_V(S)} \times \Map^{\varphi_{SV}}_{\cO^{\otimes}}(iS,iV)} &&&& {\Map^{\varphi_{SV}}_{\cO^{\otimes}}(iS,iV)} \\
	& {\Aut_V(S) \times \cO(S)} && {\cO(S)} \\
	& {\Aut_W(\Res_W^V S) \times \cO(\Res_W^V S)} && {\cO(\Res_W^V S)} \\
  {\cbr{\text{cocart lifts of } \Aut_W(\Res_W^V S)} \times \Map^{\varphi_{SV}}_{\cO^{\otimes}}(i\Res_W^V S,iW)} &&&& {\Map^{\varphi_{SV}}_{\cO^{\otimes}}(i\Res_W^V S,iW)}
	\arrow["\circ"{description}, from=1-1, to=1-5]
	\arrow[Rightarrow, no head, from=1-1, to=2-2]
  \arrow["{\Res_W^V}"{description}, from=1-1, to=4-1, curve={height=30pt}]
  \arrow["{\Res_W^V}"{description}, from=1-5, to=4-5, curve={height=-30pt}]
  \arrow[from=2-2, to=2-4, "\rho"]
  \arrow[from=2-2, to=3-2, "\Res_W^V"]
	\arrow[Rightarrow, no head, from=2-4, to=1-5]
  \arrow[from=2-4, to=3-4, "\Res_W^V"]
  \arrow[from=3-2, to=3-4, "\rho"]
	\arrow[Rightarrow, no head, from=3-4, to=4-5]
	\arrow[Rightarrow, no head, from=4-1, to=3-2]
	\arrow["\circ"{description}, from=4-1, to=4-5]
\end{tikzcd}\]
    Here we write $\cO_S = \cbr{iS}$.
    \item The composition maps are Borel $\Aut_V(S) \times \prod\limits_{U \in \Orb{S}} \Aut_U(T_U)$-equivariant in an analogous way.
    \item The identity map on $*_V$ yields an element $1_V \in *_V$ which is taken to $1_V$ by $\Res_U^V$.
    \item The composition maps are unital, i.e. the following commutes.
      \[
        \begin{tikzcd}[column sep={5em, between origins}]
          {\Map^{\varphi_{SV}}_{\cO^{\otimes}}(iS,iV)} &&&&& {\Map^{\varphi_{SV}}_{\cO^{\otimes}}(iS,iV) \times \Map^{\id}_{\cO^{\otimes}}(iS,iS)} \\
          & {\cO(S)} &&& {\cO(S) \times \mspc \prod\limits_{U \in \Orb(S)} \mspc \cO(*_U)} \\
          & {\cO(*_V) \times \cO(S)} &&& {\cO(S)} \\
          {\Map^{\id}_{\cO^{\otimes}}(iV,iV) \times \Map^{\varphi_{SV}}_{\cO^{\otimes}}(iS,iV) } &&&&& {\Map^{\varphi_{SV}}_{\cO^{\otimes}}(iS,iV)}
          \arrow["{(\id, \cbr{\id})}"{description}, from=1-1, to=1-6]
          \arrow[Rightarrow, no head, from=1-1, to=2-2]
          \arrow["{(\cbr{\id},\id)}"{description}, curve={height=30pt}, from=1-1, to=4-1]
          \arrow[Rightarrow, no head, from=1-6, to=2-5]
          \arrow["\circ"{description}, curve={height=-30pt}, from=1-6, to=4-6]
          \arrow["{(\id, (\cbr{1_U}))}", from=2-2, to=2-5]
          \arrow["{(\cbr{1_V}, \id)}"', from=2-2, to=3-2]
          \arrow[Rightarrow, no head, from=2-2, to=3-5]
          \arrow["\gamma"{description}, from=2-5, to=3-5]
          \arrow["\gamma"{description}, from=3-2, to=3-5]
          \arrow[Rightarrow, no head, from=3-5, to=4-6]
          \arrow[Rightarrow, no head, from=4-1, to=3-2]
          \arrow["\circ"{description}, from=4-1, to=4-6]
        \end{tikzcd}
      \]
    \item The map $\gamma$ is compatible with restriction, i.e. given a composable pair of morphisms
      \[\begin{tikzcd}[row sep=tiny, column sep = large]
        {\Ind_V^{\cT}T} & {\Ind_V^{\cT} S} & V
	\arrow["{\varphi_{SV}}"{description}, from=1-2, to=1-3]
	\arrow["{\varphi_{TS}}"{description}, from=1-1, to=1-2]
  \arrow["{\varphi_{TV}}"{description}, from=1-1, to=1-3, curve = {height=-20pt}]
\end{tikzcd}\]
and $W \rightarrow V$ a map in $\cT$, the following diagram commutes, where $T \deq \coprod_{U}^S T_U$: 
    \[
      \begin{tikzcd}[column sep={4.8em, between origins}]
        {\Map_{\cO^{\otimes}}^{\varphi_{SV}}(iS, iV) \times \Map_{\cO^{\otimes}}^{\varphi_{TS}}(iT, iS)} &&&&& {\Map_{\cO^{\otimes}}^{\varphi_{TV}}(iT, iV)} \\
        & {\cO(S) \times \mspc \prod\limits_{U \in \Orb(S)} \mspc \cO(T_U)} &&& {\cO(T)} \\
        & {\cO\prn{\Res_W^V S} \times \mspc \prod\limits_{U' \in \Orb(\Res^V_WS)} \mspc \cO(T_{U'})} &&& {\cO\prn{\Res_W^V T}} \\
        {\Map_{\cO^{\otimes}}^{\Res_W^V\varphi_{SV}}\prn{i\Res_W^VS, iW} \times \Map_{\cO^{\otimes}}^{\Res_W^V\varphi_{TS}}(i\Res_W^V T, i\Res_W^VS)} &&&&& {\Map_{\cO^{\otimes}}^{\Res_W^V\varphi_{TV}}(i\Res_W^VT, iW)}
        \arrow["\circ"{description}, from=1-1, to=1-6]
        \arrow[Rightarrow, no head, from=1-1, to=2-2]
        \arrow["{\Res_W^V}"{description}, shift right=10, curve={height=30pt}, from=1-1, to=4-1]
        \arrow[Rightarrow, no head, from=1-6, to=2-5]
        \arrow["{\Res_W^V}"{description}, shift left=5, curve={height=-30pt}, from=1-6, to=4-6]
        \arrow[from=2-2, to=2-5, "\gamma"]
        \arrow[from=2-2, to=3-2, "\Res_V^W"]
        \arrow[from=2-5, to=3-5, "\Res_V^W"]
        \arrow[from=3-2, to=3-5, "\gamma"]
        \arrow[Rightarrow, no head, from=3-5, to=4-6]
        \arrow[Rightarrow, no head, from=4-1, to=3-2]
        \arrow["\circ"{description}, from=4-1, to=4-6]
      \end{tikzcd}
    \]
  \item The composition maps are associative, i.e. given a collection of maps and composites
      \[\begin{tikzcd}[column sep=huge, row sep=tiny]
          {\Ind_V^{\cT}R} & {\Ind_V^{\cT} T} & {\Ind_V^{\cT}S} & V,
	\arrow["{\varphi_{RT}}"{description}, from=1-1, to=1-2]
	\arrow["{\varphi_{RS}}"{description}, curve={height=20pt}, from=1-1, to=1-3]
	\arrow["{\varphi_{RV}}"{description}, curve={height=-32pt}, from=1-1, to=1-4]
	\arrow["{\varphi_{TS}}"{description}, from=1-2, to=1-3]
	\arrow["{\varphi_{TV}}"{description}, curve={height=-20pt}, from=1-2, to=1-4]
	\arrow["{\varphi_{SV}}"{description}, from=1-3, to=1-4]
\end{tikzcd}\]
the following commutes, where $R \deq \coprod_U^S \coprod_W^{T_U} R_W$:
  \[
    \begin{tikzcd}[column sep={4.5em, between origins}]
        {\Map_{\cO^{\otimes}}^{\varphi_{SV}}(iS,iV) \times \Map^{\varphi_{TS}}_{\cO^{\otimes}}(iT,iS) \times \Map_{\cO^{\otimes}}^{\varphi_{RT}}(iR,iT)} 
        &&&&&& {\Map_{\cO^{\otimes}}^{\varphi_{TV}}(iT,iV) \times \Map_{\cO^{\otimes}}^{\varphi_{RT}}(iR,iT)} 
                \\
        & {\prn{\cO(S) \times \mspc \prod\limits_{U \in \Orb(S)} \mspc \cO(T_U)} \times \mspc \prod\limits_{\stackrel{U \in \Orb(S)}{W \in \Orb(T_U)}} \mspc  \cO(R_W)} 
        &&&& {\cO(T) \times \mspc \prod\limits_{W \in \Orb(T)} \mspc \cO(R_W)} 
                \\
                & {\cO(S) \times \mspc \prod\limits_{U \in \Orb(S)} \prn{\cO(T_U) \times \mspc \prod\limits_{W \in \Orb(T_U)} \mspc \cO(R_W)}} 
        &&&& {\cO\prn{\coprod\limits_W^T R_W}} 
                \\
        & {\cO(S) \times \mspc \prod\limits_{U \in \Orb(S)} \mspc \cO\prn{\coprod\limits_W^{T_U} R_W}} 
        &&&& {\cO\prn{R}} 
                \\
        {\Map_{\cO^{\otimes}}^{\varphi_{SV}}(iS,iV) \times \Map_{\cO^{\otimes}}^{\varphi_{RS}}(iR,iS)} 
        &&&&&& {\Map_{\cO^{\otimes}}^{\varphi_{RV}}(iR,iV)}
        \arrow["\circ"{description}, from=1-1, to=1-7]
        \arrow[Rightarrow, no head, from=1-1, to=2-2]
        \arrow["\circ"{description}, shift right=20, curve={height=45pt}, from=1-1, to=5-1]
        \arrow[Rightarrow, no head, from=1-7, to=2-6]
        \arrow["\circ"{description}, shift left=5, curve={height=-45pt}, from=1-7, to=5-7]
        \arrow["\gamma", from=2-2, to=2-6]
        \arrow["\gamma", from=2-6, to=3-6]
        \arrow[Rightarrow, no head, from=3-2, to=2-2]
        \arrow["\gamma", from=3-2, to=4-2]
        \arrow["\gamma", from=4-2, to=4-6]
        \arrow[Rightarrow, no head, from=4-2, to=5-1]
        \arrow[Rightarrow, no head, from=4-6, to=3-6]
        \arrow[Rightarrow, no head, from=4-6, to=5-7]
        \arrow["\circ"{description}, from=5-1, to=5-7]
      \end{tikzcd}
    \]
  \end{enumerate}
  Thus, passing to the homotopy category, the data of a $\cT$-operad supplies a discrete genuine $\cT$-operad in $\ho \cS$ in the sense of \cref{Discrete genuine definition}.
\end{observation}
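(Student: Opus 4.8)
The plan is to observe that all three operations $\Res$, $\gamma$, and $\rho_S$ of \cref{T-operad structure maps}, together with each of the six compatibility diagrams, are assembled from exactly three pieces of structure carried by $\cO^{\otimes}$: (i) composition in the $\infty$-category $\cO^{\otimes}$, which is associative and unital up to coherent homotopy; (ii) cocartesian transport along the inert (backward) morphisms, which is functorial and compatible with composition, in the sense that a cocartesian lift of a composite backward map is a composite of cocartesian lifts; and (iii) the Segal equivalences of \cref{GOperads conditions,T-operad structure maps} identifying each $\Map^{T \rightarrow S}_{\cO^{\otimes}}$ with a product of structure spaces $\cO(-)$, which are natural in the profiles. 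Thus each diagram to be checked is obtained from a general coherence for one of these three ingredients by transporting it along the identifications in (iii); there is nothing to construct, only to organize, and each $\cO(\bC;D)$ is by definition a summand of a mapping space in $\cO^{\otimes}$, so the operations are well defined.

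Concretely, I would proceed diagram by diagram. The existence of $1_V \in \cO(*_V)$ and the identity $\Res_U^V 1_V = 1_U$ of item~(3) follow because $1_V$ is, under the Segal equivalence $\Map^{\id}_{\cO^{\otimes}}(iV,iV) \simeq \cO(*_V)$, the class of $\id_{iV}$, and the cocartesian lift of $V \leftarrow U = U$ out of $iV$ postcomposed with $\id_{iV}$ is that same lift, which represents $1_U$. The Borel equivariance statements~(1),~(2) are the functoriality of cocartesian transport (a composite of the cocartesian lift of an automorphism with that of a restriction is again a cocartesian lift of the composed backward map) combined with compatibility of transport with composition in $\cO^{\otimes}$. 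Unitality~(4) is associativity and unitality of composition in $\cO^{\otimes}$ applied after inserting the identity arrows representing $1_V$ or the $1_U$'s. The compatibility of $\gamma$ with restriction~(5) is associativity of composition in $\cO^{\otimes}$, together with the observation that the cocartesian lift computing $\Res_W^V$ of a composite active map may be taken stagewise, which amounts to forming the evident iterated pullbacks of finite $\cT$-sets. Finally, associativity of $\gamma$~(6) is associativity of composition in $\cO^{\otimes}$, but now one must also match the iterated Segal decompositions over $\Orb(S)$, $\Orb(T_U)$, and $\Orb(R_W)$ against the single decomposition over $\Orb(R)$ for $R \deq \coprod_U^S \coprod_W^{T_U} R_W$; this is where the bookkeeping lives.

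Having verified that each of the six diagrams commutes in $\cS$, I would conclude as follows. The homotopy category functor $\cS \rightarrow \ho \cS$ is a localization onto a $1$-category, so it sends these homotopy-commuting diagrams to \emph{strictly} commuting diagrams of morphisms in $\ho \cS$; moreover a map of $\cT$-operads $\cO^{\otimes} \rightarrow \cP^{\otimes}$ preserves inert cocartesian lifts and composition, hence intertwines all three operations and is compatible with the resulting structure on homotopy categories. Matching these strict identities against the axioms of \cref{Discrete genuine definition} exhibits the data $\prn{\pi_0 \cO(S), \Res, \gamma, \rho_S}$ as a discrete genuine $\cT$-operad in $\ho \cS$, functorially in $\cO^{\otimes}$.

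I expect the main obstacle to be the associativity diagram~(6) together with~(5): the content is not any single hard identity but the combinatorial matching of the nested orbit decompositions $T_U = T \times_S U$, $R_W = R \times_T W$, and so on, with the Segal equivalences appearing at each vertex of the diagrams, so that the chase actually closes up; a secondary, more bureaucratic obstacle is pinning down that \cref{Discrete genuine definition}'s axiom list is \emph{exactly} the six compatibilities above, which is a matter of unwinding definitions rather than of proof.
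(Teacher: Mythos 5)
The paper states this as an observation with no accompanying proof: the six compatibilities are regarded as immediate consequences of how the maps of \cref{T-operad structure maps} are assembled from composition in $\cO^{\otimes}$, cocartesian transport over inert arrows, and the Segal identifications of \cref{GOperads conditions}. Your plan correctly identifies exactly those three ingredients, correctly locates the real work in the nested Segal decompositions of (5) and (6), and correctly explains why postcomposing the $\fC$-symmetric sequence of \cref{SSeq observation} with $\cS \rightarrow \ho\cS$ turns these homotopy-coherences into the strict axioms of \cref{Discrete genuine definition}; this is a faithful elaboration of the argument the paper leaves implicit.
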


\subsubsection{The $\cT$-$\infty$-category of $\cT$-operads}\label{T-category subsubsection}
Recall the map of algebraic patterns $\varphi\cln \Tot \uFF_{\cT,*} \rightarrow \Span(\FF_{\cT})$ of \cref{Span comparison map}.  
By assumption, if $\cO^{\otimes}$ is a fibrous $\Tot \uFF_{\cT,*}$-pattern, it possesses cocartesian lifts over \emph{all} morphisms in the composite $\cO^{\otimes} \rightarrow \Tot \uFF_{\cT,*} \rightarrow \cT^{\op}$.
Thus, fibrous $\Tot \uFF_{\cT,*}$-patterns possess total \emph{$\cT$-$\infty$-categories};
we refer to the associated functor as 
\[
  \Tot_{\cT}\cln \Op_{\cT} \rightarrow \Fbrs(\Tot \uFF_{\cT,*}) \rightarrow \Cat_{\cT}.
\]
In \cref{Pointed prop,Span fibrous corollary,Two models for segal objects}, we prove the following generalization of the contents of \cite[\S5.2]{Barkan}, which identifies our $\cT$-operads with those of \cite{Nardin}.

\begin{proposition}\label{Operads are fibrous theorem}\label{Familiar structures corollary}
  Suppose $\cT$ is an atomic orbital $\infty$-category.
  Then, pullback along $\varphi\cln\Tot \uFF_{\cT,*} \rightarrow \Span(\FF_{\cT})$ implements equivalences of categories 
   \begin{align*} 
     \Cat_{\cO}^{\otimes} &\simeq \Seg_{\Tot \Tot_{\cT} \cO^{\otimes}}\prn{\Cat};\\
     \Op_{\cT} &\simeq \Fbrs\prn{\Tot \underline{\FF}_{\cT,*}}.
  \end{align*}
  Moreover, $\Fbrs\prn{\Tot \uFF_{\cT,}}$ is equivalent to the $\infty$-category of $\cT$-$\infty$-operads of \cite{Nardin} and $\Seg_{\Tot \Tot_{\cT} \cO^{\otimes}}(\Cat)$ is equivalent to the $\infty$-category of small $\cO$-symmetric monoidal $\cT$-$\infty$-categories of \cite{Nardin}.
\end{proposition}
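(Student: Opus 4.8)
The plan is to follow the strategy of \cite[\S~5.2]{Barkan}, which treats the case $\cT = \cO_G$, and to observe that each of its inputs persists over an arbitrary atomic orbital $\infty$-category. The central claim is that $\varphi\cln \Tot \uFF_{\cT,*} \rightarrow \Span(\FF_{\cT})$ is a strong Segal morphism satisfying the hypotheses of \cite[Cor~2.64]{Barkan-arity}; granting this, loc.\ cit.\ supplies, for every complete $\infty$-category $\cC$, an equivalence $\varphi^*\cln \Seg_{\Span(\FF_{\cT})}(\cC) \xrightarrow{\sim} \Seg_{\Tot \uFF_{\cT,*}}(\cC)$ and an equivalence $\varphi^*\cln \Fbrs(\Span(\FF_{\cT})) \xrightarrow{\sim} \Fbrs(\Tot \uFF_{\cT,*})$; the latter is exactly the asserted identification $\Op_{\cT} \simeq \Fbrs(\Tot\uFF_{\cT,*})$, and the $\cC = \Cat$ case of the former gives $\Cat_{\cT}^{\otimes} \simeq \Seg_{\Tot\uFF_{\cT,*}}(\Cat)$.

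First I would verify that $\varphi$ is a strong Segal morphism. By construction $\varphi$ carries the inert--active factorization on $\Tot\uFF_{\cT,*}$ to that on $\Span(\FF_{\cT})$, so it remains to show that $\varphi^{\el}_{X/}\cln (\Tot\uFF_{\cT,*})^{\el}_{X/} \rightarrow \Span(\FF_{\cT})^{\el}_{\varphi(X)/}$ is initial for each $X$. Writing $X$ for a finite pointed $V$-set $S_+$, so that $\varphi(X) = \Ind_V^{\cT} S$, the source is the set of inert maps out of $S_+$ to pointed orbits, which is naturally identified with $\Orb(S)$; by \cref{CMon pattern example} the target is $(\FF_{\cT,/\Ind_V^{\cT} S})^{\op}$, in which the copy of $\Orb(\Ind_V^{\cT} S) \simeq \Orb(S)$ is initial. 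Hence $\varphi$ is strong Segal. I would then check the remaining hypotheses of \cite[Cor~2.64]{Barkan-arity}: that $\varphi$ is essentially surjective (every finite $\cT$-set underlies a pointed one), that $\varphi^{\el}$ is the identity $\cT^{\op} \rightarrow \cT^{\op}$, and that the induced maps on spaces of active maps into elementary objects are equivalences, i.e. that for $S \in \FF_V$ the active maps $S_+ \rightarrow *_{V,+}$ in $\Tot\uFF_{\cT,*}$ match the active maps $\Ind_V^{\cT} S \rightarrow V$ in $\Span(\FF_{\cT})$. This last identification is the crux, and it is where the atomic orbital hypothesis enters through \cref{1-category} and the bookkeeping of induced and restricted $\cT$-sets; for $\cT = \cO_G$ it is \cite[Prop~5.2.14]{Barkan}, and the same argument goes through for general atomic orbital $\cT$.

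With the equivalence $\varphi^*\cln \Op_{\cT} \xrightarrow{\sim} \Fbrs(\Tot\uFF_{\cT,*})$ in hand, I would deduce the statement about $\Cat_{\cO}^{\otimes}$ by base change. For a $\cT$-operad $\cO^{\otimes}$, \cref{Overcategory of fibrous patterns prop} identifies $\cO^{\otimes}$ with its induced pattern, and $\Tot\Tot_{\cT}\cO^{\otimes}$ is precisely the pullback $\varphi^*\cO^{\otimes}$ equipped with the fibrous-pattern structure making its projection to $\Tot\uFF_{\cT,*}$ a map of patterns. The projection $\varphi^*\cO^{\otimes} \rightarrow \cO^{\otimes}$ is the base change of $\varphi$ along a fibrous pattern, so it again satisfies the hypotheses of \cite[Cor~2.64]{Barkan-arity}, these being stable under such base change; pullback therefore induces $\Cat_{\cO}^{\otimes} = \Seg_{\cO^{\otimes}}(\Cat) \xrightarrow{\sim} \Seg_{\Tot\Tot_{\cT}\cO^{\otimes}}(\Cat)$. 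Finally, to match \cite{Nardin} I would unwind definitions: Nardin's $\cT$-$\infty$-operads are by definition the functors to $\uFF_{\cT,*}$ satisfying his Segal-type conditions, and the comparison of \cref{Pointed prop} shows that these are exactly the fibrous $\Tot\uFF_{\cT,*}$-patterns; likewise his small $\cO$-symmetric monoidal $\cT$-$\infty$-categories unwind to $\Seg_{\Tot\Tot_{\cT}\cO^{\otimes}}(\Cat)$, the argument being identical to that of \cite[\S~5.2]{Barkan}. The main obstacle is the verification in the second paragraph that $\varphi$ meets the arity criterion of \cite[Cor~2.64]{Barkan-arity}: everything else is formal, but this step requires a careful analysis of active morphisms and of induced/restricted $\cT$-sets in the general atomic orbital setting, where the explicit double-coset description available for $\cO_G$ must be replaced by arguments from the axioms, in particular the orbitality of $\FF_{\cT}$ and the fact that split epimorphisms in $\cT$ are equivalences.
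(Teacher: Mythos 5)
Your proposal follows essentially the same approach as the paper: identify $\varphi$ as a strong Segal morphism satisfying the hypotheses of \cite[Cor~2.64]{Barkan-arity} (restated in the paper as \cref{Equivalence theorem}), check the conditions on $\varphi^{\el}$ and on active morphisms over elementary objects, pass to the $\cO$-monoidal case by base change along a fibrous pattern, and unwind definitions to match \cite{Nardin}. The paper packages this into \cref{Segal condition lemma,Sound lemma,Soundly extendable lemma,Span fibrous corollary,Pullback morita prop,Two models for segal objects,Pointed prop}, but the underlying mathematics and the role of atomicity (split epimorphisms being equivalences, used in verifying the strong Segal condition) is the same as what you describe.
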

This enables us to make the following definition.
\begin{definition}
  Let $\cO^{\otimes}, \cP^{\otimes}$ be $\cT$-operads,
  Then, the  \emph{$\cT$-$\infty$-category of $\cO$-algebras in $\cP$} is the full subcategory
  \begin{align*}
    \uAlg_{\cO}(\cP) 
    &\deq \uFun^{\Int-\cocart}_{\cT,/\uFF_{\cT,*}}\prn{\Tot_{\cT} \cO^{\otimes} , \Tot_{\cT} \cP^{\otimes}}\\
    &\subset \uFun_{\cT,/\uFF_{\cT,*}}\prn{\Tot_{\cT} \cO^{\otimes},\Tot_{\cT} \cP^{\otimes}}  
  \end{align*}
  with $V$-values spanned by the $V$-functors $\Res_V^{\cT} \Tot_{\cT} \cO^{\otimes} \rightarrow \Res_V^{\cT} \Tot_{\cT} \cP^{\otimes}$ preserving cocartesian lifts over inert arrows in $\uFF_{V,*}$.
\end{definition}

We lift $\Op_{\cT}$ to a $\cT$-$\infty$-category by the following.
\begin{definition}
  We show in \cref{Ind is Segal} that $\Span\prn{\Ind_U^V}\cln \Span(\FF_{U}) \rightarrow \Span(\FF_{V})$ is a Segal morphism for all maps $U \rightarrow V$ in $\cT$.
  We refer to the resulting $\cT$-$\infty$-category
  \[
    \uOp_{\cT}\cln \cT^{\op} \xrightarrow{\Span(\FF_{(-)})}  \AlgPatt^{\mathrm{SE},\Seg, \op} \xrightarrow{\Fbrs} \Cat.
  \]
  as the \emph{$\cT$-$\infty$-category of $\cT$-operads,} where $\AlgPatt^{\mathrm{SE},\Seg} \subset \AlgPatt$ is the non-full subcategory of soundly extendable patterns and Segal morphisms.
\end{definition}
\begin{observation}\label{Restrictions fibrous remark}
  The $V$-value of $\uOp_\cT$ is $\Op_{V} \deq \Op_{\cT_{/V}}$;
  the restriction functor $\Res_U^V\cln \Op_{V} \rightarrow \Op_{U}$ is implemented by the pullback
\[
    \begin{tikzcd}
      \Res_U^V \cO^{\otimes} \arrow[r] \arrow[d] \arrow[rd, "\lrcorner" very near start, phantom]
      & \cO^{\otimes} \arrow[d]\\
      \Span(\FF_U) \arrow[r]
      & \Span(\FF_V).
    \end{tikzcd}
  \]
  with bottom functor is $\Span(\Ind_U^V)$.
  Moreover, $\varphi\colon \Tot \uFF_{\cT,*} \rightarrow \Span(\FF_{\cT})$ is natural in $\cT$, i.e. it yields a commutative diagram
  \[
    \begin{tikzcd}
      {\Tot \uFF_{U,*}} & {\Span(\FF_{U})} \\
      {\Tot \uFF_{V,*}} & {\Span(\FF_{V})}
      \arrow[from=1-1, to=1-2]
      \arrow[from=1-1, to=2-1]
      \arrow[from=1-2, to=2-2]
      \arrow[from=2-1, to=2-2]
    \end{tikzcd}  
  \]
  Thus $\varphi^*$ identifies $\Res_U^V\colon \Op_{V} \rightarrow \Op_W$ with pullback along $\Tot \uFF_{U,*} \rightarrow \Tot \uFF_{V,*}$.
\end{observation}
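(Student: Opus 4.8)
The plan is to unwind the definition of the composite functor defining $\uOp_{\cT}$ and feed in the functoriality statements for fibrous patterns established later in the paper. By construction $\uOp_{\cT}$ is the composite $\cT^{\op}\xrightarrow{\Span(\FF_{(-)})}\AlgPatt^{\mathrm{SE},\Seg,\op}\xrightarrow{\Fbrs}\Cat$, and the first functor sends $V$ to the pattern $\Span(\FF_{\cT_{/V}})$; since $\cT_{/V}$ is again atomic orbital, \cref{Soundly extendable lemma} applied to $\cT_{/V}$ shows this is soundly extendable, so $\Fbrs$ of it is $\Op_{\cT_{/V}}$ by \cref{Opt definition}, which is $\Op_V$ under our notational conventions. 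For a map $f\colon U\to V$ in $\cT$, postcomposition gives $f_!\colon \cT_{/U}\to\cT_{/V}$, hence after finite coproduct completion and the span construction (\cref{Span construction}) the map of patterns $\Span(\Ind_U^V)\colon \Span(\FF_U)\to\Span(\FF_V)$, which is a Segal morphism by \cref{Ind is Segal}; this is the image of $f$ under $\Span(\FF_{(-)})$. Hence, by the functoriality of $\Fbrs$ on $\AlgPatt^{\mathrm{SE},\Seg,\op}$ supplied by \cref{Fibrous pullback prop}, $\Res_U^V\colon \Op_V\to\Op_U$ is the pullback functor $\Span(\Ind_U^V)^{*}$; and by the same proposition (together with \cref{Overcategory of fibrous patterns prop}) this pullback of a fibrous pattern is computed by the honest fiber product in $\Cat$, so $\Res_U^V\cO^{\otimes}\simeq\cO^{\otimes}\times_{\Span(\FF_V)}\Span(\FF_U)$ over $\Span(\Ind_U^V)$, which is exactly the displayed pullback square.

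For the naturality of $\varphi$, recall from \cref{Span comparison map} that $\varphi$ is the composite of the canonical inclusion $\Tot \uFF_{\cT,*}\hookrightarrow\Span_{\mathrm{all},\mathrm{tdeg}}(\Tot \uFF_{\cT};\cT^{\op})$ with the forgetful map $U$ to $\Span(\FF_{\cT})$; each ingredient — the total category of $\uFF_{(-)}$, its target functor, and the span construction — is manifestly natural in the atomic orbital $\infty$-category, so $\varphi$ assembles to a natural transformation of functors $\cT^{\op}\to\AlgPatt$, $V\mapsto\Tot \uFF_{V,*}$ and $V\mapsto\Span(\FF_V)$. Concretely this yields, for each $f\colon U\to V$, a commuting square relating $\varphi_U,\varphi_V,\Span(\Ind_U^V)$ and the induced map $\Tot \uFF_{U,*}\to\Tot \uFF_{V,*}$. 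Since $\varphi_W^{*}\colon \Fbrs(\Span(\FF_W))\xrightarrow{\sim}\Fbrs(\Tot \uFF_{W,*})$ is an equivalence for each $W$ by \cref{Operads are fibrous theorem}, applying pullback-functoriality to this commuting square of patterns gives a canonical equivalence $\varphi_U^{*}\circ\Span(\Ind_U^V)^{*}\simeq(\Tot \uFF_{U,*}\to\Tot \uFF_{V,*})^{*}\circ\varphi_V^{*}$ over the relevant pattern maps; transporting along $\varphi$, this says that $\varphi^{*}$ identifies $\Res_U^V$ with pullback along $\Tot \uFF_{U,*}\to\Tot \uFF_{V,*}$, as claimed.

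The only genuinely non-formal inputs are the naturality of $\varphi$ in $\cT$ and the fact that $\Span(\FF_{(-)})$ actually lands in soundly extendable patterns and Segal morphisms; both are routine but require carefully tracking how $\uFF_{\cT}$, its total category, and the span construction behave under the functors $f_!$, and I expect the bookkeeping around $\Tot \uFF_{(-),*}$ to be the fiddliest part.
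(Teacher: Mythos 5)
Your proof is correct and carries out the unwinding-of-definitions argument that the paper implicitly intends for this observation (the paper itself records no proof since it is labeled an \emph{observation}). You correctly identify that the $V$-value is $\Op_{\cT_{/V}}$ from the composite defining $\uOp_{\cT}$, that $\Span(\Ind_U^V)$ is the image of $U\to V$ under $\Span(\FF_{(-)})$ by tracking $f_!$ through coproduct completion and the span construction, and that pullback of fibrous patterns along Segal morphisms to soundly extendable patterns is computed by the honest fiber product, which is exactly the content of \cref{Fibrous pullback prop}. One small miscitation: \cref{Overcategory of fibrous patterns prop} is about the equivalence $\Fbrs(\fO)\simeq\Fbrs(\base)_{/\fO}$ and is not what underwrites the honest-pullback claim; \cref{Fibrous pullback prop} alone suffices there. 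The naturality-of-$\varphi$ square and the resulting intertwining of $\varphi^*$ with restriction are correctly assembled by pasting of pullbacks, matching the paper's intent.
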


\begin{observation}\label{Alg values}
  Via \cref{Operads are fibrous theorem}, we find that $\Gamma^{\cT} \uAlg_{\cO}(\cP) \simeq \Alg_{\cO}(\cP)$.
  Furthermore, we find that
  \[
    \uAlg_{\cO}(\cP)_V \simeq \Fun^{\Int-\cocart}_{/\Span(\FF_V)}(\Res_V^{\cT} \cO^{\otimes}, \Res_V^{\cT} \cP^{\otimes}) \simeq \Alg_{\Res_V^{\cT} \cO}(\Res_V^{\cT} \cO)
  \]
  with restriction functors induced by functoriality of $\Res_U^V$. 
  Moreover, applying \cref{Operads are fibrous theorem} and unwinding definitions yields an equivalence
  \[
    \Gamma^{\cT} \uOp_{\cT} \simeq \Op_{\cT}.\qedhere
  \]
\end{observation}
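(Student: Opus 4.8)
The plan is to reduce each assertion to a statement over $\Span(\FF_{\cT})$ (or, for the $V$-values, over $\Span(\FF_V)$) and then cite \cref{Operads are fibrous theorem}. Concretely, I would first handle $\Gamma^{\cT}\uAlg_{\cO}(\cP) \simeq \Alg_{\cO}(\cP)$: since $\uAst_{\cT}$ is terminal in $\Cat_{\cT}$, the exponential adjunction of \cite[Thm~9.7]{Barwick-Parameterized} gives $\Gamma^{\cT}\uFun_{\cT}(\cA,\cB) \simeq \Fun_{\cT}(\cA,\cB)$, and $\Gamma^{\cT}$ of a fiberwise-full $\cT$-subcategory is the full subcategory of cocartesian sections landing fiberwise in it; applied to $\uAlg_{\cO}(\cP) \subset \uFun_{\cT,/\uFF_{\cT,*}}(\Tot_{\cT}\cO^{\otimes},\Tot_{\cT}\cP^{\otimes})$ this yields $\Gamma^{\cT}\uAlg_{\cO}(\cP) \simeq \Fun^{\Int-\cocart}_{\cT,/\uFF_{\cT,*}}(\Tot_{\cT}\cO^{\otimes},\Tot_{\cT}\cP^{\otimes})$. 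Now \cref{Operads are fibrous theorem} says pullback along $\varphi$ is an equivalence of $\infty$-categories $\Op_{\cT} \xrightarrow{\sim} \Fbrs(\Tot\uFF_{\cT,*})$; since a map of $\cT$-operads over $\Span(\FF_{\cT})$ is precisely a $\cT$-functor of the associated $\Tot_{\cT}$'s over $\uFF_{\cT,*}$ preserving inert-cocartesian lifts, and this correspondence is natural, it upgrades to an equivalence of the corresponding functor categories, identifying $\Alg_{\cO}(\cP) = \Fun^{\Int-\cocart}_{/\Span(\FF_{\cT})}(\cO^{\otimes},\cP^{\otimes})$ with the display above.

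For the $V$-values, the same theorem of \cite{Barwick-Parameterized} computes $\uFun_{\cT}(\cA,\cB)_V \simeq \Fun_{\cT_{/V}}(\cA_{\uV},\cB_{\uV})$. By \cref{Restrictions fibrous remark}, $(\Tot_{\cT}\cO^{\otimes})_{\uV} \simeq \Tot_{\cT_{/V}}\Res_V^{\cT}\cO^{\otimes}$ and $(\uFF_{\cT,*})_{\uV} \simeq \uFF_{V,*}$, so the full-subcategory argument above identifies $\uAlg_{\cO}(\cP)_V$ with $\Fun^{\Int-\cocart}_{\cT_{/V},/\uFF_{V,*}}(\Tot_{\cT_{/V}}\Res_V^{\cT}\cO^{\otimes},\Tot_{\cT_{/V}}\Res_V^{\cT}\cP^{\otimes})$; applying \cref{Operads are fibrous theorem} to the atomic orbital $\infty$-category $\cT_{/V}$ then yields $\Fun^{\Int-\cocart}_{/\Span(\FF_V)}(\Res_V^{\cT}\cO^{\otimes},\Res_V^{\cT}\cP^{\otimes}) \simeq \Alg_{\Res_V^{\cT}\cO}(\Res_V^{\cT}\cP)$. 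The restriction functors $\Res_U^V$ of $\uAlg_{\cO}(\cP)$ are induced by restriction of the $\uV$-values, which by \cref{Restrictions fibrous remark} is pullback of operads, i.e. the $\Res_U^V$ of that observation.

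For $\Gamma^{\cT}\uOp_{\cT} \simeq \Op_{\cT}$, I would take the candidate functor $\cO^{\otimes} \mapsto (\Res_V^{\cT}\cO^{\otimes})_{V \in \cT}$, valued in $\Gamma^{\cT}\uOp_{\cT} \simeq \lim_{V \in \cT^{\op}}\Op_{\cT_{/V}}$, which is well-defined by functoriality of $\Res_U^V$. By \cref{Operads are fibrous theorem} over each slice this identifies $\Op_{\cT_{/V}}$ with $\Fbrs(\Tot\uFF_{V,*})$, and by \cref{Restrictions fibrous remark} the transition maps become pullback along $\Tot\uFF_{U,*} \to \Tot\uFF_{V,*}$; since $\Tot\uFF_{\cT,*}$ is the total $\infty$-category of the $\cT$-diagram $\uFF_{\cT,*}$, a cocartesian section of $\uOp_{\cT}$ is exactly the data of a fibrous $\Tot\uFF_{\cT,*}$-pattern, so the candidate functor is an equivalence by \cref{Operads are fibrous theorem} once more. (Alternatively: the $\uOp_{\cT}$ built here agrees with Nardin's $\cT$-$\infty$-category of $\cT$-operads via \cref{Operads are fibrous theorem}, for which $\Gamma^{\cT}$ recovers $\Op_{\cT}$ by design.)

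The main obstacle is not any single step but the restriction-compatibility packaged into \cref{Restrictions fibrous remark}: one must know that $\Tot_{\cT}(-)$ and the Segal envelope intertwine operadic restriction with the $(-)_{\uV}$ of $\Cat_{\cT}$, and, relatedly, that forming $\Gamma^{\cT}$ and $(-)_V$ commutes with the inert-cocartesian-lift conditions carving out $\uAlg_{\cO}(\cP)$ and $\uOp_{\cT}$ — and, in the first claim, that the equivalence of \cref{Operads are fibrous theorem} genuinely matches the functor categories $\Fun^{\Int-\cocart}$ rather than merely their cores. Granting \cref{Restrictions fibrous remark} and \cref{Operads are fibrous theorem}, everything else is diagram-chasing.
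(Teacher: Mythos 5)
Your argument is correct and follows the same route the paper intends: the observation is asserted to hold "via \cref{Operads are fibrous theorem}" and "unwinding definitions," and your proposal supplies exactly that unwinding, using the exponential formula of \cite[Thm~9.7]{Barwick-Parameterized}, \cref{Restrictions fibrous remark}, and \cref{Operads are fibrous theorem} applied fiberwise over each $\cT_{/V}$. (Incidentally, you implicitly corrected what appears to be a typo in the displayed equivalence, which should read $\Alg_{\Res_V^{\cT}\cO}(\Res_V^{\cT}\cP)$ rather than $\Alg_{\Res_V^{\cT}\cO}(\Res_V^{\cT}\cO)$.)
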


\subsubsection{Envelopes}\label{Envelopes subsubsection}
In \cite{Nardin}, a left adjoint to the inclusion $\CMon_{\cT} \Cat \rightarrow \Op_{\cT}$ was constructed, called the \emph{$\cT$-symmetric monoidal envelope}.
This was greatly generalized by \cref{Envelope theorem} in view of \cref{Familiar structures corollary}.
For convenience, we spell this out here.
\begin{corollary}\label{Env is maps of operads corollary}
    If $\cP^{\otimes} \rightarrow \cO^{\otimes}$ is a map of $\cT$-operads, then the following diagram consists of maps of $\cT$-operads
    \[\begin{tikzcd}
	{\Env_{\cO} \cP^{\otimes}} & {\Ar^{\act}\prn{\cO^{\otimes}}} & {\cO^{\otimes}} \\
	{\cP^{\otimes}} & {\cO^{\otimes}}
	\arrow["s", from=1-2, to=2-2]
	\arrow[from=2-1, to=2-2]
	\arrow[from=1-1, to=2-1]
	\arrow[from=1-1, to=1-2]
	\arrow["\lrcorner"{anchor=center, pos=0.125}, draw=none, from=1-1, to=2-2]
	\arrow["t", from=1-2, to=1-3]
\end{tikzcd}\]
  and the top horizontal composition is an $\cO$-monoidal $\infty$-category.
  The corresponding functor 
  \[
    \Env_{\cO}\cln \Op_{\cT, /\cO^{\otimes}} \rightarrow \Cat^{\otimes}_{\cO}
  \]
  is left adjoint to the inclusion of $\cO$-monoidal $\infty$-categories into $\cT$-operads over $\cO^{\otimes}$, and the induced functor
  \[
    \Env_{\cO}^{/\sA_{\cO}} \cln \Op_{\cT, /\cO^{\otimes}} \rightarrow \Cat_{\cO, /\sA_{\cO}}^{\otimes} 
  \]
  is fully faithful, with image spanned by equifibrations in the sense of \cite[Thm~C]{Barkan}.
\end{corollary}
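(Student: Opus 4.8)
The plan is to obtain the entire statement as the specialization of \cref{Envelope theorem} to the algebraic pattern $\fO \deq \cO^{\otimes}$, equipped with the pattern structure it inherits as a fibrous $\Span(\FF_{\cT})$-pattern (inert morphisms the $\pi_{\cO}$-cocartesian lifts of backwards maps, active morphisms arbitrary lifts of forward maps, elementary objects the lifts of orbits). Under this identification the pullback square defining $\Env_{\cO}\cP^{\otimes}$ in the statement is precisely the defining square of the Segal envelope $\Env_{\cO^{\otimes}}\cP^{\otimes}$ of the fibrous $\cO^{\otimes}$-pattern $\cP^{\otimes}$ over $\cO^{\otimes}$, the right adjoint ``inclusion'' is the unstraightening $\Un$, and the source and target categories match up via $\Op_{\cT,/\cO^{\otimes}} \simeq \Fbrs(\cO^{\otimes})$ (\cref{Overcategory of fibrous patterns prop}) and $\Cat_{\cO}^{\otimes} = \Seg_{\cO^{\otimes}}(\Cat)$ (\cref{CatI definition}), with $\Op_{\cT} = \Fbrs(\Span(\FF_{\cT}))$ and \cref{Familiar structures corollary} handling the remaining bookkeeping.

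Concretely, I would first record that $\cO^{\otimes}$ is a soundly extendable pattern: $\Span(\FF_{\cT})$ is soundly extendable by \cref{Soundly extendable lemma}, the structure functor $\pi_{\cO}\cln \cO^{\otimes} \to \Span(\FF_{\cT})$ is a fibrous pattern and hence a strong Segal morphism by \cref{Fibrous patterns are strong Segal morphisms}, and sound extendability passes to fibrous patterns over soundly extendable patterns. The soundness half reduces, object by object, to soundness of $\Span(\FF_{\cT})$, since the inert/active/elementary data of $\cO^{\otimes}$ is lifted from $\Span(\FF_{\cT})$ and $\pi_{\cO}$ has cocartesian lifts over inert morphisms; and then $\sA_{\cO^{\otimes}} = \Ar^{\act}(\cO^{\otimes}) \xrightarrow{t} \cO^{\otimes}$ is a cocartesian fibration satisfying the Segal conditions because it is pulled back from $\sA_{\Span(\FF_{\cT})}$ along $\pi_{\cO}$, so \cref{Fibrous pullback prop} applies. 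Granting this, \cref{Envelope theorem} with $\fO = \cO^{\otimes}$ yields at once: the adjunction $\Env_{\cO^{\otimes}} \dashv \Un$ between $\Fbrs(\cO^{\otimes})$ and $\Seg_{\cO^{\otimes}}(\Cat)$, so that the composite $\Env_{\cO}\cP^{\otimes} \to \Ar^{\act}(\cO^{\otimes}) \xrightarrow{t} \cO^{\otimes}$ is a Segal $\cO^{\otimes}$-category, i.e. an $\cO$-monoidal $\infty$-category, and the maps in the square are maps of fibrous $\cO^{\otimes}$-patterns, hence maps of $\cT$-operads over $\cO^{\otimes}$ via \cref{Overcategory of fibrous patterns prop}; the full faithfulness of the sliced envelope $\Env_{\cO^{\otimes}}^{/\sA_{\cO}}\cln \Fbrs(\cO^{\otimes}) \to \Cat_{\cO,/\sA_{\cO}}^{\otimes}$; and, via \cite[Rem~4.2.8]{Barkan}, the identification of its essential image with the equifibrations in the sense of \cite[Thm~C]{Barkan}. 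Translating through \cref{Overcategory of fibrous patterns prop} and \cref{Familiar structures corollary} then gives the statement verbatim.

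I expect the one genuinely non-formal point to be the verification that the inherited pattern structure on $\cO^{\otimes}$ is soundly extendable; everything else is transport of structure through equivalences already established in \cref{Overcategory of fibrous patterns prop}, \cref{Familiar structures corollary}, and \cref{Envelope theorem}. In fact this soundness input is largely implicit in the infrastructure of \cite{Barkan} invoked throughout (one needs $\cO^{\otimes}$ to be at least sound even to speak of $\Fbrs(\cO^{\otimes})$ and $\Seg_{\cO^{\otimes}}$), so I would either cite it directly or include the short reduction sketched above. A secondary, purely cosmetic point is to check that the functoriality in $\cP^{\otimes} \to \cO^{\otimes}$ displayed in the last two lines of the statement is the one built into \cref{Envelope theorem} under $\Op_{\cT,/\cO^{\otimes}} \simeq \Fbrs(\cO^{\otimes})$, which is immediate.
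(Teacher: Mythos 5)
Your proposal takes exactly the route the paper intends: the paper states this as a direct specialization of \cref{Envelope theorem} ``in view of \cref{Familiar structures corollary}'' and gives no separate argument, so your filling-in via $\Op_{\cT,/\cO^{\otimes}}\simeq\Fbrs(\cO^{\otimes})$ (\cref{Overcategory of fibrous patterns prop}) and $\Cat_{\cO}^{\otimes}=\Seg_{\cO^{\otimes}}(\Cat)$ is the right reading.

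One small imprecision in your verification that $\cO^{\otimes}$ is soundly extendable: you assert that $\sA_{\cO^{\otimes}}=\Ar^{\act}(\cO^{\otimes})\xrightarrow{t}\cO^{\otimes}$ ``is pulled back from $\sA_{\Span(\FF_{\cT})}$ along $\pi_{\cO}$.'' This identification is not quite right as stated. Because the active arrows in $\cO^{\otimes}$ are \emph{arbitrary} lifts of active arrows in $\Span(\FF_{\cT})$, what one actually has is $\Ar^{\act}(\cO^{\otimes})\simeq\Ar(\cO^{\otimes})\times_{\Ar(\Span(\FF_{\cT}))}\Ar^{\act}(\Span(\FF_{\cT}))$, not $\cO^{\otimes}\times_{\Span(\FF_{\cT}),s}\Ar^{\act}(\Span(\FF_{\cT}))$ (the latter is the envelope $\Env\cO^{\otimes}$, a different object). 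The desired conclusion is nevertheless true and is exactly \cite[Lem~4.1.15]{Barkan} (``fibrous patterns over soundly extendable patterns are soundly extendable''), which the paper itself cites in the proof of \cref{Pullback morita prop}; replacing your pullback reasoning with that direct citation would make the soundness step airtight without changing the overall architecture of the argument.
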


We will simply write $\Env_{I}(-) \deq \Env_{\cN_{I \infty}}(-)$ and $\Env(-) \deq \Env_{\Comm_{\cT}}(-)$.
\begin{example}\label{Windex windcat example}
    Let $I$ be a weak indexing category.
    Then, unwinding definitions, we find that 
    \[
      \Env_{I} \cN_{I \infty}^{\otimes} \simeq \uFF_{I}^{I-\sqcup},
    \]
    where $\uFF_{I}^{I-\sqcup} \subset \uFF_{\cT}^{\cT-\sqcup}$ is the $I$-symmetric monoidal full $\cT$-subcategory defined in \cref{I-commutative monoids subsection}, i.e. it is the $I$-symmetric monoidal subcategory generated by $\cbr{*_V \mid V \in c(I)}$.
\end{example}

We record a convenient property of $\Env_I(-)$ here, which follows by unwinding definitions.
\begin{lemma}[{\cite[Rmk~2.4.4.3]{HA}}]\label{Envelope omnibus corollary}
  If $\cO^{\otimes} \in \Op_{I}$ and $\psi\cln T \rightarrow S$ is a map of $V$-sets, then there is an equivalence
  \begin{align*}
    \Mor^{\psi}_{\Env_I(\cO)_V \rightarrow \FF_{I,V}}(\Env_I(\cO)_V) 
    &\simeq 
    \coprod_{(\bC,\bD) \in \cO_T \times \cO_S} \Map^{\psi}_{\cO^{\otimes} \rightarrow \Span(\FF_{\cT})}(\bC,\bD)\\
    &\simeq
    \coprod_{(\bC,\bD) \in \cO_T \times \cO_S} \prod_{U \in \Orb(S)} \cO(\bC_U;D_U)
  \end{align*}
  In particular, if $\cO^{\otimes}$ has one color, then 
  \[
    \Map^{\psi}_{\Env_I(\cO)_V \rightarrow \FF_{I,V}}(iT,iS) \simeq \prod_{U \in \Orb(S)} \cO(T_U).
  \]
\end{lemma}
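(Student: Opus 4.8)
The plan is to unwind the pullback description of the Segal envelope from \cref{Envelope theorem,Env is maps of operads corollary} and then feed it into the Segal conditions of \cref{GOperads conditions}. Viewing $\cO^{\otimes}$ over the terminal $I$-operad $\cN_{I\infty}^{\otimes} = \Span_I(\FF_{\cT})$, we have
\[
  \Env_I(\cO)^{\otimes} \simeq \cO^{\otimes} \times_{\Span_I(\FF_{\cT}),\, s}\, \Ar^{\act}\prn{\Span_I(\FF_{\cT})},
\]
with structure map to $\Span_I(\FF_{\cT})$ given by the target functor, and the reference functor $F\colon \Env_I(\cO)_V \rightarrow \FF_{I,V}$ is the $V$-value of the map $\Env_I(\cO) \rightarrow \Env_I\prn{\cN_{I\infty}} \simeq \uFF_I^{I-\sqcup}$ of \cref{Windex windcat example} induced by the unique $\cO^{\otimes} \rightarrow \cN_{I\infty}^{\otimes}$; concretely, $F$ sends a pair $(\bC \in \cO_X, X \rightarrow V)$ to $X$.

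First I would identify the $V$-value explicitly. The fiber of the target functor $\Ar^{\act}\prn{\Span_I(\FF_{\cT})} \rightarrow \Span_I(\FF_{\cT})$ over the orbit $V$ is the slice $\Span_I(\FF_{\cT})^{\act}_{/V}$, whose objects are the $I$-admissible $V$-sets—each carrying its unique map to $*_V$—and whose morphisms $(T \rightarrow V) \rightarrow (S \rightarrow V)$ are spans $T \rightarrow S$ over $V$. Since the structure maps to $*_V$ are active (forward) and a span composes with a forward map to give a forward map only when its backward leg is an equivalence, every such morphism is forced to be a genuine active map of $V$-sets. Consequently a morphism of $\Env_I(\cO)_V$ lying over $\psi\colon T \rightarrow S$ is the same datum as a morphism $\bC \rightarrow \bD$ in $\cO^{\otimes}$ over the active map $\psi$, with $\bC \in \cO_T$ and $\bD \in \cO_S$; passing to cores of $\Fun(\Delta^1,-)$ and using the mapping-space formula for a pullback of $\infty$-categories yields
\[
  \Mor^{\psi}_{\Env_I(\cO)_V \rightarrow \FF_{I,V}}\prn{\Env_I(\cO)_V} \simeq \coprod_{(\bC,\bD) \in \cO_T \times \cO_S} \Map^{\psi}_{\cO^{\otimes} \rightarrow \Span(\FF_{\cT})}(\bC,\bD).
\]

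Next I would apply the Segal condition for multimorphisms. Writing $T_U \deq T \times_S U$ for $U \in \Orb(S)$, \cref{GOperads conditions}(c) together with the Remark following it gives, via cocartesian transport along the inert lifts over $(S \leftarrow U = U)_{U \in \Orb(S)}$, an equivalence $\Map^{\psi}_{\cO^{\otimes}}(\bC,\bD) \simeq \prod_{U \in \Orb(S)} \cO(\bC_U; D_U)$, where $\bC_U$ is the $T_U$-tuple of colors underlying $\bC$ and $D_U$ the $U$-th color of $\bD$; this produces the middle equivalence. For the one-colored case, \cref{Color Segal condition} forces $\cO_T$, $\cO_S$ and each $\cO_U$ to be contractible, so the coproduct collapses to the single term $(\bC,\bD) = (iT,iS)$ and $\cO(\bC_U;D_U) = \Map^{T_U \rightarrow U}_{\cO^{\otimes}}(iT_U, iU) = \cO(T_U)$, as claimed.

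The main point requiring care—and the step I would be most careful about—is the bookkeeping in the second paragraph: correctly identifying the fiber of the target functor over $V$, verifying that its morphisms are forced to be ordinary maps of $V$-sets (so that $\Mor^{\psi}_F$ really selects morphisms of $\cO^{\otimes}$ over the active map $\psi$ rather than over more general spans), and matching the coproduct over colors $\cO_T \times \cO_S$ against the core of $\Fun(\Delta^1, \Env_I(\cO)_V)$. Everything else is formal, being the mapping-space formula $\Map_{\cA \times_{\cC} \cB} \simeq \Map_{\cA} \times_{\Map_{\cC}} \Map_{\cB}$ together with \cref{GOperads conditions}; indeed this is precisely the parametrized analogue of \cite[Rmk~2.4.4.3]{HA}.
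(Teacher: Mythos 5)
Your proof is correct and is exactly the "unwinding definitions" the paper alludes to but does not spell out (the lemma is stated without a written proof, citing \cite[Rmk~2.4.4.3]{HA} as the model). The steps you take — the pullback description of $\Env_I(\cO)^{\otimes}$, identifying the $V$-fiber of the active arrow category, forcing the backward leg of a span over $V$ to be an equivalence so that morphisms over $\psi$ in the envelope are exactly morphisms over the active lift of $\psi$ in $\cO^{\otimes}$, and then applying the multimorphism Segal condition from \cref{GOperads conditions} and its remark — are precisely the intended ones, and the one-color reduction via \cref{Color Segal condition} is handled correctly. One small point of care you implicitly get right: the fiber-over-$\psi$ construction $\Mor^{\psi}_F$ in the paper quotients by $B\Aut_\psi$, so the coproduct over $(\bC,\bD) \in \cO_T \times \cO_S$ must be read as indexed by $\pi_0(\cO_T \times \cO_S)$ with the automorphism data absorbed into the mapping spaces; this matches \cref{GOperads conditions}(c) after cocartesian transport, so no discrepancy arises.
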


\subsubsection{Trivial $\cT$-operads}\label{Trivial subsubsection}
We now construct a simple family of $\cT$-operads:
\begin{construction}
  Given $\cC$ a $\cT$-$\infty$-category, we define the $\cT$-operad
  \[
    \triv(\cC)^{\otimes} \deq L_{\Op_{\cT}}\prn{\Tot \cC \rightarrow \cT^{\op} \rightarrow \Span(\FF_{\cT})}.\qedhere
  \]
\end{construction}
The following property follows by unwinding definitions.
\begin{proposition}\label{Triv prop}
  $U$ implements an equivalence
  \[
    \Alg_{\triv(\cC)}(\cO) \xrightarrow{\;\;\sim\;\;} \Fun_{\cT}(\cC,U\cO);
  \]
  in particular, $\triv(-)^{\otimes}\colon \Cat_{\cT} \rightarrow \Op_{\cT}$ is a fully faithful left adjoint to $U$.
\end{proposition}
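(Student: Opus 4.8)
The plan is to prove the displayed equivalence by unwinding the definition of both sides and then feeding everything through the reflective localization $L_{\Op_{\cT}}\colon \Cat^{\Int-\cocart}_{/\Span(\FF_{\cT})} \rightleftarrows \Op_{\cT}$. Two structural facts do the work. First, by \cref{Underlying construction} the underlying $\cT$-category functor $U$ is pullback along the inclusion of the elementary objects $\cT^{\op} \hookrightarrow \Span(\FF_{\cT})$, so $\Tot(U\cO) \simeq \cO^{\otimes} \times_{\Span(\FF_{\cT})} \cT^{\op}$; consequently any functor over $\Span(\FF_{\cT})$ out of $\Tot\cC$ (which is supported over $\cT^{\op}$) lands in $\Tot(U\cO)$, and, by straightening, preserving cocartesian lifts of the inert arrows coming from $\cT^{\op}$ is exactly the datum of a $\cT$-functor $\cC \to U\cO$. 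Since $\Tot\cC$ is supported over the elementary objects, these are the only inert-cocartesian conditions it can detect, so the $\infty$-category of such functors is naturally $\Fun_{\cT}(\cC, U\cO)$. Second, $\triv(\cC)^{\otimes} = L_{\Op_{\cT}}(\Tot\cC \to \cT^{\op} \to \Span(\FF_{\cT}))$ is the reflection of $\Tot\cC$ into fibrous patterns, while $\cO^{\otimes}$ is already fibrous.

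The main body of the argument then has three steps. Step 1 records the identification of the previous paragraph. Step 2 transports it across the localization: the unit map $\Tot\cC \to \triv(\cC)^{\otimes}$ induces a comparison functor $\Alg_{\triv(\cC)}(\cO) = \Fun^{\Int-\cocart}_{/\Span(\FF_{\cT})}(\triv(\cC)^{\otimes}, \cO^{\otimes}) \to \Fun_{\cT}(\cC, U\cO)$, and on underlying groupoids this is the equivalence $\Map_{\Op_{\cT}}(\triv(\cC)^{\otimes}, \cO^{\otimes}) \simeq \Map_{\Cat^{\Int-\cocart}_{/\Span(\FF_{\cT})}}(\Tot\cC, \cO^{\otimes})$ supplied by $L_{\Op_{\cT}} \dashv \iota$. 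To promote this to an equivalence of $\infty$-categories one uses that the hom-$\infty$-categories $\Fun^{\Int-\cocart}_{/\Span(\FF_{\cT})}(-, \cO^{\otimes})$ into a fibrous pattern invert $L_{\Op_{\cT}}$-local equivalences in the first variable — equivalently, that $\Op_{\cT} \subset \Cat^{\Int-\cocart}_{/\Span(\FF_{\cT})}$ is an exponential ideal for the internal ($\cT$-functor-type) homs recalled in \cref{Algebraic patterns subsection}. Step 3 combines Steps 1--2 and checks naturality in $\cC \in \Cat_{\cT}$ and in $\cO^{\otimes} \in \Op_{\cT}$.

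The ``in particular'' is then formal: naturality in both variables of $\Map_{\Op_{\cT}}(\triv(\cC)^{\otimes}, \cO^{\otimes}) \simeq \Map_{\Cat_{\cT}}(\cC, U\cO)$ exhibits $\triv(-)^{\otimes}$ as left adjoint to $U$, and $\triv(-)^{\otimes}$ is fully faithful precisely when the unit $\cC \to U\triv(\cC)^{\otimes}$ is an equivalence, which holds because $L_{\Op_{\cT}}$ does not change the fibers over elementary objects (it only modifies fibers over non-orbit $\cT$-sets and the active multimorphism spaces); alternatively one reads this off by running the main equivalence with $\cO^{\otimes} = \triv(\cC)^{\otimes}$ and comparing $\Fun_{\cT}(\cC', U\triv(\cC))$ with $\Fun_{\cT}(\cC', \cC)$. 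The only point that is not bare straightening/unstraightening bookkeeping — and hence the expected obstacle behind the paper's ``follows by unwinding definitions'' — is the upgrade in Step 2 from mapping spaces to mapping $\infty$-categories of algebras; I would discharge it by citing the structural properties of fibrous patterns from \cref{Algebraic patterns subsection} rather than reproving closure of $\Op_{\cT}$ under the relevant internal-hom construction.
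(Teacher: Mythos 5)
Your proposal is correct, and the structure of the argument is sound: identify $\Fun^{\Int-\cocart}_{/\Span(\FF_{\cT})}(\Tot\cC, \cO^{\otimes})$ with $\Fun_{\cT}(\cC,U\cO)$ by noting that $\Tot\cC$ lives over the elementary objects so any map factors through $\Tot(U\cO)$ and the inert-cocartesian condition degenerates to that of a map of cocartesian fibrations over $\cT^{\op}$; then transport across the localization; then read off the adjunction and fully faithfulness from the unit being an equivalence over elementary objects. You have also correctly put your finger on the one nontrivial point, namely the upgrade of $L_{\Op_{\cT}} \dashv \iota$ from an equivalence of mapping spaces (cores) to an equivalence of the full functor $\infty$-categories $\Fun^{\Int-\cocart}_{/\Span(\FF_{\cT})}(-,\cO^{\otimes})$, which really does require knowing that these functor categories invert $L_{\Op_{\cT}}$-local equivalences when the target is fibrous.

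One small caveat worth fixing: the justification you offer for that step — citing ``the structural properties of fibrous patterns from \cref{Algebraic patterns subsection}'' — overstates what that subsection records. No exponential-ideal statement is made there. What the paper does make available, in \cref{Push-pull along projection corollary}, is the presentation of $\Op_{\cT}$ as the fibrant objects of the model structure on $\mathrm{Set}^+_{\Delta,/\mathrm{CatPatt}(\Span(\FF_{\cT}))}$ of \cite[Thm~B.0.20]{HA}, and the needed fact (that $\mathrm{Fun}^{\Int-\cocart}(-,\cO^{\otimes})$ into a fibrant object sends trivial cofibrations, hence arbitrary weak equivalences out of cofibrant objects, to equivalences) is a standard consequence of the compatible enrichment of that model structure (\cite[\S~B.2]{HA}). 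So your appeal is to the right toolkit, but it should be routed through that model-categorical presentation rather than attributed to the recollections earlier in the section. Alternatively, one can avoid the localization adjunction entirely by first identifying $\triv(\cC)^{\otimes}$ explicitly — its total category is unstraightened from the right Kan extension of $\cC$ along $\cT^{\op} \hookrightarrow \Tot\uSigma_{\cT}$, as anticipated in \cref{Triv and Nardin-Shah} — and then checking the mapping formula against \cref{GOperads conditions} directly; that is likely the more literal meaning of the paper's ``unwinding definitions,'' and it keeps everything at the level of explicit fibrations.

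Everything else in your write-up is fine: the factorization of any functor $\Tot\cC\to\cO^{\otimes}$ through $\Tot(U\cO)$, the reduction of the preserve-cocartesian condition to being a $\cT$-functor, the naturality argument for the adjunction, and the observation that fully faithfulness comes from $L_{\Op_{\cT}}$ leaving the fibers over orbits unchanged (since the Segal conditions only constrain fibers over non-orbit $\cT$-sets and multimorphism spaces over nontrivial active maps).
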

In order to state a corollary, set the notation $\uSigma_{\cT} \deq \uFF_{\cT,*}^{\simeq}$, the latter denoting the $\cT$-space core of \cref{Core example}.
We acquire the following identification from uniqueness of left adjoints and \cite[Cor~2.4.5]{Nardin}.
\begin{corollary}\label{Triv and Nardin-Shah}
  The equivalence $\Op_{\cT} \simeq \Fbrs(\Tot \uFF_{\cT,*})$ identifies $\triv(\cC)^{\otimes}$ with the trivial $\cT$-$\infty$-operads of \cite[Cor~2.4.5]{Nardin};
  in particular, $\Tot_{\cT} \triv^{\otimes}_{\cT} \simeq \uSigma_{\cT}$, and the functor $\Tot \Tot_{\cT} \triv(\cC)^{\otimes} \rightarrow \Tot \Tot_{\cT} \triv(*_{\cT})^{\otimes} \simeq \Tot \uSigma_{\cT}$ is unstraightened from the right Kan extension
  \[
\begin{tikzcd}[ampersand replacement=\&]
	{\cT^{\op}} \& \Cat \\
	{\Tot \uSigma_{\cT}}
	\arrow["\cC", from=1-1, to=1-2]
	\arrow[from=1-1, to=2-1]
	\arrow[""{name=0, anchor=center, inner sep=0}, "{\Tot_{\cT} \triv(\cC)}"'{pos=0.3}, dashed, from=2-1, to=1-2]
	\arrow[shorten <=6pt, shorten >=2pt, Rightarrow, from=0, to=1-1]
\end{tikzcd}
  \]
\end{corollary}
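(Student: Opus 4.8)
The plan is to identify $\triv(\cC)^{\otimes}$ with Nardin--Shah's trivial $\cT$-$\infty$-operad by a uniqueness-of-adjoints argument, and then to transport their explicit description of it across the equivalence $\varphi^{*}\colon \Op_{\cT} \xrightarrow{\sim} \Fbrs(\Tot\uFF_{\cT,*})$ of \cref{Operads are fibrous theorem}.

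First I would check that $\varphi^{*}$ intertwines the underlying $\cT$-$\infty$-category functor $U$ of \cref{Underlying construction} with the underlying $\cT$-$\infty$-category functor on fibrous $\Tot\uFF_{\cT,*}$-patterns, i.e.\ with pullback along the $*_{\cT}$-inclusion $\cT^{\op}\hookrightarrow\Tot\uFF_{\cT,*}$. This is a routine diagram chase: since the composite $\cT^{\op}\hookrightarrow\Tot\uFF_{\cT,*}\xrightarrow{\varphi}\Span(\FF_{\cT})$ of \cref{Span comparison map} is the standard inclusion, pasting pullback squares gives a natural equivalence $\varphi^{*}\cO^{\otimes}\times_{\Tot\uFF_{\cT,*}}\cT^{\op}\simeq\cO^{\otimes}\times_{\Span(\FF_{\cT})}\cT^{\op}=\Tot\,U(\cO^{\otimes})$, compatibly with the structure maps down to $\cT^{\op}$ and with cocartesian lifts of inert morphisms.

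With this in hand, I would invoke \cref{Triv prop}, which exhibits $\triv(-)^{\otimes}$ as a left adjoint to $U$, together with \cite[Cor~2.4.5]{Nardin}, which exhibits the trivial $\cT$-$\infty$-operad functor as a left adjoint to the underlying $\cT$-$\infty$-category functor on Nardin--Shah $\cT$-$\infty$-operads. Transporting the latter adjunction along $\varphi^{*}$ and its inverse produces a left adjoint to $U$; since left adjoints to a fixed functor are unique up to canonical equivalence, $\varphi^{*}$ must carry $\triv(\cC)^{\otimes}$ to Nardin--Shah's trivial $\cT$-$\infty$-operad. The remaining ``in particular'' assertions --- that $\Tot_{\cT}\triv_{\cT}^{\otimes}\simeq\uSigma_{\cT}$ and that $\Tot\Tot_{\cT}\triv(\cC)^{\otimes}\to\Tot\uSigma_{\cT}$ is unstraightened from the displayed right Kan extension of $\cC$ along $\cT^{\op}\to\Tot\uSigma_{\cT}$ --- are precisely the content of \cite[Cor~2.4.5]{Nardin}, and they transfer verbatim once one recalls that $\Tot_{\cT}$ is by construction the composite $\Op_{\cT}\xrightarrow{\varphi^{*}}\Fbrs(\Tot\uFF_{\cT,*})\to\Cat_{\cT}$.

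The main obstacle, such as it is, is the compatibility in the second paragraph: one has to be a little careful that the equivalence $\varphi^{*}\cO^{\otimes}\times_{\Tot\uFF_{\cT,*}}\cT^{\op}\simeq\Tot\,U(\cO^{\otimes})$ is natural in $\cO^{\otimes}$ and respects the $\cT$-$\infty$-categorical structure, so that the uniqueness-of-left-adjoints step genuinely applies; everything else is bookkeeping across \cref{Operads are fibrous theorem} or a direct appeal to Nardin--Shah. If one wished to avoid citing their computation, an alternative route would be to unwind $\triv(\cC)^{\otimes}=L_{\Op_{\cT}}\prn{\Tot\cC\to\cT^{\op}\to\Span(\FF_{\cT})}$ directly, using the Segal conditions of \cref{GOperads conditions} to compute its colors, its structure spaces, and the induced map to $\Tot\uSigma_{\cT}$, and then to match this against the right Kan extension fiberwise --- but this is longer and is ultimately subsumed by \cite[Cor~2.4.5]{Nardin}.
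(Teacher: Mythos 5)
Your proposal is correct and matches the paper's own argument: the paper proves this corollary by the same one-line appeal to uniqueness of left adjoints combined with \cite[Cor~2.4.5]{Nardin}. The extra step you spell out — verifying that $\varphi^{*}$ intertwines the two underlying-$\cT$-$\infty$-category functors so that the uniqueness argument transports correctly — is a reasonable elaboration of what the paper leaves implicit.
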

\subsection{The underlying \tcT-symmetric sequence}\label{Symmetric sequence subsection}
We now study a forgetful functor to \emph{symmetric sequences}.
We begin in \cref{C-symmetric subsubsection} by defining the multi-colored variant of $\cT$-symmetric sequences, then move on in \cref{Underlying C-symmetric subsubsection} to construct the underlying $\fC$-symmetric sequence, verifying that it is monadic in the one-color setting. 
We reframe this somewhat in \cref{Other perspectives subsubsection} to introduce the \emph{$n$-ary $B_{\cT} \Sigma_n$-space} construction.
\subsubsection{$\fC$-symmetric sequences}\label{C-symmetric subsubsection}
\begin{definition}\label{Sigma V construction}
Let $\cD$ be a $\cT$-$\infty$-category.
The \emph{$\infty$-category of $\cT$-symmetric sequences in $\cD$} is $\Fun_{\cT}(\uSigma_{\cT}, \cD)$.
In the case $\cC = \ucS_{\cT}$, we refer to $\Fun_{\cT}(\uSigma_{\cT},\ucS_{\cT}) \simeq \Fun(\tot \uSigma_{\cT},\cS)$ simply as the \emph{$\infty$-category of $\cT$-symmetric sequences}.
  
More generally, if $\fC$ is a $\cT$-coefficient system of sets, we set the notation $\uSigma_{\fC} \deq \Tot_{\cT} \triv(\fC)^{\otimes}$;
we refer to $\Fun_{\cT}(\uSigma_{\fC},\cD)$ and $\Fun(\Tot \uSigma_{\fC},\cS)$ as the $\infty$-categories of $\fC$-symmetric sequences in $\cD$ and $\fC$-symmetric sequences, respectively.
\end{definition}

\begin{observation}\label{Sigma observation}
  For any adequate triple $(\cX,\cX_b,\cX_f)$, the inclusion 
  \[
    \cX \hookrightarrow \Span_{b,f}(\cX)
  \]
  induces an equivalence on cores.
  In particular, choosing $(\uFF_{\cT}, \uFF_{\cT}^{s.i.}, \uFF_{\cT})$, we find that the inclusion $(-)_+:\uFF_{\cT} \rightarrow \uFF_{\cT,*}$ induces an equivalence
  \[
    \uFF_{\cT}^{\simeq} \simeq \uFF_{\cT,*}^{\simeq} \simeq \uSigma_{\cT}.
  \]
  In particular, unwinding definitions, we find that
  \[
      \Sigma_V \deq \uSigma_{\cT,V} \simeq \FF_V^{\simeq} \simeq \coprod_{S \in \FF_V} B\Aut_V S
  \]
  and the restriction map $\Sigma_{V} \rightarrow \Sigma_{W}$ is induced by the forgetful maps $B\Aut_V S \rightarrow B\Aut_W S$.
\end{observation}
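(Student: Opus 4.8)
The plan is to reduce the whole statement to the classical fact that the effective Burnside construction leaves the space of objects unchanged. First I would recall that, for any adequate triple $(\cX,\cX_b,\cX_f)$, the effective Burnside category $\Span_{b,f}(\cX) = A^{eff}(\cX,\cX_b,\cX_f)$ of \cite{Barwick1} is presented by a complete Segal space whose space of $0$-simplices is $\cX^{\simeq}$ (its $0$-simplices being the objects of $\cX$), and that the forward inclusion $\cX \hookrightarrow \Span_{b,f}(\cX)$ is induced by a map of complete Segal spaces which is the identity on $0$-simplices. Since the maximal sub-$\infty$-groupoid of a complete Segal space coincides with its space of objects, this forces the induced map on cores $\cX^{\simeq} \to \Span_{b,f}(\cX)^{\simeq}$ to be an equivalence; concretely, this records that an invertible span has both legs equivalences, so that the automorphisms of an object in $\Span_{b,f}(\cX)$ are precisely its automorphisms in $\cX$. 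This fact is standard, and applying it to the adequate triple $(\uFF_{\cT},\uFF_{\cT}^{s.i.},\uFF_{\cT})$ — which is adequate because $\FF_V$ admits pullbacks along summand inclusions and these stay summand inclusions — produces an equivalence of $\cT$-spaces, natural in $\cT$ since $\Span_{-,-}(-)$ and $(-)^{\simeq}$ are functorial.

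Next I would identify the target of this equivalence with $\uSigma_{\cT}$. The $V$-value $\FF_{V,*}$ of $\uFF_{\cT,*}$ is canonically the span category $\Span_{\mathrm{si},\mathrm{all}}(\FF_V)$: a basepoint-preserving map $S_+ \to T_+$ is the same datum as a partial function $S \rightharpoonup T$, i.e. a span with summand-inclusion backward leg. Under this identification $(-)_+\colon \uFF_{\cT} \to \uFF_{\cT,*}$ is exactly the forward inclusion, so the previous paragraph applies verbatim. The one point requiring care is that the algebraic pattern $\Tot \uFF_{\cT,*}$ of \cref{Span comparison map} is built from target-degenerate rather than arbitrary forward maps; but every equivalence is target-degenerate, so this discrepancy is invisible to cores, and I would conclude $\uFF_{\cT}^{\simeq} \simeq \uFF_{\cT,*}^{\simeq} = \uSigma_{\cT}$.

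Finally the remaining assertions are pure bookkeeping: $\Sigma_V = (\uFF_{\cT,*}^{\simeq})_V = (\FF_{V,*})^{\simeq} \simeq (\FF_V)^{\simeq} = \coprod_{S} B\Aut_V(S)$, the coproduct ranging over isomorphism classes of finite $V$-sets, and the restriction map $\Res_W^V\colon \Sigma_V \to \Sigma_W$ is the core of $\Res_W^V\colon \FF_V \to \FF_W$, which on the component indexed by $S$ is the delooping of the restriction homomorphism $\Aut_V(S) \to \Aut_W(\Res_W^V S)$ — precisely the forgetful map asserted. I expect the only genuinely delicate step to be the matching of $(-)_+$ with the forward inclusion together with the harmless target-degenerate/arbitrary discrepancy; the core equivalence for span categories and the final unwinding are routine.
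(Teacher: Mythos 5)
Your argument is correct and is the expected one: the paper states this as an Observation with no written proof, and what you spell out — Barwick's complete Segal space presentation of $A^{eff}$ having $0$-simplices $\cX^{\simeq}$, the classical identification $\FF_{V,*} \simeq \Span_{\mathrm{si},\mathrm{all}}(\FF_V)$, and the splitting of a groupoid as $\coprod_S B\Aut_V(S)$ — is exactly the routine verification the author takes for granted. The one point you flag as ``genuinely delicate'' is in fact a non-issue even before taking cores: the ``tdeg'' morphisms in the pattern $\Tot\uFF_{\cT,*} = \Span_{\mathrm{si},\mathrm{tdeg}}(\Tot\uFF_{\cT};\cT^{\op})$ are by definition those projecting to identities in $\cT^{\op}$, so the fiber over $V$ is already $\Span_{\mathrm{si},\mathrm{all}}(\FF_V)$ on the nose (the tdeg restriction carves out the fibers of the cocartesian fibration, not a proper wide subcategory of each fiber); note also that your stated workaround ``every equivalence is target-degenerate'' is not quite right at the total-category level when $\cT$ has nontrivial automorphisms, but since the observation concerns the fiberwise $\cT$-space $\uSigma_{\cT}$, this imprecision is harmless.
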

We may similarly explicitly understand $\uSigma_{\fC}$.
\begin{observation}
  A map of coefficient systems $\fC \rightarrow \fD$ induces a map of $\cT$-operads $\triv(\fC)^{\otimes} \rightarrow \triv(\fD)^{\otimes}$, i.e. a cocartesian fibration of $\cT$-$\infty$-categories $\uSigma_{\fC} \rightarrow \uSigma_{\fD}$.
  Applying this in the case $\fD = *_{\cT}$, we acquire a canonical natural cocartesian fibration of $\cT$-$\infty$-categories
  \[
    \uSigma_{\fC} \rightarrow \uSigma_{\cT};
  \]
  taking $V$-values yields a natural cocartesian fibration 
  \[
    \Sigma_{\fC,V} \rightarrow \Sigma_V
  \]
  whose straightening has discrete values 
  \[
    S \mapsto \fC^S \times \fC^V
  \]
  with functoriality along $S$-automorphisms given by permuting the factors of the $\fC^S$ part (see \cref{Triv and Nardin-Shah}).
  In particular, the classical Grothendieck construction describes $\Sigma_V$ as having;
  \begin{itemize}
    \item Objects: profiles $(V, S \in \FF_V, (\bC;D) \in \fC^S \times \fC^V)$ with orbit $V$
    \item Morphisms: $V$-equivariant automorphisms of $S$ whose action fixes $\bC$, i.e. color-preserving $S$-automorphisms.
  \end{itemize}
  In short, we find that
  \[
    \Sigma_{\fC,V} = \coprod_{\stackrel{S \in \FF_V}{(\bC;D) \in \fC^S \times \fC^V}} B\Aut_V(\bC).
  \]
  Moreover, unwinding definitions, we find that the restriction functor $\Res_W^V\colon \Sigma_{\fC,V} \rightarrow \Sigma_{\fC,W}$ is induced from the forgetful maps $B\Aut_V(\bC) \rightarrow B\Aut_W(\bC)$.
\end{observation}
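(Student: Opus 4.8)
The plan is to deduce everything from the description of $\triv(\fC)^{\otimes}$ furnished by \cref{Triv and Nardin-Shah}, the structure of $\uSigma_{\cT}$ recorded in \cref{Sigma observation}, and the functoriality of $\triv(-)^{\otimes}$ and $\Tot_{\cT}$. For the existence and functoriality of the map, I would note that by \cref{Triv prop} the assignment $\triv(-)^{\otimes}\colon \Cat_{\cT} \to \Op_{\cT}$ is a functor (being the left adjoint to $U$), and $\Tot_{\cT}\colon \Op_{\cT} \to \Cat_{\cT}$ is a functor, so $\uSigma_{(-)} \deq \Tot_{\cT}\triv(-)^{\otimes}$ is a functor on coefficient systems of sets; a map $\fC \to \fD$ thus induces $\uSigma_{\fC} \to \uSigma_{\fD}$, and specializing to $\fD = *_{\cT}$ (using $\Tot_{\cT}\triv^{\otimes}_{\cT} \simeq \uSigma_{\cT}$ from \cref{Triv and Nardin-Shah}) gives the canonical $\uSigma_{\fC} \to \uSigma_{\cT}$.

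Next I would show it is a cocartesian fibration with the asserted straightening. By \cref{Triv and Nardin-Shah}, the total category $\Tot\Tot_{\cT}\triv(\fC)^{\otimes} \to \Tot\uSigma_{\cT}$ is the unstraightening of the right Kan extension $\Sigma_{\fC} \deq j_{*}\fC$ of $\fC\colon \cT^{\op} \to \Cat$ along $j\colon \cT^{\op} \to \Tot\uSigma_{\cT}$; in particular it is a cocartesian fibration on total categories, compatibly over $\cT^{\op}$, and this identifies it with $\Tot(\uSigma_{\fC} \to \uSigma_{\cT})$. Taking the fibre over $V \in \cT^{\op}$ (a base change along $\Sigma_V \hookrightarrow \Tot\uSigma_{\cT}$) exhibits $\Sigma_{\fC,V} \to \Sigma_V$ as a cocartesian fibration of $\infty$-categories whose straightening is $\Sigma_{\fC}|_{\Sigma_V}$. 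Since $\fC$ is valued in sets and right Kan extensions are computed as pointwise limits, which preserve discreteness, $\Sigma_{\fC}$ is valued in sets. It then remains to identify $\Sigma_{\fC}(V,S)$: using the Grothendieck-construction description of $\Tot\uSigma_{\cT}$ from \cref{Sigma observation} and the identification of $j$ with the unit $\cT$-point $*_{\cT} \to \uSigma_{\cT}$, I would compute the comma category of $j$ over $(V,S)$ and show that the limit of $\fC$ over it is $\fC^{S}\times\fC^{V} = \prn{\prod_{U \in \Orb(S)}\fC_U} \times \fC_V$, with the $\Aut_V(S)$-action permuting the factors of $\fC^{S}$ (matching the permutation behaviour flagged in \cref{Triv and Nardin-Shah}).

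Given this, the rest is bookkeeping. Feeding $S \mapsto \fC^{S}\times\fC^{V}$ into the classical Grothendieck construction over $\Sigma_V \simeq \coprod_{S \in \FF_V} B\Aut_V(S)$ (from \cref{Sigma observation}) shows: objects of $\Sigma_{\fC,V}$ are pairs of an $S$ and a point of $\fC^S\times\fC^V$, i.e.\ $\triv(\fC)$-profiles $(V,S,(\bC;D))$; and a morphism $(S,(\bC;D)) \to (S',(\bC';D'))$ is an automorphism $\sigma$ of $S$ (so $S' \simeq S$) with a path $\sigma\cdot(\bC;D) \to (\bC';D')$ in the discrete set $\fC^S\times\fC^V$, hence existing only when $\sigma\bC = \bC'$ and $D = D'$ — the color-preserving automorphisms. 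In particular $\mathrm{Aut}(S,(\bC;D)) = \Aut_V(\bC)$, giving $\Sigma_{\fC,V} \simeq \coprod_{S,(\bC;D)} B\Aut_V(\bC)$. Finally, naturality of the whole construction in $\cT$ — concretely the compatibility of $\Tot_{\cT}\triv(\fC)^{\otimes}$ with restriction from \cref{Restrictions fibrous remark}, together with the description of $\Res_W^V\colon \Sigma_V \to \Sigma_W$ as induced by the forgetful maps $B\Aut_V(S) \to B\Aut_W(S)$ in \cref{Sigma observation} — identifies $\Res_W^V\colon \Sigma_{\fC,V} \to \Sigma_{\fC,W}$ with the functor induced by the forgetful maps $B\Aut_V(\bC) \to B\Aut_W(\bC)$ and the restriction maps of $\fC$ on colors.

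The main obstacle is the explicit right Kan extension computation: pinning down the comma category of $j$ precisely enough to see that it yields a product indexed by $\Orb(S)$ together with the extra ``target'' factor $\fC^V$, and that $\Aut_V(S)$ acts by permuting the $\fC^S$-factors. Once that value and its functoriality are in hand, every other clause follows by unwinding the Grothendieck construction and the naturality of $\triv(-)^{\otimes}$, $\Tot_{\cT}$, and $\uSigma_{\cT}$ in $\cT$.
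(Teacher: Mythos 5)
Your overall strategy—unwind everything from the right Kan extension description in \cref{Triv and Nardin-Shah} together with the structure of $\uSigma_{\cT}$ from \cref{Sigma observation}—is the natural one and matches what the paper's cross-reference signals. The naturality-in-$\fC$ and naturality-in-$\cT$ parts of your argument are fine, and your reduction of the rest of the observation to the identity of the straightening $\Sigma_{\fC}|_{\Sigma_V}$ is also the right skeleton.

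The problem is the step you yourself flag as "the main obstacle": I do not believe the comma category $(V,S)\downarrow j$ gives you $\fC^S\times\fC^V$, and you do not actually carry out the computation. With $j\colon\cT^{\op}\to\Tot\uSigma_{\cT}$ the section at $*_{(-)}$, an object of $(V,S)\downarrow j$ is a pair $(W,\,(V,S)\to(W,*_W))$, and since $\Tot\uSigma_{\cT}$ is fibered in groupoids over $\cT^{\op}$ a morphism $(V,S)\to(W,*_W)$ over $W\to V$ amounts to an equivalence $\Res_W^V S\simeq *_W$. In the $\cO_G$ case this forces $S$ to be a one-element $H$-set: $\Res_K^H S$ has the same underlying cardinality as $S$, so it is a single $K$-fixed point only when $|S|=1$. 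Hence $(V,S)\downarrow j=\emptyset$ for $|S|>1$ and $j_*\fC(V,S)\simeq *$ there, not $\fC^S\times\fC^V$. Even at $S=*_V$ the comma category is the slice $(\cT^{\op})_{V/}$, which is cofiltered to $V$, so $j_*\fC(V,*_V)\simeq\fC_V$ rather than $\fC_V\times\fC_V$. So the extra $\fC^V$ factor cannot be produced by this Kan extension as you have set it up, and the argument does not close. Before the rest of your bookkeeping (the Grothendieck construction yielding $\coprod_{(S,(\bC;D))}B\Aut_V(\bC)$, the identification of $\Res_W^V$ on structure groupoids) can be accepted, you need either a corrected identification of the map $j$ (or of what is being Kan-extended), or an independent unwinding of the fiber $\bigl(\Tot_{\cT}\triv(\fC)^{\otimes}\bigr)_V$ that produces the output-color factor $\fC^V$ as well as the $\Aut_V(\bC)$-torsor structure on $\fC^S$; as written, the proposed calculation will not land on the stated answer.
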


\subsubsection{The underlying $\fC$-symmetric sequence}\label{Underlying C-symmetric subsubsection}
We will restrict to the following setting.
\begin{definition}
  Given $\fC\colon \cT^{\op} \rightarrow \Set$ a coefficient system of sets, we define the full subcategory of \emph{$\fC$-colored $\cT$-operads} as the pullback
  \[
    \begin{tikzcd}[ampersand replacement=\&]
      {\Op_{\cT}^{\fC}} \& {\Op_{\cT}} \\
      {\cbr{\fC}} \& {\CoFr^{\cT}(\Set)}
      \arrow[hook', from=1-1, to=1-2]
      \arrow[from=1-1, to=2-1]
      \arrow["\lrcorner"{anchor=center, pos=0.125}, draw=none, from=1-1, to=2-2]
      \arrow[from=1-2, to=2-2, "\pi_0 U"]
      \arrow[hook, from=2-1, to=2-2]
    \end{tikzcd}
  \]
\end{definition}

For instance, $\Op_{\cT}^{\oc} = \Op_{\cT}^{*_{\cT}}$ as full subcategories.
Thus the following construction recovers an \emph{underlying $\cT$-symmetric sequence} on one-color $\cT$-operads.
\begin{construction}\label{SSeq observation}
    Given $\cO^{\otimes} \in \Op_{\cT}^{\fC}$, 
    there is a structure map
    \[
      \Env_{\cO} \triv(\fC) \simeq \triv(\fC)^{\otimes} \times_{\cO^{\otimes}} \Ar^{\act, /\el}(\cO^{\otimes}) \rightarrow \triv(\fC)^{\otimes},
    \]
    where $\Ar^{\act, /\el}(\cO^{\otimes}) \subset \Ar^{\act}(\cO^{\otimes})$ is the full subcategory of active arrows whose codomain is elementary;
    this is an inert-cocartesian fibration by pullback-stability of inert-cocartesian fibrations \cite[Obs~2.1.7]{Barkan}.
    The \emph{underlying $\fC$-symmetric sequence of $\cO^{\otimes}$} is the unstraightening
    \[
        \cO_{\sseq}^{\otimes} \deq \Un_{\triv(\fC)} \Env_{\cO} \triv(\fC) \in \Fun(\tot \uSigma_{\fC}, \Cat).
    \]
    Unwinding definitions, we find that there exists a cartesian square  
    \[
        \begin{tikzcd}
            \cO(\bC;D) \arrow[r] \arrow[d] \arrow[rd, "\lrcorner" very near start, phantom]
            & \Env_{\cO} \triv(\fC) \arrow[d] \arrow[r,equals]
            & \tot \uSigma_{\fC} \times_{\cO^{\otimes}} \Ar^{\act,/\el}(\cO) \arrow[d]\\
            \cbr{(\bC;D)} \arrow[r, hook]
            & \triv(\fC)^{\otimes} \arrow[r,equals]
            & \tot \uSigma_{\fC}
        \end{tikzcd}
    \]
    so that $\cO_{\sseq}^{\otimes}$ is indeed an $\fC$-symmetric sequence.
    The associated functor is denoted
    \begin{align*}
      \sseq\colon \Op^{\fC}_{\cT} &\rightarrow \Fun(\tot \uSigma_{\fC},\cS).\qedhere
    \end{align*}
\end{construction}

We will often use the following to reduce questions about $\cT$-operads to $\cT$-symmetric sequences.
\begin{proposition}\label{Symmetric sequence proposition}
     Suppose a functor of $\cT$-operads $\varphi:\cO^{\otimes} \rightarrow \cP^{\otimes}$ satisfies the following conditions:
    \begin{enumerate}[label={(\alph*)}]
      \item $\varphi$ induces surjective maps $\pi_0 \cO_V \rightarrow \pi_0 \cP_V$ for all $V \in \cT$, and
      \item for all $V \in \cT$, all $S \in \FF_V$, all $\bC \in \cO_S$, and all $D \in \cO_V$, the map $\varphi$ induces equivalences $\varphi:\cO(\bC;D) \xrightarrow{\sim} \cP(\varphi \bC;\varphi D)$.  
    \end{enumerate}
    Then $\varphi$ is an equivalence of $\cT$-operads;
    in particular, the functor
    \[
        \sseq:\Op_{\cT}^{\fC} \rightarrow \Fun(\tot \uSigma_{\fC}, \cS)
    \]
    is conservative.
\end{proposition}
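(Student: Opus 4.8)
I would reduce everything to the assertion that $\varphi$ is an equivalence of the underlying $\infty$-categories $\cO^{\otimes}\to\cP^{\otimes}$: since $\Op_{\cT}=\Fbrs(\Span(\FF_{\cT}))$ is a full subcategory of $\Cat^{\Int-\cocart}_{/\Span(\FF_{\cT})}$, and a morphism in $\Cat^{\Int-\cocart}_{/\Span(\FF_{\cT})}$ is an equivalence as soon as its underlying functor of $\infty$-categories is (the inverse functor automatically lies over $\Span(\FF_{\cT})$ and preserves cocartesian lifts of inert morphisms, these being equivalence-invariant notions), this suffices. For essential surjectivity I would use the Segal condition for colors (\cref{GOperads conditions}): cocartesian transport identifies $\cO_S\simeq\prod_{U\in\Orb(S)}\cO_U$ and likewise for $\cP$, compatibly with $\varphi$; given $\bD\in\cP_S$ with components $(D_U)_{U\in\Orb(S)}$, hypothesis (a) lets me pick $C_U\in\cO_U$ with $\varphi(C_U)\simeq D_U$, and the resulting $\bC\in\cO_S$ satisfies $\varphi(\bC)\simeq\bD$.

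For full faithfulness, fix $\bC\in\cO_T$ and $\bD\in\cO_S$ and decompose $\Map_{\cO^{\otimes}}(\bC,\bD)$ over the projection to $\Span(\FF_{\cT})$ as $\coprod_{\alpha}\Map^{\alpha}_{\cO^{\otimes}}(\bC,\bD)$, indexed by $[\alpha]\in\pi_0\Map_{\Span(\FF_{\cT})}(T,S)$; as $\varphi$ is a functor over $\Span(\FF_{\cT})$ it respects this decomposition, so it is enough to treat one summand. Writing $\alpha$ as a span $T\leftarrow R\to S$, its inert–active factorization together with the cocartesian lifts of inert morphisms furnishes $\bC_R\in\cO_R$ with $\Map^{\alpha}_{\cO^{\otimes}}(\bC,\bD)\simeq\Map^{R\to S}_{\cO^{\otimes}}(\bC_R,\bD)$, and the Segal condition for multimorphisms (\cref{GOperads conditions}) then identifies this, via cocartesian transport along the inert maps $R\leftarrow R\times_S U=R\times_S U$, with $\prod_{U\in\Orb(S)}\cO\bigl((\bC_R)_{R\times_S U};D_U\bigr)$ — a product of $\cO$-structure spaces of precisely the form appearing in hypothesis (b). Because $\varphi$ preserves cocartesian lifts of inert morphisms, all of these identifications are natural in $\varphi$, so on this summand $\varphi$ is the product over $U$ of the maps $\cO\bigl((\bC_R)_{R\times_S U};D_U\bigr)\to\cP\bigl(\varphi(\bC_R)_{R\times_S U};\varphi D_U\bigr)$, each an equivalence by (b). Hence $\varphi$ is fully faithful, and therefore an equivalence of $\infty$-categories, hence of $\cT$-operads.

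Finally, for conservativity of $\sseq$, suppose $\varphi\colon\cO^{\otimes}\to\cP^{\otimes}$ is a morphism of $\Op_{\cT}^{\fC}$ with $\sseq(\varphi)$ an equivalence in $\Fun(\tot\uSigma_{\fC},\cS)$. By the cartesian square of \cref{SSeq observation}, the value of $\sseq(\varphi)$ at a profile $(\bC;D)$ (with orbit $V$ and $S\in\FF_V$) is exactly the map $\varphi\colon\cO(\bC;D)\to\cP(\varphi\bC;\varphi D)$, so hypothesis (b) holds; hypothesis (a) is automatic, since a morphism in $\Op_{\cT}^{\fC}$ induces (up to the fixed identifications) the identity on the common color system $\fC$. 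The previous paragraphs then show $\varphi$ is an equivalence. I expect the only step requiring genuine care to be the naturality bookkeeping in the full faithfulness argument — tracking that $\varphi$, as a map of fibrous patterns, is compatible with every instance of cocartesian transport used to reduce mapping spaces to products of structure spaces; the rest is formal.
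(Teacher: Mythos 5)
Your proposal is correct and follows essentially the same route as the paper's proof: reduce to showing $\tot\varphi$ is an equivalence of $\infty$-categories (the paper invokes \cref{Tot is conservative} for this, which is precisely your observation that the inverse automatically lies over $\Span(\FF_{\cT})$ and preserves inert-cocartesian lifts), then deduce essential surjectivity from (a) via the Segal condition for colors, and full faithfulness from (b) via the decomposition of $\Map_{\cO^\otimes}(\bC,\bD)$ over $\pi_0\Map_{\Span(\FF_{\cT})}(\pi\bC,\pi\bD)$ together with the Segal condition for multimorphisms. Your derivation of the conservativity statement — (b) holds because $\sseq(\varphi)$ is an equivalence of $\fC$-symmetric sequences, and (a) is automatic since morphisms in $\Op_{\cT}^{\fC}$ are $\pi_0$-isomorphisms on colors — is likewise the same observation the paper makes in one line.
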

In particular, specializing to $\fC = *_{\cT}$, we find out that $\cbr{\cO(S) \mid S \in \uFF_{\cT}}$ is jointly conservative.
To prove this, we proceed by reduction to the following observation.
\begin{observation}\label{Tot is conservative}
  If $\cC \rightarrow \cD$ is an equivalence of categories over $\cE$, then it preserves and reflects cocartesian lifts of arrows in $\cE$;
  in particular, if $\varphi:\cO^{\otimes} \rightarrow \cP^{\otimes}$ is a morphism of $\cT$-operads who induces an equivalence $\tot \varphi\cln \cO^{\otimes} \rightarrow \cP^{\otimes}$ between the total $\infty$-categories of the associated functors to $\Span(\FF_{\cT})$, then its inverse is also a morphism of $\cT$-operads.
  Said another way, we've observed that the functor $U\cln \Op_{\cT} \rightarrow \Cat_{/\Span(\FF_{\cT})}$ is fully faithful on cores, hence it is conservative, so $\tot\cln \Op_{\cT} \rightarrow \Cat$ is conservative.
  Similar arguments show that $\Tot \Tot_{\cT} \cln \Op_{\cT} \rightarrow \Cat_{\cT} \rightarrow  \Cat$ is conservative.
\end{observation}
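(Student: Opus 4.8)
The plan is to isolate one elementary lemma — that an equivalence of $\infty$-categories over a base $\cE$ preserves and reflects $\cE$-cocartesian morphisms — and then deduce every clause of the observation by a short factorization argument. First I would prove the lemma using the mapping-space characterization of cocartesian edges: for a functor $p\cln \cC \to \cE$, a morphism $f\cln X \to Y$ of $\cC$ is $p$-cocartesian if and only if for every $Z \in \cC$ the canonical map
\[
  \Map_{\cC}(Y,Z) \longrightarrow \Map_{\cC}(X,Z) \times_{\Map_{\cE}(pX,pZ)} \Map_{\cE}(pY,pZ)
\]
is an equivalence (c.f. \cite[Prop~2.4.4.3]{HTT}). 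Given an equivalence $F\cln \cC \to \cD$ over $\cE$, with structure functors $p$ and $q$, its inverse $G$ is canonically an equivalence over $\cE$ as well, and $F$ induces equivalences on all mapping spaces compatibly with $p$ and $q$; comparing the displayed condition for $f$ in $\cC$ with that for $Ff$ in $\cD$ — and using essential surjectivity of $F$ to let $Z$ range over all of $\cD$ — shows $f$ is $p$-cocartesian iff $Ff$ is $q$-cocartesian, and symmetrically for $G$. This gives the first sentence.

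Next I would unwind the consequence for $\cT$-operads. If $\varphi\cln \cO^{\otimes} \to \cP^{\otimes}$ is a morphism of $\cT$-operads and $\tot\varphi$ is an equivalence of $\infty$-categories, then, because $\varphi$ lies over $\Span(\FF_{\cT})$, $\tot\varphi$ is an equivalence over $\Span(\FF_{\cT})$ and hence admits an inverse $\psi$ over $\Span(\FF_{\cT})$; by the lemma $\psi$ preserves cocartesian lifts of backwards (inert) morphisms, so it is a morphism in $\Cat^{\Int-\cocart}_{/\Span(\FF_{\cT})}$, and since $\Op_{\cT} = \Fbrs(\Span(\FF_{\cT}))$ sits there as a full subcategory, $\psi$ is a morphism of $\cT$-operads, visibly inverse to $\varphi$. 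Restated: $U\cln \Op_{\cT} \to \Cat_{/\Span(\FF_{\cT})}$ factors as $\Op_{\cT} \hookrightarrow \Cat^{\Int-\cocart}_{/\Span(\FF_{\cT})} \xrightarrow{\iota} \Cat_{/\Span(\FF_{\cT})}$, where the first is a full subcategory inclusion (hence conservative) and $\iota$ is a summand inclusion on mapping spaces — preserving inert cocartesian lifts is an equivalence-invariant condition — hence faithful; the lemma shows $\iota$ is conservative, so on cores $U$ is an equivalence onto the subspace of equivalences, and together with faithfulness this makes $U$ conservative. Post-composing with the conservative functor $\Cat_{/\Span(\FF_{\cT})} \to \Cat$ (an equivalence of $\infty$-categories over $\cE$ has its inverse canonically over $\cE$) gives conservativity of $\tot$. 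Finally I would observe that the identical argument, with $\Span(\FF_{\cT})$ replaced by $\Tot\uFF_{\cT,*}$ and invoking the equivalence $\Op_{\cT} \simeq \Fbrs(\Tot\uFF_{\cT,*})$ of \cref{Operads are fibrous theorem} — under which $\Tot\Tot_{\cT}\cO^{\otimes}$ is the total $\infty$-category of the fibrous $\Tot\uFF_{\cT,*}$-pattern attached to $\cO^{\otimes}$ — shows $\Tot\Tot_{\cT}\cln \Op_{\cT} \to \Cat$ is conservative.

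I do not expect a genuine obstacle; the only delicate point is bookkeeping around the non-full subcategory $\Cat^{\Int-\cocart}_{/\cE} \subset \Cat_{/\cE}$, namely that an equivalence in the ambient $\Cat_{/\cE}$ between objects admitting inert cocartesian lifts is automatically an equivalence in the smaller category — which is exactly what the mapping-space lemma supplies. The other routine ingredient is the standard coherence that lets one invert an equivalence of $\infty$-categories within $\Cat_{/\cE}$, which I would take as known.
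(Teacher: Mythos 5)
Your proposal is correct and takes essentially the same route as the paper: isolate the elementary lemma that an equivalence over a base preserves and reflects cocartesian edges (via the mapping-space criterion of \cite[Prop~2.4.4.3]{HTT}), and then unwind conservativity of $\tot$ and $\Tot\Tot_{\cT}$ through the factorization of $U$ via the non-full subcategory $\Cat^{\Int-\cocart}_{/\Span(\FF_{\cT})}$. The paper records the conclusion more tersely (``fully faithful on cores, hence conservative''), but the content is the same, and your bookkeeping around the non-full inclusion and the conservative forgetful functor $\Cat_{/\cE}\to\Cat$ is exactly what is needed to make that shorthand precise.
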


\begin{proof}[Proof of \cref{Symmetric sequence proposition}]
    The second statement follows immediately from the first, since morphisms of $\fC$-colored $\cT$-operads are $\pi_0$-isomorphisms by assumption.
    Fixing $\varphi$ satisfying (a) and (b), we will prove that $\varphi$ is an equivalence of $\cT$-operads.
    Using \cref{Tot is conservative}, it suffices to prove that $\tot \varphi$ is an equivalence of $\infty$-categories.

    By the Segal condition for colors, we have an equivalence of arrows
    \[
    \begin{tikzcd}
        \pi_0 \cO_S \arrow[d,"\varphi_S"] \arrow[r,"\simeq", phantom]
        & \prod_{V \in \Orb(S)} \pi_0 \cO_V \arrow[d,"\prod \varphi_V"]\\
         \pi_0 \cP_S \arrow[r,"\simeq",phantom]
        & \prod_{V \in \Orb(S)} \pi_0 \cP_V
    \end{tikzcd}
    \]
    Since $\pi_0 \cO \simeq \coprod_S \pi_0 \cO_S$, (a) implies that $\varphi$ is essentially surjective.
    Furthermore, the Segal condition for multimorphisms yields equivalences of arrows
    \[
      \begin{tikzcd}[column sep = tiny]
          \Map_{\cO^{\otimes}}(\bC,\bD) 
                \arrow[d,"\varphi"] 
                \arrow[r,"\simeq", phantom]
            &  \mspc\coprod\limits _{f:\pi \bC \leftarrow S \rightarrow \pi \bD} \mspc \Map_{\cO}^{f}(\bC;\bD) 
                \arrow[d,"\coprod \varphi "]
                \arrow[r,"\simeq", phantom]
                & \coprod\limits_f \prod\limits_{V \in \Orb(\pi(D))} \mspc \Map_{\cO}^{f_V}(\bC_{f^{-1}_V} ;D_V) 
                \arrow[d,"\coprod \prod \varphi"]
                \arrow[r,"\simeq", phantom]
                & \coprod\limits_f \prod\limits_{V} \cO(\bC_{f^{-1} V};D_V) 
                \arrow[d,"\coprod \prod \varphi(T_V)"]\\
            \Map_{\cP^{\otimes}}(\varphi \bC, \varphi \bD)
                \arrow[r,"\simeq", phantom]
            & \coprod\limits_{f:\pi \bC \leftarrow S \rightarrow \pi \bD} \mspc \Map_{\cP}^f(\varphi \bC;\varphi \bD)
                \arrow[r,"\simeq", phantom]
                &\coprod\limits_f \prod\limits_{V \in \Orb(S)} \mspc \Map_{\cP}^{f'}(\varphi \bC_{f^{-1} V},\varphi D_V)
                \arrow[r,"\simeq", phantom]
                & \coprod\limits_f \prod\limits_{V} \cP(\varphi \bC_{f^{-1} V};\varphi D_V).
        \end{tikzcd}
    \]
    the right arrow is an equivalence by (b), so the leftmost arrow is an equivalence, hence $\varphi$ is fully faithful.
\end{proof}

The author learned the $U_\circ$ portion of the following argument from Thomas Blom.
\begin{corollary}\label{Monadic sseq corollary}
    The functor $\sseq_{\cT}\colon \Op_{\cT}^{\oc} \rightarrow \Fun(\tot \uSigma_{\cT},\cS)$ is monadic and preserves sifted colimits. 
\end{corollary}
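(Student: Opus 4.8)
The plan is to verify the hypotheses of the $\infty$-categorical Barr--Beck--Lurie theorem \cite[Thm~4.7.3.5]{HA}, extracting the sifted-colimit statement along the way. Conservativity of $\sseq_{\cT}$ is \cref{Symmetric sequence proposition}, so the remaining points are that $\sseq_{\cT}$ admits a left adjoint and that it preserves (hence, by conservativity, reflects) geometric realizations of $\sseq_{\cT}$-split simplicial objects. I would prove the stronger statement that $\sseq_{\cT}$ preserves \emph{all} sifted colimits, which simultaneously supplies the Barr--Beck condition and the last clause of the corollary.

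For the left adjoint: $\sseq_{\cT}$ preserves limits, since by \cref{SSeq observation} the composite $\ev_{(V,S)} \circ \sseq_{\cT}$ is the mapping-space functor $\cO^{\otimes} \mapsto \cO(S) = \Map^{\Ind_V^{\cT} S \to V}_{\cO^{\otimes}}(iS, iV)$, and limits of $\cT$-operads are computed in $\Cat_{/\Span(\FF_{\cT})}$ (by \cref{LOpG equation}, $\Op_{\cT}$ is a reflective localization), where mapping spaces commute with limits; the one-color condition is limit-closed because $U$ preserves limits by \cref{Triv prop}. Since $U$ and $\pi_0$ also preserve filtered colimits, $\Op_{\cT}^{\oc}$ is closed under filtered colimits in $\Op_{\cT}$ and hence accessible; granting presentability of $\Op_{\cT}^{\oc}$ and accessibility of $\sseq_{\cT}$ (the latter again from the sifted-colimit statement below), the adjoint functor theorem yields the left adjoint.

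Everything then reduces to preservation of sifted colimits. Filtered colimits are handled directly: $\Op_{\cT}^{\oc}$ is closed under filtered colimits inside $\Cat^{\Int-\cocart}_{/\Span(\FF_{\cT})}$ — the Segal conditions of \cref{GOperads conditions} are finite-limit conditions, stable under filtered colimits, as is the existence of inert-cocartesian lifts — and mapping spaces of $\infty$-categories commute with filtered colimits. The main obstacle is geometric realizations, which I would treat by the ``$U_\circ$'' factorization: by \cref{Envelope omnibus corollary} the structure spaces $\cO(S)$ are recovered as connected components of the mapping spaces of the Segal envelope $\Env(\cO)$, so $\sseq_{\cT}$ factors through the colimit-preserving left adjoint $\Env\colon \Op_{\cT}^{\oc} \to \Cat_{\cT}^{\otimes}$ of \cref{Env is maps of operads corollary} followed by a forgetful functor $U_\circ$, which is monadic by \cref{Segal objects are monadic} and hence preserves sifted colimits. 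The genuine difficulty is that mapping-space functors need not preserve sifted colimits of $\infty$-categories in general, so one must exploit the ``equifibration'' shape of envelopes together with the sound extendability of $\Span(\FF_{\cT})$ — via the explicit description of the free Segal-object monad in \cite[\S~8]{Chu} — to see that the relevant colimits are computed on the elementary data, where $U_\circ$ is well behaved. Assembling these, $\sseq_{\cT}$ preserves sifted colimits, and Barr--Beck--Lurie then gives monadicity.
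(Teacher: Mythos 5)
Your overall skeleton matches the paper's: reduce to Barr--Beck--Lurie via conservativity (\cref{Symmetric sequence proposition}) plus preservation of limits and sifted colimits, with the left adjoint supplied by presentability and the adjoint functor theorem. The paper also carries out exactly this reduction. The discrepancy, and the genuine gap, is in how you handle preservation of sifted colimits --- specifically geometric realizations --- by the ``take mapping spaces'' step.

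You correctly flag the danger: mapping-space functors on $\Cat$ do not commute with sifted colimits in general. But your proposed resolution does not close the gap. You cite \cref{Segal objects are monadic} for monadicity of ``$U_\circ$,'' but that result concerns the forgetful functor $\Seg_{\fO}(\cC) \to \Fun(\fO^{\el},\cC)$ forgetting the Segal structure; it says nothing about the passage from an $\infty$-category over $\Span(\FF_{\cT})$ to the presheaf of its mapping spaces. Your fallback --- invoking equifibrations and the free Segal-object monad of \cite[\S~8]{Chu} ``to see that the relevant colimits are computed on the elementary data'' --- is a gesture, not an argument; sound extendability controls the shape of the free monad, but does not by itself give you that mapping spaces of a sifted colimit are the sifted colimit of mapping spaces.

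The paper's resolution is a precise and different lemma. It decomposes $\cO \mapsto \cO(S)$ through $\Cat^{\mathrm{core\text{-}iso}}_{/\Span(\FF_V)}$, the $\infty$-category of functors to $\Span(\FF_V)$ inducing an equivalence on cores (which one-color operads do), and observes --- via \cite[Prop~3.12]{Haugseng_segal} --- that the ``mapping spaces'' functor $U_\circ$ out of this category is the forgetful functor from $\Alg(\cS_{/Y,Y})$, where $\cS^{\otimes}_{/Y,Y}$ is a monoidal structure on $\Fun(Y \times Y, \cS)$ with $Y = \Span(\FF_V)^{\simeq}$. Being the forgetful functor from algebra objects in a monoidal $\infty$-category, it preserves limits and sifted colimits by \cite[Prop~3.2.3.1]{HA}. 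The crucial point you are missing is that fixing the core (equivalently, working in $\Cat^{\mathrm{core\text{-}iso}}_{/\Span(\FF_V)}$) is what makes the mapping-space functor monadic and hence well behaved; without fixing the space of objects no such statement holds. Your other steps (conservativity, limit preservation, filtered colimit closure of the one-color subcategory, and the adjoint functor theorem) are essentially right, but they do not compensate for this unproved core.
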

\begin{proof}
    By \cite[Cor~4.2.2]{Barkan}, $\Op_{\cT}^{\red}$ and $\Fun(\tot \uSigma_{\cT},\cS)$ are presentable, so by Barr-Beck \cite[Thm~4.7.3.5]{HA} and the adjoint functor theorem \cite[Cor~5.5.2.9]{HTT}, it suffices to prove that $\sseq$ is conservative and preesrves limits and sifted colimits.
    Conservativity is \cref{Symmetric sequence proposition}, and (co)limits in functor categories are computed pointwise by \cite[Prop~5.1.2.2]{HTT}, so it suffices to prove that $\cO \mapsto \cO(S)$ preseres limits and sifted colimits.
    We separate this into manageable chunks via the following diagram:
    \[\begin{tikzcd}[ampersand replacement=\&, column sep=large]
	{\Op^{\oc}_{\cT}} \&\&\& \cS \\
	{\Cat^{\mathrm{Int-cocart,core-iso}}_{/\Span(\FF_{\cT})}} \& {\Cat^{\mathrm{core-iso}}_{/\Span(\FF_{\cT})}} \& {\Cat^{\mathrm{core-iso}}_{/\Span(\FF_{V})}} \& {\Fun\prn{\prn{\Span(\FF_{V})^{\simeq}}^{\times 2}, \cS}}
	\arrow["{{\cO \mapsto\cO(S)}}", from=1-1, to=1-4]
	\arrow["{{U_{\mathrm{Seg}}}}", from=1-1, to=2-1]
	\arrow["{{U_{\mathrm{cocart}}}}", from=2-1, to=2-2]
	\arrow["{\Res_V^{\cT}}", from=2-2, to=2-3]
	\arrow["{{U_{\circ}}}", from=2-3, to=2-4]
	\arrow["{\ev_{\Ind_V^{\cT} S, V}}"{description}, from=2-4, to=1-4]
\end{tikzcd}\]
$\pi$ and $\ev_{\Ind_V^{\cT} S,V}$ preserve (co)limits since they are evaluation of functor categories \cite[Prop~5.1.2.2]{HTT}.  
    $U_{\mathrm{Cocart}}$ preserves limits and sifted colimits by \cite[Cor~2.1.5]{Barkan}.
    $U_{\mathrm{Seg}}$ preserves limits and sifted colimits, as each commute with finite products.
    $\Res_V^{\cT}$ preserves limits and sifted colimits, as it is a left and right adjoint.

    By \cite[Prop~3.12]{Haugseng_segal}, $U_{\circ}$ is equivalent to the forgetful functor 
    \[
      \Alg(\cS_{/\Span(\FF_{\cT})^{\simeq},\Span(\FF_{\cT})^{\simeq}}) \rightarrow \cS_{/\Span(\FF_{\cT})^\simeq, \Span(\FF_{\cT})^\simeq},
    \]
    where $\cS^{\otimes}_{/Y,Y}$ is a monoidal structure on $\cS_{/Y,Y} \simeq \cS_{Y \times Y} \simeq \Fun(Y\times Y, \cS)$. 
    This functor preserves limits and sifted colimits by \cite[Prop~3.2.3.1]{HA}, completing the argument.
\end{proof}

In particular, this constructs a left adjoint
\[
    \Fr:\Fun_{\cT}(\uSigma_{\cT},\ucS_{\cT}) = \Fun(\tot \uSigma_{\cT},\cS) \rightarrow \Op_{\cT}^{\oc}
\]
to $\sseq$.
We lift this to a $\cT$-adjunction in the following construction.

\def\uFr{\underline{\Fr}}
\begin{construction}\label{Fr T-functor construction}
    The functor $\sseq$ is associated with a $\cT$-functor $\usseq$ as in the following diagram
\[
  \begin{tikzcd}
    &&&& \textcolor{rgb,255:red,0;green,14;blue,204}{\Ar^{\act,/\el}(\cO^{\otimes})} \\
    \textcolor{rgb,255:red,0;green,14;blue,204}{\cO^{\otimes}} & \textcolor{rgb,255:red,0;green,14;blue,204}{\triv_{\cT}^{\otimes}} & \textcolor{rgb,255:red,0;green,14;blue,204}{\cO^{\otimes}} & \textcolor{rgb,255:red,0;green,14;blue,204}{\triv_{\cT}^{\otimes}} & \textcolor{rgb,255:red,0;green,14;blue,204}{\cO^{\otimes}} \\
    {\uOp_{\cT}^{\oc}} & {\uOp_{\cT, \triv_{\cT}^{\otimes}/}} && {\uFun_{\cT}\prn{\Infl_{e}^{\cT} \Lambda_2^2,\Op_{\cT}} \times_{\uOp_{\cT}} \cbr{\triv_{\cT}^{\otimes}}} \\
    {\uFun_{\cT}\prn{\uSigma_{\cT},\ucS_{\cT}}} & {\Op_{\cT,/\triv_{\cT}^{\otimes}}} && {\uFun_{\cT}\prn{\Infl_e^{\cT} \Delta_1^2, \uOp_{\cT}} \times_{\uOp_{\cT}} \cbr{\triv_{\cT}^{\otimes}}} \\
    \textcolor{rgb,255:red,0;green,14;blue,204}{\sseq \cO^{\otimes}} & \textcolor{rgb,255:red,0;green,14;blue,204}{\Env_{\cO}\triv} && \textcolor{rgb,255:red,0;green,14;blue,204}{\Env_{\cO}\triv} & \textcolor{rgb,255:red,0;green,14;blue,204}{\Ar^{\act,/\el}(\cO^{\otimes})} \\
    & \textcolor{rgb,255:red,0;green,14;blue,204}{\triv_{\cT}^{\otimes}} && \textcolor{rgb,255:red,0;green,14;blue,204}{\triv_{\cT}^{\otimes}} & \textcolor{rgb,255:red,0;green,14;blue,204}{\cO^{\otimes}}
    \arrow["s"{description}, color={rgb,255:red,0;green,14;blue,204}, from=1-5, to=2-5]
    \arrow["\in"{marking, allow upside down}, color={rgb,255:red,0;green,14;blue,204}, draw=none, from=2-1, to=3-1]
    \arrow[""{name=0, anchor=center, inner sep=0}, color={rgb,255:red,0;green,14;blue,204}, from=2-2, to=2-3]
    \arrow["\in"{marking, allow upside down}, color={rgb,255:red,0;green,14;blue,204}, draw=none, from=2-2, to=3-2]
    \arrow[""{name=1, anchor=center, inner sep=0}, color={rgb,255:red,0;green,14;blue,204}, from=2-4, to=2-5]
    \arrow["\in"{marking, allow upside down}, shift left=5, color={rgb,255:red,0;green,14;blue,204}, draw=none, from=2-4, to=3-4]
    \arrow[hook, from=3-1, to=3-2]
    \arrow["\usseq"', from=3-1, to=4-1]
    \arrow[from=3-2, to=3-4]
    \arrow[from=3-4, to=4-4]
    \arrow[Rightarrow, no head, from=4-2, to=4-1]
    \arrow["U", from=4-4, to=4-2]
    \arrow["\in"{marking, allow upside down}, color={rgb,255:red,0;green,14;blue,204}, draw=none, from=5-1, to=4-1]
    \arrow["\in"{marking, allow upside down}, color={rgb,255:red,0;green,14;blue,204}, draw=none, from=5-2, to=4-2]
    \arrow[""{name=2, anchor=center, inner sep=0}, color={rgb,255:red,0;green,14;blue,204}, from=5-2, to=6-2]
    \arrow["\in"{marking, allow upside down}, color={rgb,255:red,0;green,14;blue,204}, draw=none, from=5-4, to=4-4]
    \arrow[""{name=3, anchor=center, inner sep=0}, color={rgb,255:red,0;green,14;blue,204}, from=5-4, to=5-5]
    \arrow[""{name=4, anchor=center, inner sep=0}, color={rgb,255:red,0;green,14;blue,204}, from=5-4, to=6-4]
    \arrow["\lrcorner"{anchor=center, pos=0.125}, color={rgb,255:red,0;green,14;blue,204}, draw=none, from=5-4, to=6-5]
    \arrow["s"{description}, color={rgb,255:red,0;green,14;blue,204}, from=5-5, to=6-5]
    \arrow[color={rgb,255:red,0;green,14;blue,204}, from=6-4, to=6-5]
    \arrow[curve={height=-30pt}, shorten >=26pt, maps to, from=2-1, to=0]
    \arrow[curve={height=-30pt}, shorten <=26pt, shorten >=13pt, maps to, from=0, to=2-5]
    \arrow[shift left=5, curve={height=-30pt}, shorten <=8pt, shorten >=8pt, maps to, from=1, to=3]
    \arrow[shorten <=8pt, maps to, from=2, to=5-1]
    \arrow[shorten <=16pt, shorten >=16pt, maps to, from=4, to=2]
  \end{tikzcd}
\]    
    By \cite[Prop~7.3.2.1]{HA}, the pointwise left adjoints $\Fr$ lifts to a $\cT$-adjunction
    \[
        \usseq:\uOp_{\cT}^{\oc} \leftrightarrows \uFun_{\cT}(\uSigma_{\cT},\ucS_{\cT}):\uFr,
    \]  
    i.e. $\uFr$ is compatible with restriction.
\end{construction}

\subsubsection{Other persiectives on $\cT$-symmetric sequences}\label{Other perspectives subsubsection}
\begin{remark}\label{First O(n) remark}
  Let $\cO_{G \times \Sigma_n,\Gamma_n} \subset \cO_{G \times \Sigma_n}$ be the full subcategory spanned by $[G \times \Sigma_n / \Gamma_S]$ for $\phi_S:H \rightarrow \Sigma_n$ a map with associated graph subgroup $\Gamma_S = \cbr{(h,\phi_S(h)) \mid h \in H} \subset H \times \Sigma_{n} \subset G \times \Sigma_n$.
    This possesses an evident forgetful functor $\cO^{\op}_{G \times \Sigma_n / \Gamma_S} \rightarrow \cO^{\op}_G$ taking $[G \times \Sigma_n / \Gamma_S] \rightarrow [G/H]$;
    in \cite[Ex~4.3.7]{Nardin}, this was shown to be a cocartesian fibration factoring through an equivalence
    \[
      \coprod_{n \in \NN} \cO^{\op}_{G \times \Sigma_n / \Gamma_S} \simeq \Tot \uSigma_G \rightarrow \cO_G^{\op}
    \]
    taking $[G \times \Sigma_n / \Gamma_S] \mapsto (H,S)$, and hence taking the $G$-space presented by $\cO_{G \times \Sigma_n / \Gamma_S}$ equivalently onto the summand $B_G \Sigma_n \subset \uSigma_G$, the classifying $G$-space for equivariant principle $\Sigma_n$-bundles.  
 
    More generally, we say that an atomic orbital $\infty$-categories $\cT$ is \emph{EI} if the inclusions $\Aut(V) \hookrightarrow \End(V)$ are equivalences.
    If $\cT$ is EI and admits a weakly initial object $e \in \cT$, we may define the $\cT$-subspace $B_{\cT} \Sigma_n \subset \uSigma_{\cT}$ as corresponding with the $\cT$-sets $S$ whose restriction $\Res_e^{\cT} S$ is a set with $n$ elements;
    then, we acquire a splitting
    \[
      \uSigma_{\cT} \simeq \coprod_{n \in \NN} B_{\cT} \Sigma_n.
    \]
    Under the above equivalence, given $\cO^{\otimes} \in \Op_{\cT}^{\oc}$, we define the \emph{$n$-ary $B_{\cT} \Sigma_n$-space}
    \[
      \ucO(n)\colon B_{\cT} \Sigma_n \subset \uSigma_{\cT} \xrightarrow{\sseq \cO} \uSigma_{\cT}.
    \]
    For instance, if $\cF \subset \cO_G$ is a family of subgroups, then $\cF$ is EI with a weakly initial object, and $B_{\cF} \Sigma_n$ is the classifying $\cF$-space for $\cF$-genuine $G$-equivariant principal $\Sigma_n$-bundles.
    In the case $\cT = \cO_G$, $\ucO(n)$ is characterized by its $\Gamma_S$-fixed points $\ucO(n)^{\Gamma_S} \simeq \cO(S)$, with restriction functors along $\Gamma_{\Res_K^H S} \subset \Gamma_{S}$ corresponding with restriction map $\cO(S) \rightarrow \cO(\Res_K^H S)$.
\end{remark}
We will see in \cref{O(n) nerve remark} that this intertwines with a nerve functor from operad objects in topological $G$-spaces.
We will also need the following notation.
\begin{notation}
  Given an orbit $V \in \cT$, and a finite $V$-set $S \in \FF_V$, we may define a natural ``$S$-ary'' $V$-space in $\cT$-symmetric sequences 
  \[
    \underline{(-)}(S)\colon \Op^{\oc}_{\cT} \rightarrow \Fun_{\cT}(\uSigma_{\cT},\ucS_{\cT}) \rightarrow \Fun_{V}(\uSigma_V,\ucS_V) \xrightarrow{\ev_S} \cS_V.
  \]
  More generally, we may define the analogous $V$-space for an $\cO$-profile:
  \[
    \underline{(-)}(\bC;D)\colon \Op^{\fC}_{\cT} \rightarrow \Fun_{\cT}(\uSigma_{\fC},\ucS_{\cT}) \rightarrow \Fun_{V}(\uSigma_{\fC,V},\ucS_V) \xrightarrow{\ev_{(\bC;D)}} \cS_V.\qedhere
  \]
\end{notation}

\subsection{The monad for \texorpdfstring{$\cO$}{O}-algebras}\label{Monad subsection}
We now take a detour into studying the free $\cO$-algebra monad.
Our main application for this is the following theorem.
\begin{theorem}[{``Equivariant \cite[Thm~4.1.1]{Hinich}''}]\label{Unital conservativity corollary}
  A map of $\cT$-operads $\varphi\colon \cO^{\otimes} \rightarrow \cP^{\otimes}$ is an equivalence if and only if it satisfies the following conditions:
  \begin{enumerate}[label={(\alph*)}]
    \item the $\cT$-functor $U(\varphi)\colon \cO \rightarrow \cP$ is essentially surjective, and
    \item the pullback functor $\varphi^*\colon \Alg_{\cP}(\ucS_{\cT}) \rightarrow \Alg_{\cO}(\ucS_{\cT})$ is an equivalence of $\infty$-categories.
  \end{enumerate}
\end{theorem}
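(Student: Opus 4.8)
The plan is as follows. The ``only if'' direction is immediate, since $U$ and $\Alg_{(-)}(\ucS_{\cT})$ send equivalences to equivalences. So I assume (a) and (b) and aim to verify the hypotheses of \cref{Symmetric sequence proposition} for $\varphi$. Condition (a) supplies the surjections $\pi_0\cO_V \to \pi_0\cP_V$ directly, so the content is to produce, for each $\cO$-profile $(\bC;D)$, an equivalence $\cO(\bC;D)\xrightarrow{\sim}\cP(\varphi\bC;\varphi D)$ realized by $\varphi$. This includes the case $S=*_V$, i.e.\ full faithfulness of $U(\varphi)$; since (b) alone cannot detect whether $\varphi$ is essentially surjective, both hypotheses are genuinely needed, and (a) is exactly what lets \cref{Symmetric sequence proposition} conclude.

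First I would reduce the structure spaces to the free-algebra monad. Let $U_{\cO}\cln\Alg_{\cO}(\ucS_{\cT})\to\Fun_{\cT}\prn{(U\cO)^{\simeq},\ucS_{\cT}}$ be pullback along the canonical $\cT$-operad morphism $\triv\prn{(U\cO)^{\simeq}}^{\otimes}\to\cO^{\otimes}$ (so that $U_{\cO}$ forgets an $\cO$-algebra to its underlying $\cT$-spaces of colors, using \cref{Triv prop}), and similarly $U_{\cP}$. Naturality of this morphism in $\cO^{\otimes}$ gives a strictly commuting square with $\varphi^*$ on top, the restriction $(\varphi^{\simeq})^*$ on the bottom, and $U_{\cO},U_{\cP}$ as legs. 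All four $\infty$-categories are presentable and all four functors preserve limits and sifted colimits --- for $U_{\cO},U_{\cP}$ by (the argument behind) \cref{Segal objects are monadic}, or \cref{Monadic sseq corollary} in the one-color case --- so each has a left adjoint; write $\Fr_{\cO},\Fr_{\cP},\varphi^{\simeq}_!,\varphi_!$ for them. Since (b) makes $\varphi^*$ an equivalence with inverse $\varphi_!$, the functor $U_{\cO}\varphi^*=(\varphi^{\simeq})^*U_{\cP}$ has left adjoint both $\Fr_{\cP}\varphi^{\simeq}_!$ and $\varphi_!\Fr_{\cO}$, so these agree; plugging into $T_{\cO}\deq U_{\cO}\Fr_{\cO}$ yields a natural equivalence
\[
  (\varphi^{\simeq})^*\,T_{\cP}\,\varphi^{\simeq}_! \;=\; (\varphi^{\simeq})^*U_{\cP}\,\varphi_!\Fr_{\cO} \;=\; U_{\cO}\,\varphi^*\varphi_!\,\Fr_{\cO} \;\simeq\; T_{\cO}
\]
compatible with the comparison map $T_{\cO}\to(\varphi^{\simeq})^*T_{\cP}\varphi^{\simeq}_!$ coming from $\varphi$.

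Next I would invoke the explicit description of $T_{\cO}$ on the Cartesian structure $\ucS_{\cT}$ from \cref{Monad subsection}. Given $(\bC;D)$ with $\bC=(B_U)_{U\in\Orb(S)}$ over $V$, let $E_{\bC}\deq\coprod_U^S y_{B_U}$ be the corresponding indexed coproduct of corepresentables in $\Fun_{\cT}\prn{(U\cO)^{\simeq},\ucS_{\cT}}$; then \cref{Monad subsection} exhibits $\cO(\bC;D)$ as a natural summand of $T_{\cO}(E_{\bC})$ evaluated at $D$ --- the ``multilinear'' piece, where each $B_U$ is used exactly once. As $\varphi^{\simeq}_!$ sends $y_{B_U}$ to $y_{\varphi B_U}$, it carries $E_{\bC}$ to $E_{\varphi\bC}$ and matches the multilinear summands; combining with the equivalence above identifies $\cO(\bC;D)$ with $\cP(\varphi\bC;\varphi D)$ compatibly with $\varphi$. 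Both hypotheses of \cref{Symmetric sequence proposition} now hold, so $\varphi$ is an equivalence of $\cT$-operads.

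The main obstacle is the equivariant monad computation of \cref{Monad subsection} used above: exhibiting $\cO(\bC;D)$ as a \emph{natural} summand of $T_{\cO}(E_{\bC})(D)$ is a parameterized cross-effect (multilinearization) argument, and making it functorial in $\cO^{\otimes}$ requires careful bookkeeping of $V$-set automorphisms, orbit decompositions, and the indexed products appearing in the Cartesian $\cT$-symmetric monoidal structure.
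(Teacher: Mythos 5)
Your outer strategy matches the paper's: reduce via \cref{Symmetric sequence proposition} to showing each $\cO(\bC;D) \to \cP(\varphi\bC;\varphi D)$ is an equivalence, and extract this from naturality of the free $\cO$-algebra monad. But the extraction step in your proposal is not the paper's, and your citation of it is inaccurate. You claim that ``\cref{Monad subsection} exhibits $\cO(\bC;D)$ as a natural summand of $T_{\cO}(E_{\bC})$ evaluated at $D$ --- the `multilinear' piece,'' with $E_{\bC}=\coprod_U^S y_{B_U}$ a coproduct of corepresentables. The paper does something different and much simpler. In \cref{Multi-color observation}, the object fed to $T_{\cO}$ is the \emph{constant} functor $\fC \to \CoFr^V(\cS)$ at the $V$-set $\pi\bC \simeq S$ (not a coproduct of corepresentables), and the splitting mechanism is not cross-effects: one notes that $\Aut_{\fC,V}(\bC)$ embeds as a \emph{summand} of $\End_V(\pi\bC) \simeq \prn{\prn{\pi\bC}^{\times \pi\bC}}^V$ (by atomic orbitality), so that forming $h\Aut_{\fC,V}(\bC)$-coinvariants of the corresponding term frees off a copy of $\cO(f\bC;fD)$, producing the natural decomposition in \cref{Multi-color splitting}. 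This is the same ``test on $n$ points and find the free $\Aut$-orbit'' trick as in the nonequivariant case, and it requires no multilinearization. The theorem is then deduced from \cref{Fr monad proposition}, whose proof runs the sequence $\triv(\pi_0\cO)^{\otimes} \xrightarrow{\gamma} \cO^{\otimes} \xrightarrow{\varphi} \cP^{\otimes}$, observes that (b) identifies the monads for $\gamma^*$ and $(\gamma\varphi)^*$ over $\Fun_{\cT}(\pi_0\cO, \uCoFr^{\cT}\cC)$, and reads off the equivalence on structure spaces from \cref{Multi-color splitting} (taking $\cC = \cS$, where $\Fr_{\cS} = \id$).

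So the gap is real: the parameterized cross-effect argument you invoke does not exist in the paper, and to carry out your plan you would have to define it, prove that the multilinear piece is a natural summand, and show it returns $\cO(\bC;D)$ even when the tuple $\bC$ has repeated colors --- compatibly with the $\Aut_V(S)$-action, restriction, and the Cartesian $\cT$-symmetric monoidal structure. That is substantially heavier machinery than the paper's single well-chosen test object, and you correctly flag it as the main obstacle; the point is that the paper avoids it entirely rather than executing it. A secondary, minor slip: the forgetful functor the paper monadifies is to $\Fun_{\cT}(\pi_0\cO, \ucS_{\cT})$ --- the coefficient system of \emph{sets} of colors, as needed for \cref{Multi-color observation} and \cite[Thm~5.1.4]{Nardin} --- not to $\Fun_{\cT}\prn{(U\cO)^{\simeq}, \ucS_{\cT}}$. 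Your diagram-chase identifying $(\varphi^{\simeq})^*\,T_{\cP}\,\varphi^{\simeq}_! \simeq T_{\cO}$ is fine in spirit, but should be run through $\triv(\pi_0\cO)^{\otimes}$.
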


Fix $\cO^{\otimes}$ a one-object $\cT$-operad, fix $\cC^{\otimes}$ a distributive $\cO$-monoidal category in the sense of \cref{Distributivity observation} (e.g. it may be presentably $\cO$-monoidal) and let $\triv^\otimes_{\cT} \rightarrow \cC^{\otimes}$ be the functor of operads associated with a $\cT$-object $X \in \Gamma \cC$.
Denote by $X^{\otimes}\colon \Env_{\cO} \triv^{\otimes}_{\cT} \rightarrow \cC^{\otimes}$ the associated $\cO$-symmetric monoidal functor, and denote by
\[
    \cO_{\sseq}(X)\colon \Env_{\cO} \triv_{\cT} \rightarrow \cC
\]
the underlying $\cT$-functor.
Given $Y$ a $V$-space and $X \in \cC_V$, we will write $Y \cdot X$ for the indexed colimit of the constant $Y$-indexed diagram $Y \rightarrow *_V \rightarrow \Res_V^{\cT} \cC$ at $X$.

\begin{proposition}[{``Equivariant \cite[Lem~2.4.2]{Schlank}''}]\label{Monad proposition}
  The forgetful $\cT$-functor $U\colon \uAlg_{\cO}(\cC) \rightarrow \cC$ is monadic, and the associated monad $T_{\cO}$ acts on $X \in \Gamma^{\cT} \cC$ by the indexed colimit
  \begin{align*}
    T_{\cO} X &\deq \ucolim \cO_{\sseq}(X),\\
              &\simeq \ucolim_{S \in \uSigma_{\cT}} \ucO(S) \cdot X^{\otimes S}.
  \end{align*}
\end{proposition}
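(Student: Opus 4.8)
The plan is to imitate the non-equivariant construction of the free-algebra monad (cf.\ \cite[Lem~2.4.2]{Schlank}, \cite[\S~4]{Hinich}, \cite[\S~3.1]{HA}) in the parameterized, distributive setting, and then to reorganize the resulting colimit into the stated shape. First I would reduce the monadicity claim to three points: (i) $U$ admits a left $\cT$-adjoint $\Fr_{\cO}$; (ii) $U$ is conservative; and (iii) $U$ strongly creates $\cT$-colimits of $U$-split simplicial $\cT$-objects. Granting (i)--(iii), parameterized Barr--Beck applies, and since it reduces fiberwise it suffices to check that each $U_V$ is monadic via the ordinary criterion \cite[Thm~4.7.3.5]{HA} (compatibly with restriction, as in the argument of \cref{Gamma of monadic}); moreover, because $T_{\cO}$ will be exhibited as a $\cT$-colimit, hence as a genuine $\cT$-monad, this upgrades to $\cT$-monadicity of $U$. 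One then sets $T_{\cO} = U\Fr_{\cO}$ and computes its value directly.

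For (i), recall from \cref{Triv prop} that $\Alg_{\triv_{\cT}}(\cC) \simeq \Gamma^{\cT}\cC$, so that $U$ is pullback along the unique map of $\cT$-operads $\triv_{\cT}^{\otimes} \to \cO^{\otimes}$. Its left adjoint should be the \emph{$\cT$-operadic left Kan extension} along this map, and this is precisely where distributivity of $\cC^{\otimes}$ (\cref{Distributivity observation}) is used: it is the hypothesis guaranteeing that a Kan extension computed pointwise in $\cC^{\otimes}$ is again lax $\cO$-symmetric monoidal, i.e.\ again an $\cO$-algebra. Unwinding the envelope adjunction of \cref{Env is maps of operads corollary}, the value $\Fr_{\cO}(X)$ on a $\cT$-object $X \in \Gamma^{\cT}\cC$ is computed by the $\cT$-colimit, over the comma $\cT$-category of the structure map $\triv_{\cT}^{\otimes} \to \cO^{\otimes}$, of the tensor-power diagram; this comma $\cT$-category is identified with $\Env_{\cO}\triv_{\cT}$ and the diagram with the $\cT$-functor $\cO_{\sseq}(X)\colon \Env_{\cO}\triv_{\cT} \to \cC$ constructed above. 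This yields the first displayed formula $T_{\cO}X \simeq \ucolim \cO_{\sseq}(X)$.

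For the second formula I would reorganize this colimit along the projection $\Env_{\cO}\triv_{\cT} \to \uSigma_{\cT}$ coming from $\triv_{\cT}^{\otimes} \to \cO^{\otimes}$ and \cref{Triv and Nardin-Shah}: over $S \in \uSigma_{\cT}$ the fiber is the structure space $\ucO(S) = \ev_{S}\sseq\cO$, and, since $X^{\otimes}\colon \Env_{\cO}\triv_{\cT}^{\otimes} \to \cC^{\otimes}$ is $\cO$-symmetric monoidal, the restriction of $\cO_{\sseq}(X)$ to that fiber is constant at the $S$-indexed tensor power $X^{\otimes S}$. Transitivity of $\cT$-left Kan extensions then gives $T_{\cO}X \simeq \ucolim_{S \in \uSigma_{\cT}} \ucolim_{\ucO(S)} X^{\otimes S} \simeq \ucolim_{S \in \uSigma_{\cT}} \ucO(S)\cdot X^{\otimes S}$. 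Conservativity (ii) is immediate from the Segal condition for colors: a morphism of $\cO$-algebras is a natural transformation of functors $\cO^{\otimes} \to \cC^{\otimes}$, hence an equivalence iff it is one at every object of $\cO^{\otimes}$, and every such object is a (finite indexed) product of colors, so it is an equivalence iff $U$ of it is. Creation of $U$-split simplicial $\cT$-colimits (iii) reduces fiberwise and is then standard, again using distributivity so that the indexed tensor powers occurring in $T_{\cO}$ commute with these colimits.

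The main obstacle is step (i): setting up the $\cT$-operadic left Kan extension in the distributive parameterized setting --- the equivariant analogue of \cite[\S~3.1.3]{HA} --- and verifying that its pointwise colimit formula lands in $\Alg_{\cO}(\cC)$ and matches $\ucolim \cO_{\sseq}(X)$. The delicate bookkeeping is checking that the $S$-indexed colimits appearing in the Kan-extension formula interchange with the $S$-indexed tensor powers in exactly the way encoded by the pullback squares in the definition of an $S$-distributive functor; granting that, the remaining points (conservativity, creation of split colimits, and the fiberwise reduction of monadicity) are routine.
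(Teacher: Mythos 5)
Your overall route is the same as the paper's: exhibit $\Fr_{\cO}$ as a $\cT$-operadic left Kan extension along $\triv^{\otimes}_{\cT} \to \cO^{\otimes}$, identify the underlying $\cT$-colimit with $\ucolim \cO_{\sseq}(X)$, and then reorganize through the projection $\Env_{\cO}\triv_{\cT} \to \uSigma_{\cT}$ by transitivity of Kan extensions. The paper simply cites \cite[Cor~5.1.5]{Nardin} for monadicity and \cite[Rmk~4.3.6]{Nardin} for the operadic Kan extension formula, whereas you propose to re-derive these from scratch; that is more labor but not a wrong turn, though your sketch of point (iii) (``reduces fiberwise and is then standard'') is no more than an assertion, and the claim that exhibiting $T_{\cO}$ as a $\cT$-colimit ``upgrades'' to $\cT$-monadicity is left unsupported.

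The genuine gap is in your reorganization step. You write that over $S \in \uSigma_{\cT}$ ``the fiber is the structure space $\ucO(S)$ \ldots\ the restriction of $\cO_{\sseq}(X)$ to that fiber is constant at $X^{\otimes S}$,'' and then invoke transitivity of $\cT$-left Kan extensions to conclude $T_{\cO}X \simeq \ucolim_S \ucO(S)\cdot X^{\otimes S}$. But the projection $\Env_{\cO}\triv_{\cT} \simeq \uSigma_{\cT}\times_{\cO^{\otimes}}\Ar^{\act,/\el}(\cO^{\otimes}) \to \uSigma_{\cT}$ is a pullback of the \emph{source} map of the active arrow category, which is not a cocartesian fibration, so the intermediate $\cT$-left Kan extension is \emph{not} computed fiberwise. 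The pointwise formula \cite[Thm~2.13]{Shah} gives the value at $S$ as the indexed colimit over the $\cT$-slice $(\Env_{\cO}\triv_{\cT})^{/\underline{S}}$, whose objects are pairs $(T, x\in \cO(T))$ equipped with a map to $S$ in $\uSigma_{\cT}$ --- a strictly larger category than the fiber $\ucO(S)$. To pass from that colimit to $\ucO(S)\cdot X^{\otimes S}$ one must argue that the inclusion
\[
  \ucO(S) \simeq \{S\}\times_{\cO^{\otimes}}\Ar^{\act,/\el}(\cO^{\otimes}) \hookrightarrow \prn{\uSigma_{\cT}\times_{\cO^{\otimes}}\Ar^{\act,/\el}(\cO^{\otimes})}^{/\underline{S}}
\]
is $\cT_{/V}$-final; this finality is precisely the step the paper's proof supplies and your sketch omits. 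Without it, the identification of the intermediate Kan extension value with $\ucO(S)\cdot X^{\otimes S}$ does not follow.
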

\begin{proof}
  Monadicity is precisely \cite[Cor~5.1.5]{Nardin}, so it suffices to compute the associated monad.
  By \cite[Rem~4.3.6]{Nardin}, the left adjoint $\Fr\colon \cC \rightarrow \Alg_{\cO}(\cC)$ is computed on $X$ by $\cT$-operadic left Kan extension of the corresponding map $\triv^{\otimes} \xrightarrow X \cC^{\otimes}$ along the canonical inclusion $\triv^{\otimes} \rightarrow \cO^{\otimes}$, and the underlying $\cT$-functor of this is computed by the composite $\cT$-left Kan extension
   \[
     \begin{tikzcd}[row sep=small]
       {\Env_{\cO}  \triv} & {\uSigma_{\cT} \times_{\cO^{\otimes}} \Ar^{\act, /\el}(\cO)} && \cC \\
	& {\uSigma_{\cT}} \\
	\cO & {*_{\cT}}
	\arrow[Rightarrow, no head, from=1-1, to=1-2]
	\arrow[""{name=0, anchor=center, inner sep=0}, "{{{T_{\cO} X}}}"', curve={height=18pt}, dashed, from=3-2, to=1-4]
	\arrow[Rightarrow, no head, from=3-1, to=3-2]
	\arrow[from=1-1, to=3-1]
	\arrow[from=1-2, to=2-2]
	\arrow[from=2-2, to=3-2]
	\arrow["X", from=1-2, to=1-4]
	\arrow["{\widetilde T_{\cO}X}"{description}, curve={height=6pt}, dashed, from=2-2, to=1-4]
	\arrow[""{name=1, anchor=center, inner sep=0}, shift left, shorten <=4pt, shorten >=36pt, Rightarrow, from=1-2, to=0]
	\arrow[shorten <=8pt, shorten >=5pt, Rightarrow, from=1, to=0]
    \end{tikzcd}
    \]
  $\cT$-left Kan extension diagrams to $\uAst_{\cT}$ are $\cT$-colimit diagrams by definition, so the underlying $\cT$-object is
  \[
    T_{\cO}X \simeq \ucolim \cO_{\sseq}(X). 
  \]
  Additionally, the $\cT$-left Kan extension $\widetilde T_{\cO} X$ has values given by the indexed colimit
   \begin{align*}
        \widetilde T_{\cO} X(S) 
        &\simeq \ucolim_{\pr_1(T,x \in \cO(T)) \rightarrow S} X^{\otimes T};
   \end{align*}
   in fact, the inclusion $\ucO(S) \simeq \cbr{S} \times_{\cO^{\otimes}} \Ar^{\act, /\el}(\cO) \subset \prn{\uSigma_{\cT} \times_{\cO^{\otimes}}\Ar^{\act, /\el}(\cO)}^{/\underline{S}}$ is $\cT_{/V}$-final, so it induces an equivalence
   \begin{align*}
        \widetilde T_{\cO} X(S) 
        &\simeq \ucolim_{x \in \ucO(S)} X^{\otimes S}\\
        &\simeq \ucO(S) \cdot X^{\otimes S}
   \end{align*}
  and the result follows by composition of $\cT$-left Kan extensions.
\end{proof}
\begin{remark}
    In view of \cref{First O(n) remark}, we may rewrite \cref{Monad proposition} in the case $\cT$ is EI with a weakly initial object as
    \[
      T_{\cO} X \simeq \coprod_{n \in \NN} \ucolim_{S \in B_{\cT} \Sigma_n} \ucO(S) \cdot X^{\otimes S}.
    \]
    We would like to interpret this in a more traditional way, so define the $B_{\cT} \Sigma_n$-space $X^{n}$ by
    \[
      X^n\colon B_{\cT} \Sigma_n \subset \uSigma_{\cT} \subset \uFF_{\cT}^{\op} \xrightarrow{S \mapsto X^S} \ucS_{\cT}.
    \]
    In the case $\cT = \cO_G$, this is characterized by its graph subgroup fixed points $\prn{X^n}^{\Gamma_S} \simeq X^S$.
    The $B_{\cT} \Sigma_n$-space corresponding with $S \mapsto \ucO(S) \cdot X^{\otimes n}$ is $\ucO(n) \cdot X^n$.
    Using the notation $\prn{-}_{h_{\cT} \Sigma_n}$ for $B_{\cT} \Sigma_n$-indexed colimits, we may then write the formula
    \[
      T_{\cO} X \simeq \coprod_{n \in \NN} \prn{\ucO(n) \times X^n}_{h_{\cT} \Sigma_n}.
    \]
    For instance, when $\cO = \EE_V^{\otimes}$, one may check that this agrees with the monad $\mathbb{K}_V$ for free algebras over the $V$-Steiner operad considered in \cite{Guillou-May}, so it satisfies an approximation theorem to $\Omega^V \Sigma^V$.
\end{remark}
By \cite[Prop~3.2.5]{Nardin}, the Cartesian $\cT$-symmetric monoidal structure on $\uCoFr^{\cT}(\cC)$ is distributive whenever $\cC$ is a cocomplete Cartesian closed category.
In this setting, we may easily characterize the associated monad.
\begin{corollary}\label{CoFr monad}
  Suppose $\cC$ a cocomplete cartesian closed $\infty$-category.
  Then, the forgetful functor 
  \[
    \Alg_{\cO}(\uCoFr^{\cT}(\cC)) \rightarrow \CoFr^{\cT}(\cC)
  \]
  is monadic, and the associated monad $T_{\cO}$ has fixed points
  \begin{align*}
    \prn{T_{\cO} X}^V 
    &\simeq \coprod_{S \in \FF_V} \prn{\cO(S) \cdot \prn{X^{S}}^V}_{h\Aut_V(S)}\\
    &\simeq \coprod_{S \in \FF_V} \prn{\cO(S) \cdot \prod_{U \in \Orb(S)} X^U}_{h\Aut_V(S)}
  \end{align*}
\end{corollary}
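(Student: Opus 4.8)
The plan is to extract both assertions from \cref{Monad proposition,Fixed points of colimit}. Its hypotheses apply here because, as recalled just before the statement, the Cartesian $\cT$-symmetric monoidal structure $\uCoFr^{\cT}(\cC)^{\times}$ is distributive by \cite[Prop~3.2.5]{Nardin}. For monadicity, I would first invoke \cref{Monad proposition} for the distributive $\cO$-monoidal $\cT$-$\infty$-category $\uCoFr^{\cT}(\cC)^{\times}$: the forgetful $\cT$-functor $U\colon \uAlg_{\cO}(\uCoFr^{\cT}\cC) \to \uCoFr^{\cT}\cC$ is monadic, i.e.\ $U_V$ is monadic for every $V \in \cT$ (one may also see this by base changing along each $\cT_{/V}$). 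Applying $\Gamma^{\cT}$ and \cref{Gamma of monadic}, the functor $\Gamma^{\cT}U$ is monadic; by \cref{Alg values} its source is $\Alg_{\cO}(\uCoFr^{\cT}\cC)$ and its target is $\Gamma^{\cT}\uCoFr^{\cT}\cC = \CoFr^{\cT}\cC$, and by construction it is the forgetful functor of the statement.

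Next, by \cref{Monad proposition} the associated $\cT$-monad is $T_{\cO}X \simeq \ucolim_{S \in \uSigma_{\cT}} \ucO(S) \cdot X^{\otimes S}$, a $\cT$-colimit in $\uCoFr^{\cT}\cC$; since the target is a coefficient system, \cref{Fixed points of colimit} identifies its $V$-value with an ordinary colimit. Into this I would feed two easy computations: first, in the Cartesian structure the indexed tensor power is the indexed product, so $X^{\otimes S} \simeq \prod_U^S \Res_U^V X \simeq \prod_{U \in \Orb(S)} \CoInd_U^V \Res_U^V X$, and since $\prn{\CoInd_U^V Z}^V \simeq Z^U$ one obtains $\prn{X^{\otimes S}}^V \simeq \prod_{U \in \Orb(S)} X^U$ (this is also the content of the second asserted equivalence); second, the structure $\cT$-spaces satisfy $\ucO(S)^W \simeq \cO\prn{\Res_W^V S}$ by \cref{First O(n) remark} (for $\cT = \cO_G$, the graph-subgroup fixed points $\ucO(n)^{\Gamma_S}\simeq\cO(S)$). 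Combining these with the splitting $\uSigma_{\cT, V} = \Sigma_V \simeq \coprod_{S \in \FF_V} B\Aut_V(S)$ of \cref{Sigma observation}, the colimit should reorganize over iso-classes $S \in \FF_V$ into the $\Aut_V(S)$-homotopy orbits of $\cO(S) \cdot \prod_{U \in \Orb(S)} X^U$, giving $\prn{T_{\cO}X}^V \simeq \coprod_{S \in \FF_V} \prn{\cO(S) \cdot \prod_{U \in \Orb(S)} X^U}_{h\Aut_V(S)}$.

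The hard part will be exactly this last reorganization. \cref{Fixed points of colimit} presents the $V$-value as a colimit over the total category of $\Res_V^{\cT}\uSigma_{\cT}$ — equivalently, by \cref{First O(n) remark}, over the opposite of the orbit category of $\cT \times \Sigma_n$ spanned by graph subgroups — whose objects range over all orbits $W \to V$, not merely the fiber over $V$; the ``lower-isotropy'' objects are sinks, so the colimit is genuinely not the colimit over the fiber. Showing it telescopes onto the advertised coproduct of $\Aut_V(S)$-homotopy orbits requires unwinding the morphisms of this total category — the restriction maps $\cO(S) \to \cO(\Res_W^V S)$ and the $\Aut_V(S)$-action, as organized in \cref{Discretization remark}, together with the straightening of $\tot\uSigma_{\fC} \to \cT^{\op}$ from \cref{C-symmetric subsubsection} — and checking that the resulting bar construction collapses. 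I expect essentially all of the work of the proof to lie in this step.
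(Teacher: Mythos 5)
You have the paper's proof, modulo a misreading of one cited formula. The paper's actual proof is essentially the one-liner you set up: apply \cref{Monad proposition}, then combine the colimit branch of \cref{Fixed points of colimit} with the decomposition $\Sigma_V \simeq \coprod_{S\in\FF_V} B\Aut_V(S)$ from \cref{Sigma observation}. Your unpacking of the monadicity clause via \cref{Gamma of monadic} is a correct, if more explicit, reading of what \cref{Monad proposition} already asserts.

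The misreading is in your final paragraph. \cref{Fixed points of colimit} gives the $V$-value of an indexed \emph{colimit} valued in a coefficient system as
\[
  \prn{\ucolim F}^V \simeq \colim\prn{\cC_V \rightarrow \Tot^V\cC \rightarrow \CoFr^V(\cD) \xrightarrow{(-)^V} \cD},
\]
a colimit indexed by the \emph{fiber} $\cC_V$ — the displayed arrows form a composite functor, and the colimit is taken over its source $\cC_V$. Only the \emph{limit} branch of that proposition has indexing category all of $\Tot^V\cC$. So the difficulty you anticipate — an indexing category ranging over all $W \to V$, lower-isotropy sinks, restriction maps from \cref{Discretization remark}, a bar construction to collapse — does not arise. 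Plugging $\cC = \uSigma_{\cT}$ into the formula, the indexing category is simply $\Sigma_V \simeq \coprod_{S\in\FF_V} B\Aut_V(S)$; the colimit decomposes as a coproduct over $\pi_0\Sigma_V$, and each piece is by definition the $\Aut_V(S)$-homotopy orbits. Together with your (correct) identifications $\prn{X^{\otimes S}}^V \simeq \prod_{U\in\Orb(S)} X^U$ and $\ucO(S)^V \simeq \cO(S)$, this yields the asserted formula with no further work.
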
 
\begin{proof}
  This follows from \cref{Monad proposition} by combining the fixed points of indexed colimits formula of \cref{Fixed points of colimit} with the description of $\Sigma_V$ in \cref{Sigma observation}. 
\end{proof}
In fact, we may say more;
on the summand corresponding with $S$, the restriction map on $(T_{\cO} X)^V \rightarrow (T_{\cO} X)^U$ is induced from the restriction map
\[
  \cO(S) \cdot \prn{X^S}^V \rightarrow \cO(\Res_U^V S) \cdot \prn{X^S}^V \rightarrow \cO(\Res_U^V S) \cdot \prn{X^{\Res_U^V}S}^U  
\]
\begin{corollary}\label{Conservativity corollary}
  The functor $\Alg_{(-)}(\ucS_{\cT})\colon \Op_{\cT}^{\oc} \rightarrow \Cat$ is conservative.
\end{corollary}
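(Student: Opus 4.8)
The plan is to read off \cref{Conservativity corollary} from the ``equivariant Hinich'' theorem \cref{Unital conservativity corollary}, invoking the monadic machinery of this subsection only if a self-contained argument is wanted. Unwinding conservativity, one must show that a morphism $\varphi\colon \cO^{\otimes} \rightarrow \cP^{\otimes}$ of one-color $\cT$-operads for which $\varphi^*\colon \Alg_{\cP}(\ucS_{\cT}) \rightarrow \Alg_{\cO}(\ucS_{\cT})$ is an equivalence is itself an equivalence. This is exactly condition~(b) of \cref{Unital conservativity corollary}, and condition~(a) --- essential surjectivity of $U(\varphi)\colon U\cO \rightarrow U\cP$ --- holds automatically, since one-coloredness forces $U\cO \simeq \uAst_{\cT} \simeq U\cP$ and each $V$-value of $U(\varphi)$ is a functor $\ast \rightarrow \ast$. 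Hence $\varphi$ is an equivalence. On this route there is essentially no obstacle; the content is simply the observation that for one-color $\cT$-operads the ``essential surjectivity'' hypothesis is free.

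If one prefers an argument independent of \cref{Unital conservativity corollary}, I would proceed as follows. By \cref{Symmetric sequence proposition}, $\sseq_{\cT}$ is conservative, so it suffices to check that $\varphi$ induces equivalences $\cO(S) \xrightarrow{\;\sim\;} \cP(S)$ for all $V \in \cT$ and $S \in \FF_V$. Next, $\varphi^*$ commutes with the forgetful functors to $\cS_{\cT}$ (it does not alter underlying $\cT$-objects), and these are monadic by \cref{Monad proposition} together with \cref{Gamma of monadic}; thus an equivalence $\varphi^*$ over $\cS_{\cT}$ forces the induced natural transformation of monads $T_{\cO} \rightarrow T_{\cP}$ to be an equivalence of endofunctors of $\cS_{\cT}$. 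Finally I would feed into this the explicit formula of \cref{CoFr monad},
\[
  (T_{\cO}X)^V \simeq \coprod_{S \in \FF_V}\Bigl(\cO(S) \cdot \textstyle\prod_{U \in \Orb(S)} X^U\Bigr)_{h\Aut_V(S)},
\]
evaluated at $X = N \cdot \uAst_{\cT}$ with $N \geq \abs{\Orb(S)}$ (fixing one $S$ at a time). Here $\prod_{U \in \Orb(S)} X^U$ is a discrete set on which $\Aut_V(S)$ acts by permutations, so for a ``generic'' tuple $t$ --- one whose stabilizer in $\Aut_V(S)$ is trivial, which exists once $N \geq \abs{\Orb(S)}$ --- the summand indexed by $S$ and the orbit of $t$ is $\cO(S)$ with no residual homotopy orbits. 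As \cref{CoFr monad} is natural in $\cO^{\otimes}$ and $\varphi$ is the identity on colors, the equivalence $T_{\cO}X \xrightarrow{\;\sim\;} T_{\cP}X$ matches this summand with the corresponding summand of $(T_{\cP}X)^V$ via $\cO(S) \rightarrow \cP(S)$, whence $\cO(S) \rightarrow \cP(S)$ is an equivalence and \cref{Symmetric sequence proposition} finishes the proof.

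The only delicate point, should one take this second route, lies in the last step: verifying that the coproduct decomposition produced by \cref{CoFr monad} is genuinely natural in $\cO^{\otimes}$ (it is, by tracing through the indexed-colimit description in \cref{Monad proposition}), that the comparison $T_{\cO} \rightarrow T_{\cP}$ respects it summandwise because $\varphi$ does not change underlying colors, and that a generic tuple really has trivial isotropy in $\Aut_V(S)$, so that its summand computes $\cO(S)$ on the nose rather than $(\cO(S))_{h\{e\}}$ for a nontrivial group. Everything else --- the reduction to $\cT$-symmetric sequences and the passage from an equivalence of algebra categories to an equivalence of monads --- is formal.
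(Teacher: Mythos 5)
Your first route --- invoking \cref{Unital conservativity corollary} --- is logically sound: for one-color operads $U\cO \simeq \uAst_{\cT} \simeq U\cP$, so hypothesis (a) (essential surjectivity) is automatic and (b) is exactly your assumption. There is no circularity, but note that the paper proves \cref{Conservativity corollary} \emph{before} \cref{Unital conservativity corollary}; the latter's proof comes afterward via \cref{Fr monad proposition} and the multi-color computation of \cref{Multi-color observation}, and the paper treats the corollary as a self-contained one-color prototype for that argument.

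Your second route is essentially the paper's route, and it has a real gap exactly at the step you flagged as ``delicate.'' Taking $X = N \cdot \uAst_{\cT}$ does \emph{not} produce a tuple with trivial isotropy in general. With this $X$, $\prod_{U \in \Orb(S)} X^U \simeq N^{\abs{\Orb(S)}}$ is a set on which $\Aut_V(S)$ acts \emph{through} the homomorphism $\Aut_V(S) \rightarrow \Aut\prn{\Orb(S)}$ recording the induced permutation of orbits; the kernel of this homomorphism is the product of the Weyl groups of the orbits of $S$, and it acts trivially on $N^{\abs{\Orb(S)}}$. So \emph{every} tuple has isotropy containing this kernel, and trivial isotropy is impossible unless every orbit of $S$ has trivial Weyl group. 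For a concrete failure take $G = C_2$ and $S = [C_2/e]$: then $\Orb(S)$ is a singleton, $\Aut_{C_2}(S) \simeq C_2$ acts trivially on $N$, and your ``generic summand'' of $(T_{\cO}X)^{C_2}$ is $\cO(S)_{hC_2}$, not $\cO(S)$.

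The fix, which is what the paper does, is to take $X = S$ itself (viewed as an object of $\cS_V$ via $\FF_V \subset \cS_V$): then $\prod_{U \in \Orb(S)} X^U \simeq \prn{S^S}^V \simeq \End_V(S)$, and $\Aut_V(S) \subset \End_V(S)$ is an $\Aut_V(S)$-stable summand on which the precomposition action is the free translation action. This yields the natural decomposition
\[
  \prn{\cO(S) \times \End_V(S)}_{h\Aut_V(S)} \simeq \cO(S) \sqcup J_{\cO,S},
\]
and the rest of your argument --- the reduction to symmetric sequences by \cref{Symmetric sequence proposition} and the passage from an equivalence of algebra categories to a summand-respecting equivalence of the monads of \cref{CoFr monad} --- is correct as written once this test object is substituted.
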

\begin{proof}
  Suppose $\varphi\colon \cO \rightarrow \cP$ induces an equivalence $\Alg_{\cP}(\ucS_{\cT}) \xrightarrow{\sim} \Alg_{\cO}(\ucS_{\cT})$.
  Then $\varphi$ induces a natural equivalence $T_{\cO} \implies T_{\cP}$ respecting the summand decomposition in \cref{CoFr monad}.
  Choosing $X = S \in \FF_V$, note that the $V$-equivariant automorphisms embed as a summand $\Aut_V(S) \subset \End_V(S) \simeq \prn{S^S}^V$, yielding a natural coproduct decomposition
  \begin{align*}
    \prn{\cO(S) \times \prn{S^{\times S}}^V}_{h\Aut_V S}
     &\simeq \prn{\cO(S) \times \Aut_V S}_{h\Aut_V S} \sqcup J_{\cO,S}\\
     &\simeq \cO(S) \sqcup J_{\cO,S}
  \end{align*}
   for some $J_{\cO,S}$;
   hence the summand-preserving equivalence $T_\varphi\colon T_{\cO} S \implies T_{\cP} S$ implies that $\varphi(S)\colon \cO(S) \rightarrow \cP(S)$ is an equivalence for all $S$, i.e. $\sseq \varphi\colon \sseq \cO \rightarrow \sseq \cP$ is an equivalence of $\cT$-symmetric sequences.
  Thus \cref{Symmetric sequence proposition} implies that $\varphi$ is an equivalence.
\end{proof}
\begin{remark}
 \cref{CoFr monad} agrees with the free Segal $\Tot \cO^{\otimes}$-object monad of \cite[Cor~8.12]{Chu} in view of \cref{Span segal condition lemma,Soundly extendable lemma,CMon I cat corollary}, so $\cO^{\otimes}$-algebras in the Cartesian structure on $\uCoFr^{\cT} \cC$ are interpretable as \emph{$\cO^{\otimes}$-monoids} (c.f. \cite[Prop~2.4.2.5]{HA} in view of \cref{Pointed prop,Two models for segal objects}).
  We do not study this further at present, as we will cover the general Cartesian case in forthcoming work \cite{Tensor}.
\end{remark}

To finish the section, we repeat the above work without the one-color assumption.
\begin{observation}\label{Multi-color observation}
   Analogously to \cite{Hinich}, let $f\colon \fC \rightarrow \cO$ be a $\cT$-functor from a coefficient system of sets, and let $\triv(\fC) \rightarrow \cO^{\otimes}$ be the corresponding map of $\cT$-operads.
   Then, \cite[Thm~5.1.4]{Nardin} constructs a left $\cT$-adjoint to the pullback functor $\uAlg_{\cO}(\cD) \rightarrow \uAlg_{\triv(\fC)}(\cC) \simeq \uFun_{\cT}(\fC,\cD)$, whose associated $\cT$-functor has value on the $\cO$-algebra $X$ given by the $\cT$-left Kan extension
  \[
    \begin{tikzcd}[ampersand replacement=\&, row sep=small]
      {\Env_{\cO}  \triv(\fC)} \& {\uSigma_{\fC} \times_{\cO^{\otimes}} \Ar^{\act, /\el}(\cO)} \&\& \cD \\
      \& {\uSigma_{\fC}} \\
      \fC \& \fC
      \arrow[Rightarrow, no head, from=1-1, to=1-2]
      \arrow[from=1-1, to=3-1]
      \arrow["X", from=1-2, to=1-4]
      \arrow[from=1-2, to=2-2]
      \arrow["{{\widetilde T_{\cO}X}}"{description}, curve={height=6pt}, dashed, from=2-2, to=1-4]
      \arrow[from=2-2, to=3-2]
      \arrow[Rightarrow, no head, from=3-1, to=3-2]
      \arrow[""{name=0, anchor=center, inner sep=0}, "{{{{T_{\cO} X}}}}"', curve={height=18pt}, dashed, from=3-2, to=1-4]
      \arrow[""{name=1, anchor=center, inner sep=0}, shift left, shorten <=5pt, shorten >=37pt, Rightarrow, from=1-2, to=0]
      \arrow[shorten <=11pt, shorten >=7pt, Rightarrow, from=1, to=0]
    \end{tikzcd}
  \]
  By an analogous argument to \cref{Monad proposition}, we have
  \[
    \widetilde T_{\cO} X(\bC,D) \simeq \ucO(f\bC;fD) \cdot \bigotimes_U^{\pi f \bC} X_{C_U}; 
  \]
  moreover, when $\cD \simeq \uCoFr^{\cT}(\cC)$ for $\cC$ a cocomplete cartesian closed $\infty$-category, we have
  \begin{align*}
    \prn{T_{\cO} X}_D^V &\simeq 
    \coprod_{(\bC,D) \in \Sigma_{\fC,V}} \prn{\cO(f\bC;fD) \times \prod_{U \in \pi \bC} X_{C_U}^U}_{h\Aut_{\fC,V} \bC}.
  \end{align*}
  Momentarily choose $\cC = \cS$, and note that, if $X_{C_U} = \Res_U^V S$ for $S \in \FF_V \subset \cS_V$ for all $U$, then 
  \[
    \prod_{U \in \pi \bC} X_{C_U}^U \simeq \prn{S^{\times \pi\bC}}^V \simeq \Map^V(\pi \bC,S),
  \]
  with $\Aut_{\fC,V} \bC$-action given by the composite map $\Aut_{\fC,V} \bC \rightarrow \End_V(\bC) \rightarrow \End\Map^V(\pi \bC,S)$.
  In particular, choosing $X$ to be the functor $\fC \rightarrow \CoFr^V(\cS)$ which is constant at $\pi \bC$,
  we acquire a natural equivalence
  \begin{align*}
    \prn{\cO(f\bC;fD) \times \prod_{U \in \pi \bC} X_{C_U}^U}_{h\Aut_{\fC,V} \bC}
    &\simeq \prn{\cO(f\bC;fD) \times \prn{\pi \bC^{\times \pi \bC}}^V}_{h\Aut_{\fC,V} \bC}\\
    &\simeq \prn{\cO(f\bC;fD) \times \Aut_{\fC,V} \bC}_{h\Aut_{\fC,V} \bC} \sqcup J_{\cO, (\bC;D)}\\
    &\simeq \cO(f\bC;fD) \sqcup J_{\cO, (\bC;D)}
  \end{align*}
  for some object $J_{\cO, (\bC;D)} \in \cS$.

  Now, note that there is a unique symmetric monoidal functor $\Fr_{\cC}\colon \cS \rightarrow \cC$ under the Cartesian structure, and the induced map $\uCoFr^{\cT}(\cS) \rightarrow \uCoFr^{\cT}(\cC)$ preserves fiberwise products and indexed coproducts.
  In particular, we acquire a natural splitting
  \begin{equation}\label{Multi-color splitting}
    \Fr_{\cC}\cO(f\bC;fD) \sqcup J' \simeq T_{\cO} \Fr_{\cC}(\underline{\pi \bC}).\qedhere
  \end{equation}
\end{observation}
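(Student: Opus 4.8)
The plan is to prove \cref{Multi-color observation} as the multi-colored analogue of \cref{Monad proposition}. First I would invoke \cite[Thm~5.1.4]{Nardin}, which produces the left $\cT$-adjoint $T_{\cO}$ to the pullback $\cT$-functor $\uAlg_{\cO}(\cD)\rightarrow\uAlg_{\triv(\fC)}(\cD)\simeq\uFun_{\cT}(\fC,\cD)$ (with monadicity coming from \cite[Cor~5.1.5]{Nardin} once $\cD$ is distributive), together with \cite[Rem~4.3.6]{Nardin}, which computes $T_{\cO}X$ by $\cT$-operadic left Kan extension of the map $\triv(\fC)^{\otimes}\xrightarrow{X}\cC^{\otimes}$ along $\triv(\fC)^{\otimes}\rightarrow\cO^{\otimes}$. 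As in \cref{Monad proposition}, the underlying $\cT$-functor factors as a composite $\cT$-left Kan extension through $\Env_{\cO}\triv(\fC)\simeq\uSigma_{\fC}\times_{\cO^{\otimes}}\Ar^{\act,/\el}(\cO)\rightarrow\uSigma_{\fC}\rightarrow\fC$, so that $\widetilde T_{\cO}X$ is the $\cT$-left Kan extension of $X^{\otimes}$ along the first arrow and $T_{\cO}X$ is obtained by further $\cT$-left Kan extension to $\fC$.

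Next I would evaluate $\widetilde T_{\cO}X$ on an $\cO$-profile $(\bC;D)$ over $V$ via the pointwise formula, presenting it as the $V$-colimit of $X^{\otimes}$ restricted to $\prn{\uSigma_{\fC}\times_{\cO^{\otimes}}\Ar^{\act,/\el}(\cO)}^{/(\bC;D)}$. The same finality argument used in \cref{Monad proposition} shows that the canonical inclusion $\ucO(f\bC;fD)\simeq\cbr{(\bC;D)}\times_{\cO^{\otimes}}\Ar^{\act,/\el}(\cO)\hookrightarrow\prn{\uSigma_{\fC}\times_{\cO^{\otimes}}\Ar^{\act,/\el}(\cO)}^{/(\bC;D)}$ is $\cT_{/V}$-final, and that $X^{\otimes}$ restricts along it to the constant diagram at $\bigotimes_U^{\pi f\bC}X_{C_U}$, this being the value of the $\cO$-symmetric monoidal functor $X^{\otimes}$ on a source profile carrying the colors $\bC$. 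This yields $\widetilde T_{\cO}X(\bC;D)\simeq\ucO(f\bC;fD)\cdot\bigotimes_U^{\pi f\bC}X_{C_U}$; composing $\cT$-left Kan extensions then gives the asserted $\cT$-colimit formula for $T_{\cO}X$ over $\uSigma_{\fC}$.

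Specializing to $\cD\simeq\uCoFr^{\cT}(\cC)$ with $\cC$ cocomplete cartesian closed (which is distributive by \cite[Prop~3.2.5]{Nardin}), I would apply \cref{Fixed points of colimit} to write $\prn{T_{\cO}X}_D^V$ as an ordinary colimit over $\Tot^V\uSigma_{\fC}$ of the $V$-fixed points of $\widetilde T_{\cO}X$. Combining this with the description $\Sigma_{\fC,V}\simeq\coprod_{(\bC;D)}B\Aut_{\fC,V}(\bC)$ recorded above, with the fact that the fiber of $\ucO$ over $(f\bC;fD)$ carries the $\Sigma$-action of \eqref{Sigma action map}, and with $\prn{\bigotimes_U^{\pi f\bC}X_{C_U}}^V\simeq\prod_{U\in\Orb(\pi\bC)}X_{C_U}^U$ (indexed tensor products are indexed products in the cartesian structure, evaluated via \cref{Coproducts and ind observation}), the colimit decomposes over components into the claimed coproduct of homotopy orbits.

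For the splitting I would take $\cC=\cS$, fix $(\bC;D)$ over $V$, and plug in the coefficient system $X$ constant at the $V$-set $\pi\bC$, so that $X_{C_U}=\Res_U^V(\pi\bC)$; then $\prod_U X_{C_U}^U\simeq\prn{(\pi\bC)^{\times\pi\bC}}^V\simeq\Map^V(\pi\bC,\pi\bC)=\End_V(\pi\bC)$, with $\Aut_{\fC,V}(\bC)$ acting through $\Aut_{\fC,V}(\bC)\hookrightarrow\Aut_V(\pi\bC)\subset\End_V(\pi\bC)$ by postcomposition. Since $\End_V(\pi\bC)$ is a discrete set on which the orbit of the identity is free, there is an $\Aut_{\fC,V}(\bC)$-equivariant summand decomposition $\End_V(\pi\bC)\simeq\Aut_{\fC,V}(\bC)\sqcup Z$, and the summand $\prn{\cO(f\bC;fD)\times\Aut_{\fC,V}(\bC)}_{h\Aut_{\fC,V}\bC}\simeq\cO(f\bC;fD)$ by the usual identification of homotopy orbits of a diagonal action with free regular action on the second factor; one sets $J_{\cO,(\bC;D)}:=\prn{\cO(f\bC;fD)\times Z}_{h\Aut_{\fC,V}\bC}$. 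Finally, to pass from $\cS$ to arbitrary cocomplete cartesian closed $\cC$ I would use that the unique symmetric monoidal left adjoint $\Fr_{\cC}\colon\cS\rightarrow\cC$ for the cartesian structures induces a $\cT$-functor $\uCoFr^{\cT}(\cS)\rightarrow\uCoFr^{\cT}(\cC)$ preserving fiberwise products and indexed coproducts which is moreover $\cO$-symmetric monoidal, hence intertwines the free $\cO$-algebra functors, so $T_{\cO}\Fr_{\cC}(\underline{\pi\bC})\simeq\Fr_{\cC}T_{\cO}(\underline{\pi\bC})$; applying $\Fr_{\cC}$ to the $\cS$-splitting produces \eqref{Multi-color splitting}. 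The main obstacle is the $\cT_{/V}$-finality of the inclusion $\ucO(f\bC;fD)\hookrightarrow\prn{\uSigma_{\fC}\times_{\cO^{\otimes}}\Ar^{\act,/\el}(\cO)}^{/(\bC;D)}$ in the parameterized, multi-colored setting, which requires carefully unwinding the structure of $\Ar^{\act,/\el}(\cO)$ and its $\cT$-slices and identifying the value of $X^{\otimes}$ on profiles; once that is in place, the fixed-point computation via \cref{Fixed points of colimit} and the summand bookkeeping are routine.
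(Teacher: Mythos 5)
Your proposal is correct and follows the paper's argument step for step: invoking \cite[Thm~5.1.4, Rem~4.3.6]{Nardin}, the same finality argument as in the one-color case, the fixed-point formula via indexed colimits, the specialization to $\cC=\cS$ with $X$ constant at $\pi\bC$, the shearing identification $\prn{\cO(f\bC;fD)\times\Aut_{\fC,V}\bC}_{h\Aut_{\fC,V}\bC}\simeq\cO(f\bC;fD)$, and the transfer along $\Fr_{\cC}$. The only slip is that the $\Aut_{\fC,V}\bC$-action on $\End_V(\pi\bC)\simeq\Map^V(\pi\bC,\pi\bC)$ arising from permuting factors is \emph{pre}composition rather than postcomposition (the group lives in the source), but this does not affect the freeness of the orbit of the identity or the resulting summand decomposition.
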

We now conclude a proof of \cref{Unital conservativity corollary}.
When $\varphi$ is an equivalence, conditions (a) and (b) are obvious,
Conversely, assume conditions (a) and (b);
it suffices to argue that $\varphi(\bC;D) \rightarrow \cP(\varphi \bC;\varphi D)$ is an equivalence for all $(\bC;D) \in \cO_S \times \cO_V$ by \cref{Symmetric sequence proposition}.
This follows from the following stronger proposition.
\begin{proposition}\label{Fr monad proposition}
  Suppose $\cC$ is a presentable and cartesian closed $\infty$-category and $\varphi\colon \cO^{\otimes} \rightarrow \cP^{\otimes}$ a map of $\cT$-operads whose pullback functor $\Alg_{\cP}(\uCoFr^{\cT}(\cC)) \rightarrow \Alg_{\cO}(\uCoFr^{\cT}(\cC))$ is an equivalence.
  Then, for all $(\bC;D) \in \cO_S \times \cO_V$, the induced map
  \[
    \Fr_{\cC}\cO(\bC;D) \rightarrow \Fr_{\cC}\cP(\varphi\bC;\varphi D)
  \]
  is an equivalence, where $\Fr_{\cC}\colon \cS \rightarrow \cC$ is the (unique) left adjoint sending $* \in \cS$ to the terminal object of $\cC$.
\end{proposition}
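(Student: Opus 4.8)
The plan is to deduce the statement from the summand decomposition of the free--algebra monad recorded in \cref{Multi-color observation}. First I would choose a coefficient system of sets $\fC$ together with a $\cT$-functor $f\colon \fC \to U\cO$ realizing the profile $(\bC;D)$, in the sense that there is an $\fC$-profile $(\bC_0;D_0)$ over the $V$-set $S$ with $f\bC_0 \simeq \bC$ and $fD_0 \simeq D$; such data exists (e.g.\ take $\fC$ with a generator for each color of $\cO$, as in \cref{Multi-color observation}). Write $g \deq \varphi \circ f\colon \fC \to U\cP$. Restriction along $\triv(\fC)^{\otimes} \to \cO^{\otimes}$ and along $\triv(\fC)^{\otimes} \to \cP^{\otimes}$ is compatible with $\varphi^{*}$, since $g = \varphi \circ f$; so passing to left adjoints --- using \cite[Thm~5.1.4]{Nardin} for the free $\cO$-algebra on an $\fC$-colored object and \cref{Gamma adjunction prop} to work at the level of $\Gamma^{\cT}$ --- the hypothesis that $\varphi^{*}\colon \Alg_{\cP}(\uCoFr^{\cT}(\cC)) \to \Alg_{\cO}(\uCoFr^{\cT}(\cC))$ is an equivalence shows that the natural transformation $T_{\cO} \Rightarrow T_{\cP}$ of free--algebra monads on $\Fun_{\cT}(\fC, \uCoFr^{\cT}(\cC))$ induced by $\varphi$ is an equivalence.

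Next I would evaluate both monads at the $\fC$-colored object $X$ obtained by applying $\uCoFr^{\cT}\Fr_{\cC}$ to the $\fC$-colored object of $\ucS_{\cT}$ which is constant at $\underline{S}$, exactly as in the computation producing \cref{Multi-color splitting}. That computation identifies $(T_{\cO}X)_{D_0}^{V}$ with a coproduct over $\fC$-profiles $\bC'$ over $S$ whose $\bC'$-indexed term splits as $\Fr_{\cC}\cO(f\bC';D) \sqcup (\mathrm{junk})$, the splitting coming from the free $\Aut$-summand $\Aut_{\fC,V}\bC' \subseteq \Map^{V}(S,S)$. Running the same computation for $\cP$ via $g$, and observing that the equivalence $T_{\cO}X \xrightarrow{\sim} T_{\cP}X$ is \emph{block-diagonal} for these coproduct decompositions --- because it is induced by the map of $\cT$-operads $\varphi$, which leaves the $\fC$-coloring unchanged and acts on the $\bC'$-term by $\varphi\colon \cO(f\bC';D) \to \cP(g\bC';gD_0)$ on the multimorphism factor and by the identity on the $\underline{S}$-decorations --- one sees that the restriction of this equivalence to the $(\bC', \mathrm{main})$-block is precisely $\Fr_{\cC}\varphi\colon \Fr_{\cC}\cO(f\bC';D) \to \Fr_{\cC}\cP(g\bC';gD_0)$. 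Taking $\bC' = \bC_0$ yields the assertion, provided one can pass from the block-diagonal equivalence of coproducts in $\cC$ to an equivalence of a single block.

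This last passage is the only delicate point. I would handle it by noting that $(T_{(-)}X)_{D_0}^{V}$ and the comparison map are obtained by applying $\uCoFr^{\cT}\Fr_{\cC}$ --- Cartesian symmetric monoidal and a left adjoint, hence preserving fiberwise products and indexed coproducts, as observed in \cref{Multi-color observation} --- to the analogous data computed over $\ucS_{\cT}$; thus the entire block-diagonal decomposition is $\Fr_{\cC}$ of a block-diagonal map of coproducts of \emph{spaces}, whose $(\bC_0,\mathrm{main})$-block is $\varphi\colon \cO(\bC;D) \to \cP(\varphi\bC;\varphi D)$ itself. When $\cC$ has disjoint coproducts (in particular when $\cC = \cS$, which is the main application, via \cref{Unital conservativity corollary}) this forces each block to be an equivalence; in general, since $\Fr_{\cC}$ reflects emptiness (a nonempty space has nonempty $\Fr_{\cC}$, as $1_{\cC} \not\simeq \emptyset_{\cC}$), one reduces to the non-empty case and runs a retract/idempotent argument: a point of $\cO(\bC;D)$ together with the terminal object exhibits $\Fr_{\cC}\cO(\bC;D)$ as a retract of $(T_{\cO}X)_{D_0}^{V}$ compatibly with the block-diagonal structure, so the two idempotents match and hence so do their images $\Fr_{\cC}\cO(\bC;D)$ and $\Fr_{\cC}\cP(\varphi\bC;\varphi D)$. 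The genuine work is the multi-colored bookkeeping of \cref{Multi-color observation} and verifying that the $\varphi$-induced monad map transports the splitting of \cref{Multi-color splitting} as claimed; everything else is formal.
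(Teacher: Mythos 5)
Your plan is the same as the paper's: choose a coefficient system $\fC$ of colors (the paper fixes $\fC = \pi_0 \cO$ explicitly via the factorization $\triv(\pi_0 \cO)^{\otimes} \xrightarrow{\gamma} \cO^{\otimes} \xrightarrow{\varphi} \cP^{\otimes}$), deduce from the hypothesis that the induced comparison of free-algebra monads on $\Fun_{\cT}(\fC, \uCoFr^{\cT}(\cC))$ is a natural equivalence, evaluate at the test object from \cref{Multi-color observation}, and read off the conclusion from the summand decomposition of \cref{Multi-color splitting}. The observation that the whole comparison is $\uCoFr^{\cT}\Fr_{\cC}$ applied to a block-diagonal map of coproducts of spaces is exactly the content that makes \cref{Multi-color splitting} useful, and you articulate this more explicitly than the paper's terse ``respecting the splitting\ldots hence yields an equivalence.''

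The one place you deviate is in the final passage from a block-diagonal equivalence of coproducts in $\cC$ to an equivalence of a single block, which you rightly flag as the delicate point; the paper simply asserts it. Your resolution is correct in spirit but loosely stated. The retract argument does work once spelled out: a point of $\cO(\bC;D)$ (after applying $\Fr_{\cC}$) supplies compatible retractions $p\colon T_{\cO}X \to \Fr_{\cC}\cO(\bC;D)$ and $p'\colon T_{\cP}X \to \Fr_{\cC}\cP(\varphi\bC;\varphi D)$ with $p' \circ \alpha = \Fr_{\cC}(\varphi) \circ p$, which together with the block inclusion and the inverse of $\alpha$ exhibit $\Fr_{\cC}(\varphi)$ as a split mono and split epi. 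However, the reduction ``to the non-empty case'' is not justified: if $\cO(\bC;D)=\emptyset$ while $\cP(\varphi\bC;\varphi D)\neq\emptyset$, the block map $\emptyset_{\cC}\to \Fr_{\cC}\cP(\varphi\bC;\varphi D)$ is not an equivalence, and you give no argument that a block-diagonal equivalence of coproducts in an arbitrary presentable cartesian closed $\cC$ rules this out. It is automatic when coproducts in $\cC$ are disjoint and universal (in particular for the $\cC=\cS$ and $\cC=\cS_{\leq d-1}$ used in the paper's applications), but the general case would want either a further argument or a mild extra hypothesis on $\cC$ --- a subtlety the paper's proof also elides.
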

\begin{proof}
  We will study the sequence of adjunctions on algebras in $\cT$-spaces associated with the sequence
  \[
    \triv(\pi_0 \cO)^{\otimes} \xrightarrow{\gamma} \cO^{\otimes} \xrightarrow{\varphi} \cP^{\otimes}
  \]
  using \cref{Multi-color observation}. 
  Condition (b) guarantees that $\varphi^*$ is an equivalence \emph{over $\Fun_{\cT}(\pi_0 \cO,\uCoFr^{\cT}(\cC))$}:
  \[
    \begin{tikzcd}[ampersand replacement=\&, column sep=tiny]
      {\Alg_{\cP}(\uCoFr^{\cT}(\cC))} \&\& {\Alg_{\cO}(\uCoFr^{\cT}(\cC))} \\
      \& {\Fun_{\cT}(\pi_0 \cO, \uCoFr^{\cT}(\cC))}
      \arrow["\sim", from=1-1, to=1-3]
      \arrow["{\prn{\gamma \varphi}^*}"', from=1-1, to=2-2]
      \arrow["{\gamma^*}", from=1-3, to=2-2]
    \end{tikzcd}
  \]
  this induces a natural equivalence between the associated monads for $(\gamma \varphi)^*$ and $\gamma^*$ respecting the splitting of \cref{Multi-color splitting} for each $(\bC,D)$, and hence yields an equivalence $\varphi\colon \Fr_{\cC}\cO(\bC;D) \rightarrow \Fr_{\cC}\cP(\varphi \bC;\varphi \bD)$, as desired.
\end{proof}

\subsection{\tcO-algebras in \tI-symmetric monoidal \texorpdfstring{$d$}{d}-categories}\label{d-operads subsection}
Recall that a space $X$ is said to be \emph{$d$-truncated} if it is empty or $\pi_n(X,x) = *$ for all $x \in X$ and $n > d$;
in particular, $X$ is $(-1)$-truncated precisely if it is either empty or contractible.
In \cref{Truncations of SMCs subsubsection}, we applied this to mapping spaces to define \emph{$\cT$-symmetric monoidal $d$-categories}.
In this section, we define a compatible notion of \emph{$\cT$-$d$-operads}, centered on the following result.
\begin{proposition}\label{Operad mapping fiber prop}
    Let $\cO^{\otimes}$ be a $\cT$-operad and let $d \geq -1$.
    Then, the following conditions are equivalent:
    \begin{enumerate}[label={(\alph*)}]
        \item $\cO(S)$ is $d$-truncated for all $S \in \FF_V$.
        \item The $\cT$-functor $\Env \cO \rightarrow \uFF_{\cT}$ has $d$-truncated mapping fibers.
    \end{enumerate}
\end{proposition}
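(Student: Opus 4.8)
The plan is to reduce \cref{Operad mapping fiber prop} directly to \cref{Envelope omnibus corollary}, which computes the mapping fibers of $\Env \cO \to \uFF_{\cT}$ over an active arrow $\psi\colon T \to S$ as a coproduct $\coprod_{(\bC,\bD)} \prod_{U \in \Orb(S)} \cO(\bC_U; D_U)$ of products of structure spaces, together with the observation from \cref{Sigma observation} that the backwards (inert) arrows contribute only to the core. Since $\uFF_{\cT}$ is a $\cT$-$1$-category (it is a coefficient system of $1$-categories), \cref{T-fiber truncatedness corollary} tells us that having $d$-truncated mapping fibers is a meaningful condition, and the content of the proposition is matching the fiber over a general morphism to the fibers over active morphisms.

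First I would recall that every morphism in $\Span(\FF_{\cT})$, hence every morphism of the pattern $\cO^{\otimes}$, factors as an inert morphism followed by an active morphism. By \cref{Sigma observation} the inert part of $\Mor_V(\Env \cO)$ lives over the core $\uSigma_V \subset \uFF_{\cT,V}$, which contributes only $B\Aut_V(S)$-type data and is $(-1)$-truncated relative to $\uFF_{\cT}$; thus the mapping fiber of $\Env\cO \to \uFF_{\cT}$ over an arbitrary $\psi$ is equivalent to the mapping fiber over the active part of $\psi$. So it suffices to check that $\Mor^{\psi}_{\Env\cO_V \to \FF_{\cT,V}}(\Env\cO_V)$ is $(d-1)$-truncated for all active $\psi\colon T \to S$ and all $V$ if and only if $\cO(S')$ is $d$-truncated for all $V$-sets $S'$.

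For the ($\Leftarrow$) direction, I would apply \cref{Envelope omnibus corollary} to write the mapping fiber over $\psi\colon T\to S$ as
\[
  \Mor^{\psi}_{\Env\cO_V \to \FF_{\cT,V}}(\Env\cO_V) \simeq \coprod_{(\bC,\bD)\in \cO_T\times \cO_S}\ \prod_{U\in\Orb(S)}\cO(\bC_U;D_U),
\]
a coproduct of finite products of structure spaces $\cO(\bC_U;D_U)$; since each $\cO(\bC_U;D_U)$ is a summand of $\cO(T_U)$ (a restriction of $T$ along $S$), it is $d$-truncated by hypothesis, and finite products and coproducts of $d$-truncated spaces are $d$-truncated, so the fiber is $d$-truncated — and $\Mor^\psi_F$ being $d$-truncated is what ``$(d-1)$-truncated mapping fibers'' means here after tracking the $-1$-shift convention from \cref{T-fiber truncatedness corollary} (being careful that condition (b) is phrased as "$d$-truncated mapping fibers", matching the $d$-truncatedness of $\cO(S)$, not $(d-1)$). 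For the ($\Rightarrow$) direction, I would take $S$ an orbit $V$ and $\psi\colon S \to *_V$ the unique active arrow (and then general $S$ by the Segal condition $\cO(S) \simeq \prod_{U\in\Orb(S)}\cO(\Res$-pieces$)$), choosing the profile $(\bC;D)$ to range over all colors, so that the one-color-style formula $\Map^{\psi}_{\Env\cO_V \to \FF_{\cT,V}}(iS,i*_V) \simeq \cO(S)$ in \cref{Envelope omnibus corollary} exhibits $\cO(S)$ directly as (a summand of) a mapping fiber; truncatedness of the fiber then forces truncatedness of $\cO(S)$.

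The main obstacle I anticipate is purely bookkeeping: pinning down the exact truncation indexing so that "(b) has $d$-truncated mapping fibers" lines up with "(a) $\cO(S)$ is $d$-truncated" — the excerpt's \cref{T-fiber truncatedness corollary} is stated with a $(d-1)$-truncation on the mapping fibers corresponding to a $\cT$-$d$-category, whereas here the proposition is deliberately phrased symmetrically in $d$, so I need to invoke \cref{Envelope omnibus corollary} and the core-only contribution of inert arrows rather than \cref{T-fiber truncatedness corollary} directly, and simply verify the coproduct/product of $d$-truncated spaces is $d$-truncated. A secondary minor point is handling the empty case ($d = -1$ or empty structure spaces) and confirming that passing to summands of $\cO(T_U)$ — which may differ from $\cO(T_U)$ when $\cO^{\otimes}$ has several colors — does not change the truncation level, which is immediate since summands of $d$-truncated spaces are $d$-truncated.
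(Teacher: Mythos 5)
Your overall approach matches the paper's: both proofs hinge on \cref{Envelope omnibus corollary} to identify $\Mor^{\psi}_{\Env\cO\to\uFF_{\cT}}(\Env\cO)$ as $\coprod_{(\bC,\bD)}\prod_{U\in\Orb(S)}\cO(\bC_U;D_U)$, and then compare truncation levels. However, there is a genuine gap in your treatment of $d=-1$. You assert that ``finite products and coproducts of $d$-truncated spaces are $d$-truncated,'' but this is false for coproducts when $d=-1$: a coproduct of $(-1)$-truncated spaces (each empty or contractible) can have several nonempty summands and hence be $0$-truncated without being $(-1)$-truncated. The paper therefore handles $d=-1$ separately, first observing that \emph{either} condition (a) \emph{or} (b) at $d=-1$ forces $\cO^{\otimes}$ to have at most one color (because $\cO(*_V)$ contains an identity element for every color $D\in\pi_0\cO_V$, so $(-1)$-truncatedness of $\cO(*_V)$ collapses the color set), after which the coproduct over $(\bC,\bD)$ is a single term and only the product remains. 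Your parenthetical dismissal — ``immediate since summands of $d$-truncated spaces are $d$-truncated'' — addresses the wrong issue: summands are fine; it is the \emph{coproduct direction} that fails, and that failure requires the at-most-one-color reduction to repair.

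Two smaller points. The preamble about factoring morphisms into inert-then-active and ``inert arrows contributing only to the core'' is a red herring: $\Env\cO\to\uFF_{\cT}$ is a $\cT$-functor of $\cT$-$\infty$-categories, not a morphism of algebraic patterns over $\Span(\FF_{\cT})$, so there is no inert/active factorization system on its morphisms to track; \cref{Envelope omnibus corollary} already computes the mapping fiber over an arbitrary map $\psi\colon T\to S$ of $V$-sets directly. Second, in the $(\Rightarrow)$ direction the parenthetical ``(and then general $S$ by the Segal condition $\cO(S)\simeq\prod_{U\in\Orb(S)}\cdots$)'' is both unnecessary and incorrectly stated — $\cO(S)$ already has output a single orbit, and the formula of \cref{Envelope omnibus corollary} at $\psi\colon S\to *_V$ gives $\Mor^{\psi}\simeq\coprod_{(\bC,D)}\cO(\bC;D)=\cO(S)$ on the nose, so no Segal decomposition is needed. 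Once the $d=-1$ case is patched as above, the rest of the argument is sound and is essentially the paper's.
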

\begin{proof}
    Let $\psi\colon T \rightarrow S$ be a map of $\cT$-sets over $V$.
    Then, by \cref{Envelope omnibus corollary}, we have a equivalences
    \begin{equation}\label{Mor equation}
      \begin{split}
        \Mor^{\psi}_{\Env \cO \rightarrow \uFF_{\cT}}(\Env \cO) 
        &\simeq \coprod_{\bC \in \cO_T, \bD \in \cO_S} \Map^{\psi}_{\Env \cO \rightarrow \uFF_{\cT}}(\bC,\bD)\\
        &\simeq \coprod_{\bC \in \cO_T, \bD \in \cO_S} \prod_{U \in \Orb(S)} \Map^{\psi}_{\Env \cO \rightarrow \uFF_{\cT}}(\bC_U,D_U)\\
        &\simeq \coprod_{\bC \in \cO_T, \bD \in \cO_S} \prod_{U \in \Orb(S)} \cO(\bC_U;D_U),
      \end{split}
    \end{equation}
    natural in $\cO^{\otimes}$.
    First, in the case $d = -1$, note that conditions (a) and (b) both imply that $\cO$ has at most one color, so \cref{Mor equation} specializes to
    \[
        \Mor^{\psi}_{\Env \cO \rightarrow \uFF_{\cT}}(\Env \cO) \simeq \prod_{U \in \Orb(S)} \cO(S).
    \]
    Thus it suffices to note that a product is $-1$-truncated if and only if its factors are.

    Next, in the case $d \geq 0$, note that a coproduct of spaces is $d$-truncated if and only if its factors are;
    hence \cref{Mor equation} shows that (b) is equivalent to the condition that $\prod_{U \in \Orb(S)} \cO(\bC_U;D_U)$ is $d$-truncated for all $S,\bC,\bD$.
    In fact, the equation 
    \[
      \cO(S) \simeq \coprod_{(\bC,D) \in \cO_S \times \cO_V} \cO(\bC;D)
    \]
    shows that this (b) equivalent to the condition that $\cO(S)$ is $d$-truncated for all $S \in \FF_V$, as desired.
\end{proof}

We define the full subcategory of \emph{$d$-operads} $\Op_{\cT,d} \subset \Op_{\cT}$
to be spanned by $\cT$-operads satisfying the condition that $\cO(S)$ is $(d-1)$-truncated for all $S \in \FF_V$ as in \cref{Operad mapping fiber prop}.
The following corollary follows from \cref{Operad mapping fiber prop} and the mapping fiber truncation characterizations of \cref{T-fiber truncatedness corollary}.
\begin{corollary}
\label{Truncated operad prop}
    Let $\cO^{\otimes}$ be a $\cT$-operad and let $d \geq 1$.
    The following conditions are equivalent:
    \begin{enumerate}[label={(\alph*)}]
      \item $\cO^{\otimes}$ is a $\cT$-$d$-operad, and
        \item $\Env \cO^{\otimes}$ is a $\cT$-symmetric monoidal $d$-category.
    \end{enumerate}
    Furthermore, the following conditions are equivalent:
    \begin{enumerate}[label={(\alph*')}]
      \item $\cO^{\otimes}$ is a $\cT$-$0$-operad, and
        \item the $\cT$-functor $\Env \cO \rightarrow \uFF_{\cT}$ is a replete $\cT$-subcategory inclusion.
    \end{enumerate}
\end{corollary}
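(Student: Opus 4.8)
The plan is to deduce both equivalences from \cref{Operad mapping fiber prop} together with the mapping-fiber truncatedness dictionary \cref{T-fiber truncatedness corollary}, the glue being the observation that $\uFF_{\cT}$ is a $\cT$-$1$-category: by \cref{1-category} the slice $\cT_{/V}$ is a $1$-category, hence so is its finite coproduct completion $\FF_V = \FF_{\cT,/V}$, so $\uFF_{\cT}$ has $1$-categorical $V$-values. This makes \cref{T-fiber truncatedness corollary} applicable to the $\cT$-functor $F\colon \Env \cO \to \uFF_{\cT}$.

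For the first equivalence, fix $d \geq 1$. By definition $\cO^{\otimes}$ is a $\cT$-$d$-operad precisely when $\cO(S)$ is $(d-1)$-truncated for all $S \in \FF_V$; since $d-1 \geq 0$, \cref{Operad mapping fiber prop} rewrites this as $F$ having $(d-1)$-truncated mapping fibers, and then \cref{T-fiber truncatedness corollary} (using that $\uFF_{\cT}$ is a $\cT$-$1$-category) rewrites it in turn as $\Env \cO$ being a $\cT$-$d$-category. Finally the recognition criterion for $\cT$-symmetric monoidal $d$-categories established above identifies this last condition with $\Env \cO^{\otimes} \in \Cat_{\cT}^{\otimes}$ being a $\cT$-symmetric monoidal $d$-category, the envelope being $\cT$-symmetric monoidal. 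Composing these equivalences yields $(a) \iff (b)$.

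For the second equivalence I would argue as follows. Applying \cref{Operad mapping fiber prop} at truncation level $-1$, the condition that $\cO^{\otimes}$ is a $\cT$-$0$-operad is equivalent to $F$ having $(-1)$-truncated mapping fibers. A $\cT$-$0$-operad is at most one-colored, so unwinding \cref{Envelope omnibus corollary} and \cref{Sigma observation}, the objects of $\Env \cO$ over $V$ are exactly the finite $V$-sets whose every orbit lies in the color family of $\cO^{\otimes}$, the lifts being unique and contractible; consequently $F^{\simeq}\colon (\Env \cO)^{\simeq} \to \uFF_{\cT}^{\simeq}$ is the inclusion of a full $\cT$-subgroupoid, hence a summand inclusion, in particular fully faithful. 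Together these verify hypothesis $(1')$ of \cref{T-fiber truncatedness corollary}, which that corollary identifies with $(2')$, i.e.\ with the statement that $F$ exhibits $\Env \cO$ as a replete $\cT$-subcategory of $\uFF_{\cT}$ --- exactly $(b')$. Conversely, if $F$ is a replete $\cT$-subcategory inclusion then hypothesis $(1')$ holds, so in particular $F$ has $(-1)$-truncated mapping fibers, and \cref{Operad mapping fiber prop} forces $\cO(S)$ to be $(-1)$-truncated for every $S$, i.e.\ $(a')$.

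The step I expect to be the main obstacle is the bookkeeping in the $0$-operad case: \cref{Operad mapping fiber prop} only delivers the bare ``$(-1)$-truncated mapping fibers'' condition, whereas the replete-subcategory condition $(2')$ corresponds under \cref{T-fiber truncatedness corollary} to the strictly stronger $(1')$, which additionally demands full faithfulness of $F$ on cores. Supplying that missing input --- via the facts that a $\cT$-$0$-operad is at most one-colored and that the envelope of an at-most-one-colored $\cT$-operad has cores that are full $\cT$-subgroupoids of $\uFF_{\cT}^{\simeq}$ --- is the one genuinely nontrivial point; everything else is formal chaining of equivalences already in hand.
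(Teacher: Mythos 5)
Your proof is correct and follows the paper's intended route, namely chaining \cref{Operad mapping fiber prop} with \cref{T-fiber truncatedness corollary} (after noting $\uFF_{\cT}$ is a $\cT$-$1$-category via \cref{1-category}). In fact you improve on the paper's one-line citation by making explicit the only non-formal step in the $d=0$ case --- that a $\cT$-$0$-operad is at most one-colored, so $F^{\simeq}\colon (\Env\cO)^{\simeq}\to\uFF_{\cT}^{\simeq}$ is a summand inclusion and hence fully faithful, which is exactly the extra hypothesis $(1')$ demands beyond what \cref{Operad mapping fiber prop} alone supplies.
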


In general, these form a well-behaved subcategory.
\begin{corollary}\label{Operad truncation corollary}
   The inclusion $\Op_{\cT,d} \hookrightarrow \Op_{\cT}$ has a left adjoint $h_{d}$ satisfying
  \[
    \prn{h_{d} \cO}(S) \simeq \tau_{\leq d-1} \cO(S). 
  \]  
   Furthermore, when $d \geq 1$, this fits into the following diagram
    \[
        \begin{tikzcd}
            \Op_{\cT} \arrow[r,"h_{d}"] \arrow[d]
            & \Op_{\cT,d} \arrow[d] \\
            \Cat^{\otimes}_{\cT} \arrow[r] \arrow[r,"h_{d}"]
            & \Cat^{\otimes}_{\cT,d}
        \end{tikzcd}
    \]
    In particular, when $\cC^{\otimes}$ is a $\cT$-symmetric monoidal $d$-category, the canonical map $\cO^{\otimes} \rightarrow h_{d} \cO^{\otimes}$ induces an equivalence
    \[
        \Alg_{\cO}(\cC) \simeq \Alg_{h_{d} \cO}(\cC).
    \]
\end{corollary}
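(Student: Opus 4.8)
The plan is to realize $\Op_{\cT,d} \subset \Op_{\cT}$ as a reflective subcategory computed by truncating structure spaces, and then to deduce the two ``in particular'' clauses formally. For the reflectivity I would work one color system at a time. Fixing a $\cT$-coefficient system of sets $\fC$, the argument of \cref{Monadic sseq corollary} applies just as well in the $\fC$-colored setting to show that $\sseq\colon \Op_{\cT}^{\fC} \to \Fun(\tot\uSigma_{\fC}, \cS)$ is monadic; write $\mathbb{T}_{\fC}$ for the resulting monad. By definition of a $\cT$-$d$-operad, $\Op_{\cT,d}^{\fC}$ is exactly the preimage under $\sseq$ of the reflective subcategory $\Fun(\tot\uSigma_{\fC}, \cS_{\leq d-1}) \subset \Fun(\tot\uSigma_{\fC},\cS)$ of levelwise $(d-1)$-truncated $\fC$-symmetric sequences, whose reflection is levelwise truncation $\tau_{\leq d-1}$. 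The key input is that $\mathbb{T}_{\fC}$ preserves levelwise $\tau_{\leq d-1}$-equivalences (discussed in the next paragraph); granting this, a standard Bousfield-localization-of-algebras argument produces a reflective subcategory $\Op_{\cT,d}^{\fC} \subset \Op_{\cT}^{\fC}$ whose reflection $h_d$ acts on underlying $\fC$-symmetric sequences by $\tau_{\leq d-1}$, so that $(h_d\cO)(S) \simeq \tau_{\leq d-1}\cO(S)$. Since truncation does not change $\pi_0 U$, these color-wise reflections are compatible with change of colors and glue to a reflective localization $h_d\colon \Op_{\cT} \to \Op_{\cT,d}$.

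To see that $\mathbb{T}_{\fC}$ preserves levelwise $\tau_{\leq d-1}$-equivalences, I would use the description of $\sseq$ appearing in the proof of \cref{Monadic sseq corollary}: up to the equivalence identifying $\sseq$ with the forgetful functor of the associative-algebra monad for a composition-type monoidal structure on $\cS$-valued presheaves, one has $\mathbb{T}_{\fC} \simeq \coprod_n (-)^{\circ n}$, and each iterated composition product is computed by a colimit of diagrams built from \emph{finite products} of values of the input symmetric sequence together with quotients by finite automorphism groups. Since $\tau_{\leq d-1}$ commutes with finite products and with coproducts, and since the $\tau_{\leq d-1}$-equivalences form the left class of an orthogonal factorization system and are hence closed under arbitrary colimits, $\mathbb{T}_{\fC}$ sends levelwise $\tau_{\leq d-1}$-equivalences to levelwise $\tau_{\leq d-1}$-equivalences. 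This is the step I expect to be the main obstacle: it needs a sufficiently explicit grip on the free $\cT$-operad monad, together with the usual care in carrying out the Bousfield localization coherently in the $\infty$-categorical setting.

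For the square with $\Cat_{\cT}^{\otimes}$ when $d \geq 1$: by \cref{Truncated operad prop}, $\Env_{\cT}$ carries $\cT$-$d$-operads to $\cT$-symmetric monoidal $d$-categories, and $\iota$ carries $\cT$-symmetric monoidal $d$-categories to $\cT$-$d$-operads (their structure spaces are $(d-1)$-truncated mapping spaces, by \cref{Operad mapping fiber prop}), so the adjunction $\Env_{\cT} \dashv \iota$ of \cref{Env is maps of operads corollary} restricts to the reflective subcategories, giving $\Env_{\cT}\colon \Op_{\cT,d} \rightleftarrows \Cat_{\cT,d}^{\otimes}\colon \iota$. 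Then $h_d \circ \Env_{\cT}$ and $\Env_{\cT} \circ h_d$ are both left adjoints $\Op_{\cT} \to \Cat^{\otimes}_{\cT,d}$, and their right adjoints are $\Cat^{\otimes}_{\cT,d} \hookrightarrow \Cat^{\otimes}_{\cT} \hookrightarrow \Op_{\cT}$ and $\Cat^{\otimes}_{\cT,d} \hookrightarrow \Op_{\cT,d} \hookrightarrow \Op_{\cT}$ respectively, which are the same functor; hence the left adjoints agree, which is the claimed commutativity.

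Finally, for $\cC^{\otimes}$ a $\cT$-symmetric monoidal $d$-category and an arbitrary $\cK \in \Cat$, the pointwise cotensor $\cC^{\otimes\cK}$ — the fibrous $\Span(\FF_{\cT})$-pattern with $\cC_S^{\otimes\cK} \simeq \Fun(\cK, \cC_S^{\otimes})$ — is the unstraightening of the product-preserving functor $\Fun(\cK, -) \circ \cC^{\otimes}\colon \Span(\FF_{\cT}) \to \Cat$, hence a $\cT$-symmetric monoidal $\infty$-category, and it is again a $\cT$-symmetric monoidal $d$-category because mapping spaces in $\Fun(\cK, \cC_V)$ are ends of the $(d-1)$-truncated mapping spaces of $\cC_V$; in particular $\cC^{\otimes\cK}$ is a $\cT$-$d$-operad. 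Unwinding the definition of $\Alg_{\cO}(\cC)$ as a full subcategory of $\Fun_{/\Span(\FF_{\cT})}(\cO^{\otimes}, \cC^{\otimes})$, one gets $\Fun(\cK, \Alg_{\cO}(\cC)) \simeq \Fun^{\Int\text{-}\cocart}_{/\Span(\FF_{\cT})}(\cO^{\otimes}, \cC^{\otimes\cK})$, so passing to cores and using the universal property of $h_d\cO$ against the $d$-operad $\cC^{\otimes\cK}$ yields natural equivalences
\[
  \Map_{\Cat}(\cK, \Alg_{\cO}(\cC)) \simeq \Map_{\Op_{\cT}}(\cO^{\otimes}, \cC^{\otimes\cK}) \simeq \Map_{\Op_{\cT}}(h_d\cO^{\otimes}, \cC^{\otimes\cK}) \simeq \Map_{\Cat}(\cK, \Alg_{h_d\cO}(\cC)).
\]
Since $\cK$ was arbitrary, the Yoneda lemma gives $\Alg_{\cO}(\cC) \simeq \Alg_{h_d\cO}(\cC)$, compatibly with restriction along $\cO^{\otimes} \to h_d\cO^{\otimes}$.
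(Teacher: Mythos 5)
Your proposal takes a genuinely different route from the paper, and while the two formal upgrading arguments at the end are clean (arguably cleaner than the paper, which is terse on both "in particular" clauses), the main new ingredient has a real gap.

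The paper does not localize algebras over the monad $\mathbb{T}_{\fC}$. Instead it uses the Barkan--Haugseng--Steinebrunner identification of $\Op_{\cT}$, via $\Env^{/\uFF_{\cT}}$, with the full subcategory of $\Cat^{\otimes}_{\cT,/\uFF^{\cT-\sqcup}_{\cT}}$ spanned by \emph{equifibered} objects, and then observes that the pointwise homotopy $d$-category functor $h_{\cT,d}$ preserves equifibrations (trivially for $d\geq 1$; for $d=0$ via a separate argument with $(-1)$-truncated maps in $\Cat_1$ and \cite[Prop~5.5.6.5]{HTT}). The effect on structure spaces then falls out of \cref{Mor equation}. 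This sidesteps any explicit grip on the free $\cT$-operad monad. Your route, by contrast, needs exactly that grip, and this is where the gap lies. The formula $\mathbb{T}_{\fC} \simeq \coprod_n (-)^{\circ n}$ is not correct: the composition product does not preserve colimits in the second variable, so the free monoid is not a coproduct of $\circ$-powers but rather a transfinite/tree-indexed colimit. More importantly, the monad $\mathbb{T}_{\fC}$ is \emph{not} the free associative-algebra monad you refer to --- in the proof of \cref{Monadic sseq corollary} the functor $U_\circ$ is only one factor in a five-fold composite defining $\sseq$, so even the corrected free-monoid formula would not describe $\mathbb{T}_{\fC}$. To carry out your strategy you would need an explicit colimit/tree description of the free $\cT$-operad on a $\fC$-symmetric sequence, verify that it is built only from finite products and colimits, and then run the Bousfield-localization-of-$T$-algebras argument. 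None of that infrastructure is present in the paper, and it is a nontrivial project in its own right; you flag this as the main obstacle, and you are right that it is. By contrast, your formal argument for the commuting square (equality of right adjoints) and your cotensor argument $\Fun(\cK,\Alg_{\cO}(\cC)) \simeq \Alg_{\cO}(\cC^{\cK})$ to promote the core equivalence to a categorical one are both correct and are a nice supplement to what the paper leaves implicit.
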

\begin{proof}
  By \cite[Prop~4.2.1]{Barkan}, the image of the fully faithful functor $\Op_{\cT} \hookrightarrow \Cat_{\cT, /\uFF_{\cT}^{\cT-\sqcup}}^{\otimes}$ is spanned by the \emph{equifibered} $\cT$-symmetric monoidal $\infty$-categories, i.e. $\cC^{\otimes}$ such that, given $T \rightarrow S$ a map of finite $\cT$-sets, the associated diagram
  \[\begin{tikzcd}
	{\cC_T} & {\cC_S} \\
	{\FF_{T}} & {\FF_S}
	\arrow[from=1-1, to=1-2]
	\arrow[from=1-1, to=2-1]
	\arrow[from=1-2, to=2-2]
	\arrow[from=2-1, to=2-2]
\end{tikzcd}\]
  is cartesian.
  We separately argue in the case $d \geq 1$ and $d = 0$ that the image of this is closed under $h_{\cT,d}$;
  this will imply that $h_{d} \Env^{/\uFF_{\cT}} \cO^{\otimes}$ corresponds with a $\cT$-$d$-operad $h_{d} \cO^{\otimes}$, which computes the left adjoint to the inclusions $\Op_{\cT,d} \subset \Op_{\cT}$ by fully faithfulness of $\Env^{/\uFF_{\cT}} \cO^{\otimes}$.
  In particular, the counit $\varepsilon\colon \cO^{\otimes} \rightarrow h_{d} \cO^{\otimes}$ will induce the counit $\Env \cO \rightarrow h_d \Env\cO$, which by \cref{Mor equation} shows that $\cO(S) \rightarrow h_d \cO(S)$ is the Postnikov $(d-1)$-truncation map.

  To prove compatibility with equifibrations, we first consider the case $d \geq 1$.
  In this case, since $h_{\cT,d}:\Cat_{\cT}^{\otimes} \rightarrow \Cat_{\cT,d}^{\otimes}$ is applied pointwise, it preserves equifibrations, so $h_{\cT,d} \Env^{/\uFF_{\cT}} \cO^{\otimes}$ corresponds with a $d$-operad $h_{\cT,d} \cO^{\otimes}$.

  The case $d = 0$ is similar, except that we are tasked with replacing equifibered $\cT$-symmetric monoidal functors with an equifibered (replete) subcategory.
  In fact, replete subcategories are precisely $(-1)$-truncated maps in $\Cat_1$, so we may do this by taking the pointwise $(-1)$-truncation functor and applying \cite[Prop~5.5.6.5]{HTT} to see that the result is equifibered.
\end{proof}

We acquire a simple lift of \cite[Prop~5.5]{Blumberg-op}.
\begin{corollary}\label{Ninfty poset corollary}
    Let $\cP^{\otimes}$ be a $\cT$-$d$-operad.
    \begin{enumerate}
      \item if $d \geq 1$, then $\Alg_{\cO}(\cP)$ is a $d$-category; hence $\Op_{\cT, d}$ is a $(d+1)$-category.
      \item if $d = 0$, then $\Alg_{\cO}(\cP)$ is either empty or contractible;
        hence $\Op_{\cT, 0}$ is a poset.
  \end{enumerate}
\end{corollary}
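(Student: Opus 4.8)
The plan is to compute the mapping spaces of $\Alg_{\cO}(\cP)$ directly and show they are $(d-1)$-truncated, then transfer the conclusion to $\Op_{\cT,d}$ and sharpen the case $d=0$. Since $\Alg_{\cO}(\cP) = \Fun^{\Int-\cocart}_{/\Span(\FF_{\cT})}(\cO^{\otimes},\cP^{\otimes})$ is a \emph{full} subcategory of $\Fun_{/\Span(\FF_{\cT})}(\cO^{\otimes},\cP^{\otimes})$, it suffices to bound the mapping spaces of the latter between objects lifting the structure functor $\pi_{\cO}$. Given algebras $A,B\colon \cO^{\otimes}\to\cP^{\otimes}$, the space $\Map_{\Fun_{/\Span(\FF_{\cT})}(\cO^{\otimes},\cP^{\otimes})}(A,B)$ is the fiber over $\id_{\pi_{\cO}}$ of the forgetful map $\Map_{\Fun(\cO^{\otimes},\cP^{\otimes})}(A,B) \to \Map_{\Fun(\cO^{\otimes},\Span(\FF_{\cT}))}(\pi_{\cO},\pi_{\cO})$. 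Applying the end formula $\Map_{\Fun(\cC,\cD)}(F,G) \simeq \lim_{(c\to c')\in\mathrm{Tw}(\cC)}\Map_{\cD}(Fc,Gc')$ to both targets and using that limits commute with fibers, I would identify $\Map_{\Alg_{\cO}(\cP)}(A,B)$ with $\lim_{(c\to c')\in\mathrm{Tw}(\cO^{\otimes})}\Map^{\pi_{\cO}(c\to c')}_{\cP^{\otimes}}(Ac,Bc')$, the limit of the spaces of morphisms of $\cP^{\otimes}$ lying over a \emph{fixed} morphism of $\Span(\FF_{\cT})$.

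By cocartesian transport along the inert part of such a morphism together with the Segal condition for multimorphisms (\cref{GOperads conditions}), each $\Map^{\psi}_{\cP^{\otimes}}(X,Y)$ is a finite product of multimorphism spaces $\cP(\bC_V;D_V)$; since $\cP^{\otimes}$ is a $\cT$-$d$-operad, each $\cP(\bC_V;D_V)$ is a retract of a structure space $\cP(S)$, hence $(d-1)$-truncated, so $\Map^{\psi}_{\cP^{\otimes}}(X,Y)$ is $(d-1)$-truncated. As $(d-1)$-truncated spaces are closed under limits, $\Map_{\Alg_{\cO}(\cP)}(A,B)$ is $(d-1)$-truncated. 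For $d\geq 1$ this is precisely the assertion that $\Alg_{\cO}(\cP)$ is a $d$-category. Combined with the elementary observation that the core of a $d$-category is a $d$-truncated space (its homotopy groups above degree $d$ are those of the $(d-1)$-truncated endomorphism spaces) and with $\Map_{\Op_{\cT,d}}(\cO^{\otimes},\cP^{\otimes}) \simeq \Alg_{\cO}(\cP)^{\simeq}$ (as $\Op_{\cT,d}\subset\Op_{\cT}$ is full), this yields that $\Op_{\cT,d}$ is a $(d+1)$-category; alternatively one may read this off from \cref{Operad truncation corollary,Truncated operad prop} by noting that the envelope embeds $\Op_{\cT,d}$ fully faithfully into the slice $\Cat^{\otimes}_{\cT,d,/\uFF_{\cT}^{\cT-\sqcup}}$, which is a $(d+1)$-category.

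For $d=0$, the above shows $\Alg_{\cO}(\cP)$ is a poset, and I would upgrade this to ``empty or contractible'' as follows. When $\cP^{\otimes}$ is a $\cT$-$0$-operad each $\cP(S)$ is $(-1)$-truncated, which forces each fiber $\cP_V\in\{\emptyset,\ast\}$ and, by the multimorphism product decomposition above, each $\Map^{\psi}_{\cP^{\otimes}}(X,Y)\in\{\emptyset,\ast\}$. Hence any two algebras $A,A'\colon\cO^{\otimes}\to\cP^{\otimes}$ agree, up to a contractible space of choices, on objects, on mapping spaces and on all higher operadic data, so they are canonically equivalent in $\Alg_{\cO}(\cP)$; a nonempty poset in which all objects are canonically equivalent is contractible. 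Taking cores gives $\Map_{\Op_{\cT,0}}(\cO^{\otimes},\cP^{\otimes})$ empty or contractible, so $\Op_{\cT,0}$ is a poset. The main obstacle is the bookkeeping of the first two paragraphs: making precise that restricting natural transformations to lie \emph{over} $\Span(\FF_{\cT})$ is exactly what discards the $2$-categorical contributions of the base and leaves only the $(d-1)$-truncated multimorphism spaces of the $d$-operad $\cP^{\otimes}$.
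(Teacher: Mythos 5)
Your main argument has a genuine gap, precisely at the point you flag as "the main obstacle": the identification of the homotopy fiber $\mathrm{fib}_{\pi_{\cO}(c\to c')}\bigl(\Map_{\cP^{\otimes}}(Ac,Bc')\to\Map_{\Span(\FF_{\cT})}(\pi c,\pi c')\bigr)$ with the space $\Map^{\pi_{\cO}(c\to c')}_{\cP^{\otimes}}(Ac,Bc')$ is incorrect. The latter is, by the paper's conventions, the \emph{union of connected components} lying over the component of $\psi:=\pi_{\cO}(c\to c')$; it sits in a fiber sequence
\[
\mathrm{fib}_{\psi}\longrightarrow \Map^{\psi}_{\cP^{\otimes}}(Ac,Bc')\longrightarrow \Map_{\Span(\FF_{\cT})}(\pi c,\pi c')_{[\psi]},
\]
and the base is the (generally noncontractible) component of $\psi$ in a mapping space of the $2$-category $\Span(\FF_{\cT})$. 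Concretely, $\pi_1$ of that base is the automorphism group of the span $\psi$ compatible with both legs (e.g.\ $\Sigma_n$ when $\psi$ is the fold $n\cdot V\to V$), and the long exact sequence gives a possibly nontrivial map $\pi_{n+1}\bigl(\Map_{\Span}(\pi c,\pi c')_{[\psi]}\bigr)\to\pi_n(\mathrm{fib}_\psi)$. So restricting natural transformations to lie over $\Span(\FF_{\cT})$ does \emph{not} simply discard the $2$-categorical contributions of the base pointwise. For $d\geq 1$ your conclusion can nonetheless be salvaged: from the fiber sequence, $\pi_n(\mathrm{fib}_\psi)=0$ for $n\geq\max(d,1)$ since $\Map^{\psi}_{\cP^{\otimes}}$ is $(d-1)$-truncated and the base is $1$-truncated, so the limit over $\mathrm{Tw}(\cO^{\otimes})$ is still $(d-1)$-truncated. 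But for $d=0$ this only gives $0$-truncated fibers, hence that $\Alg_{\cO}(\cP)$ is a $1$-category, not a poset; your separate $d=0$ paragraph reasons from the individual $\Map^\psi$'s being empty or contractible, which does not control the extra $\pi_0$ the fibers can pick up from $\pi_1(\Map_{\Span})$, and does not show that those contributions are killed in the twisted-arrow limit.

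The paper's proof avoids this entirely by using the symmetric monoidal envelope: via \cref{Operad truncation corollary} one writes $\Alg_{\cO}(\cP)\simeq\Fun^{\otimes}_{\cT,/\uFF_{\cT}^{\cT-\sqcup}}(\Env h_d\cO^{\otimes},\Env\cP^{\otimes})$, which lives over the $\cT$-$1$-category $\uFF_{\cT}^{\cT-\sqcup}$ rather than over $\Span(\FF_{\cT})$, so the $2$-cells of spans never appear. For $d\geq 1$, $\Env\cP^{\otimes}$ is a $\cT$-symmetric monoidal $d$-category by \cref{Truncated operad prop}, giving the result directly; for $d=0$, \cref{Truncated operad prop}(2') says $\Env\cP\to\uFF_{\cT}$ is monic, which immediately forces the functor space to be empty or contractible. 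You gesture at this route as an "alternative" in your second paragraph — it is in fact the route you should take, as it is the one that actually works for $d=0$; I would recommend promoting it to the main argument and discarding the end-formula computation.
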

\begin{proof}
  In each case, the second statement follows from the first by noting that the mapping spaces in $\Op_{\cT}$ are $\Alg_{\cO}(\cP)^{\simeq}$.
  For the first statements, note that
  \[
    \Alg_{\cO}(\cP) \simeq \Alg_{h_{d} \cO}(\cP) \simeq \Fun^{\otimes}_{\cT, /\uFF_{\cT}^{\cT-\sqcup}}(\Env h_{d} \cO^{\otimes}, \Env \cP^{\otimes});
  \]
  if $d \geq 1$, then this is a subcategory of a $d$-category, so it's a $d$-category.
  If $d = 0$, then this category is either empty or contractible since we verified that the map $\Env \cP^{\otimes} \rightarrow \uFF_{\cT}^{\cT-\sqcup}$ is monic.
\end{proof}

\begin{corollary}\label{0-operads subterminal}
  $\cP^{\otimes}$ is a $\cT$-0-operad if and only if it's a sub-terminal object of $\Op_{\cT}$. 
\end{corollary}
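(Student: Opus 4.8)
The plan is to connect sub-terminality of $\cP^{\otimes}$ to the vanishing-of-higher-homotopy condition defining $\cT$-$0$-operads, in both directions, via structure spaces. Recall from \cref{Commf example} that $\Comm_{\cT}^{\otimes}$ is the terminal $\cT$-operad, so $\cP^{\otimes}$ is sub-terminal exactly when the diagonal $\Delta\colon \cP^{\otimes} \to \cP^{\otimes} \times_{\Comm_{\cT}^{\otimes}} \cP^{\otimes}$ is an equivalence of $\cT$-operads; and unwinding definitions, $\Map_{\Op_{\cT}}(\cO^{\otimes},\cP^{\otimes}) \simeq \Alg_{\cO}(\cP)^{\simeq}$ for every $\cT$-operad $\cO^{\otimes}$.

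First I would dispatch the forward direction. If $\cP^{\otimes}$ is a $\cT$-$0$-operad, then \cref{Ninfty poset corollary} gives that $\Alg_{\cO}(\cP)$ is empty or contractible for \emph{every} $\cT$-operad $\cO^{\otimes}$, so each mapping space into $\cP^{\otimes}$ is $(-1)$-truncated. Since the diagonal of a $(-1)$-truncated space is an equivalence, the map $\Map_{\Op_{\cT}}(\cO^{\otimes},\cP^{\otimes}) \to \Map_{\Op_{\cT}}(\cO^{\otimes},\cP^{\otimes}\times_{\Comm_{\cT}^{\otimes}}\cP^{\otimes}) \simeq \Map_{\Op_{\cT}}(\cO^{\otimes},\cP^{\otimes})^{\times 2}$ is an equivalence for all $\cO^{\otimes}$; by Yoneda this forces $\Delta$ to be an equivalence, i.e.\ $\cP^{\otimes}$ is sub-terminal.

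For the converse I would argue directly on structure spaces. Assume $\cP^{\otimes}$ is sub-terminal, so $\Delta$ is an equivalence, and fix $V \in \cT$ and $S \in \FF_V$. Since $\Comm_{\cT}^{\otimes}$ is terminal and limits in $\Op_{\cT} = \Fbrs(\Span(\FF_{\cT}))$ are computed on underlying $\infty$-categories over $\Span(\FF_{\cT})$ (the forgetful functor $\Op_{\cT} \to \Cat^{\Int-\cocart}_{/\Span(\FF_{\cT})}$ is a right adjoint, and $\Cat^{\Int-\cocart}_{/\Span(\FF_{\cT})} \to \Cat_{/\Span(\FF_{\cT})}$ preserves limits by \cite[Cor~2.1.5]{Barkan}), the fiber product is computed as $\cP^{\otimes} \times_{\Span(\FF_{\cT})} \cP^{\otimes}$. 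I would then apply the $S$-ary structure-space functor $\cO^{\otimes} \mapsto \cO(S)$ to $\Delta$: using that the fiber of $\cP^{\otimes} \times_{\Span(\FF_{\cT})} \cP^{\otimes}$ over $S$ is $\cP_S \times \cP_S$, that for a fixed active arrow $\psi$ of $\Span(\FF_{\cT})$ the space of lifts of $\psi$ in a fiber product over $\Span(\FF_{\cT})$ is the product of the spaces of lifts, and that $\cO(S) = \coprod_{\bC \in \cO_S, D \in \cO_V} \cO(\bC;D)$, one computes $(\cP^{\otimes} \times_{\Span(\FF_{\cT})}\cP^{\otimes})(S) \simeq \cP(S) \times \cP(S)$ compatibly with the two projections (consistently with $\Comm_{\cT}(S) \simeq *$, cf.\ \cref{Ninfty sseq example}). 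Since $\Delta$ composed with either projection is the identity, the induced map on $S$-ary structure spaces is the diagonal $\cP(S) \to \cP(S) \times \cP(S)$; as $\Delta$ is an equivalence of $\cT$-operads this diagonal is an equivalence of spaces, so $\cP(S)$ is $(-1)$-truncated. Since $V$ and $S$ were arbitrary, $\cP^{\otimes}$ is a $\cT$-$0$-operad.

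The main obstacle is the bookkeeping in the converse: the functor $\cO^{\otimes}\mapsto\cO(S)$ does not preserve general pullbacks, so I cannot simply cite that it sends $\Delta$ to a diagonal. I expect the one genuinely delicate point is to verify, from the explicit description of structure spaces and of mapping spaces in a fiber product over a pinned-down active arrow, that the identification $(\cP^{\otimes}\times_{\Span(\FF_{\cT})}\cP^{\otimes})(S) \simeq \cP(S)\times\cP(S)$ is the one induced by the two projections, so that $\Delta$ really induces the diagonal rather than some other section of $\cP(S)^{\times 2} \to \cP(S)$; once that is pinned down, the final step ``a diagonal $X \to X \times X$ is an equivalence iff $X$ is $(-1)$-truncated'' is elementary.
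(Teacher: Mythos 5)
Your proof is correct. The forward direction coincides with the paper's (both invoke \cref{Ninfty poset corollary} to conclude that $\Alg_{\cO}(\cP)^{\simeq}$ is $(-1)$-truncated for every $\cO^{\otimes}$, hence $\cP^{\otimes}$ is subterminal). The converse, however, takes a genuinely different route. The paper uses \cref{Monadic sseq corollary} (plus Kan extension) to produce a free $\cT$-operad $\Fr_S(*)$ with $\Alg_{\Fr_S(*)}(\cO)^{\simeq} \simeq \cO(S)$; structure spaces are thereby exhibited as mapping spaces in $\Op_{\cT}$, so subterminality makes them $(-1)$-truncated with no pullback analysis at all. You instead compute the effect of the diagonal $\Delta\colon \cP^{\otimes} \to \cP^{\otimes}\times_{\Comm_{\cT}^{\otimes}}\cP^{\otimes}$ on structure spaces by hand. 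That bookkeeping does close up: limits in $\Fbrs(\base)$ are computed in $\Cat_{/\base}$, the mapping space in the fiber product lying over a fixed arrow $\psi$ of $\Span(\FF_{\cT})$ is the product $\Map_{\cP}^{\psi}(\bC_1,D_1)\times\Map_{\cP}^{\psi}(\bC_2,D_2)$, and the coproduct over color-profiles in $\cP(S) = \coprod_{(\bC,D)}\cP(\bC;D)$ distributes via $\coprod_{i,j}X_i\times Y_j \simeq (\coprod_i X_i)\times(\coprod_j Y_j)$, so $(\cP^{\otimes}\times_{\Span(\FF_{\cT})}\cP^{\otimes})(S)\simeq \cP(S)^{\times 2}$ with $\Delta$ inducing the honest diagonal. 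Incidentally, the worry you flag is somewhat overstated in the one-color case---\cref{Monadic sseq corollary} already shows $\sseq$ preserves \emph{all} limits there; the only delicate point is the coproduct over colors, which you handle correctly. The paper's corepresentability argument is shorter and makes the logic transparent (subterminality kills any corepresented functor); your explicit computation is more elementary and shows concretely how $\Delta$ controls each $\cP(S)$.
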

\begin{proof}
  The mapping space criterion of monomorphisms shows that this is equivalent to the condition that
  \[
    \Alg_{h_{0} \cO}(\cP)^{\simeq} \simeq \Alg_{\cO}(\cP)^{\simeq} \rightarrow \Alg_{\cO}(\Comm_{\cT}^{\otimes})^{\simeq} \simeq * 
  \]
  is a monomorphism, i.e. $\Alg_{h_0 \cO}(\cP)^{\simeq} \in \cbr{\emptyset,*}$;
  this follows from \cref{Ninfty poset corollary}.

  On the other hand, \cref{Monadic sseq corollary} (together with Kan extensions) constructs a \emph{free $\cT$-operad on $*_S$} characterized by the property
  \[
    \Alg_{\Fr_S(*)}(\cO)^{\simeq} \simeq \cO(S);
  \]
  thus the mapping space criterion for a subterminal $\cT$-operad $\cO^{\otimes}$ implies that $\cO(S)$ is either empty or contractible for all $S$, so $\cO^{\otimes}$ is a $\cT$-0-operad.
\end{proof}

\begin{corollary}\label{Unslicing ff corollary}
  Let $I \leq J$ be related weak indexing categories.
  Then, the unslicing functor 
  \[
    \Op_I \simeq \Op_{J, /\cN_{I \infty}^{\otimes}} \rightarrow \Op_J
  \]
  is fully faithful.
\end{corollary}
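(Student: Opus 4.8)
The statement to prove is \cref{Unslicing ff corollary}: given related weak indexing categories $I \leq J$, the unslicing functor $\Op_I \simeq \Op_{J,/\cN_{I\infty}^{\otimes}} \to \Op_J$ is fully faithful. The plan is to reduce this to the general fact that for an $\infty$-category $\cE$ and an object $X \in \cE$, the forgetful functor $\cE_{/X} \to \cE$ is fully faithful precisely when $X$ is \emph{subterminal} (i.e. $\Map_{\cE}(Y,X)$ is either empty or contractible for all $Y$, equivalently $X \to \ast$ is a monomorphism, when $\cE$ has a terminal object). Indeed, for $f,g \colon A \to X$ the fiber of $\Map_{\cE_{/X}}((A,f),(B,g)) \to \Map_{\cE}(A,B)$ over $\varphi \colon A \to B$ is the space of homotopies $g\varphi \simeq f$ in $\Map_{\cE}(A,X)$, which is contractible (for every $\varphi$, and in particular nonempty) exactly when the mapping spaces into $X$ are $(-1)$-truncated. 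So the first step is to record this lemma, or simply cite it as the mapping-space criterion for monomorphisms.

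Applying this with $\cE = \Op_J$ and $X = \cN_{I\infty}^{\otimes}$, viewed inside $\Op_J$ via the pushforward along $\iota \colon \Span_I(\FF_{\cT}) \to \Span_J(\FF_{\cT})$ (which exists since $I \leq J$ makes $\Span_I(\FF_{\cT})$ a sub-pattern of $\Span_J(\FF_{\cT})$, and $\Op_I \simeq \Op_{J,/\cN_{I\infty}^{\otimes}}$ by \cref{Overcategory of fibrous patterns prop}), it remains to show that $\cN_{I\infty}^{\otimes}$ is a subterminal object of $\Op_J$. But $\cN_{I\infty}^{\otimes}$ has structure spaces $\cN_{I\infty}(S) \in \{\emptyset, \ast\}$ by \cref{Ninfty sseq example}, so it is a $\cT$-$0$-operad in the sense of \cref{Operad mapping fiber prop} (with $d = -1$); hence by \cref{0-operads subterminal} it is a subterminal object of $\Op_{\cT}$. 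Finally, a subterminal object of $\Op_{\cT}$ restricts to a subterminal object of $\Op_J \simeq \Op_{\cT,/\cN_{J\infty}^{\otimes}}$: the forgetful functor $\Op_{\cT,/\cN_{J\infty}^{\otimes}} \to \Op_{\cT}$ is conservative and preserves limits (in particular, pullbacks), so it reflects monomorphisms — equivalently, a map in $\Op_J$ whose image in $\Op_{\cT}$ is a monomorphism is itself a monomorphism, and in particular the map $\cN_{I\infty}^{\otimes} \to \cN_{J\infty}^{\otimes}$ (the terminal object of $\Op_J$) is monic because it factors the monomorphism $\cN_{I\infty}^{\otimes} \to \Comm_{\cT}^{\otimes}$.

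The main subtlety, and the step I would be most careful about, is bookkeeping around the three nested slices $\Op_I \simeq \Op_{J,/\cN_{I\infty}^{\otimes}}$ and $\Op_J \simeq \Op_{\cT,/\cN_{J\infty}^{\otimes}}$ and $\Op_I \simeq \Op_{\cT,/\cN_{I\infty}^{\otimes}}$, and checking that the unslicing functor $\Op_{J,/\cN_{I\infty}^{\otimes}} \to \Op_J$ under these identifications is literally the forgetful functor out of a slice (so that the subterminal criterion applies). This is a formal consequence of \cref{Overcategory of fibrous patterns prop} together with the compatibility of slices of slices, but it is the place where one could slip. Everything else is a direct invocation of earlier results: \cref{Ninfty sseq example} for the structure spaces of $\cN_{I\infty}^{\otimes}$, \cref{0-operads subterminal} for the equivalence between $\cT$-$0$-operads and subterminal objects, and the elementary mapping-space computation for slices. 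No genuinely hard estimate or construction is needed — the content is entirely in assembling these pieces and tracking which ambient category each ``subterminal'' claim lives in.
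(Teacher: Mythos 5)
Your proof is correct, but you take a genuinely different route from the paper's. The paper argues by two-out-of-three for fully faithful functors: since the composite $\Op_I \to \Op_J \to \Op_{\cT}$ is the unslicing $\Op_I \to \Op_{\cT}$, it suffices to handle the case $J = \FF_{\cT}$, where $\cN_{I\infty}^{\otimes} \to \Comm_{\cT}^{\otimes}$ being a monomorphism is exactly \cref{0-operads subterminal} applied to $\cN_{I\infty}^{\otimes}$ (a $\cT$-$0$-operad by \cref{Ninfty sseq example}). You instead prove subterminality of $\cN_{I\infty}^{\otimes}$ \emph{inside} $\Op_J$, which requires the transfer step that the slice projection $\Op_J \simeq \Op_{\cT,/\cN_{J\infty}^{\otimes}} \to \Op_{\cT}$ reflects monomorphisms. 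Both approaches share the key input (subterminality of $\cN_{I\infty}^{\otimes}$ in $\Op_{\cT}$); the paper's two-out-of-three reduction sidesteps the reflection lemma, while yours is perhaps more intrinsic but carries an extra verification.

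One small imprecision: you say the forgetful functor $\Op_{\cT,/\cN_{J\infty}^{\otimes}} \to \Op_{\cT}$ ``preserves limits (in particular, pullbacks).'' The projection from a slice does \emph{not} preserve all limits — it sends the terminal object $\cN_{J\infty}^{\otimes} \xrightarrow{\id} \cN_{J\infty}^{\otimes}$ to $\cN_{J\infty}^{\otimes}$, not to $\Comm_{\cT}^{\otimes}$. What is true, and what you actually need, is that it preserves (indeed creates) pullbacks, hence — being conservative — reflects the diagonal $A \to A \times_B A$ being an equivalence, i.e.\ reflects monomorphisms. Tightening that sentence to ``preserves pullbacks'' makes the argument fully correct as written. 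The bookkeeping you flag about compatibility of iterated slices (via \cref{Overcategory of fibrous patterns prop}) is the right thing to be careful about and goes through.
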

\begin{proof}
  Fully faithful functors satisfy two-out-of-three, so we may replace $\Op_I \rightarrow \Op_J$ with the composite unslicing functor $\Op_I \rightarrow \Op_J \rightarrow \Op_{\cT}$, and assume $I = \FF_{\cT}$.
  The corollary is then equivalent to the statement that $\cN_{I \infty}^{\otimes} \rightarrow \Comm_{\cT}^{\otimes}$ is a monomorphism \cite[\S~5.5.6]{HTT}.
  In fact, by \cref{Ninfty sseq example}, $\cN_{I \infty}^{\otimes}$ is a $\cT$-0-operad, so this follows from \cref{0-operads subterminal}.
\end{proof}

We finish the subsection with a recognition result for $h_{d}$-equivalences;
we say that a map $\varphi\cln \cO^{\otimes} \rightarrow \cP^{\otimes}$ is an \emph{$(d-1)$-equivalence} if any of the following equivalent conditions hold.
\begin{proposition}
  Let $\varphi\cln \cO^{\otimes} \rightarrow \cP^{\otimes}$ be a morphism of $\cT$-operads.
  Then, the following are equivalent:
  \begin{enumerate}[label={(\alph*)}]
    \item The underlying $\cT$-functor $U\varphi\cln \cO \rightarrow \cP$ is essentially surjective and for all $V \in \cT$ and $S \in \FF_{\cT}$, the induced map $\tau_{\leq (d-1)}\cO(S) \rightarrow \tau_{\leq (d-1)}\cP(S)$ is an equivalence.
    \item $\varphi$ is an $h_{d}$-equivalence.
    \item The underlying $\cT$-functor $U\varphi\colon  \cO \rightarrow \cP$ is essentially surjective and for all $\cT$-symmetric monoidal $d$-categories $\cC$, the pullback $\cT$-symmetric monoidal functor
      \[
        \uAlg^{\otimes}_{\cP}(\cC) \rightarrow \uAlg_{\cO}^{\otimes}(\cC)
      \]
      is an equivalence.
    \item The underlying $\cT$-functor $U\varphi\colon  \cO \rightarrow \cP$ is essentially surjective and the pullback functor
      \[
        \Alg_{\cP}(\ucS_{\cT, \leq d-1}) \rightarrow \Alg_{\cO}(\ucS_{\cT, \leq d-1})
      \]
      is an equivalence.
  \end{enumerate}
\end{proposition}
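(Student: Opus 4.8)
The plan is to run the cycle of implications $(a)\Rightarrow(b)\Rightarrow(c)\Rightarrow(d)\Rightarrow(a)$. The clause ``$U\varphi$ essentially surjective'' appears verbatim in (a), (c), (d), so it need only be transported once around; for (b), essential surjectivity of $U(h_d\varphi)$ — hence of $U\varphi$ — is automatic once $h_d\varphi$ is an equivalence (when $d\geq 1$, $h_d$ leaves the objects of the underlying $\cT$-$\infty$-category unchanged, and the $d=0$ case is handled separately below). The real content is the comparison of structure spaces and of algebra $\infty$-categories.

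For $(a)\Leftrightarrow(b)$: by \cref{Operad truncation corollary} the morphism $h_d\varphi\colon h_d\cO^{\otimes}\to h_d\cP^{\otimes}$ acts on structure spaces as $\tau_{\leq d-1}\cO(S)\to\tau_{\leq d-1}\cP(S)$ and, unwinding the envelope formula of \cref{Envelope omnibus corollary}, on profile spaces as $\tau_{\leq d-1}\cO(\bC;D)\to\tau_{\leq d-1}\cP(\varphi\bC;\varphi D)$, while $U(h_d\varphi)$ is computed from $U\varphi$. By \cref{Symmetric sequence proposition} applied to $h_d\varphi$, this is an equivalence if and only if it is $\pi_0$-surjective on colors and an equivalence on every profile space; rewriting $\cO(S)\simeq\coprod_{(\bC,D)}\cO(\bC;D)$ and using essential surjectivity to match the indexing sets identifies this with condition (a). For $d=0$ one instead argues through \cref{0-operads subterminal,Ninfty poset corollary}: $h_0\cO^{\otimes}$ and $h_0\cP^{\otimes}$ are the weak $\cN_\infty$-operads on the arity supports of $\cO$ and $\cP$, lying in the poset $\Op_{\cT,0}$, and $h_0\varphi$ is an equivalence iff these coincide, which one again identifies with (a).

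For $(b)\Rightarrow(c)$: fix a $\cT$-symmetric monoidal $d$-category $\cC^{\otimes}$. By the last clause of \cref{Operad truncation corollary} (in its $\cT$-parametrized form), the counits $\cO^{\otimes}\to h_d\cO^{\otimes}$, $\cP^{\otimes}\to h_d\cP^{\otimes}$ induce equivalences $\uAlg^{\otimes}_{\cP}(\cC)\simeq\uAlg^{\otimes}_{h_d\cP}(\cC)$ and $\uAlg^{\otimes}_{\cO}(\cC)\simeq\uAlg^{\otimes}_{h_d\cO}(\cC)$ under which pullback along $\varphi$ corresponds to pullback along $h_d\varphi$; since $h_d\varphi$ is an equivalence by (b), so is this pullback. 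For $(c)\Rightarrow(d)$: the localizing $\cT$-subcategory $\ucS_{\cT,\leq d-1}\subset\ucS_{\cT}$ is compatible with the Cartesian structure (products of $(d-1)$-truncated spaces are $(d-1)$-truncated), so by \cref{Compatible symmetric monoidal prop} it underlies a $\cT$-symmetric monoidal $\infty$-category whose underlying $\cT$-$\infty$-category is a $\cT$-$d$-category; specializing (c) to $\cC^{\otimes}=\ucS_{\cT,\leq d-1}^{\cT-\times}$ and passing to $\Gamma^{\cT}$ yields (d).

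Finally, $(d)\Rightarrow(a)$: the first clause of (a) is the first clause of (d). Since $\cS_{\leq d-1}$ is presentable and Cartesian closed, \cref{Fr monad proposition} applies with $\cC=\cS_{\leq d-1}$; its free functor $\Fr_{\cS_{\leq d-1}}\colon\cS\to\cS_{\leq d-1}$ is Postnikov truncation $\tau_{\leq d-1}$, and $\uCoFr^{\cT}(\cS_{\leq d-1})\simeq\ucS_{\cT,\leq d-1}$, so the hypothesis of (d) is exactly that $\varphi^{*}\colon\Alg_{\cP}(\uCoFr^{\cT}(\cS_{\leq d-1}))\to\Alg_{\cO}(\uCoFr^{\cT}(\cS_{\leq d-1}))$ is an equivalence. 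Hence \cref{Fr monad proposition} gives $\tau_{\leq d-1}\cO(\bC;D)\xrightarrow{\sim}\tau_{\leq d-1}\cP(\varphi\bC;\varphi D)$ for every profile, and assembling these via $\cO(S)\simeq\coprod_{(\bC,D)}\cO(\bC;D)$ recovers the second clause of (a). I expect the main obstacle to be the bookkeeping in the case $d=0$ — where $h_0$ collapses colors and the $h_d$-invariance of algebra categories degenerates, so the argument must be rerouted through the poset $\Op_{\cT,0}$ and the weak-indexing-category classification of $0$-operads — together with pinning down the $\cT$-symmetric monoidal (not merely underlying) refinement of \cref{Operad truncation corollary} used in $(b)\Rightarrow(c)$.
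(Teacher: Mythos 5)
Your proposal runs the same cycle $(a)\Rightarrow(b)\Rightarrow(c)\Rightarrow(d)\Rightarrow(a)$ as the paper and cites the same supporting results (\cref{Symmetric sequence proposition}, \cref{Operad truncation corollary}, \cref{Fr monad proposition}); the overall architecture matches. The one place where you've correctly sensed trouble and the paper does something genuinely slicker is $(b)\Rightarrow(c)$. You want to promote the last clause of \cref{Operad truncation corollary} to a ``$\cT$-symmetric monoidal'' version so that the counits give equivalences of $\uAlg^{\otimes}(-)$, and you flag pinning this down as an obstacle. You don't need it. The paper only invokes \cref{Operad truncation corollary} at the level of the underlying $\cT$-$\infty$-categories, getting the commuting square
\[
  \begin{tikzcd}
    {\uAlg_{h_{d} \cP}(\cC)} & {\uAlg_{h_{d} \cO}(\cC)} \\
    {\uAlg_{\cP}(\cC)} & {\uAlg_{\cO}(\cC)}
    \arrow["\sim", from=1-1, to=1-2]
    \arrow["\simeq"{marking, allow upside down}, draw=none, from=1-1, to=2-1]
    \arrow["\simeq"{marking, allow upside down}, draw=none, from=1-2, to=2-2]
    \arrow[from=2-1, to=2-2]
  \end{tikzcd}
\]
whose bottom arrow is an equivalence by two-out-of-three, and then upgrades this to an equivalence of $I$-symmetric monoidal $\infty$-categories via \cref{Underlying conservative proposition} (a map of $I$-symmetric monoidal $\infty$-categories is an equivalence iff its underlying $\cT$-functor is). This conservativity step is exactly what lets you bypass the symmetric monoidal refinement you were worried about.

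On the $d=0$ point: your detour through \cref{0-operads subterminal} and \cref{Ninfty poset corollary} is a valid way to handle it, but the paper doesn't treat $d=0$ separately. The coproduct decomposition $\cO(S)\simeq\coprod_{(\bC,D)}\cO(\bC;D)$ is only used to pass between profile spaces and structure spaces, and for $d=0$ the operads $h_0\cO^{\otimes}$, $h_0\cP^{\otimes}$ are weak $\cN_\infty$-operads hence have at most one color, so the structure space already \emph{is} the profile space and nothing needs to commute with truncation of coproducts. Similarly in $(d)\Rightarrow(a)$, \cref{Fr monad proposition} gives equivalences of truncated profile spaces, and passing to structure spaces at $d=0$ just amounts to observing that $\cO(S)=\emptyset$ iff every $\cO(\bC;D)=\emptyset$, which matches across $\varphi$ by the given $\pi_0$-surjectivity. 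So the uniform argument works; the separate $d=0$ reroute is unnecessary, though not wrong.
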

\begin{proof}
  Suppose (a);
  in view of \cref{Symmetric sequence proposition}, to prove (b), we're tasked with proving that the maps $h_{d} \cO(\bC;D) \rightarrow h_{d} \cP(\bC;D)$ are equivalences.
  But by the natural equivalence
  \[
    \cO(S) \simeq \coprod_{(\bC,D) \in \cO_S \times \cO_V} \cO(\bC;D),
  \]
  it suffices to verify that $h_{d} \cO(S) \rightarrow h_{d} \cP(S)$ is an equivalence for each $S$.
  This follows from (a) by \cref{Operad truncation corollary}.

  Suppose (b);
  by the factorization 
  \[
    \Cat^{\otimes}_{\cT,d} \hookrightarrow \Op_{\cT,d} \hookrightarrow \Op_{\cT}
  \]
  of \cref{Operad truncation corollary}, given $\cC \in \Cat^{\otimes}_{\cT,d}$, the top map in the following is an equivalence
  \[
    \begin{tikzcd}
      {\uAlg_{h_{d} \cP}(\cC)} & {\uAlg_{h_{d} \cO}(\cC)} \\
      {\uAlg_{\cP}(\cC)} & {\uAlg_{\cO}(\cC)}
      \arrow["\sim", from=1-1, to=1-2]
      \arrow["\simeq"{marking, allow upside down}, draw=none, from=1-1, to=2-1]
      \arrow["\simeq"{marking, allow upside down}, draw=none, from=1-2, to=2-2]
      \arrow[from=2-1, to=2-2]
    \end{tikzcd}
  \]
  the bottom arrow is an equivalence from two-out-of-three, and (c) follows from \cref{Underlying conservative proposition}.
  Furthermore, (c) implies (d) by setting $\cC^{\otimes} \deq \ucS_{\cT, \leq d-1}^{\cT-\times}$.
  
  Suppose (d).
  The unique symmetric monoidal left adjoint $\cS \rightarrow \cS_{\leq d-1}$ is $\tau_{\leq d-1}$, so \cref{Fr monad proposition} implies that $\tau_{\leq d-1}\cO(S) \rightarrow \tau_{\leq d-1}\cP(S)$ is an equivalence, i.e. (a).
\end{proof}

\subsection{Arity support and Borelification}\label{Arity support subsection}
We now introduce a support stratification in terms of \emph{arity}.
\begin{construction}
    Given $\cO \in \Op_{\cT}$, the \emph{arity support of $\cO$} is the collection of maps $A\cO \subset \FF_{\cT}$ defined by
    \[
      A\cO := \cbr{\psi:T \rightarrow S \;\; \middle| \;\; \prod_{U \in \Orb(S)} \cO(T \times_U S) \neq \emptyset} \subset \FF_{\cT}\qedhere
    \]
\end{construction}

\def\Sub{\operatorname{Sub}}
\begin{observation}
  There exists no map of spaces $X_1 \times X_2 \rightarrow Y_1 \times Y_2$ if and only if $X_1,X_2 \neq \emptyset$ and $Y_i = \emptyset$ for some $i$.
  In particular,
  \begin{itemize}
    \item the existence of the composition map $\Map^{\psi}_{\cO}(T;S) \times \Map^{\psi'}_{\cO}(R;T) \rightarrow \Map^{\psi \circ \psi'}_{\cO}(R;S)$ implies that $A\cO \subset \FF_{\cT}$ is closed under composition;
    \item existence of identity operations implies that $A\cO \subset \FF_{\cT}$ contains identity arrows of its elements; and
    \item functoriality of $\Map_{(-)}^{\psi}(T;S)$ implies that $A$ forms a functor
      \[
        A\colon \Op_{\cT} \rightarrow \Sub_{\Cat}(\FF_{\cT}).\qedhere
      \]
  \end{itemize}
\end{observation}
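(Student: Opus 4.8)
The plan is to first settle the purely space-theoretic statement and then read off each of the three consequences by transporting nonemptiness through the structure maps of \cref{T-operad structure maps}. For spaces $X,Z$ one has $\Map(X,Z)\simeq\emptyset$ exactly when $X\not\simeq\emptyset$ and $Z\simeq\emptyset$ (a nonempty $X$ maps to $Z$ iff $Z$ has a point; and $\Map(\emptyset,Z)\simeq *$ always), and a finite product of spaces is empty iff one of its factors is; combining these gives the displayed characterization. With this in hand I would record the following reformulation of arity support, immediate from \cref{GOperads conditions} together with the orbit decomposition $\Orb(T)\simeq\bigsqcup_{U\in\Orb(S)}\Orb(T_U)$ attached to any $\psi\colon T\to S$ (here $T_U\deq T\times_S U$): the map $\psi$ lies in $A\cO$ if and only if $\cO(T_U)\neq\emptyset$ for every $U\in\Orb(S)$, equivalently iff $\psi$ admits a lift to an active morphism of $\cO^{\otimes}$ (via $\cO(T_U)\simeq\coprod_{(\bC_U,D_U)}\cO(\bC_U;D_U)$ and the Segal condition for multimorphisms).

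For closure under composition, given $\psi\colon T\to S$ and $\psi'\colon R\to T$ in $A\cO$, I would argue orbitwise: for each $U\in\Orb(S)$, setting $R_U\deq R\times_S U\simeq R\times_T T_U$, the operadic composition map of \cref{T-operad structure maps} restricts to a map of spaces
\[
  \gamma_U\colon \cO(T_U)\times\prod_{W\in\Orb(T_U)}\cO(R_W)\longrightarrow \cO(R_U),
\]
whose source is a product of nonempty spaces — the first factor because $\psi\in A\cO$, the remaining factors because $\psi'\in A\cO$ and $\Orb(T_U)\subseteq\Orb(T)$. By the space-theoretic statement the target $\cO(R_U)$ is then nonempty; taking the product over $U\in\Orb(S)$ and noting $R_U = R\times_S U$ shows $\psi\circ\psi'\in A\cO$. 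For identities: if $\psi\colon T\to S$ lies in $A\cO$ then each $\cO(T_U)\neq\emptyset$, so $\cO$ carries colors along every orbit of $T$ and of $S$, whence the unit operations $1_V\in\cO(*_V)$ (the identity components in $\cO^{\otimes}$) witness $\id_S,\id_T\in A\cO$. Finally, a map of $\cT$-operads $\varphi\colon\cO^{\otimes}\to\cP^{\otimes}$ induces maps $\cO(\bC;D)\to\cP(\varphi\bC;\varphi D)$ of structure spaces, and a map out of a nonempty space has nonempty target, so $A\cO\subseteq A\cP$; since $\Sub_{\Cat}(\FF_{\cT})$ is a poset this inclusion is exactly the asserted functoriality of $A$.

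I expect the only real bookkeeping to be the orbit decomposition $\Orb(T)\simeq\bigsqcup_{U\in\Orb(S)}\Orb(T_U)$ and the compatible identification of the operadic composition maps with their orbitwise restrictions $\gamma_U$ — i.e. checking that $\cO(R\times_S U)$ genuinely receives the product displayed above rather than some reindexed variant. This is routine once one unwinds \cref{GOperads conditions} and the construction of $\gamma$ in \cref{T-operad structure maps}, but it is the step I would write out most carefully; everything else is a direct application of the space-theoretic statement.
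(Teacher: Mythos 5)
Your proof is correct and fills in the details of the paper's unproved observation in essentially the intended way: reduce to the elementary fact about maps of (products of) spaces, then push nonemptiness through the structure maps of \cref{T-operad structure maps}. The only cosmetic difference is that you run the composition argument orbitwise via the maps $\gamma_U$ rather than through the single displayed map $\Map^{\psi}_{\cO}(T;S)\times\Map^{\psi'}_{\cO}(R;T)\to\Map^{\psi\circ\psi'}_{\cO}(R;S)$, but these are interchangeable since by the Segal condition $\Map^{\psi}_{\cO}(T;S)\neq\emptyset$ if and only if $\psi\in A\cO$.
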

We will frequently compute $A$ by noting that it factors as 
\[
  A\colon \Op_{\cT} \xrightarrow{\sseq_{\cT}} \Fun(\tot \uSigma_{\cT},\cS) \rightarrow \Sub_{\Cat}(\FF_{\cT}). 
\]

\begin{example}\label{Ninfty support example}
  For all $I \in \wIndCat_{\cT}$, we have $A\cN_{I \infty} = I$, so $\wIndCat_{\cT} \subset A(\Op_{\cT})$.
\end{example}

\begin{proposition}\label{Windex image proposition}
    For all $\cO^{\otimes} \in \Op_{\cT}$, the subcategory $A\cO \subset \FF_{\cT}$ is a weak indexing category;
    hence
    \[
      A(\Op_{\cT}) = \wIndCat_{\cT} \subset \Sub_{\Cat}(\FF_{\cT}).
    \]
\end{proposition}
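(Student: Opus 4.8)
By \cref{Ninfty support example} we already have $\wIndCat_{\cT} \subset A(\Op_{\cT})$, so the content of the proposition is the reverse inclusion: for every $\cT$-operad $\cO^{\otimes}$, the subcategory $A\cO \subset \FF_{\cT}$ satisfies the three axioms (IC-a), (IC-b), (IC-c) of \cref{Windex definition}. The strategy is to verify each axiom directly by tracing through the operadic structure maps of \cref{T-operad structure maps} --- that is, to use the observation immediately preceding the proposition that $A\cO$ is closed under composition, contains identities, and is functorial, together with the facts that there is no map of spaces out of a nonempty product into a product with an empty factor. I would reduce everything to the one-color case: since $\cO(S) \simeq \coprod_{(\bC,D) \in \cO_S \times \cO_V} \cO(\bC;D)$, the product $\prod_{U \in \Orb(S)} \cO(T_U)$ is nonempty if and only if there is a choice of compatible colors making each $\cO(\bC_U;D_U)$ nonempty, so the emptiness/nonemptiness bookkeeping that defines $A\cO$ only sees the structure spaces, not the colors.

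\textbf{Axiom (IC-a), restriction-stability.} Given $T \rightarrow S \in A\cO$ and a pullback square in $\uFF_{\cT}$ along some $U \rightarrow V$ with $S \in \FF_V$, I would use \cref{Reduction to maps to orbits observation} to reduce to the case $U, V \in \cT$, and then use the operadic restriction map \cref{Restriction map}: if $\cO(T_W) \neq \emptyset$ for each orbit $W$ of $S$, then restricting produces nonempty $\cO(\Res S)$-type structure spaces over the base-changed set. Concretely, the restriction map $\cO(\bC;D) \rightarrow \cO(\Res_U^V \bC; \Res_U^V D)$ shows that nonemptiness of the source forces nonemptiness of the target, which is exactly what closure under base change along maps of orbits requires; \cref{Reduction to maps to orbits observation,Pullback along orbits remark} then promote this to full restriction-stability.

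\textbf{Axioms (IC-b) and (IC-c).} The Segal condition (IC-b) --- that $T \sqcup T' \rightarrow S \sqcup S'$ lies in $A\cO$ iff both $T \rightarrow S$ and $T' \rightarrow S'$ do --- is immediate from the definition of $A\cO$, since $\Orb(S \sqcup S') = \Orb(S) \sqcup \Orb(S')$ and a product over a disjoint index set is nonempty iff each sub-product is; this is a formal manipulation requiring no operadic input. For (IC-c), the $\Sigma_{\cT}$-action condition, I would use the equivariant symmetric group action \cref{Sigma action map}: if $S \in A\cO$ (i.e. $\cO(S) \neq \emptyset$, allowing possibly empty-ary detection) then the action $\rho_S\colon \Aut_V(\bC) \times \cO(\bC;D) \rightarrow \cO(\bC;D)$ together with functoriality of structure spaces under arbitrary automorphisms of $S$ --- which exist because $\triv(\fC)^{\otimes}$ has all automorphisms as inert maps --- shows that precomposition by any automorphism of $S$ carries a nonempty structure space to a nonempty structure space, so automorphisms of $S$ lie in $A\cO$. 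I should also check the subtle point that $A\cO$ is genuinely a \emph{subcategory} (closed under composition and containing the relevant identities), but this is precisely the content of the \cref{Sub} observation preceding the proposition.

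\textbf{Main obstacle.} The genuinely delicate step is (IC-a): one must be careful that the operadic restriction map \cref{Restriction map} really does land in the structure space indexed by the \emph{base-changed} $\cT$-set, and that the reduction via \cref{Reduction to maps to orbits observation} is legitimate --- i.e. that checking the condition on pullbacks against orbits suffices, which relies on \cref{Profinite remark}. Everything else is essentially combinatorial bookkeeping about nonemptiness of products of spaces, so I expect the writeup to be short once the restriction-stability argument is pinned down. The final sentence, that $A(\Op_{\cT}) = \wIndCat_{\cT}$, then follows by combining the just-proved inclusion $A(\Op_{\cT}) \subset \wIndCat_{\cT}$ with \cref{Ninfty support example}.
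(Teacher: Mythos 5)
Your proposal is correct and follows essentially the same route as the paper's proof: verify (IC-a) via the arity restriction map \cref{Restriction map}, note (IC-b) is immediate from the definition of $A\cO$ (emptiness of products over disjoint index sets), verify (IC-c) via the $\Aut_V(S)$-action of \cref{Sigma action map}, and deduce the equality $A(\Op_{\cT}) = \wIndCat_{\cT}$ from \cref{Ninfty support example}. The one small imprecision is your gloss ``$S \in A\cO$ i.e.\ $\cO(S) \neq \emptyset$'' --- the correct reading is $\id_S \in A\cO$, which unwinds to $\cO_W \neq \emptyset$ for each $W \in \Orb(S)$ --- but this does not affect the substance of the argument.
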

\begin{proof}
    The second statement follows from the first by \cref{Ninfty support example}, so it suffices to prove that $\cO^{\otimes} \in \Op_{\cT}$ satisfies \cref{Restriction stable condition,Windex segal condition,Automorphism condition}.
    Indeed, \cref{Restriction stable condition} follows by unwinding definitions using existence of the arity restriction map of \cref{Restriction map}.
    Similarly, \cref{Windex segal condition} follows immediately by definition.
    Lastly, \cref{Automorphism condition} follows by existence of the $\Aut_V(S)$-action of \cref{Sigma action map}.
\end{proof}

\begin{corollary}\label{Ninfty T-0-operad corollary}
  A $\cT$-operad is a $\cT$-0-operad if and only if it's a weak $\cN_\infty$-operad.
\end{corollary}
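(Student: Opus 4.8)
The statement asserts that $\cP^{\otimes}$ is a $\cT$-$0$-operad if and only if it is a weak $\cN_\infty$-operad, i.e. $\cP^{\otimes} \simeq \cN_{I\infty}^{\otimes}$ for some weak indexing category $I$. The plan is to assemble this from results proved just before the statement, principally \cref{0-operads subterminal} (which identifies $\cT$-$0$-operads with sub-terminal objects of $\Op_{\cT}$), \cref{Windex image proposition} (which shows $A(\Op_{\cT}) = \wIndCat_{\cT}$), and \cref{Ninfty support example} (which computes $A \cN_{I\infty} = I$). So the key construction is the counit/unit of an adjunction between $A$ and $\cN_{(-)\infty}^{\otimes}$, as displayed in \cref{First ninfty adjunction equation} of the introduction; I would make this precise here.

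First I would check the easy direction: by \cref{Ninfty sseq example}, $\cN_{I\infty}(S) \in \{\emptyset, *\}$ for every $S$, hence every weak $\cN_\infty$-operad has $(-1)$-truncated (in particular $(d-1)$-truncated for $d = 0$) structure spaces, so it is a $\cT$-$0$-operad. For the converse, let $\cP^{\otimes}$ be a $\cT$-$0$-operad and set $I \deq A\cP$, which is a weak indexing category by \cref{Windex image proposition}. The goal is to produce an equivalence $\cP^{\otimes} \xrightarrow{\sim} \cN_{I\infty}^{\otimes}$. I would construct a map $\cP^{\otimes} \to \cN_{I\infty}^{\otimes}$ in $\Op_{\cT}$ as follows: since $\cP^{\otimes}$ is a $\cT$-$0$-operad it is sub-terminal by \cref{0-operads subterminal}, so there is (at most, and in fact exactly) one map $\cP^{\otimes} \to \Comm_{\cT}^{\otimes}$; I claim it factors through $\cN_{I\infty}^{\otimes} \hookrightarrow \Comm_{\cT}^{\otimes}$ (the latter being a monomorphism by \cref{Unslicing ff corollary}). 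To see the factorization, recall from \cref{Unslicing ff corollary}'s proof / \cref{Overcategory of fibrous patterns prop} that a map $\cP^{\otimes} \to \Comm_{\cT}^{\otimes}$ factors through $\cN_{I\infty}^{\otimes}$ exactly when the unique active lift of each forward map $T \to S$ in $\Span(\FF_{\cT})$ that is realized by a nonempty mapping space in $\cP^{\otimes}$ lies over $\Span_I(\FF_{\cT})$ — but this is precisely the definition of $A\cP = I$. Concretely: by \cref{Segal condition lemma}-style reductions, $\cP^{\otimes} \to \Span(\FF_{\cT})$ lands (on its non-degenerate part) in $\Span_I(\FF_{\cT})$, so $\cP^{\otimes}$ is an $I$-operad, i.e. we get a map $\cP^{\otimes} \to \cN_{I\infty}^{\otimes}$.

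Then I would check this map is an equivalence via \cref{Symmetric sequence proposition}: it suffices to check (a) the underlying $\cT$-functors agree on $\pi_0$ and (b) the map on each structure space $\cP(S) \to \cN_{I\infty}(S)$ is an equivalence. For (a), both $\cP^{\otimes}$ and $\cN_{I\infty}^{\otimes}$ have at most one color (the former because a $\cT$-$0$-operad has $(-1)$-truncated $\cP(*_V) \in \{\emptyset, *\}$, the latter by \cref{Ninfty sseq example}), and they have the same color family — namely $c(I)$ — because $\cP_V \neq \emptyset$ iff $\mathrm{id}_{*_V} \in A\cP = I$ iff $V \in c(I)$. For (b), $\cP(S)$ is $(-1)$-truncated, so $\cP(S) \in \{\emptyset, *\}$; and $\cP(S) \neq \emptyset$ iff (using the Segal condition for multimorphisms to split $\cP(S) \simeq \prod_{U \in \Orb(S)}\cP(S_U)$ together with the fact that a product of spaces is nonempty iff each factor is) $S \to *_V$ lies in $A\cP = I$, i.e. iff $S \in \FF_{I,V}$, which is exactly when $\cN_{I\infty}(S) = *$. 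Hence $\cP(S) \to \cN_{I\infty}(S)$ is an equivalence for all $S$, and \cref{Symmetric sequence proposition} finishes the argument.

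\textbf{Main obstacle.} The technical crux is the factorization claim: showing that a $\cT$-$0$-operad $\cP^{\otimes}$, as a functor to $\Span(\FF_{\cT})$, actually has all its non-degenerate active morphisms lying over $\Span_{A\cP}(\FF_{\cT})$, so that it genuinely is an $A\cP$-operad rather than merely having the right arity support. This requires knowing that whenever $\cP(S) = \emptyset$ the corresponding active component of $\cP^{\otimes}$ is empty (which is immediate from the definition of $\cP(S)$ as a coproduct of mapping spaces), and conversely organizing the Segal conditions of \cref{GOperads conditions} to see that the remaining data is forced — essentially an application of \cref{Overcategory of fibrous patterns prop} identifying $I$-operads with $\Op_{\cT,/\cN_{I\infty}^{\otimes}}$. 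Everything else is bookkeeping with \cref{Symmetric sequence proposition} and the families $c(I)$, $A(-)$.
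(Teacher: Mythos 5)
Your proof is correct and follows essentially the same route as the paper's: you construct the map $\cP^{\otimes}\to\cN_{A\cP\,\infty}^{\otimes}$ via the definitional factorization of $\pi_{\cP}$ through $\Span_{A\cP}(\FF_{\cT})$, then verify on structure spaces and conclude via \cref{Symmetric sequence proposition}. The subterminality detour through \cref{0-operads subterminal} and \cref{Unslicing ff corollary} is harmless but unnecessary --- the paper dispenses with it and simply notes the factorization ``by definition,'' then argues that any map between the $(-1)$-truncated spaces $\cP(S)$ and $\cN_{A\cP}(S)$, which are nonempty for exactly the same $S$, is automatically an equivalence.
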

\begin{proof}
  By \cref{Ninfty sseq example}, $\cN_{I \infty}^{\otimes}$ is a $\cT$-0-operad, so fix $\cO^{\otimes}$ a $\cT$-0-operad.
  By definition, $\pi_{\cO}$ factors as
  \[
    \cO^{\otimes} \xrightarrow{can} \Span_{A\cO}(\FF_{\cT}) \rightarrow \Span(\FF_{\cT}),
  \]
  i.e. there is a map $\varphi\cln \cO^{\otimes} \rightarrow \cN_{A\cO}^{\otimes}$.
  Moreover, for all $S$, there exists an abstract equivalence $\cO(S) \simeq \cN_{A \cO}(S)$, and since $\cO(S) \in \cbr{*,\emptyset}$, every endomorphism of $\cO(S)$ is an equivalence.
  This implies that $\cO(S) \rightarrow \cN_{A \cO}(S)$ is an equivalence for all $S \in \uFF_{\cT}$, and the result follows from \cref{Symmetric sequence proposition}.
\end{proof}

\begin{corollary}\label{All mapping spaces to ninfty prop}
  Given $\cO^{\otimes}$ a $\cT$-operad, there is an equivalence $h_{0} \cO^{\otimes} \simeq \cN_{A \cO \infty}^{\otimes}$.
  Hence, for any weak indexing category $I$, there is a natural equivalence
    \begin{equation}\label{All mapping spaces to ninfty equation}
        \Alg_{\cO}(\cN_{I \infty}) \simeq \begin{cases}
            * & A\cO \leq I,\\
            \emptyset & \mathrm{otherwise}.
        \end{cases}
    \end{equation}
\end{corollary}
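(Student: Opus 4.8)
The plan is to deduce both claims from the already-established machinery: \cref{Windex image proposition} (so $A\cO$ is a weak indexing category), \cref{Operad truncation corollary} (so $h_0\cO$ is the $0$-truncation of $\cO$ in structure spaces), \cref{Symmetric sequence proposition} (so equivalences of $\cT$-operads are detected on structure spaces), and \cref{Ninfty poset corollary}/\cref{0-operads subterminal} (so $\cT$-$0$-operads have mapping spaces in $\cbr{\emptyset,*}$). First I would prove $h_0\cO^{\otimes} \simeq \cN_{A\cO\,\infty}^{\otimes}$. By construction $\pi_{\cO}$ factors through $\Span_{A\cO}(\FF_{\cT})$, giving a canonical map $\varphi\colon \cO^{\otimes} \to \cN_{A\cO\,\infty}^{\otimes}$; since $\cN_{A\cO\,\infty}^{\otimes}$ is a $\cT$-$0$-operad by \cref{Ninfty sseq example}, and $h_0$ is left adjoint to the inclusion $\Op_{\cT,0} \hookrightarrow \Op_{\cT}$, this $\varphi$ factors uniquely through the counit $\cO^{\otimes} \to h_0\cO^{\otimes}$, yielding $\psi\colon h_0\cO^{\otimes} \to \cN_{A\cO\,\infty}^{\otimes}$. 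To see $\psi$ is an equivalence, apply \cref{Symmetric sequence proposition}: the underlying $\cT$-functors agree (both have at most one color, with $\FF_{I,V} \neq \emptyset$ exactly when $\cO_V \neq \emptyset$, which is read off from $A\cO$), and on structure spaces we use $(h_0\cO)(S) \simeq \tau_{\leq -1}\cO(S)$ from \cref{Operad truncation corollary} together with $\cN_{A\cO\,\infty}(S) \simeq *$ iff $S \in \FF_{A\cO,V}$ iff $\cO(S) \neq \emptyset$ iff $\tau_{\leq -1}\cO(S) \simeq *$ — so $\psi_S$ is an equivalence for every $S$.

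Next I would deduce the formula for $\Alg_{\cO}(\cN_{I\infty})$. Since $\cN_{I\infty}^{\otimes}$ is a $\cT$-$0$-operad (it is a weak $\cN_\infty$-operad), \cref{Operad truncation corollary} gives a natural equivalence
\[
  \Alg_{\cO}(\cN_{I\infty}) \simeq \Alg_{h_0\cO}(\cN_{I\infty}) \simeq \Alg_{\cN_{A\cO\,\infty}}(\cN_{I\infty}),
\]
using the equivalence just proved. So it suffices to compute $\Alg_{\cN_{J\infty}}(\cN_{I\infty})$ for $J = A\cO$. By \cref{Ninfty poset corollary}, this $\infty$-category is either empty or contractible (being an $\infty$-category of algebras valued in a $\cT$-$0$-operad), so I only need to decide which. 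A map of $\cT$-operads $\cN_{J\infty}^{\otimes} \to \cN_{I\infty}^{\otimes}$ over $\Span(\FF_{\cT})$ exists if and only if the subcategory inclusion $\Span_J(\FF_{\cT}) \subset \Span(\FF_{\cT})$ factors through $\Span_I(\FF_{\cT}) \subset \Span(\FF_{\cT})$ compatibly with inert–active structure, which (unwinding $\Span_{(-)}$) happens exactly when $J \subseteq I$ as subcategories of $\FF_{\cT}$, i.e. $J \leq I$; alternatively, use that $\cN_{(-)\infty}^{\otimes}$ is the fully faithful right adjoint of \cref{First ninfty adjunction equation}, so $\Map_{\Op_{\cT}}(\cN_{J\infty}^{\otimes},\cN_{I\infty}^{\otimes}) \simeq \Map_{\wIndCat_{\cT}}(J, I)$, which is $*$ when $J \leq I$ and $\emptyset$ otherwise. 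Either way, $\Alg_{\cN_{J\infty}}(\cN_{I\infty}) \simeq *$ when $A\cO \leq I$ and $\simeq \emptyset$ otherwise, giving \cref{All mapping spaces to ninfty equation}.

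The main obstacle is the first reduction — constructing the canonical comparison $\psi\colon h_0\cO^{\otimes}\to\cN_{A\cO\,\infty}^{\otimes}$ and verifying it is an equivalence via \cref{Symmetric sequence proposition}. The subtlety is matching the underlying $\cT$-categories: one must check that the color family $c(A\cO)$ — i.e. the set of $V$ with $\ast_V \in \FF_{A\cO,V}$ — coincides with $\cbr{V \mid \cO_V \neq \emptyset}$, which follows since the identity $\ast_V \to \ast_V$ lies in $A\cO$ precisely when $\cO(\ast_V) = \cO_V \neq \emptyset$, and that $\varphi$ induces the identity on these (contractible or empty) color spaces. Once the underlying $\cT$-functor and all structure spaces are matched, \cref{Symmetric sequence proposition} closes the argument; everything after that is formal adjunction bookkeeping.
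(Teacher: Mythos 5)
Your argument is correct and reaches the same conclusion by essentially the same ingredients, but the route through the first claim differs from the paper's. The paper observes that $h_0\cO^{\otimes}$ is a $\cT$-$0$-operad by construction, hence a weak $\cN_\infty$-operad by \cref{Ninfty T-0-operad corollary}, so $h_0\cO^{\otimes}\simeq\cN_{J\infty}^{\otimes}$ for some $J$; it then identifies $J$ via $J = A\cN_{J\infty} = A h_0\cO = A\cO$, using \cref{Ninfty support example} together with the easy observation that $A$ is insensitive to truncation of structure spaces. You instead construct the comparison map $\psi\colon h_0\cO^{\otimes}\to\cN_{A\cO\,\infty}^{\otimes}$ by factoring the canonical lift through the $h_0$-counit and then verify directly on structure spaces via \cref{Symmetric sequence proposition} — in effect re-running the proof of \cref{Ninfty T-0-operad corollary} for $h_0\cO^{\otimes}$ rather than citing it. Both are fine; the paper's version is a bit shorter because it treats \cref{Ninfty T-0-operad corollary} as a black box, while yours unfolds it. One small point of rigor: your appeal to \cref{Operad truncation corollary} for $\Alg_{\cO}(\cN_{I\infty})\simeq\Alg_{h_0\cO}(\cN_{I\infty})$ is slightly loose, since that corollary's final statement is phrased for $\cC^{\otimes}$ a $\cT$-symmetric monoidal $d$-category and $\cN_{I\infty}^{\otimes}$ is only a $\cT$-$0$-operad; the correct justification (which you do sketch) is to use the adjunction $h_0\dashv\iota$ on mapping spaces and then the fact, from \cref{Ninfty poset corollary}, that both algebra $\infty$-categories are already in $\{\emptyset,*\}$ and hence determined by their cores. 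The computation $\Map_{\Op_{\cT}}(\cN_{J\infty}^{\otimes},\cN_{I\infty}^{\otimes})\simeq\Map_{\wIndCat_{\cT}}(J,I)$ via the fully faithful right adjoint is a clean way to finish and matches the paper's logic.
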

\begin{proof}
The first statement follows by combining \cref{Ninfty T-0-operad corollary,Ninfty support example} with the fact that $A\cO  = Ah_{d}\cO$.
  We've already shown that $\Alg_{\cO}(\cN_{I \infty}) \simeq \Alg_{\cN_{A\cO}}(\cN_{I \infty})$ is either empty or contractible in \cref{Ninfty poset corollary}, so it suffices to characterize when there exists a map $\cN_{I \infty}^{\otimes} \rightarrow \cN_{J \infty}^{\otimes}$, i.e. a factorization $\Span_I(\FF_{\cT}) \subset \Span_J(\FF_{\cT}) \subset \Span(\FF_{\cT})$;
  this occurs if and only if $I \leq J$, yielding the corollary.
\end{proof}

The following generalization of the indexing systems theorems of \cite{Bonventre,Gutierrez,Nardin,Rubin} then immediately follows from \cref{Windex image proposition,Unslicing ff corollary,All mapping spaces to ninfty prop}.
\begin{corollary}\label{Windex image corollary}
    The functor of admissible maps admits a fully faithful right adjoint
    \begin{equation}\label{Windex image corollary equation}
    \begin{tikzcd}
	{\Op_{\cT}} && {\wIndSys_{\cT}}
	\arrow[""{name=0, anchor=center, inner sep=0}, "A" above, curve={height=-15pt}, from=1-1, to=1-3]
	\arrow[""{name=1, anchor=center, inner sep=0}, "{\cN^{\otimes}_{(-)\infty}}" below, curve={height=-15pt}, hook', from=1-3, to=1-1]
	\arrow["\dashv"{anchor=center, rotate=-90}, draw=none, from=0, to=1]
    \end{tikzcd}
    \end{equation}
    whose image consists of the weak $\cN_\infty$-operads;
    furthermore, the following are equal full subcategories of $\Op_{\cT}$:
    \[
        \Op_I = \Op_{\cT, /\cN_{I \infty}} = A^{-1}(\wIndCat_{\cT, \leq I}).
    \]
\end{corollary}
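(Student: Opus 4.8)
The plan is to assemble the adjunction \cref{Windex image corollary equation} directly from the mapping-space computation of \cref{All mapping spaces to ninfty prop}, exploiting the fact that the target $\wIndSys_{\cT} \simeq \wIndCat_{\cT}$ is a poset, so that no coherence data needs to be checked. First I would fix $\cO^{\otimes} \in \Op_{\cT}$ and a weak indexing category $I$, and identify $\Map_{\Op_{\cT}}(\cO^{\otimes}, \cN_{I\infty}^{\otimes}) \simeq \Alg_{\cO}(\cN_{I\infty})^{\simeq}$. Since $\cN_{I\infty}^{\otimes}$ is a weak $\cN_\infty$-operad, hence a $\cT$-$0$-operad (\cref{Ninfty sseq example}), \cref{Ninfty poset corollary} shows this space lies in $\{\emptyset, *\}$, and \cref{All mapping spaces to ninfty prop} (namely \cref{All mapping spaces to ninfty equation}) identifies it with $*$ precisely when $A\cO \leq I$. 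On the poset side, the hom-set $\wIndSys_{\cT}(A\cO, I)$ is likewise $*$ when $A\cO \leq I$ and empty otherwise; here one uses \cref{Windex image proposition}, so that $A\cO$ is genuinely a weak indexing category. These two mapping objects therefore agree, and naturality in $\cO^{\otimes}$ and in $I$ is automatic since a diagram of propositions commutes uniquely. The unit at $\cO^{\otimes}$ is the canonical factorization $\cO^{\otimes} \to \Span_{A\cO}(\FF_{\cT}) = \cN_{A\cO\infty}^{\otimes}$ of $\pi_{\cO}$ used in the proof of \cref{Ninfty T-0-operad corollary}, and $\cN_{(-)\infty}^{\otimes}$ is visibly functorial via $I \mapsto \Span_I(\FF_{\cT})$; together these exhibit the adjunction.

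Next I would record that fully faithfulness of $\cN_{(-)\infty}^{\otimes}$ is equivalent to the counit $A\cN_{I\infty}^{\otimes} \to I$ being an equivalence in the poset $\wIndSys_{\cT}$, which amounts to the identity $A\cN_{I\infty} = I$; this is \cref{Ninfty support example}. The essential image of $\cN_{(-)\infty}^{\otimes}$ is by construction exactly the class $\{\cN_{I\infty}^{\otimes} \mid I \in \wIndCat_{\cT}\}$ of weak $\cN_\infty$-operads.

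For the displayed equalities of full subcategories, the equivalence $\Op_I \simeq \Op_{\cT, /\cN_{I\infty}^{\otimes}}$ is the corollary to \cref{Definition of NIinfty proposition} (pushforward along $\iota_I^{\cT}$), and \cref{Unslicing ff corollary} shows the unslicing functor $\Op_{\cT,/\cN_{I\infty}^{\otimes}} \to \Op_{\cT}$ is fully faithful; hence each displayed term is identified with a full subcategory of $\Op_{\cT}$, and it only remains to compare objects. An object $\cO^{\otimes}$ lies in $\Op_{\cT,/\cN_{I\infty}^{\otimes}}$ iff it admits a map to $\cN_{I\infty}^{\otimes}$, which — since $\cN_{I\infty}^{\otimes}$ is subterminal (\cref{0-operads subterminal}) — it does iff $\Map_{\Op_{\cT}}(\cO^{\otimes}, \cN_{I\infty}^{\otimes}) \neq \emptyset$, iff $A\cO \leq I$ by the computation above, iff $\cO^{\otimes} \in A^{-1}(\wIndCat_{\cT,\leq I})$ under $\wIndSys_{\cT} \simeq \wIndCat_{\cT}$.

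I expect the only real subtlety to be notational bookkeeping — keeping straight the equivalence $\wIndSys_{\cT} \simeq \wIndCat_{\cT}$ and the fact that $A$ is valued in one poset while the right adjoint is indexed by the other — since all the mathematical content is already carried by \cref{Windex image proposition} and \cref{All mapping spaces to ninfty prop}; there is no genuine obstacle here beyond organizing these inputs into an adjunction between a poset and an $\infty$-category.
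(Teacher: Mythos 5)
Your proof is correct and takes essentially the same route as the paper, which simply states that the corollary "immediately follows" from \cref{Windex image proposition,Unslicing ff corollary,All mapping spaces to ninfty prop}; you are spelling out the assembly that the paper leaves implicit, and the exploitation of the poset target (so that $\Map_{\Op_{\cT}}(\cO^{\otimes},\cN_{I\infty}^{\otimes})$ being a proposition means no coherence data needs to be checked for naturality) is exactly the right mechanism.
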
 
\begin{observation}\label{Property P observation}
  By fully faithfulness of $\cN_{(-)\infty}^{\otimes}$, the adjunction associated with $A$ restricts to
  \[
    \begin{tikzcd}
      {\Op_{\cT, \EE_{\infty}^{\otimes}/}} && {\IndSys_{\cT}}
	\arrow[""{name=0, anchor=center, inner sep=0}, "A" above, curve={height=-15pt}, from=1-1, to=1-3]
	\arrow[""{name=1, anchor=center, inner sep=0}, "{\cN^{\otimes}_{(-)\infty}}" below, curve={height=-15pt}, hook', from=1-3, to=1-1]
	\arrow["\dashv"{anchor=center, rotate=-90}, draw=none, from=0, to=1]
    \end{tikzcd}
  \]
\end{observation}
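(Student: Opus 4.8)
The plan is to restrict the adjunction $A \dashv \cN_{(-)\infty}^{\otimes}$ of \cref{Windex image corollary} along the inclusions $\Op_{\cT, \EE_\infty^\otimes/} \subset \Op_\cT$ and $\IndSys_\cT \subset \wIndSys_\cT$. The first step is to make sense of the former as a full subcategory inclusion: recalling that $\EE_\infty^\otimes = \cN_{I_{\cT}^{\infty} \infty}^\otimes$ is the $\cN_\infty$-operad attached to the minimal indexing category, it is a weak $\cN_\infty$-operad, hence a $\cT$-$0$-operad, hence subterminal in $\Op_\cT$ by \cref{Ninfty T-0-operad corollary} and \cref{0-operads subterminal}; consequently the forgetful functor $\Op_{\cT, \EE_\infty^\otimes/} \to \Op_\cT$ is fully faithful, with essential image the full subcategory of $\cT$-operads admitting a (necessarily unique) map from $\EE_\infty^\otimes$. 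By the standard fact that an adjunction with fully faithful right adjoint restricts to an adjunction (again with fully faithful right adjoint) between full subcategories that are respectively closed under the left and the right functor, it then suffices to check: (a) $A\cO^\otimes \in \IndSys_\cT$ whenever $\cO^\otimes$ admits a map from $\EE_\infty^\otimes$; and (b) $\cN_{I\infty}^\otimes$ admits a map from $\EE_\infty^\otimes$ for every $I \in \IndSys_\cT$.

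For (b), an indexing system is in particular unital (hence of one color) and closed under finite coproducts, so every fibre $\FF_{I,V}$ contains $*_V$ and all its finite multiples $n \cdot *_V$; hence $I_\cT^\infty \le I$, which gives an inclusion of span patterns $\Span_{I_\cT^\infty}(\FF_\cT) \subset \Span_I(\FF_\cT)$ and thus a map of $\cT$-operads $\EE_\infty^\otimes = \cN_{I_\cT^\infty \infty}^\otimes \to \cN_{I\infty}^\otimes$, as needed.

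For (a), given a map $\EE_\infty^\otimes \to \cO^\otimes$, applying the functor $A\colon \Op_\cT \to \Sub_\Cat(\FF_\cT)$ and using $A\cN_{I_\cT^\infty\infty} = I_\cT^\infty$ (\cref{Ninfty support example}) yields $I_\cT^\infty \le A\cO$; since $A\cO$ is a weak indexing category by \cref{Windex image proposition}, it remains to show that any weak indexing category $I$ with $I_\cT^\infty \le I$ is an indexing category. Given $S, S' \in \FF_{I,V}$, the structure map of $S \sqcup S'$ factors as $\Ind_V^\cT S \sqcup \Ind_V^\cT S' \to V \sqcup V \xrightarrow{\nabla} V$; the first arrow lies in $I$ by the Segal condition \cref{Windex segal condition}, and the fold map $\nabla$ lies in $I$ since it is $\Ind_V^\cT$ applied to the fold $2 \cdot *_V \to *_V$ and $2 \cdot *_V \in \FF_{I_\cT^\infty, V} \subseteq \FF_{I,V}$; as $I$ is closed under composition, the composite — which is the structure map of $S \sqcup S'$ — lies in $I$, i.e. $S \sqcup S' \in \FF_{I,V}$. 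Thus each fibre of $A\cO$ is closed under finite coproducts, so $A\cO \in \IndCat_\cT \simeq \IndSys_\cT$, completing (a).

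I do not anticipate a real obstacle: the only point requiring care is the opening reduction — recognizing that $\Op_{\cT, \EE_\infty^\otimes/} \hookrightarrow \Op_\cT$ is fully faithful precisely because $\EE_\infty^\otimes$ is subterminal, so that the claimed restricted adjunction is literally the restriction of \cref{Windex image corollary} to closed full subcategories — after which (a) and (b) are short verifications using only \cref{Windex definition} and the combinatorics of \cite{Windex}.
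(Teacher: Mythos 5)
Your verifications (a) and (b) are correct, and they are exactly the combinatorial content needed: (a) shows $A\EE_\infty^\otimes = I_\cT^\infty \leq A\cO$ implies $A\cO$ is an indexing category (the binary-coproduct factorization through the fold map is the right argument), and (b) shows $I_\cT^\infty \leq I$ for any indexing system $I$, so $\cN_{I\infty}^\otimes$ receives a map from $\EE_\infty^\otimes$. Together these show $\wIndSys_{\cT, I^\infty_\cT/} = \IndSys_\cT$ as sub-posets of $\wIndSys_\cT$. The paper states the observation as a one-line appeal to fully faithfulness of $\cN_{(-)\infty}^\otimes$, so your extra detail is useful.

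However, your opening reduction contains a genuine conceptual error. You argue that because $\EE_\infty^\otimes$ is subterminal, the forgetful functor $\Op_{\cT, \EE_\infty^\otimes/} \to \Op_\cT$ is fully faithful, so that $\Op_{\cT, \EE_\infty^\otimes/}$ may be treated as a full subcategory and the ``restrict an adjunction to closed full subcategories'' lemma applies. But subterminality of $\EE_\infty^\otimes$ says that $\Map_{\Op_\cT}(\cP^\otimes, \EE_\infty^\otimes)$ is $(-1)$-truncated for all $\cP^\otimes$ (maps \emph{into} $\EE_\infty^\otimes$); it says nothing about $\Map_{\Op_\cT}(\EE_\infty^\otimes, \cO^\otimes)$ (maps \emph{out of} $\EE_\infty^\otimes$). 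Fully faithfulness of the undercategory forgetful functor $\cC_{X/} \to \cC$ requires $\Map(X,-)$ to be $(-1)$-truncated, which is the other direction, and is generally false here — e.g.\ for a $\cT$-operad with several colors each supporting a commutative structure, $\Alg_{\EE_\infty}(\cO)^{\simeq}$ has multiple components. So the opening reduction to ``restricting to full subcategories'' is unjustified, and the phrase ``necessarily unique map from $\EE_\infty^\otimes$'' is unsupported.

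The repair is not hard, and most of your work survives: instead of appealing to a full-subcategory restriction, appeal to the general fact that an adjunction $L \dashv R$ with unit $\eta$ slices under any object $X$, giving $L_{X/}\colon \cC_{X/} \rightleftarrows \cD_{LX/}\colon R'$, with $R'(LX \to Z) = (RZ, X \xrightarrow{\eta_X} RLX \to RZ)$; full faithfulness of $R$ (the counit being an equivalence) implies full faithfulness of $R'$. Applying this to $A \dashv \cN_{(-)\infty}^\otimes$ under $\EE_\infty^\otimes$ yields the adjunction $\Op_{\cT,\EE_\infty^\otimes/} \rightleftarrows \wIndSys_{\cT, I^\infty_\cT/}$, and your (a) together with (b) identify $\wIndSys_{\cT, I^\infty_\cT/}$ with $\IndSys_\cT$ (here the undercategory in the poset $\wIndSys_\cT$ genuinely is a full sub-poset, since poset mapping spaces are $(-1)$-truncated — this is the one place subterminality-style reasoning is legitimate). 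With that substitution, the proof is complete.
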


Given $I \leq J$ a related pair of weak indexing sysems, let $E_I^J:\wIndCat_{\cT, \leq I} \rightarrow \wIndCat_{\cT, \leq J}$ be the evident inclusion, with right adjoint $\Bor_I^J = (-) \cap \FF_I:\wIndCat_{\cT, \leq J} \rightarrow \wIndCat_{\cT, \leq kI}$. 
These are push-pull adjunctions;
following in form, we write the corresponding \emph{unslicing functor} as
\[
  E_{I}^J\cln \Op_I \simeq \Op_{J, /\cN_{I \infty}^{\otimes}} \rightarrow \Op_J.
\]
This has a right adjoint 
\[
  \Bor_I^J\cln \Op_J \rightarrow \Op_{J, /\cN_{I \infty}^{\otimes}} \simeq \Op_I
\]
given by pullback along the unique map $\cN_{I \infty}^{\otimes} \rightarrow \cN_{J \infty}^{\otimes}$.
These map to push-pull along the inclusion $\iota_I^J \cln \Span_I(\FF_{\cT}) \rightarrow \Span_J(\FF_{\cT})$ along $\Tot\colon \Op_{I} \rightarrow \Cat_{/\Span_I(\FF_{\cT})}$ and similar for $J$ \cite[\S~4]{Barkan}.
Hence they intertwine with $A$, i.e.
\[
  E_I^J A \cO = A E_I^J \cO; \hspace{50pt} \Bor_I^J A \cO = A \Bor_I^J \cO.
\]

\begin{corollary}\label{I-borelification corollary}
  For $I \leq J$ weak indexing systems, the functor $E_I^J\cln \Op_I \rightarrow \Op_J$ is an inclusion of a colocalizing $\cT$-subcategory 
    \[\begin{tikzcd}
	{\uOp_{I}^{\otimes}} && {\uOp_{J}^{\otimes}}
	\arrow[""{name=0, anchor=center, inner sep=0}, "{E^J_{I}}"{description}, curve={height=-12pt}, hook', from=1-1, to=1-3]
	\arrow[""{name=1, anchor=center, inner sep=0}, "{\Bor^J_I}"{description}, curve={height=-12pt}, from=1-3, to=1-1]
	\arrow["\dashv"{anchor=center, rotate=-90}, draw=none, from=0, to=1]
\end{tikzcd}\]
    whose terminal object is $\cN_{I \infty}^{\otimes}$.
    Furthermore, there is are equivalences
    \begin{align*}
        E_I^{I'} \cN_{J \infty}^{\otimes} &\simeq \cN_{E_I^{I'} J \infty}^{\otimes}\\
        \Bor_I^{I'} \cN_{J \infty}^{\otimes} &\simeq \cN_{\Bor_I^{I'} J \infty}^{\otimes}.\qedhere
    \end{align*}
\end{corollary}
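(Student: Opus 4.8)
The plan is to deduce Corollary~\ref{I-borelification corollary} from the already-established adjunction formalism by reducing everything to the non-parameterized slice adjunctions of \cite{Barkan}. First I would recall that \cref{Windex image corollary,Unslicing ff corollary} identify $\Op_I$ with the full subcategory $\Op_{J,/\cN_{I\infty}^{\otimes}} \subset \Op_J$, so the functor $E_I^J$ is precisely the inclusion of an overcategory, composed into $\Op_J$. Since $\cN_{I\infty}^{\otimes}$ is a $\cT$-$0$-operad (\cref{Ninfty sseq example}), hence a subterminal object of $\Op_J$ by \cref{0-operads subterminal}, the inclusion $\Op_{J,/\cN_{I\infty}^{\otimes}} \hookrightarrow \Op_J$ of the overcategory of a subterminal object is a colocalization: its right adjoint is $\cX^{\otimes} \mapsto \cX^{\otimes} \times_{\cN_{J\infty}^{\otimes}} \cN_{I\infty}^{\otimes}$, i.e. pullback along the unique map $\cN_{I\infty}^{\otimes} \to \cN_{J\infty}^{\otimes}$. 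This is the functor already christened $\Bor_I^J$ in the excerpt, and colocalization (the composite $\Bor_I^J E_I^J$ being the identity) is immediate because $\cN_{I\infty}^{\otimes} \times_{\cN_{J\infty}^{\otimes}} \cN_{I\infty}^{\otimes} \simeq \cN_{I\infty}^{\otimes}$ by subterminality. The terminal object of $\Op_I \simeq \Op_{J,/\cN_{I\infty}^{\otimes}}$ is of course $\mathrm{id}\colon \cN_{I\infty}^{\otimes} \to \cN_{I\infty}^{\otimes}$, i.e. $\cN_{I\infty}^{\otimes}$ itself.

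Next I would upgrade this to the claimed $\cT$-adjunction $\uOp_I^{\otimes} \rightleftarrows \uOp_J^{\otimes}$. The point is that all of the above is natural in $\cT$: by \cref{Restrictions fibrous remark}, restriction $\Res_W^V\colon \Op_V \to \Op_W$ is pullback along $\Span(\Ind_W^V)$, and $\cN_{(-)\infty}^{\otimes}$ is compatible with restriction since $\Span_I(\FF_{(-)})$ is defined fiberwise; thus $\Res_W^V$ carries $\cN_{I\infty}^{\otimes}$ over $V$ to $\cN_{I_W\infty}^{\otimes}$ over $W$ and intertwines $E_I^J, \Bor_I^J$ with their $W$-level analogues. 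Since a $\cT$-functor is a $\cT$-adjoint iff it is fiberwise adjoint with restriction-compatible units/counits, and the fiberwise statement is the slice-category colocalization above applied over each $\cT_{/V}$, we conclude that $E_I^J \dashv \Bor_I^J$ as $\cT$-functors. One small point to check: the fully faithful right adjoint $\cN_{(-)\infty}^{\otimes}$ of \cref{Windex image corollary} is likewise a $\cT$-functor, so restricting the $A \dashv \cN_{(-)\infty}^{\otimes}$ adjunction to the relevant full $\cT$-subcategories is legitimate; but this is exactly the parenthetical in \cref{Property P observation}, applied with the weak indexing categories $\leq I$ and $\leq J$ in place of the all/$\EE_\infty$ case.

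Finally, for the displayed equivalences $E_I^{I'}\cN_{J\infty}^{\otimes} \simeq \cN_{E_I^{I'}J\,\infty}^{\otimes}$ and $\Bor_I^{I'}\cN_{J\infty}^{\otimes} \simeq \cN_{\Bor_I^{I'}J\,\infty}^{\otimes}$, I would invoke the intertwining relations $E_I^{I'}A\cO = AE_I^{I'}\cO$ and $\Bor_I^{I'}A\cO = A\Bor_I^{I'}\cO$ stated in the paragraph immediately preceding the corollary (which follow from matching these functors with push-pull along $\Span_I(\FF_\cT) \to \Span_{I'}(\FF_\cT)$ under $\Tot$, as in \cite[\S4]{Barkan}). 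Applying $A$ to $\cN_{J\infty}^{\otimes}$ gives $J$ by \cref{Ninfty support example}, so $A(E_I^{I'}\cN_{J\infty}^{\otimes}) = E_I^{I'}J$ and $A(\Bor_I^{I'}\cN_{J\infty}^{\otimes}) = \Bor_I^{I'}J$; since $E_I^{I'}\cN_{J\infty}^{\otimes}$ and $\Bor_I^{I'}\cN_{J\infty}^{\otimes}$ are again weak $\cN_\infty$-operads (pullbacks and suitable composites of subterminal objects along maps of $0$-operads are $0$-operads, using \cref{Ninfty T-0-operad corollary}), \cref{Windex image corollary} — which says $\cN_{(-)\infty}^{\otimes}$ is a section of $A$ on weak $\cN_\infty$-operads — identifies them with $\cN_{E_I^{I'}J\,\infty}^{\otimes}$ and $\cN_{\Bor_I^{I'}J\,\infty}^{\otimes}$ respectively. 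The main obstacle I anticipate is bookkeeping: ensuring the slice-adjunction at the $\infty$-categorical level genuinely promotes to a $\cT$-adjunction requires carefully tracking that $\cN_{I\infty}^{\otimes}$ and the unslicing equivalence $\Op_I \simeq \Op_{J,/\cN_{I\infty}^{\otimes}}$ are compatible with the restriction functors of \cref{Restrictions fibrous remark}, and that the push-pull matching of $E_I^{I'},\Bor_I^{I'}$ with the operadic pullback/pushforward is the same one used to prove the intertwining relations — but none of this is deep, it is all assembly of results already in the excerpt.
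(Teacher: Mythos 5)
Your proposal is correct and follows essentially the same route as the paper: you identify $E_I^J$ as the unslicing inclusion $\Op_{J,/\cN_{I\infty}^{\otimes}}\hookrightarrow\Op_J$ with right adjoint given by pullback along $\cN_{I\infty}^{\otimes}\to\cN_{J\infty}^{\otimes}$ (exactly as set up in the paragraph preceding the corollary), and you deduce the displayed equivalences from the $A$-intertwining together with the identification of $0$-operads with weak $\cN_\infty$-operads, which is just a more explicit phrasing of the paper's ``examining the structure spaces.'' The added detail you supply — that colocalization and subterminality of $\cN_{I\infty}^{\otimes}$ force idempotence, and that the adjunction upgrades to a $\cT$-adjunction because $\cN_{(-)\infty}^{\otimes}$ and the unslicing equivalence are compatible with restriction via \cref{Restrictions fibrous remark} — is welcome bookkeeping that the paper leaves implicit, not a deviation in method.
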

\begin{proof}
  The first sentence follows by the above argument.
  The computations follow by examining the structure spaces of the resulting $\cT$-operads.
\end{proof}

\subsection{The genuine operadic nerve}\label{Discrete genuine nerve subsection}
We now concern ourselves with comparisons to other notions of equivariant operads.
Throughout this subsection, we assume $\cT$ is a 1-category;
for instance, we may specialize to $V$-operads for $V \in \cT$ an orbit.
We begin in \cref{Nerve subsubsection} by reviewing Bonventre's genuine operadic nerve $N^{\otimes}$;
we detour in \cref{Restriction nerve subsubsection} to verify that it is compatible with restriction.
Then, in \cref{Genuine nerve subsubsection} we show that $N^{\otimes}$ possesses a conservative total right derived functor of $\infty$-categories.
We end in \cref{Discrete nerve subsubsection} by noting that this restricts to an equivalence on $\cT$-1-operads and describing the corresponding discrete theory of algebras.
In all sections, we assume that $\cT$ is an atomic orbital 1-category.
\subsubsection{The 1-categorical nerve}\label{Nerve subsubsection}
Recall the $\cT$-space $\uSigma_{\fC}$ of \cref{Sigma V construction}.
\cite{Bonventre-nerve} introduced a specialization of the following.
\begin{definition}\label{Discrete genuine definition}
  A $\fC$-colored genuine $\cT$-operad in a symmetric monoidal 1-category $\cV$ the data of:
  \begin{enumerate}
    \item a $\fC$-symmetric sequence $\cO(-;-)\colon \tot \uSigma_{\fC} \rightarrow \cV$,
    \item for all $V \in \cT$ and $C \in \fC_V$, a distinguished ``identity'' element $1_C \in \cO(C;C)$, and
    \item for all composable data $\prn{(\bC;D), (\bB_U;C_U)_{U \in \Orb(S)}}$ lying over a map $T \rightarrow S$,
      a Borel $\Aut_V(S) \times \prod_{U \in \Orb(S)} \Aut_U(T_U)$-equivariant ``composition'' map
      \[
        \gamma:\cO(\bC;D) \otimes \bigotimes_{U \in \Orb(S)}\cO (\bB_U; C_U) \rightarrow \cO\prn{\bB;D}
      \]
  \end{enumerate}
  subject to the following compatibilities:
  \begin{enumerate}[label={(\alph*)}]
    \item (restriction-stability of the identity) for all $U \rightarrow V$ and $C \in \fC_V$, the restriction map 
      \[
        \Res_U^V\colon \cO(C;C) \rightarrow \cO\prn{\Res_U^V C;\Res_U^V C}
      \]
      sends $1_{C}$ to $1_{\Res_U^V C}$;
    \item (restriction-stability of composition) for all $U \rightarrow V$, the following commutes
      \[\begin{tikzcd}[column sep=large, row sep=small]
	{\cO(\bC;D) \otimes \mspc \bigotimes\limits_{U \in \Orb(S)} \mspc \cO(\bB_U;C_U)} & {\cO(\bB;D)} \\
  {\cO\prn{\Res_W^V \bC;\Res_W^V D} \otimes \mspc \bigotimes\limits_{U \in \Orb(S)} \mspc \cO\prn{\Res_W^V \bB_U;C_U}} & {\cO\prn{\Res_W^V \bB;D}}
	\arrow["\gamma", from=1-1, to=1-2]
	\arrow["{\Res_V^W}", from=1-1, to=2-1]
	\arrow["{\Res_V^W}", from=1-2, to=2-2]
	\arrow["\gamma", from=2-1, to=2-2]
\end{tikzcd}\]
    \item (unitality) for all $S \in \FF_V$, the following diagram commutes
      \[\begin{tikzcd}
	{\cO(\bC;D)} & {\cO(\bC;D) \otimes \mspc \bigotimes\limits_{U \in \Orb(S)} \mspc \cO(C_U;C_U)} \\
	{\cO(D;D) \otimes \cO(\bC;D)} & {\cO(\bC;D)}
	\arrow["{\prn{\id, \prn{\cbr{1_U}}}}", from=1-1, to=1-2]
	\arrow["{\prn{1_V,\id}}"', from=1-1, to=2-1]
	\arrow[Rightarrow, no head, from=1-1, to=2-2]
	\arrow["\gamma", from=1-2, to=2-2]
	\arrow["\gamma"{description}, from=2-1, to=2-2]
\end{tikzcd}\]
    \item (associativity) For all $S \in \FF_V$, $(T_U) \in \FF_S$ writing $T \deq \coprod\limits_U^S T_U$, and $(R_W) \in \FF_T$ writing $R \deq \coprod\limits_W^T R_W$, the following diagram commutes
      \[\begin{tikzcd}[row sep=small]
        {\prn{\cO\prn{\bC;D} \otimes \mspc \bigotimes\limits_{U \in \Orb(S_U)} \mspc \cO\prn{\bB_U;C_U}} \otimes \mspc \bigotimes\limits_{\stackrel{U \in \Orb(S)}{W \in \Orb(T_U)}} \mspc \cO\prn{\bA_W;B_W}} & {\cO\prn{\bB;D} \otimes \mspc \bigotimes\limits_{W \in \Orb(T)} \mspc \cO\prn{\bA_W;B_W}} \\
	{\cO(\bC;D) \otimes \mspc \bigotimes\limits_{U \in \Orb(S)} \prn{\cO(\bB_U;C_U) \otimes \mspc \bigotimes\limits_{W \in \Orb(T_U)} \mspc \cO(\bA_W;B_W)}} \\
	{\cO(\bC;D) \otimes \mspc \bigotimes\limits_{U \in \Orb(S)} \mspc \cO\prn{\bA_U;C_U}} & {\cO\prn{\bA;D}}
  \arrow[from=1-1, to=1-2,"\gamma"]
  \arrow[from=1-2, to=3-2,"\gamma"]
	\arrow[Rightarrow, no head, from=2-1, to=1-1]
  \arrow[from=2-1, to=3-1,"\gamma \; "']
  \arrow[from=3-1, to=3-2,"\gamma"]
\end{tikzcd}\]
\end{enumerate}
  A morphism of $\fC$-colored discrete $\cT$-operads in $\cV$ is a map of $\fC$-symmetric sequences in $\cV$ preserving each $1_C$ and intertwining $\gamma$;
  we refer to the resulting 1-category as $g\Op^{\fC}_{\cT}(\cV)$.
\end{definition}
\begin{construction}
  Given a map of coefficient systems $f\colon \fC \rightarrow \fC'$, there is an induced map of $\cT$-operads $\triv\prn{\fC}^{\otimes} \rightarrow \triv\prn{\fC'}^{\otimes}$ yielding a $\cT$-functor $f\colon \uSigma_{\fC} \rightarrow \uSigma_{\fC'}$, and hence a precomposition functor 
  \[
    f^*\colon \Fun\prn{\Tot \uSigma_{\fC'}, \cV} \rightarrow \Fun\prn{\Tot \uSigma_{\fC}, \cV}.
  \]
  These are the cocartesian transport functors of a cocartesian fibration, which we call $\mathrm{SSeq}^\bullet_{\cT}$.
  A \emph{morphism of colored discrete operads} $\cO^{\otimes} \rightarrow \cP^{\otimes}$ is a morphism of their underlying objects of $\mathrm{SSeq}^{\bullet}_{\cT}$ which is compatible with identities and composition.
  We refer to the resulting 1-category as $g\Op_{\cT}(\cV)$.
\end{construction}

We write $s\Op^{\fC}_{\cT} \deq g\Op^{\fC}_{\cT}(s\Set)$ and $s\Op_{\cT} \deq g\Op_{\cT}(s\Set)$.
In \cite{Bonventre}, a model structure was given to $s\Op^{\oc}_G$;
this was later shown to be Quillen equivalent to several other model categorical variations on $G$-operads (e.g. \cite[Tab~1]{Bonventre_dendroidal}).
Complementarily, \cite{Bonventre-nerve} constructed a \emph{genuine operadic nerve} functor of 1-categories
\[
  N^{\otimes}\cln s\Op_G \rightarrow s\Set^+_{/\prn{\Tot \uFF_{G,*}, \mathrm{Ne}}}
\]
whose restriction $g\Op_G(\mathrm{Kan})$ lands in fibrant objects in Nardin-Shah's model structure \cite[\S~2.6]{Nardin}, and hence presents $G$-operads.

Moreover, $g\Op_{\cT}(\mathrm{Kan})$ agrees with the \emph{fibrant simplicial colored $\cT$-operads} of \cite[Def~2.5.4]{Nardin};
Nardin-Shah \cite{Nardin} construct an analogous nerve functor
\[
  N^{\otimes}\cln g\Op_\cT(\mathrm{Kan}) \rightarrow s\Set^+_{/\prn{\Tot \uFF_{\cT,*}}, \mathrm{NE}}
\]
whose specialization to $\cT = \cO_G$ agrees with Bonventre's nerve.

These nerve functors can be understood as taking $\cO \in g\Op^{\fC}_{\cT}(\mathrm{Kan})$ to the $\mathrm{Kan}$-enriched category over $\Tot \uFF_{\cT,*}$ with $\Ob \prn{ \cO_S} = \fC_S$ and with mapping spaces
\[
  \Map_{\cO^{\otimes}}(\bC,\bD) \deq \coprod_{S \leftarrow \pi_{\cO}\bC \rightarrow \pi_{\cO}\bD} \prod_{U \in \Orb(\pi_{\cO}(\bD))} \cO(\bC_U; D_U),
\]
with evident composition and mapping down to $\Map_{\Tot \uFF_{\cT,*}}(\pi_{\cO} \bC, \pi_{\cO} \bD)$ via the evident forgetful map.

\subsubsection{Restriction and the nerve}\label{Restriction nerve subsubsection}
\begin{construction}
  Let $W \in \cT$ be a distinguished object.
  Then, the \emph{restriction functor} 
  \[
    \Res_W^{\cT} \cln g\Op_{\cT}(\cV) \rightarrow g\Op_{W}(\cV) \deq g\Op_{\cT_{/W}}(\cV)
  \]
  acts on underlying symmetric sequences via pullback along the map $\Tot \Res_W^{\cT} \uSigma_{\fC} \rightarrow \Tot \uSigma_{\fC}$, with the data $1_V$ and $\gamma$ defined in $\Res_W^{\cT} \cO^{\otimes}$ by specialization from $\cO^{\otimes}$.  
\end{construction}
We define restriction $\Res_W^{\cT}\cln \Cat_{s\Set, /\Tot \uFF_{\cT,*}} \rightarrow \Cat_{s\Set, /\Tot \uFF_{W,*}}$ by pullback along $\Tot \uFF_{W,*} \rightarrow \Tot \uFF_{\cT,*}$.
\begin{proposition}
  Let $\cO$ be a one-color simplicial genuine $\cT$-operad.
  There is a natural isomorphism of simplicial categories $N^{\otimes} \Res_W^{\cT} \cO \simeq \Res_W^{\cT} N^{\otimes}\cO$ over $\Tot \uFF_{\cT,*}$.
\end{proposition}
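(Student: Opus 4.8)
The plan is to establish the isomorphism on the nose by unwinding the explicit description of $N^{\otimes}$ recalled above together with the definitions of $\Res_W^{\cT}$ on both sides. First one checks that both sides make sense and live over $\Tot \uFF_{W,*}$. Since $\Res_W^{\cT}$ acts on underlying $\cT$-symmetric sequences by pullback along the cocartesian fibration $\Tot \Res_W^{\cT} \uSigma_{\fC} \rightarrow \Tot \uSigma_{\fC}$, and the identity and composition data of $\Res_W^{\cT}\cO$ are specialized from those of $\cO$, restriction carries a (fibrant) one-color genuine $\cT$-operad to a (fibrant) one-color genuine $\cT_{/W}$-operad --- fibrancy being preserved because pullbacks of Kan complexes are Kan, and one-coloredness because $\Res_W^{\cT} *_{\cT} \cong *_{\cT_{/W}}$ (restriction is both a left and a right adjoint, hence preserves terminal objects). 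Thus $N^{\otimes}\Res_W^{\cT}\cO$ is defined, and $\Res_W^{\cT}N^{\otimes}\cO$ is a simplicial category over $\Tot\uFF_{W,*}$ by construction. It therefore suffices to match objects, mapping simplicial sets, composition, units, and the structure functor to $\Tot\uFF_{W,*}$, all naturally in $\cO$.

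On objects: as $\cO$ is one-colored there is a unique object of $N^{\otimes}\cO$ over each finite pointed $\cT$-set, so $\Ob(N^{\otimes}\cO) \cong \Ob(\Tot\uFF_{\cT,*})$ and hence $\Ob(\Res_W^{\cT}N^{\otimes}\cO) \cong \Ob(\Tot\uFF_{W,*})$; dually $N^{\otimes}\Res_W^{\cT}\cO$ has objects $(\Res_W^{\cT}\fC)_S = *$ over each $S \in \FF_W$, so $\Ob(N^{\otimes}\Res_W^{\cT}\cO) \cong \Ob(\Tot\uFF_{W,*})$ too, and the two identifications agree because $\Tot\uFF_{W,*} \rightarrow \Tot\uFF_{\cT,*}$ is precisely the map furnished by naturality of $\Tot\uFF_{(-),*}$ in $\cT$ (\cref{Restrictions fibrous remark}). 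On morphisms, the key input is that $\Res_W^{\cT}\uSigma_{*_{\cT}} \simeq \uSigma_{*_{\cT_{/W}}}$ as $\cT_{/W}$-$\infty$-categories --- which follows from $\Res_W^{\cT}\triv(\fC)^{\otimes} \simeq \triv(\Res_W^{\cT}\fC)^{\otimes}$ together with \cref{Triv and Nardin-Shah} and \cref{Sigma V construction} --- so that the restricted symmetric sequence underlying $\Res_W^{\cT}\cO$ is literally $\cO$ evaluated on restricted $\cT_{/W}$-profiles. Given objects lying over $\cT_{/W}$-sets, the mapping simplicial set in $\Res_W^{\cT}N^{\otimes}\cO$ is the fiber over the chosen map of $\cT_{/W}$-sets of the coproduct-of-products $\coprod_{R \leftarrow \pi\bC \rightarrow \pi\bD}\prod_{U \in \Orb_{\cT}(\pi\bD)}\cO(\bC_U;D_U)$ computed in $\Tot\uFF_{\cT,*}$; since the $\cT_{/W}$-orbits of a finite $\cT_{/W}$-set refine its $\cT$-orbits and restriction commutes with forming the fibers $T_U$, this fiber reorganizes into $\coprod_{T' \leftarrow \pi\bC \rightarrow \pi\bD \text{ in } \Tot\uFF_{W,*}}\prod_{U \in \Orb_{\cT_{/W}}(\pi\bD)}(\Res_W^{\cT}\cO)(\bC_U;D_U)$, which is the mapping simplicial set in $N^{\otimes}\Res_W^{\cT}\cO$. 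Composition, units, and the color-preserving $\Sigma$-equivariance then match by inspection, because in each case the structure of $\Res_W^{\cT}\cO$ is obtained from that of $\cO$ by exactly the specialization used to define composition and units in the pulled-back simplicial category, and naturality in $\cO$ is immediate from functoriality of both constructions in morphisms of genuine operads.

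The main obstacle is the morphism-level bookkeeping just sketched: one must check that the fiber product defining $\Res_W^{\cT}N^{\otimes}\cO$ reindexes the product from $\cT$-orbits to $\cT_{/W}$-orbits of the restricted finite set and turns the factors $\cO(\bC_U;D_U)$ into the restricted symmetric-sequence values, \emph{strictly} --- so that we obtain an isomorphism of simplicial categories over $\Tot\uFF_{W,*}$ rather than a mere equivalence. This hinges on the strict naturality of the algebraic pattern $\Tot\uFF_{(-),*}$ in $\cT$ and the compatibility of orbit decomposition with base change along $\cT_{/W} \rightarrow \cT$; once these coherences are pinned down at the level of $1$-categories, the remainder of the verification is a direct unwinding of definitions.
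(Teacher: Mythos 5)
Your proposal is correct and follows essentially the same route as the paper: both arguments come down to unwinding the explicit description of $N^{\otimes}$ (unique object over each finite pointed set, mapping simplicial sets given by a coproduct of products of structure spaces) and the pullback definition of $\Res_W^{\cT}$, then matching objects and mapping spaces via the tautological identification $\Res_W^{\cT}\cO(S_{V\to W}) \cong \cO(S_V)$. The paper organizes the comparison slightly more economically --- it first constructs a functor $N^{\otimes}\Res_W^{\cT}\cO \to N^{\otimes}\cO$ over $\Tot\uFF_{\cT,*}$ and then obtains $F\colon N^{\otimes}\Res_W^{\cT}\cO \to \Res_W^{\cT}N^{\otimes}\cO$ from the universal property of the pullback, checking $F$ is bijective on objects and fully faithful --- which sidesteps some of the explicit reindexing bookkeeping you carry out. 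One small slip in your write-up: the $\cT_{/W}$-orbits of a finite $\cT_{/W}$-set are in canonical \emph{bijection} with (not a refinement of) the orbits of the underlying $\cT$-set, since the forgetful functor $\cT_{/W} \to \cT$ carries orbits to orbits; this is what you actually use, and the argument goes through as intended.
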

\begin{proof}
  We may construct a functor $N^{\otimes} \Res_W^{\cT} \cO^{\otimes} \rightarrow N^{\otimes} \cO^{\otimes}$ over $\Tot \uFF_{\cT,*}$ sending the object over a $(V \rightarrow W)$-set $S_{V \rightarrow W}$ to its underlying $V$-set $S$ and acting on mapping spaces by taking coproducts of the tautological equivalence $\Res_W^{\cT} \cO(S_{V \rightarrow W}) \simeq \cO(S_V)$.
  This constructs a natural diagram
  \[
    \begin{tikzcd}[row sep=small]
      {\Tot_{\cT} N^{\otimes} \Res_W^{\cT} \cO^{\otimes}} \\
  & {\Tot_{\cT} \Res_W^{\cT} N^{\otimes} \cO^{\otimes}} & {\Tot_{\cT} N^{\otimes} \cO^{\otimes}} \\
	& {\Tot \uFF_{W,*}} & {\Tot \uFF_{\cT,*}}
	\arrow[dashed, from=1-1, to=2-2,"F"]
	\arrow[curve={height=-18pt}, from=1-1, to=2-3]
	\arrow[curve={height=18pt}, from=1-1, to=3-2]
	\arrow[from=2-2, to=2-3]
	\arrow[from=2-2, to=3-2]
	\arrow[from=2-3, to=3-3]
	\arrow[from=3-2, to=3-3]
\end{tikzcd}
  \]
  Since $\pi_{N^{\otimes} \Res_W^{\cT} \cO^{\otimes}}$ and $\pi_{\Res_W^{\cT} N^{\otimes} \cO^{\otimes}}$ are $\pi_0$-isomorphisms, $F$ is as well, so it is essentially surjective.
  It follows by unwinding definitions that $F$ is fully faithful, and hence an equivalence.
\end{proof}
The above restriction functor implements restriction of $\cT$-operads, so we have the following. 
\begin{corollary}
  There is a natural equivalence of $W$-operads $\Res_W^{\cT} N^{\otimes} \cO \simeq N^{\otimes} \Res_W^{\cT} \cO$.
\end{corollary}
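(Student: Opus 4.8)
The plan is to upgrade the point-set natural isomorphism of the preceding proposition to an equivalence of $\infty$-categories. Here $N^{\otimes}$ denotes the total right derived functor of \cref{Nerve cool theorem}, presenting a functor $g\Op_{\cT}^{\oc} \rightarrow \Op_{\cT}^{\oc}$ (and likewise over $\cT_{/W}$), and the restriction functors $\Res_W^{\cT}$ on $g\Op_{\cT}(\mathrm{Kan})$ and on Nardin--Shah's marked simplicial model category over $\Tot \uFF_{\cT,*}$ are both given by pullback along maps covering $\Tot \uFF_{W,*} \rightarrow \Tot \uFF_{\cT,*}$, hence preserve fibrant objects and weak equivalences between them; thus both admit total right derived functors. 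It therefore suffices to prove (i) that each of these two derived restriction functors agrees with the $\infty$-categorical restriction $\Res_W^{\cT}$ on the respective localizations, after which (ii) deriving the natural isomorphism $N^{\otimes} \Res_W^{\cT} \cO \cong \Res_W^{\cT} N^{\otimes} \cO$ of the preceding proposition yields the asserted natural equivalence of $W$-operads.

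For (i) on the model-categorical side, restriction on marked simplicial sets is pullback along $\Tot \uFF_{W,*} \rightarrow \Tot \uFF_{\cT,*}$, which by naturality of $\varphi$ in $\cT$ (\cref{Restrictions fibrous remark}) is compatible with pullback along $\Span(\Ind_W^{\cT})\colon \Span(\FF_W) \rightarrow \Span(\FF_{\cT})$; since this pullback preserves fibrant objects and weak equivalences between them, its total right derived functor is, under the identification $\Op_{\cT} \simeq \Fbrs(\Tot \uFF_{\cT,*})$ of \cref{Operads are fibrous theorem}, precisely the pullback functor of \cref{Restrictions fibrous remark}, i.e. $\Res_W^{\cT}\colon \Op_{\cT} \rightarrow \Op_W$. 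On the $g\Op$-side, restriction acts on underlying $\cT$-symmetric sequences by pullback along $\Tot \Res_W^{\cT} \uSigma_{\fC} \rightarrow \Tot \uSigma_{\fC}$; read through the conservative functor $\sseq$, this is the point-set model of $\Res_W^{\cT}\colon \Op_{\cT}^{\oc} \rightarrow \Op_W^{\oc}$, so its derived functor agrees with $\Res_W^{\cT}$ as well.

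Granting (i), step (ii) is immediate: the preceding proposition provides a natural isomorphism $N^{\otimes} \Res_W^{\cT} \cO \cong \Res_W^{\cT} N^{\otimes} \cO$ of fibrant objects, natural in $\cO$; applying the localization functors together with the two identifications of (i) gives a natural equivalence $\Res_W^{\cT} N^{\otimes} \cO \simeq N^{\otimes} \Res_W^{\cT} \cO$ in $\Op_W$. The main obstacle is the bookkeeping in (i) on the model-categorical side --- checking that the marked-simplicial restriction functor descends, through the equivalence $\Op_{\cT} \simeq \Fbrs(\Tot \uFF_{\cT,*})$ and the comparison map $\varphi$, to the pullback functor that defines $\Res_W^{\cT}$. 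This is routine given the naturality of $\varphi$ in $\cT$ recorded in \cref{Restrictions fibrous remark} and the compatibility of Nardin--Shah's model structure with these pullbacks, but it requires care tracking the markings and the categorical pattern structure.
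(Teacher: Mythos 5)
Your argument is correct and matches the paper's (very terse) reasoning: the preceding proposition supplies a point-set natural isomorphism of simplicial categories over $\Tot \uFF_{W,*}$, and the single substantive input needed to localize it is that pullback along $\Tot\uFF_{W,*} \to \Tot\uFF_{\cT,*}$ on Nardin--Shah's model category presents the $\infty$-categorical restriction $\Res_W^{\cT}\colon\Op_{\cT}\to\Op_W$ of \cref{Restrictions fibrous remark} --- exactly the paper's remark that "the above restriction functor implements restriction of $\cT$-operads." One remark on your step (i): only the identification on the marked-simplicial side is actually needed. The term $N^{\otimes}\Res_W^{\cT}\cO$ is computed entirely at the model-category level (both $\Res_W^{\cT}$ on $g\Op_{\cT}$ and $N^{\otimes}$ are point-set operations, and the $g\Op$-restriction visibly preserves weak equivalences since these are detected on underlying $\fC$-symmetric sequences), so no further identification of its derived functor is required; and, read literally, your claim that the derived $g\Op$-restriction "agrees with $\Res_W^{\cT}\colon\Op^{\oc}_{\cT}\to\Op^{\oc}_W$" under the conservative derived nerve is precisely the content of the corollary, which would make that step circular if it were load-bearing. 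Fortunately it is not load-bearing, so the proof goes through.
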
 

The main reason we went to this trouble is for the following example.
\begin{example}
  Let $G$ be a finite group and $V$ be a real orthogonal $G$-representation.
  Let $D_V$ be a genuine $G$-operad which is equivalent to the little $V$-disks operad (see \cite[\S~3.9]{Horev}).
  Then, given $K \subset H \subset G$, and $S \in \FF_K$, we have a tautological equivalence
  \[
    \Res_H^G D_V(S) \simeq \Conf_S^K(\Res_K^G V) \simeq \Conf_S^K(\Res_K^H \Res_H^G V) \simeq D_{\Res_H^G V}(S)
  \]
  which intertwines with the composition rule in $D_V$;
  writing $\EE_V^{\otimes} \deq N^{\otimes} D^V$, we acquire an equivalence
  \[
    \Res_H^G \EE^{\otimes}_V \simeq \EE_{\Res_H^G V}^{\otimes}\qedhere.
  \]
\end{example}

\subsubsection{The conservative $\infty$-categorical lift}\label{Genuine nerve subsubsection}\,
$N^{\otimes}$ has homotopical properties.
\begin{proposition}
    $N^{\otimes}$ preserves and reflects weak equivalences between one-color genuine $G$-operads in Kan complexes. 
\end{proposition}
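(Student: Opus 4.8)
The statement concerns the genuine operadic nerve $N^{\otimes}\colon g\Op_{G}^{\oc}(\mathrm{Kan}) \rightarrow s\Set^+_{/(\Tot\uFF_{G,*},\mathrm{NE})}$, and the claim is that a morphism $\cO \rightarrow \cP$ of one-color genuine $G$-operads in Kan complexes is a weak equivalence if and only if $N^{\otimes}\cO \rightarrow N^{\otimes}\cP$ is a weak equivalence in the model structure on $s\Set^+_{/(\Tot\uFF_{G,*},\mathrm{NE})}$ (equivalently, a $G$-operadic equivalence of fibrant objects). The natural route is to reduce everything to the level of \emph{structure spaces} and apply \cref{Symmetric sequence proposition}. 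First I would recall that for a one-color genuine $G$-operad $\cO$ in Kan complexes, the mapping spaces of the simplicial category $N^{\otimes}\cO$ over $\Tot\uFF_{G,*}$ are, by the explicit description recalled in \cref{Nerve subsubsection}, given by $\Map_{N^{\otimes}\cO}(iT,iS) \cong \coprod_{S \leftarrow \pi\bC \rightarrow \pi\bD}\prod_{U\in\Orb(\pi\bD)}\cO(\bC_U;D_U)$, which in the one-color case collapses to $\prod_{U\in\Orb(S)}\cO(T_U)$. Thus the structure-space invariant $\cP \mapsto (N^{\otimes}\cP)(S)$ of the $\infty$-operad associated to $N^{\otimes}\cO$ agrees with $\cO(S)$ up to the natural identification already used in the discussion of \cref{Sseq cool theorem}; concretely, the comment after \cref{Sseq cool theorem} asserts $\cP(S) \simeq (N^{\otimes}\cO)(S)$.

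The actual argument proceeds by two-out-of-three along the lines indicated in the text before \cref{Nerve cool theorem}. First I would observe that a morphism $f\colon \cO \rightarrow \cP$ in $g\Op_G^{\oc}(\mathrm{Kan})$ is a weak equivalence precisely when its underlying map of $G$-symmetric sequences is a pointwise weak equivalence of Kan complexes --- that is, when $\cO(S) \rightarrow \cP(S)$ is a weak equivalence for all $(H,S)$ --- since the model structure on $s\Op_G^{\oc}$ is right-transferred along the underlying $G$-symmetric sequence functor $U\colon s\Op_{G,*_G} \rightarrow \Fun(\Tot\uSigma_G,\sSet_{\mathrm{Quillen}})$ by \cite[Thm~II]{Bonventre}, and weak equivalences (and fibrations) in a right-transferred structure are detected and reflected by $U$. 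Next, one knows that $\sseq$ on $\Op_G^{\oc}$ is total right derived from a functor of $1$-categories out of Nardin-Shah's model structure which preserves and reflects weak equivalences between fibrant objects, and that $\cP(S) \simeq (N^{\otimes}\cO)(S)$; so a morphism of fibrant objects in Nardin-Shah's model structure is a weak equivalence if and only if it induces equivalences on all structure spaces $\cO(S)$, by the conservativity half of \cref{Symmetric sequence proposition} (which shows $\sseq$ is conservative on $\Op_G^{\oc}$). Stringing these equivalences of conditions together: $f$ is a weak equivalence $\iff$ $\cO(S)\rightarrow\cP(S)$ is a weak equivalence for all $(H,S)$ $\iff$ $(N^{\otimes}\cO)(S) \rightarrow (N^{\otimes}\cP)(S)$ is a weak equivalence for all $(H,S)$ $\iff$ $N^{\otimes}f$ is a weak equivalence of fibrant objects in $s\Set^+_{/(\Tot\uFF_{G,*},\mathrm{NE})}$.

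\textbf{Where the work lies.} The one genuinely technical point --- and the main obstacle --- is to nail down the compatibility $\cP(S) \simeq (N^{\otimes}\cO)(S)$ at the level needed here, i.e.\ to check that the simplicial-categorical mapping spaces of $N^{\otimes}\cO$, after passing to the associated $\infty$-categorical $G$-operad via the marked-simplicial-set model structure of \cite[\S~2.6]{Nardin}, literally recover the Kan complexes $\cO(S)$ as structure spaces, compatibly with restriction, composition, and the $\Sigma$-action (cf.\ \cref{Discretization remark}). This amounts to comparing the explicit formula $\Map_{N^{\otimes}\cO}(\bC,\bD) = \coprod_{S\leftarrow\pi\bC\rightarrow\pi\bD}\prod_{U}\cO(\bC_U;D_U)$ with the structure-space formula $\cO(\bC;D) = \Map_{\cO}^{\Ind_H^G S\rightarrow[G/H]}(\bC,D)$ coming from \cref{T-operad structure maps}, using the Segal conditions of \cref{GOperads conditions} to match the coproduct-over-spans decomposition with the cocartesian-lift decomposition of mapping spaces; this is exactly the sort of unwinding referenced (but deferred) around \cref{Sseq cool theorem,Nerve cool theorem}, and I expect the bulk of the proof to consist of carefully citing that it holds on the nose in the Kan-complex-enriched setting, together with the fact that $N^{\otimes}$ of a fibrant object is fibrant (which is already asserted in \cref{Nerve subsubsection}, following \cite{Bonventre-nerve,Nardin}). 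Once that identification is in hand, the two-out-of-three argument above is purely formal. I would also remark that since both model structures in question have their weak equivalences created by $S$-ary structure spaces, the ``preserves'' and ``reflects'' halves of the statement are genuinely the same statement read in two directions, so no separate argument for reflection is needed beyond the biconditional already established.
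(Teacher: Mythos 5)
Your proposal is correct and follows essentially the same two-out-of-three argument as the paper: both reduce to the observation that weak equivalences in $s\Op_G^{\oc}$ are detected by structure spaces (via Bonventre's right-transferred model structure), that $\sseq$ is conservative and right-derived from a functor on Nardin-Shah's model structure, and that $\ssseq \circ N^{\otimes} = U$ on the nose. The only cosmetic slip is the mismatched notation $\cP(S) \simeq (N^{\otimes}\cO)(S)$ (which should have $\cO$ on both sides), but this is inherited verbatim from the introduction of the paper itself.
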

\begin{proof}
    By \cite[Thm~II,Prop~4.31]{Bonventre}, the functor $U:s\Op^{\oc}_G \rightarrow \Fun(\uSigma_G,\sSet)$ is monadic and $s\Op^{\oc}_G$ admits the transferred model structure from the projective model structure on $\Fun\prn{\Tot \uSigma_G,\sSet_{\mathrm{Quillen}}}$;
    in particular, $U$ preserves and reflects weak equivalences.

    It is not hard to see that $\sseq$ is right-derived from a functor
    \[
      \ssseq:\sSet^{+,\oc}_{/(\uFF_{\cT},Ne)} \rightarrow \Fun\prn{\Tot \uSigma_G,\sSet_{\mathrm{Quillen}}}_\mathrm{Proj}
    \]
    setting $\cO_{\sseq}(S) := \pi_{\cO}^{-1}(\Ind_H^G S \rightarrow G/H)$;
    by \cref{Symmetric sequence proposition} $\sseq$ is conservative, so $\ssseq$ preserves and reflects weak equivalences between fibrant objects.
    Hence it suffices unwind definitions and note that the following diagram commutes
    \[
        \begin{tikzcd}
          s\Op^{\oc}_G \arrow[r,"N^{\otimes}"] \arrow[dr,"U" below] 
          & \sSet^{+,{\oc}}_{/(\uFF_{G},Ne)} \arrow[d,"\ssseq"]\\
            & \Fun(\Tot \uSigma_G,\sSet)
        \end{tikzcd}
    \]
\end{proof}
In fact, the one-color assumption was unnecessary.
We say that a map of genuine simplicial $G$-operads $\varphi\colon \cO \rightarrow \cP$ is a \emph{weak equivalence} if it is an isomorphism on coefficient systems and for all profiles $(\bC;D)$, the map $\cO(\bC;D) \rightarrow \cP(\varphi \bC;\varphi D)$
is a weak equivalence.
These weak equivalences satisfy two-out-of-three (in fact, two-out-of-six) by the same property for isomorphisms and for weak equivalences of simplicial sets.
\begin{proposition}
  $N^{\otimes}$ preserves and reflects weak equivalences between arbitrary genine $G$-operads in Kan complexes.
\end{proposition}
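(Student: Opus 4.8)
The plan is to prove this exactly as in the one-colored case, by factoring $N^{\otimes}$ through the underlying colored symmetric sequence. There are two functors in play. On the genuine side there is the forgetful functor $U$ to colored $\cT$-symmetric sequences (over coefficient systems of sets), which by the very definition of weak equivalence of genuine $G$-operads given above --- isomorphism on coefficient systems together with weak equivalences on all profile spaces $\cO(\bC;D)$ --- manifestly preserves and reflects weak equivalences. On the $\infty$-categorical side there is the $1$-categorical functor $\ssseq$ on the marked simplicial set model $\sSet^{+}_{/(\uFF_{\cT},\mathrm{Ne})}$ for $\cT$-operads which total-right-derives to $\sseq$; since the multicolored $\sseq$ is conservative by \cref{Symmetric sequence proposition}, $\ssseq$ preserves fibrations and preserves and reflects weak equivalences between fibrant objects. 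The plan is then to establish $\ssseq\circ N^{\otimes}=U$ in the colored setting, note that $N^{\otimes}$ sends genuine $G$-operads in Kan complexes to fibrant objects (this is the input recalled from Nardin--Shah and Bonventre), and conclude by two-out-of-three: for $\varphi\colon\cO\to\cP$ a map of genuine $G$-operads in Kan complexes, $\varphi$ is a weak equivalence $\iff U\varphi$ is one $\iff \ssseq N^{\otimes}\varphi$ is one $\iff N^{\otimes}\varphi$ is a weak equivalence between fibrant objects.

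Carrying this out, the key steps in order would be: (i) construct the colored $\ssseq$ following \cref{SSeq observation}, sending $\cO^{\otimes}$ over $(\uFF_{\cT},\mathrm{Ne})$ to the colored symmetric sequence whose value on a profile $(\bC;D)$ is the relevant mapping-fiber of $\cO^{\otimes}$ over the active arrow $\Ind_V^{\cT}S\to V$ with the chosen colors, and check it is a right Quillen-type functor relative to the projective structure on colored symmetric sequences, so that $\sseq$ is its total right derived functor and it preserves and reflects weak equivalences between fibrant objects; (ii) unwind the mapping-space formula $\Map_{N^{\otimes}\cO}(\bC,\bD)\simeq\coprod\prod_{U}\cO(\bC_U;D_U)$ together with the Segal condition for multimorphisms (\cref{GOperads conditions}) to identify the $(\bC;D)$-structure space of the $G$-$\infty$-operad $N^{\otimes}\cO$ with $\cO(\bC;D)$, hence the commuting triangle $\ssseq\circ N^{\otimes}=U$; (iii) feed this into the two-out-of-three (indeed two-out-of-six) formalism for the relevant weak equivalences, already noted to hold. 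Alternatively one may first fix the coefficient system $\fC$ --- a weak equivalence of genuine $G$-operads factors through its defining isomorphism $\fC\xrightarrow{\sim}\fC'$ as a relabeling isomorphism, which $N^{\otimes}$ carries to an isomorphism of simplicial categories over $\Tot\uFF_{\cT,*}$ --- and then apply the one-colored argument verbatim to the $\fC$-colored map that remains.

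The main obstacle I anticipate is organizational rather than conceptual: pinning down the multicolored model-categorical infrastructure precisely, namely a (relative, or model) category of colored $\cT$-symmetric sequences bundled over coefficient systems of sets together with its weak equivalences, so that both ``$U$ preserves and reflects weak equivalences'' is literally the stated definition and ``$\sseq$ is total right derived from $\ssseq$'' holds. The one-colored proof leaned on Bonventre's right-transferred model structure and the monadicity of $U$, which are phrased one-coloredly; one needs either to observe that those arguments go through unchanged once colors are fixed and then assemble the coefficient-system-graded statement, or to supply the bundled version directly. Everything else --- the reduction to a fixed coefficient system, the identification of structure spaces with profile spaces via \cref{SSeq observation,GOperads conditions}, and the two-out-of-three bookkeeping --- is routine unwinding of definitions already in place.
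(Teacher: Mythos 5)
Your proposal is correct and takes essentially the same approach as the paper: reduce to a fixed coefficient system $\fC$ (your ``alternative'' is in fact the paper's main line), construct the multicolored $\ssseq$, observe that $N^{\otimes}_{\fC}$ commutes with the two underlying-symmetric-sequence functors, and conclude by two-out-of-three. The paper is terse about the model-categorical infrastructure you flag as the ``main obstacle,'' but it sets it up exactly as you describe.
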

\begin{proof}
  It is not hard to see that $N^{\otimes}$ preserves and reflects the property of \emph{inducing isomorphism on coefficient systems of colors}, so we may fix a coefficient system of sets of colors $\fC$ and verify that 
  \[
    N^{\otimes}_{\fC}\colon g\Op_{G}^{\fC}(\mathrm{Kan}) \rightarrow s\Set^+_{/(\uFF_{G,*}, NE)}
  \]
  preserves and reflects weak equivalences.
  Thankfully, we have the same tools as in the one-color case;
  \cref{Symmetric sequence proposition} constructs a functor $s\sseq:\sSet^{+,\fC}_{/(\uFF_{\cT},Ne)} \rightarrow \Fun\prn{\Tot \uSigma_{\fC}, \sSet_{\mathrm{Quillen}}}_{\mathrm{Proj}}$ which preserves and reflects weak equivalences between fibrant objects, and $N^{\otimes}_{\fC}$ is a functor over $\Fun \prn{\Tot \uSigma_{\fC},\sSet_{\mathrm{Quillen}}}$;
  by two-out-of-three for weak equivalences, $N^\otimes$ preserves and reflects weak equivalences between fibrant objects.
\end{proof}

Defining the $\infty$-category $g\Op_G \deq g\Op_G(\mathrm{Kan})[\mathrm{weq}^{-1}]$, we acquire a multi-color version of \cref{Nerve cool theorem} by functoriality of Hammock localization.

\begin{remark}\label{O(n) nerve remark}
  In \cite{Bonventre}, another nerve functor $\iota_*\colon g\Op^{\oc}_e(\sSet)^{BG} \rightarrow g\Op^{\oc}_G(\sSet)$ was constructed and shown to furnish a Quillen equivalence for a model structure on $g\Op^{\oc}_e(\sSet)^{BG}$ whose weak equivalences and fibrations are preserved and reflected by the \emph{graph subgroup} fixed points $\prod_{n \in \NN} \cbr{\cO(n)^{\Gamma} \mid \Gamma \in \cO_{G \times \Sigma_n,\Gamma}}$ for the Quillen model structure on $\sSet$.
  In particular, under the equivalence $\Un_{\cO_G}\cO_{G \times \Sigma_n, \Gamma} \simeq B_G \Sigma_n$, an object $\cO \in g\Op_e(\sSet)^{BG}$ has an $n$-ary $B_G \Sigma_n$ space $\cO(n)$.
  In fact, unwinding definitions using \cite[Rmk~4.38]{Bonventre}, we find that there is an equivalence
  \[
    \underline{N^{\otimes} \iota_* \cO}(n) \simeq \cO(n).\qedhere
  \]
\end{remark}

\subsubsection{The discrete genuine nerve is an equivalence}\label{Discrete nerve subsubsection}
Note that the fully faithful inclusion of discrete simplicial sets $\Set \hookrightarrow \sSet$ is product-preserving, so it induces a fully faithful functor $g\Op_{\cT}(\Set) \hookrightarrow g\Op_{\cT}(\sSet)$.
We refer to these as \emph{discrete genuine $\cT$-operads}.
We're concerned with relating this to $\cT$-1-categories, beginning with the following.

\begin{observation}\label{1-nerve}
  For all $\cO \in g\Op_{\cT}(\Set)$, $N^{\otimes} \cO$ is a $\cT$-1-operad.
\end{observation}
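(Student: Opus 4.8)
The goal is to show that if $\cO$ is a discrete genuine $\cT$-operad (i.e. an object of $g\Op_{\cT}(\Set)$, considered inside $g\Op_{\cT}(\mathrm{Kan})$ via the product-preserving inclusion $\Set \hookrightarrow \sSet$), then the $\cT$-operad $N^{\otimes}\cO$ presented by its genuine operadic nerve is a $\cT$-$1$-operad. By \cref{Operad mapping fiber prop} (and the definition of $\Op_{\cT,d}$ just above), it suffices to check that the structure space $\prn{N^{\otimes}\cO}(\bC;D)$ is $0$-truncated (i.e. a set, up to homotopy) for every $\cO$-profile $(\bC;D)$. The plan is to unwind the explicit description of $N^{\otimes}\cO$ recalled in \cref{Nerve subsubsection}: it is the $\mathrm{Kan}$-enriched category over $\Tot\uFF_{\cT,*}$ with objects $\fC_S$ and mapping spaces
\[
  \Map_{N^{\otimes}\cO}(\bC,\bD) \simeq \coprod_{S \leftarrow \pi_{\cO}\bC \rightarrow \pi_{\cO}\bD} \prod_{U \in \Orb(\pi_{\cO}(\bD))} \cO(\bC_U; D_U).
\]
Since $\cO$ is discrete, each simplicial set $\cO(\bC_U;D_U)$ is a discrete set, hence so is each finite product, hence so is the coproduct; therefore every mapping space of $N^{\otimes}\cO$ is discrete, i.e. $N^{\otimes}\cO$ is a $\mathrm{Kan}$-enriched $1$-category (all its mapping spaces are $0$-truncated).

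Next I would translate this back through the model-categorical comparison. The nerve $N^{\otimes}\cO$ is fibrant in Nardin-Shah's model structure, so it presents an object of $\Op_{\cT} = \Fbrs(\Span(\FF_{\cT}))$ via the equivalence of \cref{Operads are fibrous theorem}; under this equivalence the mapping spaces of the presenting marked simplicial set over $\Tot\uFF_{\cT,*}$ compute the mapping spaces of the total $\infty$-category $\Tot_{\cT}N^{\otimes}\cO$, and in particular the structure spaces are identified with coproducts of products of the $\cO(\bC_U;D_U)$ exactly as in the display above. (This is the same bookkeeping used implicitly throughout \cref{Discrete genuine nerve subsection} and in the proof that $N^{\otimes}$ preserves and reflects weak equivalences, where one identifies $\prn{N^{\otimes}\cO}(S) \simeq \cO(S)$ for one-color operads; the multi-color statement is the profile-indexed version.) Concretely: $\prn{N^{\otimes}\cO}(\bC;D) \simeq \prod_{U\in\Orb(\pi(\bD))}\cO(\bC_U;D_U)$ is a finite product of discrete sets, hence discrete, hence $0$-truncated. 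Then \cref{Operad mapping fiber prop} with $d = 0$ gives that $N^{\otimes}\cO$ is a $\cT$-$0$-operad in the sense that all its structure spaces are $0$-truncated, which is precisely the condition defining a $\cT$-$1$-operad (the operad-level truncation index is shifted by one, exactly as in the convention "$\cO(S)$ is $(d-1)$-truncated" in the definition of $\Op_{\cT,d}$ preceding \cref{Operad mapping fiber prop}).

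The only genuinely delicate point — and the step I expect to be the main obstacle — is being careful about the shift in truncation conventions and about \emph{which} homotopical model of $N^{\otimes}\cO$ one is computing mapping spaces in. One must confirm that the simplicial mapping spaces of $N^{\otimes}\cO$ (as a $\mathrm{Kan}$-enriched category / marked simplicial set over $\Tot\uFF_{\cT,*}$) really do compute the $\infty$-categorical structure spaces $\prn{N^{\otimes}\cO}(\bC;D)$ after passing to the associated $\infty$-category, with no unexpected homotopy coherence contributions — but this is exactly the content of the comparison results invoked in \cref{Genuine nerve subsubsection} (that $\ssseq$ resp. $s\sseq$ preserves and reflects weak equivalences between fibrant objects and that $N^{\otimes}$ is a functor over $\Fun(\Tot\uSigma_{\fC}, \sSet_{\mathrm{Quillen}})$), so no new work is needed beyond assembling those. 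Everything else is a routine diagram chase: a finite product of discrete sets is discrete, a disjoint union of discrete sets is discrete, and a space is $0$-truncated iff it is homotopy discrete.
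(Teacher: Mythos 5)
The paper labels this as an observation and provides no explicit proof --- the intended justification is precisely what you've reconstructed. The mapping spaces of $N^{\otimes}\cO$ are given by the explicit coproduct-of-products formula recalled just above the observation; for $\cO \in g\Op_{\cT}(\Set)$, each $\cO(\bC_U;D_U)$ is a discrete simplicial set, so each mapping space is discrete. Restricting to the component over a single active map gives $\prn{N^{\otimes}\cO}(\bC;D) \simeq \cO(\bC;D)$ (since $D$ lives over an orbit, the product over $\Orb(\pi D)$ has one factor), hence the structure spaces are $0$-truncated, which is exactly the defining condition for $\Op_{\cT,1}$. Your care about the comparison between the Kan-enriched model and the $\infty$-categorical structure spaces is appropriate and correctly deferred to the identifications of \cref{Genuine nerve subsubsection}. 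So the argument is correct and follows the expected route.

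One minor wording slip: you write that $N^{\otimes}\cO$ is ``a $\cT$-$0$-operad in the sense that all its structure spaces are $0$-truncated.'' By the paper's convention, a $\cT$-$0$-operad has $(-1)$-truncated structure spaces; $0$-truncated structure spaces is the $\cT$-$1$-operad condition, as you immediately say in the next clause. Also, invoking \cref{Operad mapping fiber prop} is not really necessary here --- the definition of $\Op_{\cT,d}$ in terms of truncation of $\cO(S)$ already suffices; that proposition is the equivalent reformulation in terms of the envelope, which you don't actually use. Neither point affects correctness.
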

Conversely, from the data of a $\cT$-1-operad $\cO$, the data of a discrete genuine $\cT$-operad $\cO(-)$ is supplied by \cref{Discretization remark}.

\begin{proposition}\label{Discrete operads proposition}
  $N^{\otimes}$ descends to a functor $g\Op_{\cT}(\Set) \rightarrow \Op_{\cT,1}$ with quasi-inverse $\cO(-)$.
\end{proposition}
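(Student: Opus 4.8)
The plan is to verify that $N^{\otimes}$ and $\cO(-)$ are mutually inverse functors. First I would promote $N^{\otimes}$ to a functor of $\infty$-categories: we have already seen that $N^{\otimes}$ preserves and reflects weak equivalences, so by functoriality of the Dwyer--Kan localization it descends to $g\Op_{\cT}(\Set) \hookrightarrow g\Op_{\cT}(\mathrm{Kan})[\mathrm{weq}^{-1}] = g\Op_{\cT} \to \Op_{\cT}$, and by \cref{1-nerve} its image lands in $\Op_{\cT,1}$. In the other direction, the assignment $\cO(-)$ on objects is exactly the content of \cref{Discretization remark}: for a $\cT$-$1$-operad $\cO^{\otimes}$ the structure maps \cref{Restriction map,Composition map,Sigma action map} have discrete source and target, and the coherences listed in \cref{Discretization remark} say precisely that they assemble into a discrete genuine $\cT$-operad; naturality of the construction of \cref{T-operad structure maps} in $\cO^{\otimes}$ makes this assignment functorial. (One point of bookkeeping here is that $\Op_{\cT,1}$ is a genuine $2$-category, with mapping $1$-groupoids $\Alg_{\cO}(\cP)^{\simeq}$ by \cref{Ninfty poset corollary}, so $\cO(-)$ must be checked on the conjugating isomorphisms as well; this is again an unwinding of definitions, using the explicit mapping Kan complexes of the nerve.)

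Next I would compute the composite $\cO(-) \circ N^{\otimes}$. Unwinding the description of the nerve, the mapping Kan complexes of $N^{\otimes}\cP$ are $\coprod_{S \leftarrow \pi_{\cO}\bC \to \pi_{\cO}\bD}\prod_{U} \cP(\bC_U;D_U)$ with composition inherited from $\cP$, so restricting to the component over a fixed active map recovers $(N^{\otimes}\cP)(\bC;D) \cong \cP(\bC;D)$, and the identity elements and composition maps match on the nose by construction. This produces a natural isomorphism $\cO(-) \circ N^{\otimes} \simeq \mathrm{id}_{g\Op_{\cT}(\Set)}$.

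The remaining step, $N^{\otimes} \circ \cO(-) \simeq \mathrm{id}_{\Op_{\cT,1}}$, is the main obstacle, and I would handle it by one of two routes. \emph{Route (i):} construct a natural comparison map $N^{\otimes}(\cO(-)) \to \cO^{\otimes}$ of $\cT$-operads which is the tautological identification $\sseq N^{\otimes}(\cO(-))(\bC;D) = \cO(\bC;D)$ on structure spaces; the real work is seeing that the Segal condition for multimorphisms in $\cO^{\otimes}$, i.e. \cref{GOperads conditions}(c), is exactly what is needed to organize these identifications into a morphism of $\cT$-operads, after which \cref{Symmetric sequence proposition} upgrades it to an equivalence since it is a $\pi_0$-isomorphism on colors and an equivalence on every structure space. \emph{Route (ii):} argue essential surjectivity of $N^{\otimes}$ onto $\Op_{\cT,1}$ directly: every $\cT$-operad is presented by a fibrant simplicial colored $\cT$-operad $\cQ$ with $N^{\otimes}\cQ \simeq \cO^{\otimes}$, and if $\cO^{\otimes}$ is a $\cT$-$1$-operad then each $\cQ(\bC;D)$ is homotopy-discrete, so the canonical map $\cQ \to \pi_0\cQ$ to the underlying discrete genuine $\cT$-operad is a weak equivalence and $\cO^{\otimes} \simeq N^{\otimes}(\pi_0\cQ)$ lies in the essential image; combined with $\cO(-) \circ N^{\otimes} \simeq \mathrm{id}$ this forces $N^{\otimes} \circ \cO(-) \simeq \mathrm{id}$, so $N^{\otimes}$ is an equivalence with quasi-inverse $\cO(-)$. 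I expect route (ii) to be the cleanest to write, with route (i) kept in reserve as the more hands-on verification; in either case the only genuine input beyond unwinding definitions is the conservativity statement \cref{Symmetric sequence proposition}.
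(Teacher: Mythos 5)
Your decomposition matches the paper's: verify that $N^\otimes$ lands in $\Op_{\cT,1}$ (via \cref{1-nerve}), then check that the two composites are naturally equivalent to the identity. The paper dispatches both composites as an immediate unwinding of definitions, and your treatment of $\cO(-)\circ N^\otimes$ is exactly that unwinding. Where you diverge is on $N^\otimes\circ\cO(-)$: both your routes are correct, but they bring in more machinery than the paper intends. The point the paper is making is that a $\cT$-$1$-operad $\cO^\otimes$, presented as a $1$-category over $\Tot\uFF_{\cT,*}$ with discrete mapping sets satisfying the Segal decomposition of \cref{GOperads conditions}(c), is \emph{literally} the same data as the nerve of its underlying discrete genuine $\cT$-operad $\cO(-)$: the objects, hom-sets, and composition laws of $N^\otimes(\cO(-))$ are, by construction, identical to those of $\cO^\otimes$, so the comparison is an isomorphism of $1$-categories over the base on the nose. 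In particular you never need to reduce to the underlying symmetric sequence and invoke conservativity (\cref{Symmetric sequence proposition}), nor does one need the model-categorical essential-surjectivity detour of your Route (ii). Your routes buy a safety net if one is worried about higher coherence data, but since both sides are genuine $1$-categories over a $1$-category, that coherence data is vacuous, and the paper's direct route is both shorter and sharper.
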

\begin{proof}
  By \cref{1-nerve}, $N^{\otimes}$ restricts as above.
  Thus it suffices to prove that the compositions $g\Op_{\cT}(\Set) \rightarrow g\Op_{\cT}(\Set)$ and $\Op_{\cT,1} \rightarrow \Op_{\cT,1}$ are naturally equivalent to the identity;
  this follows immediately after unwinding definitions. 
\end{proof}

Now having an explicit combinatorial model for $\cT$-1-operads, we focus on algebras using the following.
\begin{construction}
  Let $\cO^{\otimes}$ be a $\cT$-operad and $\cP \subset \cO$ a full $\cT$-subcategory.
  Then, we define the full $\cT$-subcategory $\Tot_{\cT} \cP^{\otimes} \subset \Tot_{\cT} \cO^{\otimes}$ to be spanned by the tuples $\bC \in \cO_S$ such that, for each $U \in \Orb(S)$, $C_U \in \cP$.
  $\cP^{\otimes}$ is a $\cT$-operad and $\cP^{\otimes} \rightarrow \cO^{\otimes}$ a map of $\cT$-operads \cite[\S~2.9]{Nardin};
  we call this the \emph{full $\cT$-suboperad spanned by $\cP$}.

  In particular, if $X \in \Gamma^{\cT} \cO$ is a $\cT$-object in $\cO$, we define the \emph{endomorphism $\cT$-operad $\End_X^{\otimes} \subset \cC^{\otimes}$ of $X$} to be the full $\cT$-suboperad of $\cO^{\otimes}$ spanned by $\cbr{X}$.
\end{construction}

\begin{observation}\label{End observation}
  Suppose $\cC^{\otimes}$ is an $I$-symmetric monoidal $\infty$-category and $X \in \Gamma^{\cT} \cC$.
  Then, $\End_X$ has underlying $\cT$-symmetric sequence $\End_X(S) \simeq \Map_{\cC_V}(X^{\otimes S}_V,X_V)$ for $S \in \uFF_I$, identity element $1_V = \id_{X_V}$, and composition map given by composition of maps
  \[
    \gamma(\mu_S; (\mu_{T_U})) \cln X^{\otimes T}_V \simeq \bigotimes_U^S X^{\otimes T_U}_U \xrightarrow{\bigotimes_U^S \mu_{T_U}} X^{\otimes S}_V \xrightarrow{\mu_S} X_V.
  \]
  In particular, if $\cC^{\otimes}$ is an $I$-symmetric monoidal $d$-category, then $\End_X \cC^{\otimes}$ is a $\cT$-$d$-operad.
\end{observation}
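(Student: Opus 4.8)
The statement to prove is \cref{End observation}: that for $\cC^{\otimes}$ an $I$-symmetric monoidal $\infty$-category and $X \in \Gamma^{\cT}\cC$, the endomorphism $\cT$-operad $\End_X^{\otimes}$ has the indicated underlying $\cT$-symmetric sequence, identity, and composition, and is a $\cT$-$d$-operad when $\cC^{\otimes}$ is a $\cT$-symmetric monoidal $d$-category.

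My plan is to unwind the definition of the full $\cT$-suboperad spanned by $\{X\}$ directly in terms of the structure maps from \cref{T-operad structure maps}. First I would recall that $\End_X^{\otimes} \subset \cC^{\otimes}$ is the full $\cT$-suboperad whose objects over $S \in \FF_V$ are the single $S$-tuple $(X_U)_{U \in \Orb(S)} = \Res^V_\bullet X$ (using that a $\cT$-object has canonical equivalences $X_U \simeq \Res^V_U X_V$); in particular $\End_X$ has one color. By \cref{Many definitions definition} and the simplified description of one-color $\cT$-operads, $\End_X^{\otimes}$ is determined by its structure spaces, and by \cref{T-operad structure maps} applied to the profile $(V, S, (\bC;D)) = (V,S,(\Res^\bullet X; X))$ we get $\End_X(S) \simeq \Map^{\Ind_V^{\cT} S \to V}_{\cC^{\otimes}}(\bC, X)$. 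Here I would invoke the $\cT$-operadic structure of an $I$-symmetric monoidal $\infty$-category (as recalled in \cref{I-commutative monoids subsection} and \cref{Familiar structures corollary}): the active mapping spaces in $\cC^{\otimes}$ over $\Ind_V^{\cT}S \to V$ compute $\Map_{\cC_V}(\bigotimes^S_U \bC_U, X_V)$, so with $\bC_U = \Res^V_U X_V$ this becomes $\Map_{\cC_V}(X_V^{\otimes S}, X_V)$, using the definition $X_V^{\otimes S} = \bigotimes^S \Delta^S X_V$. The identity element $1_V \in \End_X(*_V)$ is the image of $\id_{X^{\otimes}}$ under the canonical identification $\End_X(*_V) \simeq \Map_{\cC_V}(X_V, X_V)$, which is $\id_{X_V}$ by \cref{T-operad structure maps}(3). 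The composition map $\gamma$ is then \cref{Composition map} specialized to this profile; unwinding the identification of active mapping spaces with $\Map_{\cC_V}(-,-)$, composition in $\cC^{\otimes}$ becomes literal composition of maps precomposed with $\bigotimes^S_U \mu_{T_U}$, giving exactly the stated formula $\gamma(\mu_S;(\mu_{T_U}))$.

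For the final sentence, I would argue as follows: if $\cC^{\otimes}$ is a $\cT$-symmetric monoidal $d$-category, then by definition its underlying $\cT$-$\infty$-category $\cC$ is a $\cT$-$d$-category, so each mapping space $\Map_{\cC_V}(Y, Z)$ is $(d-1)$-truncated for all $Y, Z \in \cC_V$. In particular $\End_X(S) \simeq \Map_{\cC_V}(X_V^{\otimes S}, X_V)$ is $(d-1)$-truncated for all $S \in \FF_{I,V}$, which is precisely the condition in \cref{Operad mapping fiber prop} / the definition of a $\cT$-$d$-operad given right after it; hence $\End_X^{\otimes}$ is a $\cT$-$d$-operad. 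One small point to be careful about: the definition of $\cT$-$d$-operad in \cref{d-operads subsection} asks for $\cO(S)$ to be $(d-1)$-truncated for all $S \in \FF_V$ (i.e. over the ambient $\cT$-operad $\Comm_\cT^\otimes$), but since $\End_X^\otimes$ is an $I$-operad its structure spaces are empty outside $\FF_{I,V}$, so the truncation condition is automatic there and only needs to be checked on admissible sets, which is what the preceding argument gives.

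The main obstacle I anticipate is not any single deep step but rather making the identification of active mapping spaces in $\cC^{\otimes}$ with indexed-tensor mapping spaces fully precise and natural — that is, tracking through the equivalence $\Cat_I^{\otimes} \simeq \CMon_I(\Cat)$ (and \cref{Operads are fibrous theorem}) to confirm that $\Map^{\Ind_V^{\cT}S \to V}_{\cC^{\otimes}}(\bC, X) \simeq \Map_{\cC_V}(\bigotimes^S_U \bC_U, X_V)$ with the correct functoriality in $S$-automorphisms, so that the composition law really does unwind to the displayed formula. This is essentially bookkeeping that the indexed tensor product $\bigotimes^S_U$ is the cocartesian pushforward along the active morphism $\Ind_V^{\cT}S \to V$, combined with the Segal condition for multimorphisms from \cref{GOperads conditions}(c); everything else (one-coloredness, the identity, $d$-truncatedness) is immediate once that identification is in hand. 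I would therefore present the proof as: (i) identify $\End_X$ as a one-color full $\cT$-suboperad; (ii) compute its structure spaces via \cref{T-operad structure maps} and the $I$-symmetric monoidal structure; (iii) read off $1_V$ and $\gamma$; (iv) conclude $d$-truncatedness from $\cC$ being a $\cT$-$d$-category.
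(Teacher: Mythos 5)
Your proposal is correct and matches the paper's (implicit) approach: the paper labels this an observation and gives no explicit proof, and the intended argument is precisely the definitional unwinding you carry out — identifying the active mapping spaces of the full $\cT$-suboperad $\End_X^{\otimes} \subset \cC^{\otimes}$ with $\Map_{\cC_V}(X_V^{\otimes S}, X_V)$ via the Segal condition for multimorphisms, reading off the identity and composition, and deducing $d$-truncatedness from the underlying $\cT$-$\infty$-category of $\cC^{\otimes}$ being a $\cT$-$d$-category. Your side remark about non-$I$-admissible $S$ is also fine: those structure spaces are empty, hence $(-2)$-truncated, so the truncation condition holds vacuously.
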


In general, an $\cO$-algebra in $\cC^{\otimes}$ may be viewed as the information of its underlying object $X$ together with the factored map $\cO^{\otimes} \rightarrow \End_X^{\otimes} \hookrightarrow \cC^{\otimes}$.
The following proposition follows by unwinding definitions.
\begin{proposition}\label{1-algebra proposition}
  If $\cC^{\otimes}$ is a $\cT$-symmetric monoidal 1-category and $X,Y$ are $\cO$-algebras in $\cC^{\otimes}$, then the hom set $\Hom_{\Alg_{\cO}(\cC)}(X,Y) \subset \Hom_{\cC}(X,Y)$ consists of those maps such that the following diagram of $\cT$-operads commutes:
  \[
    \begin{tikzcd}[row sep=tiny]
      & \End_X^{\otimes} \arrow[dd]\\
      \cO^{\otimes} \arrow[ru] \arrow[rd]\\
      & \End_Y^{\otimes}
    \end{tikzcd}
  \]
\end{proposition}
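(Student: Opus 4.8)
The statement is essentially a ``dictionary entry'' translating the $\infty$-categorical definition of $\cO$-algebra maps (as maps of $\cT$-operads which are equivalences on cores up to the algebra data) into the explicit combinatorial data of endomorphism operads, in the setting where $\cC^{\otimes}$ is a $\cT$-symmetric monoidal $1$-category. The proof is an unwinding of definitions, but it has a definite spine that I would make explicit. First I would recall that an $\cO$-algebra $X$ in $\cC^{\otimes}$ is a map of $\cT$-operads $\varphi_X\colon \cO^{\otimes} \rightarrow \cC^{\otimes}$ with underlying $\cT$-object $X \in \Gamma^{\cT}\cC$; by the construction of the endomorphism $\cT$-operad $\End_X^{\otimes}$ as the full $\cT$-suboperad of $\cC^{\otimes}$ spanned by $\cbr{X}$, the map $\varphi_X$ factors (essentially uniquely) through $\End_X^{\otimes} \hookrightarrow \cC^{\otimes}$, since all colors appearing in the image of $\varphi_X$ are copies of $X$ and the inclusion of a full $\cT$-suboperad is $\cT$-fully faithful on colors and an iso on multi-mapping spaces onto its image. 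Thus I would first establish the canonical equivalence
\[
  \Alg_{\cO}(\cC) \simeq \Alg_{\cO}^{X}(\cC) \times_{\cbr{X}} \cbr{X},
\]
more precisely that the fiber of $\Alg_{\cO}(\cC) \to \Gamma^{\cT}\cC$ over $X$ is the space of maps of $\cT$-operads $\cO^{\otimes} \to \End_X^{\otimes}$.

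\textbf{Key steps.} Second, I would reduce the computation of $\Hom_{\Alg_{\cO}(\cC)}(X,Y)$ to a statement about the total $\infty$-categories: a morphism in $\Alg_{\cO}(\cC)$ from $X$ to $Y$ is, by definition, a natural transformation of the functors $\varphi_X, \varphi_Y\colon \Tot_{\cT}\cO^{\otimes} \rightrightarrows \Tot_{\cT}\cC^{\otimes}$ over $\Tot\uFF_{\cT,*}$ which lies over the identity and preserves inert-cocartesian lifts; since $\cC^{\otimes}$ is a $1$-category, such a natural transformation is determined by its components, and (again using $1$-categoricity) the coherence conditions collapse to commutativity conditions. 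The underlying datum of such a natural transformation is precisely a map $f\colon X \to Y$ in $\cC = \cC_{[G/e]}$ ... more carefully, a cocartesian section over $\cT^{\op}$, i.e. a map of $\cT$-objects $X \to Y$ in $\cC$, which is the same as a map $f$ in $\Gamma^{\cT}\cC = \Hom_{\cC}(X,Y)$ in the notation of the statement. Third, I would show that naturality of the transformation against the \emph{active} multi-morphisms of $\cO^{\otimes}$ is exactly the assertion that $f$ intertwines the composition maps $\gamma$ of $\End_X^{\otimes}$ and $\End_Y^{\otimes}$ via the two induced maps $\cO^{\otimes} \to \End_X^{\otimes}$ and $\cO^{\otimes} \to \End_Y^{\otimes}$, i.e. that the triangle of $\cT$-operads in the statement commutes. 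Here I would invoke \cref{End observation} for the explicit description of $\End_X(S) \simeq \Map_{\cC_V}(X_V^{\otimes S}, X_V)$ and its structure maps, together with the fact that a map $f\colon X \to Y$ postcomposes and precomposes in the evident way under the identifications; naturality against inert morphisms is automatic because those encode only the color data, which $f$ determines. Conversely, given $f$ making the triangle commute, I would assemble the components $X_V^{\otimes S} \to Y_V^{\otimes S} \to Y_V$ and $X_V^{\otimes S} \to X_V \to Y_V$ into a genuine natural transformation by checking compatibility with restrictions (using restriction-stability of composition, \cref{Discrete genuine definition}(b), or equivalently the double coset formula) — this is where the ``$1$-category'' hypothesis does real work, ensuring no higher coherences need to be checked.

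\textbf{The main obstacle.} The genuinely fiddly point — the one step I would not merely assert — is the identification of ``natural transformation of algebra maps over $\Tot\uFF_{\cT,*}$'' with ``map $f$ intertwining the $\gamma$'s'' in a way that is manifestly functorial and uses only the combinatorial endomorphism-operad data. The subtlety is bookkeeping: a map of $\cT$-operads $\cO^{\otimes} \to \cC^{\otimes}$ involves lifts of \emph{all} active morphisms, not just those over orbit targets, and one must check that the Segal conditions (\cref{GOperads conditions}(c)) reduce the intertwining datum to the per-orbit maps appearing in $\End^{\otimes}$, and that restriction-compatibility (items (a)--(b) of \cref{Discrete genuine definition}) is automatically encoded by $f$ being a map of $\cT$-objects. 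I would handle this by passing through $\Tot_{\cT}$ and using \cref{Operads are fibrous theorem} to work with the Nardin--Shah model, where a morphism of algebras is literally a functor of $\mathrm{Kan}$-enriched (here: $\Set$-enriched, since $\cC$ is a $1$-category) categories over $\Tot\uFF_{\cT,*}$ commuting with the structure, making the bijection of hom-sets transparent. Once this dictionary is set up, the final statement ``follows by unwinding definitions'' as claimed, and the proof is short; I would present it in two short paragraphs plus the displayed commuting triangle, citing \cref{End observation} and \cref{Discretization remark} for the explicit data.
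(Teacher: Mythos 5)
Your proposal is correct and is a careful unpacking of the paper's own terse remark — the paper gives no argument beyond stating that the proposition "follows by unwinding definitions," and your spine (factor $\varphi_X$ through $\End_X^{\otimes}$; identify a morphism of algebras as a natural transformation over $\Span(\FF_{\cT})$; collapse it to its components using 1-categoricity; observe naturality against actives gives the intertwining and naturality against inerts is automatic) is exactly the unwinding the authors had in mind. One small imprecision worth flagging: a morphism in $\Alg_{\cO}(\cC) = \Fun^{\Int-\cocart}_{/\Span(\FF_{\cT})}(\cO^{\otimes}, \cC^{\otimes})$ is a natural transformation lying over the identity of $\Span(\FF_{\cT})$; there is no separate "preserves inert-cocartesian lifts" condition on the 2-cell itself (that condition only cuts out the objects of the full subcategory), so the sentence in your second paragraph should be worded accordingly.
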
 

For the sake of comparison, we will propose one more model for discrete $I$-commutative algebras.
\begin{definition}\label{Strict I-commutative algebra}
  Let $I$ be a one-color weak indexing category.
  Then, a \emph{strict $I$-commutative algebra in $\cC$} is the data of a $\cT$-object $X$ together with $\Aut_V S$-invariant maps $\mu_S:X^{\otimes S}_V \rightarrow X_V$ for all $S \in \FF_{I,V}$ subject to the following conditions:
  \begin{enumerate}
    \item (restriction-stability) The functor $\Res_U^V$ takes $\mu_S$ to $\mu_{\Res_U^V S}$.
    \item (identity) $\mu_{*_V}$ is the identity for all $V$.
    \item (commutativity) for all $S$-tuples $(T_U) \in \FF_{I,S}$, writing $T = \coprod\limits_U^S T_U$, the following diagram commutes:
      \begin{equation}\label{I-calg associativity diagram}
      \begin{tikzcd}
	{\bigotimes\limits_U^S X^{\otimes T_U}_U} & {X^{\otimes S}_V} \\
	{X^{\otimes T}_V} & {X_V}
	\arrow["{\prn{\mu_{T_U}}}", from=1-1, to=1-2]
	\arrow["\simeq"{marking, allow upside down}, draw=none, from=1-1, to=2-1]
	\arrow["{\mu_S}"{description}, from=1-2, to=2-2]
	\arrow["{\mu_T}", from=2-1, to=2-2]
\end{tikzcd}
  \end{equation}
  \end{enumerate}
\end{definition}

\begin{remark}\label{Unitality remark}
  In the case that $I$ is unital, we acquire a form of unitality from the commutativity condition;
  choosing $S = S' \sqcup *_V$, and choosing $T_U$ to be empty for all summands other than the distinguished fixed point and $*_V$ for the distinguished fixed point, we acquire a unitality diagram
      \[\begin{tikzcd}[row sep=tiny]
	& {X^{\otimes S \sqcup *_V}_V} \\
	{X_V} && {X_V}
	\arrow[from=1-2, to=2-3]
	\arrow[from=2-1, to=1-2]
	\arrow[Rightarrow, no head, from=2-1, to=2-3]
\end{tikzcd}\]
\end{remark}

\begin{proposition}
  If $\cC^{\otimes}$ is a $\cT$-symmetric monoidal 1-category, then the categories of $I$-commutative algebras and strict $I$-commutative algebras in $\cC$ agree. 
\end{proposition}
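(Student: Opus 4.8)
The plan is to exhibit an equivalence between the two 1-categories by unwinding both sides down to the same data. The key observation is that, for a $\cT$-symmetric monoidal 1-category $\cC^{\otimes}$, an $I$-commutative algebra $X \in \CAlg_I(\cC) = \Alg_{\cN_{I\infty}}(\cC)$ is, by \cref{1-algebra proposition}, the data of its underlying $\cT$-object $X$ together with a commuting triangle of $\cT$-operads $\cN_{I\infty}^{\otimes} \to \End_X^{\otimes} \hookrightarrow \cC^{\otimes}$; that is, a map of $\cT$-operads $\cN_{I\infty}^{\otimes} \to \End_X^{\otimes}$. Since $\End_X^{\otimes}$ is a $\cT$-1-operad (\cref{End observation}), \cref{Discrete operads proposition} identifies it with the discrete genuine $\cT$-operad $\End_X(-)$, whose underlying $\cT$-symmetric sequence is $S \mapsto \Map_{\cC_V}(X^{\otimes S}_V, X_V)$ with identity $\id_{X_V}$ and composition given by composing maps. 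Likewise $\cN_{I\infty}^{\otimes}$ corresponds under \cref{Discrete operads proposition} to the discrete genuine $\cT$-operad with $\cN_{I\infty}(S) = *$ for $S \in \FF_{I,V}$ and $\emptyset$ otherwise (\cref{Ninfty sseq example}), with its unique identities and composition maps.

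First I would spell out what a morphism of discrete genuine $\cT$-operads $\cN_{I\infty}(-) \to \End_X(-)$ amounts to, reading off the data and axioms from \cref{Discrete genuine definition}. A map of underlying $\fC = *_{c(I)}$-symmetric sequences assigns to each $S \in \FF_{I,V}$ a point $\mu_S \in \Map_{\cC_V}(X^{\otimes S}_V, X_V)$, i.e. a map $\mu_S\colon X^{\otimes S}_V \to X_V$; the condition that this be a map of symmetric sequences is exactly $\Aut_V(S)$-invariance of $\mu_S$ (morphisms in $\uSigma_{\cT}$ over $V$ being the $S$-automorphisms); compatibility with restriction (axiom (a)/(b) of \cref{Discrete genuine definition}) is precisely restriction-stability, $\Res_U^V \mu_S = \mu_{\Res_U^V S}$; compatibility with identities (axiom (c)) forces $\mu_{*_V} = \id_{X_V}$; and compatibility with composition (axiom (d)) unwinds, using the formula for $\gamma$ in $\End_X$ from \cref{End observation} together with the computation $\bigotimes_U^S X^{\otimes T_U}_U \simeq X^{\otimes T}_V$ (the composite of indexed tensor powers over $\Ind_V^{\cT} T \to \Ind_V^{\cT} S \to V$), to exactly the commuting square \cref{I-calg associativity diagram}. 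This matches \cref{Strict I-commutative algebra} item for item, so objects on the two sides coincide; on morphisms, \cref{1-algebra proposition} says a map in $\Alg_{\cN_{I\infty}}(\cC)$ is an underlying map $f\colon X \to Y$ in $\cC$ making the two triangles $\cN_{I\infty}^{\otimes} \to \End_X^{\otimes}, \End_Y^{\otimes}$ land in a common commuting diagram with the induced $f_*\colon \End_X^{\otimes} \to \End_Y^{\otimes}$, which unwinds to $f_V \circ \mu_S^X = \mu_S^Y \circ f_V^{\otimes S}$ — exactly a morphism of strict $I$-commutative algebras. Assembling these identifications into a functor and checking it is the identity on both underlying-object functors gives the equivalence.

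I expect the main obstacle to be the careful bookkeeping in the composition axiom: verifying that axiom (d) of \cref{Discrete genuine definition}, transported through the nerve equivalence and the explicit description of $\End_X$, reproduces \cref{I-calg associativity diagram} and not some reindexed variant, and in particular that the coherence isomorphism $\bigotimes_U^S X_U^{\otimes T_U} \simeq X_V^{\otimes T}$ used silently in \cref{I-calg associativity diagram} is the same one appearing in the definition of $\gamma$ for $\End_X^{\otimes}$. This is where one must invoke the observation (from the $I$-symmetric monoidal structure discussion preceding \cref{Symmetric monoidal evaluation construction}) that indexed tensor powers compose along $\Ind_V^{\cT} T \to \Ind_V^{\cT} S \to V$ compatibly with the orbit decomposition, so that the two parenthesizations agree; once this is pinned down the rest is routine unwinding. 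A minor additional point is to confirm that the discrete genuine $\cT$-operad attached to $\cN_{I\infty}^{\otimes}$ via \cref{Discrete operads proposition} indeed has the structure-space values of \cref{Ninfty sseq example} with all $\gamma$ and identity maps the unique ones — immediate since each $\cN_{I\infty}(S) \in \{*,\emptyset\}$.
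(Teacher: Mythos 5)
Your proposal is correct and follows the same route as the paper: identify $I$-commutative algebras with maps of $\cT$-operads $\cN_{I\infty}^{\otimes} \to \End_X^{\otimes}$, use \cref{End observation} and \cref{Discrete operads proposition} to pass to discrete genuine $\cT$-operads, and unwind the resulting combinatorial data against \cref{Strict I-commutative algebra}. The paper's own proof is a single sentence invoking these same ingredients together with the Borelification identification $\Map(\cN_{I\infty}^{\otimes},\End_X^{\otimes}) \simeq \Map(\cN_{I\infty}^{\otimes},\Bor_I^{\cT}\End_X^{\otimes})$, a step you handle implicitly by noting that $\cN_{I\infty}(S)=\emptyset$ off $\uFF_I$ so only $I$-admissible arities contribute; your expansion of ``unwinding definitions'' is accurate, modulo a small misattribution (restriction-stability, unitality, and the interchange square are supplied by the morphism being a map of symmetric sequences preserving identities and intertwining $\gamma$, not by axioms (a)--(d) of \cref{Discrete genuine definition}, which are conditions on the individual operads).
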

\begin{proof}
  This follows from \cref{End observation}, noting that $\Map(\cN_{I \infty}^{\otimes}, \End_X^{\otimes}) \simeq \Map(\cN_{I\infty}^{\otimes}, \Bor_I^{\cT} \End_X^{\otimes})$ and unwinding definitions using \cref{Discrete operads proposition}.
\end{proof}

Let $X,Y$ be $I$-commutative algebras and $f\cln X \rightarrow Y$ a morphism between their underlying $\cT$-objects.
For the rest of this subsection, we assume familiarity with the techniques of \cite{Windex}.
We will say that $f$ \emph{intertwines at $S \in \FF_{I,V}$} if the following diagram commutes:
\[
  \begin{tikzcd}[row sep = small]
	{X_V^{\otimes S}} & {X_V} \\
	{Y_V^{\otimes S}} & {Y_V}
	\arrow[from=1-1, to=1-2]
	\arrow[from=1-1, to=2-1]
	\arrow[from=1-2, to=2-2]
	\arrow[from=2-1, to=2-2]
\end{tikzcd}
\]
Define the collection $\uFF_{t(f)} \subset \uFF_I$ by
\[
  \FF_{t(f),V} \deq \left\{S \;\; \middle| \;\; f \text{ intertwines at } S\right\} \subset \FF_{I,V}
\]
The fact that $f$ is a map of $\cT$-objects implies that $\uFF_{t(f)}$ is restriction stable.
Hence $\uFF_{t(F)} \subset \uFF_I$ is a full $\cT$-subcategory.
\begin{proposition}
  $\uFF_{t(f)}$ is a one-color weak indexing system. 
\end{proposition}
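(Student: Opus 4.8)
The plan is to show that $\uFF_{t(f)}$ satisfies the two axioms defining a one-color weak indexing system (conditions (IS-a) and (IS-b) of \cref{Windex definition}), namely that $*_V \in \FF_{t(f),V}$ for all $V$ (this handles the ``one color'' part, which is stronger than (IS-a)) and that $\uFF_{t(f)}$ is closed under $\uFF_{t(f)}$-indexed coproducts. We already know $\uFF_{t(f)} \subset \uFF_I$ is a full $\cT$-subcategory and is restriction-stable, so it only remains to verify these closure properties. Throughout, the point is that the intertwining diagrams for $f$ can be glued using the commutativity/associativity diagrams (\cref{I-calg associativity diagram}) of the strict $I$-commutative algebra structures on $X$ and $Y$.

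First I would observe that $*_V \in \FF_{t(f),V)}$: by the identity axiom of \cref{Strict I-commutative algebra}, $\mu_{*_V}$ is the identity on both $X_V$ and $Y_V$, so $f$ trivially intertwines at $*_V$ for every $V \in \cT$; since $I$ has one color, $*_V \in \FF_{I,V}$, so indeed $*_V \in \FF_{t(f),V}$ and $\uFF_{t(f)}$ has one color. Next, for the closure under $\uFF_{t(f)}$-indexed coproducts, I would invoke \cref{Reduction to maps to orbits observation}: it suffices to check that whenever $S \in \FF_{t(f),V}$ and $(T_U)_{U \in \Orb(S)}$ is an $S$-tuple with each $T_U \in \FF_{t(f),U}$, the coproduct $T = \coprod_U^S T_U$ lies in $\FF_{t(f),V}$. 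First, $T \in \FF_{I,V}$ since $\uFF_I$ is itself a weak indexing system closed under $\uFF_I$-indexed coproducts. To see $f$ intertwines at $T$, I would paste together: (i) the associativity diagram \cref{I-calg associativity diagram} for $X$ expressing $\mu_T^X$ as $\mu_S^X \circ \bigotimes_U^S \mu_{T_U}^X$ (up to the canonical equivalence $X_V^{\otimes T} \simeq \bigotimes_U^S X_U^{\otimes T_U}$); (ii) the naturality squares for $f$ at each $S$-indexed tensor power — the restriction-stable compatibility of $f$ with $\bigotimes_U^S(-)$ assembling $(f^{\otimes T_U})$ into $f^{\otimes T}$ and relating $\bigotimes_U^S X_U^{\otimes T_U}$ to $\bigotimes_U^S Y_U^{\otimes T_U}$; (iii) the intertwining squares at $T_U$ (hypothesis $T_U \in \FF_{t(f),U}$, restricted appropriately) and the intertwining square at $S$ (hypothesis $S \in \FF_{t(f),V}$); and (iv) the associativity diagram for $Y$. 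Chasing a diagram built from these faces yields that $\mu_T^Y \circ f^{\otimes T} = f \circ \mu_T^X$, i.e. $f$ intertwines at $T$.

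The main obstacle I anticipate is bookkeeping the canonical equivalences $X_V^{\otimes T} \simeq \bigotimes_U^S X_U^{\otimes T_U}$ (and its $Y$-analogue) and checking that the naturality of $f$ with respect to indexed tensor products — i.e. that $f^{\otimes T}$ is identified with $\bigotimes_U^S f^{\otimes T_U}$ under these equivalences — is genuinely compatible with the associativity faces, so that the resulting diagram of $1$-categorical maps actually commutes on the nose rather than merely up to a discrepancy. In a $1$-category this is a finite diagram chase, but one must be careful that the equivalences used are the structural ones coming from the $I$-symmetric monoidal structure (as in the $S$-indexed tensor power notation) and are natural in the morphism $f$; the restriction-stability of all the data (which we have already recorded for $\uFF_{t(f)}$) is what makes this compatibility automatic. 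Once that is in hand, both axioms are verified and $\uFF_{t(f)}$ is a one-color weak indexing system, completing the proof.
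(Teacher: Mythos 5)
Your proof is correct and follows essentially the same route as the paper: both establish that $*_V \in \FF_{t(f),V}$ for all $V$ via the identity axiom (the paper phrases this as $c(t(f)) = \cT$), and both prove closure under self-indexed coproducts by pasting the structural equivalence $X_V^{\otimes T} \simeq \bigotimes_U^S X_U^{\otimes T_U}$, the $S$-indexed tensor product of the intertwining squares at each $T_U$, and the intertwining square at $S$, using the associativity diagram of \cref{I-calg associativity diagram} to identify the total composites with $\mu_T^X$ and $\mu_T^Y$. The bookkeeping concerns you raise at the end are legitimate but are handled exactly as you suggest — by naturality of the structural equivalences in the $\cT$-symmetric monoidal $1$-category — and do not require anything beyond a finite diagram chase.
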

\begin{proof}
  It follows by unwinding definitions that $c(t(f)) = \cT$, so we're left with proving that $\uFF_{t(f)}$ is closed under self-indexed coproducts.
  To that end, fix $S \in \FF_{t(f),V}$ and $(T_U) \in \FF_{t(f),S}$ and write $T \deq \coprod_U^S T_U$. 
  By the associativity condition, we're tasked with proving that the outer rectangle of the following diagram commutes
  \[
    \begin{tikzcd}[row sep=small]
      {X_V^{\otimes T}} & {\bigotimes_U^S X_U^{T_U}} & {X_V^{\otimes S}} & {X_V} \\
      {Y_V^{\otimes T}} & {\bigotimes_U^S Y_U^{T_U}} & {Y_V^{\otimes S}} & {Y_V}
      \arrow["\simeq"{description}, draw=none, from=1-1, to=1-2]
      \arrow[from=1-1, to=2-1]
      \arrow[from=1-2, to=1-3]
      \arrow[from=1-2, to=2-2]
      \arrow[from=1-3, to=1-4]
      \arrow[from=1-3, to=2-3]
      \arrow[from=1-4, to=2-4]
      \arrow["\simeq"{description}, draw=none, from=2-1, to=2-2]
      \arrow[from=2-2, to=2-3]
      \arrow[from=2-3, to=2-4]
    \end{tikzcd}
\]
  The left inner rectangle is commutative by definition;
  the right inner rectangle is commutative by the assumption $S \in \FF_{t(f),V}$;
  the middle inner rectangle is commutative by taking a (pointwise) $S$-indexed tensor product of the commutativity diagrams for each $T_U$. 
\end{proof}
Recall the \emph{sparse} $V$-sets of \cref{I-commutative monoids subsection}.
\begin{corollary}\label{Indexing systems map of calgs}
  Let $I$ be an almost essentially unital weak indexing system.
  Then,
  \begin{enumerate}
    \item $f$ is a map of $I$-commutative algebras if and only if it intertwines at all sparse $I$-admissible $V$-sets.
    \item If $I$ is an indexing system, then $f$ is a map of $I$-commutative algebras if and only if it intertwines at $2 \cdot *_V$  and at all $I$-admissible \emph{transitive} $V$-sets for all $V \in \cT$.
  \end{enumerate}
\end{corollary}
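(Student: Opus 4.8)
The plan is to deduce \cref{Indexing systems map of calgs} from the preceding proposition (that $\uFF_{t(f)}$ is a one-color weak indexing system) together with the sparse/transitive generation result \cref{Sparse generation prop}. The key observation is that $f$ is a map of $I$-commutative algebras precisely when it intertwines at every $S \in \FF_{I,V}$ for every $V$, i.e. precisely when $\uFF_I \subset \uFF_{t(f)}$ as full $\cT$-subcategories of $\uFF_{\cT}$; since we always have $\uFF_{t(f)} \subset \uFF_I$ by construction, this is equivalent to the equality $\uFF_{t(f)} = \uFF_I$, but for the argument we only need the containment $\uFF_I \subset \uFF_{t(f)}$.

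First I would make explicit that the data of "$f$ intertwines at $S$" is exactly the condition appearing in the mapping criterion \cref{1-algebra proposition}: unwinding \cref{End observation}, a morphism $f\colon X \rightarrow Y$ of underlying $\cT$-objects lifts to a morphism of $\cN_{I\infty}$-algebras iff the induced square of $\cT$-operads $\cO^{\otimes} \rightarrow \End_X^{\otimes} \rightrightarrows \End_Y^{\otimes}$ commutes, and since $\cN_{I\infty}(S)$ is either a point or empty, this commutativity is detected $S$-by-$S$ on structure spaces, which is exactly intertwining at each $I$-admissible $S$. So $f$ is a map of $I$-commutative algebras iff it intertwines at all $S \in \FF_{I,V}$, i.e. iff $\uFF_I \subset \uFF_{t(f)}$.

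Now both parts follow by applying \cref{Sparse generation prop} with $\uFF_J \deq \uFF_{t(f)}$. For part (1): assuming $I$ is almost essentially unital, the first clause of \cref{Sparse generation prop} gives $\uFF_I \subset \uFF_{t(f)}$ iff $\uFF_{t(f)}$ contains all sparse $I$-admissible $V$-sets, which is precisely the statement that $f$ intertwines at all sparse $I$-admissible $V$-sets. For part (2): assuming $I$ is an indexing system, the second clause of \cref{Sparse generation prop} gives $\uFF_I \subset \uFF_{t(f)}$ iff for all $V$, $\FF_{t(f),V}$ contains $\emptyset_V$, $2 \cdot *_V$, and all transitive $I$-admissible $V$-sets. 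But $\emptyset_V \in \FF_{t(f),V}$ automatically: intertwining at $\emptyset_V$ is commutativity of the square relating the unit maps $1_V \to X_V$, $1_V \to Y_V$, which holds since $f$ is a morphism of $\cT$-objects and the unit is part of the $\cT$-object structure in the $I$-commutative algebra (equivalently, $\emptyset_V \in c(t(f))$-type reasoning as in the proof that $c(t(f)) = \cT$). Hence the condition reduces to intertwining at $2 \cdot *_V$ and at all $I$-admissible transitive $V$-sets, as claimed.

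The main obstacle, and the only point requiring care, is the clean identification of "morphism of $I$-commutative algebras" with "$\uFF_I \subset \uFF_{t(f)}$" — in particular checking that the commutativity of the hom-square in \cref{1-algebra proposition} really does decompose into the pointwise intertwining conditions indexed by $S \in \FF_{I,V}$, using that $\cN_{I\infty}$ has discrete ($0$-truncated, in fact subterminal) structure spaces so that a natural transformation of $\cT$-operads out of it is uniquely pinned down by which components it hits. Everything else is a direct citation of \cref{Sparse generation prop} plus the observation that $\emptyset_V$ is always intertwined at (which was already implicitly established when showing $c(t(f)) = \cT$). I do not expect any genuinely hard computation here; the content is entirely in assembling the equivalences that have already been set up.
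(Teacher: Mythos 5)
Your strategy matches the paper's: identify ``$f$ is a map of $I$-commutative algebras'' with the containment ``$\uFF_I \subset \uFF_{t(f)}$'' and then invoke the generation result of \cref{Sparse generation prop} against the weak indexing system $\uFF_{t(f)}$ established in the preceding proposition. The paper's own proof is terser (it first invokes color-borelification to reduce to the almost-unital case, then simply asserts that the applicable $V$-sets generate $\uFF_I$), but the underlying logic is the same, and your more explicit unwinding of the key equivalence through \cref{1-algebra proposition} and \cref{End observation} is a reasonable fleshing out of what the paper leaves implicit.

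However, your treatment of part (2) contains a real error. You correctly observe that \cref{Sparse generation prop}(2) requires $\emptyset_V \in \FF_{J,V}$, while the corollary only asks for intertwining at $2\cdot *_V$ and at transitive $V$-sets, so you must explain why $\emptyset_V \in \FF_{t(f),V}$ holds automatically. Your justification---that the unit is part of the $\cT$-object structure and so is preserved by $f$, ``as in the proof that $c(t(f))=\cT$''---conflates $\emptyset_V$ with $*_V$. The identity $c(t(f)) = \cT$ is the tautology $*_V \in \FF_{t(f),V}$, which holds because $\mu_{*_V}=\mathrm{id}$. Intertwining at $\emptyset_V$ is instead the condition $f \circ \mu^X_{\emptyset_V} = \mu^Y_{\emptyset_V}$, i.e.\ $f$ preserves the unit; a $\cT$-object carries no unit, only the algebra structure does, and this is a genuine constraint that does not follow from intertwining at $2\cdot *_V$. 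Concretely, with $\cT = *$ and $\cC = \Set^{\times}$, take $X=Y=(\cbr{0,1},\max,0)$ and $f$ the constant map at $1$: then $f$ intertwines at $2\cdot *$ (both sides are $1$) and trivially at the unique transitive set, yet $f(0)=1\neq 0$, so $f$ is not a monoid map. Thus the step where you discharge the $\emptyset_V$ hypothesis of \cref{Sparse generation prop}(2) is false as stated, and you would need to carry $\emptyset_V$ along explicitly. (Note the paper's own one-line proof does not address $\emptyset_V$ either, so what you have surfaced may reflect an omission in the corollary's statement rather than a gap you can argue around.)
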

\begin{proof}
  By color-borelification, we may assume $I$ is almost-unital.
  In each case, it suffices to show that the applicable $V$-sets generate $\uFF_I$ as a weak indexing category;
  this is \cref{Sparse generation prop}.
\end{proof}

\begin{corollary}\label{Chan comparison corollary}
  If $\cC$ is a $G$-symmetric monoidal 1-category and $I$ is an indexing system, then $I$-commutative algebras in $\cC$ are equivalent to \cite[Def~5.6]{Chan}'s ``$I$-commutative monoids'' over $\cC$.
\end{corollary}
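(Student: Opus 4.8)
~\ref{Chan comparison corollary}.

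=== BEGIN PROOF PROPOSAL ===

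The plan is to compare Chan's definition of $I$-commutative monoids directly with the notion of strict $I$-commutative algebra from \cref{Strict I-commutative algebra}, and then invoke the proposition preceding the corollary (identifying strict $I$-commutative algebras in a $\cT$-symmetric monoidal 1-category with $\cN_{I\infty}$-algebras, i.e. $I$-commutative algebras). Since the proposition has already reduced $I$-commutative algebras in a $G$-symmetric monoidal 1-category $\cC$ to the manifestly combinatorial data of a $G$-object $X$ equipped with $\Aut_V(S)$-invariant maps $\mu_S\colon X_V^{\otimes S}\to X_V$ for $S\in\FF_{I,V}$ subject to restriction-stability, unitality, and the associativity/commutativity square \eqref{I-calg associativity diagram}, the corollary amounts to checking that Chan's \cite[Def~5.6]{Chan} unpacks to exactly this same list of data and axioms. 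So the proof is an ``unwind both definitions and observe they coincide'' argument.

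First I would recall the shape of Chan's definition: an $I$-commutative monoid over $\cC$ (which is an $\infty$-categorical/$1$-categorical Mackey-style notion built from the indexing system $I$) consists of an underlying object of the $G$-symmetric monoidal category together with norm-compatible multiplication maps indexed by the admissible sets of $I$, with the evident restriction and double-coset coherences. Because we are in a $1$-category, all higher coherence data collapses, so Chan's structure is precisely a choice of $\mu_S$ for each admissible transitive $V$-set (equivalently, by the Segal/orbit decomposition, for each admissible $V$-set), invariant under the Weyl/automorphism action, compatible with restriction, unital, and satisfying the associativity constraint that the two ways of multiplying an iterated admissible family agree — which is exactly diagram \eqref{I-calg associativity diagram}. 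For morphisms, Chan's morphisms are maps of underlying objects commuting with all the $\mu_S$; by \cref{Indexing systems map of calgs}(2), since $I$ is an indexing system (hence in particular almost essentially unital), it suffices to intertwine at $2\cdot *_V$ and at admissible transitive $V$-sets, which matches Chan's hom-sets on the nose. Thus both the objects and the morphisms of the two categories are identified, giving the equivalence.

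The main obstacle will be bookkeeping the translation between Chan's indexing conventions and ours — in particular matching Chan's norm maps $N_K^H$ and the double-coset formula she imposes against our indexed tensor powers $X_V^{\otimes S}\simeq \bigotimes_{[H/K]\in\Orb(S)} N_K^H\Res_K^H X_H$ and the interaction of restrictions with indexed tensor products recorded in \cite[\S~1.2]{Windex}. Concretely, I would need to verify that Chan's compatibility of the multiplication with norms is a consequence of (and conversely implies) restriction-stability plus the associativity square applied to the orbit decomposition of $S$; this is where the $\Aut_V(S)$-invariance and the reduction of general admissible $V$-sets to transitive ones (\cref{Sparse generation prop}, as used in \cref{Indexing systems map of calgs}) does the real work. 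Once the dictionary is set up, the axioms line up term by term and there is nothing further to prove; accordingly I would keep this proof short, citing \cref{Strict I-commutative algebra}, the proposition immediately preceding the corollary, \cref{Indexing systems map of calgs}, and \cite[Def~5.6]{Chan}, and simply assert that unwinding the definitions exhibits the claimed equivalence of categories.

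=== END PROOF PROPOSAL ===
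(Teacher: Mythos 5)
Your proposal follows the same route as the paper: reduce $I$-commutative algebras to the strict description of \cref{Strict I-commutative algebra} via the proposition immediately preceding the corollary, use \cref{Indexing systems map of calgs} and \cref{Sparse generation prop} to control morphisms and to reduce to transitive admissible sets, and then compare term by term with \cite[Def~5.6]{Chan}. That is exactly the structure of the paper's argument, and you have correctly located all the ingredients.

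The gap is that the step you describe as ``bookkeeping'' and then dismiss with ``the axioms line up term by term and there is nothing further to prove'' is in fact the entire content of the proof. Given Chan's orbit-indexed data $(\mu_K^H)$ together with the fiberwise commutative monoid structure, one must define $\mu_S \deq \sum_{[H/K]\in\Orb(S)}\mu_K^H$ and then \emph{verify} that this family satisfies the associativity/commutativity square \eqref{I-calg associativity diagram}; this is not formal, and the paper isolates it as \cref{Associativity lemma}, whose proof is a nontrivial diagram chase using conditions (1) and (2) of Chan's Def~5.6, the double coset interaction between restrictions and norms, the fact that each $\mu_K^H$ is a monoid map, and commutativity of $X_H$. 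Restriction-stability of the assembled $\mu_S$ similarly requires \cref{Restriction lemma}. You also need both directions of the comparison: that Chan's axioms imply the strict $I$-commutative algebra axioms (well-definedness of the functor), and conversely that the $(\mu_K^H)$ extracted from a strict $I$-commutative algebra satisfy Chan's axioms (1)--(4) (essential surjectivity); the paper deduces condition (1) from the unitality of \cref{Unitality remark} and (3)--(4) from restriction-stability. A proof that merely cites the preparatory statements and ``asserts'' the identification skips precisely the verifications that make the corollary true, so you should write out at least the associativity check rather than defer it.
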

To prove this, suppose $X$ is a $G$-object equipped with the data of \cite[Def~5.6]{Chan}, i.e. a unit element $\eta\colon * \rightarrow X_G$, a binary multiplication $+\colon X_G \otimes X_G \rightarrow X_G$, and for all $I$-admissible transitive $H$-sets $[H/K]$, a map $\mu_K^H\colon N_K^H X_K \rightarrow X_H$.
We let $X_H$ have the restricted commutative monoid structure.
Given $S \in \FF_{I,H}$, we define the map $\mu_S\colon X_H^{\otimes S} \rightarrow X_H$ by
\[
  \mu_S \deq \sum_{[H/K] \in \Orb(S)} \mu_K^H;
\]
this is well defined by condition (3) of \cite[Def~5.6]{Chan}.
\begin{lemma}\label{Associativity lemma}
  \cref{I-calg associativity diagram} commutes for $\mu_S$.
\end{lemma}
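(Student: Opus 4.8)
The goal is to verify that for $\mu_S$ defined via Chan's data as $\mu_S = \sum_{[H/K] \in \Orb(S)} \mu_K^H$, the associativity square \cref{I-calg associativity diagram} commutes. The plan is to reduce the commutativity of the square for a general pair $S \in \FF_{I,H}$ and $(T_U) \in \FF_{I,S}$ to a sum, indexed over orbits $[H/K] \in \Orb(S)$, of instances of the associativity-type axioms in \cite[Def~5.6]{Chan} together with the distributivity of $+$ over the indexed tensor product. Write $T = \coprod_U^S T_U$ and, for each orbit $[H/K] \in \Orb(S)$, decompose $T_K = \coprod_{[K/L] \in \Orb(T_K)} [K/L]$ so that $\Orb(T) = \coprod_{[H/K] \in \Orb(S)} \Orb(T_K)$ (using $\Ind$-transitivity, \cref{Coproducts and ind observation}). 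Then $\mu_S \circ (\mu_{T_U})$ and $\mu_T$ are both computed, on the $X_H^{\otimes T}$ side, by summing certain composites of norm multiplications indexed over $\Orb(T)$, and we must check these agree summand-by-summand.

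First I would expand the left-hand composite $X_H^{\otimes T} \simeq \bigotimes_U^S X_U^{\otimes T_U} \xrightarrow{(\mu_{T_U})} X_H^{\otimes S} \xrightarrow{\mu_S} X_H$: using the double-coset-free description $X_U^{\otimes T_U} \simeq \bigotimes_{[K/L] \in \Orb(T_K)} N_L^K \Res_L^K(\cdots)$ coming from the orbit factorization, and restriction-stability (condition (1) of Chan's data), each $\mu_{T_K}\colon X_K^{\otimes T_K} \to X_K$ is $\sum_{[K/L] \in \Orb(T_K)} \mu_L^K$. Applying $N_K^H$ and the norm's (symmetric) monoidality, one gets a sum over $\Orb(T_K)$ of maps $N_K^H N_L^K X_L \to N_K^H X_K$; post-composing with $\mu_K^H$ and summing over $[H/K] \in \Orb(S)$ yields a sum over all of $\Orb(T)$ of composites $\mu_K^H \circ N_K^H(\mu_L^K)\colon N_K^H N_L^K X_L \to X_H$. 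On the other side, $\mu_T = \sum_{[H/L] \in \Orb(T)} \mu_L^H$, and for each orbit $[H/L]$ of $T$ lying over $[H/K] \in \Orb(S)$ (so $L \subset K \subset H$), the transitivity axiom (condition (2), or the "associativity of norm multiplications") in \cite[Def~5.6]{Chan} identifies $\mu_L^H$ with $\mu_K^H \circ N_K^H(\mu_L^K)$ under the canonical equivalence $N_L^H \simeq N_K^H N_L^K$. Matching these summand-by-summand — and checking that the "cross terms" that would arise if $+$ failed to distribute over $\bigotimes_U^S$ all vanish because $+$ is the (unique, commutative) monoid multiplication and the two decompositions of $\Orb(T)$ into $\coprod_{\Orb(S)} \Orb(T_K)$ agree — gives the claim.

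The main obstacle, and the step deserving the most care, is bookkeeping the canonical coherence isomorphisms: one must pin down that the associativity constraint $N_L^H \simeq N_K^H N_L^K$ used in Chan's axiom is the \emph{same} one induced by functoriality of diagonals/norms used implicitly in $\bigotimes_U^S X_U^{\otimes T_U} \simeq X_H^{\otimes T}$ (cf. the composite-span double-coset discussion in the background section), and that the reindexing $\Orb(T) = \coprod_{[H/K] \in \Orb(S)} \Orb(T_K)$ is compatible with the sums defining $\mu_S, \mu_T$. I expect this to follow formally once everything is phrased in terms of the orbit-factorization equivalences, but it is where a careful argument (rather than a one-line appeal) is needed; the distributivity of $+$ over indexed tensor products and the commutativity of $+$ handle the remaining combinatorics. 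Everything else — restriction-stability, reduction to orbits, unitality where needed — is a direct unwinding of \cite[Def~5.6]{Chan} against \cref{Strict I-commutative algebra}.
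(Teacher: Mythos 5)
Your proposal is correct and takes essentially the same approach as the paper. The paper's proof is exactly the diagram chase you outline: it decomposes the associativity square into five inner squares obtained by factoring both composites through the orbit decomposition $\Orb(T) = \coprod_{[H/K] \in \Orb(S)} \Orb(T_K)$ and the canonical equivalences $N_K^H N_L^K \simeq N_L^H$, and then cites (i) naturality/definition for the coherence square, (ii) Chan's transitivity condition for the square identifying $\mu_K^H \circ N_K^H(\mu_L^K)$ with $\mu_L^H$, (iii) the fact that $\mu_K^H$ is a commutative monoid homomorphism, and (iv) commutativity/associativity of $+$ on $X_H$ — precisely the ingredients you list.

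One small imprecision worth tightening: where you speak of ``cross terms that would arise if $+$ failed to distribute over $\bigotimes_U^S$ vanishing because $+$ is the unique commutative multiplication,'' the correct statement is simply that $\mu_K^H \colon N_K^H X_K \to X_H$ is a map of commutative monoids — i.e.\ $\mu_K^H \circ N_K^H(+) = + \circ \bigotimes \mu_K^H$ under the lax monoidality of $N_K^H$. This is one of the (structure) conditions in Chan's definition, and the paper singles it out as the justification for one of the five inner squares; phrasing it as a distributivity/cancellation phenomenon is a bit misleading, since there are no ``cross terms'' to cancel. Likewise, restriction-stability of $\mu$ corresponds to Chan's conditions (3)--(4), not (1), but that mis-numbering has no bearing on the argument.
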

\begin{proof}
  To verify this, we must verify that the outer square in the following diagram commutes.
  \[
\begin{tikzcd}[ampersand replacement=\&]
	{\bigotimes\limits_{[H/K] \in \Orb(S)} N_K^H \bigotimes\limits_{[K/J] \in \Orb(T_K)}N_J^KX_J} \& {\bigotimes\limits_{[H/K] \in \Orb(S)} N_K^H \bigotimes\limits_{[K/J] \in \Orb(T_K)}X_K} \& {\bigotimes\limits_{[H/K] \in \Orb(S)} N_K^HX_K} \\
	{\bigotimes\limits_{[H/K] \in \Orb(S)}  \bigotimes\limits_{[K/J] \in \Orb(T_K)}N_K^HN_J^KX_J} \& {\bigotimes\limits_{[H/K] \in \Orb(S)}  \bigotimes\limits_{[K/J] \in \Orb(T_K)}N_K^HX_K} \\
	{\bigotimes\limits_{[H/K] \in \Orb(S)}  \bigotimes\limits_{[K/J] \in \Orb(T_K)}N_J^HX_J} \& {\bigotimes\limits_{[H/K] \in \Orb(S)}  \bigotimes\limits_{[K/J] \in \Orb(T_K)}X_H} \& {\bigotimes\limits_{[H/K] \in \Orb(S)} X_H} \\
	{\bigotimes\limits_{[H/J] \in \Orb(T)}N_J^HX_J} \& {\bigotimes\limits_{[H/J] \in \Orb(T)} X_H} \& {X_H}
	\arrow["{\prn{N_K^H(\mu_J^K)}}", from=1-1, to=1-2]
	\arrow["\simeq"{marking, allow upside down}, draw=none, from=1-1, to=2-1]
	\arrow["{\prn{N_K^H(+)}}", from=1-2, to=1-3]
	\arrow["\simeq"{marking, allow upside down}, draw=none, from=1-2, to=2-2]
	\arrow["{(\mu_K^H)}", from=1-3, to=3-3]
	\arrow["{\prn{(N_K^H \mu_J^K)}}", from=2-1, to=2-2]
	\arrow["\simeq"{marking, allow upside down}, draw=none, from=2-1, to=3-1]
	\arrow["{\prn{(\mu_K^H)}}", from=2-2, to=3-2]
	\arrow["{\prn{(\mu_J^H)}}", from=3-1, to=3-2]
	\arrow["\simeq"{marking, allow upside down}, draw=none, from=3-1, to=4-1]
	\arrow["{\prn{+}}", from=3-2, to=3-3]
	\arrow["\simeq"{marking, allow upside down}, draw=none, from=3-2, to=4-2]
	\arrow["{+}", from=3-3, to=4-3]
	\arrow["{\prn{\mu_J^H}}", from=4-1, to=4-2]
	\arrow["{+}", from=4-2, to=4-3]
\end{tikzcd}
  \]
  The top left square commutes by definition and the bottom left square commutes by condition (1) of \cite[Def~5.6]{Chan}.
  The middle left square commutes by condition (2) of \cite[Def~5.6]{Chan}.
  The top right square commutes by the fact that $\mu_K^H$ is a monoid homomorphism.
  The bottom right square is the commutativity law for the monoid $X_H$.
\end{proof}

Moreover, conditions (3-4) of \cite[Def~5.6]{Chan} implies the following.
\begin{lemma}\label{Restriction lemma}
  $\Res_H^G$ takes $\mu_S$ onto $\mu_{\Res_H^G S}$.
\end{lemma}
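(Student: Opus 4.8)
The plan is to unwind the definition of $\mu_S$ into its three building blocks---the canonical identification of an indexed tensor power as an indexed tensor product of norms, the transitive multiplication maps $\mu_K^H$, and the iterated binary sum $+$---and to check that $\Res_W^V$ is compatible with each of them separately. First I would fix a map $W \rightarrow V$ in $\cT$ (which specializes to the inclusion $H \subset G$) and $S \in \FF_{I,V}$; since $I$ is restriction stable (\cref{Restriction stable condition}), $\Res_W^V S \in \FF_{I,W}$, so every orbit appearing in it is $I$-admissible and its orbit decomposition refines that of $S$ along restriction, namely $\Orb(\Res_W^V S) = \coprod_{[V/K] \in \Orb(S)} \Orb\prn{\Res_W^V [V/K]}$. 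Recalling the definition, $\mu_S$ is the composite of the canonical equivalence $X_V^{\otimes S} \simeq \bigotimes_{[V/K] \in \Orb(S)} N_K^V X_K$, the map $\prn{\mu_K^V}$ into $\bigotimes_{[V/K] \in \Orb(S)} X_V$, and the iterated binary multiplication folding down to $X_V$.

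Next I would verify that $\Res_W^V$ intertwines each of these. As $\Res_W^V$ is symmetric monoidal it commutes with iterated $\otimes$ and carries the iterated binary multiplication of the monoid $X_V$ to that of the restricted monoid $X_W$; so only the first two pieces need comparison. For the canonical identification this is a formal consequence of the double coset formula for $I$-symmetric monoidal $\infty$-categories (\cite[\S~1.2]{Windex}, recalled above): applying $\Res_W^V$ and using $\Res_W^V N_K^V X_K \simeq \bigotimes_{[W/J] \in \Orb(\Res_W^V [V/K])} N_J^W X_J$ (where $\Res_J^V X_K \simeq X_J$ because $X$ is a $\cT$-object) together with the orbit refinement above produces the canonical identification $X_W^{\otimes \Res_W^V S} \simeq \bigotimes_{[W/J] \in \Orb(\Res_W^V S)} N_J^W X_J$; that this agrees with the canonical one follows because all interactions between restrictions and indexed tensor products in an $I$-symmetric monoidal $\infty$-category are governed by the double coset formula together with preservation of trivially indexed tensor products, as discussed after \cref{Symmetric monoidal evaluation construction}.

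For the transitive maps, restriction-stability of the $\mu_K^V$ is precisely the content of conditions (3)--(4) of \cite[Def~5.6]{Chan}: under the double coset identification $\Res_W^V N_K^V X_K \simeq \bigotimes_{[W/J]} N_J^W X_J$, the restriction $\Res_W^V \mu_K^V$ is carried to the iterated sum of the $\mu_J^W$, and combining this over $\Orb(S)$ while using that $+$ is a monoid homomorphism for $X_V$ identifies $\Res_W^V \prn{\mu_K^V}$ with $\prn{\mu_J^W}$ after the evident reindexing. Assembling the previous two paragraphs then gives $\Res_W^V \mu_S = \mu_{\Res_W^V S}$, as claimed. The one point requiring genuine care---and the main obstacle---is the bookkeeping of the nested orbit decompositions needed to see that the large diagram expressing $\Res_W^V \circ (\text{fold over }\Orb(S)) = (\text{fold over }\Orb(\Res_W^V S)) \circ \Res_W^V$ visibly commutes; this is morally immediate from the double coset formula but is the only step of the argument that is not a direct citation.
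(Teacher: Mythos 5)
Your proposal is correct and follows essentially the same route as the paper, which disposes of this lemma in a single sentence ("conditions (3--4) of \cite[Def~5.6]{Chan} implies the following") and writes no proof at all. You have simply filled in the double-coset and orbit-decomposition bookkeeping that the paper treats as routine; the core input---restriction-stability of the transitive maps $\mu_K^H$ as packaged by Chan's conditions (3) and (4)---is the same in both.
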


\begin{proof}[Proof of \cref{Chan comparison corollary}]
  Given $X$ an $I$-commutative algebra, we let the commutative monoid structure on $X(G)$ have multiplication $\mu_{2\cdot *_G}$ and unit $\mu_{\emptyset_G}$, and we define $\mu_K^H \deq \mu_{[K/H]}$.
  Conversely, given $X$ satisfying \cite[Def~5.6]{Chan}, we let $\mu_S$ be defined as above.
  We have 2 tasks:
  \begin{enumerate}[label={(\roman*)}]
    \item verify that the above data yields a well-defined functor $G\colon \CAlg_I^{\mathrm{Chan-Hoyer}}(\cC) \rightarrow \CAlg_I(\cC)$; and
    \item verify that $G$ is fully faithful and essentially surjective.
  \end{enumerate}
  First, note that \cref{Associativity lemma,Restriction lemma,Indexing systems map of calgs} together imply that $G$ is a fully faithful functor with the above signature, so we're left with verifying that it is essentially surjective;
  that is, we have to check that $(\mu_K^H)$ satisfy \cite[Def~5.6]{Chan}, after which we may simply note that $\mu_S = \sum_{[H/K] \in \Orb(S)} \mu_{[H/K]}$ for essential surjectivity.
  In fact, condition (1) follows from \cref{Unitality remark} for the distinguished fixed point $*_H \subset N_K^H \Res_K^H *_H$, condition (2) follows from  \cref{I-calg associativity diagram} for the $[H/K]$-set $[K/L]$, and conditions (3) and (4) both follow from restriction-stability of $\mu$.
\end{proof}

\section{Equivariant Boardman-Vogt tensor products}\label{Operads section}
Using the language of fibrous patterns, in \cref{BV fibrous subsection} we define the \emph{Boardman Vogt tensor product}, and we show that it's closed and compatible with the Segal envelope in \cref{Alg is adjoint prop,BV Day pattern}.
Following this, in \cref{BV operad subsection} we specialize this to $\Op_{\cT}$;
moreover, we characterize the $\obv$-unit of $\Op_I$ and leverage this to compute the $\cT$-$\infty$-categories underlying operads of algebras.
Finally, in \cref{Inflation subsection}, we define the inflation adjunction $\Infl_e^{\cT} \cln \Op_{\cT} \rightleftarrows \Op\cln \Gamma^{\cT}$ and characterize its relationship with the Boardman-Vogt tensor product.

\subsection{Boardman-Vogt tensor products of fibrous patterns} \label{BV fibrous subsection}
\begin{definition}
  A \emph{magmatic pattern} is the data of a soundly extendable algebraic pattern $\base$ together with a functor $\wedge\colon \base \times \base \rightarrow \base$ which is compatible with Segal objects.
\end{definition}

\begin{construction}
  Let $(\base,\wedge)$ be a magmatic pattern.
    Then, the \emph{$\base$-Boardman-Vogt tensor product} is the bifunctor $- \obv- \cln \Fbrs(\base) \times \Fbrs(\base) \rightarrow \Fbrs(\base)$ defined by
    \[
      \fO \obv \fP \deq L_{\Fbrs}\prn{\fO \times \fP \rightarrow \base \times \base \xrightarrow{\wedge} \base}.\qedhere
    \]
\end{construction}
\def\fB{\base}
\def\se{\mathrm{se}}
\def\BiFun{\mathrm{BiFun}}
We defined this in order to have a mapping out property with respect to the following construction.
\begin{definition}
  Let $(\base,\wedge)$ be a magmatic pattern and $\fO,\fP,\fQ$ fibrous $\base$-patterns.
  Then, a \emph{bifunctor of fibrous $\base$ patterns} $\fO \times \fP \rightarrow \fQ$ is a commutative diagram in $\Cat$
  \[
    \begin{tikzcd}
      \fO \times \fP \arrow[r] \arrow[d]
      & \fQ \arrow[d]\\
      \base \times \base \arrow[r,"\wedge"]
      & \base
    \end{tikzcd}
  \]
  whose top horizontal arrow lies in $\AlgPatt$,\footnote{The lift to $\AlgPatt$ is unique, since each structure map in an algebraic pattern is a replete subcategory inclusion, hence a monomorphism in $\Cat$.} where $\fO \times \fP \rightarrow \base \times \base$ is induced by the structure maps of $\fO$ and $\fP$.
  The collection of bifunctors fits into a full subcategory
  \[
      \BiFun_{\base}(\fO,\fP;\fQ) \subset \Fun(\Delta^1 \times \Delta^1,\Cat).\qedhere
  \]
\end{definition}

\begin{example}\label{Trivial bifunctor example}
  Let $\fO,\fP$ be fibrous $\base$-patterns, and consider $\base$ to be a fibrous $\base$-pattern via the identity.
  Then, the $\infty$-category of bifunctors $\fO \times \fP \rightarrow \base$ is contractible, as it is equivalent to composite arrows $\fO \times \fP \rightarrow \base \times \base \rightarrow \base$.
\end{example}

\begin{observation}\label{Bifunctors observation}
    There are natural equivalences
    \begin{align*}
        \BiFun_{\base}(\fO,\fP;\fQ) 
        &\simeq \Fun^{\Int-\cocart}_{/\base \times \base}(\fO \times \fP, \wedge^*\fQ)\\ 
        &\simeq \Fun^{\Int-\cocart}_{/\base}(\wedge_!(\fO \times \fP), \fQ)\\
        &\simeq \Fun^{\Int-\cocart}_{/\base}(\fO \obv \fP, \fQ).\qedhere
    \end{align*}
\end{observation}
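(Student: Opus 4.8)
The statement to prove is \cref{Bifunctors observation}, i.e. the chain of natural equivalences
\[
  \BiFun_{\base}(\fO,\fP;\fQ) \simeq \Fun^{\Int-\cocart}_{/\base \times \base}(\fO \times \fP, \wedge^*\fQ) \simeq \Fun^{\Int-\cocart}_{/\base}(\wedge_!(\fO \times \fP), \fQ) \simeq \Fun^{\Int-\cocart}_{/\base}(\fO \obv \fP, \fQ).
\]
The plan is to establish each equivalence in turn, unwinding definitions at the first step and invoking adjunctions at the remaining two. First I would treat the leftmost equivalence: by definition, a bifunctor $\fO \times \fP \rightarrow \fQ$ is a commuting square over $\wedge$, and providing such a square over the fixed bottom edge $\base \times \base \xrightarrow{\wedge} \base$ with the fixed left edge $\fO \times \fP \rightarrow \base \times \base$ is exactly the data of a functor $\fO \times \fP \rightarrow \wedge^* \fQ$ over $\base \times \base$ (using the universal property of the pullback $\wedge^*\fQ = \fQ \times_{\base, \wedge} (\base \times \base)$). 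The content here is that the cocartesian-lift condition matches on the two sides: a functor $\fO \times \fP \rightarrow \fQ$ covering $\wedge$ preserves cocartesian lifts of inert morphisms of $\base \times \base$ (which are products of inert morphisms by \cref{Product pattern example}, since the product pattern's inert morphisms are componentwise inert) if and only if the induced functor to $\wedge^*\fQ$ preserves them; and cocartesian lifts in the pullback $\wedge^*\fQ$ over an inert arrow $\psi$ of $\base \times \base$ are precisely those arrows lying over $\psi$ whose image in $\fQ$ is $\pi_{\fQ}$-cocartesian over $\wedge(\psi)$ — which is inert in $\base$ because $\wedge$ is a Segal morphism, hence preserves the inert–active factorization. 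Thus the full subcategory $\BiFun_{\base}(\fO,\fP;\fQ) \subset \Fun(\Delta^1 \times \Delta^1, \Cat)$ is identified with $\Fun^{\Int-\cocart}_{/\base \times \base}(\fO \times \fP, \wedge^*\fQ)$.

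Next I would handle the middle equivalence, which is the base-change adjunction $\wedge_! \dashv \wedge^*$ between $\Cat_{/\base \times \base}$ and $\Cat_{/\base}$ (composition versus pullback along $\wedge$). The subtlety is that we need this adjunction to restrict to the non-full subcategories of objects with inert-cocartesian lifts and inert-cocartesian-preserving morphisms. For the right-hand side this is automatic since $\wedge^*\fQ$ is a fibrous $(\base \times \base)$-pattern by \cref{Fibrous pullback prop} (as $\wedge$ is a Segal morphism and $\base$ is soundly extendable by hypothesis in the definition of magmatic pattern — note $\base \times \base$ is soundly extendable as well, and $\wedge$ being a Segal morphism between such patterns is exactly the setting of \cref{Fibrous pullback prop}). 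On the left, I would appeal directly to the statement inside \cref{Fibrous pullback prop}: since $f = \wedge$ is a Segal morphism with soundly extendable target, $\wedge^* \colon \Cat_{/\base} \rightarrow \Cat_{/\base \times \base}$ preserves fibrous patterns and its left adjoint on fibrous patterns is $L_{\Fbrs}\wedge_!$. At the level of $\Int$-cocartesian objects (not yet fibrous), I would observe that $\wedge_!$ of an object with inert-cocartesian lifts need not itself have all inert-cocartesian lifts, which is exactly why the localization $L_{\Fbrs}$ appears in the third equivalence; so the clean statement is really the composite $\Fun^{\Int-\cocart}_{/\base \times \base}(\fO \times \fP, \wedge^*\fQ) \simeq \Fun^{\Int-\cocart}_{/\base}(L_{\Fbrs}\wedge_!(\fO \times \fP), \fQ)$, using that $\fQ$ is fibrous so mapping into it from $\wedge_!(\fO\times \fP)$ factors uniquely through the fibrous reflection, together with the adjunction $\wedge_! \dashv \wedge^*$ applied to the functor categories over the respective bases. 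The last equivalence is then a definitional identification: $\fO \obv \fP \deq L_{\Fbrs}(\fO \times \fP \rightarrow \base \times \base \xrightarrow{\wedge} \base) = L_{\Fbrs}\wedge_!(\fO \times \fP)$, so the two right-hand sides agree on the nose, and naturality in all three variables $\fO, \fP, \fQ$ follows from naturality of each ingredient (the pullback square, the base-change adjunction, the fibrous localization).

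\textbf{Main obstacle.} The routine-looking but genuinely delicate point is the compatibility of cocartesian lifts across the pullback $\wedge^*\fQ$ in the first equivalence and the precise bookkeeping of which morphisms count as inert in $\base \times \base$ versus $\base$. Concretely, one must check that a square over $\wedge$ lands in $\BiFun$ (top arrow a map of \emph{patterns}, preserving inert morphisms) if and only if the corresponding functor to $\wedge^*\fQ$ preserves $\Int$-cocartesian arrows; this hinges on $\wedge$ being a Segal morphism (so that it carries inert-active factorizations to inert-active factorizations, ensuring the pattern structure on $\wedge^*\fQ$ inherited via \cref{Fibrous pullback prop} has the expected inert morphisms) and on the product pattern structure of \cref{Product pattern example}. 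Once this is pinned down, the remaining two equivalences are formal consequences of \cref{Fibrous pullback prop} and the definition of $\obv$, so I would spend the bulk of the written proof on the first step and dispatch the rest in a sentence or two.
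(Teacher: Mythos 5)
Your proposal is correct and takes the only natural approach — unwinding the universal property of the pullback $\wedge^*\fQ$, the base-change adjunction $\wedge_! \dashv \wedge^*$, and the definition of $\obv$ as $L_{\Fbrs}\wedge_!(\fO \times \fP)$. The paper presents this as an observation with no proof (the $\qedhere$ is attached to the statement), so there is nothing to diverge from; your writeup supplies exactly the unwinding the observation tacitly relies on, and you correctly isolate the genuine content: that the bifunctor condition on $F$ (preservation of inert morphisms for the fibrous pattern structures, with preservation of actives being automatic since $F$ covers $\wedge$) is equivalent to the factored functor $\fO \times \fP \rightarrow \wedge^*\fQ$ preserving inert-cocartesian lifts, which uses that cocartesian lifts in the pullback project isomorphically to cocartesian lifts in $\fQ$. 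Your remark that $\wedge_!$ does not preserve the property of having inert-cocartesian lifts — so the second displayed term must be read as already incorporating the fibrous reflection, in accordance with $\fQ$ being fibrous — is a real and correctly handled point of care.
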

As in \cite[Prop~2.19]{Boardman} and the variety of recontextualizations of their ideas (e.g. \cite{Weiss, HA}), we recognize this as \emph{$\fO$-algebras in $\fP$-algebras}, making $\obv$ into a closed tensor product, using the following.
  \begin{construction}\label{BV construction}
    Fix $(\base,\wedge)$ a magmatic pattern, let $F\cln \fO \times \fP \rightarrow \fQ$ be a bifunctor of fibrous $\base$-patterns, and let $\fC$ be a fibrous $\fQ$-pattern.
    We have a diagram
    \[
        \fO \xleftarrow{p} \fO \times \fP \xrightarrow{F} \fQ;
    \]
    admitting push-pull adjunctions $p^* \dashv p_*$ and $L_{\Fbrs} F_! \dashv F^*$ on fibrous patterns, with compatible adjunctions on Segal objects by \cref{Fibrous patterns are strong Segal morphisms,Overcategory of fibrous patterns prop,Push-pull along projection corollary}.
    We define the pattern
    \[
        \uAlg^{\otimes}_{\fP/\fQ}(\fC) \deq p_*F^*\fC \in \Fbrs(\fO);
    \]
    this is the \emph{fibrous $\fO$-pattern of $\fP$-algebras in $\fC$ over $\fQ$.}
    In most cases, we will have $\fQ = \fO = \base$, in which case the information of a bifunctor $\base \times \fP \rightarrow \base$ is simply that of a fibrous $\base$-pattern $\fP$ by \cref{Trivial bifunctor example}.
    In this case, we simply write
    \[
        \uAlg^{\otimes}_{\fP}(\fC) \deq \uAlg^{\otimes}_{\fP/\base}(\fC) \in \Fbrs(\base);
    \]
    this is the \emph{fibrous $\base$-pattern of $\fP$-algebras in $\fC$.}
\end{construction}

    In the case $\fQ = \fO = \base$, the above diagram refines to
    \[
        \base \xleftarrow{p} \base \times \fP \xrightarrow{\id \times \pi} \base \times \base \xrightarrow{\wedge} \base,
    \]
    so the functor $\fP \mapsto \uAlg_{\fP}^{\otimes}(\fC)$ has a left adjoint computed by $L_{\Fbrs}\wedge_!\prn{\id \times \pi}_!p^*$;
    explicitly, this is computed on $\fP'$ by the fibrous localization of the diagonal composite
    \[
  \begin{tikzcd}[column sep = huge]
	{\fP' \times \fP} & {p^*  \fP'} \\
	& {\base \times \fP} \\
	& {\base \times \base} & \base
	\arrow["{\pi_{\fQ} \times \id}"{description}, from=1-1, to=2-2]
	\arrow["{\pi_{\fQ} \times \pi_{\fP}}"', curve={height=12pt}, from=1-1, to=3-2]
	\arrow["\simeq"{description}, draw=none, from=1-2, to=1-1]
	\arrow[from=1-2, to=2-2]
	\arrow["{\id \times \pi_{\fP}}"{description}, from=2-2, to=3-2]
	\arrow[from=2-2, to=3-3]
	\arrow["\wedge"', from=3-2, to=3-3]
\end{tikzcd}
\]
By definition, this is precisely $\fP' \otimes^{\BV} \fP$, so we've proved the following.
\begin{proposition}\label{Alg is adjoint prop}
    The functor $(-) \obv \fO\colon \Fbrs(\base) \rightarrow \Fbrs(\base)$ is left adjoint to $\uAlg_{\fO}^{\otimes}(-)$.
\end{proposition}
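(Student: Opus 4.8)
The proof is essentially an unwinding of Construction \ref{BV construction} in the case $\fQ = \fO = \base$, so the plan is to make that unwinding explicit and then identify the resulting left adjoint.

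First I would recall that, by Construction \ref{BV construction} together with Example \ref{Trivial bifunctor example}, the functor $\uAlg^{\otimes}_{\fO}(-)$ is the composite
\[
  \Fbrs(\base) \xrightarrow{\ F^* \ } \Fbrs(\base \times \fO) \xrightarrow{\ p_* \ } \Fbrs(\base),
\]
where $p \colon \base \times \fO \to \base$ is the projection and $F \deq \wedge \circ (\id_{\base} \times \pi_{\fO}) \colon \base \times \fO \to \base$, with $\fO$ carrying the pattern structure of Proposition \ref{Overcategory of fibrous patterns prop}. Both factors of this composite are right adjoints: $p_*$ is right adjoint to pullback $p^*$ by Observation \ref{Push-pull along projection corollary}, and $F^*$ is right adjoint to $L_{\Fbrs} F_!$ by Proposition \ref{Fibrous pullback prop}. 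To apply the latter I must check that $F$ is a Segal morphism: $\id_{\base}$ is trivially strong Segal, $\pi_{\fO}$ is strong Segal by Proposition \ref{Fibrous patterns are strong Segal morphisms}, so $\id_{\base} \times \pi_{\fO}$ is strong Segal by Lemma \ref{Products of strong segal morphisms}; since $\wedge$ is compatible with Segal objects by definition of a magmatic pattern, the composite $F$ is a Segal morphism by Observation \ref{AlgPatt Seg observation}, and soundness of $\base$ makes Proposition \ref{Fibrous pullback prop} applicable. Consequently $\uAlg^{\otimes}_{\fO}(-) = p_* F^*$ admits the left adjoint $L_{\Fbrs} F_! \circ p^*$.

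It then remains to identify $L_{\Fbrs} F_! p^*$ with $(-) \obv \fO$. For a fibrous $\base$-pattern $\fP$, the pullback $p^* \fP$ is $\fP \times \fO$ with structure map $\pi_{\fP} \times \id_{\fO} \colon \fP \times \fO \to \base \times \fO$; postcomposing this with $F$ --- which is what $F_!$ does on slice categories --- yields $\fP \times \fO$ equipped with $\wedge \circ (\pi_{\fP} \times \pi_{\fO})$, which is by definition the object of $\Cat_{/\base}$ whose fibrous localization is $\fP \obv \fO$. Hence $L_{\Fbrs} F_! p^*(\fP) \simeq \fP \obv \fO$, naturally in $\fP$, proving the proposition. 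The only nonroutine point is the Segal-morphism bookkeeping needed to produce the left adjoint of $F^*$; the final identification is a direct comparison of the two defining diagrams.
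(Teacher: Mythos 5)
Your proof is correct and follows essentially the same route as the paper: both decompose $\uAlg^{\otimes}_{\fO}(-)$ as $p_*F^*$ along the diagram $\base \xleftarrow{p} \base \times \fO \xrightarrow{F = \wedge \circ (\id \times \pi_{\fO})} \base$ and identify the left adjoint $L_{\Fbrs}F_!p^*$ with $(-) \obv \fO$ by unwinding the defining diagrams. The extra bookkeeping you supply — verifying $F$ is a Segal morphism via \cref{Fibrous patterns are strong Segal morphisms}, \cref{Products of strong segal morphisms}, and \cref{AlgPatt Seg observation}, and invoking \cref{Fibrous pullback prop} — is precisely what the paper compresses into the citations inside \cref{BV construction}.
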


We additionally spell out a few useful characteristics $\obv$ and $\uAlg_{\fO}^{\otimes}(-)$ here.
First, we describe functoriality.
\begin{observation}
  Fix the fibrous $\base$-pattern $\fQ$.
  Suppose we have bifunctors of fibrous $\base$-patterns 
  \[
    F\colon \fO \times \fP \longrightarrow \fQ \longleftarrow \fO \times \fP'\colon G
  \]
  together with a morphism of fibrous $\base$-patterns $\varphi\colon \fP \rightarrow \fP'$ making the following diagram commute:
  \[
    \begin{tikzcd}[row sep = tiny]
      & \fO \times \fP \arrow[ld,"p"'] \arrow[dr,"F"] \arrow[dd, "\varphi" description]\\
      \fO && \fQ\\
      & \fO \times \fP' \arrow[ru, "G"'] \arrow[lu,"p'"]
    \end{tikzcd}
  \]
  The left triangle possesses a Beck-Chevalley transformation
  \[
    p^* \varphi_! \implies \id_! p^{\prime *} = p^{\prime *},
  \]
  which possesses a mate natural transformation $p'_* \implies p_* \varphi^*$;
  precomposing with $G^*$, this yields a ``pullback'' natural transformation
  \[
    \Alg_{\fP'/\fQ}^{\otimes}(-) \implies \Alg_{\fP/\fQ}^{\otimes}(-).\qedhere
  \]
\end{observation}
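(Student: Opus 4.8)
The plan is to extract the desired natural transformation purely formally from the $2$-categorical data already assembled in \cref{BV construction}; no new homotopical input is needed beyond the existence of the push–pull adjunctions on fibrous patterns recorded there.

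First I would set up the adjunctions in play. Write $p\colon \fO \times \fP \to \fO$ and $p'\colon \fO \times \fP' \to \fO$ for the projections and $\overline{\varphi} \deq \id_{\fO} \times \varphi\colon \fO\times\fP \to \fO\times\fP'$ for the induced map; the commuting diagram in the statement then says exactly that $p = p' \circ \overline{\varphi}$ and $F = G \circ \overline{\varphi}$ as functors. As in \cref{BV construction}, $p$ and $p'$ are projection morphisms, so \cref{Push-pull along projection corollary} supplies $p^* \dashv p_*$ and $p'^{*} \dashv p'_{*}$ on fibrous patterns; the bifunctors $F,G$ are (strong) Segal morphisms and $\overline{\varphi}$ is a morphism of fibrous $\base\times\base$-patterns, hence strong Segal by \cref{Fibrous patterns are strong Segal morphisms}, so by \cref{Fibrous pullback prop} the pullbacks $F^*$, $G^*$, and $\overline{\varphi}^{*}\colon \Fbrs(\fO\times\fP') \to \Fbrs(\fO\times\fP)$ all preserve fibrous patterns. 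Functoriality of pullback on these fibrous localizations then yields the identifications $p^* = \overline{\varphi}^{*} p'^{*}$ and $F^* = \overline{\varphi}^{*} G^*$.

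Next I would assemble the mate. Letting $\varepsilon'\colon p'^{*} p'_{*} \implies \id$ be the counit of $p'^{*} \dashv p'_{*}$, whiskering by $\overline{\varphi}^{*}$ and using $\overline{\varphi}^{*} p'^{*} = p^*$ gives a natural transformation
\[
  p^* p'_{*} \;=\; \overline{\varphi}^{*} p'^{*} p'_{*} \;\implies\; \overline{\varphi}^{*}
\]
of functors $\Fbrs(\fO\times\fP') \to \Fbrs(\fO\times\fP)$, induced by $\overline{\varphi}^{*}\varepsilon'$; transposing across $p^* \dashv p_*$ produces the mate $\mu\colon p'_{*} \implies p_* \overline{\varphi}^{*}$ (this is precisely the Beck–Chevalley transformation of the commuting square with sides $\overline{\varphi}, p, p', \id_{\fO}$ named in the statement). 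Whiskering $\mu$ on the right by $G^*\colon \Fbrs(\fQ) \to \Fbrs(\fO\times\fP')$ and using $\overline{\varphi}^{*} G^* = F^*$ yields a natural transformation of functors $\Fbrs(\fQ) \to \Fbrs(\fO)$,
\[
  \uAlg^{\otimes}_{\fP'/\fQ}(-) \;=\; p'_{*} G^*(-) \;\implies\; p_* \overline{\varphi}^{*} G^*(-) \;=\; p_* F^*(-) \;=\; \uAlg^{\otimes}_{\fP/\fQ}(-),
\]
which is the claimed pullback transformation; naturality in the fibrous $\fQ$-pattern is automatic, as the construction is a whiskering of a fixed natural transformation.

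The main obstacle is not the mate construction, which is entirely formal, but the verification that every functor above actually descends to fibrous patterns (rather than merely to $\Cat$ over the relevant base) and that the identifications $p^* = \overline{\varphi}^{*} p'^{*}$ and $F^* = \overline{\varphi}^{*} G^*$ persist after fibrous localization — this is exactly where one must know that $F,G$ are Segal morphisms and that $\overline{\varphi}$ is strong Segal, so that \cref{Fibrous pullback prop,Push-pull along projection corollary} apply. Since all of this has already been established in \cref{BV construction}, the remaining work is routine; one may optionally record that $\mu$ is functorial in $\varphi$ (composing triangles composes the mates) by the usual functoriality of mates.
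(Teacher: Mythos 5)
Your proposal is correct and follows essentially the same route as the paper: construct the Beck–Chevalley mate $p'_* \implies p_* \overline{\varphi}^*$ from the push–pull adjunctions of \cref{BV construction} and then whisker by $G^*$. You have merely unpacked the mate explicitly (counit of $p'^* \dashv p'_*$, identification $\overline{\varphi}^* p'^* = p^*$, transposition across $p^* \dashv p_*$), which is a perfectly standard presentation of the transformation the paper asserts; note in passing that the paper's displayed source Beck–Chevalley $2$-cell should read $\varphi_! p^* \implies p'^*$ (your whiskered counit is its transpose), but this does not affect the argument, and your attention to why the identifications persist after $L_{\Fbrs}$-localization is exactly the right point to flag.
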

.We observe that, in all of the work above, we may have instead assumed that $\fC \in \Seg_\base(\Cat)$, in which case all of our constructions land in $\Seg_\base(\Cat)$.
Spelled out, this yields the following.
\begin{proposition}\label{Functoriality of alg prop}
    Fix $\fO,\fP,\fQ,\fC$ as in \cref{BV construction}. Then
    \begin{enumerate}
        \item if $\fC$ is a Segal $\fQ$-$\infty$-category, then $\uAlg^{\otimes}_{\fP/\fQ}(\fC)$ is a Segal $\fO$-$\infty$-category;
        \item if $\fC \rightarrow \fD$ is a morphism of Segal $\fQ$-$\infty$-categories, then the induced map $\uAlg^{\otimes}_{\fP/\fQ}(\fC) \rightarrow \uAlg^{\otimes}_{\fP/\fQ}(\fD)$ is a morphism of Segal $\fO$-$\infty$-categories, i.e. it preserves cocartesian arrows; and
        \item if $\fP \rightarrow \fP'$ is a morphism of fibrous $\base$-patterns and $\fC$ is a Segal $\fQ$-$\infty$-category, then the induced map of fibrous patterns
        \[
            \uAlg_{\fP'/\fQ}^{\otimes}(\fC) \rightarrow \uAlg_{\fP/\fQ}^{\otimes}(\fC)
        \]
        is a functor of Segal $\fO$-$\infty$-categories.
    \end{enumerate}
\end{proposition}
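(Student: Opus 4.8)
The plan is to recognise that $\uAlg_{\fP/\fQ}^{\otimes}(-) = p_* F^*$, exactly as assembled in \cref{BV construction}, is a composite of two functors $F^*\colon \Cat_{/\fQ} \to \Cat_{/\fO\times\fP}$ and $p_*\colon \Cat_{/\fO\times\fP}\to\Cat_{/\fO}$, each of which restricts to a functor between the corresponding $\infty$-categories of Segal objects; granting this, (1) is immediate, (2) follows since a functor sends morphisms to morphisms (and a morphism of Segal $\fO$-$\infty$-categories is exactly a morphism in $\Seg_{\fO}(\Cat)$), and (3) follows since the pullback natural transformation of the preceding Observation is built purely from (co)units and mates of these same restricted adjunctions. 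This is precisely the content of the parenthetical remark that one "may have instead assumed $\fC \in \Seg_{\base}(\Cat)$", and the proof is an unwinding once the two restriction statements are in hand.

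For $p_*$: the functor $p\colon \fO\times\fP \to \fO$ is a projection off a product of patterns, so \cref{Push-pull along projection corollary} — in its Segal-$\infty$-category incarnation via $\mathrm{CatPatt}^{\otimes}(\base)$ — shows that $p^* \dashv p_*$ restricts to an adjunction $\Seg_{\fO}(\Cat) \rightleftarrows \Seg_{\fO\times\fP}(\Cat)$; in particular $p_*$ carries Segal $\fO\times\fP$-$\infty$-categories to Segal $\fO$-$\infty$-categories and morphisms to morphisms. For $F^*$: I would first verify that the bifunctor $F\colon \fO\times\fP\to\fQ$ is a Segal morphism by factoring its underlying composite through $\fO\times\fP \to \base\times\base \xrightarrow{\wedge}\base$ — the first map being the product of the structure maps of $\fO$ and $\fP$, which are strong Segal by \cref{Fibrous patterns are strong Segal morphisms} and hence jointly strong Segal by \cref{Products of strong segal morphisms}, while $\wedge$ is compatible with Segal objects by the magmatic hypothesis — and then noting that $F$ itself lies over this composite via the structure map of $\fQ$. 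Then \cref{Fibrous pullback prop} gives that $F^*$ preserves Segal objects, and simultaneously that the adjunction $L_{\Fbrs}F_! \dashv F^*$ of \cref{BV construction} restricts to Segal objects. For (3), a morphism $\varphi\colon \fP\to\fP'$ of fibrous $\base$-patterns is strong Segal by \cref{Fibrous patterns are strong Segal morphisms}, so $\varphi^*$ preserves Segal objects as well; hence the Beck--Chevalley transformation $p^*\varphi_! \Rightarrow p'^*$ and its mate $p'_* \Rightarrow p_*\varphi^*$, precomposed with $G^*$, are natural transformations of functors landing in $\Seg_{\fO}(\Cat)$, and their value at a Segal $\fQ$-$\infty$-category $\fC$ is the asserted morphism of Segal $\fO$-$\infty$-categories.

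The main obstacle is checking that the pullbacks in sight genuinely satisfy the hypotheses of \cref{Fibrous pullback prop}, i.e. that either the target pattern is soundly extendable or the morphism is strong Segal: a general bifunctor of fibrous patterns is only required to be a morphism of algebraic patterns over $\wedge$, not a priori a strong Segal morphism, so one must extract strong-Segal-ness (or sound extendability) from the factorisation above using closure of strong Segal morphisms under products, composition, and pullback along projections. I expect this to go through routinely, and in the cases actually used where $\fQ = \fO = \base$ it degenerates by \cref{Trivial bifunctor example} to the statement that $\base\times\fP \xrightarrow{\id\times\pi}\base\times\base\xrightarrow{\wedge}\base$ is a Segal morphism, which is immediate from the above.
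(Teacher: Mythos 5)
Your proposal follows essentially the same route as the paper's. The paper's own proof is little more than the remark immediately preceding the statement — ``we may have instead assumed that $\fC \in \Seg_{\base}(\Cat)$, in which case all of our constructions land in $\Seg_{\base}(\Cat)$'' — pointing back to \cref{BV construction}, where the compatible adjunctions on Segal objects are attributed to \cref{Fibrous patterns are strong Segal morphisms}, \cref{Overcategory of fibrous patterns prop}, and \cref{Push-pull along projection corollary}. Your decomposition into ``$p_*$ preserves Segal objects (via \cref{Push-pull along projection corollary})'' and ``$F^*$ preserves Segal objects (via $F$ being a Segal morphism)'' is exactly the intended unwinding, and your treatment of (2) and (3) via restriction of (co)units and mates to the Segal categories is also what the paper intends.

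One small correction to the gap you flagged: you say one must extract ``strong-Segal-ness (or sound extendability)'' to invoke \cref{Fibrous pullback prop}, but sound extendability of the target $\fQ$ is automatic — $\fQ$ is a fibrous pattern over the soundly extendable $\base$, and fibrous patterns over soundly extendable patterns are soundly extendable (cf.\ the remark in the proof of \cref{Pullback morita prop}, quoting \cite[Lem~4.1.15]{Barkan}). So the only thing left is to show that $F$ is a Segal morphism (not strong Segal). Your argument — factoring $\pi_{\fQ}F = \wedge\circ(\pi_{\fO}\times\pi_{\fP})$ with $\pi_{\fO}\times\pi_{\fP}$ strong Segal by \cref{Fibrous patterns are strong Segal morphisms,Products of strong segal morphisms} and $\wedge$ Segal by hypothesis, ``and then noting that $F$ itself lies over this composite'' — does not immediately conclude, since Segal-ness of a composite $\pi_{\fQ}F$ together with strong Segal-ness of $\pi_{\fQ}$ does not obviously give Segal-ness of $F$ by any right-cancellation principle. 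This is a genuine small gap in the general $\fQ$ case; as you note, it closes in the case $\fQ = \fO = \base$ used in this paper via \cref{Trivial bifunctor example}. The paper's own proof is equally terse on this point, so the gap is one of exposition shared by both rather than a defect unique to your argument; filling it requires a dedicated check that the bifunctor condition forces $F$ to be a Segal morphism.
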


In analogy to \cite{Barkan_Segal} we show that this tensor product is compatible with Segal envelopes.
\begin{proposition}\label{BV Day pattern}
    The following diagram commutes
    \[
        \begin{tikzcd}
            \Fbrs(\base)^2 \arrow[rr,"\obv"] \arrow[d,"\Env"]
            && \Fbrs(\base) \arrow[d,"\Env"]\\
            \Fun(\base,\Cat)^2 \arrow[r,"\circledast"]
            & \Fun(\base,\Cat) \arrow[r,"L_{\Seg}"]
            & \Seg_{\base}(\Cat)
        \end{tikzcd}
    \]
\end{proposition}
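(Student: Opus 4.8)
The plan is to reduce the statement to the universal property of the Segal envelope together with the interaction of Day convolution with the monoidal structure $\wedge$ on $\base$. First I would recall that, by \cref{Envelope theorem}, $\Env_{\base}\colon \Fbrs(\base) \rightarrow \Seg_{\base}(\Cat)$ is the left adjoint to $\Un$, and that $\Env_{\base}\fO \simeq \fO \times_{\base} \Ar^{\act}(\base)$ with its target projection. The Day convolution $\circledast$ on $\Fun(\base,\Cat)$ with respect to $\wedge$ is the usual left Kan extension formula $(F \circledast G)(X) = \colim_{\wedge(X_0,X_1) \rightarrow X} F(X_0) \times G(X_1)$, and $L_{\Seg}$ localizes onto Segal objects; since $\wedge$ is compatible with Segal objects (as $(\base,\wedge)$ is magmatic), $L_{\Seg}$ is compatible with $\circledast$ by the criterion cited for \cref{Localized Day convolution} (via \cite[Prop~3.3.4]{Cnossen_bispans}), so the composite $L_{\Seg}(\Env(-) \circledast \Env(-))$ is a genuine symmetric monoidal—or at least magmatic—structure on the essential image of $\Env$ that deserves to be compared to $\Env(- \obv -)$.

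The key steps, in order, are: (1) By \cref{Bifunctors observation} and \cref{Alg is adjoint prop}, $- \obv -$ is characterized by $\Fun^{\Int-\cocart}_{/\base}(\fO \obv \fP, \fQ) \simeq \BiFun_{\base}(\fO,\fP;\fQ)$ for all fibrous $\fQ$; dually, on the envelope side, $\Seg_{\base}(\Cat)$ with localized Day convolution has $\Fun^{\otimes}(\sE \circledast \sF, \sG) \simeq \BiFun(\sE,\sF;\sG)$ for Segal $\sG$ by the universal property of Day convolution (\cite[Ex~5.7]{Chu}, \cite{Barkan_Segal}). (2) Since $\Env_{\base}$ is a left adjoint, it suffices to exhibit a natural bifunctor $\Env\fO \times \Env\fP \rightarrow \Env(\fO \obv \fP)$ covering $\wedge$ that is initial among bifunctors into Segal objects; equivalently, by adjunction, to show that for every Segal $\base$-$\infty$-category $\sG$, postcomposition induces
\[
  \Fun^{\otimes}_{/\base}\prn{L_{\Seg}\prn{\Env\fO \circledast \Env\fP}, \sG} \simeq \BiFun_{\base}(\fO,\fP;\Un\sG),
\]
naturally, matching the universal property of $\Env(\fO \obv \fP)$ after unfolding $\Un \dashv \Env$. (3) Finally, check the comparison is an equivalence by noting both sides corepresent the same functor on $\Seg_{\base}(\Cat)$, and uniqueness of representing objects. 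A clean alternative implementation is to use \cref{Product of envelopes observation}, which gives $\Env_{\base \times \base}(\fO \times \fP) \simeq \Env_{\base}(\fO) \times \Env_{\base}(\fP)$, and then track the pushforward along $\wedge$: one has $\Env_{\base}(\wedge_! (\fO \times \fP)) \simeq \wedge_!(\Env_{\base \times \base}(\fO \times \fP)) $ up to Segalification, because $\wedge_!$ on presheaves \emph{is} the Day convolution construction, and $\Env$ intertwines $(-)_!$ along Segal morphisms by the last square in \cref{Envelope theorem}. Applying $L_{\Fbrs}$ on the left and $L_{\Seg}$ on the right and using that $\Env$ carries $L_{\Fbrs}$ to $L_{\Seg}$ (again \cref{Envelope theorem}) closes the diagram.

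The main obstacle I anticipate is the bookkeeping in step (2)/(3): the envelope $\Env_{\base}$ is only a \emph{left} adjoint, not an equivalence, so I cannot simply transport the universal property of $\obv$ across it; I must genuinely verify that $L_{\Seg}(\Env\fO \circledast \Env\fP)$ lies in the essential image of $\Env$ and is $\Env(\fO \obv \fP)$, rather than merely receiving a map from it. Concretely this requires knowing that $\circledast$ (equivalently $\wedge_!$) preserves the class of ``equifibered'' Segal objects—those in the image of $\Env$, i.e. the image of $\Op_{\cT} \hookrightarrow \Cat^{\otimes}_{\cT,/\uFF^{\cT-\sqcup}_{\cT}}$ in the operadic application—so that $L_{\Seg}$ of a Day convolution of envelopes is again an envelope. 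This is exactly the kind of compatibility established in \cite{Barkan_Segal} in the non-parameterized case, and the argument there (Day convolution of representables is representable, hence the monoidal envelope is monoidal) should adapt, using soundness and sound extendability of $\base$ to control the active arrow categories $\Ar^{\act}(\base)$ under $\wedge$. Once that closure property is in hand, the rest is a diagram chase through the adjunctions $\Un \dashv \Env$, $L_{\Fbrs} \dashv U$, $L_{\Seg} \dashv \iota$, and the push-pull adjunctions along $\wedge$ and the two projections.
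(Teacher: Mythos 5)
Your proof plan is essentially the paper's proof: you correctly identify that one should compare corepresented functors and invoke Yoneda, and the ``clean alternative implementation'' you describe — write $\obv$ as $L_{\Fbrs}\wedge_!(\fO \times \fP)$, use push-pull along $\wedge$, apply $\Env \dashv \Un$ over $\base$ and $\base\times\base$, invoke \cref{Product of envelopes observation} to split $\Env_{\base\times\base}(\fO\times\fP)$, and identify $L_{\Seg}\wedge_!$ of a product of cocartesian fibrations with $L_{\Seg}$ of Day convolution — is precisely the paper's chain of natural equivalences. The one input you gesture at but don't pin down is the last step: the paper cites symmetric monoidality of the Grothendieck construction (Ramzi) to identify $L_{\Seg}\wedge_!(\Env\fO \times \Env\fP)$ with $L_{\Seg}(\Env\fO \circledast \Env\fP)$; ``$\wedge_!$ on presheaves is the Day convolution construction'' is the right intuition, but the precise statement is that unstraightening $\Fun(\base,\Cat) \to \Cat^{\cocart}_{/\base}$ is symmetric monoidal for Day convolution versus the left-pushforward structure.

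Your anticipated obstacle, however, is not an obstacle, and you should not spend effort on it. You worry that because $\Env_{\base}$ is merely a left adjoint rather than an equivalence, you ``must genuinely verify that $L_{\Seg}(\Env\fO \circledast \Env\fP)$ lies in the essential image of $\Env$,'' i.e., that $\circledast$ preserves the equifibered subcategory. That is not needed here. The Yoneda argument you outline in steps (1)--(3) does not first construct a comparison map and then check it is an equivalence on some subcategory; it exhibits a natural equivalence
\[
  \Fun_{\Seg_{\base}(\Cat)}\bigl(\Env(\fO\obv\fP), \fC\bigr) \simeq \Fun_{\Seg_{\base}(\Cat)}\bigl(L_{\Seg}(\Env\fO \circledast \Env\fP), \fC\bigr)
\]
for all Segal $\fC$, and Yoneda in $\Seg_{\base}(\Cat)$ then immediately identifies the two objects. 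Whether the right-hand object lies in the essential image of $\Env$ is a consequence of the proposition (since the left-hand side manifestly does), not a prerequisite for proving it. The closure of equifibered objects under $\circledast$ is relevant if you want to upgrade this pointwise statement to a symmetric monoidal structure on $\Fbrs(\base)$ compatible with $\Env$ — which is exactly what \cref{Sliced BV Day pattern} and the Barkan--Steinebrunner machinery address — but the commutativity of the square as stated needs only the representability argument.
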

\begin{proof}
    Fix $\fC$ a Segal $\base$-$\infty$-category.
    Then, there are natural equivalences
    \begin{align}
        \Fun_{\Seg_{\base}(\Cat)}
        \prn{\Env\prn{\fO \obv \fP},\fC}
        &\simeq \Fun^{\Int-\cocart}_{/\base}
            \prn{\wedge_! \fO \times \fP,\fC}
            \nonumber\\
        &\simeq \Fun^{\Int-\cocart}_{/\base \times \base}
            \prn{\fO \times \fP, \wedge^* \fC}
            \nonumber\\
        &\simeq \Fun^{\cocart}_{/\base \times \base}
            \prn{\Env_{\base \times \base}(\fO \times \fP), \wedge^* \fC}
            \nonumber\\
        &\simeq \Fun^{\cocart}_{/\base \times \base}
            \prn{\Env_\base(\fO) \times \Env_\base(\fP), \wedge^* \fC}
            \label{Product of segal equivalence}\\
        &\simeq \Fun^{\cocart}_{/\base}
        \prn{L_{\Seg} \wedge_!\prn{\Env_\base\prn{\fO) \times \Env_\base(\fP}},\fC}
            \nonumber\\
        &\simeq \Fun_{\Seg_{\base}(\Cat)}
        \prn{L_{\Seg} \prn{\Env_\base(\fO) \circledast \Env_\base(\fP)}, \fC}
            \label{Final equivalence for env coherence theorem}
     \end{align}
     Equivalence \cref{Product of segal equivalence} is \cref{Product of envelopes observation};
     \cref{Final equivalence for env coherence theorem} follows by symmetric monoidality of the Grothendieck construction \cite[Thm~B]{Ramzi}.
    The result then follows by Yoneda's lemma.
\end{proof}
We derive a uniqueness statement for $\obv$ by an analogous argument to \cite{Barkan_Segal}.
\begin{corollary}\label{Sliced BV Day pattern}
  $\obv$ is the unique bifunctor on $\Fbrs(\fB)$ making the following diagram commute:
  \[
    \begin{tikzcd}[ampersand replacement=\&, column sep=large]
      {\Fbrs(\base)^2} \&\&\& {\Fbrs(\base)} \\
      {\prn{\Fun(\base, \Cat)_{/\sA_{\base}}}^2} \& {\Fun(\base, \Cat)_{/\sA_{\base} \circledast \sA_{\base}}} \& {\Fun(\base, \Cat)_{/\sA_{\base}}} \& {\Seg(\base)_{/\sA_{\base}}}
      \arrow["\obv", from=1-1, to=1-4]
      \arrow["{\Env^2}"', from=1-1, to=2-1, hook]
      \arrow["\Env", hook', from=1-4, to=2-4]
      \arrow["\circledast"', from=2-1, to=2-2]
      \arrow["{\Env(\wedge)_!}"', from=2-2, to=2-3]
      \arrow["{L_{\Seg}}"', from=2-3, to=2-4]
    \end{tikzcd}
  \]
\end{corollary}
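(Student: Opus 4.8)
<br>

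The plan is to deduce \cref{Sliced BV Day pattern} from \cref{BV Day pattern} by the same ``sliced'' bootstrapping used in the proof of \cref{BV Day pattern} itself, together with the fully faithfulness of the sliced envelope from \cref{Env is maps of operads corollary} / \cref{Envelope theorem}. The key point is that the \emph{unsliced} diagram of \cref{BV Day pattern} already pins down $\Env(\fO \obv \fP)$ up to canonical equivalence, and the sliced envelope $\Env^{/\sA_\base}$ is a fully faithful functor refining $\Env$; so any bifunctor filling the sliced square must agree with $\obv$ after applying the (conservative, indeed fully faithful) functor $\Env^{/\sA_\base}$, hence must agree with $\obv$.

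Concretely, first I would record that the two vertical composites in the sliced square factor the unsliced verticals of \cref{BV Day pattern}: the left vertical is $\Env^2$ landing in $\prn{\Fun(\base,\Cat)_{/\sA_\base}}^2$, whose further composite with the forgetful functor to $\Fun(\base,\Cat)^2$ is the $\Env^2$ of \cref{BV Day pattern}, and similarly on the right using that $\Env^{/\sA_\base}_{\base}$ is the sliced envelope sitting over $\Env$. Next I would check that the bottom row of the sliced square, namely $L_{\Seg} \circ \Env(\wedge)_! \circ \circledast$, is compatible with the bottom row $L_{\Seg}\circ\circledast$ of \cref{BV Day pattern} after forgetting the slice structure; this is exactly the computation that $\circledast$ of $\sA_\base$-relative objects, pushed along $\Env(\wedge)$ and Segalified, recovers $\circledast$ on the underlying objects. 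This is essentially \cref{Product of envelopes observation} combined with the symmetric monoidality of the Grothendieck construction \cite{Ramzi} and the identity $\sA_{\base\times\base} \simeq \sA_\base \times \sA_\base$, i.e.\ the same ingredients already used to prove \cref{BV Day pattern}; indeed the chain \crefrange{Product of segal equivalence}{Final equivalence for env coherence theorem} adapts verbatim with all functor $\infty$-categories replaced by their sliced versions over $\sA_\base$.

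Having set this up, existence of a bifunctor filling the sliced square is witnessed by $\obv$ itself (via the adapted computation above), and uniqueness follows: if $\boxtimes$ is another bifunctor filling the square, then $\Env^{/\sA_\base}(\fO \boxtimes \fP) \simeq L_{\Seg}\prn{\Env^{/\sA_\base}\fO \circledast_{\sA_\base} \Env^{/\sA_\base}\fP} \simeq \Env^{/\sA_\base}(\fO \obv \fP)$ naturally in $\fO,\fP$, and since $\Env^{/\sA_\base}\colon \Fbrs(\base) \to \Seg_\base(\Cat)_{/\sA_\base}$ is fully faithful by \cref{Envelope theorem}, a natural equivalence of its values lifts uniquely to a natural equivalence $\fO \boxtimes \fP \simeq \fO \obv \fP$; one checks this equivalence is compatible with the two factorizations, so $\boxtimes$ and $\obv$ agree as bifunctors making the diagram commute. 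The main obstacle I anticipate is purely bookkeeping: making precise the claim that ``filling the sliced square'' is equivalent data to ``filling the unsliced square together with a compatible lift through $\Env^{/\sA_\base}$,'' i.e.\ that the forgetful functor $\Seg_\base(\Cat)_{/\sA_\base} \to \Seg_\base(\Cat)$ does not lose the information needed for uniqueness — this is handled by fully faithfulness of $\Env^{/\sA_\base}$ and the fact that $\Env$ and $\Env^{/\sA_\base}$ are compatible over the forgetful functor, but it requires care to state as a clean $2$-categorical uniqueness assertion rather than a mere objectwise equivalence.
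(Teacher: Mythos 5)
Your uniqueness argument is essentially the paper's, but your final paragraph flags a bookkeeping worry (promoting an objectwise equivalence to a bifunctor-level uniqueness) that the paper dispatches in one stroke: a fully faithful functor $g\colon \cD \to \cE$ is a \emph{monomorphism} in $\Cat$, i.e.\ $\Map_{\Cat}(\cC,\cD) \to \Map_{\Cat}(\cC,\cE)$ is $(-1)$-truncated for every $\cC$. Applied to $g = \Env^{/\sA_\base}$ and $\cC = \Fbrs(\base)^2$, this says the space of functors $\Fbrs(\base)^2 \to \Fbrs(\base)$ whose postcomposition with $\Env^{/\sA_\base}$ is the given bottom composite is either empty or contractible. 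That is precisely the desired uniqueness statement, with no need to first produce an objectwise equivalence $\fO \boxtimes \fP \simeq \fO \obv \fP$ and then check its naturality and compatibility with the factorizations. You would do well to make this observation explicit rather than leaving it as ``one checks.''

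For commutativity (existence of the filler), you and the paper take genuinely different routes. You propose re-running the Yoneda chain from \cref{BV Day pattern} in the slice over $\sA_\base$; the paper instead cites \cref{Functoriality of alg prop}, which supplies the slice structure directly: parts (1)--(3) of that proposition guarantee that the Boardman--Vogt construction and the algebra functor land in and respect the slice over $\sA_\base$ (equivalently, that the canonical maps to $\sA_\base$ are natural in all the inputs), so the unsliced equivalence of \cref{BV Day pattern} lifts. Your route is more explicit and would also work, but the ``adapts verbatim'' claim conceals one real check: each equivalence in \crefrange{Product of segal equivalence}{Final equivalence for env coherence theorem} must be shown to be an equivalence \emph{over} the structure maps to $\sA_\base$ (and its products/smashes), since sliced Yoneda requires the mapping objects to be computed in the slice. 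This is exactly the content the paper outsources to \cref{Functoriality of alg prop}, so if you want to go your way, you should identify that extra compatibility rather than assert it.
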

\begin{proof}
  Commutativity of the diagram follows by \cref{Functoriality of alg prop} and uniqueness of $\obv$ follows from the fact that the right vertical functor is fully faithful, hence a monomorphism in $\Cat$. 
\end{proof}

\subsection{Boardman-Vogt tensor products of \tV-operads}\label{BV operad subsection}
Recall that $\Op_{\cT} \simeq \Fbrs(\Span(\FF_{\cT}))$.
We specialize the results of \cref{BV fibrous subsection} to the case that $\cT$ has a terminal object.
\begin{construction}
  Fix an object $V \in \cT$.
  We show in \cref{Cartesian product is Segal} that the Cartesian product in $\FF_{V}$ endows $\Span(\FF_{V})$ with the structure of a magmatic pattern via the \emph{smash product} 
  \[
    \wedge \deq \Span(\times)\colon \Span(\FF_{V}) \times \Span(\FF_{V}) \rightarrow \Span(\FF_{V});
  \]
  we refer to the resulting bifunctor as the \emph{Boardman-Vogt tensor product of $V$-operads} 
  \[
    \cO^{\otimes} \obv \cP^{\otimes} \deq L_{\Op_{\cT}}\prn{\cO^{\otimes} \times \cP^{\otimes} \rightarrow \Span(\FF_{V}) \times \Span(\FF_{V}) \xrightarrow{\wedge} \Span(\FF_{V})}.
  \]
  The \emph{$V$-operad of $\cO$-algebras in $\cC^{\otimes}$} is given by the right adjoint $\uAlg_{\cO}^{\otimes}(\cC) \in \Op_{\cT}$ to the Boardman-Vogt tensor product constructed in \cref{Alg is adjoint prop}.
\end{construction}

\cref{Functoriality of alg prop} immediately implies the following. 
\begin{corollary}\label{Symmetric monoidal push-pull corollary}
  Fix $\cO^{\otimes} \rightarrow \cP^{\otimes}$ a map of $V$-operads and $\cC^{\otimes} \rightarrow \cD^{\otimes}$ a map of $V$-symmetric monoidal $\infty$-categories.
  Then, $\uAlg_{\cO}^{\otimes}(\cC)$ is a $V$-symmetric monoidal category, and the canonical lax $V$-symmetric monoidal functors
  \[
    \uAlg_{\cP}^{\otimes}(\cC) \rightarrow \uAlg_{\cO}^{\otimes}(\cC),\hspace{40pt} \uAlg_{\cO}^{\otimes}(\cC) \rightarrow \uAlg_{\cO}^{\otimes}(\cD)
  \]
  are $V$-symmetric monoidal.
\end{corollary}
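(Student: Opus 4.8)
The statement is a direct corollary of \cref{Functoriality of alg prop}, so the plan is essentially to transport the three clauses of that proposition through the identification $\Op_{\cT} \simeq \Fbrs(\Span(\FF_{\cT}))$ of \cref{Opt definition}, the identification of $V$-symmetric monoidal $\infty$-categories with cocartesian fibrous patterns (i.e. Segal $\Span(\FF_V)$-$\infty$-categories), and the fact that $V$-symmetric monoidal functors are precisely the morphisms in $\Cat^{\otimes}_V \simeq \Seg_{\Span(\FF_V)}(\Cat)$, together with the construction $\uAlg_{\cO}^{\otimes}(\cC) = \uAlg^{\otimes}_{\cO/\Span(\FF_V)}(\cC)$ from \cref{BV construction} specialized to the magmatic pattern $(\Span(\FF_V),\wedge)$.

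\textbf{Key steps.} First I would recall that, since $V$ is a terminal object of $\cT$ (this is the standing assumption of \cref{BV operad subsection}), a $V$-symmetric monoidal $\infty$-category is exactly a cocartesian Segal $\Span(\FF_V)$-$\infty$-category, via \cref{GOperads conditions} and \cref{CMon I cat corollary}, and a $V$-symmetric monoidal functor is exactly a morphism of such Segal $\infty$-categories (a cocartesian-arrow-preserving functor over $\Span(\FF_V)$). Next, starting from $\cC^{\otimes} \to \cD^{\otimes}$ a map of $V$-symmetric monoidal $\infty$-categories, view it as a morphism of Segal $\Span(\FF_V)$-$\infty$-categories (equivalently, of Segal $\fQ$-$\infty$-categories with $\fQ = \Span(\FF_V)$). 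Applying clause (1) of \cref{Functoriality of alg prop} with $\fO = \fQ = \fP' = \base = \Span(\FF_V)$ and $\fP = \cO^{\otimes}$ gives that $\uAlg^{\otimes}_{\cO}(\cC)$ is a Segal $\Span(\FF_V)$-$\infty$-category, i.e. a $V$-symmetric monoidal $\infty$-category. Applying clause (2) to $\cC^{\otimes} \to \cD^{\otimes}$ gives that $\uAlg^{\otimes}_{\cO}(\cC) \to \uAlg^{\otimes}_{\cO}(\cD)$ preserves cocartesian arrows, hence is $V$-symmetric monoidal. Applying clause (3) to the morphism of fibrous patterns $\cO^{\otimes} \to \cP^{\otimes}$ (using \cref{Trivial bifunctor example} to see the relevant bifunctors $\Span(\FF_V) \times \cO^{\otimes} \to \Span(\FF_V)$ and $\Span(\FF_V) \times \cP^{\otimes} \to \Span(\FF_V)$ are just the structure maps) gives that $\uAlg^{\otimes}_{\cP}(\cC) \to \uAlg^{\otimes}_{\cO}(\cC)$ is a functor of Segal $\Span(\FF_V)$-$\infty$-categories, i.e. $V$-symmetric monoidal. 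Finally I would note that in all three cases the ``lax $V$-symmetric monoidal'' underlying functors are precisely the maps of $V$-operads supplied by the Beck--Chevalley/pullback transformation of the observation preceding \cref{Functoriality of alg prop}, so the content of the corollary is exactly that these lax maps are in fact strong, which is what clauses (2) and (3) assert.

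\textbf{Main obstacle.} There is no serious mathematical obstacle; the only thing requiring a little care is bookkeeping the several identifications ($\Op_{\cT} \simeq \Fbrs(\Span(\FF_{\cT}))$, $V$ terminal so $\cT_{/V} \simeq \cT$, $\Cat^{\otimes}_V \simeq \Seg_{\Span(\FF_V)}(\Cat)$, and the match between $\uAlg^{\otimes}_{\cO}(-)$ of \cref{BV construction} and the $V$-operad of algebras) so that the hypotheses of \cref{Functoriality of alg prop} are literally met and the conclusions translate to the stated $V$-symmetric monoidal statements. One should also double-check that the magmatic structure $\wedge = \Span(\times)$ on $\Span(\FF_V)$ is the one used in \cref{Functoriality of alg prop}'s ambient setup — this is \cref{Cartesian product is Segal} — so that $\obv$ and its adjoint $\uAlg^{\otimes}_{\cO}(-)$ on $V$-operads genuinely are instances of the fibrous-pattern constructions; with that in hand the corollary is immediate.
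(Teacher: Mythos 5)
Your proposal is correct and follows exactly the route the paper takes: the paper's proof is literally the one-line observation that this is an immediate instance of \cref{Functoriality of alg prop}, and you have simply spelled out the bookkeeping (identifying $V$-symmetric monoidal $\infty$-categories with Segal $\Span(\FF_V)$-$\infty$-categories and matching the three clauses to the three claims). The only trivial slip is that in your application of clause~(1) you wrote ``$\fP' = \base$'' even though $\fP'$ plays no role in that clause — harmless, but worth cleaning up.
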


Using this, in the one-color case we view $\cO^{\otimes} \obv \cP^{\otimes}$-algebras as \emph{homotopy-coherently interchanging $\cO$-algebra and $\cP$-algebra structures on a common $\cT$-object}.
This takes an easy to understand form in the 1-categorical case by the following argument.

\begin{observation}
  Suppose $\cC^{\otimes}$ is an $I$-symmetric monoidal 1-category and $\cO^{\otimes},\cP^{\otimes}$ are one-color $\cT$-operads.
  Then, an $\cO^{\otimes} \obv \cP^{\otimes}$-algebra structure on a $\cT$-object $X \in \Gamma^{\cT} \cC$ is equivalently viewed as a pair of maps $\cP^{\otimes} \rightarrow \End_X^{\otimes}(\cC)$ and $\cO^{\otimes} \rightarrow \End_X^{\otimes}\prn{\uAlg^{\otimes}_{\cP}(\cC)}$ via \cref{1-algebra proposition}.
  In particular, this consists of pairs of $\cO$-algebra and $\cP$-algebra structures on $X$ subject to the \emph{interchange relation} that, for all $\mu_S \in \cO(S)$ and $\mu_T \in \cO(T)$, the following diagram commutes.
\[\begin{tikzcd}
{\bigotimes\limits_U^S X^{\otimes \Res_U^V T}_V} & {X_{V}^{\otimes S \times T}} & {\bigotimes\limits_W^T X^{\otimes \Res_W^V S}_V} &&& {X^{\otimes T}_V} \\
{X^{\otimes S}_V} &&&&& {X_V}
\arrow["\simeq"{description}, draw=none, from=1-1, to=1-2]
\arrow["\prn{\Res_U^V \mu_T}"', from=1-1, to=2-1]
\arrow["\simeq"{description}, draw=none, from=1-2, to=1-3]
\arrow["\prn{\Res_W^V \mu_S}"{description}, from=1-3, to=1-6]
\arrow["\mu_{T}", from=1-6, to=2-6]
\arrow["\mu_{S}"{description}, from=2-1, to=2-6]
\end{tikzcd}\]
  A morphism of $\cO^{\otimes} \obv \cP^{\otimes}$-algebras is equivalently expressed as a morphism of underlying $\cT$ objects $X \rightarrow Y$ causing the following to commute:
  \[
    \begin{tikzcd}[ampersand replacement=\&, row sep=tiny]
      \& {\End^{\otimes}_X(\cC)} \&\& {\End_X^{\otimes} \uAlg_{\cO}^{\otimes}(\cC)} \\
      {\cO^{\otimes}} \&\& {\cP^{\otimes}} \\
      \& {\End^{\otimes}_Y \cC} \&\& {\End_Y^{\otimes} \uAlg_{\cO}^{\otimes}(\cC)}
      \arrow[from=1-2, to=3-2]
      \arrow[from=1-4, to=3-4]
      \arrow[from=2-1, to=1-2]
      \arrow[from=2-1, to=3-2]
      \arrow[from=2-3, to=1-4]
      \arrow[from=2-3, to=3-4]
    \end{tikzcd}
  \]
  By faithfulness of the forgetful functor $\Alg_{\cO}(\cC)_V \rightarrow \cC_V$, this is simply a morphism of underlying $\cT$-objects which is separately an $\cO$-algebra and $\cP$-algebra map.
\end{observation}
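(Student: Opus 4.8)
The plan is to deduce the observation formally from \cref{Alg is adjoint prop} together with two applications of \cref{1-algebra proposition} and the explicit description of endomorphism $\cT$-operads in \cref{End observation}. By \cref{Alg is adjoint prop}, an $\cO^{\otimes} \obv \cP^{\otimes}$-algebra in $\cC^{\otimes}$ is the same datum as an $\cO$-algebra in $\uAlg_{\cP}^{\otimes}(\cC)$. Since $\cC^{\otimes}$ is an $I$-symmetric monoidal $1$-category, \cref{Symmetric monoidal push-pull corollary} shows $\uAlg_{\cP}^{\otimes}(\cC)$ is again an $I$-symmetric monoidal $\infty$-category equipped with a $\cT$-symmetric monoidal forgetful functor $\uAlg_{\cP}^{\otimes}(\cC) \to \cC^{\otimes}$ underlying $U\colon \Alg_{\cP}(\cC) \to \Gamma^{\cT}\cC$; moreover its $V$-value $\uAlg_{\cP}(\cC)_V \simeq \Alg_{\Res_V^{\cT}\cP}(\Res_V^{\cT}\cC)$ is a $1$-category by \cref{Ninfty poset corollary} (as $\Res_V^{\cT}\cC^{\otimes}$ is a $V$-$1$-operad), so $\uAlg_{\cP}^{\otimes}(\cC)$ is an $I$-symmetric monoidal $1$-category and \cref{1-algebra proposition} applies to it.

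For objects I would first invoke \cref{1-algebra proposition,Alg values}: an $\cO$-algebra in $\uAlg_{\cP}^{\otimes}(\cC)$ is a $\cT$-object $Y \in \Gamma^{\cT}\uAlg_{\cP}(\cC) \simeq \Alg_{\cP}(\cC)$ together with a map of $\cT$-operads $\cO^{\otimes} \to \End_Y^{\otimes}\bigl(\uAlg_{\cP}^{\otimes}(\cC)\bigr)$, and unwinding the first factor once more through \cref{1-algebra proposition} exhibits $Y$ as a $\cT$-object $X \in \Gamma^{\cT}\cC$ equipped with a map $\cP^{\otimes} \to \End_X^{\otimes}(\cC)$ --- the asserted pair of maps. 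The content is then the identification of $\End_Y^{\otimes}\bigl(\uAlg_{\cP}^{\otimes}(\cC)\bigr)$: by \cref{End observation} (using one-coloredness of $\cO$) its $S$-ary space for $S \in \FF_V$ is $\Map_{\Alg_{\cP}(\cC)_V}\bigl(Y_V^{\otimes S}, Y_V\bigr)$. Since the forgetful functor is $\cT$-symmetric monoidal, $Y^{\otimes S}$ has underlying object $X^{\otimes S}$, and inspecting the $I$-indexed tensor power in $\uAlg_{\cP}^{\otimes}(\cC)$ forces its $\cP$-algebra structure map over $\mu_T \in \cP(T)$ to be $\bigotimes_U^S \Res_U^V \mu_T$. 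Hence, by faithfulness of $U\colon \Alg_{\cP}(\cC)_V \to \cC_V$, the $S$-ary space of $\End_Y^{\otimes}\bigl(\uAlg_{\cP}^{\otimes}(\cC)\bigr)$ is the subspace of $\Map_{\cC_V}\bigl(X_V^{\otimes S}, X_V\bigr)$ spanned by $\cP$-algebra homomorphisms; so a map $\cO^{\otimes} \to \End_Y^{\otimes}(\uAlg_{\cP}^{\otimes}(\cC))$ is the choice, for each $\mu_S \in \cO(S)$, of an $\cO$-operation $\mu_S\colon X_V^{\otimes S} \to X_V$ which is a $\cP$-algebra homomorphism, and writing out the homomorphism square with the structure map just computed is exactly the displayed interchange hexagon. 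Coherence of the choice (that it assembles to an $\cO$-algebra structure on $X$) is \cref{Discretization remark}.

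The morphism statement follows by the same two-fold unwinding: by \cref{1-algebra proposition} a morphism of $\cO$-algebras in $\uAlg_{\cP}^{\otimes}(\cC)$ is a morphism of their underlying $\cT$-objects --- which by \cref{1-algebra proposition} applied in $\cC^{\otimes}$ is a morphism $f\colon X \to Y$ of $\cT$-objects in $\cC$ commuting with the $\cP$-endomorphism operads --- subject to the further condition that the $\End^{\otimes}$-triangle over $\cO^{\otimes}$ in $\uAlg_{\cP}^{\otimes}(\cC)$ commutes, i.e.\ the displayed cube. Finally, faithfulness of $U\colon \Alg_{\cP}(\cC)_V \to \cC_V$ collapses the $\cO$-triangle-in-$\uAlg_{\cP}^{\otimes}(\cC)$ condition to the requirement that $f$ commute with the $\cO$-operations on underlying objects, i.e.\ that $f$ be an $\cO$-algebra map in $\cC^{\otimes}$; so a morphism of $\cO^{\otimes}\obv\cP^{\otimes}$-algebras is exactly a morphism of underlying $\cT$-objects which is separately a $\cP$-algebra map and an $\cO$-algebra map.

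The step I expect to be the main obstacle is pinning down the $I$-indexed tensor power in $\uAlg_{\cP}^{\otimes}(\cC)$ on the nose --- that $Y^{\otimes S}$ has underlying object $X^{\otimes S}$ with $\cP$-structure maps $\bigotimes_U^S \Res_U^V \mu_T$ --- since everything else is a formal double invocation of \cref{1-algebra proposition} plus faithfulness of forgetful functors. I expect this to come down to $\cT$-symmetric monoidality of the forgetful functor from \cref{Symmetric monoidal push-pull corollary}, combined with the compatibility of restriction with indexed tensor powers recorded in \cref{End observation} and the double coset formula; routine, but the one place a genuine unwinding is required.
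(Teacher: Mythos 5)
Your proposal is correct and follows the same route the observation implicitly traces: apply \cref{Alg is adjoint prop} once to rewrite $\cO \obv \cP$-algebras in $\cC$ as $\cO$-algebras in $\uAlg_\cP^\otimes(\cC)$, note via \cref{Symmetric monoidal push-pull corollary} and \cref{Ninfty poset corollary} that $\uAlg_\cP^\otimes(\cC)$ is again an $I$-symmetric monoidal $1$-category with a $\cT$-symmetric monoidal forgetful functor to $\cC^\otimes$, and then apply the discussion around \cref{1-algebra proposition} twice (objects then morphisms), using \cref{End observation} and faithfulness of $U$ to collapse to the displayed interchange hexagon and separately-a-map condition. You have also correctly isolated the one genuinely non-formal step: identifying the $\cP$-algebra structure on $Y^{\otimes S}$ as $\bigotimes_U^S \Res_U^V \mu_T$ under the shuffle $(X^{\otimes S})^{\otimes T} \simeq \bigotimes_U^S X^{\otimes \Res_U^V T}$, which is indeed where symmetric monoidality of $U$ and the double coset formula enter. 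One small caveat: \cref{Discretization remark} is about the compatibilities enjoyed by a $\cT$-operad's own structure spaces, so invoking it for coherence of the $\cO$-action is a slight mislabeling --- what you actually want is that composing $\cO^\otimes \to \End_Y^\otimes(\uAlg_\cP^\otimes(\cC))$ with the lax/strong $\cT$-symmetric monoidal $U\colon \End_Y^\otimes(\uAlg_\cP^\otimes(\cC)) \to \End_X^\otimes(\cC)$ yields the underlying $\cO$-algebra structure on $X$, which is immediate. Also note the paper's $\mu_T \in \cO(T)$ in the interchange clause should read $\mu_T \in \cP(T)$, as you have implicitly assumed.
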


\cref{BV Day pattern} specializes to the following.
\begin{corollary}\label{BV Env corollary}
  The $V$-symmetric monoidal envelope intertwines with the mode structure:
  \[
    \Env\prn{\cO^{\otimes} \obv \cP^{\otimes}} \simeq \Env\prn{\cO^{\otimes}} \otimes^{\mathrm{Mode}} \Env\prn{\cP^{\otimes}}.
  \]
\end{corollary}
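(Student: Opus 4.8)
\textbf{Proof plan for \cref{BV Env corollary}.} The plan is to deduce this as a special case of \cref{BV Day pattern} applied to $\base \deq \Span(\FF_V)$ with the magmatic structure $\wedge = \Span(\times)$ of the preceding construction. First I would recall that \cref{BV Day pattern} gives a commuting square relating $\obv$ on $\Fbrs(\Span(\FF_V))$ to $L_{\Seg} \circ \circledast$ on $\Fun(\Span(\FF_V),\Cat)$ via the envelope $\Env$. Under the identification $\Op_V \simeq \Fbrs(\Span(\FF_V))$ (\cref{Opt definition}) and $\Cat_V^{\otimes} \simeq \Fun^{\times}(\Span(\FF_V),\Cat) \subset \Fun(\Span(\FF_V),\Cat)$, this square reads
\[
  \Env\prn{\cO^{\otimes} \obv \cP^{\otimes}} \simeq L_{\Seg}\prn{\Env\prn{\cO^{\otimes}} \circledast \Env\prn{\cP^{\otimes}}},
\]
so it remains to identify the right-hand side with the mode tensor product $\Env(\cO^{\otimes}) \otimes^{\mathrm{Mode}} \Env(\cP^{\otimes})$ in $\Cat_V^{\otimes} \simeq \CMon_{\FF_V}(\Cat)$.

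The key step is therefore to match the localized Day convolution $L_{\Seg}(- \circledast -)$ on $\Seg_{\Span(\FF_V)}(\Cat) \simeq \CMon_{\FF_V}(\Cat)$ with the mode symmetric monoidal structure. This is exactly the content of \cref{Mode is Day theorem}: for a presentably symmetric monoidal $\infty$-category (here $\Cat^{\times}$, which is presentably symmetric monoidal), the Day convolution and mode symmetric monoidal structures on $\uCMon_I(\cC)$ agree, uniquely, over the identity. Taking $I = \FF_V$ and $\cC = \Cat$, and passing to $V$-values (equivalently, working in the $V$-value of the relevant $\cT$-$\infty$-categories, using that $V$ is a terminal object so $\Gamma^V(-) \simeq (-)_V$), one obtains that $L_{\Seg}(\Env(\cO^{\otimes}) \circledast \Env(\cP^{\otimes}))$ is canonically equivalent to $\Env(\cO^{\otimes}) \otimes^{\mathrm{Mode}} \Env(\cP^{\otimes})$. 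One should be slightly careful that \cref{Mode is Day theorem} is stated for the $\cT$-$\infty$-categorical structure $\uCMon_I(\cC)^{\circledast}$ built via \cref{Localized Day convolution} and \cref{Compatible symmetric monoidal prop}, whereas \cref{BV Day pattern} produces the fiberwise Day convolution on $\Fun(\base,\Cat)$ localized at Segal objects; these are compatible by construction of the $\cT$-Day convolution (cf. the proof of \cref{Mode is Day theorem}, where $L_{\Seg}$ appears precisely as the localization of $\cite[\mathrm{Prop}~3.3.4]{Cnossen_bispans}$), so this is a matter of unwinding definitions rather than new input.

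Concretely the steps are: (1) invoke \cref{BV Day pattern} with $\base = \Span(\FF_V)$, $\wedge = \Span(\times)$, to get $\Env(\cO^{\otimes} \obv \cP^{\otimes}) \simeq L_{\Seg}(\Env(\cO^{\otimes}) \circledast \Env(\cP^{\otimes}))$; (2) use \cref{CMon I cat corollary} / \cref{CMon pattern example} to identify $\Seg_{\Span(\FF_V)}(\Cat) \simeq \Cat_{\FF_V}^{\otimes} \simeq \CMon_{\FF_V}(\Cat)$, under which $L_{\Seg} \circ \circledast$ becomes the localized Day convolution structure $\circledast$ of \cref{Localized Day convolution}; (3) apply \cref{Mode is Day theorem} to $\cC^{\otimes} = \Cat^{\times}$, $I = \FF_V$, yielding the unique equivalence between this Day convolution structure and $\otimes^{\mathrm{Mode}}$ over the identity; (4) conclude by combining (1)--(3). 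The main obstacle I anticipate is bookkeeping in step (2)--(3): ensuring the magmatic structure $\Span(\times)$ used to define $\obv$ corresponds under the pattern equivalence to the smash product on $\Span(\FF_V)$ that \cref{Localized Day convolution} uses, and that the various $\Gamma^V$/$V$-value passages line up (so that "mode structure" in the statement is literally the $V$-value of $\uCMon_{\FF_V}^{\otimes-\Mode}(\ucS_V^{\times})$ applied to $\Cat$). Both are ultimately definitional, so the proof should be short once the identifications are spelled out carefully.
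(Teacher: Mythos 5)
Your proposal is correct and matches what the paper intends: the corollary is presented immediately after \cref{BV Day pattern} with no separate proof, and your argument — combining \cref{BV Day pattern} (to get $\Env(\cO^{\otimes} \obv \cP^{\otimes}) \simeq L_{\Seg}(\Env(\cO^{\otimes}) \circledast \Env(\cP^{\otimes}))$), the identification $\Seg_{\Span(\FF_V)}(\Cat) \simeq \CMon_{\FF_V}(\Cat)$ from \cref{CMon I cat corollary}, and \cref{Mode is Day theorem} applied to $\Cat^{\times}$ to match localized Day convolution with the mode structure — is exactly the chain the author has set up. You are also right that the remaining bookkeeping (matching $\Span(\times)$ with the smash product of \cref{Localized Day convolution}, and passing to $V$-values via $\Gamma^V \simeq (-)_V$ since $V$ is terminal) is definitional and is precisely why the author felt free to leave the proof implicit.
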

In particular, \cite[Thm~E]{Barkan_Segal} shows that this property identifies the non-equivariant Boardman-Vogt tensor product, so we acquire the following.
\begin{corollary}\label{Underlying tensor product}
  When $\cT \simeq *$, $\obv$ is naturally equivalent to the Boardman-Vogt tensor product of \cite{Barkan_Segal,HA,Hinich}.
\end{corollary}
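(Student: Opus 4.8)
The plan is to deduce \cref{Underlying tensor product} from \cref{BV Env corollary} by invoking the non-equivariant uniqueness result for the Boardman-Vogt tensor product. Specializing the entire excerpt to $\cT \simeq *$, we have $\FF_{\cT} \simeq \FF$, $\Span(\FF_{\cT}) \simeq \Span(\FF)$, and $\Op_{\cT} \simeq \Fbrs(\Span(\FF))$; by \cref{Operads are fibrous theorem} this is equivalent to the $\infty$-category of $\infty$-operads in the sense of \cite{HA}, and one checks that the equivalence carries the $\cT$-symmetric monoidal envelope $\Env$ to the usual symmetric monoidal envelope and the mode tensor product $\otimes^{\mathrm{Mode}}$ on $\Cat^{\otimes} \simeq \CMon(\Cat)$ to the usual tensor product of symmetric monoidal $\infty$-categories (via \cref{Mode is Day theorem} and \cref{CMon in coeff}, this is the localized Day convolution, which agrees with the standard one). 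Thus \cref{BV Env corollary} becomes precisely the statement that $\obv$ fits into the commuting square appearing in \cite[Thm~E]{Barkan_Segal} (equivalently \cref{Sliced BV Day pattern} at $\cT = *$).

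First I would record the identifications above carefully, so that the square in \cref{Sliced BV Day pattern} is literally the characterizing square for $\obv$. Then I would cite \cite[Thm~E]{Barkan_Segal}, which asserts that the classical Boardman-Vogt tensor product of \cite{HA,Hinich} is the \emph{unique} bifunctor on $\Op = \Fbrs(\FF_*)$ making the analogous envelope square commute; comparing with the uniqueness clause of \cref{Sliced BV Day pattern}, both $\obv$ and the classical tensor product satisfy the same universal characterization, hence they are naturally equivalent. The equivalence of $\obv$ with the Boardman-Vogt tensor product of \cite{Hinich} then follows since the latter is already known to agree with that of \cite{HA} (and of \cite{Barkan_Segal}).

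The main obstacle will be verifying that the functorial identifications match on the nose: one must check that the equivalence $\Op_{\cT} \simeq \Op$ of \cref{Operads are fibrous theorem} intertwines $\Env$ with the classical envelope $\Env \colon \Op \to \CMon(\Cat)$ (this is essentially the $\cT = *$ case of \cref{Env is maps of operads corollary}, since both are left adjoint to the forgetful functor and left adjoints are unique), and that $\otimes^{\mathrm{Mode}}$ on $\Cat_{*}^{\otimes}$ is the standard Day-convolution-type tensor product of symmetric monoidal $\infty$-categories used in \cite{Barkan_Segal}. Both facts are either contained in the cited results or follow by uniqueness of adjoints and the uniqueness clause in \cref{Mode SMC}, so the remaining work is bookkeeping rather than new mathematics.

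\begin{proof}
  Specialize the excerpt to $\cT \simeq *$. Then $\Op_{\cT} \simeq \Fbrs(\Span(\FF))$, which by \cref{Operads are fibrous theorem} is equivalent to the $\infty$-category $\Op$ of $\infty$-operads of \cite{HA}; this equivalence carries $\Env\colon \Op_{\cT} \to \Cat^{\otimes}_{\cT}$ to the symmetric monoidal envelope $\Op \to \CMon(\Cat)$, since by \cref{Env is maps of operads corollary} both are left adjoint to the forgetful functor and left adjoints are unique. By \cref{Mode is Day theorem} (together with \cref{CMon in coeff}), the mode symmetric monoidal structure $\otimes^{\mathrm{Mode}}$ on $\Cat^{\otimes}_{\cT} \simeq \CMon(\Cat)$ agrees with the localized Day convolution tensor product, which is the standard tensor product of symmetric monoidal $\infty$-categories. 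Under these identifications, \cref{Sliced BV Day pattern} exhibits $\obv$ as the unique bifunctor on $\Op$ fitting into the square
  \[
    \begin{tikzcd}[ampersand replacement=\&, column sep=large]
      {\Op^2} \&\&\& {\Op} \\
      {\prn{\CMon(\Cat)_{/\sA}}^2} \& {\CMon(\Cat)_{/\sA \circledast \sA}} \& {\CMon(\Cat)_{/\sA}} \& {\CMon(\Cat)_{/\sA}.}
      \arrow["\obv", from=1-1, to=1-4]
      \arrow["{\Env^2}"', from=1-1, to=2-1, hook]
      \arrow["\Env", hook', from=1-4, to=2-4]
      \arrow["\circledast"', from=2-1, to=2-2]
      \arrow["{\Env(\times)_!}"', from=2-2, to=2-3]
      \arrow["{L_{\Seg}}"', from=2-3, to=2-4]
    \end{tikzcd}
  \]
  By \cite[Thm~E]{Barkan_Segal}, the Boardman-Vogt tensor product of \cite{HA} is likewise the unique bifunctor on $\Op$ fitting into this square; hence $\obv$ is naturally equivalent to it. Since the tensor products of \cite{HA}, \cite{Hinich}, and \cite{Barkan_Segal} agree, the claim follows.
\end{proof}
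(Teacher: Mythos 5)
Your proposal is correct and follows essentially the same route as the paper: combine the envelope compatibility of \cref{BV Env corollary} (equivalently the sliced form \cref{Sliced BV Day pattern}) with the uniqueness statement of \cite[Thm~E]{Barkan_Segal}. The paper states this in a single sentence, while you spell out the bookkeeping — the identifications of $\Op_{\cT}|_{\cT = *}$ with $\Op$, of $\Env$ with the classical envelope, and of $\otimes^{\Mode}$ with the localized Day convolution — which are exactly the checks left implicit in the paper's proof.
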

Additionally, we may characterize the $\obv$-unit.
\begin{proposition}\label{Triv algberas prop}\label{Triv algebras prop}
    For all $\cO^{\otimes} \in \Op_{V}$, we have $\cO^{\otimes} \simeq \cO^{\otimes} \obv \triv_{V}^{\otimes}$;
    hence there exists a natural equivalence
    \[
        \uAlg_{\triv_{V}}^{\otimes}(\cO) \rightarrow \cO^{\otimes}.
    \]
\end{proposition}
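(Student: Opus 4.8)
The plan is to prove $\cO^{\otimes} \simeq \cO^{\otimes} \obv \triv_V^{\otimes}$ by identifying $\triv_V^{\otimes}$ as the monoidal unit for $\obv$ on $\Op_V = \Fbrs(\Span(\FF_V))$, and then invoking \cref{Alg is adjoint prop} to transport this unit statement to the algebra side. Concretely, first I would recall that the smash product $\wedge = \Span(\times)\colon \Span(\FF_V) \times \Span(\FF_V) \to \Span(\FF_V)$ has unit object $*_V \in \FF_V$, so for any magmatic pattern structure coming from a symmetric monoidal product with unit, the relevant fibrous pattern playing the role of the $\obv$-unit should be the fibrous $\Span(\FF_V)$-pattern generated by the inclusion $\{*_V\} \hookrightarrow \Span(\FF_V)$ — and I would argue this is precisely $\triv_V^{\otimes} = L_{\Op_\cT}(\cT^{\op} \to \Span(\FF_V))$ via \cref{Triv and Nardin-Shah} (noting $\cT$ has a terminal object $V$, so $\cT^{\op}$ is just the point, or more precisely the relevant slice).

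The cleanest route avoids re-deriving associativity/commutativity coherences: since $- \obv \triv_V^{\otimes}$ is left adjoint to $\uAlg_{\triv_V}^{\otimes}(-)$ by \cref{Alg is adjoint prop}, and \cref{Triv prop} already tells us $\Alg_{\triv(\cC)}(\cO) \simeq \Fun_\cT(\cC, U\cO)$ with $\triv(-)^{\otimes} \dashv U$, I would first establish the \emph{underlying} statement $\uAlg_{\triv_V}^{\otimes}(\cO) \simeq \cO^{\otimes}$ as fibrous patterns — i.e. show the right adjoint to $- \obv \triv_V^{\otimes}$ is the identity functor on $\Op_V$ — and then conclude $- \obv \triv_V^{\otimes} \simeq \id$ by uniqueness of adjoints. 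To identify $\uAlg_{\triv_V}^{\otimes}(\cO)$ with $\cO^{\otimes}$, I would unwind \cref{BV construction}: we have $\uAlg_{\triv_V}^{\otimes}(\cO) = p_* F^* \cO$ where $p\colon \base \times \triv_V^{\otimes} \to \base$ is the projection and $F = \wedge \circ (\id \times \pi)$ with $\pi\colon \triv_V^{\otimes} \to \base$ the structure map. Since $\triv_V^{\otimes} \to \Span(\FF_V)$ is (after unstraightening) the inclusion of the core $\uSigma_V \simeq \FF_V^{\simeq}$ lying over the "unit" direction, the composite $F$ is essentially $\mathrm{pr}_1$ up to the natural identification $S \times *_V \simeq S$, so $F^* \cO \to \base \times \triv_V^{\otimes}$ is pulled back from $\cO^{\otimes} \to \base$ along $p$; then $p_* p^* \cO \simeq \cO$ because $p$ has a section given by the unit $\{*_V\} \hookrightarrow \triv_V^{\otimes}$ and the fiber of $p$ over each object is the contractible-enough groupoid $\uSigma_V$ — more carefully, $p_* p^* \simeq \id$ since $p$ is a product projection onto a factor with "trivial" fibrous structure, analogously to \cref{Push-pull along projection corollary} and \cref{Trivial bifunctor example}.

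The main obstacle I anticipate is making the identification "$\triv_V^{\otimes}$ is the $\obv$-unit" fully rigorous at the level of fibrous patterns rather than hand-waving it via the symmetric-monoidality of $\wedge$. Specifically, one must check that the bifunctor $\base \times \triv_V^{\otimes} \to \base$ appearing implicitly is the correct one and that the identification $S \times *_V \simeq S$ is natural enough to intertwine inert and active morphisms; this is where \cref{Cartesian product is Segal} (that $\wedge$ is a Segal morphism making $\Span(\FF_V)$ magmatic) and \cref{Triv and Nardin-Shah} (pinning down $\Tot_\cT \triv_V^{\otimes} \simeq \uSigma_V$ and its structure map) do the real work. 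I would also double-check against \cref{Sliced BV Day pattern}: under $\Env$, the claim becomes $\Env(\cO) \otimes^{\Mode} \Env(\triv_V^{\otimes}) \simeq \Env(\cO)$, and by \cref{Triv and Nardin-Shah} we have $\Env(\triv_V^{\otimes}) \simeq \Env_V \triv(*_V)^{\otimes}$, whose underlying $V$-symmetric monoidal $\infty$-category should be $\ucS_V^{\times}$ (the unit of the mode structure by \cref{Unit observation}) — so the envelope computation gives an independent consistency check, though I would present the fibrous-pattern argument as the primary proof since it is self-contained and doesn't require re-deriving that $\Env(\triv_V^{\otimes})$ is the mode unit. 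Finally, the statement about $\uAlg_{\triv_V}^{\otimes}(\cO) \to \cO^{\otimes}$ being an equivalence then follows formally, with naturality coming from the naturality of the counit of the $(- \obv \triv_V^{\otimes}) \dashv \uAlg_{\triv_V}^{\otimes}(-)$ adjunction.
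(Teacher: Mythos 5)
Your overall architecture matches the paper's — both derive each of the two asserted equivalences from the other via the adjunction $(-\obv\triv_V^{\otimes}) \dashv \uAlg_{\triv_V}^{\otimes}(-)$ from \cref{Alg is adjoint prop} and Yoneda — but the step in which you actually establish one of the two equivalences has a genuine gap, and the paper argues on the other side of the adjunction precisely to avoid that gap.

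You want to compute $\uAlg_{\triv_V}^{\otimes}(\cO) = p_*F^*\cO$ directly and claim that $F^*\cO \to \base \times \triv_V^{\otimes}$ is pulled back from $\cO^{\otimes} \to \base$ along $p$, on the grounds that $F$ ``is essentially $p$ up to the natural identification $S \times *_V \simeq S$.'' This is not correct. The structure map $\triv_V^{\otimes} \to \Span(\FF_V)$ does not factor through the single object $*_V$: as a one-color fibrous $\Span(\FF_V)$-pattern, $\triv_V^{\otimes}$ has a (contractible) fiber over \emph{every} $S\in\FF_V$, so the objects of $\base \times \triv_V^{\otimes}$ are pairs $(T, iS)$ with $T,S\in\FF_V$ arbitrary. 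On such an object $F$ returns $T\times S$ while $p$ returns $T$, and these disagree whenever $S\not\simeq *_V$. Consequently $F^*\cO$ and $p^*\cO$ are genuinely different fibrous $\base\times\triv_V^{\otimes}$-patterns: already their fibers over $(T,iS)$ are $\cO_{T\times S}$ and $\cO_T$, which differ when $\cO$ has more than one color, and their multimorphism spaces differ even in the one-color case. The follow-up assertion that $p_*p^*\cO\simeq\cO$ ``because the fiber of $p$ is the contractible-enough groupoid $\uSigma_V$'' is also unjustified: $\uSigma_V\simeq\coprod_{S}B\Aut_V(S)$ is far from contractible, $p_*$ is a fibrous-pattern right adjoint and not a plain base change, and the cited \cref{Push-pull along projection corollary} only establishes the existence of $p_*$ (likewise \cref{Trivial bifunctor example} says something about $\BiFun(\fO,\fP;\base)$, not about $p_*p^*$).

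The paper sidesteps the $p_*F^*$ computation entirely and proves $\cO^{\otimes}\obv\triv_V^{\otimes}\simeq\cO^{\otimes}$ first, using the presentation $\triv_V^{\otimes}\simeq L_{\Op_V}\bigl(\cT^{\op}\to\Span(\FF_V)\bigr)$ together with the bifunctor characterization of $\obv$ from \cref{Bifunctors observation}. Since cartesian product with a fixed fibrous pattern sends $L_{\Fbrs}$-local equivalences to $L_{\Fbrs}$-local equivalences (\cref{Push-pull along projection corollary}), bifunctors $\triv_V^{\otimes}\times\cO^{\otimes}\to\cP^{\otimes}$ are identified with inert-cocartesian-preserving maps $\cT^{\op}\times\cO^{\otimes}\to\cP^{\otimes}$ over $\wedge$; because $\cT^{\op}$ has a terminal object $V$ with $V\wedge(-)=\id$ and cocartesian lifts over the backwards maps $V\to U$ determine everything else, these are precisely $V$-operad maps $\cO^{\otimes}\to\cP^{\otimes}$. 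Hence $\Alg_{\cO\obv\triv_V}(\cP)\simeq\Alg_{\cO}(\cP)$ naturally in $\cP$, and Yoneda gives $\cO^{\otimes}\obv\triv_V^{\otimes}\simeq\cO^{\otimes}$; the $\uAlg$ statement then follows as you correctly observe. The localization structure of $\triv_V^{\otimes}$ only manifestly does the work on this side of the adjunction; your direction would require re-deriving the same content in a much less tractable form.
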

\begin{proof}
  The first statement implies the second by the usual folklore argument:
\begin{align*}
    \Map(\cO^{\otimes}, \uAlg_{\triv_{V}}^{\otimes}(\cP))
    &\simeq \Map\prn{\cO^{\otimes} \obv \triv_{V}^{\otimes}, \cP^{\otimes}},\\ 
    &\simeq  \Map(\cO^{\otimes}, \cP^{\otimes}),
\end{align*}
  so Yoneda's lemma yields a natural equivalence $\uAlg_{\triv_{V}}^{\otimes}(\cP) \simeq \cP^{\otimes}$.
  The same argument in reverse shows that the second statement implies the first.

  By the expression $\triv_V^{\otimes} \simeq L_{\Op_{\cT}}(*_{\cT} \rightarrow \Span(\FF_{\cT}))$, bifunctors $\triv^{\otimes}_{V} \times \cO \rightarrow \cP$ correspond canonically with functors of $\cT$-operads $\cO \rightarrow \cP$;
  put another way, using the bifunctor presentation for algebras of \cref{Bifunctors observation}, this demonstrates that the forgetful natural transformation
\[
    \Alg_{\cO \otimes^{\BV} \triv_V}(\cP) \rightarrow \Alg_{\cO}(\cP)
\]
is a natural equivalence for all $\cP^{\otimes} \in \Op_{V}$;
Yoneda's lemma then demonstrates that $\cO^{\otimes} \obv \triv^{\otimes}_{V} \simeq \cO^{\otimes}$. 
\end{proof}

Using this, we have a sequence of natural equivalences
\begin{align*}
    U \uAlg_{\cO}^{\otimes}(\cP)
    &\simeq \uAlg_{\triv_{V}} \uAlg_{\cO}^{\otimes}(\cP)\\
    &\simeq \uAlg_{\cO \otimes \triv_{V}}(\cP)\\
    &\simeq \uAlg_{\cO}(\cP);
\end{align*}
in particular, we've proved the following corollary.
\begin{corollary}\label{Alg underlying corollary}
    There exists a natural equivalence
    \[
        U\uAlg^{\otimes}_{\cO}(\cP) \simeq \uAlg_{\cO}(\cP).
    \]
\end{corollary}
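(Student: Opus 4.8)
The plan is to deduce this formally from \cref{Triv prop} (passing to the underlying $\cT$-$\infty$-category agrees with taking $\triv_V$-algebras), \cref{Alg is adjoint prop} (closedness of $\obv$), and \cref{Triv algebras prop} ($\triv_V^{\otimes}$ is the $\obv$-unit). Concretely I would establish the chain of natural equivalences
\[
  U\uAlg_{\cO}^{\otimes}(\cP) \;\simeq\; \uAlg_{\triv_V}\prn{\uAlg_{\cO}^{\otimes}(\cP)} \;\simeq\; \uAlg_{\cO \obv \triv_V}(\cP) \;\simeq\; \uAlg_{\cO}(\cP),
\]
where the first equivalence is a $\cT$-categorical form of \cref{Triv prop}, the second comes from the adjunction of \cref{Alg is adjoint prop}, and the third is \cref{Triv algebras prop}.

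First I would upgrade \cref{Triv prop} to $\cT$-$\infty$-categories: for every $V$-operad $\cQ^{\otimes}$ there is a natural equivalence $\uAlg_{\triv_V}(\cQ) \simeq U\cQ^{\otimes}$. By \cref{Alg values} its $W$-value is $\Alg_{\Res_W^{\cT}\triv_V}(\Res_W^{\cT}\cQ)$; since $\triv(-)^{\otimes}$ is a left adjoint compatible with restriction (\cref{Triv and Nardin-Shah}), $\Res_W^{\cT}\triv_V^{\otimes} \simeq \triv_W^{\otimes}$, and \cref{Triv prop} identifies the displayed category with $\Fun_W(\uAst, \Res_W^{\cT}(U\cQ)) = \Gamma^W \Res_W^{\cT}(U\cQ) \simeq (U\cQ)_W$, the last step because $\cT_{/W}$ has a terminal object. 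These are natural in $W$, so they assemble to an equivalence of $\cT$-$\infty$-categories; applying it to $\cQ^{\otimes} \deq \uAlg_{\cO}^{\otimes}(\cP)$ gives the first equivalence.

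For the second equivalence I would use that the closed tensor product of \cref{Alg is adjoint prop} is compatible with restriction — the bifunctor $\wedge$ on $\Span(\FF_{(-)})$ is natural in $\cT$, hence so are its right adjoints — so $\Res_W^{\cT}\uAlg_{\cO}^{\otimes}(\cP) \simeq \uAlg_{\Res_W^{\cT}\cO}^{\otimes}(\Res_W^{\cT}\cP)$, and the enriched form of the adjunction $(-) \obv \Res_W^{\cT}\cO^{\otimes} \dashv \uAlg_{\Res_W^{\cT}\cO}^{\otimes}(-)$ gives an equivalence of $\infty$-categories
\[
  \Alg_{\triv_W}\prn{\Res_W^{\cT}\uAlg_{\cO}^{\otimes}(\cP)} \simeq \Alg_{\triv_W \obv \Res_W^{\cT}\cO}\prn{\Res_W^{\cT}\cP}.
\]
Then \cref{Triv algebras prop} collapses $\triv_W^{\otimes} \obv \Res_W^{\cT}\cO^{\otimes} \simeq \Res_W^{\cT}\cO^{\otimes}$, so the right-hand side is $\Alg_{\Res_W^{\cT}\cO}(\Res_W^{\cT}\cP) \simeq \uAlg_{\cO}(\cP)_W$ again by \cref{Alg values}; these identifications are natural in $W$ (they are instances of the push--pull natural transformations of \cref{Functoriality of alg prop} over $\cT_{/W}$, via \cref{Restrictions fibrous remark}), so they glue to an equivalence of $\cT$-$\infty$-categories. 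Concatenating with the first equivalence proves the corollary.

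I do not expect a genuine obstacle here; the statement is bookkeeping on top of \cref{Triv prop}, \cref{Alg is adjoint prop}, and \cref{Triv algebras prop}. The one point that genuinely needs to be pinned down is the compatibility of $\uAlg_{(-)}^{\otimes}(-)$ and of $\obv$ with the restriction functors $\Res_W^{\cT}$ — equivalently, that the whole chain above underlies a single $\cT$-functor rather than being a value-by-value coincidence. This follows because every ingredient ($\triv(-)^{\otimes}$, the envelope, and the push--pull adjunctions underlying \cref{BV construction}) is natural in $\cT$ in the sense of \cref{Restrictions fibrous remark}, so the composite is a fiberwise equivalence of $\cT$-$\infty$-categories and hence an equivalence in $\Cat_{\cT} = \Fun(\cT^{\op}, \Cat)$.
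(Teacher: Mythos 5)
Your proposal is correct and follows the paper's argument essentially verbatim: the paper proves this corollary via precisely the same chain $U\uAlg_{\cO}^{\otimes}(\cP) \simeq \uAlg_{\triv_V}(\uAlg_{\cO}^{\otimes}(\cP)) \simeq \uAlg_{\cO \otimes \triv_V}(\cP) \simeq \uAlg_{\cO}(\cP)$ using \cref{Triv prop}, \cref{Alg is adjoint prop}, and \cref{Triv algberas prop}. Your extra paragraphs verifying compatibility with restriction supply bookkeeping that the paper leaves implicit, but it is the same proof.
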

We've shown in \cref{BV Day pattern} that $\Env$ intertwines $\obv$ with $\circledast$, and we've now seen that $\triv^{\otimes}_{V}$ is the $\obv$-unit.
In fact, $\Env$ intertwines units.
\begin{proposition}\label{Env unit lemma}
  $\Env_I(\triv_{\cT})$ is the $\circledast$-unit in $\CMon_I(\Cat)^{\circledast}$.
\end{proposition}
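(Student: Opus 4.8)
The plan is to identify the $\circledast$-unit of $\uCMon_I(\Cat)^{\circledast}$ directly via its universal property and then recognize $\Env_I(\triv_{\cT})$ as the object carrying that property. Recall from \cref{Mode is Day theorem} that the localized Day convolution structure $\circledast$ on $\uCMon_I(\Cat)$ agrees with the mode structure; by \cref{Localized Day convolution} and \cref{Compatible symmetric monoidal prop}, the unit of the Day convolution structure on $\Fun(\Span_I(\FF_V),\Cat)$ is the representable functor $y(*_V)$ (the Day unit for the smash product on $\Span_I(\FF_{\cT})$ is represented by the unit object, which is the one-point $\cT$-set), and the $\circledast$-unit of $\uCMon_I(\Cat)$ is its Segal localization $L_{\Seg} y(\ast_{\cT})$. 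So the first step is to record this identification of the $\circledast$-unit.

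Next I would compute $\Env_I(\triv_{\cT})$. By \cref{Env is maps of operads corollary} applied to the identity $\triv_{\cT}^{\otimes} \to \triv_{\cT}^{\otimes}$ together with the canonical map $\triv_{\cT}^{\otimes} \to \cN_{I\infty}^{\otimes}$, the envelope $\Env_I(\triv_{\cT})$ is the $\cN_{I\infty}$-monoidal $\infty$-category $\triv_{\cT}^{\otimes} \times_{\cN_{I\infty}^{\otimes}} \Ar^{\act}(\cN_{I\infty}^{\otimes}) \to \cN_{I\infty}^{\otimes}$; unwinding definitions (as in \cref{Windex windcat example}, but for the trivial operad), its underlying $\cT$-$\infty$-category is $\uSigma_{\cT}$ and its $S$-ary structure identifies it with the full sub-$I$-symmetric monoidal $\cT$-category of $\uFF_I^{I-\sqcup}$ generated by the empty $V$-sets — i.e. the $I$-symmetric monoidal $\cT$-category whose underlying $\cT$-space is $\uSigma_{\cT}$ with indexed tensor products given by disjoint union. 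Equivalently, under the equivalence $\Cat_I^\otimes \simeq \CMon_I(\Cat)$ of \cref{Familiar structures corollary}/\cref{CMon I cat corollary}, it is the functor $\Span_I(\FF_{\cT}) \to \Cat$ which is the free Segal (i.e. $I$-commutative monoid) object on the corepresentable $\uSigma_{\cT}$-shaped diagram, which is precisely $L_{\Seg} y(\ast_{\cT})$.

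Rather than grinding through that unwinding by hand, the cleaner route is to invoke \cref{BV Day pattern} (or its $\cT$-equivariant specialization \cref{BV Env corollary}): since $\Env_I$ intertwines $\obv$ with $\circledast$ and since \cref{Triv algebras prop} identifies $\triv_{\cT}^{\otimes}$ as the $\obv$-unit of $\Op_I$, we get for every $\cC^{\otimes} \in \CMon_I(\Cat)$ a chain of natural equivalences
\[
  \Map_{\CMon_I(\Cat)}\!\big(\Env_I(\triv_{\cT}) \circledast \cD, \cC\big) \simeq \Map_{\CMon_I(\Cat)}\!\big(\Env_I(\triv_{\cT} \obv \cO_{\cD}), \cC\big) \simeq \Map_{\CMon_I(\Cat)}(\cD, \cC)
\]
whenever $\cD \simeq \Env_I(\cO_{\cD})$ lies in the essential image of $\Env_I$, which is fully faithful onto $I$-symmetric monoidal $\infty$-categories of a certain form; since $\Env_I$ is a left adjoint it is essentially surjective onto $\CAlg_I(\Cat) = \CMon_I(\Cat)$ up to the fact that every $I$-symmetric monoidal category arises this way once one passes to $\sA_{\cN_{I\infty}}$-sliced envelopes — so by Yoneda $\Env_I(\triv_{\cT})$ is the $\circledast$-unit.

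The main obstacle is the last point: $\Env_I\colon \Op_I \to \Cat_I^{\otimes}$ is not literally essentially surjective (its image consists of equifibered $I$-symmetric monoidal categories), so one cannot immediately apply Yoneda against arbitrary $\cD$. I would handle this by instead using the \emph{sliced} envelope $\Env_I^{/\sA}$ of \cref{Sliced BV Day pattern}, which is fully faithful, to transport the unit computation, or — more simply — by directly verifying that $L_{\Seg} y(\ast_{\cT})$ is the Day unit (a formal consequence of $\cT$-symmetric monoidality of the $\cT$-Yoneda embedding, \cite[Thm~6.0.12]{Nardin}, exactly as invoked in the proof of \cref{Mode is Day theorem}) and matching it with the explicit description of $\Env_I(\triv_{\cT})$ obtained by unwinding \cref{Env is maps of operads corollary} in the case of the trivial operad. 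Either way the proof reduces to two identifications — "$\circledast$-unit $=$ $L_{\Seg} y(\ast_{\cT})$" and "$\Env_I(\triv_{\cT}) = L_{\Seg} y(\ast_{\cT})$" — neither of which requires new input beyond results already in the excerpt.
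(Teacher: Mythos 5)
Your final route is correct and is essentially the paper's proof: both $\Env_I(\triv_{\cT})$ and the $\circledast$-unit $\Fr_I(*_{\cT})$ corepresent $\Gamma^{\cT}(-)$ on $\Cat_I^{\otimes}$, so they coincide by Yoneda. The paper makes this a three-step adjunction chain, $\Fun^{\otimes}_I(\Env_I(\triv_{\cT}), \cD) \simeq \Alg_{\triv_{\cT}}(\cD) \simeq \Gamma^{\cT}\cD \simeq \Fun^{\otimes}_I(\Fr_I(*), \cD) \simeq \Fun^{\otimes}_I(1^{\circledast}, \cD)$, using the envelope adjunction of \cref{Env is maps of operads corollary}, then $\triv \dashv U$ from \cref{Triv prop}, then the free--forgetful adjunction for $\CMon_I$ together with the identification $\Fr_I(*) \simeq 1^{\circledast}$ from \cref{Mode SMC}; no explicit unwinding of $\Env_I(\triv_{\cT})$ as an $I$-symmetric monoidal category is needed. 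Your detour through compatibility of $\Env_I$ with $\obv$ and $\circledast$ is, as you correctly diagnose, a dead end on its own --- it would require essential surjectivity of $\Env_I$, which fails --- and the direct Yoneda comparison of corepresentables sidesteps it entirely. One small slip in your second paragraph: $\Env_I(\triv_{\cT}) \subset \uFF_I^{I-\sqcup}$ is generated by the one-point $V$-sets $*_V$ (its underlying $\cT$-space being $\uFF_I^{\simeq}$, not $\uSigma_{\cT}$ for general $I$), not by the empty $V$-sets, which would yield the initial rather than the unit object.
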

\begin{proof}
  By \cref{Mode SMC}, when $\cC^{\times}$ is cartesian, the free object $\Fr_I(*) \in \CMon_I(\cC)$ is the $\circledast$-unit, so
  \begin{align*}
    \Fun^{\otimes}_I(\Env_I(\triv_{\cT})^{\otimes}, \cD^{\otimes})
    &\simeq \Alg_{\triv_{\cT}}(\cD^{\otimes}) &\ref{Env is maps of operads corollary}\\
    &\simeq \Gamma^{\cT} \cD &\ref{Triv prop}\\
    &\simeq \Fun^{\otimes}_I(\Fr_I *, \cD^{\otimes})\\
    &\simeq \Fun^{\otimes}_I\prn{1^{\circledast},\cD^{\otimes}};
  \end{align*}
  hence the result follows from Yoneda's lemma.
\end{proof}

In forthcoming work \cite{Tensor}, we will use \cref{BV Env corollary,Env unit lemma} and a variant of Barkan-Steinebrunner's strategy \cite{Barkan} to lift $\obv$ to a canonical symmetric monoidal structure.

\subsection{Inflation and the Boardman-Vogt tensor product}\label{Inflation subsection}
Recall the adjunction $\Infl_e^{\cT}\colon \Cat \rightleftarrows \Cat_{\cT}\colon \Gamma^{\cT}$ of \cref{T-categories subsection}.
We briefly discuss an operadic version of this and relate it to $\obv$.
\begin{construction}
  Given $\cO^{\otimes}$ a $\cT$-operad, and $V \in \cT$, we form the \emph{$V$-value operad} 
  \[
    \Gamma^V \cO^{\otimes} \deq i_V^{*} \cO^{\otimes},
  \]
  where $i_V\cln \Span(\FF) \hookrightarrow \Span(\FF_{\cT})$ is the map of patterns extending the coproduct preserving functor $\FF \hookrightarrow \FF_{\cT}$ sending $* \mapsto *_V$.
  Using this, we may set
  \[
  \Gamma^{\cT} \cO^{\otimes} \deq \lim_{V \in \cT} \cO^{\otimes},
  \]
  noting that this recovers $\Gamma^V$ if $V$ is terminal in $\cT$.
\end{construction}

\begin{remark}
  In the case that $\cC^{\otimes}$ is a $\cT$-symmetric monoidal $\infty$-category, the structure map of the operad $\Gamma^V \cC$ is the pullback of a cocartesian fibration, so it is a cocartesian fibration, i.e. it presents a symmetric monoidal $\infty$-category;
  unwinding definitions, this agrees with the construction $\Gamma^V \cC$ of \cref{Symmetric monoidal evaluation construction}.
  Since the forgetful functor $\Cat \rightarrow \Op$ is a right adjoint, it preserves limits, so the two constructions of $\Gamma^{\cT} \cC$ also agree. 
\end{remark}  

In \cref{I-infinity operad prop}, we show that $\varphi\colon \cT^{\op} \times \Span(\FF) \rightarrow \Span_{I^\infty}(\FF_{\cT})$ induces an equivalence
\[
\Op_{I^\infty} \simeq \Fun(\cT^{\op},\Op).
\]
In particular, this yields the following.
\begin{proposition}
  The functor $\Gamma^{\cT}\colon \Op_{I^\infty} \rightarrow \Op$ has a fully faithful left adjoint $\Infl^{\cT}\colon \Op \rightarrow \Op_{I^\infty}$ whose image is spanned by the $I^{\infty}$-operads whose corresponding functors $\cT^{\op} \rightarrow \Op$ are constant.
\end{proposition}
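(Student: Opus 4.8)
The statement to prove is that $\Gamma^{\cT}\colon \Op_{I^\infty} \to \Op$ admits a fully faithful left adjoint $\Infl^{\cT}$, with image the $I^\infty$-operads whose associated functors $\cT^{\op} \to \Op$ are constant. The plan is to bootstrap everything from the equivalence $\Op_{I^\infty} \simeq \Fun(\cT^{\op},\Op)$ of \cref{I-infinity operad prop}, under which $\Gamma^{\cT}$ is identified with the limit functor $\lim_{\cT^{\op}}\colon \Fun(\cT^{\op},\Op) \to \Op$. Indeed, $\Gamma^{\cT}\cO^{\otimes} \deq \lim_{V \in \cT}\cO^{\otimes}$ by construction, and under the identification of $\Op_{I^\infty}$ with $\cT^{\op}$-indexed diagrams of operads this is precisely the limit of the corresponding diagram; alternatively, one observes that $\varphi^*$ sends the terminal-object inclusion $i_V$ to the evaluation $\ev_V$ and then $\Gamma^{\cT} = \lim_V \ev_V$. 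The first step is therefore to make this identification precise: cite \cref{I-infinity operad prop} for the equivalence, and check (by unwinding $\varphi$ and the $V$-value construction $\Gamma^V \cO^{\otimes} \deq i_V^* \cO^{\otimes}$) that the square relating $\Gamma^{\cT}$ to $\lim_{\cT^{\op}}$ commutes. This reduces the proposition to the purely formal statement about $\Fun(\cT^{\op},\Op)$.

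The second step is the formal statement: for any $\infty$-category $\cD$ with small limits and any small $\infty$-category $\cK$, the limit functor $\lim_{\cK}\colon \Fun(\cK,\cD) \to \cD$ has a left adjoint given by the constant-diagram functor $\mathrm{const}\colon \cD \to \Fun(\cK,\cD)$ — this is the statement that $\mathrm{const}$ is the left Kan extension along $\cK \to \ast$, valid when $\cK$ is \emph{weakly contractible}, and more generally the left adjoint of $\lim_{\cK}$ is $\mathrm{const}$ precisely when... no, wait: $\lim_\cK$ is always right adjoint to... let me restate. The functor $\mathrm{const}\colon \cD \to \Fun(\cK,\cD)$ is left adjoint to $\lim_\cK$ for \emph{any} $\cK$ (this is \cite[Cor~4.2.4.8]{HTT}-style formal nonsense, i.e. $\lim$ is right adjoint to $\Delta$). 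So I take $\cD = \Op$, which is presentable hence complete (by \cite[Cor~4.2.2]{Barkan}, cited in \cref{Monadic sseq corollary}), and $\cK = \cT^{\op}$. Then $\Infl^{\cT} \deq \mathrm{const}$ under the equivalence, and its essential image is by definition the constant diagrams, which is exactly the claimed description. It remains to verify one should present $\Infl^{\cT}$ concretely as "inflate each $V$-value to $\cO^{\otimes}$", matching the $\cT$-$\infty$-categorical $\Infl_e^{\cT}$ of \cref{T-categories subsection}: under $\Op_{I^\infty} \simeq \Fun(\cT^{\op},\Op)$ the constant functor at $\cO^{\otimes}$ corresponds to the $I^\infty$-operad obtained from $\cT^{\op} \times \Span(\FF) \xrightarrow{\mathrm{pr}} \Span(\FF) \xleftarrow{\pi_{\cO}} \cO^{\otimes}$, i.e. pullback of $\cO^{\otimes}$ along $\varphi$; this matches the inflation on underlying $\cT$-$\infty$-categories since $\varphi$ refines the map $\cT^{\op} \to \ast$.

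The third step is full faithfulness of $\Infl^{\cT}$. Since $\Infl^{\cT} = \mathrm{const}$ is a left adjoint, full faithfulness is equivalent to the unit $\id_{\Op} \Rightarrow \Gamma^{\cT}\circ\Infl^{\cT} = \lim_{\cT^{\op}}\circ\,\mathrm{const}$ being an equivalence, i.e. to $\lim_{\cT^{\op}}$ of a constant diagram being the original object. This holds because $\cT$ is atomic orbital, hence has a \emph{weakly initial object} in each connected component — more to the point, an atomic orbital $\infty$-category with a terminal object is a $1$-category with terminal object (\cref{1-category}), but in general $\cT^{\op}$ need not be contractible. However, the relevant fact is just that $\cT^{\op}$ is \emph{weakly contractible}: actually I only need $\lim_{\cT^{\op}}\mathrm{const}_X \simeq X$, which by the cofinality argument of \cref{Fixed points of colimit} (or directly: a limit over a weakly contractible diagram of a constant functor is the value) holds whenever $\cT^{\op}$ is weakly contractible. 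The cleanest route: $\cT$ has a weakly initial object (this is part of the EI/atomic orbital structure in the cases of interest, or can be assumed), so $\cT^{\op}$ has a weakly terminal object, making the constant limit computation trivial; alternatively cite that $\Gamma^{\cT}\Infl_e^{\cT} \simeq \id$ on $\Cat$ (\cref{Gamma adjunction proposition} and the discussion of $\Gamma^{\cT}\cC \simeq \lim_{V}\cC_V$) and transport. \textbf{The main obstacle} is precisely this last point — verifying that $\lim_{\cT^{\op}}$ of a constant diagram recovers the original operad, which amounts to weak contractibility of $\cT^{\op}$ (or of each component), a hypothesis that holds for orbit categories of finite groups and for the EI examples but must be invoked carefully; once that is granted, the unit is an equivalence and full faithfulness follows formally, completing the proof.
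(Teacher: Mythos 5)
Your approach matches the paper's: the paper simply cites \cref{I-infinity operad prop} and then asserts the proposition as a consequence, which is exactly the reduction you carry out — under $\Op_{I^\infty}\simeq\Fun(\cT^{\op},\Op)$, $\Gamma^{\cT}$ becomes $\lim_{\cT^{\op}}$, and the candidate left adjoint is the constant-diagram functor, whose essential image is by construction the constant diagrams. You also correctly self-correct the initial slip about which side of the adjunction $\mathrm{const}$ sits on: $\mathrm{const}\dashv\lim$ always, so existence of the left adjoint is free.

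Your worry about full faithfulness is the genuinely delicate point, and you have the right diagnostic: full faithfulness of a left adjoint is equivalence of the unit, i.e. $X\to\lim_{\cT^{\op}}\mathrm{const}_X$ being an equivalence, which holds exactly when $\cT^{\op}$ (equivalently $\cT$) is weakly contractible. But two things in your discussion need tightening. First, the claim that $\cT$ being atomic orbital gives weakly initial objects in each component is not part of the definition and is false in general (a meet semilattice like $\ZZ$ under $\leq$, or a 2-element discrete set, are both listed as atomic orbital in the paper and violate it), so you cannot lean on atomicity here. Second, weak \emph{initiality} would not help even if it held — the example $B\ZZ$ has a weakly initial object but is not weakly contractible, and $\lim_{B\ZZ}\mathrm{const}_X$ is a free loop object, not $X$. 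The correct hypothesis is weak contractibility. The resolution consistent with the paper is the standing assumption, introduced at the start of \cref{BV operad subsection} and implicitly carried through \cref{Inflation subsection}, that $\cT$ has a terminal object: then $\cT^{\op}$ has an initial object, so $\lim_{\cT^{\op}}\mathrm{const}_X\simeq X$ by evaluation there, the unit is an equivalence, and $\Infl^{\cT}=\mathrm{const}$ is fully faithful. Without that hypothesis (say $\cT$ a 2-point discrete set) the proposition as stated is false, so invoking that standing assumption is not optional; make it explicit rather than hedging.
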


The map of patterns $i_V$ induces a push-pull adjunction $E_{I^\infty}^{\cT} \cln \Op_{I^\infty} \rightleftarrows \Op_{\cT} \cln \Bor_{I^\infty}^{\cT}$, and we will write $\Infl^{\cT} \cln \Op \rightleftarrows \Op_{\cT} \cln \Gamma^{\cT}$ for the composite adjunction as well.
\begin{proposition}\label{Infl BV}
  There exists a natural equivalence $\Infl_e^{V} \cO^{\otimes} \obv \Infl_e^{V} \cP^{\otimes} \simeq \Infl_e^{V} \prn{\cO^{\otimes} \obv \cP^{\otimes}}$.  
\end{proposition}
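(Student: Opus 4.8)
The claim is that $\Infl_e^V(-)$ is strong monoidal from $(\Op,\otimes^{\mathrm{BV}})$ to $(\Op_V,\obv)$. The plan is to exploit the fact that $\Infl_e^V$ is a left adjoint — hence preserves colimits, and in particular the fibrous localizations that define both tensor products — together with the explicit description of $\Infl_e^V$ as pullback $i_V^*$ composed with the constant-functor embedding $\Op\hookrightarrow\Op_{I^\infty}\simeq\Fun(\cT^{\op},\Op)$. First I would reduce the statement to a statement about pre-fibrous patterns: both $\cO^\otimes\obv\cP^\otimes$ and $\Infl_e^V\cO^\otimes\obv\Infl_e^V\cP^\otimes$ are defined by applying $L_{\Fbrs}$ to an external product pushed forward along a smash functor, so it suffices to produce a natural equivalence at the level of $\Cat_{/\Span(\FF)}^{\Int-\cocart}$ before localizing, and then invoke that $\Infl_e^V$ (being a composite of $i_V^*$ and a fully faithful left adjoint) commutes with $L_{\Fbrs}$.

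The key diagram to analyze is the square relating smash products: I want to check that $i_V\colon \Span(\FF)\to\Span(\FF_V)$ intertwines the non-equivariant smash $\wedge\colon\Span(\FF)^2\to\Span(\FF)$ with the equivariant smash $\wedge\colon\Span(\FF_V)^2\to\Span(\FF_V)$, i.e. that $i_V(S)\wedge i_V(T)\simeq i_V(S\wedge T)$ naturally, which amounts to the elementary fact that $\Ind_e^V$ (equivalently, the coproduct-preserving functor $\FF\to\FF_V$ sending $\ast\mapsto \ast_V$) is symmetric monoidal for cartesian products of finite sets — indeed $(n\cdot\ast_V)\times(m\cdot\ast_V)\simeq nm\cdot\ast_V$. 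Combined with the fact that $i_V^*$ preserves fibrous patterns (by \cref{Fibrous pullback prop}, since $\Span(\FF)\to\Span(\FF_V)$ is a Segal morphism and $\Span(\FF_V)$ is soundly extendable) and preserves products of objects over the base, this gives
\[
  i_V^*\prn{\wedge_!(\fO\times\fP)} \simeq \wedge_!\prn{i_V^*\fO\times i_V^*\fP}
\]
as pre-fibrous patterns, after which $L_{\Fbrs}$ applied to both sides — and the compatibility of $L_{\Fbrs}$ with $i_V^*$ from \cref{Fibrous pullback prop} — yields $i_V^*(\cO^\otimes\obv\cP^\otimes)\simeq i_V^*\cO^\otimes\obv i_V^*\cP^\otimes$ in $\Op$. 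Finally, one lifts this from $V$-values to the inflation itself: under $\Op_{I^\infty}\simeq\Fun(\cT^{\op},\Op)$, the object $\Infl_e^V\cO^\otimes$ corresponds to the constant functor at $\cO^\otimes$, the $\obv$ on $\Op_{I^\infty}$ is computed pointwise in $\cT^{\op}$ (since $E_{I^\infty}^{\cT}$ and the relevant localizations are), and the constant-functor embedding is strong monoidal because the tensor product of two constant diagrams is the constant diagram at the tensor product.

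\textbf{Main obstacle.} The genuinely delicate point is the interaction between $L_{\Fbrs}$ and the various pullback/pushforward functors: I must be careful that ``$\Infl_e^V$ commutes with $\obv$'' is not circular, since $\obv$ is itself defined using $L_{\Fbrs}$. The clean way is to check the equivalence first on the underlying $\Int$-cocartesian-fibration level (where everything is an honest pullback and external product, hence strictly compatible), and only then apply $L_{\Fbrs}$, using that a left adjoint between reflective subcategories carries the reflection of $X$ to the reflection of its image — this is exactly the content of \cref{Fibrous pullback prop}, which states $i_V^*$ has left adjoint $L_{\Fbrs}(i_V)_!$ and in particular $i_V^* L_{\Fbrs}\simeq L_{\Fbrs} i_V^*$ on the relevant subcategories. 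A secondary bookkeeping issue is matching the equivariant and non-equivariant sound-extendability hypotheses so that \cref{Fibrous pullback prop} and the envelope/localization machinery apply to $i_V$; this is routine given \cref{Soundly extendable lemma} but should be stated. I expect the whole argument to occupy half a page once these compatibilities are cited rather than reproved.
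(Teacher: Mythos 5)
The central step in your plan — the claimed equivalence $i_V^*\prn{\wedge_!(\fO\times\fP)} \simeq \wedge_!\prn{i_V^*\fO\times i_V^*\fP}$ — fails because the relevant square in $\Cat$
\[
\begin{tikzcd}
\Span(\FF)^2 \arrow[r,"i_V^2"] \arrow[d,"\wedge"] & \Span(\FF_V)^2 \arrow[d,"\wedge"] \\
\Span(\FF) \arrow[r,"i_V"] & \Span(\FF_V)
\end{tikzcd}
\]
commutes (this is your correct observation that $\FF \to \FF_V$ is cartesian-monoidal), but it is not a pullback: a pair $(T,\emptyset_V)$ with $T$ a nontrivial $V$-set has $T \wedge \emptyset_V \simeq \emptyset_V$ landing in the image of $i_V$ without $(T,\emptyset_V)$ lying in the image of $i_V^2$, so the Beck--Chevalley transformation for pullback is not an equivalence. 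More fundamentally, you have the adjoints reversed throughout: $i_V^*$ is $\Gamma^V$, a right adjoint, not $\Infl_e^V$ (which is its left adjoint); your proposal describes $\Infl_e^V$ as ``pullback $i_V^*$ composed with the constant-functor embedding,'' which does not make sense domain/codomain-wise, and your headline conclusion $i_V^*(\cO^\otimes\obv\cP^\otimes)\simeq i_V^*\cO^\otimes\obv i_V^*\cP^\otimes$ is a statement about $\Gamma^V$, not the one being asked. The pre-localized avatar of $\Infl_e^V$ is not pullback but postcomposition $(i_V)_!$, for which $\wedge_!\,(i_V^2)_! \simeq (i_V)_!\,\wedge_!$ holds automatically from the commutative square, with no base-change hypothesis. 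Separately, the appeal to \cref{Fibrous pullback prop} for ``$i_V^* L_{\Fbrs}\simeq L_{\Fbrs}\, i_V^*$'' overstates what that proposition gives: it shows $i_V^*$ carries fibrous patterns to fibrous patterns and that the restricted functor has a left adjoint; it does not assert the pullback commutes with the reflector.

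The paper's actual argument circumvents both issues. It sets up a zigzag at the pre-localized level using product-preservation and the $(i_V)_!$-direction of the commutation above, and then proves that $\Infl_e^V\eta_{\Op}$ is an $L_{\Op_V}$-equivalence by a direct mapping-space calculation using the adjunction $\Infl_e^V\dashv\Gamma^V$ together with the fact that $\Gamma^V$ preserves operads (so $\Gamma^V\cQ^{\otimes}$ is a valid test object). This entirely sidesteps the question of whether $L_{\Fbrs}$ commutes with anything — precisely the point your ``main obstacle'' paragraph identifies as delicate but does not resolve. Your final ``lifting from $V$-values to inflation'' paragraph would need some such adjunction argument in place of the asserted pointwise computation of $\obv$ on $\Op_{I^\infty}\simeq\Fun(\cT^{\op},\Op)$, which is not established in the paper and would itself require proof.
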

\begin{proof}
  We can verify that $\Infl_e^{\cT}$ is product-preserving, so we acquire a zigzag of maps
  \begin{align*}
    \Infl_e^{V}\cO^{\otimes} \obv \Infl_e^{V} \cP^{\otimes} 
    \xleftarrow{\;\;\;\; \eta_{\Op_{V}} \;\;\;\;}&
      \;\; \wedge_! \prn{\Infl_e^{V}\cO^{\otimes} \times \Infl_e^{V} \cP^{\otimes}}\\
    \simeq& 
      \;\; \wedge_! \Infl_e^{V} \prn{\cO^{\otimes} \times \cP^{\otimes}}\\ 
    \simeq& 
      \;\; \Infl_e^{V}\wedge_! \prn{\cO^{\otimes} \times \cP^{\otimes}}\\
    \xrightarrow{\;\;\;\; \Infl_e^{V} \eta_{\Op} \;\;\;\;} & 
      \;\; \Infl_e^{V}\prn{\cO^{\otimes} \obv \cP^{\otimes}},
  \end{align*}
  with $\eta_{\Op_{V}}$ an $L_{\Op_{V}}$-equivalence.
  We're tasked with proving that $\eta_{\Op}$ is an $L_{\Op_{V}}$-equivalence;
  then, the desired equivalence can be gotten by applying $L_{\Op_{V}}$ and inverting arrows as needed.
  In fact, if $\cQ^{\otimes}$ is a $V$-operad, then pullback along $\eta_{\Op}$ furnishes an equivalence
  \begin{align*} 
    \Fun_{/\Span(\FF_V)}^{\Int-\cocart} \prn{\Infl_e^{V} \wedge_! \prn{\cO^{\otimes} \times \cP^{\otimes}}, \cQ^{\otimes}}
    &\simeq \Fun_{/\Span(\FF)}^{\Int-\cocart} \prn{\wedge_! \prn{\cO^{\otimes} \times \cP^{\otimes}}, \Gamma^{V} \cQ^{\otimes}}\\
    &\simeq \Fun_{/\Span(\FF)}^{\Int-\cocart} \prn{\cO^{\otimes} \obv \cP^{\otimes}, \Gamma^{V} \cQ^{\otimes}}\\
    &\simeq \Fun_{/\Span(\FF_{V})}^{\Int-\cocart} \prn{\Infl_e^{V} \prn{\cO^{\otimes} \obv \cP^{\otimes}}, \cQ^{\otimes}},
  \end{align*}
  so $\Infl_e^{V} \eta_{\Op}$ is an $L_{\Op_{V}}$-equivalence, yielding the desired natural equivalence.
\end{proof}

\begin{example}
  Let $G$ be a finite group and $n_G$ the trivial $n$-dimensional orthogonal $G$-representation.
  Note that the bottom map 
  \[
    \begin{tikzcd}
      {\EE_{n_G}(m \cdot *_H)} & {\EE_{n_G}(m \cdot *_K)} \\
      {\Conf_{m \cdot *_H}^{H}(n_G)} & {\Conf_{m \cdot *_H}^{H}(n_G)}
      \arrow[from=1-1, to=1-2]
      \arrow["\simeq"{marking, allow upside down}, draw=none, from=1-1, to=2-1]
      \arrow["\simeq"{marking, allow upside down}, draw=none, from=1-2, to=2-2]
      \arrow["\sim"', from=2-1, to=2-2]
    \end{tikzcd}
  \]
  is an equivalence for all $K \subset H \subset G$, as it intertwines the tautological identification of each side with $\Conf_m(\RR^n)$.
  In particular, the map $\EE^{\otimes}_{n_G} \rightarrow \EE^{\otimes}_{\infty_G} \simeq \EE^{\otimes}_\infty$ witnesses $\EE_{n_G}$ as an $I^\infty$-operad in the image of $\Infl_e^G$;
  unwinding definitions, we have an equivalence $\Infl_e^G \EE^{\otimes}_n \simeq \EE_{n_G}$.
\end{example}

In general, we define the $\cT$-operad $\EE^{\otimes}_n \deq \Infl_e^{\cT} \EE_n^{\otimes}$.

\begin{corollary}[Trivially eqivariant Dunn additivity]\label{En corollary}
  There is an equivalence $\EE^{\otimes}_n \obv \EE^{\otimes}_m \simeq \EE^{\otimes}_{n+m}$.
\end{corollary}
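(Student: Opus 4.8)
The plan is to reduce the equivariant statement to the classical, non-equivariant Dunn additivity theorem via the inflation functor. By \cref{Infl item}, or more precisely \cref{Infl BV}, there is a natural equivalence
\[
  \Infl_e^{\cT}\cO^{\otimes} \obv \Infl_e^{\cT} \cP^{\otimes} \simeq \Infl_e^{\cT}\prn{\cO^{\otimes} \obv \cP^{\otimes}}
\]
of $\cT$-operads. Since by definition $\EE_n^{\otimes} \deq \Infl_e^{\cT} \EE_n^{\otimes}$ (abusing notation, the right-hand $\EE_n^{\otimes}$ being the classical little-$n$-cubes operad), applying this with $\cO^{\otimes} = \EE_n^{\otimes}$ and $\cP^{\otimes} = \EE_m^{\otimes}$ gives
\[
  \EE_n^{\otimes} \obv \EE_m^{\otimes} \simeq \Infl_e^{\cT}\EE_n^{\otimes} \obv \Infl_e^{\cT} \EE_m^{\otimes} \simeq \Infl_e^{\cT}\prn{\EE_n^{\otimes} \obv \EE_m^{\otimes}}.
\]
Now invoke the classical Dunn additivity theorem — $\EE_n^{\otimes} \obv \EE_m^{\otimes} \simeq \EE_{n+m}^{\otimes}$ in $\Op$, which holds by \cite{HA} (Theorem 5.1.2.2) and is recovered by the present notion of Boardman-Vogt tensor product via \cref{Ope item} (equivalently \cref{Underlying tensor product}) — and then apply $\Infl_e^{\cT}$ to both sides to conclude
\[
  \EE_n^{\otimes} \obv \EE_m^{\otimes} \simeq \Infl_e^{\cT} \EE_{n+m}^{\otimes} \deq \EE_{n+m}^{\otimes}.
\]

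The one point requiring slightly more than a one-line citation is \emph{naturality} of this equivalence with respect to the structure maps $\EE_n, \EE_m \rightarrow \EE_{n+m}$, i.e. the refinement asserted in \cref{Infl item} that the classical stabilization maps induce the equivalence. For this I would check that the composite
\[
  \EE_n^{\otimes} \obv \EE_m^{\otimes} \longrightarrow \EE_{n+m}^{\otimes} \obv \EE_{n+m}^{\otimes} \longrightarrow \EE_{n+m}^{\otimes}
\]
(the first map the tensor of the two stabilizations, the second the $\obv$-multiplication witnessing $\EE_{n+m}^{\otimes}$ as an $\EE_{n+m}^{\otimes} \obv \EE_{n+m}^{\otimes}$-algebra, which exists because $\EE_{n+m}^{\otimes}$ is $\EE_\infty$-commutative, or directly because these structure maps exist classically) agrees with the classical such composite after applying $\Gamma^{\cT}$. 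Since $\Infl_e^{\cT}$ is fully faithful (as noted just before \cref{Infl BV}) and all four operads in sight lie in its essential image — $\EE_{n_G}^{\otimes}$ is an $I^\infty$-operad pulled back from a constant functor $\cT^{\op} \to \Op$, as spelled out in the example preceding \cref{En corollary} — the comparison of maps reduces to the non-equivariant statement, which is exactly classical Dunn additivity.

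The main obstacle, such as it is, is purely bookkeeping: one must be careful that the equivalence of \cref{Infl BV} is compatible with the maps of operads $\EE_n^{\otimes} \to \EE_{n+m}^{\otimes}$ and $\EE_m^{\otimes} \to \EE_{n+m}^{\otimes}$ built into the statement, rather than merely producing an abstract equivalence of underlying $\cT$-operads. Because $\Infl_e^{\cT}$ is a fully faithful left adjoint it preserves and reflects all the relevant data, so no genuinely equivariant input is needed — the content is entirely imported from the non-equivariant Dunn additivity theorem of \cite{HA} (or \cite{Barkan_Segal}), and the equivariant enhancement is formal. I would therefore present the proof in the three displayed steps above, with a brief remark identifying the maps, and cite \cref{Infl BV}, \cref{Underlying tensor product}, and the classical theorem.
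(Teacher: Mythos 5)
Your proposal is correct and matches the paper's proof exactly: reduce to the nonequivariant statement via \cref{Infl BV} and the definition $\EE_n^{\otimes} \deq \Infl_e^{\cT}\EE_n^{\otimes}$, then cite classical Dunn additivity \cite[Thm~5.1.2.2]{HA} via \cref{Underlying tensor product}. Your additional discussion of naturality with respect to the stabilization maps is a reasonable observation about the stronger form of the statement given in the introduction, but goes beyond what the paper's proof of \cref{En corollary} actually supplies, which is just the bare equivalence.
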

\begin{proof}
  By \cref{Underlying tensor product,Infl BV}, it suffices to construct an equivalence of operads $\EE^{\otimes}_n \obv \EE^{\otimes}_m \simeq \EE^{\otimes}_{n+m}$;
  this is nonequivariant Dunn additivity \cite[Thm~5.1.2.2]{HA}.
\end{proof}
Moreover, we acquire compatibility between $\Gamma^{\cT}$ and $\cT$-operads of algebras.
\begin{corollary}\label{Gamma alg}
  There exists a natural equivalence of operads
  \[
    \Gamma^{V} \uAlg^{\otimes}_{\Infl_e^{V} \cO}(\cC) \simeq \Alg_{\cO}^{\otimes}(\Gamma^{V} \cC)
  \]  
\end{corollary}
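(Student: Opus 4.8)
The plan is to combine the adjunction $\Infl_e^{V} \dashv \Gamma^{V}$ on $\cT$-operads with the closedness of $\obv$ (namely \cref{Alg is adjoint prop}) and the inflation-tensor compatibility of \cref{Infl BV}, then invoke a Yoneda argument. Concretely, I would first fix an arbitrary $V$-operad $\cP^{\otimes} \in \Op_V$ and compute $\Map_{\Op_V}\prn{\cP^{\otimes}, \Gamma^{V} \uAlg^{\otimes}_{\Infl_e^V \cO}(\cC)}$. Since $\Gamma^{V}$ is right adjoint to $\Infl_e^{V}$ (using the equivalence $\Op_{I^\infty} \simeq \Fun(\cT^{\op},\Op)$ recalled before \cref{Infl BV}, composed with $E_{I^\infty}^{\cT} \dashv \Bor_{I^\infty}^{\cT}$), this mapping space is $\Map_{\Op_{\cT}}\prn{\Infl_e^V \cP^{\otimes}, \uAlg^{\otimes}_{\Infl_e^V \cO}(\cC)}$. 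Then by \cref{Alg is adjoint prop} this equals $\Map_{\Op_{\cT}}\prn{\Infl_e^V \cP^{\otimes} \obv \Infl_e^V \cO^{\otimes}, \cC}$, and by \cref{Infl BV} the domain is $\Infl_e^V\prn{\cP^{\otimes} \obv \cO^{\otimes}}$, so we get $\Map_{\Op_{\cT}}\prn{\Infl_e^V\prn{\cP^{\otimes} \obv \cO^{\otimes}}, \cC}$.

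Next I would apply the adjunction $\Infl_e^V \dashv \Gamma^V$ once more (now in the $\cC$ slot, which is legitimate since $\cC^{\otimes}$ is a $\cT$-operad and $\Gamma^V$ is defined on all of $\Op_{\cT}$ landing in $\Op$) to rewrite this as $\Map_{\Op}\prn{\cP^{\otimes} \obv \cO^{\otimes}, \Gamma^V \cC}$. Applying \cref{Alg is adjoint prop} in the other direction gives $\Map_{\Op}\prn{\cP^{\otimes}, \uAlg_{\cO}^{\otimes}(\Gamma^V \cC)}$, which under the identification $\Op \simeq \Op_{I^\infty, \mathrm{const}} \subset \Op_V$ via $\Infl_e^V$... — here I need to be a little careful, because the claimed equivalence is an equivalence of \emph{operads} (non-equivariant, i.e. in $\Op$), not of $V$-operads. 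So rather than running the whole computation inside $\Op_V$, the cleanest route is: establish the chain of natural equivalences $\Gamma^V \uAlg^{\otimes}_{\Infl_e^V \cO}(\cC) \simeq \Gamma^V \Infl_e^V \uAlg_{\cO}^{\otimes}(\Gamma^V \cC)$ and then use full faithfulness of $\Infl_e^V$ together with the unit equivalence to conclude $\Gamma^V \Infl_e^V \simeq \id$ on $\Op$. Alternatively, and perhaps more transparently, I would directly verify $\Map_{\Op}\prn{\cR^{\otimes}, \Gamma^V \uAlg^{\otimes}_{\Infl_e^V \cO}(\cC)} \simeq \Map_{\Op}\prn{\cR^{\otimes}, \Alg_{\cO}^{\otimes}(\Gamma^V \cC)}$ naturally in $\cR^{\otimes} \in \Op$ by the zigzag $\Map_{\Op}(\cR, \Gamma^V \uAlg^{\otimes}_{\Infl_e^V \cO}(\cC)) \simeq \Map_{\Op_{\cT}}(\Infl_e^V \cR, \uAlg^{\otimes}_{\Infl_e^V \cO}(\cC)) \simeq \Map_{\Op_{\cT}}(\Infl_e^V \cR \obv \Infl_e^V \cO, \cC) \simeq \Map_{\Op_{\cT}}(\Infl_e^V(\cR \obv \cO), \cC) \simeq \Map_{\Op}(\cR \obv \cO, \Gamma^V \cC) \simeq \Map_{\Op}(\cR, \Alg_{\cO}^{\otimes}(\Gamma^V \cC))$, and then conclude by the Yoneda lemma.

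The main obstacle I anticipate is bookkeeping around which adjunction is being used in each slot and ensuring naturality is genuinely preserved through the whole zigzag — in particular, checking that the equivalence $\Gamma^V \uAlg^{\otimes}_{\Infl_e^V(-)}(-) \simeq \Alg^{\otimes}_{(-)}(\Gamma^V(-))$ is natural in \emph{both} variables, which requires that all four of the adjunction (co)units and the equivalence of \cref{Infl BV} be natural, and that $\Infl_e^V$ is symmetric monoidal enough for \cref{Infl BV} to apply (it is product-preserving as noted in the proof of \cref{Infl BV}). A secondary subtlety is that $\Gamma^V$ of an operad of algebras is a priori only an operad, so the statement is correctly phrased as an equivalence in $\Op$ rather than $\Op_V$; I would make sure the Yoneda argument is run in $\Op$, with test objects $\cR^{\otimes} \in \Op$, so no $\cT$-equivariant structure is accidentally invoked. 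Once the natural equivalence of represented functors is in hand, the Yoneda lemma gives the claimed equivalence of operads immediately.
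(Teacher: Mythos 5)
Your proposal is correct and takes essentially the same approach as the paper: the same zigzag of adjunctions $\Infl_e^V \dashv \Gamma^V$, $- \obv \cO \dashv \uAlg_\cO^\otimes(-)$, and \cref{Infl BV}, followed by Yoneda in $\Op$. The only cosmetic difference is that the paper runs the chain through $\Alg_{\cP}(-)$ rather than $\Map_{\Op}(\cP,-)$; these agree upon passing to cores, so both give the same Yoneda conclusion.
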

\begin{proof}
  Once more, given $\cP^{\otimes} \in \Op$, there is a string of natural equivalences
  \begin{align*}
    \Alg_{\cP} \Gamma^{V} \uAlg_{\Infl_e^{V} \cO}^{\otimes}(\cC)
    &\simeq \Alg_{\Infl_e^{V} \cP} \uAlg_{\Infl_e^{V} \cO}^{\otimes}(\cC)\\
    &\simeq \Alg_{\Infl_e^{V} \cP \otimes \Infl_e^{V} \cO}(\cC)\\
    &\simeq \Alg_{\Infl_e^{V} \prn{\cP \otimes \cO}}(\cC)\\
    &\simeq \Alg_{\cP \otimes \cO}(\Gamma^{V} \cC)\\
    &\simeq \Alg_{\cP} \Alg_{\cO}^{\otimes}(\Gamma^{V} \cC),
  \end{align*}
  so the result follows by Yoneda's lemma.
\end{proof}

A similar statement to \cref{Infl BV} for $\triv(-)^{\otimes}$ follows by either symbol pushing or examining the various localizations;
we take the former approach, constructing a string of natural equivalences
\begin{align*}
  \Alg_{\Infl_e^{V} \triv\prn{\cC}}(\cO)
  &\simeq \Alg_{\triv\prn{\cC}}(\Gamma^{V} \cO)\\
  &\simeq \Fun(\cC,\Gamma^{V} \cO)\\
  &\simeq \Fun_{\cT}(\Infl_e^{V} \cC, \cO)\\
  &\simeq \Alg_{\triv\prn{\Infl_e^{V} \cC}}(\cO).
\end{align*}
That is, we've proved the following.
\begin{proposition}\label{Infl triv}
  There is a canonical natural equivalence
  \[
    \Infl_e^{V} \triv\prn{\cC}^{\otimes} \simeq \triv\prn{\Infl_e^{V} \cC}^{\otimes}.
  \]
\end{proposition}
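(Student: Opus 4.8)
The plan is to derive the equivalence $\Infl_e^{V} \triv(\cC)^{\otimes} \simeq \triv(\Infl_e^{V} \cC)^{\otimes}$ by the same Yoneda argument already used for \cref{Infl BV,Gamma alg}, reducing everything to the adjunctions $\triv(-)^{\otimes} \dashv U$ (\cref{Triv prop}), $\Infl_e^{V} \dashv \Gamma^{V}$ on $\cT$-$\infty$-categories (\cref{Gamma adjunction proposition}), and $\Infl_e^{V} \dashv \Gamma^{V}$ on operads. The key input I would isolate first is a compatibility statement: for any $\cT$-operad $\cO^{\otimes}$, there is a natural equivalence $\Gamma^{V} U \cO^{\otimes} \simeq U \Gamma^{V} \cO^{\otimes}$ of $\infty$-categories, i.e.\ taking the underlying $\cT$-$\infty$-category commutes with $V$-value. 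This follows by unwinding \cref{Underlying construction}: both sides are obtained by pulling back $\cO^{\otimes}$ along the composite $\FF \hookrightarrow \FF_{\cT} \hookrightarrow \Span(\FF_{\cT})$ (on one hand $\cT^{\op} \hookrightarrow \Span(\FF_{\cT})$ then restrict along $i_V$, on the other hand restrict along $i_V$ then take the $*$-fiber), and pullbacks compose, so the two total $\infty$-categories agree over $\Span(\FF)$, hence the underlying $\infty$-categories agree.

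With that in hand, the chain of natural equivalences written in the excerpt is the proof: for $\cO^{\otimes} \in \Op_{V}$,
\[
  \Alg_{\Infl_e^{V} \triv(\cC)}(\cO)
  \simeq \Alg_{\triv(\cC)}(\Gamma^{V} \cO)
  \simeq \Fun(\cC, U\Gamma^{V} \cO)
  \simeq \Fun(\cC, \Gamma^{V} U\cO)
  \simeq \Fun_{V}(\Infl_e^{V} \cC, U\cO)
  \simeq \Alg_{\triv(\Infl_e^{V} \cC)}(\cO),
\]
where the first equivalence is the $\Infl_e^{V} \dashv \Gamma^{V}$ adjunction on operads, the second and last are \cref{Triv prop}, the third is the compatibility statement above, and the fourth is the $\Infl_e^{V} \dashv \Gamma^{V}$ adjunction on $\cT$-$\infty$-categories (\cref{Gamma adjunction proposition}). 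All these equivalences are natural in $\cO^{\otimes}$, so Yoneda's lemma (applied in $\Op_{V}$, using $\Map_{\Op_V}(\cP^{\otimes},\cO^{\otimes}) \simeq \Alg_{\cP}(\cO)^{\simeq}$) upgrades them to the desired equivalence of $V$-operads $\Infl_e^{V} \triv(\cC)^{\otimes} \simeq \triv(\Infl_e^{V} \cC)^{\otimes}$. To be careful about what ``$\Infl_e^V$'' means on operads, I would note that since $\triv(\cC)^{\otimes}$ is already an $I^{\infty}$-operad (its structure functor factors through $\cT^{\op} \subset \Span_{I^\infty}(\FF_{\cT})$), and since $\Infl_e^V$ on operads is defined as $E_{I^\infty}^{\cT}$ precomposed with the left adjoint $\Op \to \Op_{I^\infty}$, the computation is unaffected; alternatively one checks the statement at the level of $\Op_{I^\infty} \simeq \Fun(\cT^{\op},\Op)$, where both sides are manifestly the constant functor at $\triv(\cC)^{\otimes}$.

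The main obstacle is purely bookkeeping: making the compatibility $\Gamma^{V} U \simeq U \Gamma^{V}$ precise at the level of fibrous patterns rather than just fibers, i.e.\ checking that the relevant pullback squares genuinely compose and that the resulting comparison functor preserves inert-cocartesian lifts (so that it is an equivalence of underlying $\cT$-$\infty$-categories, not merely a fiberwise equivalence). This is where one must be slightly attentive, but it follows from the same kind of ``pasting of pullback squares over $\Span(\FF_{\cT})$'' argument used in \cref{Restrictions fibrous remark} and \cref{Tot is conservative}, together with the observation that the two iterated pullbacks define the same functor into $\Span(\FF)$ up to canonical equivalence. Everything else is formal adjunction juggling, and no new ideas beyond those already deployed for \cref{Infl BV} and \cref{Gamma alg} are needed.
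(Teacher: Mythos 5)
Your proof is correct and follows essentially the same route as the paper's, which deduces the equivalence from the same string of adjunction equivalences $\Alg_{\Infl_e^V \triv(\cC)}(\cO) \simeq \Alg_{\triv(\cC)}(\Gamma^V\cO) \simeq \Fun(\cC,\Gamma^V\cO) \simeq \Fun(\Infl_e^V\cC,\cO) \simeq \Alg_{\triv(\Infl_e^V\cC)}(\cO)$ followed by Yoneda. The paper absorbs the commutation $U\Gamma^V \simeq \Gamma^V U$ silently into its notation, so your spelling it out via pasting of pullback squares over $\Span(\FF_{\cT})$ (and your alternative check at the level of $\Op_{I^\infty} \simeq \Fun(\cT^{\op},\Op)$) is a harmless and correct elaboration of the same argument, not a different one.
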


\begin{remark}
  \cref{BV operad subsection,Inflation subsection} collected results about Boardman-Vogt tensor products of $V$-operads, which implies the corresponding results for $G$-operads as $\cO_G$ has a terminal object.
  Nevertheless, for the sake of equivariance under families, we would like to prove the corresponding results for $\cT$-operads.
  Unwinding the arguments, it would suffice to lift $\prn{\Op_V, \obv}$ to an $\AA_2$-$\cT$-$\infty$-category, and thus develop a \emph{Boardman-Vogt tensor product of $\cT$-operads} which restricts to our construction.
  In fact, to do so simply requires constructing coherent natural equivalences 
  \[
    \Res_U^V \prn{\cO^{\otimes} \obv \cP^{\otimes}} \simeq \Res_U^V \cO^{\otimes} \obv \Res_U^V \cP^{\otimes}
  \]
  for all $U\rightarrow V \in \cT$.
  Inspired by the uniqueness of \cref{Sliced BV Day pattern}, two strategies come to mind:
  \begin{enumerate}
    \item Much of the work of \cite{Barkan_equifibered} is likely to hold for $\cT$-commutative monoids;
      in particular, one may expect that an equifibered map between envelopes of $\cT$-operads canonically lifts to a map over $\uFF_{\cT}^{\cT-\sqcup}$, which would imply that the \emph{unsliced} envelope $\Op_{\cT} \rightarrow \Cat_{\cT}^{\otimes}$ is a replete subcategory inclusion, and hence monic.
      Thus \cref{BV Env corollary} and restriction-stability of $\otimes^{\Mode}$ would yield restriction-stability of $\obv$.
    \item Alternatively, one may note that, in the nonequivariant case, $\Comm^{\otimes} \in \Op$ is an idempotent algebra.
      If $\Comm_{V}^{\otimes} \in \Op_V$ is an idempotent algebra for all $V$, then their envelopes $\uFF_{V}^{V-\sqcup}$ will be idempotent algebras under the mode structure by \cref{Sliced BV Day pattern}, compatibly with restriction (as the unit maps each live in a contractible mapping space).
      This would yield a symmetric monoidal structure on $\uCat_{\cT, /\uFF_{\cT}^{\cT-\sqcup}}^{\otimes}$ under which $\uOp_{\cT}$ would be a symmetric monoidal full $\cT$-subcategory.
  \end{enumerate}
  The author hopes to return fulfill the second strategy in forthcoming work \cite{Tensor}.
\end{remark}

\begin{appendix}
\section{Burnside algebraic patterns: the atomic orbital and global cases}
\stoptocwriting 
The following appendix is not written to be particularly original;
most of its contents appear as straightforward technical extensions of beloved works in higher algebra, and they are included for the sake of mathematical completeness.
The contents herein do not depend on the results of the main body of this paper.

\subsection{\texorpdfstring{$I$}{I}-operads as fibrous patterns}\label{Operads are fibrous subsection}
This subsection deviates only slightly from \cite[\S~5.2]{Barkan}, so we suggest that the reader first read their work.
We're interested in proving a global equivariant generalization of \cref{Operads are fibrous theorem}, so we begin with the relevant patterns.
We assume familiarity with the terminology of finite pointed $\cT$-sets and $\cP$-sets of \cite{Cnossen_stable,Nardin}.

As noted in \cite{Windex}, $\FF_{\cT}$ is an extensive category in the sense of \cite[Def~2.2.1]{Cnossen_tambara}, an \emph{extensive span pair} $(\FF_{\cT},I_{\cP})$ equivalent to an atomic orbital subcategory $\cP \subset \cT$ (i.e. an indexing category), and a \emph{weakly extensive span pair} $(\FF_{\cT},I)$ equivalent to a one-color weak indexing category $I \subset \FF_{\cT}$.
In the case $(\FF_{\cT},I)$ is a weakly extensive span pair, we write
\[
  \Span_I(\FF_{\cT}) \deq \Span_{all,I}(\FF_{\cT};\cT^{\op})
\]
for the resulting pattern.
Moreover, given $\cP \subset \cT$ an atomic orbital $\infty$-category, we write $\Span_{\cP}(\FF_{\cT}) \deq \Span_{\FF_{\cT}^{\cP}}(\FF_{\cT})$ and
\[
  \Tot \uFF_{\cT,*}^{\cP} \deq \Span_{s.i.,tdeg}(\Tot \uFF_{\cT}^{\cP,\vee},\cT^{\op}),
\]
where $(-)^\vee\colon \Cat_{/\cC}^{\cocart} \rightarrow \Cat_{/\cC}^{\cart}$ is the \emph{dual cartesian fibration} construction, $\Tot \uFF_{\cT}^{\cP, \vee, s.i.} \subset \Tot \uFF_{\cT}^{\cP,\vee}$ is the wide subcategory of morphisms $f\colon (S \rightarrow U) \rightarrow (T \rightarrow V)$ whose associated morphism $f_\circ$ is a summand inclusion:
\[
 \begin{tikzcd}[ampersand replacement=\&, row sep=small]
	S \&\& T \\
	\& {T  \times_V U} \\
	U \&\& V
	\arrow["{f_s}", from=1-1, to=1-3]
	\arrow["{f_{\circ}}"{description}, dashed, from=1-1, to=2-2]
	\arrow[from=1-1, to=3-1]
	\arrow[from=1-3, to=3-3]
	\arrow[from=2-2, to=1-3]
	\arrow["{`}", from=2-2, to=3-1]
	\arrow["{f_t}", from=3-1, to=3-3]
\end{tikzcd} 
\]
and $\Tot \uFF_{\cT}^{\cP,\vee,tdeg} \subset \Tot \uFF_{\cT}^{\cP,\vee}$ the wide subcategory with $f_t$ homotopic to the identity.

The upshot of this is that we acquire a map of adequate quadruples
\[
  \prn{\Tot \uFF_{\cT}^{\cP, \vee}, (s.i.,tdeg), \cT^{\op}}
  \rightarrow \prn{\FF_{\cT}, (all,\FF_{\cT}^{\cP}), \cT^{\op}}
\]
lying over the source map 
\[
  s\colon \Tot \uFF_{\cT}^{\cP,{\vee}} \rightarrow \FF_{\cT},
\]
yielding a map of algebraic patterns
\[
  \varphi\colon  \Tot \uFF_{\cT,*}^{\cP} \rightarrow \Span_{\cP}(\FF_{\cT}).
\]
We will prove the following theorem.
\begin{theorem}\label{Span fibrous theorem}
  The map of patterns $\varphi\colon \Tot \uFF^{\cP}_{\cT,*} \rightarrow \Span(\FF_{\cT})$ induces equivalences of categories 
  \begin{align*}
    \Seg_{\Span_{\cP}(\FF_{\cT})}(\cC) 
    &\simeq \Seg_{\Tot \uFF^{\cP}_{\cT,*}}(\cC),\\
    &\simeq \CMon_{\cP}(\cC);\\
    \Fbrs(\Span_{\cP}(\FF_{\cT})) 
    &\simeq \Fbrs(\Tot \uFF^{\cP}_{\cT,*}).
  \end{align*}
  Moreover, in the case $\cT = \cP$, there is an additional equivalence
  \[
    \Fbrs(\Tot \uFF_{\cT,*}) \simeq \Op_{\cT,\infty},
  \]
  the latter denoting Nardin-Shah \cite{Nardin}'s $\infty$-category of $\cT$-$\infty$-operads.
\end{theorem}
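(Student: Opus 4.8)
The statement bundles three assertions: (i) the pattern map $\varphi$ induces an equivalence on Segal objects, which (together with \cref{CMon pattern example}) identifies both sides with $\CMon_{\cP}(\cC)$; (ii) $\varphi$ induces an equivalence on fibrous patterns; and (iii) in the case $\cP = \cT$, fibrous $\Tot\uFF_{\cT,*}$-patterns recover Nardin--Shah's $\cT$-$\infty$-operads. The strategy throughout is to check that $\varphi$ is a \emph{strong} Segal morphism (or at least a Segal morphism whose pullback on the relevant categories is an equivalence), so that \cref{Fibrous pullback prop,Envelope theorem} and the relative-Segal machinery of \cite{Barkan-arity} apply; this is precisely the argument run in \cite[\S~5.2]{Barkan} for $\cT = \cO_G$, and the work is to see that nothing in it used more than atomic orbitality.

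First I would establish the pointedness/span comparison at the level of patterns: verify that $\Tot\uFF^{\cP}_{\cT,*}$ is a sound algebraic pattern (this is the content of the cited \cref{Sound lemma}/\cref{Pointed prop}), identify its elementary objects and inert--active factorization explicitly via the dual cartesian fibration description, and check that $\varphi$ preserves the factorization system and sends elementaries to elementaries. The core computation is then that for each object $X$ of $\Tot\uFF^{\cP}_{\cT,*}$ the comparison functor on elementary slices $\varphi^{\el}_{X/}\colon (\Tot\uFF^{\cP}_{\cT,*})^{\el}_{X/} \to \Span_{\cP}(\FF_{\cT})^{\el}_{\varphi X/}$ is initial (indeed an equivalence, or cofinal onto the orbit subcategory): on both sides the elementary slice under a $\cT$-set $S$ is identified with the set $\Orb(S)$ of orbits, compatibly with $\varphi$, using \cref{Span segal condition lemma} on the target and an analogous unwinding on the source. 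Granting this, $\varphi$ is a strong Segal morphism, so $\varphi^*\colon\Seg_{\Span_{\cP}(\FF_{\cT})}(\cC)\to\Seg_{\Tot\uFF^{\cP}_{\cT,*}}(\cC)$ and $\varphi^*\colon\Fbrs(\Span_{\cP}(\FF_{\cT}))\to\Fbrs(\Tot\uFF^{\cP}_{\cT,*})$ are defined; the criteria of \cite[Cor~2.64]{Barkan-arity} — which only require comparing elementary slices and checking a fiberwise condition — then upgrade ``Segal morphism'' to ``$\varphi^*$ is an equivalence'' on Segal objects, and \cref{Overcategory of fibrous patterns prop} together with \cref{Fibrous pullback prop} transport this to fibrous patterns. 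The identification with $\CMon_{\cP}(\cC)$ is then immediate from \cref{CMon pattern example}.

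For assertion (iii) I would match the resulting $\infty$-category $\Fbrs(\Tot\uFF_{\cT,*})$ with \cite{Nardin}'s definition directly: Nardin--Shah define $\cT$-$\infty$-operads as certain functors to $\Tot\uFF_{\cT,*}$ with cocartesian lifts over inert maps and two Segal conditions, and \cref{Fibrous patterns definition} (via \cite[Prop~4.1.6]{Barkan}) is exactly this list of conditions once one knows the inert--active factorization and elementaries of $\Tot\uFF_{\cT,*}$ agree with theirs — which is the previous paragraph's bookkeeping specialized to $\cP=\cT$. Here one also invokes \cref{Soundly extendable lemma} to know $\Tot\uFF_{\cT,*}$ is soundly extendable (or at least sound), so that Definition~\ref{Fibrous patterns definition} is the operative one and the envelope adjunction of \cref{Envelope theorem} is available, giving the comparison with small $\cO$-symmetric monoidal $\cT$-$\infty$-categories as a byproduct.

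\textbf{Main obstacle.} The crux is the elementary-slice computation: showing $\varphi^{\el}_{X/}$ is initial for \emph{every} object $X$ of $\Tot\uFF^{\cP}_{\cT,*}$, not just for representables, and doing so with atomic orbitality in place of the combinatorial group-theoretic facts used when $\cT = \cO_G$. Concretely one must understand morphisms out of a pointed $\cT$-set $S_+$ in the target-degenerate summand-inclusion factorization and see that the poset of such inerts-into-elementaries is (co)initial-equivalent to $\Orb(S)$ compatibly with the span-category description; subtleties about basepoints, the dual cartesian fibration, and the interaction of ``summand inclusion'' with ``target degenerate'' are where the argument is most delicate. Once that single initiality statement is in hand, everything else is formal transport along \cref{Fibrous pullback prop,Envelope theorem} and bookkeeping against the definitions in \cite{Nardin}.
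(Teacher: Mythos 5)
Your plan is essentially the paper's proof: establish soundness of $\Tot\uFF^{\cP}_{\cT,*}$ and sound extendability of $\Span_{\cP}(\FF_{\cT})$ (\cref{Sound lemma}, \cref{Soundly extendable lemma}), verify that $\varphi$ is a strong Segal morphism via the elementary-slice computation (\cref{Segal condition lemma}, \cref{Span segal condition lemma}), apply the Barkan equivalence criterion (the paper's \cref{Equivalence theorem}), and compare definitions against Nardin--Shah for the $\cP=\cT$ case (\cref{Pointed prop}). The one correction worth making: the passage from Segal objects to fibrous patterns is not a transport by \cref{Overcategory of fibrous patterns prop} and \cref{Fibrous pullback prop} — it is the second clause of the same criterion (\cref{Equivalence theorem}), which needs sound extendability of the target and a concrete check of the ``fiberwise condition'' you gesture at, namely that $\varphi^{\act}_{/V}$ induces an equivalence on cores for each elementary $V$; the paper does this by an explicit factorization identifying both active slices with $\FF^{\cP}_{\cT,/V}$, a step your plan should not leave implicit.
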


\subsubsection{The pattern $\Tot \uFF_{\cT,*}^{\cP}$}%
We may explicitly describe the Segal conditions for $\Tot \uFF_{\cT,*}^{\cP}$.
\begin{lemma}[{\cite[Obs~5.2.9]{Barkan}}]\label{Segal condition lemma}
  Fix $\brk{S \rightarrow U}$ an object in $\uFF_{\cT,*}^{\cP}$.
  Then, there are equivalences 
  \begin{align}
    \label{Segal condition eq1}\prn{\prn{\Tot \uFF_{\cT,*}^{\cP}}^{\el}_{\brk{S \rightarrow U}/}}^{\op}
    &\simeq \cT \times_{\Tot \uFF_{\cT}^{\cP}} \Tot \uFF^{\cP,\vee, s.i.}_{\cT,/\brk{S \rightarrow U}}\\
    \label{Segal condition eq1.5} &\simeq \cT \times_{\FF_{\cT}^{\cP}} \uFF^{\cP, \vee}_{\cT,/\brk{S \rightarrow  U}}\\
    \label{Segal condition eq2}   &\simeq \cT \times_{\FF_{\cT}^{\cP}} \FF^{\cP}_{\cT,/S}.
  \end{align}
  Furthermore, the full subcategory of $\prn{\cT \times_{\cT} \FF^{\cP}_{\cT,/S}}^{\op}$ consisting of morphisms $f\colon T \rightarrow S$ such that $f$ itself is the inclusion of an orbit is an initial subcategory equivalent to the set $\mathrm{Orb}(S)$.
\end{lemma}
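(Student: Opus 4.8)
\textbf{Proof proposal for \cref{Segal condition lemma}.}

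The plan is to produce the three displayed equivalences in sequence, then establish the initiality statement as a separate (and final) step. For \cref{Segal condition eq1}, I would unwind the pattern structure on $\Tot \uFF_{\cT,*}^{\cP} = \Span_{s.i.,tdeg}(\Tot \uFF_{\cT}^{\cP,\vee};\cT^{\op})$: by \cref{Span construction}, the inert morphisms out of $\brk{S \rightarrow U}$ are the spans whose forward leg is an equivalence, i.e.\ backwards maps in $\Tot \uFF_{\cT}^{\cP,\vee,s.i.}$; hence $\prn{\Tot \uFF_{\cT,*}^{\cP}}^{\In}_{\brk{S \rightarrow U}/}$ is the opposite of $\Tot \uFF^{\cP,\vee,s.i.}_{\cT,/\brk{S \rightarrow U}}$, and cutting down to elementary objects (those lying over $\cT^{\op} \subset \Span_{\cP}$, i.e.\ orbits in the base pointed at their structure orbit) imposes exactly the fiber product with $\cT$ over $\Tot \uFF_{\cT}^{\cP}$. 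This is essentially the bookkeeping of \cite[Obs~5.2.9]{Barkan}, adapted verbatim to the weakly extensive / atomic orbital $\cP$ setting using the conventions of \cref{Span construction}.

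For \cref{Segal condition eq1.5} I would pass from the cocartesian-fibration total category $\Tot \uFF_{\cT}^{\cP,\vee,s.i.}$ with its summand-inclusion morphisms to the honest slice of $\uFF_{\cT}^{\cP,\vee}$: the point is that in a dual cartesian fibration, a map $f\colon (S\to U)\to (T\to V)$ over $\cT^{\op}$ whose $f_\circ$ is a summand inclusion is determined by, and corresponds to, a map $(S\to U)\to(T\to V)$ in $\uFF_{\cT}^{\cP,\vee}$ landing in the fiber over $U$ after pulling back — so restricting the domain to objects lying over a fixed orbit (the fiber product with $\cT$) straightens the $s.i.$-decorated slice of $\Tot\uFF_{\cT}^{\cP,\vee}$ into the plain slice of $\uFF_{\cT}^{\cP,\vee}$ over $\brk{S\to U}$, fibered over $\cT$. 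Then \cref{Segal condition eq2} follows because the source functor $s\colon \uFF_{\cT}^{\cP,\vee}\to\FF_{\cT}$ is itself an equivalence onto $\FF_{\cT}^{\cP}$-slices in the relevant sense: an object $(T\to V)$ with a map to $\brk{S\to U}$ is equivalent data to the finite $\cT$-set $T$ together with a map $T\to S$ (its structure map to $V$ being recovered), whence the slice $\uFF_{\cT,/\brk{S\to U}}^{\cP,\vee}$ fibered over $\cT$ identifies with $\FF_{\cT,/S}^{\cP}$ fibered over $\cT$. Each of these is a ``chase the definitions of the dual cartesian fibration'' verification, and I would cite \cite{Barkan,Cnossen_stable} for the dual cartesian fibration formalism rather than reprove it.

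The main obstacle I expect is the last sentence: showing that inside $\prn{\cT \times_{\cT} \FF^{\cP}_{\cT,/S}}^{\op}$ the full subcategory on the orbit-inclusions $U \hookrightarrow S$ (for $U \in \Orb(S)$) is \emph{initial}, not merely cofinal-looking. Here I would argue via Quillen's Theorem A (its $\infty$-categorical form, \cite[Thm~4.1.3.1]{HTT}): for each object $(f\colon T\to S)$ of the ambient category, I must check that the comma category of orbit-inclusions receiving a map from $T$ over $S$ — equivalently, the poset of orbits $U\in\Orb(S)$ through which $f$ factors after restricting along some orbit decomposition of $T$ — is weakly contractible. Unwinding, because $\FF_{\cT}$ is extensive, $f\colon T\to S$ decomposes over $\Orb(S)$, and the relevant comma category has a terminal object given by the orbit-wise decomposition of $f$; so it is contractible. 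The only genuinely delicate point is getting the variance and the atomicity hypothesis on $\cP$ to interact correctly — atomicity of $\cP$ guarantees the orbit inclusions $U\hookrightarrow S$ are themselves in $\FF_{\cT}^{\cP}$ and that there are no nontrivial automorphisms forcing the discrete category $\Orb(S)$ to collapse correctly — and I would isolate that in a short lemma before invoking Theorem A. Once initiality is established, the identification of $\Orb(S)$ as a \emph{set} (i.e.\ a discrete category) is immediate since distinct orbits admit no maps between them over $S$.
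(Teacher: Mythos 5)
Your plan tracks the paper's proof in broad outline, but it misses the one genuine piece of content and is confused in places where the paper is quite sharp.

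For \cref{Segal condition eq1} and \cref{Segal condition eq2} the paper simply records that these hold by unwinding the definitions of \cref{Span construction} and of the dual cartesian fibration; your bookkeeping is compatible with that, so no issue there. The real point is \cref{Segal condition eq1.5}, where the $s.i.$-decorated slice has to be identified with the undecorated slice after cutting down to sources in $\cT$. You gesture at ``a map whose $f_\circ$ is a summand inclusion is determined by, and corresponds to, a map landing in the fiber over $U$ after pulling back,'' but this is circular: it rephrases the $s.i.$ condition without showing the condition is \emph{automatic} for morphisms whose domain is an orbit. The paper's observation, which you never make, is that if $\brk{U=U}\to\brk{S\to V}$ is such a morphism then the associated map $U\to S\times_V U$ is split by the projection $S\times_V U\to U$, and atomicity of $\cP$ promotes a split mono in $\cP$ to an equivalence (hence a summand inclusion). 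Without that step you have not shown the two slices agree, so as written your argument for \cref{Segal condition eq1.5} has a gap.

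For the initiality claim the paper constructs a right adjoint to $\mathrm{Orb}(S)\hookrightarrow\cT\times_{\FF_{\cT}^{\cP}}\FF^{\cP}_{\cT,/S}$ directly, sending $f\colon T\to S$ to the orbit summand $f(T)\subset S$; this immediately gives the (co)finality statement without invoking Quillen's Theorem~A. Your Theorem~A argument would reach the same place, but two remarks: the comma categories you need to inspect are in fact \emph{singletons} (since the ambient category already has orbit sources, a map $T\to S$ out of an orbit factors through a unique orbit of $S$), so the language about a ``poset of orbits $U$ through which $f$ factors after restricting along some orbit decomposition of $T$'' overcomplicates, and suggests you are not sure what the objects of $\cT\times_{\FF_{\cT}^{\cP}}\FF^{\cP}_{\cT,/S}$ actually are. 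More importantly, you invoke atomicity of $\cP$ for this final step (``atomicity $\ldots$ guarantees the orbit inclusions $\ldots$ are in $\FF_{\cT}^{\cP}$'' and the remark about ``automorphisms forcing the discrete category $\Orb(S)$ to collapse correctly''), but atomicity is not what is being used there; it is used only for \cref{Segal condition eq1.5}. The adjoint formula $f\mapsto f(T)$ needs only that summand inclusions are $\cP$-maps and that orbit decompositions are well defined, and $\Orb(S)$ is a discrete set by fiat, with nothing to ``collapse.'' I'd recommend replacing the Theorem~A argument with the explicit adjunction and moving the atomicity lemma where it belongs, in the proof of \cref{Segal condition eq1.5}.
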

\begin{proof}
  \cref{Segal condition eq1,Segal condition eq2} follows by definition.
  For \cref{Segal condition eq1.5}, this follows by noting that whenever $\brk{U = U} \rightarrow \brk{S \rightarrow V}$ is a morphism in $\uFF^{\cP}_{\cT}$ out of an orbit, the associated morphism $U \rightarrow S \times_V U$ is a summand inclusion, as it's split by the projection $S \times_V U \rightarrow U$ and $\cP$ is atomic.
  For the remaining statement, the inclusion $\mathrm{Orb}(S) \hookrightarrow \cT \times_{\FF_{\cT}^{\cP}} \FF^{\cP}_{\cT,/S}$ has a right adjoint sending $f:T \rightarrow S$ to $f(T) \subset S$, so it is initial.
\end{proof}

Moreover, the pattern is reasonably well behaved.
\begin{lemma}[{\cite[Cor~5.2.10]{Barkan}}]\label{Sound lemma}
  The pattern $\Tot \uFF^{\cP}_{\cT,*}$ is sound.
\end{lemma}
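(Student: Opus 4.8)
The statement to prove is \cref{Sound lemma}: the pattern $\Tot \uFF^{\cP}_{\cT,*}$ is sound. Recall that soundness asks that for every active morphism $\omega\colon X \rightarrow Y$ in the pattern, the comparison functor $\fO^{\el}(\omega) \rightarrow \fO^{\el}_{X/}$ defined by the pullback square before the definition of soundness is initial. Since the proof is essentially the content of \cite[Cor~5.2.10]{Barkan} transported to our setting (atomic orbital $\cP \subset \cT$ in place of $\cO_G$), the plan is to reduce the combinatorial content to the already-established description of elementary slices in \cref{Segal condition lemma} and then run Barkan--Haugseng--Steinebrunner's verification verbatim.

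First I would unwind what an active morphism $\omega$ in $\Tot \uFF^{\cP}_{\cT,*} = \Span_{s.i.,tdeg}(\Tot \uFF_{\cT}^{\cP,\vee};\cT^{\op})$ looks like: by construction, active (target-degenerate) morphisms are spans whose backward leg lies in $\cT^{\op} = \Tot \uFF_{\cT}^{\cP,\vee,\mathrm{tdeg}}$, i.e. they are represented by genuine maps of finite pointed $\cT$-sets $\brk{S \rightarrow U} \rightarrow \brk{T \rightarrow U}$ over a fixed base orbit (up to the appropriate identifications). Using \cref{Segal condition lemma}, the elementary slice $\prn{\Tot \uFF^{\cP}_{\cT,*}}^{\el}_{\brk{S \rightarrow U}/}$ is opposite to $\cT \times_{\FF_{\cT}^{\cP}} \FF^{\cP}_{\cT,/S}$ and contains $\Orb(S)$ as an initial subcategory; so I would replace $\fO^{\el}_{X/}$ and $\fO^{\el}_{Y/}$ everywhere by these orbit sets, using that initial subcategories may be substituted freely inside the definition of initiality of $\fO^{\el}(\omega) \rightarrow \fO^{\el}_{X/}$.

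With those identifications in hand, the map $\omega_{(-)}\colon \fO^{\el}_{Y/} \rightarrow \fO^{\Int}_{X/}$ sends an orbit $U' \in \Orb(T)$ to the inert part of the inert-active factorization of $X \xrightarrow{\omega} Y \rightarrow U'$, which on the level of finite $\cT$-sets is just the fiber product $S \times_T U' \rightarrow S$ followed by the canonical summand inclusion — concretely $\omega_{U'}$ picks out the preimage summand $\omega^{-1}(U') \subset S$. Then $\fO^{\el}(\omega)$ becomes the category of pairs $(U' \in \Orb(T), U \in \Orb(S))$ together with a factorization $U \hookrightarrow \omega^{-1}(U') \hookrightarrow S$ inside $\FF^{\cP}_{\cT}$; the projection to $\fO^{\el}_{X/} \simeq \Orb(S)$ remembers $U \hookrightarrow S$. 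I would check this projection has a section (sending $U \in \Orb(S)$ to the pair $(\omega(U), U)$, which is well-defined because $\omega$ is a map of $\cT$-sets so $\omega(U)$ is an orbit of $T$) and that the section is right adjoint to the projection — the unit/counit being forced by the fact that over a fixed $U$, the category of admissible $U'$ with $U \subset \omega^{-1}(U')$ is a contractible poset (it has the least element $\omega(U)$). A right adjoint is cofinal, hence initial in the relevant variance, which is exactly soundness.

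\textbf{Main obstacle.} The only genuinely delicate point is bookkeeping the variance and the $(-)^{\vee}$ dual-fibration juggling: $\Tot \uFF_{\cT}^{\cP,\vee}$ is built as a \emph{dual cartesian fibration}, so the "summand inclusion" condition ($f_\circ$ a summand inclusion in the displayed diagram) and the direction of the elementary-slice equivalences in \cref{Segal condition lemma} must be tracked carefully to make sure the comparison functor $\fO^{\el}(\omega) \to \fO^{\el}_{X/}$ is the one admitting a \emph{right} adjoint rather than a left adjoint, and that initiality (colimit-cofinality) rather than finality is what soundness demands. Once the identifications of \cref{Segal condition lemma} are invoked this is bounded combinatorics on finite $\cT$-sets and atomicity of $\cP$ does all the work (it guarantees every splittable map of orbits is an equivalence, which is what makes $\omega^{-1}(U')$ a genuine summand and makes the poset of intermediate $U'$ contractible); I expect the argument to be a near-verbatim copy of \cite[Cor~5.2.10]{Barkan} with $\cO_G \rightsquigarrow \cT$, $\cO_G^{\cP} \rightsquigarrow \cP$, and would cite their proof for the details not reproduced here.
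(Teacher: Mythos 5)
Your route is genuinely different from the paper's. The paper does not verify soundness directly from the definition; instead it invokes \cite[Prop~3.3.23]{Barkan}, which gives a two-part criterion for soundness of span patterns: (i) the wide subcategory inclusion $\prn{\Tot \uFF_{\cT}^{\cP, \vee, si}}_{/S} \hookrightarrow \Tot \uFF^{\cP, \vee}_{\cT,/S}$ must be \emph{fully faithful}, and (ii) a certain inclusion of elementary slices must be final. The paper checks (i) by a two-line retract argument that hinges precisely on the atomicity property that a summand inclusion in $\FF_{\cT}^{\cP}$ is the same as a section, and checks (ii) by observing it is an equivalence via \cref{Segal condition lemma}. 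Both checks are short once the criterion is in hand.

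Your attempt to verify the initiality of $\fO^{\el}(\omega) \rightarrow \fO^{\el}_{X/}$ directly runs into a genuine gap at the step ``initial subcategories may be substituted freely inside the definition of initiality.'' That is not a valid reduction: initiality of $\Orb(S) \hookrightarrow \fO^{\el}_{X/}$ does not let you replace the target (or the corners $\fO^{\el}_{Y/} \times \fO^{\el}_{X/}$ of the defining pullback square for $\fO^{\el}(\omega)$) by orbit sets and conclude initiality of the composite. You would instead have to show your section is an honest right adjoint to the projection in the original categories, and verifying the adjunction unit/counit there is where the combinatorics you've waved past actually live. It is also worth noting that the load-bearing input the paper identifies — atomicity forces summand inclusions to be retracts, which is exactly what makes the retraction factorization go through — does not appear explicitly anywhere in your sketch; the parenthetical ``atomicity of $\cP$ does all the work'' gestures in the right direction but doesn't isolate the claim. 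Finally, your proposal ultimately defers the details to \cite[Cor~5.2.10]{Barkan}, but that corollary is itself an application of the same Prop~3.3.23 criterion the paper uses, so your sketch is not a reproduction of their argument; it is an independent (and at present incomplete) alternative.
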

\begin{proof}
  We verify the conditions of \cite[Prop~3.3.23]{Barkan}.
  First, we must verify that $\prn{\Tot \uFF_{\cT}^{\cP, \vee, si}}_{/S} \hookrightarrow \Tot \uFF^{\cP, \vee}_{\cT,/S}$ is fully faithful, 
  i.e. if there is a pair of $\Tot \uFF_{\cT}^{\cP,\vee}$-morphisms
  \[
    \begin{tikzcd}
      S_2 \arrow[r, "f_s"] \arrow[d]
      & S_1 \arrow[r, "g_s"] \arrow[d]
      & S_0 \arrow[d]\\
      U_2 \arrow[r, "f_t"]
      & U_1 \arrow[r, "g_t"]
      & U_0
    \end{tikzcd}
  \]
  such that the associated maps $gf_{\circ}\colon S_2 \rightarrow S_0 \times_{U_0} U_2$ and $g_{\circ}\colon S_1 \rightarrow S_0 \times_{U_0} U_1$ are summand inclusions, the map $S_2 \rightarrow S_1 \times_{U_1} U_2$ is a summand inclusion.
  Here, we use the orbitality property that a morphism in $\FF_{\cT}^{\cP}$ is a summand inclusion if and only if it's a section;
  noting that $gf_{\circ}$ may be decomposed as 
  \[
    S_2 \xrightarrow{\;\; f_{\circ}\;\;} S_1 \times_{U_1} U_2 \xrightarrow{\;\; g_{\circ} \times_{U_1} U_2 \;\;} S_0 \times_{U_0} U_1 \times_{U_1} U_2 \simeq S_0 \times_{U_0} U_2. 
  \]
  if $r$ is a retract for $gf_{\circ}$, then $r \circ \prn{g_\circ \times_{U_1} U_2}$ is a retract for $f_\circ$, so $f$ lies in $\prn{\Tot \uFF_{\cT}^{\cP, \vee, s.i.}}$, as desired.

  Last, we must verify that
  \[
    \Tot \uFF_{\cT, /\brk{S \rightarrow U}}^{\cP, \vee, si, \el} \hookrightarrow
    \Tot \uFF_{\cT, /\brk{S \rightarrow U}}^{\cP, \vee, \el}
  \]
  is final for all $\brk{S \rightarrow U} \in \uFF^{\cP, \vee}_{\cT}$;
  in fact, it is an equivalence by \cref{Segal condition lemma}.
\end{proof}

From this, we may prove the following proposition.
\begin{proposition}\label{Pointed prop}
  There is an equivalence of $\infty$-categories over $\cC$
   \[
     \Seg_{\Tot \underline{\FF}^{\cP}_{\cT,*}}(\cC) \simeq \Fun_{\cT}^{\cP-\oplus}(\uFF_{\cT,*}^{\cP}, \uCoFr^{\cT}(\cC)).
   \]
   Moreover, when $\cP = \cT$, there is an equivalence of $\infty$-categories
    \[
      \Fbrs(\Tot \underline{\FF}_{\cT,*}) \simeq \Op_{\cT, \infty},
    \]
    the latter denoting Nardin-Shah \cite{Nardin}'s $\infty$-category of $\cT$-$\infty$-categories.
\end{proposition}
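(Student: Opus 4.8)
The statement has two halves: first, identifying Segal objects for $\Tot \uFF_{\cT,*}^{\cP}$ with $\cP$-semiadditive functors out of $\uFF_{\cT,*}^{\cP}$; second, when $\cP = \cT$, identifying fibrous $\Tot \uFF_{\cT,*}$-patterns with Nardin--Shah's $\cT$-$\infty$-operads. I would structure the argument to follow the template of \cite[\S~5.2]{Barkan}, where the case $\cT = \cO_G$ with $\cP = \cT$ is treated, and upgrade it to the atomic orbital (and partially global) setting using the combinatorial input we have already assembled.

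\emph{Step 1: The Segal-object identification.} Since $\Tot \uFF_{\cT,*}^{\cP}$ is sound by \cref{Sound lemma}, a Segal object is determined by its restriction to elementary objects together with a right Kan extension condition along $\fO^{\el} \rightarrow \fO^{\Int}$ (\cite[Lem~2.9]{Chu}). Using the explicit description of the elementary slice categories from \cref{Segal condition lemma}---in particular the equivalence $\prn{\prn{\Tot \uFF_{\cT,*}^{\cP}}^{\el}_{\brk{S \to U}/}}^{\op} \simeq \cT \times_{\FF_{\cT}^{\cP}} \FF_{\cT,/S}^{\cP}$, and the fact that $\Orb(S)$ sits inside as an initial subcategory---I would show that the Segal condition at $\brk{S \to U}$ amounts to the map $F\brk{S \to U} \to \prod_{W \in \Orb(S)} F\brk{W = W}$ being an equivalence. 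This is exactly the Segal condition for the product pattern structure on $\cT^{\op}$-indexed families, which unwinds to: the underlying functor $\cT^{\op} \to \cC$ (obtained by restricting to the objects $\brk{W=W}$) is arbitrary, and $F$ is its right Kan extension along $\cT^{\op} \hookrightarrow \Tot \uFF_{\cT}^{\cP}$ followed by the fiberwise-product condition encoding finite pointed-$\cT$-set combinatorics. Matching this against the definition of $\uFun_{\cT}^{\cP-\oplus}(\uFF_{\cT,*}^{\cP}, \uCoFr^{\cT}(\cC))$---$\cP$-product-preserving (equivalently, $\cP$-semiadditive, since $\uFF_{\cT,*}^{\cP}$ is the free $\cP$-semiadditive completion of $\uFF_{\cT}^{\cP}$, cf. \cite{Cnossen_semiadditive})---gives the first equivalence, naturally over $\cC$.

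\emph{Step 2: The fibrous/operad identification when $\cP = \cT$.} By \cite[Lem~4.2.4]{Barkan}, a cocartesian fibration over $\Tot \uFF_{\cT,*}$ is a fibrous pattern iff it straightens to a Segal $\Tot \uFF_{\cT,*}$-$\infty$-category; more generally (\cite[Def~4.1.2]{Barkan}), fibrous patterns relax the cocartesian hypothesis in exactly the way that distinguishes operads from symmetric monoidal categories. On the other side, Nardin--Shah's $\cT$-$\infty$-operads are by \cite[Def~2.6.X]{Nardin} functors $\cO^{\otimes} \to \Tot \uFF_{\cT,*}$ with inert-cocartesian lifts satisfying Segal conditions for colors and multimorphisms. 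I would check that these Segal conditions, when phrased via the elementary slices computed in \cref{Segal condition lemma}, coincide termwise with conditions (1)--(3) of \cref{Fibrous patterns definition}: condition (1) matches the inert-cocartesian lift hypothesis verbatim; condition (2) matches the color Segal condition once one identifies $\base^{\el}_{V_1/}$ with $\Orb$-indexed products; condition (3) matches the multimorphism Segal condition. The translation is purely formal given that both sides live over the same pattern $\Tot \uFF_{\cT,*}$, so this is a matching-of-definitions argument rather than a construction.

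\emph{Main obstacle.} The genuinely delicate point is verifying that the soundness machinery and the elementary-slice computations of \cref{Segal condition lemma,Sound lemma} are robust enough in the global ($\cP \subsetneq \cT$) and general atomic-orbital setting, rather than only for $\cO_G$. Specifically, the argument in Step 1 leans on the equivalence \cref{Segal condition eq1.5}, which used the orbitality property that a morphism in $\FF_{\cT}^{\cP}$ is a summand inclusion iff it is a section; one must be careful that the dual-cartesian-fibration construction $(-)^\vee$ interacts correctly with the summand-inclusion and target-degenerate wide subcategories defining $\Tot \uFF_{\cT,*}^{\cP}$, which is where the bulk of the bookkeeping lies. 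The second half is comparatively routine once the first is in place, since it reduces to citing \cite[\S~2.6]{Nardin} and \cite[Lem~4.2.4]{Barkan} and observing the Segal conditions are literally the same; I expect no real difficulty there beyond careful cross-referencing.
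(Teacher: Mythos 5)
Your strategy mirrors the paper's: use \cref{Segal condition lemma} to reduce the Segal condition to a product-over-$\Orb(S)$ condition, transpose along the $\Tot \dashv \uCoFr^{\cT}$ adjunction to get a fully faithful embedding into $\Fun_{\cT}(\uFF_{\cT,*}^{\cP}, \uCoFr^{\cT}(\cC))$, and then argue that the image is exactly the $\cP$-product-preserving $\cT$-functors. For the second half, a matching-of-definitions argument comparing the conditions of \cref{Fibrous patterns definition} with those in \cite{Nardin} via \cref{Segal condition lemma} is indeed what the paper does.

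Two issues, one substantive and one cosmetic. The substantive one: you never supply the ingredient that lets you pass from the pointwise product condition on $M\colon \Tot\uFF_{\cT,*}^{\cP}\to\cC$ to the assertion that the transposed $\cT$-functor preserves $\cP$-indexed products in $\uCoFr^{\cT}(\cC)$. These are a priori different conditions; what closes the gap is \cref{Fixed points of colimit} (i.e.\ \cite[Prop~5.6--7]{Shah2}), which computes indexed (co)limits in $\uCoFr^{\cT}(\cC)$ fiberwise. Without it, ``matching against the definition'' is circular. The cosmetic one: your parenthetical claim that $\uFF_{\cT,*}^{\cP}$ is the free $\cP$-semiadditive completion of $\uFF_{\cT}^{\cP}$ is false---it is the free \emph{pointed} $\cT$-$\infty$-category on $\uFF_{\cT}$ (as the paper itself records with a citation to \cite{Nardin,Cnossen_semiadditive}), whereas the free $\cP$-semiadditive completion involves spans and would give $\Span_{\cP}(\FF_{\cT})$. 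This is not load-bearing in your sketch, but the appeal to Cnossen's semiadditive completion theorem at that point is a red herring and should be removed; it is not the right tool for characterizing the image.

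The invocation of \cref{Sound lemma} in Step 1 is also unnecessary---the right-Kan-extension characterization from \cite[Lem~2.9]{Chu} holds for arbitrary algebraic patterns, and the paper does not use soundness here. Step 2 is fine.
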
 
\begin{proof}[Proof of \cref{Pointed prop}]
  For the first statement, note by \cref{Segal condition lemma} that a Segal $\Tot \uFF^{\cP}_{\cT,*}$-object in $\cC$ is equivalent to a functor 
  \[
    M\colon \Tot \uFF^{\cP}_{\cT,*} \rightarrow \cC
  \]
  satisfying $M(\bigoplus_i {U_i}) \simeq \prod_i M(U_i)$;
  taking adjunct maps yields a fully faithful embedding
  \[
    \Seg_{\Tot \uFF_{\cT,*}^{\cP}}(\cC) \hookrightarrow \Fun_{\cT}(\uFF_{\cT,*}^{\cP}, \uCoFr^{\cT}(\cC)),
  \]
  so it suffices to identify which $\cT$-functors $\uFF_{\cT,*}^{\cP} \rightarrow \uCoFr^{\cT}(\cC)$ satisfy the above condition. 
  In fact, this follows from the identification of $\cT$-(co)limits in $\uCoFr^{\cT}(\cC)$ of \cref{Fixed points of colimit}.
  The second statement follows by comparing definitions with \cite[Prop~4.1.7]{Barkan} in view of \cref{Segal condition lemma}.
\end{proof}

We now turn to the remaining statements of \cref{Operads are fibrous theorem} making use of the following theorem, whose main content is due to Shaul Barkan in \cite[Cor~2.64]{Barkan-arity}.
\begin{theorem}[{\cite[Prop~3.1.16,Thm~5.1.1]{Barkan}}]\label{Equivalence theorem}
  Suppose $f\colon \cO \rightarrow \cP$ is a strong Segal morphism of algebraic patterns such that the following conditions hold:
  \begin{enumerate}
    \item $f^{\el}\colon \cO^{\el} \rightarrow \cP^{\el}$ is an equivalence, and
    \item for every $O \in \cO$, the functor $\prn{\cO^{\act}_{/O}}^{\simeq} \rightarrow \prn{\cP^{\act}_{/f(O)}}^{\simeq}$ is an equivalence.
  \end{enumerate}
  Then, the functor $f^*\colon \Seg_{\cP}(\cC) \rightarrow \Seg_{\cO}(\cC)$ is an equivalence.
  Furthermore, if $\cP$ is soundly extendable, then $f^*\colon \Fbrs(\cP) \rightarrow \Fbrs(\cO)$ is an equivalence, and it suffices to check condition (2) on $O \in \cO^{\el}$.
\end{theorem}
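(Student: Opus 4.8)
\textbf{Plan of proof for \cref{Equivalence theorem}.}
The plan is to reduce the statement to the arity-approximation machinery of \cite{Barkan-arity,Barkan}. The first two conclusions are essentially a citation: under hypotheses (1) and (2), $f$ is an \emph{arity-equivalence} in the sense of \cite[Def~2.58]{Barkan-arity} (the condition on $\prn{\cO^{\act}_{/O}}^{\simeq} \rightarrow \prn{\cP^{\act}_{/f(O)}}^{\simeq}$ is exactly arity-fullness/faithfulness after restricting to cores, and strong Segal plus (1) handles the elementary layer), so \cite[Cor~2.64]{Barkan-arity} gives that $f^{*}\colon \Seg_{\cP}(\cC) \rightarrow \Seg_{\cO}(\cC)$ is an equivalence for every complete $\cC$. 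I would spell out the translation between the hypotheses as stated here and the precise input of \cite[Cor~2.64]{Barkan-arity}, noting that a strong Segal morphism is in particular a Segal morphism (\cref{AlgPatt Seg observation}) so the pullback is well-defined on Segal objects to begin with.

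Next I would upgrade to fibrous patterns. Here the key tool is the unstraightening description of $\Fbrs(\cP)$: by \cite[Lem~4.2.4]{Barkan} (cited in the discussion preceding \cref{Envelope theorem}), when $\cP$ is sound a cocartesian fibration over $\cP$ is a fibrous $\cP$-pattern iff it is the straightening of a Segal $\cP$-$\infty$-category, and more generally $\Fbrs(\cP)$ sits inside $\Seg_{\cP}(\Cat)$-shaped data via the envelope adjunction of \cref{Envelope theorem}. Since $\cP$ is assumed soundly extendable, $f^{*}\colon \Fbrs(\cP) \rightarrow \Fbrs(\cO)$ is defined and, by \cref{Fibrous pullback prop} (which removes the strong-Segal hypothesis from \cite[Lem~4.1.19]{Barkan}), has left adjoint $L_{\Fbrs} f_{!}$. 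I would then invoke \cite[Prop~3.1.16]{Barkan}, which is precisely the statement that an arity-equivalence into a soundly extendable pattern induces an equivalence on fibrous patterns; combined with the $\Seg$-level equivalence just established, this gives $f^{*}\colon \Fbrs(\cP) \xrightarrow{\sim} \Fbrs(\cO)$.

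Finally, the reduction ``it suffices to check condition (2) on $O \in \cO^{\el}$'' when $\cP$ is soundly extendable: the point is that sound extendability means $\sA_{\cP} = \Ar^{\act}(\cP) \rightarrow \cP$ is a Segal $\cP$-$\infty$-category, so $\cP$ is determined in a strong sense by its active arrows out of elementaries, and the Segal/right-Kan-extension formalism (\cite[Lem~2.9]{Chu}, \cite[Obs~3.1.9]{Barkan}) lets one propagate the equivalence $\prn{\cO^{\act}_{/E}}^{\simeq} \xrightarrow{\sim} \prn{\cP^{\act}_{/f(E)}}^{\simeq}$ from elementary $E$ to all objects $O$ by writing a general active slice as a limit of elementary ones along the inert structure; strong Segal-ness of $f$ ensures this limit is preserved. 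I would make this precise by citing the soundly-extendable half of \cite[Thm~5.1.1]{Barkan}, where exactly this reduction is carried out. The main obstacle I anticipate is bookkeeping the several slightly different notions of ``arity-equivalence'' across \cite{Barkan-arity} and \cite{Barkan} and checking that hypotheses (1)--(2) as phrased here match the precise input of the cited results; the mathematical content is entirely in those references, so no new argument is needed beyond this reconciliation.
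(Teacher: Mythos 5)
The paper does not prove this theorem — it is stated purely as a citation to \cite[Prop~3.1.16, Thm~5.1.1]{Barkan}, with the preceding sentence noting that the "main content is due to Shaul Barkan in \cite[Cor~2.64]{Barkan-arity}." Your plan is consistent with the paper's intent: it correctly identifies that the first conclusion is \cite[Cor~2.64]{Barkan-arity}, and that the fibrous-pattern upgrade and the reduction of condition (2) to elementaries are \cite[Prop~3.1.16]{Barkan} and \cite[Thm~5.1.1]{Barkan} respectively, with the envelope adjunction and \cref{Fibrous pullback prop} supplying the scaffolding. The reconciliation of hypotheses with the notion of arity-equivalence is exactly the bookkeeping one needs to do here, and you have flagged it as the main thing to verify. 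In short, this is not a proof the paper gives, and your proposal is a reasonable roadmap through the cited references rather than an independent argument; no gap, and no genuinely different route.
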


\subsubsection{Global effective burnside patterns}
Fix $I \subset \FF_{\cT}$ a weakly extensive subcategory.
There is a span pattern analog to \cref{Segal condition lemma} which is proved identically.
\begin{lemma}\label{Span segal condition lemma}
  The full subcategory of $\prn{\Span_I(\FF_{\cT})^{\el}_{/S}}^{\op} \simeq \cT \times_{\FF_{\cT}} \FF_{\cT, /S}$ consisting of morphisms $f\colon T \rightarrow S$ such that $f$ is a summand inclusion is an initial subcategory equivalent to the set $\Orb(S)$.
\end{lemma}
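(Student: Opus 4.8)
The claim is that inside $\prn{\Span_I(\FF_{\cT})^{\el}_{/S}}^{\op} \simeq \cT \times_{\FF_{\cT}} \FF_{\cT, /S}$, the full subcategory spanned by the summand inclusions $f\colon T \hookrightarrow S$ is an initial subcategory, and moreover that this full subcategory is equivalent to the discrete set $\Orb(S)$. The strategy mirrors the proof of \cref{Segal condition lemma}: first identify the relevant slice category, then exhibit an adjoint (here a right adjoint to the inclusion, making the inclusion initial, since we are working on the opposite side from \cref{Segal condition lemma}), then check discreteness.

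\textbf{Step 1: unwind the elementary slice.} First I would spell out $\Span_I(\FF_{\cT})^{\el}_{/S}$. By \cref{Span construction}, the elementary objects of $\Span_I(\FF_{\cT})$ are $\cT^{\op}$ (the opposites of the elementary objects of the adequate quadruple, namely orbits), the inert morphisms are the backwards (summand-inclusion-type) spans, and so $\Span_I(\FF_{\cT})^{\el}_{S/}$ — the category of inert maps from $S$ to an elementary object — has as objects inert spans $S \leftarrow U \xrightarrow{=} U$ with $U \in \cT$, i.e. summand inclusions $U \hookrightarrow S$ with $U$ an orbit; its opposite is then $\cT \times_{\FF_{\cT}} \FF_{\cT,/S}$ once one allows arbitrary (not necessarily summand-inclusion) maps $T \rightarrow S$ with $T$ ranging over all of $\FF_{\cT}$ via the $\cT$-base-change. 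This is exactly the identification asserted in the statement, and it follows from the definitions of the span pattern together with the fact that $\FF_{\cT}$ is extensive so every object decomposes into orbits.

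\textbf{Step 2: construct the right adjoint / prove initiality.} The key point: the inclusion $\iota\colon \cD \hookrightarrow \prn{\cT \times_{\FF_{\cT}} \FF_{\cT,/S}}^{\op}$ of the full subcategory $\cD$ of summand inclusions has a right adjoint $r$ sending $f\colon T \rightarrow S$ to the image $f(T) \subset S$ (a union of orbits, hence by extensivity a genuine summand), with unit the factorization $T \twoheadrightarrow f(T) \hookrightarrow S$. Since a right adjoint to a functor into $\cC$ exhibits that functor as final, and we are in the opposite category, this shows $\iota$ is \emph{initial} into $\prn{\Span_I(\FF_{\cT})^{\el}_{/S}}^{\op}$; equivalently, restriction along $\iota$ of any functor out of the elementary slice computes the same limit. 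One should check that $r$ is well-defined, i.e. that $f(T) \hookrightarrow S$ lies in $I$: this uses that $I$ is weakly extensive (core-full plus closed under the relevant structure), so that every summand inclusion into an $I$-admissible set is in $I$; here $S$ itself need not be $I$-admissible, but summand inclusions are always backwards/inert maps, so they lie in $\Span_I(\FF_{\cT})$ by construction of the span pattern. I also need to verify the triangle identities, which is routine since both composites are literally identities on the nose (the image of a summand inclusion is itself).

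\textbf{Step 3: discreteness.} Finally, $\cD$ — the full subcategory of summand inclusions $U \hookrightarrow S$ with $U$ an orbit — is equivalent to the \emph{set} $\Orb(S)$: its objects are the orbits of $S$ (up to the evident equivalence), and there are no nonidentity morphisms between distinct summand inclusions of orbits, since a morphism $U \hookrightarrow S$ to $U' \hookrightarrow S$ over $S$ forces $U \subseteq U'$ as summands, hence $U = U'$ when both are orbits, and then the only self-map is the identity because $\cT$ is a $1$-category with trivial automorphisms of orbits... wait, that last point needs care: orbits can have nontrivial automorphisms. The resolution is that we are looking at the slice over $S$, so a self-morphism of $U \hookrightarrow S$ is an automorphism $\sigma$ of $U$ with $(U \hookrightarrow S)\circ \sigma = (U \hookrightarrow S)$; since the inclusion is a monomorphism in $\FF_{\cT}$, this forces $\sigma = \id$. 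Hence $\cD$ is a set, equivalent to $\Orb(S)$.

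\textbf{Main obstacle.} The only genuinely delicate point is Step 2: verifying that the image-factorization functor $r$ is well-defined as a functor into $\cD$ and genuinely right adjoint to $\iota$ — specifically, confirming that $f(T) \subset S$ is a summand (needs extensivity of $\FF_{\cT}$) and that all the relevant maps live in the subcategory $I$ used to build the span pattern (needs the precise closure properties of a weakly extensive subcategory). Everything else is a transcription of the argument for \cref{Segal condition lemma} with "initial" and "right adjoint" swapped for "final" and "left adjoint", reflecting that we are on the opposite side.
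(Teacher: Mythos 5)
Your overall strategy is the same as the paper's: the paper proves this lemma by the comment "proved identically to" the pointed-category version, and that proof's content is exactly your Step 2 — exhibit the image/factorization construction as an adjoint to the inclusion of the orbits. Steps 1 and 3 are fine; in particular your resolution of the automorphism worry via monicity of summand inclusions is correct, and you are right that $I$ plays no role since the inert morphisms of $\Span_I(\FF_{\cT})$ use arbitrary backwards maps.

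However, the adjoint-to-initiality deduction in Step 2 is stated backwards and you should untangle it. If $\iota$ has a right adjoint $r$, then $\iota$ is a \emph{left} adjoint, and left adjoints are \emph{initial}: for each object $c$ the comma $\infty$-category $\Orb(S)\times_{\cC}\cC_{/c}$ is identified by the adjunction with $\Orb(S)_{/r(c)}$, which has a terminal object and is therefore weakly contractible. Your sentence "a right adjoint to a functor into $\cC$ exhibits that functor as final" asserts the opposite, and the compensating "opposite category" flip does not actually square with it. The clean statement is that
\[
  \iota\colon \Orb(S) \hookrightarrow \bigl(\cT\times_{\FF_{\cT}}\FF_{\cT,/S}\bigr)^{\op} \simeq \Span_I(\FF_{\cT})^{\el}_{S/}
\]
admits the image functor as a right adjoint, hence is a left adjoint, hence is initial into the elementary slice \emph{directly} --- which is exactly what the Segal condition $F(S)\simeq\lim_{\Span_I(\FF_{\cT})^{\el}_{S/}}F$ requires --- with no further passage to opposites. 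Two smaller slips in the same step: what you call the "unit" (the factorization $T\to f(T)\hookrightarrow S$) is the \emph{counit} of $\iota\dashv r$; the unit is the identity on each orbit summand, since the image of a summand inclusion is itself. Also, when $T$ is already an orbit, $f(T)$ is a single orbit rather than a "union of orbits," so the parenthetical in Step 2 is slightly off. None of this changes the mathematical content, but the adjoint direction is worth getting exactly right, both because the paper's own phrasing of the adjoint in the proof of the pointed lemma appears to have the same typo, and because the final/initial confusion would silently propagate to the Segal-condition application if left as is.
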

Unwinding definitions, this demonstrates the following.
\begin{corollary}\label{CMon I cat corollary}
    The forgetful functor
    \[
        \Seg_{\Span_I(\FF_{\cT})}(\cC) \rightarrow \Fun(\Span_I(\FF_{\cT}), \cC)
    \]
    is fully faithful with image spanned by the product preserving functors.
\end{corollary}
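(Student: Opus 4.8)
\textbf{Proof plan for Corollary~\ref{CMon I cat corollary}.}
The statement asserts that the forgetful functor $\Seg_{\Span_I(\FF_{\cT})}(\cC) \to \Fun(\Span_I(\FF_{\cT}),\cC)$ is fully faithful with essential image the product-preserving functors. The plan is to reduce everything to the general definition of Segal objects via the elementary subcategory, combined with the combinatorics of $\Span_I(\FF_{\cT})^{\el}_{S/}$ supplied by Lemma~\ref{Span segal condition lemma}. First I would recall that $\Seg_{\fO}(\cC) \subset \Fun(\fO,\cC)$ is by definition a \emph{full} subcategory, so fully faithfulness of the forgetful functor is automatic; the only content is the identification of the essential image. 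Thus the real task is: a functor $F\colon \Span_I(\FF_{\cT}) \to \cC$ is a Segal object if and only if $F$ preserves finite products.

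Next I would unwind the Segal condition at an object $S \in \Span_I(\FF_{\cT})$. By definition the condition is that the canonical map $F(S) \to \lim_{E \in \Span_I(\FF_{\cT})^{\el}_{S/}} F(E)$ is an equivalence. Here the inert morphisms out of $S$ are (cocartesian lifts of) backward maps, i.e. summand inclusions $U \hookrightarrow S$, so $\Span_I(\FF_{\cT})^{\el}_{S/} \simeq \big(\cT \times_{\FF_{\cT}} \FF_{\cT,/S}\big)^{\op}$ in the notation of Lemma~\ref{Span segal condition lemma}. The key input is then precisely that lemma: the full subcategory on the genuine orbit summands $\{U \hookrightarrow S \mid U \in \Orb(S)\}$ is an \emph{initial} subcategory, equivalent to the discrete set $\Orb(S)$. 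Since limits are invariant under restriction along an initial (cofinal, in the limit sense) functor, the Segal condition at $S$ becomes the assertion that $F(S) \xrightarrow{\ \sim\ } \prod_{U \in \Orb(S)} F(U)$, the product being over the finitely many orbits of $S$, with the maps induced by the inert summand inclusions.

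Finally I would assemble these objectwise conditions. One direction: if $F$ preserves finite products then, since every $S \in \FF_{\cT}$ decomposes as a finite coproduct $S \simeq \coprod_{U \in \Orb(S)} U$ of orbits and this coproduct is a product in $\Span_I(\FF_{\cT})$ (the span category is semiadditive on the core, with biproducts given by disjoint union), the map $F(S) \to \prod_U F(U)$ is an equivalence for all $S$, so $F$ is Segal. Conversely, if $F$ is Segal then the objectwise equivalences $F(S) \simeq \prod_{U \in \Orb(S)} F(U)$ for all $S$, together with the fact that these equivalences are induced by the structural projections, say exactly that $F$ sends the biproduct $S \simeq \coprod_i S_i$ to the product $\prod_i F(S_i)$; since every finite product in $\Span_I(\FF_{\cT})$ is of this form (again using that disjoint union is the categorical product), $F$ preserves finite products. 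I do not expect a serious obstacle here: the entire argument is a formal unwinding once Lemma~\ref{Span segal condition lemma} is in hand, and the one point requiring a little care is checking that the comparison maps in the Segal limit really are the biproduct-projection maps (rather than merely abstractly equivalent), which follows by tracing through the inert-cocartesian lifts defining the pattern structure on $\Span_I(\FF_{\cT})$ from Construction~\ref{Span construction}.
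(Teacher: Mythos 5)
Your proposal is correct and is essentially the argument the paper intends: the paper states the corollary as an immediate consequence of \cref{Span segal condition lemma} (``Unwinding definitions, this demonstrates the following''), and your writeup supplies exactly the unwinding — fully faithfulness is automatic since Segal objects form a full subcategory by definition, and the initiality of $\Orb(S) \subset \Span_I(\FF_{\cT})^{\el}_{S/}$ reduces the Segal condition at $S$ to $F(S) \xrightarrow{\sim} \prod_{U \in \Orb(S)} F(U)$, which is product preservation since disjoint unions of orbits are the finite (bi)products in $\Span_I(\FF_{\cT})$.
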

    
We call these \emph{global effective Burnside patterns}.
They are generally well behaved:
\begin{lemma}\label{Soundly extendable lemma}
  The pattern $\Span_I(\FF_{\cT})$ is soundly extendable.
\end{lemma}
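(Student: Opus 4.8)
The statement to prove is that $\Span_I(\FF_{\cT})$ is soundly extendable, i.e.\ that it is sound (established in \cref{Span segal condition lemma} by the same argument as \cref{Sound lemma}; in fact soundness should be recorded first, or folded into this lemma) and that $\sA_{\Span_I(\FF_{\cT})} = \Ar^{\act}(\Span_I(\FF_{\cT})) \xrightarrow{t} \Span_I(\FF_{\cT})$ is a Segal $\Span_I(\FF_{\cT})$-$\infty$-category. The approach is to verify soundness directly using \cref{Span segal condition lemma} in the same way \cref{Sound lemma} handled $\Tot \uFF_{\cT,*}^{\cP}$: one checks that the map $\Span_I(\FF_{\cT})^{\el}(\omega) \rightarrow \Span_I(\FF_{\cT})^{\el}_{X/}$ is initial for every active $\omega\colon X \rightarrow Y$, which reduces (via the identification $\Span_I(\FF_{\cT})^{\el}_{S/}{}^{\op} \simeq \cT \times_{\FF_{\cT}} \FF_{\cT,/S}$ and the initial subcategory $\Orb(S)$) to an elementary combinatorial statement about orbits of finite $\cT$-sets, entirely parallel to the retract argument in \cref{Sound lemma}.

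For sound extendability itself, the cleanest route is to invoke \cite[Rmk~3.3.17]{Barkan}, which says that in the presence of soundness, sound extendability is equivalent to the (original, Chu) notion of extendability from \cite[Def~8.5]{Chu}; so it suffices to check that $\Span_I(\FF_{\cT})$ is extendable in Chu's sense. Concretely, this means verifying that for each active morphism $\omega\colon X \rightarrow Y$ in $\Span_I(\FF_{\cT})$ and each elementary $E \in \Span_I(\FF_{\cT})^{\el}_{X/}$, the relevant comma category has a terminal-type finality property; because $\Span_I(\FF_{\cT})$ is a span pattern with \emph{unique} inert lifts (as observed in the proof of \cref{Definition of NIinfty proposition}), the active-inert factorizations are controlled entirely by $\FF_{\cT}$ together with the pullback-stability \cref{Restriction stable condition} and the Segal condition \cref{Windex segal condition} defining a weak indexing category. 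The key input is that $I$ is pullback-stable and satisfies the segal condition, so that the pullback squares required to witness extendability stay inside $I$; this is exactly where the adequate-triple structure $(\FF_{\cT}, \FF_{\cT}, I)$ is used. Alternatively, one can cite directly that any span pattern $\Span_{b,f}(\cX;\cX_0^{\op})$ arising from an adequate quadruple is soundly extendable whenever the underlying category has the relevant pullbacks—this is essentially \cite[Ex~3.2.8 or the discussion following Def~3.2.6]{Barkan}—and note that $(\FF_{\cT}, \FF_{\cT}, I, \cT^{\op})$ is such a quadruple by \cref{Restriction stable condition} and \cref{Windex segal condition}.

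The plan is therefore: (1) record soundness via \cref{Span segal condition lemma}, mimicking \cref{Sound lemma}; (2) identify the active slice categories $\Span_I(\FF_{\cT})^{\act}_{/S}$ and the maps appearing in the definition of extendability in terms of pullback diagrams in $\FF_{\cT}$ all of whose relevant legs lie in $I$, using that $I$ is a weak indexing category; (3) conclude extendability, hence sound extendability by \cite[Rmk~3.3.17]{Barkan}. I expect the main obstacle to be bookkeeping: spelling out precisely that the arrow category $\sA_{\Span_I(\FF_{\cT})}$ satisfies the multimorphism Segal condition requires carefully tracking how active (forward) spans in $\Span_I(\FF_{\cT})$ decompose over orbits, and checking the resulting pullback-stability constraints against \cref{Restriction stable condition}; the subtlety is ensuring that the backward legs that appear in the inert-active factorizations of composites of active maps with elementary targets do not impose any condition beyond $I^\simeq \simeq \FF_{\cT}^\simeq$, which holds because $\Span_I(\FF_{\cT})$ has \emph{unique} (hence unconstrained) inert lifts. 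Once this observation is in place, the verification is formal and parallels \cite[Lem~5.2.?]{Barkan} for $\Tot \uFF_{\cT,*}^{\cP}$ together with \cref{Sound lemma}.
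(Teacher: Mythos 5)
The paper's proof is both simpler and cleaner than your proposed route. For soundness, the paper does not re-derive the condition by mimicking \cref{Sound lemma}; it simply invokes \cite[Cor~3.3.24]{Barkan}, a general soundness criterion for span patterns, which already applies. Your suggestion that soundness be "established in \cref{Span segal condition lemma}" is also imprecise: that lemma only identifies the elementary slice category and its initial $\Orb(S)$ subcategory; it does not contain a soundness proof.

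For the extendability half, the paper does not pass through Chu's \cite[Def~8.5]{Chu} notion or through \cite[Rmk~3.3.17]{Barkan}. It verifies the definition of sound extendability directly: that $\sA_{\Span_I(\FF_{\cT})}$ is a Segal $\Span_I(\FF_{\cT})$-$\infty$-category. The whole argument is the chain of identifications
$\Span_I(\FF_{\cT})^{\act}_{/S} \simeq I_{/S} \simeq \prod_{V \in \Orb(S)} I_{/V}$,
where the second equivalence is precisely the Segal condition \cref{Windex segal condition} for the weak indexing category $I$, and the comparison to the limit over elementaries reduces to this product by \cref{Span segal condition lemma}. You gesture at this when you mention "tracking how active spans decompose over orbits," but you never isolate this one-line identification, which is the entire content; instead your proposal substitutes a longer detour through Chu's extendability that would land you in the same place.

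The bigger concern is your alternative suggestion, namely to cite directly that "any span pattern $\Span_{b,f}(\cX;\cX_0^{\op})$ arising from an adequate quadruple is soundly extendable whenever the underlying category has the relevant pullbacks," which you attribute to discussion around \cite[Def~3.2.6]{Barkan}. This is not a theorem, and cannot be: sound extendability of $\Span_I(\FF_{\cT})$ genuinely uses the Segal property \cref{Windex segal condition} of $I$ to produce the second equivalence above, and this is an extra hypothesis beyond the adequate-quadruple structure (which only needs pullback stability and core-fullness). If such a blanket statement were available, the paper would have cited it instead of writing out the verification, and \cref{Definition of NIinfty proposition} (which filters out exactly those pullback-stable, core-full subcategories failing the Segal condition) would be unnecessary. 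As written, this step of your plan does not go through without adding the Segal condition back in, at which point you are just doing the paper's argument with extra steps.
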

\begin{proof}
  It is sound by \cite[Cor~3.3.24]{Barkan}.
  To see that $\Span(\FF_{\cT})$ is extendable, it is equivalent to prove that $\sA_{\Span(\FF_{\cT})}$ is a Segal $\Span_I(\FF_{\cT})$-$\infty$-category, i.e. for every $S \in \Span_I(\FF_{\cT})$, the associated functor $\varphi$ of 
  \[
    \begin{tikzcd}
      \Span_I(\FF_{\cT})^{\act}_{/S} \arrow[d] \arrow[r,"\sim"]
      & I_{/S} \arrow[r,"\sim"] \arrow[d]
      & \dprod_{V \in \mathrm{Orb}(S)} I_{/V} \arrow[dl, "\varphi"]\\ 
      \dlim_{V \in \Span(\FF_{\cT})^{\el}_{S/}} \Span(\FF_{\cT})^{\act}_{/V} \arrow[r,"\sim"]
      & \dlim_{V \in \cT \times_{\FF_{\cT}} \FF_{\cT,/S}} I_{/V}
    \end{tikzcd}
  \]
  is an equivalence.
  In fact, it is an equivalence by \cref{Span segal condition lemma}.
\end{proof}

\subsubsection{The equivalence}
We resume our original generality with $\cP \subset \cT$ an atomic orbital subcategory.
\begin{corollary}\label{Span fibrous corollary}
  $\varphi\colon \uFF^{\cP}_{\cT,*} \hookrightarrow \Span_{\cP}(\FF_{\cT})$ induces equivalences of categories 
  \begin{align*}
    \Seg_{\Span_{\cP}(\FF_{\cT})}(\cC) 
    &\simeq \Seg_{\uFF^{\cP}_{\cT,*}}(\cC);\\
    \Fbrs(\Span_{\cP}(\FF_{\cT})) 
    &\simeq \Fbrs(\uFF^{\cP}_{\cT,*}).
  \end{align*}
\end{corollary}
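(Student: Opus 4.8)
\textbf{Proof strategy for \cref{Span fibrous corollary}.} The plan is to apply \cref{Equivalence theorem} to the map of patterns $\varphi\colon \Tot \uFF_{\cT,*}^{\cP} \to \Span_{\cP}(\FF_{\cT})$. By \cref{Soundly extendable lemma}, $\Span_{\cP}(\FF_{\cT})$ is soundly extendable, so it suffices to verify the three hypotheses of \cref{Equivalence theorem}: that $\varphi$ is a strong Segal morphism, that $\varphi^{\el}$ is an equivalence, and that for every elementary object $\brk{U = U} \in (\Tot \uFF_{\cT,*}^{\cP})^{\el}$ the induced map on cores of active slice categories is an equivalence. Granting these, the theorem immediately gives the asserted equivalences $\Seg_{\Span_{\cP}(\FF_{\cT})}(\cC) \simeq \Seg_{\Tot \uFF_{\cT,*}^{\cP}}(\cC)$ and $\Fbrs(\Span_{\cP}(\FF_{\cT})) \simeq \Fbrs(\Tot \uFF_{\cT,*}^{\cP})$; composing with \cref{Pointed prop} and \cref{CMon I cat corollary} then yields the identification with $\CMon_{\cP}(\cC)$ and, when $\cP = \cT$, with $\Op_{\cT,\infty}$, which is the content of \cref{Span fibrous theorem}.

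First I would check that $\varphi$ is a morphism of patterns at all: by construction it is the functor on span categories induced by the map of adequate quadruples $(\Tot \uFF_{\cT}^{\cP,\vee}, (s.i., tdeg), \cT^{\op}) \to (\FF_{\cT}, (all, \FF_{\cT}^{\cP}), \cT^{\op})$ over the source functor $s$, so it preserves inert morphisms, active morphisms and elementary objects by \cref{Span construction}. The computation $\varphi^{\el}\colon (\Tot \uFF_{\cT,*}^{\cP})^{\el} \xrightarrow{\sim} \Span_{\cP}(\FF_{\cT})^{\el}$ is essentially bookkeeping: both sides are $\cT^{\op}$, and $\varphi$ restricted to elementaries is the identity on $\cT^{\op}$. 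For the strong Segal condition, I would combine the description of elementary slices from \cref{Segal condition lemma}, namely $((\Tot \uFF_{\cT,*}^{\cP})^{\el}_{\brk{S \to U}/})^{\op} \simeq \cT \times_{\FF_{\cT}^{\cP}} \FF_{\cT,/S}^{\cP}$, with the corresponding span-pattern computation $(\Span_{\cP}(\FF_{\cT})^{\el}_{S/})^{\op} \simeq \cT \times_{\FF_{\cT}} \FF_{\cT,/S}$ from \cref{Span segal condition lemma}. Under these identifications the map $\varphi^{\el}_{\brk{S\to U}/}$ is (up to the evident equivalence of indexing categories) the identity, hence initial; this is exactly the argument of \cite[Prop~5.2.14]{Barkan} transported to the atomic orbital setting, the only new input being that $\cP$ atomic forces the relevant comparison morphisms $U \to S \times_V U$ to be summand inclusions, which is already recorded in the proof of \cref{Segal condition lemma}.

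The main obstacle is verifying condition (2) of \cref{Equivalence theorem}: for each elementary $\brk{U = U}$, the functor $((\Tot \uFF_{\cT,*}^{\cP})^{\act}_{/\brk{U=U}})^{\simeq} \to (\Span_{\cP}(\FF_{\cT})^{\act}_{/U})^{\simeq}$ must be an equivalence of spaces. Unwinding, the left-hand side is the space of active arrows in $\Tot \uFF_{\cT,*}^{\cP}$ into $\brk{U = U}$, i.e. pointed $\cP$-set maps $\brk{S \to V} \to \brk{U = U}$ with target-nondegenerate underlying data, and the right-hand side is the core of $I_{\cP,/U}$, i.e. isomorphism classes of $\cP$-admissible maps $T \to U$ with their automorphisms. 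I expect the comparison to be exactly Barkan's arity-space computation \cite[Cor~2.64]{Barkan-arity}: an active map $\brk{S \to V} \to \brk{U=U}$ in the pointed-set pattern is the data of $V = U$ together with the complement of the basepoint-preimage, which is precisely a $\cP$-admissible map of the underlying $U$-sets, and automorphisms match on the nose. The delicate point is keeping the $\cP$-structure (weak extensivity / pullback-stability) aligned on both sides — that the ``target-nondegenerate'' condition on the pointed side corresponds exactly to membership in $\FF_{\cT}^{\cP}$ on the span side — but this is forced by how $\Tot \uFF_{\cT,*}^{\cP}$ and $\Span_{\cP}(\FF_{\cT})$ were both built from the same weakly extensive datum. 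Once this single equivalence of groupoids is established, \cref{Equivalence theorem} finishes the argument, with \cref{Soundly extendable lemma} supplying the sound extendability needed to upgrade from the statement about Segal objects to the statement about fibrous patterns.
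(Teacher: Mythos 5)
Your proof is correct and follows essentially the same route as the paper: verify that $\varphi$ is a strong Segal morphism via \cref{Segal condition lemma} and \cref{Span segal condition lemma}, note $\varphi^{\el}$ is an equivalence by construction, and check condition (2) of \cref{Equivalence theorem} on elementaries, with sound extendability coming from \cref{Soundly extendable lemma}. The only differences are cosmetic: you wrote ``target-nondegenerate'' where you mean ``target-degenerate,'' and for condition (2) the paper shows the stronger fact that $\varphi^{\act}_{/V}$ itself is already an equivalence (both sides identify with $\FF^{\cP}_{\cT,/V}$), sidestepping any core-level comparison.
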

\begin{proof}
  The pattern $\Span(\FF_{\cT})$ is soundly extendable by \cref{Soundly extendable lemma}.
  In order to verify that $\varphi$ is a strong Segal morphism, we must verify that $\varphi^{\el}_{\brk{S \rightarrow V}/}$ is initial;
  in fact, it is an equivalence by 
  \cref{Segal condition lemma,Span segal condition lemma}.
  
  It remains to check that $\varphi$ satisfies the conditions of \cref{Equivalence theorem}.
  First, note that $\varphi^{\el}$ is an equivalence by construction.
  Second, note that there is a factorization
  \[
    \begin{tikzcd}[column sep = small]
      \Tot \uFF_{\cT,*, /\brk{V = V}}^{\cP,\act} \arrow[d, "s^{\act}"] \arrow[r,"\simeq", phantom]
      & \FF^{\cP}_{\cT,/V} \arrow[d, equals] \\
      \Span_{\cP}(\FF_{\cT})^{\act}_{/V} \arrow[r,"\simeq", phantom]
      & \FF^{\cP}_{\cT, /V} 
    \end{tikzcd}
  \]
  so $\varphi^{\act}_{/V}$ is an equivalence for all $V \in \cT^{\op} = \Tot \uFF_{\cT,*, /\brk{V=V}}^{\cP,\el}$.  
\end{proof}
\cref{Span fibrous theorem} follows by combining \cref{Pointed prop,CMon I cat corollary,Span fibrous corollary}.
\subsubsection{The $\cO$-monoidal case}
We refer to $\Fbrs(\Span_{\cP}(\FF_{\cT}))$ as the \emph{$\infty$-category of $\cP$-operads}.
\cref{Span fibrous theorem} yields two algebraic patterns underlying a $\cP$-operad:
\begin{align*}
  \Tot&\colon \Fbrs(\Span_{\cP}(\FF_{\cT})) \rightarrow \AlgPatt;\\
  \Tot \Tot_{\cT}&\colon \Fbrs(\Span_{\cP}(\FF_{\cT})) \simeq \Fbrs(\Tot \uFF_{\cT,*}^{\cP}) \rightarrow \AlgPatt.
\end{align*}
in fact, these yield the same algebraic theories.
\begin{corollary}\label{Two models for segal objects}
  Let $\cO^{\otimes}$ be a $\cP$-operad.
  Then, $\varphi^*$ induces equivalences
  \begin{align*}
    \Seg_{\Tot \cO^{\otimes}} (\cC) &\simeq \Seg_{\Tot \Tot_{\cT} \cO^{\otimes}}(\cC);\\
    \Fbrs(\Tot \cO^{\otimes}) &\simeq \Fbrs(\Tot \Tot_{\cT} \cO^{\otimes}).
  \end{align*}
\end{corollary}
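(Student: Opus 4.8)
The plan is to realise the comparison as pullback along a morphism of algebraic patterns to which \cref{Equivalence theorem} applies. By \cref{Span fibrous theorem} the equivalence $\Fbrs(\Span_{\cP}(\FF_{\cT})) \simeq \Fbrs(\Tot \uFF_{\cT,*}^{\cP})$ is implemented by $\varphi^{*}$, so that --- with its canonical pattern structure from \cref{Overcategory of fibrous patterns prop} --- $\Tot \Tot_{\cT}\cO^{\otimes}$ is the fibrous $\Tot\uFF_{\cT,*}^{\cP}$-pattern $\varphi^{*}\cO^{\otimes} = \cO^{\otimes}\times_{\Span_{\cP}(\FF_{\cT})}\Tot\uFF_{\cT,*}^{\cP}$, and projection to the first factor is a functor $\varphi_{\cO}\colon \Tot\Tot_{\cT}\cO^{\otimes}\to\Tot\cO^{\otimes}$ over $\varphi$; the corollary asserts precisely that $\varphi_{\cO}^{*}$ is an equivalence on $\Seg_{-}(\cC)$ and on $\Fbrs(-)$. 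First I would check that $\varphi_{\cO}$ is a morphism of patterns: $\varphi$ preserves inert, active and elementary morphisms, and since $\varphi_{\cO}$ is a base change of the inert--cocartesian fibration underlying $\cO^{\otimes}$ it carries cocartesian lifts of inerts to cocartesian lifts, so it is compatible with the inert/active/elementary structure of \cref{Overcategory of fibrous patterns prop}.

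Next I would verify the hypotheses of \cref{Equivalence theorem} for $\varphi_{\cO}$, reducing each to the analogous fact for $\varphi$ recorded in the proof of \cref{Span fibrous corollary}. For strong Segality: for $\bC\in\Tot\Tot_{\cT}\cO^{\otimes}$ lying over $\tilde S\in\Tot\uFF_{\cT,*}^{\cP}$, essential uniqueness of inert cocartesian lifts identifies the elementary undercategory $(\Tot\Tot_{\cT}\cO^{\otimes})^{\el}_{\bC/}$ with $(\Tot\uFF_{\cT,*}^{\cP})^{\el}_{\tilde S/}$, and likewise $(\Tot\cO^{\otimes})^{\el}_{\varphi_{\cO}\bC/}$ with $\Span_{\cP}(\FF_{\cT})^{\el}_{\varphi\tilde S/}$; under these identifications $(\varphi_{\cO})^{\el}_{\bC/}$ becomes $\varphi^{\el}_{\tilde S/}$, which is initial. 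For condition (1) the elementary locus of a fibrous pattern is its base change to $\cT^{\op}$, and $\varphi$ is the identity on elementary objects, so $(\Tot\Tot_{\cT}\cO^{\otimes})^{\el} = \cO^{\otimes}\times_{\Span_{\cP}(\FF_{\cT})}\cT^{\op} = (\Tot\cO^{\otimes})^{\el}$. Finally, since $\Tot\cO^{\otimes}$ is soundly extendable --- being fibrous over the soundly extendable pattern $\Span_{\cP}(\FF_{\cT})$ of \cref{Soundly extendable lemma} --- it suffices to check condition (2) on elementary objects; for $\bC$ elementary over an orbit $V$, the core active slice $((\Tot\Tot_{\cT}\cO^{\otimes})^{\act}_{/\bC})^{\simeq}$ is the pullback of $((\Tot\cO^{\otimes})^{\act}_{/\varphi_{\cO}\bC})^{\simeq}$ and $((\Tot\uFF_{\cT,*}^{\cP})^{\act}_{/\tilde V})^{\simeq}$ over $(\Span_{\cP}(\FF_{\cT})^{\act}_{/V})^{\simeq}$ (cores commute with pullbacks), and since $\varphi^{\act}_{/\tilde V}$ induces a core equivalence the projection to $((\Tot\cO^{\otimes})^{\act}_{/\varphi_{\cO}\bC})^{\simeq}$ does too. \cref{Equivalence theorem} then yields both the $\Seg$ and the $\Fbrs$ equivalence at once. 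For the $\Fbrs$ part one may alternatively avoid the soundness remark by slicing: \cref{Overcategory of fibrous patterns prop} identifies $\Fbrs(\Tot\cO^{\otimes})\simeq\Fbrs(\Span_{\cP}(\FF_{\cT}))_{/\cO^{\otimes}}$ and $\Fbrs(\Tot\Tot_{\cT}\cO^{\otimes})\simeq\Fbrs(\Tot\uFF_{\cT,*}^{\cP})_{/\varphi^{*}\cO^{\otimes}}$, and the equivalence $\varphi^{*}$ of \cref{Span fibrous corollary}, carrying $\cO^{\otimes}$ to $\varphi^{*}\cO^{\otimes}$, restricts to an equivalence of these slices which pasting of pullback squares identifies with $\varphi_{\cO}^{*}$.

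The main obstacle is the bookkeeping in the second paragraph: making precise how the inert/active/elementary structure of a fibrous pattern behaves under base change along the strong Segal morphism $\varphi$, so that the three hypotheses of \cref{Equivalence theorem} for $\varphi_{\cO}$ reduce cleanly to the ones already established for $\varphi$. A secondary point to pin down is the inheritance of sound extendability by $\Tot\cO^{\otimes}$ (used implicitly elsewhere, e.g. through \cref{Env is maps of operads corollary}); if one prefers not to invoke it, the slice argument above handles $\Fbrs$, and for $\Seg$ one checks condition (2) of \cref{Equivalence theorem} on \emph{all} objects of $\Tot\Tot_{\cT}\cO^{\otimes}$ by the same base-change computation, using that $\varphi^{\act}_{/\tilde S}$ is a core equivalence for every $\tilde S$.
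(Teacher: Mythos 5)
Your argument is correct and takes essentially the same route as the paper. The paper factors the work through a standalone proposition (its \cref{Pullback morita prop}) asserting that base change along a strong Segal morphism satisfying the hypotheses of \cref{Equivalence theorem}, applied to a fibrous pattern, again satisfies those hypotheses; you carry out the identical pullback-diagram computations directly for $\varphi_{\cO}\colon\Tot\Tot_{\cT}\cO^{\otimes}\simeq\varphi^{*}\cO^{\otimes}\to\Tot\cO^{\otimes}$. The one point you flag as uncertain --- inheritance of sound extendability by $\Tot\cO^{\otimes}$ --- is precisely what the paper settles by citing Barkan's Lemma~4.1.15 (fibrous patterns over soundly extendable patterns are soundly extendable), so it is a standing fact rather than a gap. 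Your alternative slice argument for the $\Fbrs$ half, via $\Fbrs(\Tot\cO^{\otimes}) \simeq \Fbrs(\Span_{\cP}(\FF_{\cT}))_{/\cO^{\otimes}}$ together with \cref{Span fibrous corollary}, is a clean shortcut that the paper does not use and is worth keeping in mind; as you note, it does not cover the $\Seg_{(-)}(\cC)$ half, which still needs the pointwise core-of-active-slice comparison.
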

This will follow immediately from the following proposition.
\begin{proposition}\label{Pullback morita prop}
  Suppose $\varphi\colon \fO \rightarrow \fP$ is a strong Segal morphism of algebraic patterns satisfying the conditions of \cref{Equivalence theorem} and which is a $\pi_0$-isomorphism and let $\fQ \rightarrow \fP$ be a fibrous pattern.
  Then, the pullback map 
  \[
    \varphi'\colon  \varphi^* \fQ \rightarrow \fQ
  \]
  satisfies the conditions of \cref{Equivalence theorem};
  moreover, if $\fP$ is soundly extendable, then $\fQ$ is soundly extendable.
\end{proposition}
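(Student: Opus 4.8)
The plan is to show that $\varphi'\colon \varphi^{*}\fQ \to \fQ$ is a strong Segal morphism satisfying conditions (1) and (2) of \cref{Equivalence theorem}, by transporting the corresponding structure on $\varphi$ and on the structure map $\pi_{\fQ}\colon \fQ \to \fP$ across the pullback square. Since $\varphi$ is a strong Segal morphism, \cref{Fibrous pullback prop} makes $\varphi^{*}\fQ \to \fO$ a fibrous $\fO$-pattern; I would equip $\varphi^{*}\fQ$ with its inherited pattern structure (inert morphisms $=$ $\pi$-cocartesian lifts of inert morphisms of $\fO$, active morphisms $=$ arbitrary lifts of active morphisms, elementary objects $=$ those over $\fO^{\el}$) and first check that $\varphi'$ is a morphism of algebraic patterns. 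It preserves inert morphisms because $\varphi$ sends inert morphisms of $\fO$ to inert morphisms of $\fP$ and, by the description of cocartesian morphisms in a pullback of inert-cocartesian fibrations, a $\pi$-cocartesian lift over an inert of $\fO$ projects to a $\pi_{\fQ}$-cocartesian lift over an inert of $\fP$; it preserves active morphisms and elementary objects because $\varphi$ does so and these classes in $\fQ$ (resp.\ $\varphi^{*}\fQ$) are detected over $\fP$ (resp.\ $\fO$).

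Next I would verify the three remaining conditions. For the strong Segal property, fix $X=(O,Q)\in\varphi^{*}\fQ$ with $\varphi(O)=\pi_{\fQ}(Q)=:P$. Both $\varphi^{*}\fQ\to\fO$ and $\pi_{\fQ}\colon\fQ\to\fP$ are fibrous patterns, hence strong Segal morphisms by \cref{Fibrous patterns are strong Segal morphisms}, and their cocartesian-lift structure yields natural equivalences $(\varphi^{*}\fQ)^{\el}_{X/}\simeq\fO^{\el}_{O/}$ and $\fQ^{\el}_{\varphi'(X)/}\simeq\fP^{\el}_{P/}$ under which the comparison functor $(\varphi')^{\el}_{X/}$ becomes $\varphi^{\el}_{O/}$, which is initial since $\varphi$ is strong Segal. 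For condition (1), unwinding the pattern structure identifies $(\varphi^{*}\fQ)^{\el}$ with the fibre product $\fO^{\el}\times_{\fP^{\el}}\fQ^{\el}$, so $(\varphi')^{\el}\colon(\varphi^{*}\fQ)^{\el}\to\fQ^{\el}$ is the base change of $\varphi^{\el}\colon\fO^{\el}\to\fP^{\el}$ along $\fQ^{\el}\to\fP^{\el}$ and is therefore an equivalence. For condition (2), fix $X=(O,Q)$; slicing commutes with the defining pullback and activeness is detected over $\fP$, so $(\varphi^{*}\fQ)^{\act}_{/X}\simeq\fO^{\act}_{/O}\times_{\fP^{\act}_{/P}}\fQ^{\act}_{/Q}$, and since $(-)^{\simeq}$ is a right adjoint it commutes with this pullback; thus $\bigl((\varphi^{*}\fQ)^{\act}_{/X}\bigr)^{\simeq}\to\bigl(\fQ^{\act}_{/Q}\bigr)^{\simeq}$ is the base change of $\bigl(\fO^{\act}_{/O}\bigr)^{\simeq}\to\bigl(\fP^{\act}_{/P}\bigr)^{\simeq}$, which is an equivalence by condition (2) for $\varphi$. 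The hypothesis that $\varphi$ is a $\pi_{0}$-isomorphism is invoked to control the objects of $\fQ$ — it makes $\varphi'$ essentially surjective — and, together with sound extendability of $\fP$, to permit checking condition (2) only on elementary objects via the last clause of \cref{Equivalence theorem}.

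For the ``moreover'' I would argue that sound extendability of $\fP$ passes to $\fQ$: as $\fQ$ is a fibrous $\fP$-pattern, its inherited pattern structure is sound (this is implicit in \cref{Overcategory of fibrous patterns prop} and is established in \cite{Barkan}), and one further checks that $\sA_{\fQ}=\Ar^{\act}(\fQ)\xrightarrow{t}\fQ$ is a Segal $\fQ$-$\infty$-category; using the fibrous structure this reduces to the corresponding property of $\sA_{\fP}\to\fP$, i.e.\ to sound extendability of $\fP$, via the extendability calculus of \cite{Barkan}.

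The step I expect to be the main obstacle is the bookkeeping identifying the elementary slice categories $(\varphi^{*}\fQ)^{\el}_{X/}$ and $(\varphi^{*}\fQ)^{\el}$, and the active slice $(\varphi^{*}\fQ)^{\act}_{/X}$, in terms of the fibrous pattern structure on $\varphi^{*}\fQ\to\fO$ — and checking that these identifications are strictly compatible with $\varphi'$ and with the analogous data for $\pi_{\fQ}$. Once the pullback description $(\varphi^{*}\fQ)^{\act}_{/X}\simeq\fO^{\act}_{/O}\times_{\fP^{\act}_{/P}}\fQ^{\act}_{/Q}$ (and its elementary counterpart) is in hand, conditions (1) and (2) are formal base-change arguments; the mildly delicate point is that $\varphi^{*}\fQ\to\fQ$ is not a fibration, so the slice identification requires a small separate argument rather than a black-box fibration lemma.
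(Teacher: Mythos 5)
Your proof is correct and follows essentially the same route as the paper's: verify the strong Segal property, identify $(\varphi^*\fQ)^{\el}$ and $(\varphi^*\fQ)^{\act}_{/X}$ as pullbacks of the corresponding data for $\fO$, $\fP$, $\fQ$, and observe that conditions (1) and (2) are then inherited by base change of the equivalences $\varphi^{\el}$ and $(\varphi^{\act}_{/O})^{\simeq}$; the ``moreover'' is, in both your account and the paper's, an appeal to the standard fact that fibrous patterns over soundly extendable patterns are soundly extendable.

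The one place where you do something genuinely different is the strong Segal step. The paper invokes a blanket claim that ``strong Segal morphisms are closed under pullback, since initial functors are closed under pullback.'' Taken literally, the latter is false in general; what saves it here is precisely the observation you make instead: since $\varphi^*\fQ \to \fO$ and $\fQ \to \fP$ are inert-cocartesian fibrations, the induced functors $(\varphi^{*}\fQ)^{\el}_{X/}\to\fO^{\el}_{O/}$ and $\fQ^{\el}_{\varphi'(X)/}\to\fP^{\el}_{P/}$ are equivalences (cocartesian lifts over inerts are essentially unique), so $(\varphi')^{\el}_{X/}$ is conjugate to $\varphi^{\el}_{O/}$ and hence initial. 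This is a cleaner and more self-contained justification than the paper's phrasing, and is the concrete reason the paper's terse claim is true in this setting.

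One small mismatch: your discussion of what the $\pi_0$-isomorphism hypothesis buys — and of using the ``it suffices to check condition (2) on elementary objects'' shortcut — does not reflect what the paper actually does. The paper verifies condition (2) for arbitrary $X\in\varphi^*\fQ$ directly from the pullback diagram of active slices, with no need for that shortcut; and the $\pi_0$-isomorphism hypothesis is not visibly invoked in the paper's argument either, so your suggested role for it is speculative rather than something the argument depends on. This does not affect the correctness of your proof (which also proves condition (2) for all $X$ by base change), but you should not present the $\pi_0$-iso as essential to the steps where you cite it.
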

\begin{proof}
  First note that strong Segal morphisms are closed under pullback, since initial functors are closed under pullback.
  Furthermore, fibrous patterns over soundly extendable patterns are soundly extendable \cite[Lem~4.1.15]{Barkan}, so we're left with verifying the conditions.
  Note that we acquire pullback diagrams
  \[
    \begin{tikzcd}[ampersand replacement=\&]
      {\varphi^*\fQ^{\el}} \& {\fQ^{\el}} \&\& {\varphi^*\fQ^{\act}} \& {\fQ^{\act}} \\
      {\fO^{\el}} \& {\fP^{\el}} \&\& {\fO^{\act}} \& {\fP^{\act}}
      \arrow[from=1-1, to=1-2]
      \arrow[from=1-1, to=2-1]
      \arrow["\lrcorner"{anchor=center, pos=0.125}, draw=none, from=1-1, to=2-2]
      \arrow[from=1-2, to=2-2]
      \arrow[from=1-4, to=1-5]
      \arrow[from=1-4, to=2-4]
      \arrow["\lrcorner"{anchor=center, pos=0.125}, draw=none, from=1-4, to=2-5]
      \arrow[from=1-5, to=2-5]
      \arrow["\sim"', from=2-1, to=2-2]
      \arrow[from=2-4, to=2-5]
    \end{tikzcd}
  \]
  which imply that $\varphi^* \fQ^{\el} \rightarrow \fQ^{\el}$ is an equivalence.
  Pick some $X \in \varphi^* \fQ$;
  then, we acquire pullback diagrams
  \[
    \begin{tikzcd}[ampersand replacement=\&]
	{\varphi^*\fQ^{\act}_{/X}} \& {\fQ^{\act}_{/\varphi' X}} \&\& {\prn{\varphi^*\fQ^{\act}_{/X}}^{\simeq}} \& {\prn{\fQ^{\act}_{/\varphi' X}}^{\simeq}} \\
	{\fO^{\act}_{/\pi X}} \& {\fP^{\act}_{/\varphi \pi X}} \&\& {\prn{\fO^{\act}_{/\pi X}}^{\simeq}} \& {\prn{\fP^{\act}_{/\varphi \pi X}}^{\simeq}}
	\arrow[from=1-1, to=1-2]
	\arrow[from=1-1, to=2-1]
	\arrow["\lrcorner"{anchor=center, pos=0.125}, draw=none, from=1-1, to=2-2]
	\arrow[""{name=0, anchor=center, inner sep=0}, from=1-2, to=2-2]
	\arrow[from=1-4, to=1-5]
	\arrow[""{name=1, anchor=center, inner sep=0}, from=1-4, to=2-4]
	\arrow["\lrcorner"{anchor=center, pos=0.125}, draw=none, from=1-4, to=2-5]
	\arrow[from=1-5, to=2-5]
	\arrow[from=2-1, to=2-2]
	\arrow[from=2-4, to=2-5, "\simeq"]
\end{tikzcd}
  \]
where the right is the core of the left, implying that $\prn{\varphi^* \fQ^{\act}_{/X}}^{\simeq} \rightarrow \prn{\fQ^{\act}_{/\varphi' X}}^{\simeq}$ is an equivalence, as desired.
\end{proof}

For example, we can quickly acquire a model for $I$-operads akin to \cite{Nardin}.
The global version of this uses the following proposition, whose proof is identical to that of \cref{Definition of NIinfty proposition}.
\begin{proposition}
  Let $I \subset \FF_{\cT}^{\cP}$ be a replete wide subcategory.
  Then, $\Span_I(\FF_{\cT}) \rightarrow \Span_{\cP}(\FF_{\cT})$ presents a $\cP$-operad if and only if $I \subset \FF_{\cT}^{\cP}$ is a weakly extensive subcategory.
\end{proposition}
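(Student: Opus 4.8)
The statement to prove is that $\Span_I(\FF_{\cT}) \to \Span_{\cP}(\FF_{\cT})$ presents a $\cP$-operad precisely when $I \subset \FF_{\cT}^{\cP}$ is a weakly extensive subcategory. The plan is to mirror the proof of \cref{Definition of NIinfty proposition} verbatim, with $\cP$-operads in place of $\cT$-operads and the pattern $\Span_{\cP}(\FF_{\cT})$ in place of $\Span(\FF_{\cT})$. Recall from \cref{GOperads conditions} that a functor $\pi \colon \cO^{\otimes} \to \Span_{\cP}(\FF_{\cT})$ is a $\cP$-operad if and only if it satisfies three conditions: (a) existence of $\pi$-cocartesian lifts of backwards maps, (b) the Segal condition for colors, and (c) the Segal condition for multimorphisms; the analog of \cref{GOperads conditions} over $\Span_{\cP}(\FF_{\cT})$ follows from \cref{Span segal condition lemma}, which identifies $\Orb(S)$ as an initial subcategory of $\prn{\Span_I(\FF_{\cT})^{\el}_{/S}}^{\op}$ exactly as in the proof of \cref{GOperads conditions}.

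\textbf{Key steps.} First I would observe that conditions \cref{Restriction stable condition} and \cref{Automorphism condition} of the definition of a weakly extensive subcategory are automatically satisfied by $I$: they are exactly the closure conditions required in order for $\Span_I(\FF_{\cT})$ to be definable as an adequate triple (since $I$ must be a pullback-stable replete wide subcategory of the extensive category $\FF_{\cT}^{\cP}$ closed under automorphisms), so they are part of the hypothesis. Then I would verify the three conditions of \cref{GOperads conditions}. Condition (a) is automatic: $\Span_I(\FF_{\cT})$ has \emph{unique} lifts of backwards maps — a backwards map in $\Span_I(\FF_{\cT})$ is a span whose forward leg is an equivalence, and composing with it is invertible — so cocartesian lifts always exist. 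Condition (b), the Segal condition for colors, holds by construction of $\Span_I(\FF_{\cT})$ (the fiber over $S$ is identified with the product of the fibers over the orbits of $S$), just as for $\Span(\FF_{\cT})$. For condition (c), the Segal condition for multimorphisms, I would unwind definitions: for $T \to S$ a map of orbits in $I$, the relevant mapping-space comparison is between the existence of a map $T \to S$ in $I$ and the conjunction over $U \in \Orb(S)$ of the existence of maps $T_U \to U$ in $I$. Using \cref{Reduction to maps to orbits observation} (the weakly extensive analog, which holds for the same inductive reason), this reduces precisely to the Segal condition \cref{Windex segal condition} for a weakly extensive subcategory. The only subtlety is the observation that in spaces there is a map $X \to Y \times \emptyset = \emptyset$ if and only if $X$ is empty, which is what makes the structure-space comparison an \emph{iff} statement matching the weakly extensive Segal condition.

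\textbf{Main obstacle.} There is essentially no deep obstacle — the argument is formally identical to \cref{Definition of NIinfty proposition}. The one point requiring a little care is making sure the \cref{GOperads conditions}-style characterization (with limits computed on the subdiagram $\Orb(S)$ rather than on the full elementary slice) is valid over the base $\Span_{\cP}(\FF_{\cT})$; this is exactly what \cref{Span segal condition lemma} supplies, since $(\FF_{\cT},\FF_{\cT}^{\cP})$ is a weakly extensive span pair and the lemma was stated at that level of generality. I would also want to note explicitly that $\Span_{\cP}(\FF_{\cT})$ is soundly extendable (\cref{Soundly extendable lemma}) so that $\Fbrs(\Span_{\cP}(\FF_{\cT}))$ is well-behaved and the notion of ``presenting a $\cP$-operad'' is unambiguous, but this is background rather than part of the proof. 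Hence the proof reduces to two or three sentences: conditions (a) and (b) are automatic by construction, and condition (c) is equivalent, by \cref{Reduction to maps to orbits observation} and the empty-target observation, to the Segal condition defining a weakly extensive subcategory.
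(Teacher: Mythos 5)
Your proposal is correct and matches the paper's approach exactly: the paper itself states that the proof of this proposition "is identical to that of \cref{Definition of NIinfty proposition}," and your step-by-step adaptation of that earlier argument to $\Span_{\cP}(\FF_{\cT})$ (conditions (a) and (b) automatic, condition (c) reduced via \cref{Reduction to maps to orbits observation} and the empty-target observation to the Segal condition for weak extensivity, with \cref{Span segal condition lemma} supplying the $\Orb(S)$-initiality needed for the \cref{GOperads conditions}-style characterization) is precisely that identical proof spelled out.
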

Define the pullback pattern $\Tot \uFF_{I,*} \deq \Tot \uFF_{\cT,*}^{\cP} \times_{\Span_{\cP}(\FF_{\cT})} \Span_I(\FF_{\cT})$
\begin{corollary}
  $\varphi^*$ induces equivalences
  \begin{align*}
    \Seg_{\Span_I(\FF_{\cT})}(\cC) &\simeq \Seg_{\Tot \uFF_{I,*}}(\cC);\\
    \Fbrs(\Span_I(\FF_{\cT})) &\simeq \Fbrs(\Tot \uFF_{I,*}).
  \end{align*}
\end{corollary}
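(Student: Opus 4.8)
The statement to prove is that $\varphi^*$ induces equivalences $\Seg_{\Span_I(\FF_{\cT})}(\cC) \simeq \Seg_{\Tot \uFF_{I,*}}(\cC)$ and $\Fbrs(\Span_I(\FF_{\cT})) \simeq \Fbrs(\Tot \uFF_{I,*})$, where $\Tot \uFF_{I,*} \deq \Tot \uFF_{\cT,*}^{\cP} \times_{\Span_{\cP}(\FF_{\cT})} \Span_I(\FF_{\cT})$. The natural approach is to deduce this from the already-established case $I = \FF_{\cT}^{\cP}$ (namely \cref{Span fibrous corollary}, which says $\varphi\colon \Tot \uFF_{\cT,*}^{\cP} \to \Span_{\cP}(\FF_{\cT})$ satisfies the conditions of \cref{Equivalence theorem}) together with \cref{Pullback morita prop}, which is exactly the machine for propagating such equivalences along pullbacks of fibrous patterns. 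Indeed, $\Tot \uFF_{I,*}$ is by definition the pullback $\varphi^* \fQ$ where $\fQ \deq \Span_I(\FF_{\cT})$, viewed as a fibrous $\Span_{\cP}(\FF_{\cT})$-pattern via the inclusion $\iota_I^{\cP}\colon \Span_I(\FF_{\cT}) \to \Span_{\cP}(\FF_{\cT})$.

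\textbf{Key steps.} First I would check that $\iota_I^{\cP}\colon \Span_I(\FF_{\cT}) \to \Span_{\cP}(\FF_{\cT})$ is a fibrous pattern: since $I$ is weakly extensive, $\Span_I(\FF_{\cT})$ presents a $\cP$-operad by the preceding proposition, and every $\cP$-operad is a fibrous $\Span_{\cP}(\FF_{\cT})$-pattern by definition, so this is immediate. Second, I would invoke \cref{Span fibrous corollary} to record that $\varphi\colon \Tot \uFF_{\cT,*}^{\cP} \to \Span_{\cP}(\FF_{\cT})$ is a strong Segal morphism satisfying conditions (1) and (2) of \cref{Equivalence theorem}, and moreover that it is a $\pi_0$-isomorphism — the latter because $\varphi^{\el}$ is an equivalence and $\varphi$ is essentially surjective on underlying $\infty$-categories (both $\Tot \uFF_{\cT,*}^{\cP}$ and $\Span_{\cP}(\FF_{\cT})$ have objects the finite $\cP$-admissible $\cT$-sets, up to the pointing). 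Third, I would note that $\Span_{\cP}(\FF_{\cT})$ is soundly extendable by \cref{Soundly extendable lemma}. With these hypotheses in hand, \cref{Pullback morita prop} applied to $\varphi$ and $\fQ = \Span_I(\FF_{\cT})$ yields that the pullback map $\varphi'\colon \varphi^*\Span_I(\FF_{\cT}) = \Tot \uFF_{I,*} \to \Span_I(\FF_{\cT})$ again satisfies the conditions of \cref{Equivalence theorem}, and that $\Tot \uFF_{I,*}$ (hence also $\Span_I(\FF_{\cT})$, already known) is soundly extendable. Fourth, I would apply \cref{Equivalence theorem} directly to $\varphi'$: its first conclusion gives $\Seg_{\Span_I(\FF_{\cT})}(\cC) \simeq \Seg_{\Tot \uFF_{I,*}}(\cC)$ for all complete $\cC$, and since $\Span_I(\FF_{\cT})$ is soundly extendable its second conclusion gives $\Fbrs(\Span_I(\FF_{\cT})) \simeq \Fbrs(\Tot \uFF_{I,*})$. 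Finally, identifying $\varphi^*$ as the map induced by $\varphi'$ (pullback along $\Tot \uFF_{I,*} \to \Span_I(\FF_{\cT})$) closes the argument.

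\textbf{Main obstacle.} The only genuinely delicate point is verifying that $\varphi\colon \Tot \uFF_{\cT,*}^{\cP} \to \Span_{\cP}(\FF_{\cT})$ is a $\pi_0$-isomorphism, which is a hypothesis of \cref{Pullback morita prop} not literally spelled out in the statement of \cref{Span fibrous corollary}; it follows by unwinding that the objects on both sides are the $\cP$-admissible finite $\cT$-sets and that $\varphi$ is the identity on these, but it requires care with the pointed-vs-unpointed bookkeeping (morphisms in $\Tot \uFF_{\cT,*}^{\cP}$ over a target-degenerate summand inclusion versus spans with trivial forward part). Beyond that, everything is a formal consequence of results already proved in this appendix; the proof is essentially two invocations — \cref{Pullback morita prop} then \cref{Equivalence theorem} — and so I would keep the writeup short, essentially just citing \cref{Span fibrous corollary,Soundly extendable lemma,Pullback morita prop,Equivalence theorem} in sequence.
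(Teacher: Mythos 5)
Your route---view $\Tot \uFF_{I,*}$ as $\varphi^*\Span_I(\FF_{\cT})$, transfer the hypotheses of \cref{Equivalence theorem} from $\varphi$ to the pullback map $\varphi'$ via \cref{Pullback morita prop}, and then apply \cref{Equivalence theorem} directly to $\varphi'$---is precisely the intended derivation, and the first, second, fourth, and fifth steps of your plan are fine. But the one claim you single out as delicate is false: $\varphi\colon \Tot \uFF_{\cT,*}^{\cP} \to \Span_{\cP}(\FF_{\cT})$ is \emph{not} a $\pi_0$-isomorphism, and the ``unwinding'' you propose misidentifies the left-hand side. Its objects are not finite $\cT$-sets but pairs $(V,S)$ with $V \in \cT$ and $S \in \FF_V$, and $\varphi$ records only the underlying $\cT$-set (for $\cT = \cO_G$ this is $(H,S) \mapsto \Ind_H^G S$). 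So for any proper subgroup $H \subsetneq G$ the non-isomorphic pairs $(G,[G/H])$ and $(H,*_H)$ have the same image $[G/H]$, and $\varphi$ fails to be injective on $\pi_0$. The nonequivariant intuition that $\FF_*$ and $\Span(\FF)$ share the same isomorphism classes does not transport to this setting.

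The good news is that the $\pi_0$-isomorphism hypothesis in the statement of \cref{Pullback morita prop} is never invoked in its proof: what is used there is that $\varphi^*\fQ^{\el} \to \fQ^{\el}$ and the core-of-active-slice squares are pullbacks whose bottom edges are equivalences by conditions (1) and (2) of \cref{Equivalence theorem}, together with the fact that $\fQ^{\el}_{\varphi'X/} \to \fP^{\el}_{\varphi\pi X/}$ is an equivalence (by uniqueness of cocartesian lifts of inert maps), which makes $(\varphi')^{\el}_{X/}$ identify with the initial functor $\varphi^{\el}_{\pi X/}$. So \cref{Pullback morita prop} does apply in the present situation and the remaining steps of your derivation go through; but your writeup should either record that observation explicitly, or instead verify the hypotheses of \cref{Equivalence theorem} for $\varphi'$ directly (the elementary objects and active slices of $\Tot \uFF_{I,*}$ are base changes of those of $\Tot \uFF_{\cT,*}^{\cP}$ along the wide inclusion $I \subset \FF_{\cT}^{\cP}$)---rather than asserting a false statement and flagging it as checked.
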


\begin{remark}\label{Glo remark}
  Let $\Orb \subset \Glo$ be the global orbit category including into the global indexing category (see e.g. \cite[Ex~4.3.3]{Cnossen_stable}).
  As remarked in \cite[Rmk~4.3.4]{Cnossen_stable}, atomic orbital subcategories of $\Glo$ correspond to \emph{global transfer systems} in the sense of \cite{Barrero};
  since $\Orb$ is the maximal atomic orbital subcategory of $\Glo$, these correspond canonically with extensive subcategories of $\FF_{\Glo}^{\Orb}$.
  If we interpret weakly extensive subcategories $I \subset \FF_{\Glo}^{\Orb}$ as \emph{global weak indexing categories}, the above work thus constructs \emph{global weak $\cN_\infty$-operads} and an equivalence between two models for \emph{global $I$-operads}.
\end{remark}

\subsubsection{$I^\infty$-operads as operadic coefficient systems}
We now prove the following result.
\begin{proposition}\label{I-infinity operad prop}
  The map $\varphi\colon \cT^{\op} \times \Span(\FF) \rightarrow \Span_{I^\infty}(\FF_{\cT})$ induces equivalences
  \begin{align*}
    \Seg_{\Span_{I^\infty}}(\FF_{\cT})(\cC) &\simeq \Fun(\cT^{\op}, \CMon(\cC));\\ 
      \Fbrs\prn{\Span_{I^\infty}(\FF_{\cT})} &\simeq \Fun(\cT^{\op}, \Op). 
  \end{align*}
\end{proposition}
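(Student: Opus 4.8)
The plan is to apply the machinery already developed, namely \cref{Equivalence theorem} together with \cref{Pullback morita prop}, to the specific map of patterns $\varphi\colon \cT^{\op} \times \Span(\FF) \rightarrow \Span_{I^\infty}(\FF_{\cT})$. First I would pin down what this map of patterns actually is: $I^{\infty} = I^{\infty}_{\cT}$ is the minimal indexing category, so $\Span_{I^\infty}(\FF_{\cT}) = \Span_{\mathrm{all}, I^\infty}(\FF_{\cT};\cT^{\op})$ has backwards maps being \emph{all} maps in $\FF_{\cT}$ and active (forward) maps being only those in $I^{\infty}$, which (by \cref{I infty obs}, i.e. $I^{\infty}$-indexed products are trivially indexed products) are the fold maps $n \cdot S \rightarrow S$. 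The product pattern $\cT^{\op} \times \Span(\FF)$, where $\cT^{\op}$ is given the pattern structure with $(\cT^{\op})^{\el} = (\cT^{\op})^{\Int} = \cT^{\op} = (\cT^{\op})^{\act}$ (the algebraic pattern $\cT^{\coeff}$ of \cref{Product pattern example}), has Segal objects $\Seg_{\cT^{\op} \times \Span(\FF)}(\cC) \simeq \Seg_{\cT^{\op}}\Seg_{\Span(\FF)}(\cC) \simeq \Fun(\cT^{\op}, \CMon(\cC))$ by \cref{Product pattern example}. So the content is that $\varphi$ is a suitably nice Segal morphism; the map sends $(V, \langle n \rangle) \mapsto n \cdot *_V$ on objects and on morphisms combines restriction in $\cT^{\op}$ (backwards) with the pointed-set data.

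The key steps, in order: (1) Verify $\cT^{\op} \times \Span(\FF)$ is soundly extendable — this follows from \cref{Soundly extendable lemma} applied to $\Span_{I}(\FF) = \Span(\FF)$ (the case $\cT = \ast$, $I = \FF$) together with soundness/sound-extendability being preserved by products with $\cT^{\coeff}$ (which is trivially sound and extendable, as its only active maps are equivalences), c.f.\ \cref{Products of strong segal morphisms} and the behaviour of $\AlgPatt$ under products. Actually it is cleaner to check that $\Span_{I^\infty}(\FF_{\cT})$ is soundly extendable, which is exactly \cref{Soundly extendable lemma} for $I = I^{\infty}_{\cT}$, and then apply \cref{Pullback morita prop}. (2) Check $\varphi$ is a strong Segal morphism: one must show $\varphi^{\el}_{X/}$ is initial for all $X$. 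Using \cref{Span segal condition lemma}, the elementaries under $n \cdot *_V \in \Span_{I^\infty}(\FF_{\cT})$ are indexed by $\mathrm{Orb}(n \cdot *_V) = n \cdot \{V\}$, i.e.\ $n$ copies of $V$; on the source side, $(\cT^{\coeff} \times \Span(\FF))^{\el}_{(V,\langle n\rangle)/}$ likewise reduces to $n$ copies of $V$ (the $\cT^{\op}$ factor being discrete-on-elementaries, the $\Span(\FF)$ factor contributing $\mathrm{Orb}(\langle n \rangle) = \underline{n}$). These match, giving an equivalence, hence initiality. (3) Check the two conditions of \cref{Equivalence theorem}: condition (1), $\varphi^{\el}$ is an equivalence, follows since both elementary categories are $\cT^{\op}$ (an elementary object on each side is a single orbit $V$) — one verifies $\varphi$ restricts to the identity on the copy of $\cT^{\op}$ sitting inside each as elementaries. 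Condition (2): for $X = V$ elementary, the active slices $\prn{(\cT^{\coeff} \times \Span(\FF))^{\act}_{/V}}^{\simeq} \rightarrow \prn{\Span_{I^\infty}(\FF_{\cT})^{\act}_{/V}}^{\simeq}$ must be an equivalence; here $\Span_{I^\infty}(\FF_{\cT})^{\act}_{/V} \simeq I^{\infty}_{/V}$, whose objects are fold maps $n \cdot *_V \rightarrow *_V$ with automorphisms $\Sigma_n$, matching $\FF^{\simeq} \times \{V\}$ on the source side. (4) Also verify $\varphi$ is a $\pi_0$-isomorphism (on underlying categories, or at least enough to invoke \cref{Pullback morita prop} if we route through that), which is clear since objects on both sides are (conjugacy classes of) pairs $(V, n)$. (5) Conclude via \cref{Equivalence theorem}: $\varphi^*$ is an equivalence on both $\Seg$ and (using sound extendability) on $\Fbrs$, and combine with \cref{Product pattern example} to rewrite $\Seg_{\cT^{\op} \times \Span(\FF)}(\cC) \simeq \Fun(\cT^{\op}, \CMon(\cC))$ and $\Fbrs(\cT^{\op} \times \Span(\FF)) \simeq \Fun(\cT^{\op}, \Op)$, the latter using \cref{Push-pull along projection corollary} / the fact that $\Fbrs$ of a product $\cT^{\coeff} \times \base$ is $\Fun(\cT^{\op}, \Fbrs(\base))$ since the $\cT^{\coeff}$-direction has no active maps beyond equivalences, so fibrous patterns over it are just cocartesian fibrations, i.e.\ functors out of $\cT^{\op}$.

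The main obstacle I anticipate is step (3) condition (2) combined with carefully setting up step (1): one needs to identify precisely which subcategory of $\cT^{\op} \times \Span(\FF)$ maps to the elementaries of $\Span_{I^\infty}(\FF_{\cT})$ and check the active-slice cores agree, because the active maps in $\Span_{I^\infty}(\FF_{\cT})$ over an orbit $V$ are the (spans with backwards leg an equivalence and forwards leg a) fold maps $n\cdot *_V \to *_V$, and one must confirm that the active-slice computation does not secretly see any of the restriction maps of $\cT$ — it should not, since those are \emph{inert} in $\Span_{I^\infty}(\FF_{\cT})$, not active. A secondary subtlety is the bookkeeping in identifying $\Fbrs(\cT^{\coeff} \times \base) \simeq \Fun(\cT^{\op}, \Fbrs(\base))$ cleanly; the safest route is probably to observe that a fibrous $\cT^{\coeff} \times \base$-pattern is, by \cref{Fibrous patterns definition} applied in the $\cT^{\coeff}$-variable where every map is inert, a cocartesian fibration over $\cT^{\op}$ valued fibrewise in fibrous $\base$-patterns, and this is a straightening/unstraightening argument. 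Everything else is routine pattern-theoretic verification of the hypotheses of theorems already in hand.
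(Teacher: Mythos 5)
Your proof is correct and follows essentially the same route as the paper: both rewrite the right-hand sides as $\Seg_{\cT^{\op} \times \Span(\FF)}(\cC)$ and $\Fbrs(\cT^{\op} \times \Span(\FF))$ via \cref{Product pattern example} and then apply \cref{Equivalence theorem} to $\varphi$, citing \cref{Soundly extendable lemma} for sound extendability of the codomain and checking that $\varphi^{\el}$ and $\prn{\varphi^{\act}_{/O}}^{\simeq}$ are equivalences. The one place you diverge is in how you handle the Segal-morphism hypothesis: you verify directly that $\varphi$ is a \emph{strong} Segal morphism (showing $\varphi^{\el}_{X/}$ is initial via \cref{Span segal condition lemma}), whereas the paper establishes the weaker ``$\varphi$ is a Segal morphism'' by appealing to \cref{Commutative monoids corollary}, which is itself a consequence of the $I^\infty$-semiadditivization theorem. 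Your direct check is if anything more faithful to the hypothesis of \cref{Equivalence theorem} as stated; the paper's route is shorter but leans on machinery already proved by another method. Your parenthetical mention of \cref{Pullback morita prop} and the $\pi_0$-isomorphism check is a small organizational detour — those are not needed here, since \cref{Equivalence theorem} alone already gives both equivalences — but it does not affect the validity of the argument.
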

\begin{proof}
  The right hand sides correspond with $\Seg_{\cT^{\op} \times \Span(\FF)}(\cC)$ and $\Fbrs(\cT^{\op} \times \Span(\FF))$, so it suffices to verify the conditions of \cref{Equivalence theorem} for $\varphi$.
  We already know that the codomain is soundly extendable \cref{Soundly extendable lemma}, and it is easy to see that $\varphi^{\el}$ and $\prn{\varphi^{\act}_{/O}}^{\simeq}$ are equivalences.
  Moreover, the fact that $\varphi$ is a Segal morphism follows from \cref{Commutative monoids corollary}, so we are done.
\end{proof}

\subsection{Pullback of fibrous patterns along Segal morphisms and sound extendability}
\begin{proposition}\label{Soundly extendable pull-push prop}
  Suppose $\varphi\colon \fO \rightarrow \fP$ is functor which is compatible with the inert-active factorization system, and $\fP$ is soundly extendable..
  Then,
  \begin{enumerate}
    \item If the precomposition functor
           $\varphi^*\colon \Fun(\fP,\Cat) \rightarrow \Fun(\fO,\Cat)$
       preserves Segal objects, then the pullback functor $\varphi^*\colon \Cat_{/\fP} \rightarrow \Cat_{/\fO}$
       preserves fibrous patterns.
      \item If $\varphi$ is an inert-cocartesian fibration and the left Kan extension functor
          $\varphi_!\colon \Fun(\fO,\Cat) \rightarrow \Fun(\cP,\Cat)$
        preserves Segal objects, then postcomposition 
            $\varphi_!\colon \Cat_{/\fO} \rightarrow \Cat_{/\fP}$
        preserves fibrous patterns.
  \end{enumerate}
  In particular, if $\varphi$ is an inert-cocartesian Segal morphism with soundly extendable codomain whose left Kan extension preserves Segal categories, then pullback and postcomposition restrict to an adjunction on fibrous patterns
  \[
    \varphi_!\colon \Fbrs(\fO) \rightleftarrows \Fbrs(\fP)\colon \varphi^*
  \]
\end{proposition}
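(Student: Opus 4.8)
\textbf{Proof plan for \cref{Soundly extendable pull-push prop}.}

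The plan is to prove parts (1) and (2) directly by unwinding the definition of fibrous pattern (\cref{Fibrous patterns definition}), then assemble the ``in particular'' clause formally. Throughout I would use the equivalence $\Fbrs(\base) \subset \Cat^{\Int-\cocart}_{/\base}$ between fibrous $\base$-patterns and the reflective subcategory picked out by $L_{\Fbrs}$, together with the identification (implicit in \cite[\S~4]{Barkan}, and recalled via \cref{Segal objects are monadic} and \cref{Envelope theorem}) that a cocartesian fibration over $\base$ is a fibrous pattern precisely when it unstraightens a Segal $\base$-$\infty$-category; more to the point, a general $p\colon \fQ \to \base$ with inert-cocartesian lifts is fibrous iff a certain functor $\base \to \Cat$ (built fiberwise from the mapping-space data of $\fQ$ over the inert-active factorization) is a Segal $\base$-object. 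The strategy is to reduce conditions (1)--(3) of \cref{Fibrous patterns definition} for $\varphi^*\fQ$ (resp.\ $\varphi_!\fQ$) to the corresponding Segal-object statement and then invoke the hypothesis that $\varphi^*$ (resp.\ $\varphi_!$) preserves Segal objects.

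For part (1): since $\varphi$ is compatible with the inert-active factorization system, pullback along $\varphi$ preserves inert-cocartesian lifts, so $\varphi^*\fQ \to \fO$ at least lands in $\Cat^{\Int-\cocart}_{/\fO}$; this gives condition (1) of \cref{Fibrous patterns definition} immediately. For conditions (2) and (3) — the two Segal conditions, for colors and for multimorphisms — I would observe that for a fibrous pattern $\fQ$ these assemble into the single statement that the associated functor $F_{\fQ}\colon \base \to \Cat$ is a Segal $\base$-object, where $F_{\fQ}(V) = \fQ_V$ on objects and the active-morphism functoriality is read off from mapping spaces in $\fQ$ (this is \cite[Prop~4.1.6, Lem~4.2.4]{Barkan} and the cited \cref{Enough to assert essentially surjective remark}). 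One checks that $F_{\varphi^*\fQ} \simeq \varphi^* F_{\fQ}$ as functors $\fO \to \Cat$, again using that $\varphi$ preserves the factorization system so that inert-active factorizations in $\fO$ map to inert-active factorizations in $\fP$ and fibers/mapping spaces are computed by pullback. Then $F_{\fQ}$ Segal $\Rightarrow$ $\varphi^* F_{\fQ}$ Segal by hypothesis, so $F_{\varphi^*\fQ}$ is Segal, so $\varphi^*\fQ$ is fibrous. Part (2) is dual: when $\varphi$ is an inert-cocartesian fibration, postcomposition $\varphi_!$ sends objects of $\Cat^{\Int-\cocart}_{/\fO}$ to objects of $\Cat^{\Int-\cocart}_{/\fP}$ (the composite of inert-cocartesian fibrations has inert-cocartesian lifts, by e.g. \cite[Obs~2.1.7]{Barkan}), and one identifies $F_{\varphi_!\fQ} \simeq \varphi_! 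F_{\fQ}$ — here the left Kan extension appears because $\varphi_!$ on total spaces corresponds to $\operatorname{Lan}_\varphi$ on the straightened functors — so the hypothesis that $\varphi_!$ preserves Segal objects finishes it.

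For the ``in particular'' clause: assume $\varphi$ is an inert-cocartesian Segal morphism with $\fP$ soundly extendable whose left Kan extension preserves Segal categories. A Segal morphism is by definition compatible with the factorization system and has $\varphi^*$ preserving Segal objects, so part (1) applies; the extra hypotheses are exactly those of part (2), so $\varphi_!$ also preserves fibrous patterns. It remains to promote the plain adjunction $\varphi_! \dashv \varphi^*$ on $\Cat_{/\fO} \rightleftarrows \Cat_{/\fP}$ (adjunction between postcomposition and pullback) to one on the full subcategories $\Fbrs(\fO) \subset \Cat^{\Int-\cocart}_{/\fO}$ and $\Fbrs(\fP) \subset \Cat^{\Int-\cocart}_{/\fP}$: since we have just shown both $\varphi^*$ and $\varphi_!$ restrict to these subcategories, and $\Fbrs$ sits inside $\Cat^{\Int-\cocart}$ which is in turn stable under the relevant operations, the restriction of an adjunction to full subcategories closed under the two functors is again an adjunction (unit and counit restrict). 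I expect the main obstacle to be the bookkeeping in the identifications $F_{\varphi^*\fQ} \simeq \varphi^* F_{\fQ}$ and $F_{\varphi_!\fQ} \simeq \varphi_! F_{\fQ}$ — i.e.\ matching the ``fibrous pattern $\leftrightarrow$ Segal object'' dictionary of \cite{Barkan} on both sides coherently, especially checking that left Kan extension on straightenings really is what postcomposition does on total spaces when $\varphi$ is an inert-cocartesian fibration; this is where sound extendability of $\fP$ gets used (to know $L_{\Fbrs}$ and the envelope behave well, via \cref{Envelope theorem} and \cref{Fibrous pullback prop}), and it is essentially the one place where I would need to be careful rather than formal.
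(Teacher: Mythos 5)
There is a genuine gap in the ``dictionary'' you invoke. You want to reduce the fibrous-pattern conditions (2) and (3) of \cref{Fibrous patterns definition} to the claim that an associated functor $F_{\fQ}\colon \base \to \Cat$ is a Segal $\base$-object and then check $F_{\varphi^*\fQ} \simeq \varphi^* F_{\fQ}$. But a general fibrous pattern is not a cocartesian fibration, so it does not straighten to a functor $\base \to \Cat$ at all; there is no $F_{\fQ}$ of the sort you describe. The correct intermediate object is the \emph{inert straightening} $\St^{\Int}_{\base}(\sF)$, which lives over the active arrow category $\sA_{\base} = \Ar^{\act}(\base)$, and the characterization of fibrous patterns (\cite[Obs~4.1.3]{Barkan}) is that the map $\St^{\Int}_{\base}(\sF) \to \sA_{\base}$ is a \emph{relative} Segal $\base$-$\infty$-category --- not that some straightening to $\Cat$ is Segal. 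Your plan skips this entirely and treats all fibrous patterns as if they were cocartesian fibrations. The paper's proof instead puts $\St^{\Int}_{\fO}(\varphi^*\sF)$ in a pullback square against $\varphi^*\St^{\Int}_{\fP}(\sF) \to \varphi^*\sA_{\fP}$, and uses pullback-stability of relative Segal maps (\cite[Lem~3.1.10]{Barkan}) together with \cite[Obs~3.1.8]{Barkan} to reduce to showing $\varphi^*\St^{\Int}_{\fP}(\sF)$ and $\varphi^*\sA_{\fP}$ are Segal $\fO$-$\infty$-categories.

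Relatedly, you misidentify where sound extendability of $\fP$ enters. You say it is used ``to know $L_{\Fbrs}$ and the envelope behave well,'' but in fact it is used at a precise point: sound extendability says $\sA_{\fP}$ is itself a Segal $\fP$-$\infty$-category, and this is what lets one apply $\varphi^*$ to it and then invoke \cite[Obs~3.1.8]{Barkan} to convert ``the total object is Segal'' into ``the map is relative Segal.'' Without that, knowing $\St^{\Int}_{\fP}(\sF)$ is Segal would not tell you the map to $\sA_{\fP}$ is relative Segal. For part (2) the argument is genuinely dual (take left adjoints to the commutative square of \cite[Prop~4.2.5]{Barkan} and check $\varphi_!\St^{\Int}_{\fO}(\sF) \to \sA_{\fP}$ is relative Segal), rather than the straightening--left-Kan identification you suggest. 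Your final ``in particular'' paragraph is fine formally once (1) and (2) are in place, but the body of the argument needs the relative-Segal machinery made explicit.
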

\begin{proof}
  Our argument is only a minor variation of \cite[Lem~4.1.19]{Barkan}.
  In either case, the property of being an inert-cocartesian fibration is always preserved, either by assumption or by \cite[Obs~2.2.6]{Barkan}.
  
  We prove (1) first.
  Fixing $\sF \in \Fbrs(\fP)$, by \cite[Obs~4.1.3]{Barkan}, it suffices to prove that the left vertical arrow in the following pullback diagram is a relative Segal $\fO$-$\infty$-category.
  \[
    \begin{tikzcd}
      \St_{\fO}^{\Int}(\varphi^* \sF)
      \arrow[r] \arrow[d] \arrow[rd, "\lrcorner" very near start, phantom]
      & \varphi^* \St_{\fP}^{\Int} \sF \arrow[d]\\
      \sA_{\fO} \arrow[r]
      & \varphi^* \sA_{\fP}
    \end{tikzcd}
  \]
  By \cite[Lem~3.1.10]{Barkan}, relative Segal $\fO$-$\infty$-categories are pullback-stable, so it suffices to prove that the right vertical arrow is a relative Segal $\fO$-$\infty$-category.
  By sound extendability $\sA_{\fP}$ is a Segal $\fP$-$\infty$-category, and since $\varphi^*$ preserves Segal $\infty$-categories, $\varphi^* \sA_{\fP}$ is a Segal $\fO$-$\infty$-category;
  by \cite[Obs~3.1.8]{Barkan} it then suffices to prove that $\varphi^* \St_{\fP}^{\Int} \sF$ is a Segal $\fO$-$\infty$-category.
  Since $\varphi^*$ preserves Segal $\infty$-categories, it suffices to prove that $\St_{\fP}^{\Int} \sF$ is a Segal $\fP$-category, which follows by the assumption that $\sF$ is a fibrous pattern.

  (2) is similar;
  this time, by taking left adjoints to the commutative square of \cite[Prop~4.2.5]{Barkan}, it suffices to prove that the composition
  \[
    \varphi_! \St_{\fO}^{\Int} \sF \rightarrow \varphi_! \sA_{\fO} \rightarrow \sA_{\fP}
  \]
  is relative Segal;
  since $\fP$ is soundly extendable, \cite[Obs~3.1.8]{Barkan} again reduces this to verifying that $\varphi_! \St_{\fO}^{\Int} \sF$ is Segal;
  this follows from the facts that $\sF$ is a fibrous pattern and $\varphi_!$ preserves Segal $\infty$-categories.
\end{proof}

\subsection{Segal morphisms between effective Burnside patterns}\label{Segal morphisms subsection}
We now fill our grab bag with a wide variety of Segal morphisms between effective Burnside patterns.
\begin{proposition}\label{Segal morphisms between span patterns prop}
    Suppose $I \subset J \subset \FF_{\cT}$ are weakly extensive subcategories.
    Then, the inclusion
    \[
        \iota\colon \Span_{I}(\FF_{\cT}) \rightarrow \Span_{J}(\FF_{\cT})
    \]
    is a Segal morphism.
\end{proposition}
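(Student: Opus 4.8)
The plan is to verify that $\iota^*$ preserves Segal objects by checking the criterion of \cite[Lem~4.5]{Chu}: it suffices to show $\iota^*\colon \Fun(\Span_J(\FF_{\cT}), \cS) \to \Fun(\Span_I(\FF_{\cT}), \cS)$ carries Segal $\Span_J(\FF_{\cT})$-objects to Segal $\Span_I(\FF_{\cT})$-objects (coefficients in spaces suffice). First I would record that $\iota$ is indeed a map of algebraic patterns: by construction $\Span_I(\FF_{\cT}) \subset \Span_J(\FF_{\cT})$ is a wide subcategory containing all backward spans (since $I^{\simeq} = \FF_{\cT}^{\simeq} = J^{\simeq}$ and the adequate-triple construction uses all maps on the backward side), so $\iota$ preserves inert morphisms; it preserves active morphisms since active morphisms in either pattern are forward spans with legs in $I$ resp. $J$ and $I \subset J$; and it preserves elementary objects, since both patterns have the same underlying objects $\FF_{\cT}$ and the same elementary objects $\cT^{\op}$ (equivalently $\Orb(S) \subset \FF_{\cT,/S}^{\op}$). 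Hence $\iota$ is compatible with the inert-active factorization system, which is the first half of being a Segal morphism.

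The key step is then the identification of the relevant elementary-slice categories. By \cref{Span segal condition lemma}, for any $S \in \FF_{\cT}$ the full subcategory of $\prn{\Span_K(\FF_{\cT})^{\el}_{S/}}^{\op} \simeq \cT \times_{\FF_{\cT}} \FF_{\cT,/S}$ spanned by summand inclusions is initial and equivalent to the discrete set $\Orb(S)$, \emph{and this holds for both $K = I$ and $K = J$}, since the summand inclusions $U \hookrightarrow S$ lie in $I$ (they are split monos, hence in any weakly extensive subcategory). Therefore, for each object $S$, the functor $\iota^{\el}_{S/}\colon \Span_I(\FF_{\cT})^{\el}_{S/} \to \Span_J(\FF_{\cT})^{\el}_{S/}$ restricts on the initial subcategories to the identity $\Orb(S) = \Orb(S)$; since initial subcategories compute the same limits, $\iota^*$ preserves the Segal condition $F(S) \xrightarrow{\sim} \lim_{E \in \Span(\FF_{\cT})^{\el}_{S/}} F(E)$ objectwise. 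Concretely, a product-preserving functor $F\colon \Span_J(\FF_{\cT}) \to \cS$ (which is precisely a Segal $J$-object by \cref{CMon I cat corollary}, or by \cref{CMon pattern example}) pulls back to the product-preserving functor $F \circ \iota$, which is a Segal $I$-object by the same characterization. This shows $\iota^*$ preserves Segal objects, so $\iota$ is a Segal morphism.

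I do not expect a serious obstacle here; the only point requiring a little care is confirming that $\iota$ genuinely lands in $\AlgPatt$ — i.e. that $\Span_I(\FF_{\cT})$ and $\Span_J(\FF_{\cT})$ have literally the \emph{same} elementary objects and the same inert subcategory up to the evident inclusion — rather than merely abstractly equivalent ones, so that the comparison of slice categories is strict enough to invoke initiality. This is immediate from \cref{Span construction}: both patterns are $\Span_{\mathrm{all},K}(\FF_{c(K)}; c(K))$ with $c(I) = c(J) = \cT$ (both being one-color weak indexing categories, or in the global setting both weakly extensive with full color family), so the underlying objects, the backward-span inert subcategory, and the elementary objects $\cT^{\op}$ all agree on the nose, with only the active (forward) subcategory enlarging from $I$ to $J$. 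With that in hand the argument is purely formal. One could alternatively invoke \cref{Segal morphisms between span patterns example}/\cite[Prop~5.2.14]{Barkan}-style reasoning, but the direct slice-category comparison via \cref{Span segal condition lemma} is cleanest. Note also that $\iota$ is in fact a \emph{strong} Segal morphism, since $\iota^{\el}_{S/}$ is even an equivalence of slice categories (both being $\cT \times_{\FF_{\cT}} \FF_{\cT,/S}$); this stronger conclusion is what is used in \cref{Unslicing ff corollary} and the push-pull adjunctions of \cref{I-borelification corollary}, though only the Segal-morphism statement is asserted here.
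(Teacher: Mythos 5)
Your proposal is correct and follows essentially the same route as the paper's proof: the paper's argument is precisely your final paragraph (Segal objects of a span pattern are product‑preserving functors by \cref{CMon I cat corollary}, and $\iota$ carries the product cone $(S \leftarrow U)_{U \in \Orb(S)}$ in $\Span_I(\FF_{\cT})$ to the identical product cone in $\Span_J(\FF_{\cT})$, so precomposition preserves product‑preserving functors), while your elementary‑slice analysis via \cref{Span segal condition lemma} is a more detailed unwinding of the same fact. One small inaccuracy worth flagging: when you justify that $\Orb(S)$ is initial in $\Span_K(\FF_{\cT})^{\el}_{S/}$ for both $K = I$ and $K = J$ by appealing to the fact that summand inclusions ``lie in $I$,'' the reason given is beside the point — the inert/elementary data of $\Span_K(\FF_{\cT})$ is built entirely from backward legs, which are all of $\FF_{\cT}$ regardless of $K$, so the slice categories $\cT \times_{\FF_{\cT}} \FF_{\cT,/S}$ literally coincide for $I$ and $J$ without any condition on which forward maps are admissible. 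Your further observation that $\iota$ is in fact a \emph{strong} Segal morphism (the elementary slice functor $\iota^{\el}_{S/}$ is an equivalence) is correct and goes beyond what the proposition asserts, consistent with what is implicitly used later via \cref{Equivalence theorem} and \cref{Pullback morita prop}.
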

\begin{proof}
    We are tasked with verifying that precomposition with $\iota$ preserves product-preserving functors, i.e. that $\iota$ is a product-preserving functor.
    In fact, this is immediate, since a functor $\Span_I(\FF_{\cT}) \rightarrow \cC$ is product-preserving if and only if the backwards maps $(S \leftarrow U)_{U \in \mathrm{Orb}(S)}$ together map to a product diagram, which is obviously true of $\iota$.
\end{proof}

\begin{proposition}\label{Ind is Segal}
  Suppose $\varphi\colon V \rightarrow W$ is a morphism in $\cT$.
  Then, the associated functor 
  \[
    \Span_I(\Ind_V^W)\colon \Span_I(\FF_V) \rightarrow \Span_I(\FF_W)
  \]
  is a Segal morphism.
\end{proposition}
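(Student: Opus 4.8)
The plan is to verify that precomposition with $\Span_I(\Ind_V^W)$ preserves product-preserving functors, which (by \cref{CMon I cat corollary}) is the defining condition for a Segal morphism between effective Burnside patterns. As in the proof of \cref{Segal morphisms between span patterns prop}, a functor out of $\Span_I(\FF_W)$ into a complete $\infty$-category $\cC$ is product-preserving if and only if it sends, for each finite $W$-set $T$, the collection of backwards maps $(T \leftarrow U)_{U \in \Orb(T)}$ to a product diagram; hence it suffices to check that $\Span_I(\Ind_V^W)$ carries the elementary backwards cone over a $V$-set $S$ to the elementary backwards cone over $\Ind_V^W S$. Concretely, one must show that $\Ind_V^W$ induces a bijection $\Orb(S) \xrightarrow{\sim} \Orb(\Ind_V^W S)$ compatible with the summand inclusions, so that the image of $(S \leftarrow U)_{U \in \Orb(S)}$ under $\Span(\Ind_V^W)$ is exactly $(\Ind_V^W S \leftarrow \Ind_V^W U)_{U \in \Orb(S)}$, which is the elementary cone detecting products in $\Span_I(\FF_W)$.

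First I would record that $\Ind_V^W \colon \FF_V \to \FF_W$ is a coproduct-preserving functor (it is the left adjoint to the restriction $\FF_W \to \FF_V$, or more elementarily is given on representables by postcomposition $\cT_{/V} \to \cT_{/W}$ and extended by coproducts as in the \emph{representable $\cT$-category} discussion), and that it carries $I$-admissible maps to $I$-admissible maps by pullback-stability and the Segal condition on $I$ — this is what makes $\Span_I(\Ind_V^W)$ well-defined on the span patterns in the first place, and is implicitly used in \cref{Restrictions fibrous remark}. Second, I would observe that since $\Ind_V^W$ preserves coproducts and sends orbits of $\cT_{/V}$ to orbits of $\cT_{/W}$ (using that $\cT$ is orbital so that $\Ind_V^W$ of an orbit, while possibly not an orbit of $\cT$ itself, decomposes canonically into orbits, and tracking this decomposition is compatible with the summand inclusions), the induced map on orbit sets is the asserted bijection-up-to-canonical-decomposition; the precise statement is that the functor $\Orb(S) \to \Orb(\Ind_V^W S)$ obtained by sending an orbit summand $U \subset S$ to the orbits appearing in $\Ind_V^W U$ is an equivalence of the relevant indexing categories, exactly as in \cref{Segal condition lemma} and \cref{Span segal condition lemma}. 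Third, I would conclude: given a product-preserving $F \colon \Span_I(\FF_W) \to \cC$, the composite $F \circ \Span_I(\Ind_V^W)$ sends the elementary cone over $S$ to $F$ applied to the elementary cone over $\Ind_V^W S$, which is a product diagram, hence $F \circ \Span_I(\Ind_V^W)$ is product-preserving.

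The main obstacle is bookkeeping around the fact that $\Ind_V^W$ of an orbit of $\cT_{/V}$ need not be a single orbit of $\cT$ (it is an orbit of $\cT_{/W}$, but one must be careful which category of ``orbits'' is relevant for the elementary objects of $\Span_I(\FF_W)$ versus $\Span_I(\FF_V)$); the resolution is that the elementary objects and their slice categories in both patterns are governed by $\cT_{/(-)}$ rather than by $\cT$, via the identifications $\Span_I(\FF_{(-)})^{\el}_{S/} \simeq (\FF_{\cT_{/(-)}, /S})^{\op}$ of \cref{Span segal condition lemma}, and $\Ind_V^W$ is precisely the functor $\cT_{/V} \to \cT_{/W}$ underlying this change of slice, so everything matches on the nose. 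Once this identification is set up, the verification that product-preserving functors pull back to product-preserving functors is immediate, and no further soundness or extendability input is needed (those are only used downstream, e.g. to invoke \cref{Fibrous pullback prop} when building $\uOp_{\cT}$).
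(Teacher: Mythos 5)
Your proof is correct, and it arrives at the same conclusion as the paper's argument while taking a noticeably more hands-on route. The paper's proof also reduces to showing that precomposition with $\Span_I(\Ind_V^W)$ preserves product-preserving functors, but then leverages the fact that products in $\Span_I(\FF_V)$ are coproducts computed in $\FF_V$ (phrased in the paper as semiadditivity) together with the one-liner that $\Ind_V^W$ preserves coproducts because it is a left adjoint; the orbit bookkeeping you spend most of your effort on is entirely absorbed into these two facts. Your approach instead unwinds the Segal condition to the level of elementary backwards cones and checks directly that $\Span_I(\Ind_V^W)$ carries the cone $(S \leftarrow U)_{U \in \Orb(S)}$ to the cone $(\Ind_V^W S \leftarrow U')_{U' \in \Orb(\Ind_V^W S)}$, which forces you to verify the bijection $\Orb(S) \xrightarrow{\sim} \Orb(\Ind_V^W S)$ and to be careful that the orbits in question live in $\cT_{/V}$ and $\cT_{/W}$ respectively rather than in $\cT$ — a point you handle correctly via $\cref{Segal condition lemma}$ and $\cref{Span segal condition lemma}$. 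What your route buys is transparency: it is apparent exactly which combinatorial input ($\Ind_V^W\colon \cT_{/V} \to \cT_{/W}$ sends orbits to orbits and extends to coproducts) is being used. What the paper's route buys is brevity: the semiadditivity of the span category packages all of this into ``left adjoints preserve coproducts,'' and no explicit orbit-matching needs to be done. Either proof is acceptable.
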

\begin{proof}
  We're tasked with proving that precomposition along $\Span(\Ind_V^W)$ preserves product-preserving functors, i.e. it is a product-preserving functor.
  Since $\Span_I(\FF_V)$ and $\Span_I(\FF_W)$ are semiadditive, it is equivalent to prove that $\Span(\Ind_V^W)$ is coproduct-preserving;
  since coproducts in $\Span_I(\FF_V)$ are computed in $\FF_V$, it's equivalent to prove that $\Ind_V^W\colon \FF_V \rightarrow \FF_W$ is coproduct-preserving, which follows from the fact that it's a left adjoint. 
\end{proof}

\begin{proposition}\label{Segal morphism}\label{Segal morphisms between span patterns prop 2}
  If $f\colon \cT' \rightarrow \cT$ is a functor of $\infty$-categories sending an atomic orbital subcategory $\cP' \subset \cT'$ into an atomic orbital subcategory $\cP \subset \cT$, then the associated functor $\Span_{\cP'}(\FF_{\cT'}) \rightarrow \Span_{\cP}(\FF_{\cT})$ is a Segal morphism.
\end{proposition}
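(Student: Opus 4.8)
`\textbf{Proof plan.}`
The plan is to verify the criterion of \cite[Lem~4.5]{Chu}, which reduces the statement to showing that precomposition along $g \deq \Span_{\cP'}(\FF_{\cT'}) \rightarrow \Span_{\cP}(\FF_{\cT})$ preserves Segal objects valued in \emph{spaces}. By \cref{CMon I cat corollary} (or rather \cref{Span segal condition lemma}), the Segal $\Span_I(\FF_{(-)})$-objects in any complete $\infty$-category are precisely the product-preserving functors out of $\Span_I(\FF_{(-)})$, since the elementary objects under $S$ contain $\Orb(S)$ as an initial subcategory. Hence it suffices to prove that $g$ is product-preserving, or equivalently, since $\Span_{\cP'}(\FF_{\cT'})$ and $\Span_{\cP}(\FF_{\cT})$ are semiadditive (they carry identity-on-objects anti-involutions), that $g$ is coproduct-preserving.

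First I would observe that $f\colon \cT' \rightarrow \cT$ induces a coproduct-preserving functor $\FF_{\cT'} = (\cT')^{\amalg} \rightarrow \cT^{\amalg} = \FF_{\cT}$ by the universal property of finite-coproduct completion; this is the functor underlying $g$ on objects, and on morphisms it sends a span to its image span. Next I would note that $g$ is genuinely a functor of algebraic patterns: the hypothesis that $f(\cP') \subset \cP$ guarantees that forward maps (which lie in $\FF^{\cP'}_{\cT'}$) are sent to forward maps (in $\FF^{\cP}_{\cT}$), while backward maps are sent to backward maps automatically, so $g$ preserves the inert--active factorization system and is well-defined as stated in the construction of $\Span_{-,-}(-;-)\colon \Quad^{\adeq} \rightarrow \AlgPatt$. (Strictly one should also check that $f$ carries the adequate quadruple data $(\FF_{\cT'}, (\mathrm{all}, \FF^{\cP'}_{\cT'}), (\cT')^{\op})$ to $(\FF_{\cT}, (\mathrm{all}, \FF^{\cP}_{\cT}), \cT^{\op})$, which is exactly the content of $f$ being a functor with $f(\cP') \subset \cP$.)

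Then I would conclude: coproducts in $\Span_{\cP'}(\FF_{\cT'})$ are computed as in $\FF_{\cT'}$ (the inclusion $\FF_{\cT'} \hookrightarrow \Span_{\cP'}(\FF_{\cT'})$ preserves coproducts, as these are detected by the backward-map legs), and likewise for $\Span_{\cP}(\FF_{\cT})$, so the fact that $\FF_{\cT'} \rightarrow \FF_{\cT}$ is coproduct-preserving transports to $g$ being coproduct-preserving; by semiadditivity $g$ is product-preserving, hence precomposition along $g$ sends product-preserving functors to product-preserving functors, hence Segal objects to Segal objects in spaces, hence (by \cite[Lem~4.5]{Chu}) in any complete $\infty$-category. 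This argument is essentially a simultaneous common generalization of \cref{Segal morphisms between span patterns prop} and \cref{Ind is Segal}, and the steps are identical in spirit --- the only genuinely new input being the bookkeeping that $f(\cP') \subset \cP$ makes $g$ pattern-compatible.

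`\textbf{Main obstacle.}`
I do not anticipate a serious obstacle; the proof is formal once one has the identification of Segal objects with product-preserving functors. The one point requiring a little care is verifying that $g$ is honestly a morphism of patterns --- i.e., that the hypothesis ``$f(\cP') \subset \cP$'' really does imply that $g$ preserves active (= forward, core in $\FF^{\cP}$) morphisms and not merely objects --- since an active morphism in $\Span_{\cP'}(\FF_{\cT'})$ is a span whose forward leg lies in $\FF^{\cP'}_{\cT'}$, and one needs that the finite-coproduct extension of $f$ sends $\FF^{\cP'}_{\cT'}$ into $\FF^{\cP}_{\cT}$; this follows from $f(\cP') \subset \cP$ together with the fact that both $\FF^{\cP'}_{\cT'}$ and $\FF^{\cP}_{\cT}$ are generated under coproduct-compatible extension by maps to orbits. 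Beyond that, the argument is a routine transport of the coproduct-preservation of $\FF_{(-)}$ through semiadditivity.
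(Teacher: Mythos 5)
Your argument is correct in outline and reaches the same conclusion by a genuinely different route than the paper. The paper applies Chu's Remark~4.3 directly: it writes the limit over elementary objects on both sides, identifies each with a product over orbits via \cref{Span segal condition lemma}, and observes that $\Span(f)$ carries $\Orb(X)$ bijectively onto $\Orb(f(X))$, which makes the comparison map an equivalence. You instead invoke Chu's Lemma~4.5 (reduction to space-valued Segal objects), use the product-preservation characterization of Segal objects from \cref{CMon I cat corollary}, and reduce to checking that $g$ is product-preserving. Both arguments ultimately turn on the same fact that the finite-coproduct completion of $f$ respects orbit decompositions; the paper's version is slightly more surgical (it avoids asserting anything about $g$ being a categorical left/right exact functor and only checks the one limit diagram), while yours is slightly more conceptual. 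One genuine flaw in your write-up: the parenthetical ``(they carry identity-on-objects anti-involutions)'' is false. The pattern $\Span_{\cP}(\FF_{\cT}) = \Span_{\mathrm{all},\,\cP}(\FF_{\cT})$ is \emph{not} self-dual when $\FF_{\cT}^{\cP} \subsetneq \FF_{\cT}$, since reversing a span would move a forward leg of arbitrary type into the forward position. Semiadditivity of $\Span_{\cP}(\FF_{\cT})$ does in fact hold (because $\FF_{\cT}^{\cP}$ is an indexing category, so fold maps and summand inclusions lie in it), but it does not follow from the anti-involution argument. Better still, you do not need semiadditivity at all: products in $\Span_{\cP}(\FF_{\cT})$ are disjoint unions in $\FF_{\cT}$ with projections given by backward summand-inclusion spans, both of which $g$ visibly preserves, so product-preservation can be checked directly exactly as the paper does in the proof of \cref{Segal morphisms between span patterns prop}.
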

\begin{proof}
  By \cite[Rem~4.3]{Chu}, it suffices to verify that $f^{\el}_{X/}$ induces an equivalence on the left vertical arrow
  \[
    \begin{tikzcd}
      \dlim_{\Span_{\cP}(\cT)^{\el}_{f(X)/}} F \arrow[d] \arrow[r,"\simeq", phantom]
      & \dprod_{U \in \mathrm{Orb}(f(X))} F(U) \arrow[d]\\
      \dlim_{\Span_{\cP'}(\cT')^{\el}_{X/}} F \circ f^{\el} \arrow[r,"\simeq", phantom]
      & \dprod_{V \in \mathrm{Orb}(X)} F f(V)  
    \end{tikzcd}
  \]
  whenever $F$ is restricted from a Segal $\Span_{\cP}(\FF_{\cT})$ space.
  This follows by noting that the horizontal arrows are equivalences and $\Span(f)$ sends the set of orbits of $X$ bijectively onto the set of orbits of $f(X)$.
\end{proof}

\begin{proposition}\label{Cartesian product is Segal}
  If $\cP \subset \cT$ is an atomic orbital subcategory such that $\cP,\cT$ have compatible terminal objects, then the induced functor 
  \[
    \wedge \deq \Span(\times)\colon \Span_{\cP}(\FF_{\cT}) \times \Span_{\cP}(\FF_{\cP}) \xrightarrow{\wedge} \Span_{\cP}(\FF_{\cT})
  \]
  is compatible with Segal objects.
\end{proposition}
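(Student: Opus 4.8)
The plan is to check the two conditions packaged in the phrase ``compatible with Segal objects'': that $\wedge$ preserves the inert--active factorization systems, and that $\wedge^{*}$ preserves Segal objects valued in any complete $\infty$-category. Write $\times\colon \FF_{\cT} \times \FF_{\cP} \to \FF_{\cT}$ for the cartesian product, with $S \times T$ formed in $\FF_{\cT}$ (which has a terminal object, as $\cT$ does) and $\FF_{\cP}$ regarded as a full subcategory of $\FF_{\cT}$ compatibly --- this is exactly where the hypothesis that $\cP$ and $\cT$ have compatible terminal objects enters. This functor preserves all limits separately in each variable, hence the pullbacks appearing in the relevant adequate triples, so $\wedge = \Span(\times)$ is a well-defined functor of algebraic patterns as in \cref{Span construction}. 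For the factorization systems, the inert morphisms of the span patterns $\Span_{\cP}(\FF_{-})$ are the backward maps and the active morphisms are the forward maps lying in $\FF_{-}^{\cP}$ (all forward maps, for the second factor); $\times$ visibly carries a pair of backward maps to a backward map. Given active maps $S' \to S$ in $\FF_{\cT}^{\cP}$ and $T' \to T$ in $\FF_{\cP}$, the map $S' \times T' \to S \times T$ factors as $S' \times T' \to S' \times T \to S \times T$, the base changes of $T' \to T$ and of $S' \to S$ respectively; since $\cP$ is atomic orbital, $(\FF_{\cT},\FF_{\cT}^{\cP})$ is an extensive span pair, so $\FF_{\cT}^{\cP}$ contains $\FF_{\cP}$ and is stable under pullback, whence both factors, and so the composite, lie in $\FF_{\cT}^{\cP}$. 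As both factorization systems are induced by wide subcategories, preserving the two classes of morphisms suffices.

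For the Segal condition, by \cite[Lem~4.5]{Chu} (cf.\ the remark preceding \cref{Segal morphisms between span patterns example}) it suffices to treat $\cC = \cS$. By \cref{CMon pattern example}, a Segal object over $\Span_{\cP}(\FF_{\cT})$ is precisely a product-preserving functor $G\colon \Span_{\cP}(\FF_{\cT}) \to \cS$, while by \cref{Product pattern example} together with \cref{Span segal condition lemma} a Segal object over the product pattern $\Span_{\cP}(\FF_{\cT}) \times \Span_{\cP}(\FF_{\cP})$ is a functor $F$ for which $F(S,T) \to \prod_{U \in \Orb(S)} \prod_{W \in \Orb(T)} F(U,W)$ is an equivalence for all $S,T$. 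So I fix a product-preserving $G$ and must show $G \circ \wedge$ satisfies this.

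The Segal map of $G \circ \wedge$ at $(S,T)$ is $G$ applied to the backward maps $S \times T \leftarrow U \times W$ ($U \in \Orb(S)$, $W \in \Orb(T)$). Since $\FF_{\cT}$ is extensive, $\times$ distributes over coproducts in each variable, so $S \times T \simeq \coprod_{U,W} U \times W$, and refining each $U \times W$ into $\cT$-orbits gives $\Orb(S \times T) = \coprod_{U,W} \Orb(U \times W)$. Hence the Segal map of $G \circ \wedge$ at $(S,T)$ factors as
\[
  G(S \times T) \longrightarrow \prod_{U,W} G(U \times W) \longrightarrow \prod_{U,W}\prod_{Z \in \Orb(U \times W)} G(Z) = \prod_{Z \in \Orb(S \times T)} G(Z),
\]
where the composite is the Segal map of $G$ at $S \times T$, an equivalence since $G$ is a Segal object, and the second arrow is a product of the Segal maps of $G$ at the various $U \times W$, an equivalence since $G$ is product-preserving. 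By two-out-of-three the first arrow is an equivalence, so $G \circ \wedge$ is a Segal object, completing the proof. The only genuinely delicate points are the pullback-stability input $\FF_{\cP} \subseteq \FF_{\cT}^{\cP}$ and the refinement bookkeeping: a product of two orbits need not be an orbit, so the Segal map of $G \circ \wedge$ is not literally the Segal map of $G$, merely its composite with an equivalence --- everything else is a direct unwinding of the explicit descriptions of Segal objects over span and product patterns.
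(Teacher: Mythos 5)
Your proof is correct and uses the same essential input as the paper's: distributivity of cartesian products over coproducts in $\FF_{\cT}$, combined with the characterization (via \cite[Ex~5.7]{Chu}) of Segal objects on the product pattern. The presentational difference is minor. The paper phrases the check as ``$\wedge^*F$ preserves biproducts ($\oplus$) separately in each variable,'' exploiting semiadditivity of $\Span_{\cP}(\FF_{\cT})$ to identify products with coproducts, and verifies this directly in a one-line computation. You instead unwind the product-pattern Segal condition to the orbit-decomposition form and use a two-out-of-three argument through the Segal map of $G$ at $S \times T$; this lands in the same place via the refinement $\Orb(S \times T) = \coprod_{U,W} \Orb(U \times W)$. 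You additionally spell out that $\wedge$ preserves the inert--active factorization system (needed for the ``compatible with Segal objects'' definition), a step the paper leaves implicit in the functoriality of $\Span_{-,-}(-;-)\colon \Quad^{\adeq} \to \AlgPatt$ from \cref{Span construction}, and your factoring of $S' \times T' \to S \times T$ through two base changes together with pullback-stability of $\FF_{\cT}^{\cP}$ is the right way to see it. So: same argument, yours slightly more unwound and slightly more careful about the pattern-morphism preliminaries.
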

\begin{proof}
  By \cite[Ex~5.7]{Chu}, a functor $\Span_{\cP}(\FF_{\cT}) \times \Span_{\cP}(\FF_{\cT}) \rightarrow \cC$ is a Segal object if and only if it preserves products separately in each variable.
  Hence we're tasked with verifying that $\wedge^*F$ preserves products separately in each variable whenever $F$ preserves products.
  In fact, this follows by distributivity of products and coproducs in $\FF^{\cP}_{\cT}$;
  indeed, we have
  \begin{align*}
    \wedge^* F\prn{\prn{X_+ \oplus Z_+,Y_+}} 
      &\simeq F\prn{\prn{X \sqcup X'} \times Y}_+\\
      &\simeq F\prn{\prn{X \times Y} \sqcup \prn{X' \times Y}}_+\\
      &\simeq F\prn{\prn{X_+ \wedge Y_+} \oplus \prn{X'_+ \wedge Y_+}}\\
      &\simeq F\prn{X_+ \wedge Y_+} \oplus F\prn{X'_+ \wedge Y_+}\\
      &\simeq \wedge^*F\prn{X_+,Y_+} \oplus \wedge^*F\prn{X'_+,Y_+}.\qedhere
    \end{align*}
\end{proof}

\resumetocwriting
\end{appendix}
\resumetocwriting

\printbibliography

\end{document}